
\documentclass[11pt]{article}%
\usepackage{graphicx}
\usepackage{amsmath}
\usepackage{fullpage}
\usepackage[notref,notcite]{showkeys}
\usepackage{amsfonts}
\usepackage{amssymb}%
\setcounter{MaxMatrixCols}{30}
\providecommand{\U}[1]{\protect\rule{.1in}{.1in}}
\newtheorem{theorem}{Theorem}

\newtheorem{corollary}[theorem]{Corollary}

\newtheorem{definition}[theorem]{Definition}
\newtheorem{example}[theorem]{Example}

\newtheorem{lemma}[theorem]{Lemma}

\newtheorem{problem}[theorem]{Problem}
\newtheorem{proposition}[theorem]{Proposition}
\newtheorem{remark}[theorem]{Remark}

\newenvironment{proof}[1][Proof]{\textbf{#1.} }{\ \rule{0.5em}{0.5em}}
\numberwithin{theorem}{section} \numberwithin{equation}{section}
\begin{document}

\title{Relations and radicals in abstract lattices and in lattices of subspaces of
Banach spaces and ideals of Banach algebras. Amitsur's theory revisited. }
\author{Edward Kissin, Victor Shulman and Yuri Turovskii}
\maketitle
\date{}

\begin{abstract}
We refine Amitsur's theory of radicals in
complete lattices and apply the
obtained results to the theory of radicals in the lattices of subspaces of
Banach spaces and in the lattices of ideals of Banach and C*-algebras and of
Banach Lie algebras.
\end{abstract}

\tableofcontents

\section{Introduction}

In his research of the radical theory of algebras and rings, Amitsur
\cite{Am2, Am3} discovered that a significant part of the results of this
theory can be formulated and proved in terms of the general theory of
lattices. In \cite{Am} he developed the theory of radicals for relations in
lattices that was used in various areas of algebra: group theory,
non-associative rings, Lie algebras, universal algebras, etc.

Later Dixon \cite{Di1} initiated the radical approach to some problems of
functional analysis and laid the basis of the theory of topological radicals
of Banach algebras. This theory was further developed and applied to the
theory of invariant subspaces of operator algebras and to classification of
Banach and operator Lie algebras in \cite{ST}, \cite{KST3}, \cite{KST5}.

In this paper we investigate both aspects of the theory of radicals. In part
one: Sections 2 to 7 we revise and refine Amitsur's theory of radicals in
complete abstract lattices. In part two: Sections 8 and 9 we apply the
obtained results to the theory of radicals in the lattices of subspaces of
Banach spaces and in the lattices of ideals of Banach and C*-algebras and of
Banach Lie algebras.

Recall that a partially ordered set $(Q,\leq)$ with a reflexive,
anti-symmetric, transitive relation $\leq$ is a lattice if all $a,b\in Q$ have
a least upper bound $a\vee b$ and a greatest lower bound $a\wedge b.$ It is
\textit{complete }if each $G\subseteq Q$ has a least upper bound $\vee G$ and
a greatest lower bound $\wedge G$. Set $\mathbf{0}=\wedge Q$ and
$\mathbf{1}=\vee Q.$ A transitive relation in $Q$ is an \textit{order}; a
relation $\prec_{_{1}}$ in $Q$ is \textit{stronger }than a relation $\prec$
if
\begin{equation}
a\prec_{_{1}}b\text{ implies }a\prec b\text{ for }a,b\in Q\text{ }(\text{we
write }\prec_{_{1}}\text{ }\subseteq\text{ }\prec). \label{0}%
\end{equation}
We will only consider reflexive relations $\ll$ in ($Q,\leq)$ stronger than
$\leq$ and denote by Ref$\left(  Q\right)  $ the family of all such relations.
For $x\leq y$ in $Q,$ set $[x,y]=\{z\in Q$: $x\leq z\leq y\}.$

Amitsur \cite{Am} studied $\mathbf{H}$- and dual $\mathbf{H}$-relations $\ll$
in complete lattices $Q$ and, using a special procedure, constructed the
$\mathbf{R}$-order $\ll^{\triangleright}$ from an $\mathbf{H}$-relation and
the dual $\mathbf{R}$-order $\ll^{\triangleleft}$ from a dual $\mathbf{H}%
$-relation (Definition \ref{D2.2}). He proved that $\ll^{\triangleright}$ has
a unique $\ll^{\triangleright}$-\textit{radical} $\mathfrak{r}_{_{\left[
a,b\right]  }}$ in each interval $\left[  a,b\right]  $ in $Q$ and
$\ll^{\triangleleft}$ has a unique \textit{dual} $\ll^{\triangleleft}%
$-\textit{radical} $\mathfrak{p}_{_{\left[  a,b\right]  }}$.

In our paper (Theorems \ref{eur} and \ref{eur1}) we establish that even weaker
relations ($\mathbf{T}$- and dual $\mathbf{T}$-orders) have unique radicals in
each $[a,b].$ Moreover, for a $\mathbf{T}$-order, the map $\mathfrak{r}$:
$a\in Q\rightarrow\mathfrak{r}_{_{\left[  a,\mathbf{1}\right]  }}$ is
pre-radical map; for a dual $\mathbf{T}$-order, the map $\mathfrak{p}$: $a\in
Q\rightarrow\mathfrak{p}_{_{\left[  \mathbf{0},a\right]  }}$ is a dual
pre-radical maps ((\ref{3.41}) and (\ref{2.0})) on $Q.$ For $\mathbf{R}%
$-orders and dual $\mathbf{R}$-orders they are, respectively, radical and dual
radical maps ((\ref{f3.1}) and (\ref{t3.2})) on $Q$ (Theorems \ref{T2.2} and
\ref{T2.2'}). If a relation $\ll$ is both a $\mathbf{T}$- and a dual
$\mathbf{T}$-order, it coincides "locally" with $\leq$, i.e., $Q$ is a union
of mutually disjoint intervals $[a_{\lambda},b_{\lambda}],$ $\lambda\in
\Lambda,$ such that $x\ll y$ if and only if $x,y\in\lbrack a_{\lambda
},b_{\lambda}]$ for some $\lambda\in\Lambda,$ and $x\leq y$ (Theorem
\ref{T2.12}).

In Section 4 we investigate the relations $\ll^{\triangleright}$ and
$\ll^{\triangleleft}$ constructed from arbitrary relations $\ll$. We compare
them to other naturally constructed relations $\ll^{\text{lo}}$ and
$\ll^{\text{up}}$ (Definition \ref{D2}) and prove that each interval $[a,b]$
in $Q$ has $\ll^{\triangleright}$- and dual $\ll^{\triangleleft}$-radicals
which are not, however, unique.

For an $\mathbf{H}$-relation $\ll,$ we show in Section 5 that $\ll
^{\triangleright}$ is an $\mathbf{R}$-order, it coincides with $\ll
^{\text{up}}$ and each $[a,b]$ has a unique $\ll^{\triangleright}$-radical.
Similarly, for a dual $\mathbf{H}$-relation $\ll,$ $\ll^{\triangleleft}$ is a
dual $\mathbf{R}$-order coinciding with $\ll^{\text{lo}}$ and each $[a,b]$ has
a unique dual $\ll^{\triangleleft}$-radical. We prove that $\ll$ is an
$\mathbf{R}$-order (a dual $\mathbf{R}$-order) if and only if $\ll$ $=$
$\ll^{\triangleright}$ ($\ll$ $=$ $\ll^{\triangleleft}).$

In Section 6 we investigate two incompatible relations: the gap
($<_{\mathfrak{g}})$ and continuous ($<_{\mathfrak{c}})$ relations in lattices
$Q$. We show that if $Q$ is modular then $<_{\mathfrak{g}}$ is an $\mathbf{H}%
$- and a dual $\mathbf{H}$-relation. If $Q$ is modular and has (JID) and (MID)
(see (\ref{6.9})) then $<_{\mathfrak{c}}$ is an $\mathbf{R}$- and a dual
$\mathbf{R}$-order and $Q$ is a union of disjoint sets without gaps.

Section 7 is devoted to the study of enveloping and inscribing sets in
lattices $Q.$ We prove that there is a bijection between all enveloping sets
and all radical maps in $Q,$ and between all inscribing sets and all dual
radical maps in $Q$. We consider some enveloping sets in the lattice Ref($Q).$
In particular, we show that the sets of all $\mathbf{T}$-orders, of all dual
$\mathbf{T}$-orders, of all $\mathbf{R}$-orders, of all dual $\mathbf{R}%
$-orders are enveloping in Ref($Q).$ We also study transfinite extensions of
relations and, in particular, of the relations $<_{\mathfrak{g}}$ and
$<_{\mathfrak{c}}.$

An important place in the theory of operator algebras is occupied by the study
of lattices of subspaces of Banach spaces $X$ and, especially, of the lattices
Lat $\mathcal{A}$ of invariant subspaces of operator algebras $\mathcal{A}$ on
$X$. In Section 8 the above results are used to study such lattices.

Denote by Ln($X)$ the lattice of all \textit{linear} subspaces and by Cl($X)$
the lattice of all \textit{closed} subspaces of $X.$ In Theorem \ref{T4.6} we
describe ascending and descending $\ll$-series of subspaces in sublattices of
Ln($X)$ and Cl($X)$ with respect to $\mathbf{H}$- and dual $\mathbf{H}%
$-relations $\ll,$ and their $\ll^{\triangleright}$- and $\ll^{\triangleleft}%
$-radicals $\mathfrak{r}$ and $\mathfrak{p}$, respectively. In the sublattices
Lat $\mathcal{A}$ of Cl($X)$ these radicals are superinvariant (Proposition
\ref{P9.4}), i.e., they are invariant for the operator Lie algebra
\[
\text{Nor }\mathcal{A}=\{S\in B(X):SA-AS\in\mathcal{A}\text{ for }%
A\in\mathcal{A}\}.
\]
We concentrate on the study of the gap relation $<_{\mathfrak{g}}$ and the
relations $\{\ll_{n}\}_{1\leq n\leq\infty}$ defined in Ln$(X)$ and Cl$(X)$ by
the condition $L\ll_{n}M$ if $\dim(M/L)<n.$ As Ln($X)$ is a modular lattice,
$<_{\mathfrak{g}}$ and all $\ll_{n}$ are both $\mathbf{H}$- and dual
$\mathbf{H}$-relations ($\mathbf{HH}$-relations) in it (Proposition \ref{P9.1}
and Corollary \ref{C9.6}).

The lattice Cl($X)$ is not modular and, although all $\ll_{n}$ are still
$\mathbf{HH}$-relations in it$,$ there are sublattices where $<_{\mathfrak{g}%
}$ is neither $\mathbf{H}$-, nor dual $\mathbf{H}$-relation (Corollary
\ref{C9.1}). The main obstacle is the fact that the sum of subspaces from
Cl($X)$ is not necessarily closed$.$ We construct in Cl($X)$ an $\mathbf{H}%
$-relation $\sqsubset_{\mathfrak{g}}$ and a dual $\mathbf{H}$-relation
$\prec_{\mathfrak{g}}$ stronger than $<_{\mathfrak{g}}$ that give us the
$\sqsubset_{\mathfrak{g}}^{\triangleright}$-radical and the dual
$\prec_{\mathfrak{g}}^{\triangleleft}$-radical in each sublattice of Cl($X).$

For a Hilbert space $X,$ we define in (\ref{5,7}) another class of the
$\mathbf{HH}$-relations $\ll_{_{n}}^{\bot},$ $0\leq n\leq\infty,$ in Cl($X)$
(Theorem \ref{T9.3}). Moreover, each $\mathbf{HH}$-relation in Cl$(X)$ is
either $\ll_{n},$ or $\ll_{_{n}}^{\bot}$ for some $n$ (see \cite{K3} and
Theorem \ref{T6.3}). Each commutative subspace lattice (CSL) $Q$ in Cl($X)$
has properties (JID) and (MID) (see (\ref{6.4})). So it is modular,
$<_{\mathfrak{g}}=$ $\sqsubset_{\mathfrak{g}}=$ $\prec_{\mathfrak{g}}$ is an
$\mathbf{HH}$-relation in $Q,$ and $Q$ is a union of disjoint intervals each
of which has no gaps (Theorem \ref{T9.1}).

The study of ideals of Banach and Banach Lie algebras constitutes one of the
main tools of research. In \cite{KST3} the authors used the structure of
chains of ideals in Banach Lie algebras to develop the radical theory of these
algebras. In this paper we study chains of ideals generated by $\mathbf{H}$-
and dual $\mathbf{H}$-relations in the lattices Id$_{A}$ of all closed ideals
of Banach algebras $A.$

As $\ll_{\infty}$ is $\mathbf{HH}$-order in Id$_{A},$ each sublattice of
Id$_{A}$ has the unique $\ll_{_{\infty}}^{\triangleright}$-radical and the
dual $\ll_{_{\infty}}^{\triangleleft}$-radical$.$ Denoting by $\Sigma_{A}$ the
set of all subalgebras of finite codimension in a Banach algebra $A$, we show
in Proposition \ref{C-Laf} that $\cap_{_{S\in\Sigma_{A}}}S=\cap\{I\in$
Id$_{A}$: $\dim(A/I)<\infty\}.$

The relation $<_{\mathfrak{g}}$ in Id$_{A}$ is not always an $\mathbf{H}$-, or
a dual $\mathbf{H}$-relation, since the sum of closed ideals of $A$ is not
necessarily closed. If, however, each ideal of $A$ has a bounded left or right
approximate identity, then $I+J$ is closed for all $I,J\in$ Id$_{A},$ so that
$<_{\mathfrak{g}}$ is an $\mathbf{HH}$-relation in Id$_{A}$ (Corollary
\ref{C3.4}).

Let $LR\mathcal{(}A)$ be the operator algebra generated by all operators of
left and right multiplication by elements from $A.$ Then Id$_{A}=$ Lat
$LR\mathcal{(}A)$ is a sublattice of Cl($A)$, the Lie algebra Nor
$LR\mathcal{(}A)$ contains all derivations of $A$ and operators from Nor
$LR\mathcal{(}A)$ map $\overline{I^{2}}$ in $I,$ $I\in$ Id$_{A}$ (Theorem
\ref{T9.6}).

If $\cap_{_{S\in\Sigma_{A}}}S=\{0\},$ for a Banach Lie algebra $A$, then
(Theorem \ref{comdiff}) the dual $\ll_{_{\infty}}^{\triangleleft}$-radical
$\mathfrak{p}$ in the lattice of all characteristic Lie ideals of $A$
(invariant for all derivations of $A)$ is $\{0\}.$ So there is a descending
series $(I_{\lambda})_{1\leq\lambda\leq\gamma}$ of characteristic Lie ideals
of $A$ such that $\dim(I_{\lambda}/I_{\lambda+1})<\infty$ for $\lambda
\neq\gamma,$ $I_{1}=A$ and $\mathfrak{p}=I_{\gamma}=\{0\}.$

Section 9.2 is devoted to the study of $\mathbf{H}$- and dual $\mathbf{H}%
$-relations in the lattices Id$_{A}$ of all ideals of C*-algebras $A.$ In this
case Id$_{A}$ is a modular lattice and $<_{\mathfrak{g}}$ is an $\mathbf{HH}%
$-relation in Id$_{A}.$ The lattice Id$_{A}$ has many $\mathbf{H}$-relations
that can be obtained in the following way. Let $\mathfrak{A}$ be the set of
all C*-algebras. A subclass $P$ of $\mathfrak{A}$ is a \textit{property}, if
$\{0\}\in P$ and $A\in P$ implies $B\in P$ for all $B\approx A$. Each property
$P$ generates the relation $\ll_{_{P}}$ in Id$_{A}$ by $I\ll_{_{P}}J$ if
$I\subseteq J$ in Id$_{A}$ and $J/I\in P.$

A property $P$ is lower stable if $A\in P$ implies Id$_{A}\subset P;$\emph{
}$P$ is upper stable\textbf{ }if $A\in P$ implies that the quotients $A/I\in
P$ for all $I\in$ Id$_{A}.$ In Theorem \ref{T9.4} we prove that $P$ is upper
(lower) stable if and only if $\ll_{_{P}}$ is an $\mathbf{H}$-relation (a dual
$\mathbf{H}$-relation). Some characteristics of the $\ll_{_{P}}%
^{\triangleright}$-radicals $\mathfrak{r}_{_{P}}$ and the dual $\ll_{_{P}%
}^{\triangleleft}$-radicals $\mathfrak{p}_{_{P}}$ in Id$_{A}$ are discussed in
Theorem \ref{T9.5}. These radicals are invariant for all automorphisms of $A$
(Corollary \ref{C9.8}). Since many properties in $\mathfrak{A}$ are upper, or
lower stable, we have a large variety of $\mathbf{H}$- and dual $\mathbf{H}%
$-relations in Id$_{A}.$ These relations were investigated in \cite{KST4}; in
this paper we briefly consider some of them.

For example, the classes $CCR$ and $GCR$ of all CCR- and GCR-algebras are
lower and upper stable properties, while the class of all NGCR-algebras is a
lower, but not upper stable property (see \cite{D}). So $\ll_{_{CCR}}$ and
$\ll_{_{GCR}}$ are $\mathbf{HH}$-relations. This gives a well-known result
that the radical $\mathfrak{r}_{_{CCR}}$ of a C*-algebra $A$ is the largest
GCR-ideal and $A/\mathfrak{r}_{_{CCR}}$ has no CCR-ideals. Moreover, if
$\mathfrak{r}_{_{CCR}}\nsubseteq I\neq A$ then $J/I$ is a CCR-algebra for some
$I\subsetneqq J\in$ Id$_{A}.$

We also consider the classes $RZ$ of all real rank zero, $AF$ of all
approximately finite-dimensional and $NU$ of all nuclear C*-algebras. They are
lower and upper stable properties, so that $\ll_{_{RZ}},$ $\ll_{_{AF}},$
$\ll_{_{NU}}$ are $\mathbf{HH}$-relations in Id$_{A}$ for all C*-algebras $A$
(Corollary \ref{C9.7}). While the relations $\ll_{_{CCR}}$ and $\ll_{_{RZ}}$
are not transitive, the relations $\ll_{_{AF}}$ and $\ll_{_{NU}}$ are
transitive. Moreover, they are $\mathbf{R}$-orders in all Id$_{A}.$ So the
corresponding radicals are, respectively, the largest $AF$-algebra and the
largest nuclear algebra in $A$ (Corollary \ref{C8.4}) (this result was
initially obtained in \cite{ST}).

\section{Radicals and $\mathbf{T}$-orders in complete lattices}

Let $(Q,\leq)$ be a complete lattice. Set $\mathbf{0}=\wedge Q$ and
$\mathbf{1}=\vee Q.$ For each subset $G\subseteq Q,$ its $\wedge
$\textit{-completion} $G^{\wedge}$ and $\vee$\textit{-completion}\textbf{
}$G^{\vee}$ are defined by%

\begin{equation}
G^{\wedge}=\left\{  \wedge N:\varnothing\neq N\subseteq G\right\}  \text{ and
}G^{\vee}=\left\{  \vee N:\varnothing\neq N\subseteq G\right\}  . \label{1.10}%
\end{equation}
A set $G$ is $\wedge$-\textit{complete} if $G=G^{\wedge},$ $\vee
$-\textit{complete} if $G=G^{\vee},$ and \textit{complete} if $G^{\vee
}=G^{\wedge}=G$.

Let $(Q,\leq)$ be a complete lattice, let $x<y$ in $Q$ and let $\ll$ be a
relation in Ref($Q)$. Set
\begin{align}
(x,y]  &  =[x,y]\diagdown\{x\},\text{ }[x,y)=[x,y]\diagdown\{y\},\nonumber\\
\left[  a,\ll\right]   &  =\left\{  x\in Q\text{\textbf{: }}a\ll x\right\}
,\text{ }\left[  \ll,a\right]  =\left\{  x\in Q\text{: }x\ll a\right\}  \text{
for each }a\in Q. \label{1}%
\end{align}

\begin{definition}
\label{D1.1}A relation $\ll$ from \emph{Ref(}$Q)$ is called
\textbf{up-contiguous} if $a\ll b$ implies $[a,b]\subseteq\lbrack\ll
,b]$\emph{;}\smallskip

\textbf{down-contiguous} if $a\ll b$ implies $[a,b]\subseteq\lbrack
a,\ll];\smallskip$

\textbf{up-expanded} if $\left[  a,\ll\right]  $ is $\vee$-complete$,$
\textbf{down-expanded} if $\left[  \ll,a\right]  $ is $\wedge$-complete for
all $a\in Q$\emph{;}\smallskip

\textbf{contiguous}$,$\textbf{ expanded} if $\ll$ satisfies the corresponding
up- and down-condition.
\end{definition}

Note that if a relation has one of the properties defined above, its
restriction to any interval also has it. We consider now some examples of
these relations.

\begin{example}
\label{E1}\emph{1) Let }$X=[0,1]\subset\mathbb{R}$\emph{ and }$Q=P(X)$\emph{
be its power set -- the lattice of all subsets of }$X$\emph{ with }$\leq
$\emph{ }$=$\emph{ }$\subseteq.$\emph{ For }$A,B\in Q,$\emph{ we write }$A\ll
B$\emph{ if }$A\subseteq B\subseteq\overline{A.}$\emph{ Then }$\ll$\emph{ is
contiguous: if }$A\ll B$\emph{ then }$[A,B]\subseteq\lbrack A,\ll]\cap
\lbrack\ll,B],$\emph{ since }$\overline{C}=\overline{A}$\emph{ for each }%
$C\in\lbrack A,B].$\emph{ It is also up-expanded, as }$\vee\lbrack
A,\ll]=\overline{A}\in\lbrack A,\ll].$\emph{ However, }$\ll$\emph{ is not
down-expanded, as }$\wedge\lbrack\ll,X]=\varnothing\notin\lbrack
\ll,X].\medskip$

\emph{2) \ Let }$X$\emph{ be the set of all Lebesgue measurable subsets of
}$\mathbb{R}$\emph{ and }$Q$\emph{ be the set of all equivalence classes in
}$X.$\emph{ For }$A,B\in Q,$\emph{ we write }$A\leq B$\emph{ if }%
$\mu(A\diagdown B)=0,$\emph{ and we write }$A\ll B$\emph{ if }$A\leq B$\emph{
and }$\mu(B\diagdown A)<\infty.$\emph{ Then }$\ll$\emph{ is contiguous, but
neither up-, nor down-expanded.\medskip}

\emph{3) Let }$Q_{_{N}}=\{1,...,N\}$\emph{ for }$N\in N,$\emph{ the order
}$\leq$\emph{ is defined as usual and }$n\ll m$\emph{ if }$n$\emph{ is a
divisor of }$m.$\emph{ Then }$\ll$\emph{ is expanded, but not contiguous.
\ \ }$\blacksquare$
\end{example}

\begin{definition}
\label{D3.2}\emph{(i) }A transitive relation is a $\mathbf{T}$\textbf{-order}
if it is up-contiguous and up-expanded;\smallskip

\emph{(ii) } A transitive relation is a dual $\mathbf{T}$-\textbf{order} if it
is down-contiguous and down-expanded.\smallskip

\emph{(iii)} $\ll$ is a $\mathbf{TT}$-\textbf{order}, if it is a $T$- and a
dual $T$-order, i.e., $\ll$ is contiguous and expanded\emph{.}
\end{definition}

By the Duality Principle \cite[Theorem $1.3^{\prime}$]{Sk}, the results for
down-conditions follow from the corresponding results for up-conditions and
vice versa. The results for dual $\mathbf{T}$-orders follow from the
corresponding results for $\mathbf{T}$-orders and vice versa.

Following \cite{Am}, for $\ll$ $\in$ Ref($Q),$\emph{ }define its \textit{lower
}and \textit{upper complement }relations $\overrightarrow{\ll}$ and
$\overleftarrow{\ll}$%
\begin{equation}
a\text{ }\overleftarrow{\ll}\text{ }b\text{ if }\left[  a,\ll\right]
\cap\,[a,b]=\left\{  a\right\}  ;\text{ and }a\text{ }\overrightarrow{\ll
}\text{ }b\text{ if }\left[  \ll,b\right]  \,\cap\,[a,b]=\left\{  b\right\}
\text{ for }a\leq b\text{ in }Q. \label{A1}%
\end{equation}
An element $\mathfrak{r}\in Q$ is called a $\ll$\textit{-radical} if
$\mathbf{0}\ll\mathfrak{r}$ $\overleftarrow{\ll}$ $\mathbf{1;}$ an element
$\mathfrak{p}$ is called a dual $\ll$\textit{-radical} if $\mathbf{0}%
$\ $\overrightarrow{\ll}$ $\mathfrak{p}\ll\mathbf{1}.$ In particular, for an
interval $[a,b]\subseteq Q,$%
\begin{align}
\mathfrak{r}  &  \in\left[  a,b\right]  \text{ is a }\ll
\text{\textit{-radical} in }\left[  a,b\right]  \text{ if }a\ll\mathfrak{r}%
\text{ }\overleftarrow{\ll}\text{ }b,\label{2.10}\\
\mathfrak{p}  &  \in\left[  a,b\right]  \text{ is a \textit{dual}}%
\ll\text{-\textit{radical} in }\left[  a,b\right]  \text{ if }a\text{\ }%
\overrightarrow{\ll}\text{ }\mathfrak{p}\ll b. \label{2.11}%
\end{align}
The set of radicals may be empty or have many elements. For example,\emph{
}let $Q=[0,1]\subseteq\mathbb{R}$ and $\ll$ $\in$ Ref($Q)$ be such that $x\ll
y$ only if $0\neq x\leq y\neq1.$ Then the set $[a,1],$ $a\neq0,$ has no $\ll
$-radicals and the dual $\ll$-radical $\mathfrak{p}=1;$ the set $[0,b],$
$b\neq1,$ has no dual $\ll$-radicals and the $\ll$-radical $\mathfrak{r}=0$.

\begin{theorem}
\label{eur}\emph{(i)\ }If $\ll$ is an up-expanded order\emph{,} $\mathfrak{r}%
=\vee(\left[  a,b\right]  \cap\left[  a,\ll\right]  )$ is a $\ll$-radical in
$\left[  a,b\right]  \subseteq Q.$\smallskip

\emph{(ii)\ \ }If $\ll$ is a $\mathbf{T}$-order then $\mathfrak{r}$ in
\emph{(i) }is a unique radical and $[a,\mathfrak{r}]=[a,b]\cap\lbrack
\ll,\mathfrak{r}].\smallskip$

\emph{(iii)}\ If each $[a,b]\subseteq Q$ has a unique $\ll$-radical then $\ll$
is up-expanded.\smallskip

\emph{(iv) }An up-contiguous order $\ll$ is a $\mathbf{T}$-order if and only
if each $[a,b]\subseteq Q$ has a unique $\ll$-radical.
\end{theorem}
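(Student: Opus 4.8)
The plan is to treat the four parts in the order (i), (ii), (iii), (iv), since (iv) will follow formally once (ii) and (iii) are in hand. Throughout I use that $\ll$ is reflexive and stronger than $\leq$.

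For (i), first note that $[a,b]\cap[a,\ll]$ is nonempty, because $a\ll a$ by reflexivity and $a\in[a,b]$, so $\mathfrak r=\vee([a,b]\cap[a,\ll])$ is well defined, and $a\leq\mathfrak r\leq b$ gives $\mathfrak r\in[a,b]$. Since $[a,b]\cap[a,\ll]$ is a nonempty subset of the $\vee$-complete set $[a,\ll]$, its join $\mathfrak r$ again lies in $[a,\ll]$, i.e. $a\ll\mathfrak r$. To verify $\mathfrak r\,\overleftarrow{\ll}\,b$, take $z\in[\mathfrak r,\ll]\cap[\mathfrak r,b]$, so $\mathfrak r\ll z$ and $\mathfrak r\leq z\leq b$; transitivity of the order $\ll$ applied to $a\ll\mathfrak r\ll z$ gives $a\ll z$, whence $z\in[a,b]\cap[a,\ll]$ and so $z\leq\mathfrak r$, forcing $z=\mathfrak r$. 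Thus $\mathfrak r$ is a $\ll$-radical in $[a,b]$.

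For (ii), let $\mathfrak s$ be any $\ll$-radical in $[a,b]$. Then $a\ll\mathfrak s$ and $\mathfrak s\in[a,b]$, so $\mathfrak s\in[a,b]\cap[a,\ll]$ and $\mathfrak s\leq\mathfrak r$. Up-contiguity applied to $a\ll\mathfrak r$ yields $[a,\mathfrak r]\subseteq[\ll,\mathfrak r]$; since $a\leq\mathfrak s\leq\mathfrak r$ this gives $\mathfrak s\ll\mathfrak r$, and as $\mathfrak s\leq\mathfrak r\leq b$ we get $\mathfrak r\in[\mathfrak s,\ll]\cap[\mathfrak s,b]=\{\mathfrak s\}$ from $\mathfrak s\,\overleftarrow{\ll}\,b$; hence $\mathfrak r=\mathfrak s$ and the radical is unique. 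The identity $[a,\mathfrak r]=[a,b]\cap[\ll,\mathfrak r]$ is proved by the same contiguity: the inclusion $\subseteq$ is $[a,\mathfrak r]\subseteq[\ll,\mathfrak r]$ together with $[a,\mathfrak r]\subseteq[a,b]$ (as $\mathfrak r\leq b$), while $\supseteq$ holds because $w\ll\mathfrak r$ forces $w\leq\mathfrak r$, so any $w\in[a,b]\cap[\ll,\mathfrak r]$ lies in $[a,\mathfrak r]$.

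The key tool for (iii) is the equivalence, valid whenever every interval carries a unique radical, that $a\ll b$ if and only if $b$ is the (then unique) radical of $[a,b]$: indeed $a\ll b$ makes $b$ a radical since $[b,\ll]\cap\{b\}=\{b\}$ by reflexivity, and conversely a radical equal to $b$ satisfies $a\ll b$ by definition. Given a nonempty $N\subseteq[a,\ll]$ with $c=\vee N$, I would let $\mathfrak r$ be the unique radical of $[a,c]$ and aim to prove $\mathfrak r=c$, which by the equivalence is exactly $a\ll c$ and hence the $\vee$-completeness of $[a,\ll]$. Writing out $\mathfrak r\,\overleftarrow{\ll}\,c$ shows that no $w\in(\mathfrak r,c]$ satisfies $\mathfrak r\ll w$; consequently, for any $x\in N$, the element $\mathfrak r\vee x\leq c$ still has $\mathfrak r$ as the radical of $[a,\mathfrak r\vee x]$, so by the equivalence $a\not\ll(\mathfrak r\vee x)$ whenever $x\not\leq\mathfrak r$. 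The main obstacle is precisely here: without transitivity one cannot pass directly from $a\ll x$ to control of $\mathfrak r\vee x$, so one must exploit the uniqueness hypothesis in the auxiliary intervals $[a,\mathfrak r\vee x]$ to rule out an element $x\in N$ escaping above $\mathfrak r$ (for instance, by showing that such an $x$ would itself be a second radical of $[a,\mathfrak r\vee x]$ distinct from $\mathfrak r$), thereby forcing $x\leq\mathfrak r$ for all $x$ and hence $c=\vee N\leq\mathfrak r\leq c$. Once up-expandedness is established, (iv) is immediate: an up-contiguous order with unique radicals in every interval is up-expanded by (iii), hence a $\mathbf T$-order by Definition \ref{D3.2}, while the converse direction is exactly (ii).
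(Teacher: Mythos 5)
Your parts (i) and (ii) are correct and essentially identical to the paper's argument (the paper leaves the identity $[a,\mathfrak r]=[a,b]\cap[\ll,\mathfrak r]$ implicit; your verification of it is right), and your reduction of (iv) to (ii) and (iii) is also the paper's. The genuine gap is in (iii). You correctly observe that under the uniqueness hypothesis $a\ll b$ holds if and only if $b$ is the unique radical of $[a,b]$, and that $\mathfrak r$ remains the unique radical of each auxiliary interval $[a,\mathfrak r\vee x]$; but the step you leave as a parenthetical hope --- that an $x\in N$ with $x\not\leq\mathfrak r$ ``would itself be a second radical of $[a,\mathfrak r\vee x]$'' --- is never proved and does not follow: for $x$ to be a radical there you would need $[x,\ll]\cap[x,\mathfrak r\vee x]=\{x\}$, and nothing excludes an element $y$ with $x\ll y\leq\mathfrak r\vee x$. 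Worse, your deliberate avoidance of transitivity makes the gap unfillable, because for a merely reflexive relation the statement of (iii) is false. Take $Q=\{\mathbf 0,x,y,c\}$ with $x\vee y=c$ and $x\wedge y=\mathbf 0$, and let $\ll$ consist of the reflexive pairs together with $\mathbf 0\ll x$, $\mathbf 0\ll y$ and $x\ll c$. Every interval then has a unique $\ll$-radical: in $[\mathbf 0,c]$ it is $y$ (the candidate $x$ fails $\overleftarrow{\ll}$ because $x\ll c$, and $\mathbf 0\not\ll c$ rules out $c$), in $[\mathbf 0,x]$ it is $x$, in $[\mathbf 0,y]$ it is $y$, in $[x,c]$ it is $c$, in $[y,c]$ it is $y$. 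Yet $[\mathbf 0,\ll]=\{\mathbf 0,x,y\}$ is not $\vee$-complete, since $x\vee y=c\notin[\mathbf 0,\ll]$.

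The intended reading of (iii), which the paper's proof makes explicit with the words ``As $\ll$ is an order,'' carries transitivity as a standing hypothesis, and with it the argument is short: given $G\subseteq[a,\ll]$ and $b=\vee G$, let $\mathfrak r$ be the unique radical of $[a,b]$ and, for $x\in G$, let $\mathfrak r_x$ be the unique radical of $[x,b]$, so that $x\ll\mathfrak r_x\,\overleftarrow{\ll}\,b$. Transitivity upgrades $a\ll x\ll\mathfrak r_x$ to $a\ll\mathfrak r_x$, making $\mathfrak r_x$ a radical of $[a,b]$; uniqueness gives $\mathfrak r_x=\mathfrak r$, hence $x\ll\mathfrak r$ and so $x\leq\mathfrak r$ for every $x\in G$, whence $b=\vee G\leq\mathfrak r\leq b$ and $a\ll b$, i.e.\ $b\in[a,\ll]$. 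Note that this use of transitivity is harmless for your part (iv), whose hypothesis is an up-contiguous \emph{order}; once you insert it, your outline closes and the rest of your write-up stands.
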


\begin{proof}
(i) As $\ll$ is up-expanded, $\left[  a,\ll\right]  \cap\left[  a,b\right]  $
is $\vee$-complete. So it contains $\mathfrak{r}.$ Hence $a\ll\mathfrak{r}$.
If $\mathfrak{r}\ll y$ for some $y\in(\mathfrak{r},b],$ then $a\ll y$ by
transitivity of $\ll$ -- a contradiction, since $\mathfrak{r}$ is the largest
element in $\left[  a,\ll\right]  \cap\left[  a,b\right]  .$ Thus
$\mathfrak{r}$ $\overleftarrow{\ll}$ $b$. So $\mathfrak{r}$ is a $\ll$-radical
in $[a,b].$

(ii) Let $\ll$ be also up-contiguous and $a\ll z$ $\overleftarrow{\ll}$ $b$
for some $z.$ Then $z\leq\mathfrak{r}$, since $\mathfrak{r}$ is the largest
element in $\left[  a,\ll\right]  \cap\left[  a,b\right]  .$ As $a\ll
\mathfrak{r}$ and $\ll$ is up-contiguous, $[a,\mathfrak{r}]\subseteq\lbrack
\ll,\mathfrak{r}].$ So $z\ll\mathfrak{r}$. As $z$ $\overleftarrow{\ll}$ $b,$
we have $z=\mathfrak{r}$. Thus $\mathfrak{r}$ is a unique $\ll$-radical.

(iii) Let $a\in Q,$ $G\subseteq\lbrack a,\ll]$ and $b=\vee G.$ Let
$\mathfrak{r}$ be the $\ll$-radical in $[a,b]$: $a\ll\mathfrak{r}$
$\overleftarrow{\ll}$ $b$. If $x\in G$ then $a\ll x\leq b.$ Let $\mathfrak{r}%
_{x}$ be the $\ll$-radical in $[x,b]$: $x\ll\mathfrak{r}_{x}$
$\overleftarrow{\ll}$ $b$. As $\ll$\ is an order, $a\ll\mathfrak{r}_{x}$
$\overleftarrow{\ll}$ $b.$ So $\mathfrak{r}_{x}$ is a $\ll$-radical in
$[a,b].$ As the $\ll$-radical in $[a,b]$ is unique, $\mathfrak{r}%
=\mathfrak{r}_{x}.$ Thus $a\ll x\ll\mathfrak{r}$ $\overleftarrow{\ll}$ $b$ for
all $x\in G.$ Hence $b=\vee G\leq\mathfrak{r}.$ Thus $b=\mathfrak{r}.$ So
$a\ll b.$ Therefore $b\in\lbrack a,\ll],$ so that $\ll$ is up-expanded.

Part (iv) follows from (ii) and (iii).\bigskip
\end{proof}

By duality we get the following result.

\begin{theorem}
\label{eur1}\emph{(i)\ }If $\ll$ is a down-expanded order\emph{,}
$\mathfrak{p}=\wedge(\left[  a,b\right]  \cap\left[  \ll,b\right]  )$ is a
dual $\ll$-radical in $\left[  a,b\right]  $\emph{.}\smallskip

\emph{(ii) \ }If $\ll$ is a dual $\mathbf{T}$-order then $\mathfrak{p}$ in
\emph{(i) }is unique and $[\mathfrak{p},b]=\left[  a,b\right]  \cap\left[
\mathfrak{p},\ll\right]  .$

\emph{(iii) }If each $[a,b]\subseteq Q$ has a unique dual $\ll$-radical then
$\ll$ is down-expanded.\smallskip

\emph{(iv) }A down-contiguous order $\ll$ is a dual $\mathbf{T}$-order if and
only if each $[a,b]\subseteq Q$ has a unique dual $\ll$-radical.
\end{theorem}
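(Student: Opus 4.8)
The plan is to obtain Theorem~\ref{eur1} entirely by dualizing Theorem~\ref{eur}, invoking the Duality Principle stated right after Definition~\ref{D3.2}. The key observation is that every notion in Theorem~\ref{eur} has an exact dual: the up-conditions (up-contiguous, up-expanded, $\mathbf{T}$-order) dualize to the down-conditions (down-contiguous, down-expanded, dual $\mathbf{T}$-order), the join $\vee$ dualizes to the meet $\wedge$, the set $[a,\ll]$ dualizes to $[\ll,b]$, the radical relation $\overleftarrow{\ll}$ dualizes to $\overrightarrow{\ll}$, and a $\ll$-radical $\mathfrak{r}$ with $a\ll\mathfrak{r}\ \overleftarrow{\ll}\ b$ dualizes to a dual $\ll$-radical $\mathfrak{p}$ with $a\ \overrightarrow{\ll}\ \mathfrak{p}\ll b$. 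Thus each of the four parts of Theorem~\ref{eur1} is the literal dual statement of the corresponding part of Theorem~\ref{eur}, and the Duality Principle delivers them at once.

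Concretely, I would first set up the dual lattice $(Q,\leq^{\mathrm{op}})$ in which $x\leq^{\mathrm{op}}y$ iff $y\leq x$; under this reversal $\mathbf{0}$ and $\mathbf{1}$ swap, $\vee$ and $\wedge$ swap, and the interval $[a,b]$ becomes $[b,a]$ in the dual order. I would then verify the dictionary of dualizations: a relation $\ll$ is down-contiguous and down-expanded in $(Q,\leq)$ precisely when it is up-contiguous and up-expanded in $(Q,\leq^{\mathrm{op}})$, so $\ll$ is a dual $\mathbf{T}$-order in $Q$ iff it is a $\mathbf{T}$-order in $Q^{\mathrm{op}}$. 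Transitivity of $\ll$ is self-dual, so this transfers cleanly. Likewise the formula $\mathfrak{p}=\wedge([a,b]\cap[\ll,b])$ is exactly the image, under the order reversal, of the formula $\mathfrak{r}=\vee([a,b]\cap[a,\ll])$ from Theorem~\ref{eur}(i), and the identity $[\mathfrak{p},b]=[a,b]\cap[\mathfrak{p},\ll]$ in part (ii) is the reversal of $[a,\mathfrak{r}]=[a,b]\cap[\ll,\mathfrak{r}]$.

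With the dictionary in place, each clause follows mechanically: applying Theorem~\ref{eur}(i) in $Q^{\mathrm{op}}$ and translating back gives Theorem~\ref{eur1}(i); applying Theorem~\ref{eur}(ii) gives uniqueness together with the interval identity of part (ii); applying Theorem~\ref{eur}(iii) gives part (iii), noting that ``each $[a,b]$ has a unique dual $\ll$-radical'' is a self-indexed (hence self-dual) hypothesis once intervals are reversed; and part (iv), being a formal consequence of (ii) and (iii) in Theorem~\ref{eur}, is likewise a formal consequence of the dualized (ii) and (iii) here. The whole proof can therefore be compressed to the single sentence that Theorem~\ref{eur1} is the dual of Theorem~\ref{eur} and follows by the Duality Principle.

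The only place requiring genuine care — what I would regard as the main (though modest) obstacle — is confirming that the two auxiliary definitions really are dual to one another rather than merely analogous. Specifically, one must check that ``$[a,\ll]$ is $\vee$-complete for all $a$'' reverses exactly to ``$[\ll,a]$ is $\wedge$-complete for all $a$'', which hinges on the fact (recorded after Definition~\ref{D1.1}) that these properties pass to restrictions on intervals, and on the symmetry between the two complement relations in~(\ref{A1}): the defining condition $[a,\ll]\cap[a,b]=\{a\}$ for $\overleftarrow{\ll}$ order-reverses precisely to $[\ll,b]\cap[a,b]=\{b\}$ for $\overrightarrow{\ll}$. Once this bookkeeping is verified, there is no further content, and I expect the author's proof to be the terse remark ``By duality'' preceding the statement, with no separate argument given.
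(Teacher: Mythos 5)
Your proposal is correct and matches the paper exactly: the authors give no separate argument for Theorem \ref{eur1}, introducing it only with the remark that it follows from Theorem \ref{eur} by duality (invoking the Duality Principle stated after Definition \ref{D3.2}), and your dictionary of dualizations is precisely the bookkeeping that remark leaves implicit. Your prediction of the paper's terse ``by duality'' treatment is accurate, and your explicit verification of the correspondence between $[a,\ll]$/$[\ll,b]$, $\overleftarrow{\ll}$/$\overrightarrow{\ll}$, and the two radical formulas is a sound, if unstated-in-the-paper, justification.
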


As the following examples show, the results of Theorems \ref{eur} and
\ref{eur1} can not be strengthened.

\begin{example}
\label{E2.1}\emph{(i) }The existence of a unique $\ll$-radical in each
$[a,b]\subseteq Q$ only guarantees that $\ll$ is up-expanded, but not that it
is up-contiguous. \emph{Indeed, let }$\ll$\emph{ be the reflexive relation in
}$Q=[0,1]\subseteq\mathbb{R}$\emph{ with only one non-trivial pair }$0\ll
1.$\emph{ Each }$[a,b]\subseteq Q$\emph{ has a unique }$\ll$\emph{-radical:
}$\mathfrak{r}_{[a,b]}=a$\emph{ if }$[a,b]\neq\lbrack0,1],$\emph{ and
}$\mathfrak{r}_{[0,1]}=1.$\emph{ The relation }$\ll$\emph{ is up-expanded, but
not up-contiguous (not a }$\mathbf{T}$\emph{-order): }$0\ll1$\emph{ but}
$Q=[0,1]\nsubseteq\lbrack\ll,1]=\{0,1\}.\medskip$

\emph{(ii) }The existence of $\ll$-radicals in each $[a,b]\subseteq Q$ does
not even guarantee that $\ll$ is up-expanded. \emph{Indeed, let }%
$Q=\{\mathbf{0},a,b,\mathbf{1}\},$ $\mathbf{0}<a<\mathbf{1}$ \emph{and}
$\mathbf{0}<b<\mathbf{1}.$ \emph{Let }$\ll$\emph{ be the reflexive relation in
}$Q$\emph{ such that }$\mathbf{0}\ll a$\emph{ and }$\mathbf{0}\ll b.$
\emph{Then }$[\mathbf{0},\mathbf{1}]$\emph{ has two }$\ll$\emph{-radicals:
}$\mathfrak{r}_{_{1}}=a$\emph{ and }$\mathfrak{r}_{_{2}}=b.$\emph{ The
relation }$\ll$\emph{ is not up-expanded, since }$[\mathbf{0},\ll
]=\{\mathbf{0},a,b\}$ \emph{and }$\vee\lbrack\mathbf{0},\ll]=a\vee
b=\mathbf{1\notin}[\mathbf{0},\ll].$\medskip

\emph{(iii) }If $\ll$ is up-expanded but not up-contiguous then all
$[a,b]\subseteq Q$ have radicals but not necessarily unique.\emph{ Indeed, let
}$\ll$\emph{ be the reflexive relation in }$Q=[0,1]\subseteq\mathbb{R}$\emph{
such that }$0\ll1$\emph{ and }$0\ll\frac{1}{2}.$\emph{ It is up-expanded, but
not up-contiguous (not a }$\mathbf{T}$\emph{-order): }$0\ll1$\emph{ but
}$Q=[0,1]\nsubseteq\lbrack\ll,1]=\{0,1\}.$\emph{ Each }$[a,b]\neq\lbrack
0,1]$\emph{ has a unique }$\ll$\emph{-radical, while }$[0,1]$\emph{ has two
}$\ll$\emph{-radicals: }$\frac{1}{2}$\emph{ and }$1.$ \ \ \ \ $\blacksquare$
\end{example}

For a dual $\mathbf{T}$-order $\ll$ in $Q,$ denote by $\mathfrak{p}(b)$ the
unique dual $\ll$-radical in\ $[\mathbf{0},b]\subseteq Q$:%
\begin{equation}
\mathfrak{p}(b)=\mathfrak{p}_{\ll}(b)=\wedge\lbrack\ll,b],\text{ }%
\mathbf{0}\text{ }\overrightarrow{\ll}\text{ }\mathfrak{p}(b)\text{ }\ll\text{
}b\text{ and }\left[  \mathbf{0},b\right]  \cap\left[  \mathfrak{p}%
,\ll\right]  =[\mathfrak{p},b]. \label{2.13}%
\end{equation}
For a $\mathbf{T}$-order $\ll$ in $Q,$ denote by $\mathfrak{r}(a)$ the unique
$\ll$-radical in\ $[a,\mathbf{1}]$:%
\begin{equation}
\mathfrak{r}(a)=\mathfrak{r}_{\ll}(a)=\vee\lbrack a,\ll],\text{ }a\text{ }%
\ll\text{ }\mathfrak{r}(a)\text{ }\overleftarrow{\ll}\text{ }\mathbf{1}\text{
and }[a,\mathbf{1}]\cap\lbrack\ll,\mathfrak{r}]=[a,\mathfrak{r}]. \label{2.14}%
\end{equation}

\begin{lemma}
\label{L2.1}\emph{(i) }Let $\ll$ be a dual $\mathbf{T}$-order in $Q.$ Then
\begin{align}
\mathfrak{p}(\mathfrak{p}(b))  &  =\mathfrak{p}(b)\text{ for all }b\in
Q;\text{ \ \ }a\ll b\text{ implies }\mathfrak{p}(b)=\mathfrak{p}(a)\ll
a;\label{2,2}\\
\mathfrak{p}(b)  &  \leq a\leq b\text{ implies }\mathfrak{p}(b)=\mathfrak{p}%
(a)\ll a. \label{2,5}%
\end{align}

If $\ll$ is also up-contiguous then $\mathfrak{p}(b)\leq a\leq b$ implies
$a\ll b.\smallskip$

\emph{(ii) }Let $\ll$ be a $\mathbf{T}$-order in $Q.$ Then
\begin{align}
\mathfrak{r}(\mathfrak{r}(a))  &  =\mathfrak{r}(a)\text{ for all }a\in
Q;\text{ \ \ }a\ll b\text{ implies }b\ll\mathfrak{r}(b)=\mathfrak{r}%
(a);\nonumber\\
a  &  \leq b\leq\mathfrak{r}(a)\text{ implies }b\ll\mathfrak{r}%
(b)=\mathfrak{r}(a). \label{2,3}%
\end{align}

If $\ll$ is also down-contiguous then $a\leq b\leq\mathfrak{r}(a)$ implies
$a\ll b.$
\end{lemma}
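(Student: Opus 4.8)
The plan is to prove Lemma~\ref{L2.1} by proving part~(i) directly and then obtaining part~(ii) by the Duality Principle cited after Definition~\ref{D3.2}; the down-conditions and up-conditions are dual, so every claim in~(ii) is the formal dual of a claim in~(i). Thus I will concentrate on~(i), where $\ll$ is a dual $\mathbf{T}$-order, so by Theorem~\ref{eur1}(ii) each $[\mathbf{0},b]$ has a unique dual $\ll$-radical $\mathfrak{p}(b)=\wedge[\ll,b]$ satisfying $\mathbf{0}\;\overrightarrow{\ll}\;\mathfrak{p}(b)\ll b$ and $[\mathbf{0},b]\cap[\mathfrak{p}(b),\ll]=[\mathfrak{p}(b),b]$, as recorded in~(\ref{2.13}).

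First I would establish the idempotency $\mathfrak{p}(\mathfrak{p}(b))=\mathfrak{p}(b)$. Write $p=\mathfrak{p}(b)$. Since $\mathbf{0}\;\overrightarrow{\ll}\;p$, the element $p$ is itself a dual $\ll$-radical in $[\mathbf{0},p]$: indeed $\mathbf{0}\;\overrightarrow{\ll}\;p\ll p$ by reflexivity, and uniqueness in $[\mathbf{0},p]$ forces $\mathfrak{p}(p)=p$. Next, for the implication $a\ll b\Rightarrow\mathfrak{p}(b)=\mathfrak{p}(a)\ll a$, I would argue that $\mathfrak{p}(b)\ll b$ together with $a\ll b$ should let me locate $\mathfrak{p}(b)$ inside $[\mathbf{0},a]$ and identify it with $\mathfrak{p}(a)$; the key computational tool is the characterization $\mathfrak{p}(b)=\wedge[\ll,b]$ from~(\ref{2.13}), so I expect to show $[\ll,a]$ and $[\ll,b]$ have the same meet once $a\ll b$. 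Concretely, down-contiguity of the dual $\mathbf{T}$-order gives $[\mathbf{0},a]\subseteq[\mathbf{0},\ll]$-type containments; I would use $a\ll b\Rightarrow[\mathfrak{p}(b),b]\ni a$ and the relation $[\mathbf{0},b]\cap[\mathfrak{p}(b),\ll]=[\mathfrak{p}(b),b]$ to deduce $\mathfrak{p}(b)\ll a$, then transport the radical property from $[\mathbf{0},b]$ down to $[\mathbf{0},a]$ to conclude $\mathfrak{p}(a)=\mathfrak{p}(b)$.

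For~(\ref{2,5}) I would assume only the weaker hypothesis $\mathfrak{p}(b)\le a\le b$ and reduce it to the previous case. The idea is that $\mathfrak{p}(b)\le a$ places $a$ in the interval $[\mathfrak{p}(b),b]$, which by~(\ref{2.13}) equals $[\mathbf{0},b]\cap[\mathfrak{p}(b),\ll]$, so $\mathfrak{p}(b)\ll a$; then the argument of~(\ref{2,2}) applies with $\mathfrak{p}(b)$ in place of $\mathbf{0}$, or more directly I would show $\mathfrak{p}(a)=\mathfrak{p}(b)$ by comparing meets. Finally, under the extra assumption that $\ll$ is also up-contiguous, I would show $\mathfrak{p}(b)\le a\le b$ implies $a\ll b$: from~(\ref{2,5}) we have $\mathfrak{p}(b)\ll a$, and up-contiguity applied to $\mathfrak{p}(b)\ll b$ gives $[\mathfrak{p}(b),b]\subseteq[\ll,b]$, whence $a\in[\mathfrak{p}(b),b]$ yields $a\ll b$.

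The main obstacle I anticipate is the middle step, the identity $\mathfrak{p}(a)=\mathfrak{p}(b)$ when $a\ll b$ (and its variant under $\mathfrak{p}(b)\le a\le b$): this is where the interplay between the three defining properties in~(\ref{2.13})—the explicit meet formula, the dual-radical relation $\mathbf{0}\;\overrightarrow{\ll}\;\mathfrak{p}$, and the interval identity $[\mathbf{0},b]\cap[\mathfrak{p},\ll]=[\mathfrak{p},b]$—must be combined carefully, and where down-contiguity and down-expandedness are genuinely used. The idempotency and the final up-contiguity consequence are comparatively routine once this identity is in hand. Everything in part~(ii) then follows by dualizing, replacing $\wedge$ with $\vee$, $[\mathbf{0},b]$ with $[a,\mathbf{1}]$, $\overrightarrow{\ll}$ with $\overleftarrow{\ll}$, and down-conditions with up-conditions, invoking Theorem~\ref{eur}(ii) in place of Theorem~\ref{eur1}(ii).
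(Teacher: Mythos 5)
Your proposal is correct and follows essentially the same route as the paper: everything is derived from the three ingredients of (\ref{2.13}) (the meet formula, $\mathbf{0}\;\overrightarrow{\ll}\;\mathfrak{p}(b)$, and the interval identity $[\mathbf{0},b]\cap[\mathfrak{p},\ll]=[\mathfrak{p},b]$), with $\mathfrak{p}(b)\ll a$ obtained via minimality plus the interval identity, (\ref{2,5}) reduced to (\ref{2,2}) and idempotency, the up-contiguous conclusion from $[\mathfrak{p}(b),b]\subseteq[\ll,b]$, and part (ii) by duality. The only (harmless) cosmetic difference is that at two points you invoke uniqueness of the dual radical from Theorem \ref{eur1}(ii) where the paper instead compares the two meets directly (e.g.\ showing $\mathfrak{p}(a)\ll\mathfrak{p}(b)\ll b$ and $\mathfrak{p}(b)\leq\mathfrak{p}(a)$ to force equality).
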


\begin{proof}
By (\ref{2.13}), $\mathfrak{p}(\mathfrak{p}(b))\ll\mathfrak{p}(b)\ll b.$ As
$\ll$ is transitive, $\mathfrak{p}(\mathfrak{p}(b))\ll b.$ By (\ref{2.13}),
$\mathfrak{p}(b)$ is the smallest element in $[\ll,b]$. Hence $\mathfrak{p}%
(b)\leq\mathfrak{p}(\mathfrak{p}(b))$. Thus $\mathfrak{p}(\mathfrak{p}%
(b))=\mathfrak{p}(b).$

Let $a\ll b.$ As $\mathfrak{p}(b)$ is the smallest element in $[\ll,b]$, we
have $\mathfrak{p}\left(  b\right)  \leq a\leq b$. So, by (\ref{2.13}),
$\mathfrak{p}\left(  b\right)  \ll a$. Thus $a\ll b$ implies $\mathfrak{p}%
\left(  b\right)  \ll a$. Hence $\mathfrak{p}\left(  b\right)  \ll a$ implies
$\mathfrak{p}(a)\ll\mathfrak{p}(b).$ So $\mathfrak{p}(a)\ll\mathfrak{p}(b)\ll
b.$ As $\ll$ is transitive, $\mathfrak{p}(a)\ll b$. By (\ref{2.13}),
$\mathfrak{p}\left(  b\right)  \leq\mathfrak{p}(a)$. Thus $\mathfrak{p}%
(b)=\mathfrak{p}(a)$. As $\mathfrak{p}(a)\ll a,$ (\ref{2,2}) is proved.

Let $\mathfrak{p}(b)\leq a\leq b$. By (\ref{2.13}), $\mathfrak{p}(b)\ll a$. By
(\ref{2,2}), $\mathfrak{p}(a)=\mathfrak{p}(\mathfrak{p}(b))=\mathfrak{p}(b).$

If, in addition, $\ll$ is up-contiguous then $\mathfrak{p}(b)\ll b$ and
$a\in\lbrack\mathfrak{p}(b),b]$ imply $a\ll b$.

Part (ii) can be proved similarly.\bigskip
\end{proof}

Let $g$: $Q\rightarrow Q$ be a map on $Q.$ We say that $g$ is
\begin{align}
\text{a \textit{pre-radical map }if }x  &  \leq g(x)=g(g(x))\text{ and
}x<y<g(x)\text{ implies }g(y)=g(x);\label{3.41}\\
\text{a \textit{dual pre-radical map }if\textit{ }}g(g(x))  &  =g(x)\leq
x\text{ and }g(x)<y<x\text{ implies }g(y)=g(x);\label{2.0}\\
\text{a \textit{radical map }if }x  &  \leq g\left(  x\right)  =g\left(
g\left(  x\right)  \right)  \text{ and }x\leq y\text{ implies }g\left(
x\right)  \leq g\left(  y\right)  ;\label{f3.1}\\
\text{a \textit{dual radical map }if }g\left(  g\left(  x\right)  \right)   &
=g\left(  x\right)  \leq x\text{ and }x\leq y\text{ implies }g\left(
x\right)  \leq g\left(  y\right)  . \label{t3.2}%
\end{align}
Radical maps were considered in \cite[Definition I.3.26]{G}, where they are
called closure operators.

\begin{remark}
\label{R3.1}Radical maps are pre-radical, while pre-radical maps are not
always radical. \emph{Indeed, }$x<y<f(x)$\emph{ implies }$f(x)\leq f(y)\leq
f(f(x))=f(x).$\emph{ So }$f(y)=f(x)$\emph{ and }$f$\emph{ is pre-radical.}

\emph{On the other hand, let }$Q=\{\mathbf{0},a,b,c,d,\mathbf{1}\},$\emph{
}$a<b,$\emph{ }$a<c<d.$\emph{ Set }$f(\mathbf{0})=\mathbf{0},$\emph{
}$f(a)=f(b)=b,$\emph{ }$f(c)=f(d)=d,$\emph{ }$f(\mathbf{1})=\mathbf{1}.$\emph{
Then }$f$\emph{ is a }pre-radical map\emph{, but }not a radical map\emph{, as
}$a<c$\emph{ does not imply} $f(a)=b\leq f(c)=d.$ \emph{Similar} \emph{results
hold for }dual radical and dual pre-radical maps\emph{. \ \ }$\blacksquare$
\end{remark}

For a map $g$: $Q\rightarrow Q,$ define the following relation $\ll^{g}$ in
$Q$:%
\begin{equation}
x\ll^{g}y\text{ if }x\leq y\text{ and }g(x)=g(y)\text{ for }x,y\in Q,
\label{2,1}%
\end{equation}

\begin{theorem}
\label{T3.6}\emph{(i) }For a dual $\mathbf{T}$-order $\ll,$ the map
\[
\mathfrak{p}_{_{\ll}}\emph{:\ }b\in Q\mapsto\mathfrak{p}_{_{\ll}}(b)\text{
}\emph{(}\text{see }\emph{(\ref{2.13}))\ }\text{is a dual pre-radical map}%
\]
and$\ \ll$ $\subseteq$ $\ll^{\mathfrak{p}_{_{\ll}}}$ \emph{(}see
\emph{(\ref{0})). }If $\ll$ is a contiguous dual $\mathbf{T}$-order then $\ll$
$=$ $\ll^{\mathfrak{p}_{_{\ll}}}.\smallskip$

\emph{(ii) }The map $g\mapsto$ $\ll^{g}$ is a bijection from the set of all
dual pre-radical maps onto the set of all contiguous dual $\mathbf{T}$-orders.
The map $\ll$ $\mapsto$ $\mathfrak{p}_{_{\ll}}$ is its inverse\emph{,}
i.e.\emph{, }$\mathfrak{p}_{_{\ll^{g}}}=g.$
\end{theorem}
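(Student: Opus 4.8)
The plan is to obtain part (i) almost directly from Lemma \ref{L2.1}(i), and to build part (ii) around a single ``sandwich'' observation about dual pre-radical maps.

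For part (i), write $g = \mathfrak{p}_{_{\ll}}$. The idempotency $\mathfrak{p}(\mathfrak{p}(b)) = \mathfrak{p}(b)$ together with $\mathfrak{p}(b) \ll b$ (hence $\mathfrak{p}(b) \le b$) gives the first clause $g(g(b)) = g(b) \le b$ of (\ref{2.0}), while the implication in (\ref{2,5}) that $\mathfrak{p}(b) \le a \le b$ forces $\mathfrak{p}(a) = \mathfrak{p}(b)$ gives the second clause; thus $\mathfrak{p}_{_{\ll}}$ is a dual pre-radical map. The inclusion $\ll \subseteq \ll^{\mathfrak{p}_{_{\ll}}}$ is just the assertion that $a \ll b$ forces $a \le b$ and $\mathfrak{p}(a) = \mathfrak{p}(b)$, which is (\ref{2,2}). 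Finally, if $\ll$ is contiguous (so also up-contiguous), then $a \ll^{\mathfrak{p}_{_{\ll}}} b$ means $a \le b$ and $\mathfrak{p}(b) = \mathfrak{p}(a) \le a$, i.e. $\mathfrak{p}(b) \le a \le b$; the last clause of Lemma \ref{L2.1}(i) then yields $a \ll b$, giving the reverse inclusion and hence $\ll = \ll^{\mathfrak{p}_{_{\ll}}}$.

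The engine of part (ii) is the following fact, which I would prove first: for a dual pre-radical map $g$ and any $b$, if $g(b) \le c \le b$ then $g(c) = g(b)$. The cases $c = g(b)$ and $c = b$ follow from idempotency, and the case $g(b) < c < b$ is exactly the defining implication of (\ref{2.0}). With this in hand I would check that $\ll^g$ is a contiguous dual $\mathbf{T}$-order. It is reflexive and stronger than $\le$, and transitive since $x \le y \le z$ with $g(x) = g(y) = g(z)$ gives $x \ll^g z$. For contiguity, note that $a \ll^g b$ forces $g(b) = g(a) \le a$, so any $c$ with $a \le c \le b$ satisfies $g(b) \le c \le b$, whence $g(c) = g(b) = g(a)$; this gives both $a \ll^g c$ (down-contiguity) and $c \ll^g b$ (up-contiguity). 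For down-expandedness, take a nonempty $N \subseteq [\ll^g, a]$; each $x \in N$ satisfies $g(a) = g(x) \le x$, so $g(a) \le \wedge N \le a$, and the sandwich fact gives $g(\wedge N) = g(a)$, i.e. $\wedge N \in [\ll^g, a]$. Hence $g \mapsto \ll^g$ lands among contiguous dual $\mathbf{T}$-orders, and in particular formula (\ref{2.13}) applies to $\ll^g$.

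To conclude, I would compute $\mathfrak{p}_{_{\ll^{g}}}(b) = \wedge[\ll^g, b]$: the element $g(b)$ lies in $[\ll^g, b]$ by idempotency and is a lower bound of it, since every $x \in [\ll^g, b]$ satisfies $g(b) = g(x) \le x$; therefore $\wedge[\ll^g, b] = g(b)$ and $\mathfrak{p}_{_{\ll^{g}}} = g$. Together with $\ll^{\mathfrak{p}_{_{\ll}}} = \ll$ from part (i) (for contiguous dual $\mathbf{T}$-orders) and the fact, also from part (i), that $\mathfrak{p}_{_{\ll}}$ is always a dual pre-radical map, this shows $g \mapsto \ll^g$ and $\ll \mapsto \mathfrak{p}_{_{\ll}}$ are mutually inverse, hence bijections. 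I expect the main obstacle to be the down-expandedness of $\ll^g$, and the subtle point of remembering to verify that $\ll^g$ is a genuine dual $\mathbf{T}$-order before invoking (\ref{2.13}); everything else collapses to the sandwich fact once it is isolated.
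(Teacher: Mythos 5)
Your proposal is correct and takes essentially the same route as the paper's proof: part (i) is deduced from (\ref{2,2}), (\ref{2,5}) and the up-contiguous clause of Lemma \ref{L2.1}(i), and part (ii) by checking that $\ll^{g}$ is an order, contiguous, and down-expanded, then computing $\mathfrak{p}_{_{\ll^{g}}}=g$. Your ``sandwich'' fact is precisely the interval identity $[\ll^{g},y]=[g(y),y]$ that the paper establishes inline, merely isolated as a preliminary observation.
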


\begin{proof}
(i) From (\ref{2,2}), (\ref{2,5}) and (\ref{2.0}) it follows that
$\mathfrak{p}$ is a dual pre-radical map.

Let $a\ll b.$ By (\ref{2,2}), $\mathfrak{p}(a)=\mathfrak{p}(b).$ Thus
$a\ll^{\mathfrak{p}}b$ by (\ref{2,1}). So $\ll$ $\subseteq$ $\ll
^{\mathfrak{p}}$.

Let $a\ll^{\mathfrak{p}}b.$ Then, by (\ref{2,1}), $\mathfrak{p}%
(b)=\mathfrak{p}(a)\leq a\leq b.$ If $\ll$ is also up-contiguous then, by
Lemma \ref{L2.1}(i), $a\ll b,$ so that $\ll^{\mathfrak{p}}$ is stronger than
$\ll.$ Thus $\ll$ $=$ $\ll^{\mathfrak{p}}.$

(ii) If $x\ll^{g}y\ll^{g}z$ then $g(x)=g(y)=g(z)$ by (\ref{2,1}). So $x\ll
^{g}z.$ Thus $\ll^{g}$ is an order.

Let $x\ll^{g}y$ and $z\in\left[  x,y\right]  .$ Then $g(y)=g(x)\leq x\leq
z\leq y$ by (\ref{2.0}) and $g(z)=g(y)=g(x).$ Thus $x\ll^{g}z\ll^{g}y.$ So
$\left[  x,y\right]  \subseteq\lbrack x,\ll^{g}]$ and $\left[  x,y\right]
\subseteq\lbrack\ll^{g},y],$ i.e., $\ll^{g}$ is contiguous.

Fix $y\in Q.$ By (\ref{2,1}) and (\ref{2.0}), $\left[  \ll^{g},y\right]
=\{x\in Q$: $g(y)=g(x)\leq x\leq y\}\subseteq\lbrack g(y),y].$ On the other
hand, for each $g(y)\leq x\leq y,$ it follows from (\ref{2.0}) that $x\ll
^{g}y.$ Thus $[g(y),y]\subseteq\left[  \ll^{g},y\right]  .$ So $\left[
\ll^{g},y\right]  =[g(y),y].$ Hence
\[
\mathfrak{p}(y)=\mathfrak{p}_{\ll^{g}}(y)=\wedge\lbrack\ll^{g},y]=g(y)\in
\left[  \ll^{g},y\right]  .
\]
Thus $\left[  \ll^{g},y\right]  $ is $\wedge$-complete. Hence $\ll^{g}$ is
down-expanded. So $\ll^{g}$ is a dual $\mathbf{T}$-order which is also
up-contiguous and $\mathfrak{p}_{_{\ll^{g}}}=g.$ Applying (i), we complete the
proof.$\bigskip$
\end{proof}

By duality we have the following result.

\begin{theorem}
\label{T3.6'}\emph{(i) }For a $\mathbf{T}$-order $\ll,$ the map $\mathfrak{r}%
\emph{:\ }a\mapsto\mathfrak{r}_{_{\ll}}(a)$ $\emph{(}$see $\emph{(\ref{2.14}%
))\ }$is a pre-radical map\emph{,} and$\ \ll$ $\subseteq$ $\ll^{\mathfrak{r}%
_{\ll}}$ \emph{(}see\emph{(\ref{0}))}$\emph{.}$ If $\ll$ is a contiguous
$\mathbf{T}$-order then $\ll$ $=$ $\ll^{\mathfrak{r}_{\ll}}.\smallskip$

\emph{(ii) }The map $g\mapsto$ $\ll^{g}$ is a bijection from the set of all
pre-radical maps onto the set of all contiguous $\mathbf{T}$-orders. The map
$\ll$ $\mapsto$ $\mathfrak{r}_{_{\ll}}$ is its inverse\emph{,} i.e.\emph{,
}$\mathfrak{r}_{_{\ll^{g}}}=g.$
\end{theorem}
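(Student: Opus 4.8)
The plan is to obtain Theorem \ref{T3.6'} entirely by the Duality Principle from Theorem \ref{T3.6}, since the excerpt explicitly states that results for $\mathbf{T}$-orders follow from the corresponding results for dual $\mathbf{T}$-orders and vice versa. The duality exchanges $\leq$ with $\geq$, hence $\vee$ with $\wedge$, up-conditions with down-conditions, $\mathbf{T}$-orders with dual $\mathbf{T}$-orders, and $\mathfrak{r}$ with $\mathfrak{p}$. Under this correspondence, statement (i) of Theorem \ref{T3.6}---that $\mathfrak{p}_{_{\ll}}$ is a dual pre-radical map with $\ll\subseteq\ll^{\mathfrak{p}_{_{\ll}}}$, and equality for contiguous dual $\mathbf{T}$-orders---is precisely the dual of statement (i) of Theorem \ref{T3.6'}, and similarly for (ii). So the first thing I would do is simply assert that the claim is the exact dual of Theorem \ref{T3.6} and invoke the Duality Principle.

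If, however, a direct proof is wanted for completeness, I would mirror the argument of Theorem \ref{T3.6} line by line. For part (i), the facts that $\mathfrak{r}$ is a pre-radical map follow from Lemma \ref{L2.1}(ii) together with the definition (\ref{3.41}): the relation $\mathfrak{r}(\mathfrak{r}(a))=\mathfrak{r}(a)$ gives idempotence, $a\leq\mathfrak{r}(a)$ holds by (\ref{2.14}), and the condition $a<b<\mathfrak{r}(a)\Rightarrow\mathfrak{r}(b)=\mathfrak{r}(a)$ is exactly the last line of (\ref{2,3}). To show $\ll\subseteq\ll^{\mathfrak{r}_{\ll}}$, I would take $a\ll b$; by (\ref{2,3}), $\mathfrak{r}(a)=\mathfrak{r}(b)$, and since $a\ll b$ forces $a\leq b$, the definition (\ref{2,1}) gives $a\ll^{\mathfrak{r}}b$. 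For the converse under contiguity, if $a\ll^{\mathfrak{r}}b$ then $a\leq b$ and $\mathfrak{r}(a)=\mathfrak{r}(b)$, so $a\leq b\leq\mathfrak{r}(b)=\mathfrak{r}(a)$, and Lemma \ref{L2.1}(ii) (its down-contiguous clause) yields $a\ll b$.

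For part (ii), the argument again transposes that of Theorem \ref{T3.6}. Given a pre-radical map $g$, I would verify that $\ll^{g}$ is a contiguous $\mathbf{T}$-order: transitivity comes from $g(x)=g(y)=g(z)$ in (\ref{2,1}); contiguity follows because $x\ll^{g}y$ and $z\in[x,y]$ force $x\leq z\leq y\leq g(y)=g(x)$, whence $g(z)=g(x)$ by the pre-radical condition (\ref{3.41}) applied on the interval $[x,g(x)]$. The key computation is to identify $[x,\ll^{g}]=[x,g(x)]$: the inclusion $[x,\ll^{g}]\subseteq[x,g(x)]$ uses $x\leq y\leq g(y)=g(x)$, and the reverse uses (\ref{3.41}) to show every $y\in[x,g(x)]$ satisfies $g(y)=g(x)$, hence $x\ll^{g}y$. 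This gives $\mathfrak{r}_{\ll^{g}}(x)=\vee[x,\ll^{g}]=g(x)\in[x,\ll^{g}]$, so $[x,\ll^{g}]$ is $\vee$-complete, $\ll^{g}$ is up-expanded, and $\mathfrak{r}_{\ll^{g}}=g$; combining with (i) shows the two maps are mutually inverse bijections.

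The only place demanding care is the contiguity and $\vee$-completeness step in part (ii), where one must apply the pre-radical property (\ref{3.41}) correctly on the right subinterval to conclude $g$ is constant on $[x,g(x)]$; in the duality-based approach this is subsumed, and the main ``obstacle'' is merely checking that every hypothesis and conclusion dualizes faithfully---in particular that the down-contiguous clause of Lemma \ref{L2.1}(ii) matches the up-contiguous clause used in Theorem \ref{T3.6}. Given the explicit Duality Principle already invoked in the excerpt, I expect the cleanest writeup to be the one-line appeal to duality.
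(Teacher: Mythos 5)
Your proposal coincides with the paper's own treatment: the paper states Theorem \ref{T3.6'} with exactly the one-line justification "by duality" from Theorem \ref{T3.6}, which is your primary plan. Your optional direct argument is moreover a faithful dualization of the paper's proof of Theorem \ref{T3.6} --- in particular your key identification $[x,\ll^{g}]=[x,g(x)]$ is precisely the dual of the paper's $[\ll^{g},y]=[g(y),y]$ --- so it is correct as written.
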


Each $\mathbf{TT}$-order $\ll$ defines the maps $\mathfrak{p},$ $\mathfrak{r}$
on $Q$ by (\ref{2.13}) and (\ref{2.14}). We will show that $Q$ decomposes into
a union of disjoint intervals and the restriction of $\ll$ to each of them
coincides with $\leq.$ Set%
\begin{equation}
Q_{\mathfrak{r}}=\{x\in Q:x=\mathfrak{r}(x)\}\text{ and }Q_{\mathfrak{p}%
}=\{y\in Q:y=\mathfrak{p}(y)\}. \label{2.21}%
\end{equation}

\begin{proposition}
\label{C2.13}Let $\ll$ be a $\mathbf{TT}$-order from \emph{Ref(}$Q).$
Then\smallskip

\emph{(i) \ \ }$\mathfrak{r}(a)=\mathfrak{r}(\mathfrak{p}(a))$ and
$\mathfrak{p}(a)=\mathfrak{p}(\mathfrak{r}(a))$ for all $a\in Q.\smallskip$

\emph{(ii) \ }The map $\mathfrak{p}$ maps $Q$ onto $Q_{\mathfrak{p}}$\emph{
}and it maps isomorphically $Q_{\mathfrak{r}}$ onto $Q_{\mathfrak{p}}.$

$\qquad$The map $\mathfrak{r}$ maps $Q$ onto $Q_{\mathfrak{r}}$ and it maps
isomorphically $Q_{\mathfrak{p}}$ on $Q_{\mathfrak{r}}$ and $\mathfrak{r}%
|_{Q_{\mathfrak{r}}}=(\mathfrak{p}|_{Q_{\mathfrak{p}}})^{-1}.\smallskip$

\emph{(iii) }Let $\mathfrak{p}(c)\leq a\leq b\leq\mathfrak{r}\left(  c\right)
$ for some $c\in Q.$ Then $a\ll b,$ $\mathfrak{p}(a)=\mathfrak{p}%
(b)=\mathfrak{p}(c)$

\qquad and $\mathfrak{r}\left(  a\right)  =\mathfrak{r}\left(  b\right)
=\mathfrak{r}\left(  c\right)  $. Thus the relations $\leq$ and $\ll$ coincide
on $\left[  \mathfrak{p}(c),\mathfrak{r}\left(  c\right)  \right]  .$
\end{proposition}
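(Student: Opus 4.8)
The plan is to exploit the two pre-radical maps $\mathfrak{p}$ and $\mathfrak{r}$ attached to the $\mathbf{TT}$-order $\ll$ via Lemma \ref{L2.1}, treating parts (i), (ii), (iii) in an order that lets each feed the next. Throughout I will use the defining identities (\ref{2.13}) and (\ref{2.14}), namely $\mathbf{0}\,\overrightarrow{\ll}\,\mathfrak{p}(a)\ll a$ and $a\ll\mathfrak{r}(a)\,\overleftarrow{\ll}\,\mathbf{1}$, together with the key consequences of Lemma \ref{L2.1}: for a dual $\mathbf{T}$-order, $\mathfrak{p}(c)\le a\le c$ forces $\mathfrak{p}(a)=\mathfrak{p}(c)$, and dually $a\le b\le\mathfrak{r}(a)$ forces $\mathfrak{r}(b)=\mathfrak{r}(a)$; since $\ll$ is a $\mathbf{TT}$-order both statements are available.

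\textbf{Part (i).} First I would show $\mathfrak{r}(\mathfrak{p}(a))=\mathfrak{r}(a)$. Since $\mathfrak{p}(a)\ll a$ and $\ll$ is a $\mathbf{T}$-order, Lemma \ref{L2.1}(ii) gives $\mathfrak{r}(\mathfrak{p}(a))=\mathfrak{r}(a)$ directly (the clause ``$a\ll b$ implies $\mathfrak{r}(b)=\mathfrak{r}(a)$'' applied with the pair $\mathfrak{p}(a)\ll a$). Dually, from $a\ll\mathfrak{r}(a)$ and Lemma \ref{L2.1}(i) we get $\mathfrak{p}(\mathfrak{r}(a))=\mathfrak{p}(a)$. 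This part is essentially a bookkeeping application of the lemma and should be short.

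\textbf{Part (iii), proved before (ii).} I would prove (iii) next, because the isomorphism statement in (ii) will rest on it. Given $\mathfrak{p}(c)\le a\le b\le\mathfrak{r}(c)$, the inclusion $a\le b$ together with $a\ge\mathfrak{p}(c)$ and the clause of Lemma \ref{L2.1}(i) (``if $\ll$ is also up-contiguous then $\mathfrak{p}(b)\le a\le b$ implies $a\ll b$'') should yield $a\ll b$, after first checking $\mathfrak{p}(b)=\mathfrak{p}(c)\le a$. To get $\mathfrak{p}(a)=\mathfrak{p}(b)=\mathfrak{p}(c)$ I apply (\ref{2,5}): from $\mathfrak{p}(c)\le a\le c$ (note $a\le b\le\mathfrak{r}(c)$ and I must confirm the relevant elements lie in $[\mathfrak{p}(c),c]$ or adapt the argument to $[\mathfrak{p}(c),\mathfrak{r}(c)]$) we obtain $\mathfrak{p}(a)=\mathfrak{p}(c)$, and similarly for $b$. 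Dually $\mathfrak{r}(a)=\mathfrak{r}(b)=\mathfrak{r}(c)$ follows from the $\mathbf{T}$-order half. The hard part will be verifying the membership hypotheses cleanly: the interval $[\mathfrak{p}(c),\mathfrak{r}(c)]$ is not literally of the form $[\mathfrak{p}(b),b]$, so I expect to need both Lemma \ref{L2.1}(i) and (ii) in tandem, first establishing $\mathfrak{p}(b)=\mathfrak{p}(c)$ and $\mathfrak{r}(a)=\mathfrak{r}(c)$ and only then invoking the contiguity clauses to deduce $a\ll b$. Once $a\ll b$ is known on all of $[\mathfrak{p}(c),\mathfrak{r}(c)]$, reflexivity of $\ll$ and $\ll\,\subseteq\,\le$ give coincidence of the two relations there.

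\textbf{Part (ii).} Finally I would assemble (ii). That $\mathfrak{p}$ maps $Q$ onto $Q_{\mathfrak{p}}$ is immediate: $\mathfrak{p}(\mathfrak{p}(b))=\mathfrak{p}(b)$ by Lemma \ref{L2.1}(i), so every value lies in $Q_{\mathfrak{p}}$ and every element of $Q_{\mathfrak{p}}$ is its own image. For the isomorphism $Q_{\mathfrak{r}}\to Q_{\mathfrak{p}}$, I use part (i): the maps $\mathfrak{p}|_{Q_{\mathfrak{r}}}$ and $\mathfrak{r}|_{Q_{\mathfrak{p}}}$ are mutually inverse, since for $x\in Q_{\mathfrak{r}}$ we have $\mathfrak{r}(\mathfrak{p}(x))=\mathfrak{r}(x)=x$ and for $y\in Q_{\mathfrak{p}}$ we have $\mathfrak{p}(\mathfrak{r}(y))=\mathfrak{p}(y)=y$ by (i). Monotonicity in both directions (making these order isomorphisms rather than mere bijections) I would read off from the pre-radical property, which gives monotonicity on the fixed-point sets; I expect this to be the only place where I must argue monotonicity rather than just inverse-hood, and it is the subtle point, since pre-radical maps need not be globally monotone (Remark \ref{R3.1}), so the monotonicity must be extracted specifically on $Q_{\mathfrak{r}}$ and $Q_{\mathfrak{p}}$. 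The identity $\mathfrak{r}|_{Q_{\mathfrak{r}}}=(\mathfrak{p}|_{Q_{\mathfrak{p}}})^{-1}$ is then a restatement of what (i) already yields.
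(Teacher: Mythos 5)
Your part (i) and the bijection half of (ii) coincide with the paper's own argument: (i) is exactly the application of (\ref{2,3}) to the pair $\mathfrak{p}(a)\ll a$ and of (\ref{2,2}) to $a\ll\mathfrak{r}(a)$, and the mutual-inverse property of $\mathfrak{p}|_{Q_{\mathfrak{r}}}$ and $\mathfrak{r}|_{Q_{\mathfrak{p}}}$ follows from (i) plus idempotence just as you assemble it. Your (iii) is a correct mild variant: the paper first gets $\mathfrak{p}(c)\ll\mathfrak{r}(c)$ by transitivity and then uses contiguity in both directions to produce the chain $\mathfrak{p}(c)\ll a\ll b\ll\mathfrak{r}(c)$, reading off the equalities from (\ref{2,2}) and (\ref{2,3}); you go equalities-first, and the membership worry you raise is resolved exactly by your own part (i): since $\mathfrak{p}(\mathfrak{r}(c))=\mathfrak{p}(c)\leq b\leq\mathfrak{r}(c)$, formula (\ref{2,5}) gives $\mathfrak{p}(b)=\mathfrak{p}(c)$, likewise $\mathfrak{p}(a)=\mathfrak{p}(c)$, and dually $\mathfrak{r}(a)=\mathfrak{r}(b)=\mathfrak{r}(c)$ from (\ref{2,3}) applied with $\mathfrak{p}(c)\leq a\leq\mathfrak{r}(\mathfrak{p}(c))$; after that the up-contiguity clause of Lemma \ref{L2.1}(i) yields $a\ll b$. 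Both orders of argument work.

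The genuine problem is the step you yourself flag as the subtle point of (ii): monotonicity of the restrictions. Pre-radicality does \emph{not} give monotonicity on the fixed-point sets, and the claim is in fact false for general $\mathbf{TT}$-orders. Take the paper's own Example \ref{E4.1}(i): $Q=\{\mathbf{0},a,b,\mathbf{1}\}$ with $a\wedge b=\mathbf{0}$, $a\vee b=\mathbf{1}$ and only $\mathbf{0}\ll a$ non-trivial. This is a $\mathbf{TT}$-order with $Q_{\mathfrak{p}}=\{\mathbf{0},b,\mathbf{1}\}$, $Q_{\mathfrak{r}}=\{a,b,\mathbf{1}\}$, and $\mathfrak{r}(\mathbf{0})=a$, $\mathfrak{r}(b)=b$; so $\mathbf{0}\leq b$ in $Q_{\mathfrak{p}}$ while the images $a$ and $b$ are incomparable, i.e. $\mathfrak{r}|_{Q_{\mathfrak{p}}}$ is not order-preserving (and a slightly larger example makes $\mathfrak{p}|_{Q_{\mathfrak{r}}}$ fail as well). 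This is consistent with Corollary \ref{C2.14}: monotonicity of $\mathfrak{p}$ (resp. $\mathfrak{r}$) is precisely the extra hypothesis that upgrades a $\mathbf{TT}$-order to a dual $\mathbf{R}$-order (resp. an $\mathbf{R}$-order), so it cannot come for free. Accordingly, the paper's proof of (ii) establishes only that $\mathfrak{p}|_{Q_{\mathfrak{r}}}$ and $\mathfrak{r}|_{Q_{\mathfrak{p}}}$ are mutually inverse bijections and uses ``isomorphically'' in that weak sense; its only later invocation, in the proof of Theorem \ref{T2.12}, needs injectivity alone. So delete the monotonicity step: your bijection argument already contains everything the proposition asserts as it is proved and used in the paper, whereas the stronger order-isomorphism reading is unprovable because it is false.
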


\begin{proof}
(i) By (\ref{2.13}) and (\ref{2.14}), $\mathfrak{p}(a)$ $\ll$ $a$ $\ll$
$\mathfrak{r}(a)$ for $a\in Q.$ So (i) follows from (\ref{2,2}) and (\ref{2,3}).

(ii) By (\ref{3.41}) and (\ref{2.0}), $\mathfrak{p}^{2}=\mathfrak{p}$ and
$\mathfrak{r}^{2}=\mathfrak{r}.$ Hence $y=\mathfrak{p}(x)\in Q_{\mathfrak{p}}$
for $x\in Q,$ as $\mathfrak{p}(y)=\mathfrak{p}^{2}(x)=\mathfrak{p}(x)=y.$
Similarly, $\mathfrak{r}(x)\in Q_{\mathfrak{r}}$ for $x\in Q.$ For $y\in
Q_{\mathfrak{p}},$ we have $\mathfrak{r}(y)\in Q_{\mathfrak{r}}$ and, by (i),
$\mathfrak{p}(\mathfrak{r}(y))=\mathfrak{p}(y)=y.$ Similarly, $\mathfrak{p}%
(x)\in Q_{\mathfrak{p}}$ and $\mathfrak{r}(\mathfrak{p}(x))=x$ for $x\in
Q_{\mathfrak{r}}.$ From this (ii) follows immediately.

(iii) By (\ref{2.13}) and (\ref{2.14}), $\mathfrak{p}(c)\ll\mathfrak{r}\left(
c\right)  .$ As $\ll$ is contiguous, $\mathfrak{p}(c)\ll a\ll b\ll
\mathfrak{r}\left(  c\right)  $. So, by (\ref{2,2}) and (\ref{2,3}),
$\mathfrak{p}(a)=\mathfrak{p}(b)=\mathfrak{p}(c)$ and $\mathfrak{r}\left(
a\right)  =\mathfrak{r}\left(  b\right)  =\mathfrak{r}\left(  c\right)  $.
Thus $\leq|_{\left[  \mathfrak{p}(c),\mathfrak{r}\left(  c\right)  \right]  }$
is stronger than $\ll|_{\left[  \mathfrak{p}(c),\mathfrak{r}\left(  c\right)
\right]  }.$ As $\ll$ $\subseteq$ $\leq$ in $Q,$ we have $\leq|_{\left[
\mathfrak{p}(c),\mathfrak{r}\left(  c\right)  \right]  }=$ $\ll|_{\left[
\mathfrak{p}(c),\mathfrak{r}\left(  c\right)  \right]  }.\bigskip$
\end{proof}

Recall that subsets $G_{1}$ and $G_{2}$ of $Q$ are disjoint if $G_{1}\cap
G_{2}=\varnothing.$

\begin{theorem}
\label{T2.12}Let $\ll$ belong to \emph{Ref(}$Q).$ The following conditions are
equivalent.\smallskip

\emph{(i) \ \ }$\ll$ is a $\mathbf{TT}$-order$.\smallskip$

\emph{(ii) \ }$Q=\cup_{\lambda\in Q_{\mathfrak{r}}}[\mathfrak{p}%
(\lambda),\lambda]$ and all intervals $[\mathfrak{p}(\lambda),\lambda],$
$\lambda\in Q_{\mathfrak{r}},$ are mutually disjoint\emph{.}

\qquad Moreover\emph{, }$x\ll y$ if and only if $x,y\in\lbrack\mathfrak{p}%
(\lambda),\lambda]$ and $x\leq y$ for some $\lambda\in Q_{\mathfrak{r}}%
.$\smallskip

\emph{(iii) }$Q=\cup_{\lambda\in Q_{\mathfrak{p}}}[\lambda,\mathfrak{r}%
(\lambda)]$ and all intervals $[\lambda,\mathfrak{r}(\lambda)],$ $\lambda\in
Q_{\mathfrak{p}},$ are mutually disjoint\emph{.}

\qquad Moreover\emph{, }$x\ll y$ if and only if $x,y\in\lbrack\lambda
,\mathfrak{r}(\lambda)]$ and $x\leq y$ for some $\lambda\in Q_{\mathfrak{p}}%
$\emph{.}
\end{theorem}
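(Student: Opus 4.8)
The plan is to prove the equivalence $\textrm{(i)}\Leftrightarrow\textrm{(ii)}$ in detail and obtain $\textrm{(i)}\Leftrightarrow\textrm{(iii)}$ by the Duality Principle already invoked in the paper. The forward direction $\textrm{(i)}\Rightarrow\textrm{(ii)}$ is where most of the work lies, and I expect it to follow almost entirely from Proposition \ref{C2.13}, which has already done the structural heavy lifting. First I would note that for each $\lambda\in Q_{\mathfrak{r}}$ the interval $[\mathfrak{p}(\lambda),\lambda]$ makes sense because $\mathfrak{p}(\lambda)\ll\lambda$ gives $\mathfrak{p}(\lambda)\le\lambda$. To establish the covering $Q=\cup_{\lambda\in Q_{\mathfrak{r}}}[\mathfrak{p}(\lambda),\lambda]$, I would take an arbitrary $x\in Q$, set $\lambda=\mathfrak{r}(x)\in Q_{\mathfrak{r}}$ (it lies in $Q_{\mathfrak{r}}$ by Proposition \ref{C2.13}(ii)), and show $x\in[\mathfrak{p}(\lambda),\lambda]$. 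Indeed $x\le\mathfrak{r}(x)=\lambda$ is immediate from (\ref{2.14}), and by Proposition \ref{C2.13}(i) we have $\mathfrak{p}(\lambda)=\mathfrak{p}(\mathfrak{r}(x))=\mathfrak{p}(x)\le x$, so $x$ lies in the interval.

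For the disjointness and the ``moreover'' clause I would argue together. Suppose $x$ belongs to $[\mathfrak{p}(\lambda),\lambda]$ for $\lambda\in Q_{\mathfrak{r}}$. Then $\mathfrak{p}(\lambda)\le x\le\lambda=\mathfrak{r}(\lambda)$, so applying Proposition \ref{C2.13}(iii) with $c=\lambda$ gives $\mathfrak{r}(x)=\mathfrak{r}(\lambda)=\lambda$ (using $\mathfrak{r}(\lambda)=\lambda$ since $\lambda\in Q_{\mathfrak{r}}$) and $\mathfrak{p}(x)=\mathfrak{p}(\lambda)$. This shows that the only $\lambda\in Q_{\mathfrak{r}}$ whose interval can contain $x$ is $\lambda=\mathfrak{r}(x)$, which forces the intervals to be mutually disjoint: if $x\in[\mathfrak{p}(\lambda),\lambda]\cap[\mathfrak{p}(\mu),\mu]$ then $\lambda=\mathfrak{r}(x)=\mu$. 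For the characterization of $\ll$, if $x,y\in[\mathfrak{p}(\lambda),\lambda]$ with $x\le y$ then $x\ll y$ is exactly the content of Proposition \ref{C2.13}(iii) (the relations coincide on the interval). Conversely, if $x\ll y$, then $x\le y$ since $\ll$ is stronger than $\le$, and by (\ref{2,3}) we get $\mathfrak{r}(x)=\mathfrak{r}(y)=:\lambda\in Q_{\mathfrak{r}}$; then both $x$ and $y$ lie in $[\mathfrak{p}(\lambda),\lambda]$ by the covering argument above, completing this direction.

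For the converse $\textrm{(ii)}\Rightarrow\textrm{(i)}$ I would verify directly that the relation described is a $\mathbf{TT}$-order, i.e., contiguous, expanded, and transitive (Definition \ref{D3.2}(iii)). Transitivity is clear: if $x\ll y\ll z$ then $x,y$ lie in one interval and $y,z$ in another, but the intervals are disjoint unless equal, so all three share the same interval and $x\le z$ gives $x\ll z$. Contiguity follows because if $x\ll y$ with both in $[\mathfrak{p}(\lambda),\lambda]$, then any $w\in[x,y]$ also lies in that interval, whence $x\ll w$ and $w\ll y$; this yields $[x,y]\subseteq[x,\ll]\cap[\ll,y]$. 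For expandedness I would check that $[a,\ll]$ is $\vee$-complete and $[\ll,a]$ is $\wedge$-complete: if $a$ lies in interval $[\mathfrak{p}(\lambda),\lambda]$, then $[a,\ll]=[a,\lambda]$ and $[\ll,a]=[\mathfrak{p}(\lambda),a]$, and both of these are closed under arbitrary joins and meets respectively within the complete lattice, so they are complete.

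The main obstacle, such as it is, will be bookkeeping rather than genuine difficulty: I must be careful that the indexing set $Q_{\mathfrak{r}}$ in (ii) and $Q_{\mathfrak{p}}$ in (iii) are used consistently, and that the two ``moreover'' characterizations of $\ll$ match across the duality. For $\textrm{(i)}\Leftrightarrow\textrm{(iii)}$ I would simply apply the Duality Principle: part (iii) is the order-dual of part (ii) under interchanging $\mathfrak{p}\leftrightarrow\mathfrak{r}$, $Q_{\mathfrak{p}}\leftrightarrow Q_{\mathfrak{r}}$, and $\le\leftrightarrow\ge$, and since being a $\mathbf{TT}$-order is self-dual (Definition \ref{D3.2}(iii)), the equivalence $\textrm{(i)}\Leftrightarrow\textrm{(iii)}$ follows from $\textrm{(i)}\Leftrightarrow\textrm{(ii)}$ without separate argument. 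I would close by noting that (ii) and (iii) describe the same decomposition of $Q$ viewed from opposite endpoints, since $\mathfrak{r}$ and $\mathfrak{p}$ restrict to mutually inverse isomorphisms between $Q_{\mathfrak{p}}$ and $Q_{\mathfrak{r}}$ by Proposition \ref{C2.13}(ii).
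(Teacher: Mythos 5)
Your proof is correct and follows essentially the same route as the paper's: both directions rest on Proposition \ref{C2.13} together with (\ref{2,3}) and (\ref{2,5}), with the same covering argument via $\lambda=\mathfrak{r}(x)$, the same verification of transitivity, contiguity and expandedness for (ii) $\Rightarrow$ (i), and duality for (i) $\Leftrightarrow$ (iii) where the paper says ``similarly.'' The only cosmetic difference is that you obtain disjointness by computing $\mathfrak{r}(x)=\lambda$ directly from Proposition \ref{C2.13}(iii), whereas the paper deduces it from the injectivity of $\mathfrak{p}$ on $Q_{\mathfrak{r}}$ in Proposition \ref{C2.13}(ii); both are sound.
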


\begin{proof}
(ii) $\Rightarrow$ (i). Let $Q$ satisfy (ii). If $x\ll y$ then $\left[
x,y\right]  \subseteq\lbrack\mathfrak{p}(\lambda),\lambda]$ for some
$\lambda\in Q_{\mathfrak{r}}.$ As $\ll$ on $[\mathfrak{p}(\lambda),\lambda]$
coincides with $\leq,$ we have $\left[  x,y\right]  \subseteq\lbrack\ll,y]$
and $\left[  x,y\right]  \subseteq\lbrack x,\ll].$ Thus $\ll$ is contiguous.

We also have $\left[  x,\ll\right]  =\left[  x,\lambda\right]  $ and $\left[
\ll,y\right]  =\left[  \mathfrak{p}(\lambda),y\right]  .$ Hence $\left[
x,\ll\right]  $ is $\vee$-complete and $\left[  \ll,y\right]  $ is $\wedge
$-complete. Thus $\ll$ is expanded. Finally, if $x\ll y\ll z$ then
$x,y,z\in\left[  \mathfrak{p}(\lambda),\lambda\right]  $ for some $\lambda$
and $x\leq y\leq z.$ Hence $x\leq z,$ so that $x\ll z.$ Thus $\ll$ is
transitive, so that $\ll$ is a $\mathbf{TT}$-order.

(i) $\Rightarrow$ (ii). For a $\mathbf{TT}$ -order $\ll,$ let $z\in
\lbrack\mathfrak{p}(\lambda),\lambda]\cap\lbrack\mathfrak{p}(\nu),\nu],$ where
$\lambda\neq\nu$ in $Q_{\mathfrak{r}}.$ As $\mathfrak{p}(\lambda)\leq
z\leq\lambda,$ we have $\mathfrak{p}(z)=\mathfrak{p}(\lambda)$ by (\ref{2.0}).
Similarly, $\mathfrak{p}(z)=\mathfrak{p}(\nu),$ so $\mathfrak{p}%
(z)=\mathfrak{p}(\lambda)=\mathfrak{p}(\nu).$ By Proposition \ref{C2.13},
$\mathfrak{p}$ maps isomorphically $Q_{\mathfrak{r}}$ on $Q_{\mathfrak{p}}.$
Thus $\lambda=\nu,$ a contradiction. So all $[\mathfrak{p}(\lambda),\lambda]$
are disjoint$.$

Let $x\ll y.$ Set $\lambda=\mathfrak{r}(y).$ By (\ref{2,3}), $y\ll
\mathfrak{r}(x)=\mathfrak{r}(y)=\lambda$. By (\ref{3.41}) and (\ref{2.21}),
$\lambda\in Q_{\mathfrak{r}}$. So $\mathfrak{p}(\lambda)=\mathfrak{p}%
(\mathfrak{r}(x))=\mathfrak{p}(x)\ll x\ll y\ll\lambda$ by Proposition
\ref{C2.13} and (\ref{2,5}). Thus $x,y\in\lbrack\mathfrak{p}(\lambda
),\lambda].$

Conversely, let $x\leq y$ in $[\mathfrak{p}(\lambda),\lambda].$ By Proposition
\ref{C2.13}, $\ll$ and $\leq$ coincide in $[\mathfrak{p}(\lambda),\lambda].$
So $x\ll y.$

Finally, let $x\in Q.$ Set $\lambda=\mathfrak{r}(x).$ By (\ref{2,3}) and
Proposition \ref{C2.13}, $x\ll\lambda\in Q_{\mathfrak{r}}.$ Hence, by
Proposition \ref{C2.13} and (\ref{2,5}), $\mathfrak{p}(\lambda)=\mathfrak{p}%
(x)\ll x\ll\lambda.$ So $x\in\lbrack\mathfrak{p}(\lambda),\lambda].$ Thus
$Q=\cup_{\lambda\in Q_{\mathfrak{r}}}[\mathfrak{p}(\lambda),\lambda].$

Similarly, one can prove (i) $\Leftrightarrow$ (iii).\bigskip
\end{proof}

It should be noted that if $Q=\cup_{\lambda\in\Lambda}[a_{\lambda},b_{\lambda
}]$ and all intervals $[a_{\lambda},b_{\lambda}],$ $\lambda\in\Lambda,$ are
mutually disjoint\emph{,} then the relation in $Q$ defined by $x\ll y$ if and
only if $x,y\in\lbrack a_{\lambda},b_{\lambda}]$ and $x\leq y$ for some
$\lambda\in\Lambda,$ is a $\mathbf{TT}$-order.

\section{Radicals, $\mathbf{H}$-relations and $\mathbf{R}$-orders in complete
lattices}

In this section we consider $\mathbf{H}$-relations, and $\mathbf{R}$-orders.
We show that many results obtained for $\mathbf{T}$-orders in the previous
section can be strengthened for $\mathbf{R}$-orders.

\begin{lemma}
\label{le1}\emph{(i) }For a relation $\ll$ in $Q,$ the following conditions
are equivalent$:\smallskip$

$\qquad1)$\emph{\ }$\ll$ is up-contiguous and $a\wedge b\ll b$ implies $a\ll
a\vee b;\smallskip$

$\qquad2)$\emph{\ }$a\ll b$ and $a\leq c$ imply $c\ll b\vee c$ for $a,b,c\in
Q;\smallskip$

$\qquad3)$\emph{ }$a\ll b$ implies $a\vee x\ll b\vee x$ for every $x\in Q$.

\emph{(ii) }For a relation $\ll$ in $Q,$ the following conditions are
equivalent$:\smallskip$

$\qquad1)$ $\ll$ is down-contiguous and $a\ll a\vee b$ implies $a\wedge b\ll
b;\smallskip$

\qquad$2)$ $a\ll b$ and $c\leq b$ in $Q$ imply $a\wedge c\ll c;\smallskip$

\qquad$3)$ $a\ll b$ implies $a\wedge x\ll b\wedge x$ for every $x\in Q.$
\end{lemma}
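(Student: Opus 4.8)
The plan is to prove (i) directly and then obtain (ii) at once by the Duality Principle recorded after Definition \ref{D3.2}, interchanging $\wedge$ with $\vee$, $\leq$ with $\geq$, and up-conditions with down-conditions throughout. For (i) I would establish the equivalences by first handling the easy pair $2)\Leftrightarrow 3)$ and then the pair $1)\Leftrightarrow 2)$.

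For $2)\Rightarrow 3)$, given $a\ll b$ and any $x\in Q$, set $c=a\vee x$; then $a\leq c$, so $2)$ gives $c\ll b\vee c$. Since $\ll$ $\subseteq$ $\leq$ we have $a\leq b$, whence $b\vee c=a\vee b\vee x=b\vee x$, and therefore $a\vee x=c\ll b\vee x$. Conversely, for $3)\Rightarrow 2)$, given $a\ll b$ and $a\leq c$, apply $3)$ with this $x=c$ to get $a\vee c\ll b\vee c$; since $a\leq c$ forces $a\vee c=c$, this is exactly $c\ll b\vee c$.

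For $2)\Rightarrow 1)$ I would read off both conjuncts of $1)$ separately. Up-contiguity: if $a\ll b$ and $z\in[a,b]$, then $a\leq z$, so $2)$ gives $z\ll b\vee z=b$ (using $z\leq b$), i.e. $[a,b]\subseteq[\ll,b]$. The second conjunct: if $a\wedge b\ll b$, apply $2)$ to the pair $(a\wedge b,b)$ with $c=a\geq a\wedge b$ to obtain $a\ll b\vee a=a\vee b$.

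The main obstacle is $1)\Rightarrow 2)$, where the point is to feed the two hypotheses of $1)$ into one another through a single intermediate meet. Given $a\ll b$ and $a\leq c$, note first that $a\leq b$ (again as $\ll$ $\subseteq$ $\leq$), so $a\leq c\wedge b\leq b$ and hence $c\wedge b\in[a,b]$. Up-contiguity applied to $a\ll b$ then yields $c\wedge b\ll b$. Now the implication in $1)$, applied with $c$ in the role of $a$ and $b$ unchanged, converts $c\wedge b\ll b$ into $c\ll c\vee b=b\vee c$, which is precisely $2)$. The subtlety to verify carefully is that $c\wedge b$ genuinely lies in $[a,b]$, which is exactly the place where the standing inclusion $\ll$ $\subseteq$ $\leq$ (guaranteeing $a\leq b$) must be used.
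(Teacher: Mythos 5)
Your proposal is correct, and its crucial step $1)\Rightarrow 2)$ --- passing through $c\wedge b\in[a,b]$, where the standing convention $\ll\,\subseteq\,\leq$ is needed to get $a\leq b$ --- is exactly the paper's argument. The remaining differences are cosmetic: where the paper cites Amitsur for $2)\Leftrightarrow 3)$ and closes the cycle with $3)\Rightarrow 1)$, you prove $2)\Leftrightarrow 3)$ directly (a worthwhile filling-in of the cited step) and return via $2)\Rightarrow 1)$ instead, both routes being equally elementary, and your appeal to the Duality Principle for (ii) matches the paper's ``proved similarly.''
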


\begin{proof}
(i) $1)$ $\Rightarrow$ $2).$ Let $a\ll b$ and $a\leq c$. Then $b\wedge
c\in\left[  a,b\right]  $. As $\ll$ is up-contiguous, $b\wedge c\ll b$, so
that $c\ll b\vee c$. \ $2)$ $\Longleftrightarrow$ $3)$ is proved in \cite[Page
776]{Am}.

$3)$ $\Rightarrow$ $1).$ For $c\in\left[  a,b\right]  $ and $a\ll b,$ one has
$c=a\vee c\ll b\vee c=b$. Thus $\ll$ is up-contiguous. If $a\wedge b\ll b$
then $a=a\vee\left(  a\wedge b\right)  \ll a\vee b$. Part (ii) is proved similarly.
\end{proof}

\begin{definition}
\label{D2.2}\emph{(\cite{Am}) (i) }$\ll$ is an $\mathbf{H}$-\textbf{relation}
if it satisfies conditions of Lemma $\ref{le1}(i).\smallskip$

\emph{(ii)} \ $\ll$ is a \textbf{dual }$\mathbf{H}$\textbf{-relation} if it
satisfies equivalent conditions of Lemma $\ref{le1}(ii).\smallskip$

\emph{(iii) }$\ll$ is an $\mathbf{HH}$-relation if it is an $\mathbf{H}$- and
a dual $\mathbf{H}$-relation.

\emph{(iv)} $\ll$ is an $\mathbf{R}$-\textbf{order} if it is an up-expanded
$\mathbf{H}$-order$.$\smallskip

\emph{(v) } $\ll$ is a dual $\mathbf{R}$-\textbf{order} if it is a
down-expanded dual $\mathbf{H}$-order.\smallskip

\emph{(vi)\ }$\ll$ is an$\emph{\ }\mathbf{RR}$-\textbf{order} if it is an
$\mathbf{R}$-order and a dual $\mathbf{R}$-order$\emph{.}$
\end{definition}

By the Duality Principle \cite[Theorem $1.3^{\prime}$]{Sk}, the results for
$\mathbf{H}$-relations and $\mathbf{R}$-orders follow from the corresponding
results for dual $\mathbf{H}$-relations and dual $\mathbf{R}$-orders and vice versa.

Each $\mathbf{H}$-order $\ll$ is \textit{finitely up-expanded}: $b,c\in\lbrack
a,\ll]$\emph{ }implies $b\vee c\in\lbrack a,\ll]$ for all $a\in Q.$ Indeed, by
Lemma \ref{le1}, $c=a\vee c\ll b\vee c$. By transitivity, $a\ll b\vee c$, as
$a\ll c.$ Similarly, each dual $\mathbf{H}$-order $\ll$ is \textit{finitely
down-expanded}, i.e., $a,c\in\lbrack\ll,b]$\emph{ }implies $a\wedge
c\in\lbrack\ll,b]$ for each $a\in Q.$

The notions of (dual) $\mathbf{R}$-orders are stronger than the notions of
(dual) $\mathbf{H}$-orders, respectively, as they require up- and
down-expandedness and not only its "finite" version. Amitsur \cite{Am} defined
them in a different, but equivalent way (he called them $\mathbf{R}$- and dual
$\mathbf{R}$-relations).

Comparing Definitions \ref{D3.2} and \ref{D2.2}, we see that (dual)
$\mathbf{R}$-orders are (dual) $\mathbf{T}$-orders$.$ Thus the results for
(dual) $\mathbf{T}$-orders hold also for (dual) $\mathbf{R}$-orders. In particular,

1) for a dual $\mathbf{R}$-order $\ll,$ $\mathfrak{p}(b)=\wedge\lbrack\ll,b]$
is the unique dual $\ll$-radical in$\ [\mathbf{0},b]$ and (\ref{2.13}) holds;

2) for an $\mathbf{R}$-order $\ll,$ $\mathfrak{r}(a)=\vee\lbrack a,\ll]$ is
the unique $\ll$-radical in$\ [a,\mathbf{1}]$ and (\ref{2.14}) holds.

However, as the following example shows, (dual) $\mathbf{T}$-orders are not
necessarily (dual) $\mathbf{R}$-orders.

\begin{example}
\label{E4.1}\emph{(i) Let} $Q=\{\mathbf{0},a,b,\mathbf{1}\},$ $a\wedge
b=\mathbf{0}$ \emph{and} $a\vee b=\mathbf{1}.$ \emph{Let }$\ll$\emph{ be a
reflexive relation in }$Q.\smallskip$

\emph{1) If only} $\mathbf{0}\ll a$ \emph{then }$\ll$\emph{ is a}
$\mathbf{TT}$-\emph{order but not an} $\mathbf{R}$-\emph{order as}
$b=\mathbf{0}\vee b\not \ll a\vee b=\mathbf{1}.\smallskip$

\emph{2) If} \emph{only }$a\ll\mathbf{1}$ \emph{then }$\ll$\emph{ is a}
$\mathbf{TT}$-\emph{order but not a dual} $\mathbf{R}$-\emph{order}.\smallskip

\emph{(ii) Let }$Q=[0,1]\subset\mathbb{R}$\emph{,} $\ll|_{[0,1)}=$
$\leq|_{[0,1)}$ \emph{and }$1\ll1.$\emph{ Then }$\ll$\emph{ is an }%
$\mathbf{H}$\emph{-order but not an }$\mathbf{R}$\emph{-order}.
\end{example}

We consider now some additional properties of the maps $a\mapsto
\mathfrak{p}(a),$ $a\mapsto\mathfrak{r}(a)$ (see Lemma \ref{L2.1}).

\begin{lemma}
\label{L2.2}\emph{(i) }Let $\ll$ be a dual $\mathbf{R}$-order$.$ Then
\emph{(\ref{2,2}) }and \emph{(\ref{2,5})} hold and\emph{, }for $a,b\in Q,$%
\[
\mathfrak{p}(a\wedge b)\ll\mathfrak{p}(a)\wedge\mathfrak{p}(b)\ll a\wedge
b\text{ and \ }\mathfrak{p}(a)\leq\mathfrak{p}(b)\text{ if }a\leq b.
\]

\emph{(ii) }Let $\ll$ be an $\mathbf{R}$-order$.$ Then \emph{(\ref{2,3})}
holds and\emph{, }for $a,b\in Q,$%
\[
a\vee b\ll\mathfrak{r}(a)\vee\mathfrak{r}(b)\ll\mathfrak{r}(a\vee b)\text{ and
}\mathfrak{r}(a)\leq\mathfrak{r}(b)\mathit{\ }\text{if }a\leq b.
\]

\end{lemma}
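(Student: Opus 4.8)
The plan is to prove part (i) directly and then obtain part (ii) by the Duality Principle. The first observation is cheap: since a dual $\mathbf{R}$-order is a down-expanded dual $\mathbf{H}$-order, it is in particular a dual $\mathbf{T}$-order (as noted right after Definition \ref{D2.2}), so Lemma \ref{L2.1}(i) applies verbatim and yields (\ref{2,2}) and (\ref{2,5}) with no extra work. The genuine content is therefore the two-sided estimate for $\mathfrak{p}(a\wedge b)$ together with the monotonicity of $\mathfrak{p}$, and this is where the full dual $\mathbf{R}$-order hypothesis (beyond the dual $\mathbf{T}$-order structure) must be used.

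For the middle chain I would first establish the right-hand relation $\mathfrak{p}(a)\wedge\mathfrak{p}(b)\ll a\wedge b$. Starting from $\mathfrak{p}(a)\ll a$ and applying condition $3)$ of Lemma \ref{le1}(ii) with $x=\mathfrak{p}(b)$ gives $\mathfrak{p}(a)\wedge\mathfrak{p}(b)\ll a\wedge\mathfrak{p}(b)$; starting from $\mathfrak{p}(b)\ll b$ and applying the same condition with $x=a$ gives $a\wedge\mathfrak{p}(b)\ll a\wedge b$. Transitivity of $\ll$ then produces $\mathfrak{p}(a)\wedge\mathfrak{p}(b)\ll a\wedge b$, as wanted.

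For the left-hand relation, recall from (\ref{2.13}) that $\mathfrak{p}(a\wedge b)\ll a\wedge b$ and that $\mathfrak{p}(a\wedge b)=\wedge[\ll,a\wedge b]$ is the smallest element of $[\ll,a\wedge b]$. Since the previous step shows $\mathfrak{p}(a)\wedge\mathfrak{p}(b)\in[\ll,a\wedge b]$, we at once get $\mathfrak{p}(a\wedge b)\leq\mathfrak{p}(a)\wedge\mathfrak{p}(b)\leq a\wedge b$. To upgrade this $\leq$ to the required $\ll$ I would invoke down-contiguity: from $\mathfrak{p}(a\wedge b)\ll a\wedge b$ one has $[\mathfrak{p}(a\wedge b),a\wedge b]\subseteq[\mathfrak{p}(a\wedge b),\ll]$, and $\mathfrak{p}(a)\wedge\mathfrak{p}(b)$ lies in that interval, whence $\mathfrak{p}(a\wedge b)\ll\mathfrak{p}(a)\wedge\mathfrak{p}(b)$. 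This closes the middle chain.

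Monotonicity then falls out as the special case $a\leq b$, where $a\wedge b=a$: the chain just proved gives $\mathfrak{p}(a)\wedge\mathfrak{p}(b)\ll a$, so $\mathfrak{p}(a)\wedge\mathfrak{p}(b)\in[\ll,a]$ and hence dominates the least element $\mathfrak{p}(a)=\wedge[\ll,a]$; combined with the trivial $\mathfrak{p}(a)\wedge\mathfrak{p}(b)\leq\mathfrak{p}(a)$ this forces $\mathfrak{p}(a)=\mathfrak{p}(a)\wedge\mathfrak{p}(b)$, i.e. $\mathfrak{p}(a)\leq\mathfrak{p}(b)$. The main obstacle throughout is the repeated need to manufacture true $\ll$-relations rather than mere $\leq$-inequalities; this is exactly the step where the dual $\mathbf{H}$-order property (via condition $3)$ of Lemma \ref{le1}(ii)) and down-contiguity are indispensable, and it is the only place where the strength of the dual $\mathbf{R}$-order structure enters beyond what is already used for (\ref{2,2}) and (\ref{2,5}). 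Part (ii) then follows by the Duality Principle, interchanging $\wedge$ with $\vee$, $\mathfrak{p}$ with $\mathfrak{r}$, down-conditions with up-conditions, and reversing the order.
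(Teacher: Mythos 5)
Your proposal is correct and essentially the paper's own proof: it reduces (\ref{2,2}) and (\ref{2,5}) to Lemma \ref{L2.1}(i) via the dual $\mathbf{T}$-order structure, obtains $\mathfrak{p}(a)\wedge\mathfrak{p}(b)\ll a\wedge b$ by two applications of condition $3)$ of Lemma \ref{le1}(ii) followed by transitivity, and settles part (ii) by duality. The only cosmetic divergence is that the paper gets $\mathfrak{p}(a\wedge b)\ll\mathfrak{p}(a)\wedge\mathfrak{p}(b)$ in one stroke by applying (\ref{2,2}) to the relation $\mathfrak{p}(a)\wedge\mathfrak{p}(b)\ll a\wedge b$ (giving $\mathfrak{p}(a\wedge b)=\mathfrak{p}(\mathfrak{p}(a)\wedge\mathfrak{p}(b))$) and then reads off monotonicity directly from $\mathfrak{p}(a)=\mathfrak{p}(a\wedge b)\leq\mathfrak{p}(a)\wedge\mathfrak{p}(b)$, whereas you route both steps through down-contiguity and the minimality of $\mathfrak{p}(a\wedge b)$ in $[\ll,a\wedge b]$ --- an equivalent, slightly longer detour.
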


\begin{proof}
As a dual $\mathbf{R}$-order is a dual $\mathbf{T}$-order, the results of
Lemma \ref{L2.1}(i) hold for them.

As $\mathfrak{p}(b)\ll b$ and $\mathfrak{p}(a)\ll a$ by (\ref{2.13}), we have
$\mathfrak{p}(a)\wedge\mathfrak{p}\left(  b\right)  \ll\mathfrak{p}(a)\wedge
b\ll a\wedge b$ from Lemma \ref{le1}(ii). By transitivity, $\mathfrak{p}%
(a)\wedge\mathfrak{p}\left(  b\right)  \ll a\wedge b$. So, by (\ref{2,2}),
$\mathfrak{p}(a\wedge b)\ll\mathfrak{p}(a)\wedge\mathfrak{p}(b)\ll a\wedge b.$

If $a\leq b$ then $\mathfrak{p}(a)=\mathfrak{p}(a\wedge b)\leq\mathfrak{p}%
(a)\wedge\mathfrak{p}(b).$ So $\mathfrak{p}(a)\leq\mathfrak{p}(b).$ The proof
of (ii) is similar.\bigskip
\end{proof}

Recall that, for a map $g$: $Q\rightarrow Q,$ the relation $\ll^{g}$ in $Q$ is
defined in (\ref{2,1}).

\begin{theorem}
\label{T2.2}\emph{(i) }For a dual $\mathbf{R}$-order $\ll,$ the map
$\mathfrak{p}_{\ll}\emph{:\ }b\mapsto\mathfrak{p}_{\ll}(b)$ in
$\emph{(\ref{2.13})\ }$is a dual radical map and$\ \ll$ $\subseteq$
$\ll^{\mathfrak{p}_{\ll}}$ \emph{(}see \emph{(\ref{0})). }If $\ll$ is a
contiguous dual $\mathbf{R}$-order then $\ll$ $=$ $\ll^{\mathfrak{p}_{\ll}%
}.\smallskip$

\emph{(ii) }The map $g\mapsto$ $\ll^{g}$ is a bijection from the set of all
dual radical maps onto the set of all contiguous dual $\mathbf{R}$-orders. The
map $\ll$ $\mapsto$ $\mathfrak{p}_{_{\ll}}$ is its inverse\emph{,} i.e.\emph{,
}$\mathfrak{p}_{_{\ll^{g}}}=g.$
\end{theorem}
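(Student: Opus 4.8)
The plan is to build on Theorem \ref{T3.6}, which already establishes the analogous bijection for the weaker classes of contiguous dual $\mathbf{T}$-orders and dual pre-radical maps. Since a dual $\mathbf{R}$-order is by Definition \ref{D2.2}(v) a down-expanded dual $\mathbf{H}$-order, and hence in particular a dual $\mathbf{T}$-order, all of the machinery of Theorem \ref{T3.6} applies verbatim; the task is to show that the additional $\mathbf{R}$-order hypothesis on $\ll$ corresponds exactly to the upgrade from dual pre-radical map to dual radical map, i.e.\ to the monotonicity condition $x\leq y\Rightarrow g(x)\leq g(y)$ in (\ref{t3.2}).

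For part (i), I would first invoke Theorem \ref{T3.6}(i) to get that $\mathfrak{p}_{\ll}$ is a dual pre-radical map, satisfying $\mathfrak{p}(\mathfrak{p}(x))=\mathfrak{p}(x)\leq x$; it remains only to verify monotonicity. This is supplied directly by Lemma \ref{L2.2}(i), which states that for a dual $\mathbf{R}$-order one has $\mathfrak{p}(a)\leq\mathfrak{p}(b)$ whenever $a\leq b$. Combining these two facts shows $\mathfrak{p}_{\ll}$ satisfies all the clauses of (\ref{t3.2}) and is therefore a dual radical map. The inclusion $\ll\,\subseteq\,\ll^{\mathfrak{p}_{\ll}}$ and the equality $\ll\,=\,\ll^{\mathfrak{p}_{\ll}}$ in the contiguous case are already given by Theorem \ref{T3.6}(i), since those statements depend only on the dual $\mathbf{T}$-order structure together with up-contiguity, which every contiguous relation possesses.

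For part (ii), the strategy is again to restrict the bijection of Theorem \ref{T3.6}(ii). That theorem gives a bijection $g\mapsto\,\ll^{g}$ between all dual pre-radical maps and all contiguous dual $\mathbf{T}$-orders, with inverse $\ll\,\mapsto\,\mathfrak{p}_{\ll}$. I want to show this bijection carries the subset of dual radical maps exactly onto the subset of contiguous dual $\mathbf{R}$-orders. One direction is part (i): if $\ll$ is a contiguous dual $\mathbf{R}$-order, its image $\mathfrak{p}_{\ll}$ is a dual radical map. For the reverse, given a dual radical map $g$, Theorem \ref{T3.6}(ii) already tells us $\ll^{g}$ is a contiguous dual $\mathbf{T}$-order with $\mathfrak{p}_{\ll^{g}}=g$; I must additionally check that $\ll^{g}$ is a dual $\mathbf{H}$-order (the down-expandedness being automatic from the dual $\mathbf{T}$-order property, leaving the $\mathbf{H}$-condition as the real content). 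Concretely, using the characterization $x\ll^{g}y$ iff $x\leq y$ and $g(x)=g(y)$, I would verify condition 3) of Lemma \ref{le1}(ii), namely $a\ll^{g}b\Rightarrow a\wedge x\ll^{g}b\wedge x$: this reduces to showing $g(a\wedge x)=g(b\wedge x)$ given $g(a)=g(b)$, which should follow from squeezing $g(a)=g(a\wedge x\vee(\ldots))$ via monotonicity of $g$ together with the idempotence and $g(x)\leq x$ properties.

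The main obstacle I anticipate is precisely this last verification, that monotonicity of $g$ forces the $\mathbf{H}$-relation meet-compatibility of $\ll^{g}$. The pre-radical structure alone only gave contiguity and expandedness; promoting to the $\mathbf{H}$-condition genuinely needs the global order-preservation of $g$ interacting with the lattice meet, and getting the chain of inequalities $g(a\wedge x)\leq g(a)=g(b)$ and $g(b\wedge x)\geq g(a\wedge x)$ to collapse into an equality $g(a\wedge x)=g(b\wedge x)$ requires care with which bounds are available. Once that equality is in hand the bijectivity and the inverse relation $\mathfrak{p}_{\ll^{g}}=g$ are inherited directly from Theorem \ref{T3.6}(ii), so the proof closes by citing that theorem together with part (i) above.
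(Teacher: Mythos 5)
Your proposal is correct and follows essentially the same route as the paper: part (i) is exactly Lemma \ref{L2.2}(i) combined with Theorem \ref{T3.6}(i), and part (ii) restricts the bijection of Theorem \ref{T3.6}(ii), the only real content being the dual $\mathbf{H}$-condition for $\ll^{g}$, which the paper checks via condition 2) of Lemma \ref{le1}(ii) while you propose the equivalent condition 3). The squeeze you flag as the main obstacle closes just as you anticipate: monotonicity and $g(x)\leq x$ give $g(b\wedge x)\leq g(b)\wedge g(x)=g(a)\wedge g(x)\leq a\wedge x\leq b\wedge x$, and the interval-constancy clause of (\ref{2.0}) then forces $g(a\wedge x)=g(b\wedge x)$ --- which is precisely the paper's argument, written there with an arbitrary $c\leq b$ in place of $b\wedge x$.
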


\begin{proof}
(i) By (\ref{2,2}) and (\ref{2.13}), $\mathfrak{p}(\mathfrak{p}%
(b))=\mathfrak{p}(b)\leq b$ for $b\in Q.$ By Lemma \ref{L2.2}, $\mathfrak{p}%
(a)\leq\mathfrak{p}(b)$ if $a\leq b.$ Thus $\mathfrak{p}$ satisfies
(\ref{t3.2}), so that it is a dual radical map.

If $\ll$ is a dual $\mathbf{R}$-order, it is a dual $\mathbf{T}$-order. So it
follows from Theorem \ref{T3.6} that $\ll$ $\subseteq$ $\ll^{\mathfrak{p}}$
and that $\ll$ $=$ $\ll^{\mathfrak{p}}$ if $\ll$ is also up-contiguous$.$

(ii) If $g$ is a dual radical map then it is pre-dual by Remark \ref{R3.1}.
Hence, by Theorem \ref{T3.6}, $\ll^{g}$ is a contiguous dual $\mathbf{T}%
$-order and $\mathfrak{p}_{_{\ll^{g}}}=g.$

If $a\ll^{g}b$ then $g\left(  a\right)  =g\left(  b\right)  $. Let $c\leq b$.
By (\ref{t3.2}), $g(c)\leq g(b)$ and $g(x)\leq x$ for all $x\in Q.$ Thus%
\[
g(c)=g(b)\wedge g(c)=g(a)\wedge g(c)\leq a\wedge c\leq c.
\]
So $g(c)\leq a\wedge c\leq c.$ As $g$ is also pre-dual, $g(c)=g(a\wedge c)$ by
(\ref{2.0}). Hence $a\wedge c\ll^{g}c.$ Thus we have from Lemma \ref{le1}(ii)
that $\ll^{g}$ is a dual $\mathbf{H}$-relation. So $\ll^{g}$ is a contiguous
dual $\mathbf{R}$-order. Applying (i), we complete the proof.\bigskip
\end{proof}

By duality we get the following result.

\begin{theorem}
\label{T2.2'}\emph{(i) }For a $\mathbf{R}$-order $\ll,$ the map $\mathfrak{r}%
\emph{:\ }a\mapsto\mathfrak{r}(a)$ in $\emph{(\ref{2.14})\ }$is a radical map
and$\ \ll$ $\subseteq$ $\ll^{\mathfrak{r}}$ \emph{(}see\emph{(\ref{0}%
))}$\emph{.}$ If $\ll$ is a contiguous $\mathbf{R}$-order then $\ll$ $=$
$\ll^{\mathfrak{r}}.\smallskip$

\emph{(ii) }The map $g\mapsto$ $\ll^{g}$ is a bijection from the set of all
radical maps onto the set of all contiguous $\mathbf{R}$-orders. The map $\ll$
$\mapsto$ $\mathfrak{r}_{_{\ll}}$ is its inverse\emph{,} i.e.\emph{,
}$\mathfrak{r}_{_{\ll^{g}}}=g.$
\end{theorem}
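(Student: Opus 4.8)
The plan is to establish Theorem~\ref{T2.2'} purely by invoking the Duality Principle rather than re-running the argument of Theorem~\ref{T2.2}. The excerpt explicitly states ``By duality we get the following result,'' so the intended proof is one line citing \cite[Theorem $1.3'$]{Sk}; my job is to lay out why this duality transfer is legitimate and complete. First I would verify that every ingredient in the statement of Theorem~\ref{T2.2'} is the exact dual of the corresponding ingredient in Theorem~\ref{T2.2}. Under the order-reversing involution on $(Q,\leq)$ the following dualities hold: an $\mathbf{R}$-order dualizes to a dual $\mathbf{R}$-order (Definition~\ref{D2.2}(iv)--(v)), the map $\mathfrak{r}(a)=\vee[a,\ll]$ in (\ref{2.14}) dualizes to $\mathfrak{p}(b)=\wedge[\ll,b]$ in (\ref{2.13}), a radical map (\ref{f3.1}) dualizes to a dual radical map (\ref{t3.2}), and the relation $\ll^{g}$ from (\ref{2,1}) is self-dual in the appropriate sense because the condition ``$x\leq y$ and $g(x)=g(y)$'' simply reverses to ``$y\leq x$ and $g(y)=g(x)$'' under the involution. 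Contiguity is likewise self-dual since up-contiguous and down-contiguous swap, and ``contiguous'' means both.

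Concretely I would write: \emph{This follows from Theorem~\ref{T2.2} by the Duality Principle \cite[Theorem $1.3'$]{Sk}.} For (i), applying the order-reversing involution converts the hypothesis that $\ll$ is an $\mathbf{R}$-order into the hypothesis that its dual is a dual $\mathbf{R}$-order; Theorem~\ref{T2.2}(i) then gives that the dual of $\mathfrak{r}$, namely $\mathfrak{p}_{\ll^{\mathrm{op}}}$, is a dual radical map, which translates back to the assertion that $\mathfrak{r}$ is a radical map. The containment $\ll\,\subseteq\,\ll^{\mathfrak{r}}$ and the equality $\ll\,=\,\ll^{\mathfrak{r}}$ under contiguity transfer identically, since $\ll^{g}$ and the strengthening relation $\subseteq$ from (\ref{0}) are preserved by the involution. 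For (ii), the bijection between dual radical maps and contiguous dual $\mathbf{R}$-orders established in Theorem~\ref{T2.2}(ii) transports to a bijection between radical maps and contiguous $\mathbf{R}$-orders, with $\mathfrak{r}_{\ll}$ as the inverse of $g\mapsto\ll^{g}$, giving $\mathfrak{r}_{\ll^{g}}=g$.

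I expect the only genuine subtlety to be confirming that the construction $g\mapsto\ll^{g}$ behaves correctly under duality, since $\ll^{g}$ is defined via the \emph{same} map $g$ on $Q$ but with the order $\leq$ replaced by $\geq$. The clean way to phrase this is to note that if $\overline{\phantom{x}}$ denotes the involution carrying $(Q,\leq)$ to $(Q,\geq)$, then a map $g$ is a radical map on $(Q,\leq)$ if and only if it is a dual radical map on $(Q,\geq)$, and $\ll^{g}$ computed in $(Q,\leq)$ equals the relation computed in $(Q,\geq)$ with $x,y$ interchanged; under this identification the statement of Theorem~\ref{T2.2'} is literally the image of Theorem~\ref{T2.2} under $\overline{\phantom{x}}$. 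Since no step of Theorem~\ref{T2.2} used any property of $Q$ beyond its being a complete lattice (a self-dual notion, with $\mathbf{0}$ and $\mathbf{1}$ swapped), the transfer is automatic and no recomputation is needed.
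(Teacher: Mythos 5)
Your proposal is correct and matches the paper exactly: the paper offers no argument for Theorem \ref{T2.2'} beyond the line ``By duality we get the following result,'' relying on the Duality Principle \cite[Theorem $1.3'$]{Sk} just as you do, and your verification that each ingredient ($\mathbf{R}$-order versus dual $\mathbf{R}$-order, $\mathfrak{r}$ in (\ref{2.14}) versus $\mathfrak{p}$ in (\ref{2.13}), radical versus dual radical map, and the self-duality of $\ll^{g}$ and of contiguity) dualizes correctly is sound. Your extra care about how $\ll^{g}$ transforms under the order-reversing involution is a legitimate fleshing-out of the paper's one-line proof, not a departure from it.
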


We will now study properties of the maps $\mathfrak{p},$ $\mathfrak{r}$ for
relations satisfying combined conditions.

\begin{lemma}
\label{L2.3}\emph{(i) }If $\ll$ is an $\mathbf{H}$-relation and a dual
$\mathbf{T}$-order then $\mathfrak{p}(a\vee b)\ll\mathfrak{p}(a)\vee
\mathfrak{p}(b)$ for $a,b\in Q.$\smallskip

\emph{(ii) \ }If $\ll$ is an $\mathbf{H}$-relation and a dual $\mathbf{R}%
$-order\emph{,} then $\mathfrak{p}(a\vee b)=\mathfrak{p}(a)\vee\mathfrak{p}%
(b)$ for $a,b\in Q.\smallskip$

\emph{(iii) }If $\ll$ is a dual $\mathbf{H}$-relation and a $\mathbf{T}$-order
then $\mathfrak{r}(a)\wedge\mathfrak{r}(b)\ll\mathfrak{r}(a\wedge b)$ for
$a,b\in Q.\smallskip$

\emph{(iv)} If $\ll$ is a dual $\mathbf{H}$-relation and an $\mathbf{R}$-order
then $\mathfrak{r}(a)\wedge\mathfrak{r}(b)=\mathfrak{r}(a\wedge b)$ for
$a,b\in Q.$
\end{lemma}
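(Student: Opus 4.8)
The plan is to establish part (i) by a direct computation, to derive (ii) from (i) together with the monotonicity of $\mathfrak{p}$ supplied by Lemma \ref{L2.2}(i), and to obtain (iii) and (iv) as the $\wedge$/$\mathfrak{r}$ duals of (i) and (ii) via the Duality Principle. The whole content sits in (i); the remaining parts are a sandwiching argument and two dualizations.

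For (i), I would start from the two basic relations $\mathfrak{p}(a)\ll a$ and $\mathfrak{p}(b)\ll b$ furnished by (\ref{2.13}). The role of the $\mathbf{H}$-relation hypothesis is precisely to let me join these two relations additively: using form $3)$ of Lemma \ref{le1}(i), the relation $\mathfrak{p}(a)\ll a$ yields $\mathfrak{p}(a)\vee\mathfrak{p}(b)\ll a\vee\mathfrak{p}(b)$ (taking $x=\mathfrak{p}(b)$), while $\mathfrak{p}(b)\ll b$ yields $a\vee\mathfrak{p}(b)\ll a\vee b$ (taking $x=a$). Since a dual $\mathbf{T}$-order is transitive, chaining these gives $\mathfrak{p}(a)\vee\mathfrak{p}(b)\ll a\vee b$. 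I would then apply (\ref{2,2}) to this relation: it yields $\mathfrak{p}(a\vee b)=\mathfrak{p}\bigl(\mathfrak{p}(a)\vee\mathfrak{p}(b)\bigr)\ll\mathfrak{p}(a)\vee\mathfrak{p}(b)$, which is exactly the assertion.

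For (ii), a dual $\mathbf{R}$-order is in particular a dual $\mathbf{T}$-order, so part (i) already gives $\mathfrak{p}(a\vee b)\ll\mathfrak{p}(a)\vee\mathfrak{p}(b)$ and hence $\mathfrak{p}(a\vee b)\leq\mathfrak{p}(a)\vee\mathfrak{p}(b)$ because $\ll\subseteq\leq$. For the reverse inequality I would invoke the monotonicity of $\mathfrak{p}$ from Lemma \ref{L2.2}(i): from $a\leq a\vee b$ and $b\leq a\vee b$ one gets $\mathfrak{p}(a),\mathfrak{p}(b)\leq\mathfrak{p}(a\vee b)$, so $\mathfrak{p}(a)\vee\mathfrak{p}(b)\leq\mathfrak{p}(a\vee b)$; combining the two inequalities forces equality. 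Parts (iii) and (iv) are the formal duals — interchanging $\mathbf{H}$ with dual $\mathbf{H}$, $\mathfrak{p}$ with $\mathfrak{r}$, $\vee$ with $\wedge$, and using (\ref{2,3}) and Lemma \ref{L2.2}(ii) in place of (\ref{2,2}) and Lemma \ref{L2.2}(i) — so I would simply cite the Duality Principle. The one genuinely delicate point is (i): in the $\mathbf{T}$-order setting of that part no monotonicity of $\mathfrak{p}$ is available, equality is false in general, and the correct statement is the one-sided $\ll$-relation; the argument therefore hinges on manufacturing $\mathfrak{p}(a)\vee\mathfrak{p}(b)\ll a\vee b$ from the $\mathbf{H}$-condition and then collapsing it through (\ref{2,2}).
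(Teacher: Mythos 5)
Your proof is correct and follows essentially the same route as the paper's: part (i) via Lemma \ref{le1}(i) applied twice to get $\mathfrak{p}(a)\vee\mathfrak{p}(b)\ll a\vee\mathfrak{p}(b)\ll a\vee b$, transitivity, and then (\ref{2,2}); part (ii) by combining (i) with the monotonicity of $\mathfrak{p}$ from Lemma \ref{L2.2}(i); parts (iii) and (iv) by duality. No gaps.
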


\begin{proof}
(i) By (\ref{2.13}), $\mathfrak{p}(a)\ll a.$ As $\ll$ is an $\mathbf{H}%
$-relation, $\mathfrak{p}(a)\vee\mathfrak{p}(b)\ll a\vee\mathfrak{p}(b)$ by
Lemma \ref{le1}. Similarly, $\mathfrak{p}(b)\ll b$ implies $a\vee
\mathfrak{p}(b)\ll a\vee b$. By transitivity of $\ll$, $\mathfrak{p}%
(a)\vee\mathfrak{p}(b)\ll a\vee b$. Since $\ll$ is also a dual $\mathbf{T}%
$-order, $\mathfrak{p}(a\vee b)\ll\mathfrak{p}(a)\vee\mathfrak{p}(b)$ by
(\ref{2,2}).

(ii) If $\ll$ is a dual $\mathbf{R}$-order, $\mathfrak{p}(a)\vee
\mathfrak{p}(b)\leq\mathfrak{p}(a\vee b)$ by Lemma \ref{L2.2}. So the proof
follows from (i).

Parts (iii) and (iv) follow from duality.\bigskip
\end{proof}

Recall that $\mathbf{TT}$-orders are $\mathbf{T}$- and dual $\mathbf{T}%
$-orders, and $\mathbf{RR}$-orders are $\mathbf{R}$- and dual $\mathbf{R}%
$-orders$\emph{.}$ Each $\mathbf{TT}$-order defines both maps $\mathfrak{r}%
$\emph{: }$a\mapsto\mathfrak{r}(a)$ and $\mathfrak{p}$\emph{: }$a\mapsto
\mathfrak{p}(a)$ on $Q.$ In terms of them we will find necessary and
sufficient conditions for a $\mathbf{TT}$-order to be an $\mathbf{R}$-, or a
dual $\mathbf{R}$-, or an $\mathbf{RR}$-order.

\begin{corollary}
\label{C2.14}\emph{(i) }Let $\ll$ be a $\mathbf{TT}$-order in $Q.$ The
following conditions are equivalent\emph{.\smallskip}

$\qquad1)\ \ll$ is a dual $\mathbf{R}$-order$;$

$\qquad2)$ $\mathfrak{p}(a)\leq\mathfrak{p}(b)$ if $a\leq b$ in $Q;$

$\qquad3)\ \mathfrak{p}(a\wedge b)\leq\mathfrak{p}(a)\wedge\mathfrak{p}(b)$
for $a,b\in Q;$

\qquad$4)$ $\mathfrak{p}(a\wedge b)=\mathfrak{p}\left(  \mathfrak{p}%
(a)\wedge\mathfrak{p}(b)\right)  $ for $a,b\in Q$.\smallskip

\emph{(ii) }Let $\ll$ be a $\mathbf{TT}$-order in $Q.$ The following
conditions are equivalent\emph{:\smallskip}

$\qquad1)$ $\ll$ is an $\mathbf{R}$-order$;$

$\qquad2)$ $\mathfrak{r}(a)\leq\mathfrak{r}(b)$ if $a\leq b$ in $Q;$

$\qquad3)$ $\mathfrak{r}(a)\vee\mathfrak{r}(b)\leq\mathfrak{r}(a\vee b)$ for
$a,b\in Q;$

$\qquad4)$ $\mathfrak{r}\left(  \mathfrak{r}(a)\vee\mathfrak{r}(b)\right)
=\mathfrak{r}(a\vee b)$ for $a,b\in Q$.\smallskip

\emph{(iii) }A $\mathbf{TT}$-order $\ll$\textit{ is an }$\mathbf{RR}%
$\textit{-order if and only if }$\mathfrak{p}(a)\leq\mathfrak{p}(b)$\textit{
and }$\mathfrak{r}(a)\leq\mathfrak{r}(b)$\textit{ for all }$a\leq b.$
\end{corollary}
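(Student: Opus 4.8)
The plan is to prove Corollary \ref{C2.14} by reducing all three parts to the characterizations already established for $\mathbf{T}$- and $\mathbf{R}$-orders, using the fact that a $\mathbf{TT}$-order is automatically a $\mathbf{T}$- and a dual $\mathbf{T}$-order, so both maps $\mathfrak{r}$ and $\mathfrak{p}$ are available and satisfy (\ref{2,2}), (\ref{2,3}), (\ref{2,5}) and their idempotency/pre-radical properties. The key observation is that for a $\mathbf{TT}$-order the only thing separating it from an $\mathbf{R}$-order (resp.\ dual $\mathbf{R}$-order) is monotonicity of $\mathfrak{r}$ (resp.\ $\mathfrak{p}$): since a dual $\mathbf{T}$-order with monotone $\mathfrak{p}$ is exactly a dual radical map, and by Theorem \ref{T2.2} contiguous dual radical maps correspond bijectively to contiguous dual $\mathbf{R}$-orders. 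So part (i) should follow by showing the equivalence of the four monotonicity-type conditions, and part (iii) is then immediate by combining (i) and (ii).

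For part (i), I would prove the cycle $1)\Rightarrow 2)\Rightarrow 3)\Rightarrow 4)\Rightarrow 2)$ together with $2)\Rightarrow 1)$. The implication $1)\Rightarrow 2)$ is Lemma \ref{L2.2}(i). For $2)\Rightarrow 3)$: since $a\wedge b\leq a$ and $a\wedge b\leq b$, monotonicity gives $\mathfrak{p}(a\wedge b)\leq\mathfrak{p}(a)$ and $\mathfrak{p}(a\wedge b)\leq\mathfrak{p}(b)$, hence $\mathfrak{p}(a\wedge b)\leq\mathfrak{p}(a)\wedge\mathfrak{p}(b)$. For $3)\Rightarrow 4)$: apply $\mathfrak{p}$ to the inequality $3)$ and use that $\mathfrak{p}(a\wedge b)=\mathfrak{p}(\mathfrak{p}(a\wedge b))$ by idempotency, together with the fact (from (\ref{2,5})) that $\mathfrak{p}$ is constant on the interval $[\mathfrak{p}(a\wedge b),a\wedge b]$, which contains $\mathfrak{p}(a)\wedge\mathfrak{p}(b)$ since the latter lies between $\mathfrak{p}(a\wedge b)$ and $a\wedge b$. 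The implication $4)\Rightarrow 2)$ and $2)\Rightarrow 1)$ is where the real content sits.

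The main obstacle will be $2)\Rightarrow 1)$, i.e.\ showing that monotonicity of $\mathfrak{p}$ forces $\ll$ to be a dual $\mathbf{H}$-relation (it is already a dual $\mathbf{T}$-order, hence down-expanded, so only the dual $\mathbf{H}$-condition of Lemma \ref{le1}(ii) remains). Here I would verify condition $2)$ of Lemma \ref{le1}(ii): assume $a\ll b$ and $c\leq b$, and aim to show $a\wedge c\ll c$. Since $a\ll b$, (\ref{2,2}) gives $\mathfrak{p}(a)=\mathfrak{p}(b)$. Using monotonicity ($c\leq b$ gives $\mathfrak{p}(c)\leq\mathfrak{p}(b)=\mathfrak{p}(a)$) and the contiguity of the dual $\mathbf{T}$-order, I would show $\mathfrak{p}(c)\leq a\wedge c\leq c$ and then invoke (\ref{2,5}) to conclude $\mathfrak{p}(a\wedge c)=\mathfrak{p}(c)$, which by (\ref{2,1}) and the contiguous-order characterization of Theorem \ref{T3.6} yields $a\wedge c\ll c$. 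The delicate point is establishing $\mathfrak{p}(c)\leq a\wedge c$, which needs $\mathfrak{p}(c)\leq a$; this follows because $\mathfrak{p}(c)\leq\mathfrak{p}(a)\leq a$.

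Part (ii) is the exact dual of part (i), obtained by the Duality Principle \cite[Theorem $1.3^{\prime}$]{Sk}, replacing $\wedge,\leq,\mathfrak{p}$ by $\vee,\geq,\mathfrak{r}$ throughout; I would simply state that it follows by duality. For part (iii), an $\mathbf{RR}$-order is by definition both an $\mathbf{R}$- and a dual $\mathbf{R}$-order, so applying condition $2)$ of part (i) and condition $2)$ of part (ii) simultaneously gives that $\ll$ is an $\mathbf{RR}$-order if and only if both $\mathfrak{p}$ and $\mathfrak{r}$ are monotone, which is precisely the stated condition.
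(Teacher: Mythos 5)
Your proposal is correct in substance but closes the equivalence cycle differently from the paper, and it leaves one (easy) link unproved. The paper proves $1)\Rightarrow2)\Rightarrow3)\Rightarrow4)\Rightarrow3)\Rightarrow1)$: the step $4)\Rightarrow3)$ is the one-liner $\mathfrak{p}\left(\mathfrak{p}(a)\wedge\mathfrak{p}(b)\right)\leq\mathfrak{p}(a)\wedge\mathfrak{p}(b)$, and the return $3)\Rightarrow1)$ goes through the structural decomposition: by Theorem \ref{T2.12}, $x\ll y$ and an arbitrary $z$ lie in intervals $[\mathfrak{p}(\lambda),\lambda]$ and $[\mathfrak{p}(\mu),\mu]$ with $\lambda,\mu\in Q_{\mathfrak{r}}$, condition $3)$ squeezes $x\wedge z\leq y\wedge z$ into $[\mathfrak{p}(\lambda\wedge\mu),\mathfrak{r}(\lambda\wedge\mu)]$, and Proposition \ref{C2.13}(iii) gives $x\wedge z\ll y\wedge z$, i.e.\ condition $3)$ of Lemma \ref{le1}(ii). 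Your return $2)\Rightarrow1)$ instead verifies condition $2)$ of Lemma \ref{le1}(ii) directly: from $a\ll b$ and $c\leq b$ you get $\mathfrak{p}(c)\leq\mathfrak{p}(b)=\mathfrak{p}(a)\leq a$ using (\ref{2,2}) and monotonicity, hence $\mathfrak{p}(c)\leq a\wedge c\leq c$, and then (\ref{2,5}) together with up-contiguity (the last clause of Lemma \ref{L2.1}(i), or equivalently $\ll$ $=$ $\ll^{\mathfrak{p}}$ from Theorem \ref{T3.6}(i), applicable since a $\mathbf{TT}$-order is contiguous) gives $a\wedge c\ll c$. This argument is sound and somewhat more elementary: it bypasses Theorem \ref{T2.12} entirely, deriving the dual $\mathbf{H}$-property from the weakest-looking condition $2)$, whereas the paper exploits the global interval picture; both routes ultimately invoke the up-contiguity of the $\mathbf{TT}$-order at the final step. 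The one genuine omission is that your cycle requires $4)\Rightarrow2)$, which you flag as ``real content'' but never carry out; it is, however, immediate: $4)$ and $\mathfrak{p}(x)\leq x$ give $\mathfrak{p}(a\wedge b)=\mathfrak{p}\left(\mathfrak{p}(a)\wedge\mathfrak{p}(b)\right)\leq\mathfrak{p}(a)\wedge\mathfrak{p}(b)$, i.e.\ $3)$, and $3)$ with $a\leq b$ (so $a\wedge b=a$) yields $\mathfrak{p}(a)\leq\mathfrak{p}(b)$. Parts (ii) and (iii) you handle exactly as the paper does (duality, and combining (i) with (ii)).
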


\begin{proof}
(i) $1)\Rightarrow2)$ follows from Lemma \ref{L2.2}(i).

$2)\Rightarrow3)$ As $a\wedge b\leq a,$ we have $\mathfrak{p}(a\wedge
b)\leq\mathfrak{p}(a)$ from 2). Similarly, $\mathfrak{p}(a\wedge
b)\leq\mathfrak{p}(b)$ and $3)$ follows$.$

$3)\Rightarrow4)$ We have $\mathfrak{p}(a\wedge b)\leq\mathfrak{p}%
(a)\wedge\mathfrak{p}(b)\leq a\wedge b$. By (\ref{2,5}), $\mathfrak{p}(a\wedge
b)=\mathfrak{p}\left(  \mathfrak{p}(a)\wedge\mathfrak{p}(b)\right)  $.

$4)\Rightarrow3).$ As $\mathfrak{p}(x)\leq x$ for $x\in Q,$ we have
$\mathfrak{p}\left(  \mathfrak{p}(a)\wedge\mathfrak{p}(b)\right)
\leq\mathfrak{p}(a)\wedge\mathfrak{p}(b).$

$3)\Rightarrow1).$ Let $x\ll y$ and $z\in Q.$ By Theorem \ref{T2.12},
$\mathfrak{p}(\lambda)\leq x\leq y\leq\lambda$ and $\mathfrak{p}(\mu)\leq
z\leq\mu$ for some $\lambda,\mu\in Q_{\mathfrak{r}}$. Hence, by 3),%
\[
\mathfrak{p}(\lambda\wedge\mu)\leq\mathfrak{p}(\lambda)\wedge\mathfrak{p}%
(\mu)\leq x\wedge z\leq y\wedge z\leq\lambda\wedge\mu\leq\mathfrak{r}\left(
\lambda\wedge\mu\right)  .
\]
By Proposition \ref{C2.13}(iii), $x\wedge z\ll y\wedge z.$ Thus $\ll$ is a
dual $\mathbf{H}$-relation. So it is a dual $\mathbf{R}$-order.

Part (ii) can be proved similarly. Part (iii) follows from (i) and
(ii).\bigskip
\end{proof}

Maps $f,g$ on $Q$ are \textit{conjugate }if $g\left(  x\right)  =g\left(
f\left(  x\right)  \right)  $ and $f\left(  x\right)  =f\left(  g\left(
x\right)  \right)  $ for all $x\in Q.$

\begin{corollary}
\emph{(i) }For an $\mathbf{RR}$-order $\ll,$ the maps $\mathfrak{r}_{_{\ll}}$
and $\mathfrak{p}_{_{\ll}}$ are conjugate.\smallskip

\emph{(ii) }Let maps $f,g$ on $Q$ be conjugate\emph{ }and let
\begin{equation}
g(x)\leq x\leq f(x)\text{ for }x\in Q,\text{ and }f(x)\leq f(y)\text{ and
}g(x)\leq g(y)\text{ for }x\leq y\text{ in }Q. \label{3;1}%
\end{equation}
Then there is an $\mathbf{RR}$-order $\ll$ such that $f=\mathfrak{r}_{_{\ll}}$
and $g=\mathfrak{p}_{_{\ll}}$. Moreover\emph{, }$\ll$ $=$ $\ll^{f}$ $=$
$\ll^{g}.$
\end{corollary}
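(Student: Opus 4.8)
The plan is to combine Corollary \ref{C2.14}(iii) with the formulas established in Lemma \ref{L2.1}. For an $\mathbf{RR}$-order $\ll$, both $\mathfrak{r}_{_{\ll}}$ and $\mathfrak{p}_{_{\ll}}$ are defined, and I must show $\mathfrak{r}(x)=\mathfrak{r}(\mathfrak{p}(x))$ and $\mathfrak{p}(x)=\mathfrak{p}(\mathfrak{r}(x))$ for all $x$. But this is precisely Proposition \ref{C2.13}(i), which holds for any $\mathbf{TT}$-order, and every $\mathbf{RR}$-order is in particular a $\mathbf{TT}$-order. So (i) is immediate from Proposition \ref{C2.13}(i).

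\textbf{Part (ii).} Given conjugate maps $f,g$ satisfying (\ref{3;1}), first I would verify that $f$ is a radical map and $g$ is a dual radical map. The idempotency $f(f(x))=f(x)$ follows from conjugacy: applying $f(x)=f(g(x))$ with $x$ replaced by $f(x)$ gives $f(f(x))=f(g(f(x)))$, and conjugacy $g(f(x))=g(x)$ then yields $f(f(x))=f(g(x))=f(x)$. Together with $x\le f(x)$ and monotonicity from (\ref{3;1}), this shows $f$ satisfies (\ref{f3.1}), so $f$ is a radical map; dually $g$ is a dual radical map. By Theorem \ref{T2.2'}(ii), the relation $\ll^{f}$ is then a contiguous $\mathbf{R}$-order with $\mathfrak{r}_{_{\ll^{f}}}=f$, and by Theorem \ref{T2.2}(ii), $\ll^{g}$ is a contiguous dual $\mathbf{R}$-order with $\mathfrak{p}_{_{\ll^{g}}}=g$.

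The crux is to show $\ll^{f}=\ll^{g}$, for then this single relation $\ll$ is simultaneously a contiguous $\mathbf{R}$-order and a contiguous dual $\mathbf{R}$-order, i.e. an $\mathbf{RR}$-order, and it satisfies $f=\mathfrak{r}_{_{\ll}}$, $g=\mathfrak{p}_{_{\ll}}$ as required. To prove $\ll^{f}\subseteq\ll^{g}$, suppose $x\ll^{f}y$, so $x\le y$ and $f(x)=f(y)$ by (\ref{2,1}). Applying $g$ and using conjugacy $g=g\circ f$, I get $g(x)=g(f(x))=g(f(y))=g(y)$, whence $x\ll^{g}y$. The reverse inclusion $\ll^{g}\subseteq\ll^{f}$ is entirely symmetric, using $f=f\circ g$. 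Thus $\ll^{f}=\ll^{g}=:\ll$, completing the proof.

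I expect the main obstacle to be the bookkeeping of idempotency via conjugacy rather than any deep difficulty: once $f,g$ are recognized as (dual) radical maps, the heavy lifting is already done by Theorems \ref{T2.2} and \ref{T2.2'}, and the equality $\ll^{f}=\ll^{g}$ reduces to the two-line symmetric argument above driven purely by the defining identities $g=g\circ f$ and $f=f\circ g$.
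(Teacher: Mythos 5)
Your proposal is correct and follows essentially the same route as the paper: the same conjugacy-driven idempotency computation $f(f(x))=f(g(f(x)))=f(g(x))=f(x)$, the same two-line argument that $\ll^{f}=\ll^{g}$, and the same appeal to Theorems \ref{T2.2} and \ref{T2.2'} to conclude. The only cosmetic difference is that for part (i) you cite Proposition \ref{C2.13}(i) (valid, since every $\mathbf{RR}$-order is a $\mathbf{TT}$-order) where the paper re-derives the identities directly from (\ref{2,2}) and (\ref{2,3}) -- but that is the same underlying computation.
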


\begin{proof}
(i) Let $\ll$ be an $\mathbf{RR}$-order$.$ By Theorems \ref{T2.2} and
\ref{T2.2'}, $\mathfrak{r}_{_{\ll}}$ is a radical map and $\mathfrak{p}%
_{_{\ll}}$ is a dual radical map$.$ By (\ref{2.13}) and (\ref{2.14}),
$\mathfrak{p}_{_{\ll}}(x)\ll x\ll\mathfrak{r_{_{\ll}}(}x)$ for all $x\in Q,$
and%
\[
\mathfrak{p}_{_{\ll}}(x)\overset{(\ref{2,2})}{=}\mathfrak{p}_{_{\ll}%
}(\mathfrak{r}_{_{\ll}}(x))\text{ and }\mathfrak{r}_{_{\ll}}%
(x)\overset{(\ref{2,3})}{=}\mathfrak{r}_{_{\ll}}(\mathfrak{p}_{_{\ll}}(x)).
\]

(ii) As $f$ and $g$ are conjugate, $f(f(x))=f(g(f(x)))=f(g(x))=f(x).$
Similarly, $g(g(x))=g(x).$ From this and from (\ref{3;1}) we get that $f$ is a
radical and $g$ is a dual radical map ((\ref{f3.1}), (\ref{t3.2})).

Let $x\ll^{f}y.$ By (\ref{2,1}), $f\left(  x\right)  =f\left(  y\right)  .$ As
$f$ and $g$ are conjugate, $g\left(  x\right)  =g\left(  f\left(  x\right)
\right)  =g\left(  f\left(  y\right)  \right)  =g\left(  y\right)  .$ So
$x\ll^{g}y.$ Similarly, $x\ll^{g}y\Rightarrow x\ll^{f}y.$ Thus $\ll^{f}=$
$\ll^{g}.$ By Theorems \ref{T2.2} and \ref{T2.2'}, $\ll$ $=$ $\ll^{f}=$
$\ll^{g}$ is an $\mathbf{RR}$-order, $f=\mathfrak{r}_{_{\ll}}$ and
$g=\mathfrak{p}_{_{\ll}}$.
\end{proof}

\section{Refinement of relations}

We proved in the previous section that $\mathbf{T}$-orders have unique
radicals. However, majority of relations in applications are neither
contiguous, nor expanded. In this section we consider two ways to construct
from a relation some new relations that are contiguous, or expanded, or both,
so that the new relations have radicals. We start with the complements of
$\ll$: $\overleftarrow{\ll}$ and $\overrightarrow{\ll}$ defined in (\ref{A1}).

\begin{proposition}
\label{Lat1}\emph{(i)\ }$\overleftarrow{\ll}$ is a down-contiguous
relation\emph{;} the set $[a,\overleftarrow{\ll}]$ is $\wedge$-complete for
each $a\in Q.\smallskip$

\emph{(ii) \ }$\overrightarrow{\ll}$ is an up-contiguous relation$;$ the set
$[\overrightarrow{\ll},b]$ is $\vee$-complete for each $b\in Q.\smallskip$

\emph{(iii) }If $\ll$ is stronger than $\prec,$ then $\overleftarrow{\prec}$
is stronger than $\overleftarrow{\ll}$ and $\overrightarrow{\prec}$ is
stronger than $\overrightarrow{\ll}$\emph{.}
\end{proposition}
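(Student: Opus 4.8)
The plan is to verify all three statements directly from the definition of the complements in (\ref{A1}), using only two trivial facts repeatedly: reflexivity of the relations (so that $a\in[a,\ll]$ and $b\in[\ll,b]$ for every $a,b$, hence the endpoint is never lost from an intersection), and monotonicity of intervals (if $a\le c\le b$ then $[a,c]\subseteq[a,b]$, and if $m\le x$ then $[a,m]\subseteq[a,x]$). Since (ii) is exactly the order-dual of (i), I would prove (i) in full and obtain (ii) from the Duality Principle cited after Definition \ref{D2.2}. Statement (iii) is not self-dual in that way, so both halves need a short separate argument, but they are mirror images of one another.

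For the down-contiguity of $\overleftarrow{\ll}$ in (i), suppose $a\,\overleftarrow{\ll}\,b$, so $[a,\ll]\cap[a,b]=\{a\}$, and let $c\in[a,b]$. Since $c\le b$ gives $[a,c]\subseteq[a,b]$, intersecting with $[a,\ll]$ yields $[a,\ll]\cap[a,c]\subseteq\{a\}$; reflexivity puts $a$ back in, so $[a,\ll]\cap[a,c]=\{a\}$, i.e. $a\,\overleftarrow{\ll}\,c$. As $c$ was arbitrary in $[a,b]$, this is precisely $[a,b]\subseteq[a,\overleftarrow{\ll}]$, the down-contiguity condition of Definition \ref{D1.1}. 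For $\wedge$-completeness of $[a,\overleftarrow{\ll}]$, take any nonempty $N\subseteq[a,\overleftarrow{\ll}]$ and set $m=\wedge N$; from $a\le x$ for all $x\in N$ we get $a\le m$, and from $m\le x$ we get $[a,m]\subseteq[a,x]$, whence $[a,\ll]\cap[a,m]\subseteq[a,\ll]\cap[a,x]=\{a\}$ for each $x\in N$, and reflexivity again upgrades this to equality. Thus $m\in[a,\overleftarrow{\ll}]$, so the set coincides with its $\wedge$-completion (\ref{1.10}).

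Statement (ii) then follows by duality. For (iii), the key remark is that $\ll\subseteq\prec$ is equivalent to $[a,\ll]\subseteq[a,\prec]$ for every $a$ (and likewise to $[\ll,b]\subseteq[\prec,b]$ for every $b$). Granting this, if $a\,\overleftarrow{\prec}\,b$, i.e. $[a,\prec]\cap[a,b]=\{a\}$, then $[a,\ll]\cap[a,b]\subseteq[a,\prec]\cap[a,b]=\{a\}$, and reflexivity forces equality, so $a\,\overleftarrow{\ll}\,b$; hence $\overleftarrow{\prec}\subseteq\overleftarrow{\ll}$, which is exactly what ``stronger'' means in (\ref{0}). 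The argument for $\overrightarrow{\prec}\subseteq\overrightarrow{\ll}$ is the mirror image, with $[\ll,b]$ in place of $[a,\ll]$ and the upper endpoint in place of the lower one.

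I do not expect a serious obstacle here; the whole proposition is bookkeeping with intersections of intervals. The two points worth flagging are that reflexivity of the relations is what guarantees the relevant endpoint always lies in the intersection, so that every ``$\subseteq\{a\}$'' can be upgraded to ``$=\{a\}$'', and that the complement construction is \emph{contravariant} in $\ll$ (a stronger $\ll$ produces a weaker complement) — this reversal of direction is the only mildly counterintuitive content, and it is exactly what (iii) records.
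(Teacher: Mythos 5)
Your proof is correct and follows essentially the same route as the paper: all three parts are verified directly from the definition (\ref{A1}), using the inclusion $[a,m]\subseteq[a,x]$ for $m\leq x$ (respectively $[a,\ll]\subseteq[a,\prec]$ when $\ll\,\subseteq\,\prec$) to shrink the intersection down to $\{a\}$, with reflexivity keeping the endpoint inside. The only cosmetic differences are that you invoke the Duality Principle for (ii) where the paper simply says ``similarly,'' and that you make explicit the upgrading of ``$\subseteq\{a\}$'' to ``$=\{a\}$,'' which the paper leaves tacit.
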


\begin{proof}
(i) Let $a$ $\overleftarrow{\ll}$ $b.$ By (\ref{A1}), $[a,\ll]\cap\lbrack
a,b]=\left\{  a\right\}  .$ Then $[a,\ll]\cap\lbrack a,x]=\left\{  a\right\}
$ for each $x\in\lbrack a,b].$ Thus $a$ $\overleftarrow{\ll}$ $x,$ so that
$\overleftarrow{\ll}$ is down-contiguous (see Definition \ref{D1.1}).

Let $G\subseteq\lbrack a,\overleftarrow{\ll}]$. By (\ref{A1}), $[a,\ll
]\cap\lbrack a,b]=\left\{  a\right\}  $ for all $b\in G$. Thus $[a,\ll
]\cap\lbrack a,\wedge G]=\left\{  a\right\}  $. Hence $a\;\overleftarrow{\ll
}\!\wedge G$, so that $[a,\overleftarrow{\ll}]$ is $\wedge$-complete. Part
(ii) is proved similarly.

(iii) Let $a$ $\overleftarrow{\prec}$ $b.$ By (\ref{A1}), $[a,\prec
]\cap\lbrack a,b]=\left\{  a\right\}  .$ As $\ll$ $\subseteq$ $\prec$ (see
(\ref{0})), we have $[a,\ll]\subseteq\lbrack a,\prec].$ Hence $[a,\ll
]\cap\lbrack a,b]=\left\{  a\right\}  .$ By (\ref{A1}), $a$
$\overleftarrow{\ll}$ $b.$ The proof of the second statement is
similar.\bigskip
\end{proof}

We introduce now two constructions that play important part in this paper.

\begin{definition}
\label{D2}\thinspace Let $\ll$ be a relation from \emph{Ref(}$Q).$ We say that
$G\subseteq Q$ is\smallskip

$1)$ a \textbf{lower }$\ll$\textbf{-set} if\emph{,} for each $x\in
G\diagdown\{\wedge G\},$ there is $y\in G$ such that $x\neq y$ and $y\ll
x;\smallskip$

$2)$ an \textbf{upper }$\ll$\textbf{-set} if\emph{, }for each $x\in
G\diagdown\{\vee G\},$ there is $y\in G$ such that $x\neq y$ and $x\ll
y.$\smallskip

$3)$ We write $a\ll^{\text{\emph{lo}}}b\,$if$\ [a,b]\ $is a lower$\ \ll
$-set\emph{;} and\ $a\ll^{\text{\emph{up}}}b$ if $[a,b]\ $is an upper $\ll
$-set$.$
\end{definition}

\begin{proposition}
\label{p11}\emph{(i)\ }$\ll^{\emph{lo}}=$ $\overleftarrow{\left(
\overrightarrow{\ll}\right)  }$ is a down-contiguous order$.$ If $\ll$ is
down-contiguous\emph{,} $\ll$ $\subseteq$ $\ll^{\text{\emph{lo}}}.\smallskip$

\emph{(ii) \ }$\overrightarrow{\ll}$ $\subseteq$ $\overrightarrow{\ll
^{\text{\emph{lo}}}}$ and $(\ll^{\emph{lo}})^{\text{\emph{lo}}}=$
$\ll^{\emph{lo}}.$\smallskip

\emph{(iii) }$\ll^{\emph{up}}=$ $\overrightarrow{\left(  \overleftarrow{\ll
}\right)  }$ is an up-contiguous order. If $\ll$ is up-contiguous\emph{,}
$\ \ll$ $\subseteq$ $\ll^{\text{\emph{up}}}.\smallskip$

\emph{(iv) }$\overleftarrow{\ll}$ $\subseteq$ $\overleftarrow{\ll
^{\,\emph{up}}}$ and $(\ll^{\emph{up}})^{\emph{up}}=$ $\ll^{\emph{up}}.$
\end{proposition}

\begin{proof}
(i) If $a$ $\overleftarrow{\left(  \overrightarrow{\ll}\right)  }$ $b$ then
$a$ is not $\overrightarrow{\ll}$-related to all $x\in(a,b]$ by (\ref{A1}).
So, by (\ref{A1}),
\begin{equation}
a\text{ }\overleftarrow{\left(  \overrightarrow{\ll}\right)  }\text{ }b\text{
if and only if, for each }x\in(a,b],\text{ there is }y\in\lbrack a,x)\text{
such that }y\ll x. \label{2.1}%
\end{equation}
That is, if and only if $[a,b]$ is a lower $\ll$-set: $a\ll^{\text{lo}}b.$
Hence $\ll^{\text{lo}}=$ $\overleftarrow{\left(  \overrightarrow{\ll}\right)
}.$ So, by Proposition \ref{Lat1}(i), $\ll^{\text{lo}}$ is down-contiguous.
Clearly, it is an order.

If $\ll$ is down-contiguous and $a\ll b$ then $\left[  a,b\right]
\subseteq\left[  a,\ll\right]  $. So $a\ll x$ for all $x\in(a,b]$. Thus
$\left[  a,b\right]  $ is a lower $\ll$-set: $a\ll^{\text{lo}}b$. Thus $\ll$
$\subseteq$ $\ll^{\text{lo}}.$ The proof of (iii) is similar.

(ii) By Proposition \ref{Lat1}(ii), $\overrightarrow{\ll}$ is up-contiguous.
So, by (iii) and (i), $\overrightarrow{\ll}$ $\subseteq$ $\left(
\overrightarrow{\ll}\right)  ^{\text{up}}%
=\overrightarrow{\overleftarrow{\left(  \overrightarrow{\ll}\right)  }%
}=\overrightarrow{\ll^{\text{lo}}}.$

Hence, by (i) and by Proposition \ref{Lat1}(iii), $(\ll^{\text{lo}%
})^{\text{lo}}=\overleftarrow{\overrightarrow{\ll^{\text{lo}}}}\ \subseteq
\ \overleftarrow{\left(  \overrightarrow{\ll}\right)  }=$ $\ll^{\text{lo}}.$

Let $a$ be not $\overleftarrow{\overrightarrow{\ll^{\text{lo}}}}$--related to
$b$. By (\ref{2.1}), there is $x\in(a,b]$ such that $y$ is not $\ll
^{\text{lo}}$--related to $x$ for each $y\in\lbrack a,x)$. In particular, $a$
is not $\ll^{\text{lo}}$-related to $x.$ Then there is $z\in(a,x]$ such that
$u$ $\not \ll $ $z$ for each $u\in\lbrack a,z)$. Hence $a$ is not
$\ll^{\text{lo}}$--related to $b$. Thus $\ll^{\text{lo}}$ $\subseteq$
$\overleftarrow{\overrightarrow{\ll^{\text{lo}}}}=(\ll^{\text{lo}}%
)^{\text{lo}}$. So $\ll^{\text{lo}}\ =\ (\ll^{\text{lo}})^{\text{lo}}.$

The proof of part (iv) is similar.\bigskip
\end{proof}

To describe the second way of constructing new relations from $\ll$, we study
chains in lattices. Recall that a chain $C$ in $Q$ is a linearly ordered
subset: either $x\leq y$ or $y\leq x$ for $x,y\in C.$ A chain $C$ is
\textit{from} $a$ \textit{to} $b$ if $\wedge C=a$ and $\vee C=b$. A chain $C$
is \textit{maximal}\textbf{ }in $G\subseteq Q$ if $C\subseteq G$ and $G$ has
no larger chains.

\begin{definition}
\label{D3}Let\emph{ }$\ll$ be a relation in $Q$ and $C$ be a chain from $a$ to
$b.$ Then\smallskip

$1)\ C$ is a \textbf{lower \emph{(}upper\emph{)} }$\ll$\textbf{-chain}, if it
is a lower \emph{(}upper\emph{)} $\ll$-set.\smallskip

$2)$ $C$ is a \textbf{lower }$\ll$\textbf{-gap chain} if each $x\in
C\diagdown\{a\}$ has an immediate $\ll$-predecessor $p_{x}\in C$\emph{:}

$\qquad\qquad\qquad p_{x}\neq x,$ $p_{x}\ll x$ and $[p_{x},x]\cap
C=\{p_{x},x\}.\smallskip$

$3)$\emph{ }$C$ is an \textbf{upper }$\ll$\textbf{-gap chain} if each $x\in
C\diagdown\{b\}$ has an immediate $\ll$-successor $s_{x}\in C$\emph{:}

$\qquad\qquad\qquad x\neq s_{x},$ $x\ll s_{x}$ and $[x,s_{x}]\cap
C=\{x,s_{x}\}.$
\end{definition}

\begin{lemma}
\label{L2.1p}Let $G$ be a $\wedge$-complete set in $Q,$ $C$ be a chain in $G$
and $b:=\vee C\in G.$ Set $a=\wedge C.$\smallskip

\emph{(i) \ \ }There is a maximal chain in $G\cap\lbrack a,b]$ containing
$C.\smallskip$

\emph{(ii) \ }If $C$ is a maximal chain in $G\cap\lbrack a,b],$ it is $\wedge
$-complete. If $G$ is complete\emph{, }$C$ is complete.\smallskip

\emph{(iii) (cf. }Lemma \emph{I.34 \cite{G}) }If $C$ is a $\wedge$-complete
lower $\ll$-gap chain and $b\in C,$ then $C$ is complete$.$
\end{lemma}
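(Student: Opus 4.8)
The plan is to use that $C$ is already assumed $\wedge$-complete, so that only $\vee$-completeness of $C$ remains, and the two together give completeness. Note first that $\wedge$-completeness forces $a = \wedge C \in C$. Fix a nonempty $N \subseteq C$ and put $s = \vee N$ (which exists since $Q$ is complete); the goal is to show $s \in C$.

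First I would trap $s$ from above inside $C$. Let $D = \{x \in C : n \le x \text{ for all } n \in N\}$ be the set of $C$-upper bounds of $N$. Since $b = \vee C \in C$ satisfies $b \ge \vee N = s \ge n$ for every $n \in N$, we have $b \in D$, so $D \ne \varnothing$; here the hypothesis $b \in C$ is used. As $D \subseteq C$ is nonempty and $C$ is $\wedge$-complete, $t := \wedge D \in C$. Now $s$ is a lower bound of $D$, so $s \le t$; and $t \ge s \ge n$ for all $n \in N$ together with $t \in C$ gives $t \in D$, whence $t$ is the least element of $D$, i.e. the least upper bound of $N$ inside the chain $C$. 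It then suffices to prove $t = s$, since that yields $s = t \in C$.

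The heart of the matter, and the only place the gap condition is used, is the equality $t = s$. Suppose instead $t > s$. Since $s \ge a$ we get $t > a$, so $t \in C \setminus \{a\}$, and because $C$ is a lower $\ll$-gap chain, $t$ has an immediate $\ll$-predecessor $p_t \in C$ with $p_t \ne t$, $p_t \ll t$ (hence $p_t < t$, as $\ll \subseteq \le$), and $[p_t, t] \cap C = \{p_t, t\}$. As $p_t < t$ and $t$ is the least $C$-upper bound of $N$, the element $p_t$ is not an upper bound of $N$, so some $n \in N$ has $n \not\le p_t$; comparability in the chain $C$ then forces $p_t < n$. But $n \le s < t$, so $p_t < n < t$ with $n \in C$, contradicting $[p_t, t] \cap C = \{p_t, t\}$. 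Hence $t = s \in C$, proving that $C$ is $\vee$-complete.

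I expect the main obstacle to be spotting the right object: instead of arguing directly about $s$, one forms $t = \wedge D$, the meet of the $C$-upper bounds of $N$. The $\wedge$-completeness hypothesis is precisely what places $t$ in $C$, and the gap condition is precisely what forbids any element of $C$ strictly between $p_t$ and $t$; these two facts squeeze $t$ down onto $s$. Once this pairing is in place, the remaining verifications (that $D$ has a least element, and the chain comparabilities) are routine.
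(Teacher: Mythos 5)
Your argument for part (iii) is correct and is essentially the paper's own proof. The paper likewise forms the set $K=\{x\in C:\ y\le x\text{ for all }y\in\Gamma\}$ of upper bounds of $\Gamma$ inside $C$, notes $b\in K$, uses $\wedge$-completeness to place $d:=\wedge K$ in $C$ (your $t$), checks $d\in K$, and then invokes the immediate $\ll$-predecessor $p$ of $d$ together with the gap condition $[p,d]\cap C=\{p,d\}$ to force $\vee\Gamma=d\in C$. The only cosmetic difference is the final squeeze: the paper produces $y\in\Gamma$ with $p<y\le d$ and concludes $y=d$ directly, so that $d\in\Gamma$ is its own maximum, whereas you compare $t$ with the supremum $s=\vee N$ taken in $Q$ and derive a contradiction from $s<t$ via $p_t<n\le s<t$. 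These are the same mechanism; your version even absorbs uniformly the degenerate case $N=\{a\}$, which the paper sets aside by restricting to $\Gamma\neq\{a\}$.

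The genuine defect is one of coverage: the lemma has three parts, and you have proved only (iii). Your opening claim that ``only $\vee$-completeness of $C$ remains'' holds under the hypotheses of (iii) but says nothing about (i) or (ii), where $C$ is not assumed to be a gap chain. Both omitted parts are short but must be supplied: for (i), apply Zorn's Lemma to the family of chains in $G\cap[a,b]$ containing $C$; for (ii), observe that $C^{\wedge}$ is again a chain in $G\cap[a,b]$ containing $C$ (here $\wedge$-completeness of $G$ is used), so maximality of $C$ forces $C=C^{\wedge}$, and when $G$ is complete the same argument applied to $C^{\vee}$ gives $C=C^{\vee}$, hence completeness of $C$.
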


\begin{proof}
(i) Apply Zorn's Lemma to the set of all chains in $G\cap\lbrack a,b]$ that
contain $C.$

(ii) As $C^{\wedge}$ is a chain in $G\cap\lbrack a,b]$ containing $C$ and $C$
is maximal, $C=C^{\wedge}$. If $G$ is complete then $C^{\vee}\subseteq
G\cap\lbrack a,b].$ As $C^{\vee}$ is a chain containing $C$ and $C$ is
maximal, $C=C^{\vee}$. So $C$ is complete.

(iii) For $\Gamma\subseteq C,$ $\Gamma\neq\{a\},$ let $K=\left\{  x\in
C\text{: }y\leq x\text{ for all }y\in\Gamma\right\}  .$ As $b\in K,$
$K\neq\varnothing.$ As $C$ is $\wedge$-complete, $d:=\wedge K\in C.$ Then
$a<d$ and $y\leq d$ for all $y\in\Gamma.$ So $d\in K.$ As $C$ is a lower $\ll
$-gap chain, there is an immediate $\ll$-predecessor $p$ of $d$ in $C$:
$C\cap\lbrack p,d]=\{p,d\}.$ If $y\leq p$ for all $y\in\Gamma,$ then $p\in K$
and $d=\wedge K\leq p,$ a contradiction. Hence there is $y\in\Gamma$ such that
$p<y\leq d.$ So $y=d$, i.e., $\vee\Gamma=d\in C$. Thus $C$ is $\vee
$-complete.\bigskip
\end{proof}

We shall now consider a particular type of extension of lower $\ll$-gap chains.

\begin{definition}
Let $D$ be a chain, $a=\wedge D$ and $b=\vee D\in D.$ A chain $C$
\textbf{down-extends}\textit{ }$D$ if%
\begin{equation}
C\subseteq\lbrack\mathbf{0},b]\text{ and }D=C\cap\lbrack a,b],\text{ i.e.,
}C=D\cup E,\text{ where }E\text{ is a chain in }[\mathbf{0},a]\text{.}
\label{2.7}%
\end{equation}
A chain $D$ is a \textbf{maximal down-extended} lower $\ll$-gap chain
\emph{(}resp. lower $\ll$-chain\emph{)}, if there does not exist another lower
$\ll$-gap chain \emph{(}resp. lower $\ll$-chain\emph{) }that down-extends $D$.
\end{definition}

Let $Q=[0,1]\subset\mathbb{R}.$ Then ($Q,\leq)$ is a maximal chain, a maximal
down-extended lower $\leq$-chain, but not a lower $\leq$-gap chain. The set
$\mathfrak{Q}_{[0,1]}$ of all rational numbers in $[0,1]$ is a maximal
down-extended lower $\leq$-chain, but is neither a maximal lower $\leq$-chain,
nor a lower $\leq$-gap chain. The set $\{0\}\cup\mathfrak{Q}_{[\frac{1}{2}%
,1]}$ is a maximal down-extended lower $\leq$-chain but not a lower $\leq$-gap
chain; $\mathfrak{Q}_{[\frac{1}{2},1]}$ is a lower $\leq$-chain but not
maximal down-extended. The sets $\{0,1\}$ and $\{0,\frac{1}{2},1\}$ are
maximal down-extended lower $\leq$-gap chains and $\{\frac{1}{2},1\}$ is a
lower $\ll$-gap chain but not maximal down-extended.

\begin{proposition}
\label{P2.6}Let $G$ be a $\wedge$-complete set in $Q,$ let $S$ be a
complete$,$ lower $\ll$-gap chain in $G$ and $b:=\vee G\in S.$ Then there is a
maximal down-extended complete\emph{,} lower $\ll$-gap chain $C$ in $G$ that%
\begin{equation}
1)\text{ down-extends }G\text{ and }G\cap\lbrack\ll,\mathfrak{p}%
]=\{\mathfrak{p}\},\text{ where }\mathfrak{p}=\wedge C\in C. \label{2.16}%
\end{equation}
If $G$ is a lower $\ll$-set\emph{,} there is a complete\textit{ }lower $\ll
$-gap chain $C$ in $G$ such that $\wedge G=\wedge C=\mathfrak{p}$ and $\vee
C=b.$
\end{proposition}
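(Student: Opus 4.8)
The plan is to obtain $C$ as a maximal element, under downward extension, of the family
\[
\mathcal{F}=\{D: D\text{ is a complete lower }\ll\text{-gap chain in }G,\ b\in D,\ D\text{ down-extends }S\},
\]
partially ordered by $D_{1}\preceq D_{2}$ iff $D_{2}$ down-extends $D_{1}$ (equivalently $D_{1}=D_{2}\cap[\wedge D_{1},b]$). First I would check that $\preceq$ is a partial order and that $S\in\mathcal{F}$, so $\mathcal{F}\neq\varnothing$; here $\vee S=\vee G=b\in S$ because $S\subseteq G$ and $b\in S$. Note that $S$, and every member of $\mathcal{F}$, reaches the top $\vee G=b$, so the chains in $\mathcal{F}$ are exactly the admissible downward extensions of $S$ inside $G$.

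The crux is verifying that $\preceq$-totally ordered families $\{D_{i}\}$ in $\mathcal{F}$ have upper bounds. Such members are nested under inclusion, so $D:=\cup_{i}D_{i}$ is a chain with $D\cap[\wedge D_{i},b]=D_{i}$ for every $i$. The difficulty is that $D$ need not be $\wedge$-complete: if the bottoms $\wedge D_{i}$ strictly decrease toward an unattained infimum $a^{\ast}=\wedge\{\wedge D_{i}\}$, then $a^{\ast}\notin D$. Since $G$ is $\wedge$-complete we have $a^{\ast}\in G$, and I would take $D^{\ast}:=D\cup\{a^{\ast}\}$ as the candidate bound. One then checks: (a) $D^{\ast}$ is $\wedge$-complete, for a nonempty $N\subseteq D^{\ast}$ either $\wedge N=a^{\ast}$, or $\wedge N>a^{\ast}$ forces $N\subseteq D_{i_{0}}$ for some $i_{0}$ with $\wedge D_{i_{0}}<\wedge N$, whence $\wedge N\in D_{i_{0}}$; (b) $D^{\ast}$ is a lower $\ll$-gap chain, since any $x\in D^{\ast}\setminus\{a^{\ast}\}$ lies in some $D_{i}$ with $x\neq\wedge D_{i}$ (otherwise $x=a^{\ast}$), and its immediate $\ll$-predecessor there stays immediate in $D^{\ast}$ because the down-extension relations $D_{i}=D\cap[\wedge D_{i},b]$ leave no member of $D^{\ast}$ strictly between it and $x$; (c) $D^{\ast}$ down-extends $S$ and $b\in D^{\ast}$. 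By Lemma \ref{L2.1p}(iii), (a)+(b) together with $b\in D^{\ast}$ upgrade $\wedge$-completeness to completeness, so $D^{\ast}\in\mathcal{F}$ and $D_{i}\preceq D^{\ast}$ for all $i$. Zorn's Lemma then yields a maximal $C\in\mathcal{F}$; set $\mathfrak{p}=\wedge C$, which lies in $C$ as $C$ is complete. This upper-bound step is the main obstacle.

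It remains to identify the maximal $C$. I would prove $G\cap[\ll,\mathfrak{p}]=\{\mathfrak{p}\}$ by contradiction: if $y\in G$ satisfies $y\ll\mathfrak{p}$ and $y\neq\mathfrak{p}$, then $y<\mathfrak{p}$, and $C':=C\cup\{y\}$ is again a complete lower $\ll$-gap chain down-extending $S$, because $y$ becomes the new bottom and supplies $\mathfrak{p}$ with an immediate $\ll$-predecessor while leaving every other immediate predecessor intact, with completeness again following from $\wedge$-completeness and Lemma \ref{L2.1p}(iii). Since $C\prec C'$, this contradicts maximality. The same computation shows that no lower $\ll$-gap chain in $G$ properly down-extends $C$ — such an extension would hand $\mathfrak{p}$ an immediate $\ll$-predecessor in $G$ — so $C$ is maximal down-extended, which establishes the first assertion. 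Finally, if $G$ is a lower $\ll$-set, then every $x\in G\setminus\{\wedge G\}$ has a proper $\ll$-predecessor in $G$; as $\mathfrak{p}\in G$ has none, necessarily $\mathfrak{p}=\wedge G$, giving a complete lower $\ll$-gap chain with $\wedge C=\mathfrak{p}=\wedge G$ and $\vee C=b$, which is the second assertion.
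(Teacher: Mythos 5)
Your proposal is correct and follows essentially the same route as the paper: Zorn's Lemma applied to the complete lower $\ll$-gap chains in $G$ down-extending $S$, with upper bounds built as the union of a totally ordered family together with its infimum (your $a^{\ast}$ is the paper's $\wedge K$), completeness upgraded via Lemma \ref{L2.1p}(iii), and the same one-point-extension argument for $G\cap[\ll,\mathfrak{p}]=\{\mathfrak{p}\}$ and for the lower $\ll$-set case. The only nitpick is that in step (a) the clause $\wedge D_{i_{0}}<\wedge N$ need not hold (e.g.\ $N=D_{i_{0}}$) and is not needed — $N\subseteq D_{i_{0}}$ with $D_{i_{0}}$ being $\wedge$-complete already gives $\wedge N\in D_{i_{0}}$.
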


\begin{proof}
Let $\Gamma_{S}$ be the set of all complete,\textit{ }lower $\ll$-gap chains
in $G$ that down-extend $S.$ It is partially ordered by the inclusion
"down-extends". Let $\Omega$ be a linearly ordered subset of $\Gamma_{S}.$ Set
$K=\cup_{_{C\in\Omega}}C$ and $T=K\cup\{\wedge K\}.$ Then $T$ is a chain in
$G$ and it down-extends each $D\in\Omega$, as%
\[
T\cap\lbrack\wedge D,b]=(\cup_{_{C\in\Omega}}C)\cap\lbrack\wedge
D,b]=\cup_{_{C\in\Omega}}(C\cap\lbrack\wedge D,b])\overset{(\ref{2.7})}{=}D.
\]

Let $x\in K$ and $\wedge K<x.$ Then $x\in D$ and $\wedge D<x$ for some
$D\in\Omega.$ As $D$ is a lower $\ll$-gap chain, there is $p\in D$ such that
$p\ll x\neq p$ and $D\cap\lbrack p,x]=\{p,x\}$. By the above,%
\[
T\cap\lbrack p,x]=T\cap([\wedge D,b]\cap\lbrack p,x])=D\cap\lbrack
p,x]=\{p,x\},
\]
so that $T$ is a lower $\ll$-gap chain in $G$ that down-extends $S.$

To prove that $T$ is $\wedge$-complete, let $E\subseteq T$. If $\wedge K\in E$
then $\wedge E=\wedge K\in T\mathfrak{.}$ If $\wedge K\notin E$, set $E_{_{C}%
}=C\cap E$ for $C\in\Omega.$ As each $C$ is complete, $e_{_{C}}$:$=\wedge
E_{_{C}}\in C$. If all $e_{_{C}}$ lie in a chain $D\in\Omega$ then, as $D$ is
complete,%
\[
\wedge E\overset{(\ref{2.5})}{=}\wedge_{_{C\in\Omega}}\left(  \wedge E_{_{C}%
}\right)  =\wedge_{_{C\in\Omega}}e_{_{C}}\in D\subseteq T\mathfrak{.}%
\]
If not then, for each $D\in\Omega$, there is $C\in\Omega$ such that
$e_{C}\notin D.$ Thus $\wedge C\leq e_{_{C}}<\wedge D$. Hence
\[
\wedge K\overset{(\ref{2.5})}{=}\wedge_{_{C\in\Omega}}\left(  \wedge C\right)
\leq\wedge_{_{C\in\Omega}}e_{_{C}}\leq\wedge_{_{D\in\Omega}}\wedge D=\wedge
K.
\]
Therefore $\wedge E=\wedge_{_{C\in\Omega}}e_{_{C}}=\wedge K\in T\mathfrak{,}$
so that $T$ is $\wedge$-complete. By Lemma \ref{L2.1p}(iii), $T$ is complete.
Thus $T\in\Gamma_{S}$ and $T$ is a supremum of $\Omega.$ By Zorn's lemma,
$\Gamma_{S}$ has a maximal element -- a complete,\textit{ }lower $\ll$-gap
chain $C$ in $G$ that down-extends $S.$

Set $\mathfrak{p}=\wedge C$. As $C$ is complete, $\mathfrak{p}\in C\subseteq
G.$ If $x\ll\mathfrak{p}$ for some $x\in G,$ $\mathfrak{p}\neq x,$ then
$C\cup\{x\}\in\Gamma_{S}$ and larger than $C.$ As $C$ is maximal, such $x$
does not exist. So $G\cap\lbrack\ll,\mathfrak{p}]=\{\mathfrak{p}\}.$

\ Let $G$ be a lower $\ll$-set. If $\mathfrak{p}\neq\wedge G$ then there is
$x\in G$ such that $x\neq\mathfrak{p}$ and $x\ll\mathfrak{p}$ which
contradicts (\ref{2.16})$.$ Hence $\mathfrak{p}=\wedge G.$ The chain $S=\{b\}$
is a complete$,$ lower $\ll$-gap chain in $G$ containing $b.$ By (i) and the
above argument, there is a complete\emph{,} lower $\ll$-gap chain $C$ in $G$
that down-extends $S$ and $\mathfrak{p}=\wedge C=\wedge G.$\bigskip
\end{proof}

Let $\gamma$ be an ordinal number. A set $\left(  x_{\lambda}\right)
_{1\leq\lambda\leq\gamma}$ in $Q$ is a\emph{ }\textit{descending
}(respectively, an \textit{ascending}) $\ll$\textit{-series} from $a$ to $b$
if%
\begin{align}
x_{\lambda+1}  &  \ll x_{\lambda}\text{ for }\lambda<\gamma,\text{ }x_{\beta
}=\wedge_{\lambda<\beta}(x_{\lambda})\text{ for limit ordinals }\beta,\text{
}x_{1}=b,\text{ }x_{\gamma}=a;\nonumber\\
\text{respectively, }x_{\lambda}  &  \ll x_{\lambda+1}\text{ for }%
\lambda<\gamma,\text{ }x_{\beta}=\vee_{\lambda<\beta}(x_{\lambda})\text{ for
limit ordinals }\beta,\text{ }x_{1}=a,\text{ }x_{\gamma}=b. \label{2.12}%
\end{align}

\begin{proposition}
\label{p10}Let $\ll\mathbf{\in}$ \emph{Rel}$\left(  Q,\leq\right)  $ and $C$
be a complete chain in $Q$ from $a$ to $b.$ Then \smallskip

\emph{(i)} $C$ is an upper $\ll$-gap chain if and only if it is an ascending
$\ll$-series.\smallskip

\emph{(ii) }$C$ is a lower $\ll$-gap chain if and only if it is a descending
$\ll$-series.
\end{proposition}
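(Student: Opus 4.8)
The plan is to prove part (i) in full and deduce part (ii) by the Duality Principle, interchanging $\vee$ with $\wedge$, suprema with infima, \emph{ascending} with \emph{descending}, and \emph{upper} with \emph{lower}. Throughout I use the standing convention that $\ll$ is reflexive and stronger than $\leq$, so $\ll\,\subseteq\,\leq$, and the fact that completeness of $C$ gives $a=\wedge C=\min C\in C$ and $b=\vee C=\max C\in C$. Within (i) the implication ``ascending $\ll$-series $\Rightarrow$ upper $\ll$-gap chain'' is routine, whereas ``upper $\ll$-gap chain $\Rightarrow$ ascending $\ll$-series'' requires a transfinite enumeration of $C$ and is the heart of the matter.

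For the hard direction, assume $C$ is an upper $\ll$-gap chain and define a family $(x_{\lambda})$ in $C$ by transfinite recursion: put $x_{1}=a$; given $x_{\lambda}$ with $x_{\lambda}\neq b$, let $x_{\lambda+1}=s_{x_{\lambda}}\in C$ be the immediate $\ll$-successor supplied by Definition \ref{D3} (this is where the hypothesis is used, and it yields $x_{\lambda}\ll x_{\lambda+1}$ with $[x_{\lambda},x_{\lambda+1}]\cap C=\{x_{\lambda},x_{\lambda+1}\}$); at a limit $\beta$ put $x_{\beta}=\vee_{\lambda<\beta}x_{\lambda}$, which lies in $C$ since $C$ is $\vee$-complete, and halt at the first index $\gamma$ with $x_{\gamma}=b$. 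The sequence is strictly increasing: at successors $x_{\lambda}<x_{\lambda+1}$, and at limits $x_{\beta}\geq x_{\lambda+1}>x_{\lambda}$ for each $\lambda<\beta$. As $C$ is a set, the recursion cannot run through all ordinals, so it terminates, which can only happen on reaching $b$; thus $x_{1}=a$, $x_{\gamma}=b$, and the limit clause of (\ref{2.12}) hold by construction, giving an ascending $\ll$-series. To see that the $x_{\lambda}$ exhaust $C$, I would show by transfinite induction that every $\{x_{\mu}:\mu\leq\lambda\}$ is downward closed in $C$: at successor steps the gap condition $[x_{\lambda},x_{\lambda+1}]\cap C=\{x_{\lambda},x_{\lambda+1}\}$ forbids any element of $C$ strictly between $x_{\lambda}$ and $x_{\lambda+1}$, and at limit steps any $y\in C$ with $y<x_{\beta}=\vee_{\lambda<\beta}x_{\lambda}$ fails to be an upper bound of the $x_{\lambda}$, hence $y\leq x_{\lambda}$ for some $\lambda<\beta$ and is already listed. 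The terminal segment is downward closed and contains $b=\max C$, so it equals $C$.

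For the converse, let $C$ be an ascending $\ll$-series $(x_{\lambda})_{1\leq\lambda\leq\gamma}$. From $\ll\,\subseteq\,\leq$ and the limit clause one obtains by transfinite induction that the indexing is monotone, $\mu\leq\nu\Rightarrow x_{\mu}\leq x_{\nu}$. Fix $x\in C\setminus\{b\}$, write $x=x_{\mu}$, and let $\nu_{0}$ be the least index exceeding $\mu$ with $x_{\nu_{0}}>x_{\mu}$ (it exists as $x_{\gamma}=b>x$). A short case analysis shows $\nu_{0}$ is a successor $\nu'+1$: were it a limit, minimality would force $x_{\lambda}=x_{\mu}$ for all $\mu<\lambda<\nu_{0}$ and hence $x_{\nu_{0}}=\vee_{\lambda<\nu_{0}}x_{\lambda}=x_{\mu}$, a contradiction. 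Then $x_{\mu}=x_{\nu'}\ll x_{\nu_{0}}$, and any $x_{\eta}\in C$ with $x_{\mu}<x_{\eta}<x_{\nu_{0}}$ would satisfy $\mu<\eta<\nu_{0}$ by monotonicity and so $x_{\eta}=x_{\mu}$ by minimality, a contradiction; thus $[x_{\mu},x_{\nu_{0}}]\cap C=\{x_{\mu},x_{\nu_{0}}\}$ and $x_{\nu_{0}}$ is the desired immediate $\ll$-successor. Hence $C$ is an upper $\ll$-gap chain.

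The main obstacle is precisely the hard direction: arranging the transfinite recursion so that it simultaneously terminates (strict monotonicity of an injection into a set), realizes the prescribed suprema at limit ordinals, and skips no element of $C$ (the initial-segment induction). Once (i) is in place, (ii) follows verbatim by duality, replacing immediate $\ll$-successors by immediate $\ll$-predecessors and suprema by infima, so no separate computation is needed.
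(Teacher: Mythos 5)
Your proof is correct and takes essentially the same route as the paper's: a transfinite enumeration of $C$ that uses the immediate $\ll$-successors at successor steps and completeness of $C$ at limit steps, with (ii) obtained by duality. The only organizational difference is bookkeeping --- the paper first shows that every nonempty subset of a complete upper $\ll$-gap chain contains its infimum and then peels off the residual set $G_{\beta}$, which makes exhaustion of $C$ automatic, whereas you run the recursion forward via $x_{\beta}=\vee_{\lambda<\beta}x_{\lambda}$ and verify exhaustion afterwards by your downward-closedness induction; both are sound, and your explicit treatment of the converse (including possible repetitions $x_{\lambda+1}=x_{\lambda}$ permitted by reflexivity of $\ll$) spells out a step the paper dismisses as clear.
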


\begin{proof}
(i) Any ascending $\ll$-series $\left(  x_{\lambda}\right)  _{1\leq\lambda
\leq\gamma}$ is, clearly, a $\vee$-complete, upper $\ll$-gap chain. By dual to
Lemma \ref{L2.1p}(iii), it is complete. Conversely, let $C$ be a complete,
upper $\ll$-gap chain from $a$ to $b$. Let $\varnothing\neq G\neq\left\{
b\right\}  $ be any subset of $C.$ Then $\wedge G\in C,$ as $C$ is complete,
and $\wedge G\neq b$. Hence there is $d\in C$ such that $[\wedge G,d]\cap C$
is a gap. So $\wedge G\in G$.

Suppose that, for some ordinal $\beta,$
\begin{equation}
C=C_{\beta}\cup G_{\beta},\text{ }G_{\beta}\cap C_{\beta}=\varnothing\text{
and }\vee C_{\beta}\leq\wedge G_{\beta}, \label{2.17}%
\end{equation}
where $C_{\beta}=\left(  c_{\alpha}\right)  _{\alpha<\beta}$ is an ascending
$\ll$-series. Let $G_{\beta}\neq\varnothing.$ If there is $\alpha_{0}$ such
that $\beta=\alpha_{0}+1$ then $c_{\alpha_{0}}$ is the largest element in
$C_{\beta}.$ As $C$ is an upper $\ll$-gap chain, take $c_{\beta}=c_{\alpha
_{0}+1}$ equal to an immediate $\ll$-successor of $c_{\alpha_{0}}.$ Clearly,
$c_{\beta}$ belongs to $G_{\beta}$. If such $\alpha_{0}$ does not exist then
$\beta$ is a limit ordinal. Set $c_{\beta}=\wedge G_{\beta}$. By the above,
$c_{\beta}\in G_{\beta}$ and $c_{\alpha}<c_{\beta}$ for all $\alpha<\beta$. In
both cases, extend $C_{\beta}$ to the ascending $\ll$-series $C_{\beta
+1}=\left(  c_{\alpha}\right)  _{\alpha<\beta+1}$ and restrict $G_{\beta}$ to
$G_{\beta+1}=G_{\beta}\diagdown\left\{  c_{\beta}\right\}  $. Then
$C=C_{\beta+1}\cup G_{\beta+1}.$

Let $\gamma$ be a limit ordinal. Suppose that there are $\{C_{\beta},G_{\beta
}\},$ $\beta<\gamma,$ satisfying (\ref{2.17}). Set $C_{\gamma}=\cup
_{\beta<\gamma}C_{\beta}$ and $G_{\gamma}=\cap_{\beta<\gamma}G_{\beta}.$ It is
easy to see that $C=C_{\gamma}\cup G_{\gamma},$ $C_{\gamma}\cap G_{\gamma
}=\varnothing$ and $c\leq g$ for all $c\in C_{\gamma}$ and $g\in G_{\gamma},$
so that $C_{\gamma}\leq\wedge G_{\gamma}.$ Thus $\{C_{\gamma},G_{\gamma}\}$
satisfies (\ref{2.17}). Moreover, $C_{\gamma}$ is an ascending $\ll$-series.
Thus, by transfinite induction, (\ref{2.17}) holds for all ordinals. As $C$ is
a set, find an ordinal $\gamma$ such that $G_{\gamma}=\varnothing.$ Then
$C=C_{\gamma}$ is an ascending $\ll$-series and $b=\vee C=a_{\gamma}$. The
proof of (ii) is similar.\bigskip
\end{proof}

For each $\ll$ $\in$ Ref($Q)$, define the reflexive relations $\ll
^{\triangleleft}$ and $\ll^{\triangleright}$ from Ref($Q)$:%
\begin{align}
a  &  \ll^{\triangleleft}b\text{ if }a<b\text{ and there is a complete lower
}\ll\text{-gap chain from }a\text{ to }b;\nonumber\\
a  &  \ll^{\triangleright}b\text{ if }a<b\text{ and there is a complete upper
}\ll\text{-gap chain from }a\text{ to }b. \label{2.18}%
\end{align}
If $a\ll b$ then $\{a,b\}$ is a complete$,$ lower and upper $\ll$-gap chain,
i.e., $a\ll^{\triangleleft}b$ and $a\ll^{\triangleright}b.$ Thus%
\begin{equation}
\ll\text{ }\subseteq\text{ }\ll^{\triangleleft}\text{ and }\ll\text{
}\subseteq\text{ }\ll^{\triangleright}\text{ }(\text{see (\ref{0})}).
\label{2,8}%
\end{equation}

\begin{theorem}
\label{P2.1}\emph{(i) }Let $\ll$ be a relation from \emph{Ref(}$Q).$ Then
$\ll^{\triangleleft}$ and $\ll^{\triangleright}$ are orders\emph{,}%
\begin{equation}
\ll^{\text{\emph{lo}}}\text{ }\subseteq\text{ }\ll^{\triangleleft}=\left(
\ll^{\triangleleft}\right)  ^{\triangleleft}\text{ and }\overrightarrow{\ll
}=\overrightarrow{\ll^{\triangleleft}};\text{ \ }\ll^{\text{\emph{up}}}\text{
}\subseteq\text{ }\ll^{\triangleright}=\left(  \ll^{\triangleright}\right)
^{\triangleright}\text{ and }\overleftarrow{\ll}=\overleftarrow{\ll
^{\triangleright}}. \label{2.19}%
\end{equation}

\emph{(ii) \ }If $\ll$ is a down-expanded order\emph{,} $\ll$ $=$
$\ll^{\triangleleft}$. If $\ll$ is a dual $\mathbf{T}$-order then $\ll$ $=$
$\ll^{\triangleleft}$ $=$ $\ll^{\text{\emph{lo}}}$.\smallskip

\emph{(iii) }If $\ll$ is an up-expanded order then $\ll$ $=$ $\ll
^{\triangleright}$. If $\ll$ is a $\mathbf{T}$-order then $\ll$ $=$
$\ll^{\triangleright}$ $=$ $\ll^{\text{\emph{up}}}$.
\end{theorem}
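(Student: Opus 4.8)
The plan is to prove everything for $\ll^{\triangleleft}$, $\ll^{\text{lo}}$ and $\overrightarrow{\ll}$; the assertions for $\ll^{\triangleright}$, $\ll^{\text{up}}$ and $\overleftarrow{\ll}$ in (i), as well as part (iii), then follow from the Duality Principle applied to part (ii). First I would check that $\ll^{\triangleleft}$ is an order, i.e. transitive (reflexivity is built into Ref$(Q)$). Given $a\ll^{\triangleleft}b\ll^{\triangleleft}c$ with $a<b<c$, take complete lower $\ll$-gap chains $C_{1}$ from $a$ to $b$ and $C_{2}$ from $b$ to $c$ and glue them at $b$ into $C=C_{1}\cup C_{2}$. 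Since the elements of $C_{2}$ lie above $b$ and those of $C_{1}$ below it, every $x\in C\diagdown\{a\}$ retains an immediate $\ll$-predecessor in $C$ (the one it had in $C_{1}$ or $C_{2}$), so $C$ is a lower $\ll$-gap chain; moreover $C$ is $\wedge$-complete ($\wedge N=\wedge(N\cap C_{1})\in C_{1}$ whenever $N\cap C_{1}\neq\varnothing$, and otherwise $\wedge N\in C_{2}$) and $\vee C=c\in C$, so $C$ is complete by Lemma \ref{L2.1p}(iii). Hence $a\ll^{\triangleleft}c$.

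For $\ll^{\text{lo}}\subseteq\ll^{\triangleleft}$: if $a\ll^{\text{lo}}b$ then $G=[a,b]$ is a $\wedge$-complete lower $\ll$-set with $\vee G=b\in G$, so the final assertion of Proposition \ref{P2.6} furnishes a complete lower $\ll$-gap chain from $\wedge G=a$ to $b$; thus $a\ll^{\triangleleft}b$. For $\overrightarrow{\ll}=\overrightarrow{\ll^{\triangleleft}}$ one inclusion is free: since $\ll\subseteq\ll^{\triangleleft}$ by (\ref{2,8}), Proposition \ref{Lat1}(iii) gives $\overrightarrow{\ll^{\triangleleft}}\subseteq\overrightarrow{\ll}$. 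Conversely, if $a\overrightarrow{\ll}b$ but $x\ll^{\triangleleft}b$ for some $x\in[a,b)$, then the top $b$ of the corresponding gap chain has an immediate $\ll$-predecessor $p\in[a,b)$ with $p\ll b$, contradicting $[\ll,b]\cap[a,b]=\{b\}$; hence $\overrightarrow{\ll}\subseteq\overrightarrow{\ll^{\triangleleft}}$.

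The main work is the idempotency $\ll^{\triangleleft}=(\ll^{\triangleleft})^{\triangleleft}$; the inclusion $\subseteq$ is again (\ref{2,8}) applied to the relation $\ll^{\triangleleft}$, so only $(\ll^{\triangleleft})^{\triangleleft}\subseteq\ll^{\triangleleft}$ needs proof. Suppose $a\,(\ll^{\triangleleft})^{\triangleleft}\,b$. By Proposition \ref{p10}(ii) a complete lower $\ll^{\triangleleft}$-gap chain is a descending $\ll^{\triangleleft}$-series $(x_{\lambda})_{1\le\lambda\le\gamma}$ from $a$ to $b$, and each successor step $x_{\lambda+1}\ll^{\triangleleft}x_{\lambda}$ can be refined, again by Proposition \ref{p10}(ii), to a descending $\ll$-series $C_{\lambda}$ from $x_{\lambda+1}$ to $x_{\lambda}$. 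I would then concatenate the $C_{\lambda}$ in decreasing order of $\lambda$ and verify that the result is a descending $\ll$-series from $a$ to $b$: the successor conditions come from the individual $C_{\lambda}$ and from the gluing at the $x_{\lambda}$, while at a limit ordinal $\beta$ of the outer series the required meet condition holds because the successors $\lambda+1$ are cofinal below $\beta$, whence the meet of all previously listed elements equals $\wedge_{\lambda<\beta}x_{\lambda}=x_{\beta}$. By Proposition \ref{p10}(ii) this series is a complete lower $\ll$-gap chain, so $a\ll^{\triangleleft}b$. I expect this transfinite concatenation — reindexing the glued family by a single ordinal and checking the limit-meet conditions — to be the one genuinely delicate point.

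Finally, for (ii) let $\ll$ be a down-expanded order; only $\ll^{\triangleleft}\subseteq\ll$ is needed. Given a descending $\ll$-series $(x_{\lambda})$ from $a$ to $b$ witnessing $a\ll^{\triangleleft}b$, I would prove $x_{\lambda}\ll b$ for all $\lambda$ by transfinite induction: transitivity handles successors, and at limits $\{x_{\lambda}:\lambda<\beta\}\subseteq[\ll,b]$ together with $\wedge$-completeness of $[\ll,b]$ (down-expandedness) gives $x_{\beta}=\wedge_{\lambda<\beta}x_{\lambda}\in[\ll,b]$; taking $\lambda=\gamma$ yields $a\ll b$, so $\ll=\ll^{\triangleleft}$. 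If in addition $\ll$ is a dual $\mathbf{T}$-order it is also down-contiguous, so $\ll\subseteq\ll^{\text{lo}}$ by Proposition \ref{p11}(i); combined with the already proved $\ll^{\text{lo}}\subseteq\ll^{\triangleleft}=\ll$ this forces $\ll=\ll^{\text{lo}}=\ll^{\triangleleft}$. Part (iii) and the remaining assertions of (i) then follow by duality.
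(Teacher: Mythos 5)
Your proposal is correct and follows essentially the same route as the paper's proof: Proposition \ref{P2.6} applied to $[a,b]$ for $\ll^{\text{lo}}\subseteq\ll^{\triangleleft}$, the immediate-$\ll$-predecessor contradiction for $\overrightarrow{\ll}=\overrightarrow{\ll^{\triangleleft}}$, concatenation and reindexing of descending $\ll$-series (via Proposition \ref{p10}) for idempotency, transfinite induction using down-expandedness of $[\ll,b]$ for (ii), and duality for (iii). Your explicit gluing argument for transitivity and your check of the limit-ordinal meet conditions in the concatenation merely flesh out steps the paper dismisses as "clear" or handles by "renumbering."
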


\begin{proof}
(i) Clearly, $\ll^{\triangleleft}$ and $\ll^{\triangleright}$ are transitive,
so they are orders.

If $a$ $\ll^{\text{lo}}$ $b$ then $\left[  a,b\right]  $ is a complete lower
$\ll$-set. Hence, by Proposition \ref{P2.6}, there is a complete, lower $\ll
$-gap chain from $a$ to $b.$ Thus $a$ $\ll^{\triangleleft}$ $b,$ so that
$\ll^{\text{lo}}$ $\subseteq$ $\ll^{\triangleleft}$.

By (\ref{2,8}), $\ll^{\triangleleft}$ $\subseteq$ $\left(  \ll^{\triangleleft
}\right)  ^{\triangleleft}.$ Conversely, let $a$ $\left(  \ll^{\triangleleft
}\right)  ^{\triangleleft}$ $b,$ $a\neq b.$ Then there is a descending
$\ll^{\triangleleft}$-series $\left(  x_{\alpha}\right)  _{1\leq\alpha
\leq\gamma}$ such that $x_{1}=b$ and $x_{\gamma}=a$. For each ordinal
$\alpha,$ $x_{\alpha+1}\ll^{\triangleleft}x_{\alpha},$ so that there is a
descending $\ll$-series $\left(  u_{\alpha\beta}\right)  _{1\leq\beta
\leq\gamma\left(  \alpha\right)  }$ such that $u_{\alpha1}=x_{\alpha}$ and
$u_{\alpha\gamma\left(  \alpha\right)  }=x_{\alpha+1}$. Renumbering
$\cup_{1\leq\alpha\leq\gamma}\left(  u_{\alpha\beta}\right)  _{1\leq\beta
\leq\gamma\left(  \alpha\right)  },$ we get a descending $\ll$-series $\left(
v_{\lambda}\right)  _{1\leq\lambda\leq\delta}$ such that $v_{1}=b$ and
$v_{\delta}=a$. This shows that $(\ll^{\triangleleft})^{\triangleleft}$
$\subseteq$ $\ll^{\triangleleft}$. So $(\ll^{\triangleleft})^{\triangleleft}$
$=$ $\ll^{\triangleleft}$.

As $\ll\/\subseteq$ $\ll^{\triangleleft}$ by (\ref{2,8}), we have
$\overrightarrow{\ll^{\triangleleft}}\subseteq\overrightarrow{\ll}$ by
Proposition \ref{Lat1}(iii).

Let $a$ $\overrightarrow{\ll}$ $b$. Suppose that $x\ll^{\triangleleft}b$ for
some $x\in\lbrack a,b).$ By (\ref{2.18}), there is a complete lower $\ll$-gap
chain $C$ from $x$ to $b.$ Hence, by Definition \ref{D3}, $b$ has an immediate
$\ll$-predecessor\emph{ }$p\in C$:\emph{ }$a\leq x\leq p<b$ and $p\ll b.$ This
contradicts $a$ $\overrightarrow{\ll}$ $b$. So there is no $x\in\lbrack a,b)$
such that $x\ll^{\triangleleft}b,$ i.e., $a$ $\overrightarrow{\ll
^{\triangleleft}}$ $b.$ Hence $\overrightarrow{\ll}\subseteq
\overrightarrow{\ll^{\triangleleft}}$ whence $\overrightarrow{\ll
}=\overrightarrow{\ll^{\triangleleft}}$ which completes the proof of the first
part of (\ref{2.19}).

The proof of the first part of (\ref{2.19}) is similar.

(ii) By (\ref{2,8}), $\ll$ $\subseteq$ $\ll^{\triangleleft}.$ Let $a$
$\ll^{\triangleleft}$ $b.$ By (\ref{2.18}) and Proposition \ref{p10}, there is
a descending $\ll$-series $\left(  x_{\lambda}\right)  _{1\leq\lambda
\leq\gamma}$ from $a$ to $b$: $x_{\lambda+1}<x_{\lambda},$ $x_{\lambda+1}\ll
x_{\lambda}$ for $\lambda<\gamma,$ $x_{\beta}=\wedge_{\lambda<\beta
}(x_{\lambda})$ for limit ordinals $\beta,$ $x_{1}=b,$ $x_{\gamma}=a.$ Suppose
that $x_{\lambda}\ll b$ for some $\lambda.$ Then $x_{\lambda+1}\ll x_{\lambda
}\ll b.$ As $\ll$ is an order, $x_{\lambda+1}\ll b.$ Let $\beta$ be a limit
ordinal and $x_{\lambda}\ll b$ for all $\lambda<\beta.$ As $\ll$ is
down-expanded, $x_{\beta}=\wedge_{\lambda<\beta}(x_{\lambda})\ll b.$ By
transfinite induction, $a=x_{\gamma}\ll b.$ Thus $\ll^{\triangleleft}$
$\subseteq$ $\ll$ whence $\ll$ $=$ $\ll^{\triangleleft}$.

If $\ll$ is a dual $\mathbf{T}$-order, it is down-contiguous. Hence $\ll$
$\subseteq$ $\ll^{\text{lo}}$ by Proposition \ref{p11}$.$ By (\ref{2.19}),
$\ll^{\text{lo}}$ $\subseteq$ $\ll^{\triangleleft}.$ As $\ll$ $=$
$\ll^{\triangleleft},$ we have $\ll$ $=$ $\ll^{\text{lo}}$ $=$ $\ll
^{\triangleleft}$.

Part (iii) is proved by duality.\bigskip
\end{proof}

Even if $\ll$ is contiguous, the relations $\ll^{\triangleright}$ and
$\ll^{\triangleleft}$ are not necessarily contiguous.

\begin{example}
\label{E3.1}\emph{Let} $Q=\{\mathbf{0},a,b,\mathbf{1\},}$ $\mathbf{0}%
<a<\mathbf{1,}$ $\mathbf{0}<b<\mathbf{1.}$ \emph{Let }$\ll$ \emph{be a
reflexive relation in} $Q,$ $\mathbf{0}\ll a\ll\mathbf{1}$\emph{ and}
$\mathbf{0}\not \ll \mathbf{1.}$ \emph{Then }$\ll$\emph{ }is contiguous, while
$\ll^{\triangleright}$ and $\ll^{\triangleleft}$ are not contiguous\emph{.}

\emph{Indeed, }$\mathbf{0}\ll^{\triangleright}\mathbf{1}.$ \emph{However,}
$[\mathbf{0,1]\nsubseteq\lbrack\ll^{\triangleright},1]}$ \emph{and
}$[\mathbf{0,1]\nsubseteq\lbrack0,\ll^{\triangleright}],}$ \emph{since}
$b\in\lbrack\mathbf{0,1],}$ $\mathbf{0\not \ll ^{\triangleright}}b$ \emph{and}
$b\not \ll ^{\triangleright}\mathbf{1.}$ \emph{Thus }$\ll^{\triangleright}%
$\emph{ is not contiguous. Similarly, }$\ll^{\triangleleft}$ \emph{is not
contiguous. \ \ }$\blacksquare$
\end{example}

If, however, $Q$ is a complete \textit{chain} and $\ll$ is contiguous then
$\ll^{\triangleright}$ and $\ll^{\triangleleft}$ are contiguous.

\begin{lemma}
\label{L3.4}Let $Q$ be a complete chain. If $\ll$ from \emph{Ref}$(Q)$ is
up-contiguous then the relations $\ll^{\triangleright},$ $\ll^{\triangleleft}$
are up-contiguous. If $\ll$ is down-contiguous\emph{,} $\ll^{\triangleright},$
$\ll^{\triangleleft}$ are down-contiguous.
\end{lemma}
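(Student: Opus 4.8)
The plan is to derive all four assertions from a single ``surgery'' on gap chains, exploiting that in a complete \emph{chain} $Q$ every $z\in[a,b]$ either lies on a given chain $C$ from $a$ to $b$ or sits strictly inside one, and only one, gap of $C$. Since order-reversal sends ``up-contiguous'' to ``down-contiguous'' and $\triangleright$ to $\triangleleft$, the Duality Principle \cite[Theorem $1.3'$]{Sk} reduces the four statements to the two up-contiguity assertions, and I would prove those for $\ll^{\triangleright}$ and $\ll^{\triangleleft}$ in parallel, the only bookkeeping difference being whether one tracks immediate $\ll$-successors or immediate $\ll$-predecessors (Definition \ref{D3}).

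So fix $a\ll^{\triangleright}b$ (resp. $a\ll^{\triangleleft}b$) witnessed by a complete upper (resp. lower) $\ll$-gap chain $C$ from $a$ to $b$ as in (\ref{2.18}), and take $z\in[a,b]$; the goal is a complete upper (resp. lower) $\ll$-gap chain from $z$ to $b$, which yields $z\ll^{\triangleright}b$ (resp. $z\ll^{\triangleleft}b$). Because $Q$ is a chain, $C=L\cup U$ with $L=\{x\in C:x\le z\}$, $U=\{x\in C:x\ge z\}$, and completeness of $C$ places $z''=\vee L$ and $z'=\wedge U$ in $C$. If $z\in C$, then $z=z''=z'$ and the truncation $C\cap[z,b]$ is already a complete gap chain from $z$ to $b$: every successor/predecessor link of $C$ above $z$ is inherited, and no new link is needed, so in this case no contiguity hypothesis is used at all. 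If $z\notin C$, then $z''<z<z'$ and $[z'',z']\cap C=\{z'',z'\}$ is a genuine gap; reading off the immediate-successor (resp. predecessor) condition at $z'$ gives exactly the link $z''\ll z'$.

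The decisive step, and the only place the hypothesis enters, is this gap case. Up-contiguity of $\ll$ upgrades $z''\ll z'$ (with $z\in[z'',z']$) to $z\ll z'$. I then set $C''=\{z\}\cup U$ and verify it is a complete upper (resp. lower) $\ll$-gap chain from $z$ to $b$: it is complete because $z$ is its least element and $U=C\cap[z',b]$ is complete; its least element $z$ has $z'$ as immediate $\ll$-successor (resp. $z'$ has $z$ as immediate $\ll$-predecessor), using $z\ll z'$ and $[z,z']\cap C''=\{z,z'\}$; and every remaining gap is copied verbatim from $C$ since it avoids $z$. This gives $z\ll^{\triangleright}b$ (resp. $z\ll^{\triangleleft}b$) and proves up-contiguity. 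The two down-contiguity statements then follow by the Duality Principle; concretely they amount to the mirror surgery that fixes $a$ and splices $z$ into $L$ to build a chain from $a$ to $z$, where down-contiguity supplies the seam link $z''\ll z$ in place of $z\ll z'$. I expect the verification of the gap case, producing the single seam link from contiguity and checking that the splice neither creates spurious comparabilities nor breaks a gap elsewhere, to be the main point; everything else is truncation and completeness bookkeeping, and Proposition \ref{p10} is available should one prefer to phrase the chains as ascending/descending $\ll$-series.
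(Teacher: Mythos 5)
Your proof is correct and takes essentially the same route as the paper's: both locate $z$ in the unique gap $[c,c']$ of the complete $\ll$-gap chain via $c=\vee\{t\in C:t\le z\}$ and $c'=\wedge\{t\in C:t\ge z\}$ (using completeness of $C$ and linearity of $Q$), extract the seam link $c\ll c'$ from the gap-chain condition, upgrade it to $z\ll c'$ (resp. $c\ll z$) by contiguity, and splice $z$ into the truncated chain. The only differences are cosmetic: you treat the case $z\in C$ explicitly and dispatch the remaining assertions by duality, where the paper argues the down-contiguous case for $\ll^{\triangleright}$ directly and writes \emph{similarly} for $\ll^{\triangleleft}$.
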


\begin{proof}
Let $a\ll^{\triangleright}b$ and let $T$ be a complete upper $\ll$-gap chain
from $a$ to $b.$ Then $a\in\lbrack\ll^{\triangleright},b].$ Let $x\in(a,b].$
Set $c=\vee\{t\in T$: $t\leq x\}$ and $c^{\prime}=\wedge\{t\in T$: $x\leq
t\}.$ As $T$ is complete, $c,c^{\prime}\in T$ and $c\leq x\leq c^{\prime}.$ If
there is $r\in T$ such that $c<r<c^{\prime},$ then either $x\leq r,$ or $r<x,$
as $Q$ is a chain. If $x\leq r$ then $c^{\prime}\leq r,$ a contradiction.
Similarly, the condition $r<x$ gives a contradiction. Thus $[c,c^{\prime
}]_{_{T}}$ is a gap, so that $c^{\prime}=c_{s}$ is the immediate $\ll
$-successor of $c$ in $T.$

Let $\ll$ be up-contiguous. Then $c\ll c_{s}$ and $c\leq x\leq c_{s}$ imply
$x\ll c_{s}.$ Hence $\{x\}\cup\{t\in T$: $c_{s}\leq t\}$ is a complete upper
$\ll$-gap chain from $x$ to $b.$ Thus $x\ll^{\triangleright}b$ and
$\ll^{\triangleright}$ is up-contiguous.

If $\ll$ is down-contiguous then $c\ll c_{s}$ and $c\leq x\leq c_{s}$ imply
$c\ll x.$ Hence $\{t\in T$: $t\leq c\}\cup\{x\}$ is a complete upper $\ll$-gap
chain from $a$ to $x.$ Thus $a\ll^{\triangleright}x$ and $\ll^{\triangleright
}$ is down-contiguous.

Similarly, if $\ll$ be down-contiguous then $\ll^{\triangleright},$
$\ll^{\triangleleft}$ are down-contiguous.\bigskip
\end{proof}

By Theorem \ref{P2.1}, if $\ll$ is a down-expanded order then $\ll
^{\triangleleft}=$ $\ll$ is down-expanded. If, however, $\ll$ is \textbf{not}
down-expanded, $\ll^{\triangleleft}$ is not necessarily down-expanded even if
$Q$ is a complete chain.

\begin{example}
\label{E3.2}Let $Q$ be a complete chain. If $\ll$ is \textbf{not}
down-expanded then $\ll^{\triangleleft}$ is not necessarily down-expanded. If
$\ll$ is \textbf{not} up-expanded then $\ll^{\triangleright}$ is not
necessarily up-expanded.\smallskip

\emph{Indeed, let }$Q=0\cup\{\frac{1}{n}\}_{n=1}^{\infty}$\emph{ be a subset
of }$[0,1]$\emph{ with usual order }$\leq.$\emph{ Then }$(Q,\leq)$ \emph{is a
complete chain. Let }$\ll$\emph{ be a reflexive relation in }$Q$\emph{ such
that only }$\frac{1}{n}\ll1$\emph{ for all }$n=1,2,...$\emph{ Then }%
\[
\ll\emph{\ }\text{\emph{is an order}},\emph{\ }\ll\emph{\ }=\emph{\ }%
\ll^{\triangleleft}\emph{\ }=\text{ }\ll^{\triangleright}\text{\emph{and}%
}\emph{\ }[\ll^{\triangleleft},1]=[\ll,1]=\left\{  1/n\right\}  _{n=1}%
^{\infty}.
\]
\emph{ Hence }$\wedge\lbrack\ll^{\triangleleft},1]=0\notin\lbrack
\ll^{\triangleleft},1].$\emph{ Thus }$[\ll^{\triangleleft},1]$\emph{ is not
}$\wedge$\emph{-complete. So }$\ll,\ll^{\triangleleft}$\emph{ are not
down-expanded.}

\emph{Similar example shows that if }$\ll$\emph{ is not up-expanded, }%
$\ll^{\triangleleft}$\emph{ is not necessarily up-expanded. \ \ }%
$\blacksquare$
\end{example}

While a relation $\ll$ may have neither $\ll$-radicals, nor dual $\ll
$-radicals, the relation $\ll^{\triangleright}$ always has $\ll
^{\triangleright}$-radicals and the relation $\ll^{\triangleleft}$ always has
dual $\ll^{\triangleleft}$-radicals in all $[a,b]\subseteq Q.$

\begin{corollary}
\label{C2.2}For $\ll$ $\in$ \emph{Ref(}$Q),$ each interval $[a,b]\subseteq Q$
has dual $\ll^{\triangleleft}$-radicals and $\ll^{\triangleright}$-radicals.
\end{corollary}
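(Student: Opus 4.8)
The plan is to obtain both radicals directly from Proposition \ref{P2.6} together with the two complement identities recorded in Theorem \ref{P2.1}(i), namely $\overrightarrow{\ll}=\overrightarrow{\ll^{\triangleleft}}$ and $\overleftarrow{\ll}=\overleftarrow{\ll^{\triangleright}}$. These identities are exactly what lets me transfer a statement about $\ll$ into the required statement about the constructed orders $\ll^{\triangleleft}$ and $\ll^{\triangleright}$. I would carry out the construction of a dual $\ll^{\triangleleft}$-radical in full and then invoke the Duality Principle for the $\ll^{\triangleright}$-radical.

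For the dual $\ll^{\triangleleft}$-radical, fix $[a,b]$ and set $G=[a,b]$. First I would check that $G$ is $\wedge$-complete: for $\varnothing\neq N\subseteq[a,b]$ one has $a\leq\wedge N\leq b$, so $\wedge N\in[a,b]$. The chain $S=\{b\}$ is then a complete lower $\ll$-gap chain in $G$ (the gap condition is vacuous since $S\diagdown\{\wedge S\}=\varnothing$) and $b=\vee G\in S$. Applying Proposition \ref{P2.6}, I get a complete lower $\ll$-gap chain $C\subseteq G$ down-extending $S$, with $\mathfrak{p}:=\wedge C\in C$, satisfying $\vee C=b$ (since $C\subseteq[\mathbf{0},b]$ and $b\in C$) and $G\cap[\ll,\mathfrak{p}]=\{\mathfrak{p}\}$. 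In particular $\mathfrak{p}\in[a,b]$. By (\ref{2.18}), the existence of the complete lower $\ll$-gap chain $C$ from $\mathfrak{p}$ to $b$ gives $\mathfrak{p}\ll^{\triangleleft}b$ (and if $\mathfrak{p}=b$ this holds by reflexivity of $\ll^{\triangleleft}$).

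It then remains to show $a\;\overrightarrow{\ll^{\triangleleft}}\;\mathfrak{p}$, and here the key step is the translation of $G\cap[\ll,\mathfrak{p}]=\{\mathfrak{p}\}$ into a complement relation. Since $[a,\mathfrak{p}]\subseteq G$, one has $[\ll,\mathfrak{p}]\cap[a,\mathfrak{p}]\subseteq[\ll,\mathfrak{p}]\cap G=\{\mathfrak{p}\}$, while the reverse inclusion $\{\mathfrak{p}\}\subseteq[\ll,\mathfrak{p}]\cap[a,\mathfrak{p}]$ is immediate; so by (\ref{A1}) we get $a\;\overrightarrow{\ll}\;\mathfrak{p}$, and the identity $\overrightarrow{\ll}=\overrightarrow{\ll^{\triangleleft}}$ from (\ref{2.19}) upgrades this to $a\;\overrightarrow{\ll^{\triangleleft}}\;\mathfrak{p}$. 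Combining, $a\;\overrightarrow{\ll^{\triangleleft}}\;\mathfrak{p}\ll^{\triangleleft}b$, so $\mathfrak{p}$ is a dual $\ll^{\triangleleft}$-radical in $[a,b]$ by (\ref{2.11}). Dually, applying the $\vee$-complete, upper-gap-chain version of Proposition \ref{P2.6} with $S=\{a\}$ produces a complete upper $\ll$-gap chain from $a$ to $\mathfrak{r}:=\vee C\in[a,b]$ with $[a,b]\cap[\mathfrak{r},\ll]=\{\mathfrak{r}\}$; this yields $a\ll^{\triangleright}\mathfrak{r}$ and, via $\overleftarrow{\ll}=\overleftarrow{\ll^{\triangleright}}$, also $\mathfrak{r}\;\overleftarrow{\ll^{\triangleright}}\;b$, so $\mathfrak{r}$ is a $\ll^{\triangleright}$-radical. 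The only genuine subtlety is the bookkeeping of this last translation step: all the real work is already inside Proposition \ref{P2.6}, whose Zorn's-lemma argument supplies the chain together with the required minimality of $\mathfrak{p}$, and the complement identities of Theorem \ref{P2.1} do the rest.
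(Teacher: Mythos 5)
Your proposal is correct and follows essentially the same route as the paper's own proof: take $S=\{b\}$, apply Proposition \ref{P2.6} to obtain a complete lower $\ll$-gap chain $C$ with $\mathfrak{p}=\wedge C$ and $[a,b]\cap[\ll,\mathfrak{p}]=\{\mathfrak{p}\}$, read off $a\;\overrightarrow{\ll}\;\mathfrak{p}\ll^{\triangleleft}b$, and upgrade via the identity $\overrightarrow{\ll}=\overrightarrow{\ll^{\triangleleft}}$ from (\ref{2.19}), with the $\ll^{\triangleright}$-radical handled by duality. Your extra bookkeeping (the $\wedge$-completeness of $[a,b]$, the vacuous gap condition for $S=\{b\}$, and the explicit restriction of $G\cap[\ll,\mathfrak{p}]=\{\mathfrak{p}\}$ to $[a,\mathfrak{p}]$) merely spells out steps the paper leaves implicit.
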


\begin{proof}
The chain $S=\{b\}$ is a complete$,$ lower $\ll$-gap chain in $[a,b]$
containing $b.$ It follows from Proposition \ref{P2.6} that there is a
complete, lower $\ll$-gap chain $C$ in $[a,b]$ that down-extends $S$ and
$[a,b]\cap\lbrack\ll,\mathfrak{p}]=\{\mathfrak{p}\},$ where $\mathfrak{p}%
=\wedge C.$ Hence $a$ $\overrightarrow{\ll}$ $\mathfrak{p}\ll^{\triangleleft
}b$ and $[a,\mathfrak{p}]\cap\lbrack\ll,\mathfrak{p}]=\{\mathfrak{p}\}$ by
(\ref{A1}). By (\ref{2.19}), $\overrightarrow{\ll}=\overrightarrow{\ll
^{\triangleleft}}.$ So $a$ $\overrightarrow{\ll^{\triangleleft}}$
$\mathfrak{p}$ and $\mathfrak{p}$ is a dual $\ll^{\triangleleft}$-radical in
$[a,b]$. Similarly, $\ll^{\triangleright}$-radicals exist.$\bigskip$
\end{proof}

For the uniqueness of the radicals in Corollary \ref{C2.2} we need some extra conditions.

\section{$\mathbf{H}$-relations $\ll$ and the corresponding $\ll
^{\triangleleft}$ and $\ll^{\triangleright}$ relations$.$}

We constructed above the relations $\ll^{\text{lo}},$ $\ll^{\text{up}},$
$\ll^{\triangleleft}$ and $\ll^{\triangleright}.$ In this section we show
that, if $\ll$ is an $\mathbf{H}$-relation then $\ll^{\text{up}}=$
$\ll^{\triangleright}$ is an $\mathbf{R}$-order; if $\ll$ is a dual
$\mathbf{H}$-relation, $\ll^{\text{lo}}=$ $\ll^{\triangleleft}$ is a dual
$\mathbf{R}$-order.

The\thinspace presence\thinspace of\thinspace upper\thinspace(lower)\thinspace
$\ll$-chains\thinspace effects\thinspace the\thinspace structure\thinspace
of\thinspace lattices.

\begin{proposition}
\label{P2.2n}Let $\ll$ be a dual $\mathbf{H}$-relation in $Q.$ Let $C$ be a
$\wedge$-complete\emph{,} lower $\ll$-chain in a $\wedge$-complete set
$G\subseteq Q.$ Let $b=\vee C\in C$ and $a=\wedge C.$ Then\smallskip

\emph{(i) \ \ }For each $z\in\lbrack\mathbf{0},b]\diagdown\lbrack
\mathbf{0},a],$ there is $c\in C$ such that $z\neq c\wedge z$ and $c\wedge
z\ll z$.

\qquad In particular$,$ the sets $[\mathbf{0},b]\diagdown\lbrack\mathbf{0},a]$
and $[a,b]$ are lower $\ll$-sets.\smallskip

\emph{(ii) \ }Each chain in $[a,b]$ larger than $C$ is a lower $\ll
$-chain\emph{.\smallskip}

\emph{(iii)\ }There is a maximal chain $S$ in $G\cap\lbrack a,b]$ containing
$C;$ it is a $\wedge$-complete\emph{,} lower $\ll$-chain$.$

$\qquad$If $G$ is complete\emph{, }$S$ is complete.
\end{proposition}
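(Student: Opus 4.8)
The plan is to treat the first statement of (i) as the engine and derive everything else from it. To prove (i), fix $z\in[\mathbf{0},b]\diagdown[\mathbf{0},a]$, so $z\leq b$ but $z\not\leq a$. Since $b\in C$ and $z\leq b$, the set $K=\{c\in C:z\leq c\}$ is nonempty, and $\wedge$-completeness of $C$ gives $d:=\wedge K\in C$ with $z\leq d$. Because $z\not\leq a=\wedge C$ we get $d\neq a$, i.e. $d\in C\diagdown\{\wedge C\}$, so the lower $\ll$-chain hypothesis supplies $p\in C$ with $p\neq d$ and $p\ll d$; as $\ll\subseteq\leq$ this forces $p<d$, whence $p\notin K$ and $z\not\leq p$. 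Now I would invoke the dual $\mathbf{H}$-condition in the form of Lemma \ref{le1}(ii)(2): from $p\ll d$ and $z\leq d$ it yields $p\wedge z\ll z$, while $z\not\leq p$ gives $p\wedge z<z$. Thus $c:=p$ satisfies $c\wedge z\ll z$ and $c\wedge z\neq z$, proving (i).

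The two ``in particular'' assertions would then be read off. For $[a,b]$ this is immediate: any $x\in[a,b]\diagdown\{a\}$ satisfies $x\not\leq a$, so (i) produces $p$ with $p\wedge x\ll x$ and $p\wedge x\neq x$, and since $p,x\geq a$ the predecessor $p\wedge x$ lies in $[a,b]$; hence $[a,b]$ is a lower $\ll$-set. For $[\mathbf{0},b]\diagdown[\mathbf{0},a]$ the first statement already exhibits, for each $z$, a genuine $\ll$-predecessor $p\wedge z$ lying in $[\mathbf{0},b]$. The main obstacle I anticipate is precisely here: to qualify as a lower $\ll$-set this predecessor must stay inside the set, i.e. one needs $p\wedge z\not\leq a$, and the construction above does not guarantee this, since the meet $p\wedge z$ can fall to or below $a$. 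This is the one step I would scrutinise most carefully, checking whether the full strength of the equivalent dual $\mathbf{H}$-conditions, or a more careful selection of $c$, keeps the predecessor out of $[\mathbf{0},a]$.

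For (ii) the situation is cleaner, and here (i) pays off sharply. Given a chain $D\subseteq[a,b]$ with $C\subseteq D$, note $\wedge D=a$ and $\vee D=b$ (since $D\subseteq[a,b]$ forces $\wedge D\geq a$, while $C\subseteq D$ forces $\wedge D\leq\wedge C=a$, and dually for the join), and $a\in C\subseteq D$. For $x\in D\diagdown\{a\}$ apply (i) to obtain $c\in C$ with $c\wedge x\ll x$ and $c\wedge x\neq x$. The key simplification is that $c$ and $x$ both lie in the chain $D$, hence are comparable; as $c\wedge x\neq x$ rules out $x\leq c$, we must have $c\leq x$, so $c\wedge x=c$ and in particular $c=c\wedge x\neq x$. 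Therefore $c\ll x$ with $c\in D$ and $c\neq x$: a $\ll$-predecessor of $x$ inside $D$. Thus every such $D$ is a lower $\ll$-chain.

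Finally (iii) assembles from the previous parts together with Lemma \ref{L2.1p}. Since $G$ is $\wedge$-complete, $C$ is a chain in $G$, $b=\vee C\in G$ and $a=\wedge C$, Lemma \ref{L2.1p}(i) yields a maximal chain $S$ in $G\cap[a,b]$ containing $C$, and Lemma \ref{L2.1p}(ii) gives that $S$ is $\wedge$-complete, and complete when $G$ is complete. That $S$ is moreover a lower $\ll$-chain is then immediate from (ii), since $S$ is a chain in $[a,b]$ containing $C$. Hence no new argument is needed for (iii) beyond invoking (ii) and Lemma \ref{L2.1p}.
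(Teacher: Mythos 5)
Your arguments for the main assertion of (i), for the $[a,b]$ claim, and for (ii) and (iii) coincide with the paper's own proof in every essential detail: the paper also sets $M=\{y\in C:z\leq y\}$, takes $d=\wedge M\in C$ by $\wedge$-completeness, notes $a<d$, extracts $c\in C$ with $c\neq d$, $c\ll d$ from the lower-chain property, and applies Lemma \ref{le1}(ii) to get $z\neq c\wedge z\ll d\wedge z=z$; in (ii) it uses exactly your comparability trick ($c,z$ both lie in the chain $D$, so $c\wedge z\neq z$ forces $c\wedge z=c$, whence $c\ll z$); and (iii) is, as you say, just Lemma \ref{L2.1p} plus (ii). For $[a,b]$ your observation that the witness $c\wedge z$ stays in $[a,b]$ is the paper's one-line remark as well.

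The step you singled out for scrutiny is a genuine soft spot, and you were right not to wave it through: the paper's proof simply declares ``Thus $[\mathbf{0},b]\diagdown\lbrack\mathbf{0},a]$ is a lower $\ll$-set'' immediately after producing $c\wedge z$, with no verification that $c\wedge z\not\leq a$, and with Definition \ref{D2} read literally (the predecessor $y$ must belong to the set $G$ itself) the claim can fail. Take $Q$ the lattice of subsets of $\{1,2,3\}$, $\ll$ $=$ $\leq$ (a dual $\mathbf{H}$-relation by Lemma \ref{le1}(ii)3)), and $C=\{\{1\},\{1,2\},\{1,2,3\}\}$, so $a=\{1\}$, $b=\mathbf{1}$. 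Then $z=\{2\}$ lies in $[\mathbf{0},b]\diagdown\lbrack\mathbf{0},a]$ and is not the meet of that set (which is $\varnothing\notin$ the set), yet its only strict $\leq$-predecessor is $\varnothing\leq a$; so the difference set is not a lower $\leq$-set, and indeed the constructed witness $c\wedge z=\{1\}\wedge\{2\}=\varnothing$ drops below $a$, exactly as you feared. The defensible content of that clause is precisely the first assertion of (i): every $z$ in the difference set has a proper $\ll$-predecessor of the form $c\wedge z$ with $c\in C$, hence lying in $[\mathbf{0},b]$. This weaker reading is also all that the paper ever uses: in Theorem \ref{T2.3p} and Proposition \ref{P2.4} the relevant elements $z$ additionally lie in a $\wedge$-complete set $G$ containing $C$ with $z\geq\wedge G$, so the witness $c\wedge z$ is re-captured by $G$ (respectively by $[a,b]$), and membership is restored. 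Since your derivations of the $[a,b]$ claim, (ii) and (iii) use only the first assertion of (i), your proof is complete and correct for every part of the proposition that carries weight downstream; the residual clause should be repaired in the statement rather than in your argument.
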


\begin{proof}
(i) Let $M=\left\{  y\in C:\text{ }z\leq y\right\}  .$ Then $M\neq
\varnothing,$ as $b\in M$. Let $d=\wedge M.$ Then $z\leq d\in C,$ since $C$ is
$\wedge$-complete, and $a<d,$ as $z\notin\lbrack\mathbf{0},a].$ Hence there is
$c\in C$ such that $d\neq c\ll d$. Then $c\notin M$. By Lemma \ref{le1},
$c\wedge z\ll d\wedge z=z.$ Moreover, $c\wedge z\neq z,$ since otherwise
$z\leq c$ and $c\in M$. Thus $[\mathbf{0},b]\diagdown\lbrack\mathbf{0},a]$ is
a lower $\ll$-set.

If $z\in\lbrack a,b]$ then $c\wedge z\in\lbrack a,b].$ So $[a,b]$ is a lower
$\ll$-set.

(ii) Let $D$ be a chain in $[a,b],$ $C\subset D$ and $a\neq z\in D\diagdown
C.$ By (i), there is $c\in C$ such that $z\neq c\wedge z\ll z$. As $D$ is a
chain, $c=c\wedge z.$ Thus $z\neq c\ll z,$ so that $D$ is a lower $\ll
$-chain\emph{.}

(iii) By Lemma \ref{L2.1p}, $S$ exists and it is $\wedge$-complete. If $G$ is
complete, $S$ is complete. By (ii), $S$ is a lower $\ll$-chain.\bigskip
\end{proof}

Let $G$ be a $\wedge$-complete set in $Q$ and $b:=\vee G\in G.$\emph{ }It
follows from Proposition \ref{P2.6} that, for different complete$,$ lower
$\ll$-gap chains $S$ in $G$ containing $b$ (for example, $S=\{b\}),$ there are
maximal down-extended complete, lower $\ll$-gap chains $C_{S}$ in $G$ that
down-extend $S$ and the elements $\mathfrak{p}_{_{C_{S}}}=\wedge C_{S}$
satisfy (\ref{2.16}). We will show now that if $\ll$ is a dual $\mathbf{H}%
$-relation then all $\mathfrak{p}_{_{C_{S}}}$ coincide.

\begin{theorem}
\label{T2.3p}Let $G$ be a $\wedge$-complete set in $Q,$ $b:=\vee G\in G$\emph{
}and $\ll$ be a dual $\mathbf{H}$-relation. Then\smallskip

\emph{(i) \ \ }There is a unique $\mathfrak{p}\in G$ satisfying $G\cap
\lbrack\ll,\mathfrak{p}]=\{\mathfrak{p}\}$\emph{.\smallskip}

\emph{(ii)} \ If $S$ is a complete$,$ lower $\ll$-gap chain in $G$ and $b\in
S$ then $\mathfrak{p}\leq\wedge S.$ If $\mathfrak{p}<\wedge S,$ there is a

\qquad maximal down-extended complete lower $\ll$-gap chain $C\subseteq G$
down-extending $S$\emph{,} $\mathfrak{p}=\wedge C$.\smallskip

\emph{(iii) }The following conditions are equivalent.\smallskip

\qquad$1)$ $G$ is a lower $\ll$-set\emph{;\smallskip}

\qquad$2)$ There is a $\wedge$-complete\emph{,} lower $\ll$-chain $C$ in $G$
from $a$ to $b$ and $b\in C;\smallskip$

\qquad$3)$ There is a complete\emph{,} lower $\ll$-gap chain $S$ in $G$ from
$a$ to $b$ and $b\in S.$
\end{theorem}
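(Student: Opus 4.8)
The plan is to derive the whole theorem from the two structural facts already established for lower $\ll$-gap chains under a dual $\mathbf{H}$-relation: Proposition \ref{P2.6} (existence of a maximal down-extended complete lower $\ll$-gap chain with a distinguished bottom) and Proposition \ref{P2.2n}(i) (a $\wedge$-complete lower $\ll$-chain forces a meet-predecessor below its top). For existence in (i) I would apply Proposition \ref{P2.6} to the trivial chain $S=\{b\}$, which is a complete lower $\ll$-gap chain in $G$ containing $b=\vee G$. This produces a maximal down-extended complete lower $\ll$-gap chain $C$ in $G$ from $\mathfrak{p}:=\wedge C$ to $b$ with $b\in C$ and $G\cap[\ll,\mathfrak{p}]=\{\mathfrak{p}\}$, giving a point with the required minimality property together with a witnessing chain up to $b$.

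The heart of the matter is the uniqueness in (i), and this is where I expect the only real difficulty. The natural argument is symmetric: suppose $\mathfrak{p}_1,\mathfrak{p}_2$ both satisfy the minimality condition, each being the bottom of a complete lower $\ll$-gap chain $C_i$ in $G$ from $\mathfrak{p}_i$ to $b$ (this reachability is exactly what the construction in Proposition \ref{P2.6} supplies, and it is what pins the point down). Each $C_i$ is a complete, hence $\wedge$-complete, lower $\ll$-chain with $\wedge C_i=\mathfrak{p}_i$ and $\vee C_i=b\in C_i$, so Proposition \ref{P2.2n}(i) applies to it. If $\mathfrak{p}_2\not\leq\mathfrak{p}_1$ then $\mathfrak{p}_2\in[\mathbf{0},b]\diagdown[\mathbf{0},\mathfrak{p}_1]$, so there is $c\in C_1$ with $c\wedge\mathfrak{p}_2\neq\mathfrak{p}_2$ and $c\wedge\mathfrak{p}_2\ll\mathfrak{p}_2$; since $c,\mathfrak{p}_2\in G$ and $G$ is $\wedge$-complete, $c\wedge\mathfrak{p}_2\in(G\cap[\ll,\mathfrak{p}_2])\diagdown\{\mathfrak{p}_2\}$, contradicting the minimality of $\mathfrak{p}_2$. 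Hence $\mathfrak{p}_2\leq\mathfrak{p}_1$, and running the same argument with $C_2$ gives $\mathfrak{p}_1\leq\mathfrak{p}_2$, so $\mathfrak{p}_1=\mathfrak{p}_2$. The delicate point is precisely that one needs a gap chain to $b$ on \emph{both} sides, so that the meet-contiguity coming from the dual $\mathbf{H}$-condition (Lemma \ref{le1}(ii)) can be invoked in each direction.

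Part (ii) then follows by re-running the construction from a prescribed chain. Given a complete lower $\ll$-gap chain $S$ in $G$ with $b\in S$, I would apply Proposition \ref{P2.6} to $S$: it yields a maximal down-extended complete lower $\ll$-gap chain $C$ down-extending $S$ whose bottom satisfies the minimality condition of (i), so by the uniqueness just proved that bottom equals $\mathfrak{p}$. Since $C$ down-extends $S$ we have $S\subseteq C$, whence $\mathfrak{p}=\wedge C\leq\wedge S$; and when $\mathfrak{p}<\wedge S$ this $C$ is exactly the asserted proper down-extension with $\mathfrak{p}=\wedge C$.

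For the equivalences in (iii) I would argue $1)\Rightarrow3)\Rightarrow2)\Rightarrow1)$. If $G$ is a lower $\ll$-set, the final clause of Proposition \ref{P2.6} gives a complete lower $\ll$-gap chain $C$ in $G$ with $\wedge C=\wedge G=a$ and $\vee C=b$; being complete it contains $b$, which is $3)$. A complete lower $\ll$-gap chain is in particular $\wedge$-complete and, being a lower $\ll$-set, a lower $\ll$-chain from $a$ to $b$ with $b\in C$, giving $3)\Rightarrow2)$. Finally, for $2)\Rightarrow1)$, note that $G\subseteq[a,b]$ with $a=\wedge G$; applying Proposition \ref{P2.2n}(i) to the given $\wedge$-complete lower $\ll$-chain $C$ (with $\wedge C=a$ and $\vee C=b\in C$), every $z\in G\diagdown\{a\}$ lies in $[\mathbf{0},b]\diagdown[\mathbf{0},a]$ and so admits $c\in C$ with $c\wedge z\neq z$ and $c\wedge z\ll z$; as $c\wedge z\in G$, this exhibits a strictly smaller $\ll$-predecessor of $z$ inside $G$, so $G$ is a lower $\ll$-set. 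Thus the three conditions are equivalent, and the only genuinely new ingredient beyond the cited propositions is the symmetric meet argument of the second paragraph.
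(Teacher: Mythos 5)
Your proposal is correct and follows essentially the same route as the paper's own proof: existence via Proposition \ref{P2.6} applied to $S=\{b\}$, uniqueness by the symmetric application of Proposition \ref{P2.2n}(i) to one chain against the other point's minimality (with $\wedge$-completeness of $G$ guaranteeing $c\wedge\mathfrak{p}\in G$), part (ii) by re-running Proposition \ref{P2.6} from the prescribed chain $S$ and invoking the uniqueness just proved, and the same cycle of implications for (iii). Your explicit observation that uniqueness is pinned down by each candidate being reachable from $b$ by a complete lower $\ll$-gap chain matches exactly the (implicit) scope of the paper's argument, which likewise only compares bottoms of such chains.
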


\begin{proof}
(i) and (ii). Let $S$ and $T$ be complete$,$ lower $\ll$-gap chains in $G$
containing $b$ (for example, $S=\{b\}).$ By Proposition \ref{P2.6}, there is a
maximal down-extended complete, lower $\ll$-gap chain $C$ in $G$ that
down-extends $S$ and $\mathfrak{p}_{_{C}}=\wedge C$ satisfies
\begin{equation}
G\cap\lbrack\ll,\mathfrak{p}_{_{C}}]=\{\mathfrak{p}_{_{C}}\}\emph{.}
\label{4.9}%
\end{equation}
Let $D$ be another maximal down-extended complete, lower $\ll$-gap chain in
$G$ that either down-extends $S,$ or $T.$ Set $\mathfrak{p}_{_{D}}=\wedge D.$
If $\mathfrak{p}_{_{C}}\notin\lbrack\mathbf{0},\mathfrak{p}_{_{D}}]$ then, as
$\ll$ is a dual $\mathbf{H}$-relation, it follows from Proposition
\ref{P2.2n}(i) that there is $z\in D\subseteq G$ such that $\mathfrak{p}%
_{_{C}}\neq z\wedge\mathfrak{p}_{_{C}}\ll\mathfrak{p}_{_{C}}$ which
contradicts (\ref{4.9})$.$ Thus $\mathfrak{p}_{_{C}}\in\lbrack\mathbf{0}%
,\mathfrak{p}_{_{D}}]$. Similarly, $\mathfrak{p}_{_{D}}\in\lbrack
\mathbf{0},\mathfrak{p}_{_{C}}]$ whence $\mathfrak{p}_{_{D}}=\mathfrak{p}%
_{_{C}}$. Thus $\mathfrak{p}_{_{C}}$ is uniquely defined.

(iii) As $\ll$ is a dual $\mathbf{H}$-relation, 2) $\Rightarrow$ 1) follows
from Proposition \ref{P2.2n}(i).

3) $\Rightarrow$ 2) is evident. As $G$ is a lower $\ll$-set, 1) $\Rightarrow$
3) follows from Proposition \ref{P2.6}.$\medskip$
\end{proof}

The following corollary gives an analogue of Theorem \ref{T2.3p}(i) and (ii)
for lower $\ll$-chains.

\begin{corollary}
\label{C2.5p}Let $G,$ $b,$ $\ll$ and $\mathfrak{p}$ be as in Theorem
\emph{\ref{T2.3p}. }If $T\subseteq G$ is a $\wedge$-complete$,$\textit{ }lower
$\ll$-chain containing $b$ then $\mathfrak{p}\leq\wedge T.$ If $\mathfrak{p}%
<\wedge T,$ there is a maximal down-extended $\wedge$-complete\textit{ }lower
$\ll$-chain $\mathcal{T}$ in $G$ from $\mathfrak{p}$ to $b$ that down-extends
$T$ and $\wedge\mathcal{T}=\mathfrak{p}.$
\end{corollary}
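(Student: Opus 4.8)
The plan is to prove Corollary \ref{C2.5p} by reducing it to the already-established Theorem \ref{T2.3p}, using the fact that a $\wedge$-complete lower $\ll$-chain $T$ can be refined to a complete lower $\ll$-gap chain without lowering its infimum. First I would record the setup: $G$ is $\wedge$-complete, $b=\vee G\in G$, $\ll$ is a dual $\mathbf{H}$-relation, and $\mathfrak{p}\in G$ is the unique element with $G\cap[\ll,\mathfrak{p}]=\{\mathfrak{p}\}$ furnished by Theorem \ref{T2.3p}(i). Given a $\wedge$-complete lower $\ll$-chain $T\subseteq G$ with $b\in T$, the inequality $\mathfrak{p}\leq\wedge T$ should follow just as in Theorem \ref{T2.3p}(ii): if we had $\mathfrak{p}\nleq\wedge T$, then since $T$ is a $\wedge$-complete lower $\ll$-chain we could invoke Proposition \ref{P2.2n}(i) (applied with $C=T$) to produce some $z\in T\subseteq G$ with $z\wedge\mathfrak{p}\neq\mathfrak{p}$ and $z\wedge\mathfrak{p}\ll\mathfrak{p}$, contradicting $G\cap[\ll,\mathfrak{p}]=\{\mathfrak{p}\}$. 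Hence $\mathfrak{p}\leq\wedge T$.

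Next I would handle the construction in the case $\mathfrak{p}<\wedge T$. The idea is to extend $T$ downward, staying inside $G$, all the way to $\mathfrak{p}$, while keeping the chain a $\wedge$-complete lower $\ll$-chain. The cleanest route is to appeal to Proposition \ref{P2.6}/Theorem \ref{T2.3p}(ii): the portion of $T$ together with $b$ gives lower $\ll$-gap data, and by Proposition \ref{P2.6} there is a maximal down-extended complete lower $\ll$-gap chain $C\subseteq G$ down-extending (a lower $\ll$-gap refinement of) $T$, with $\wedge C=\mathfrak{p}$. To pass from this gap chain back to a lower $\ll$-chain $\mathcal{T}$ with the stated maximality, I would take a maximal chain $\mathcal{T}$ in $G\cap[\mathfrak{p},b]$ that contains $T\cup C$; by Lemma \ref{L2.1p}(i) such a maximal chain exists, and by Lemma \ref{L2.1p}(ii) it is $\wedge$-complete. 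Proposition \ref{P2.2n}(ii), applied to the lower $\ll$-chain $T$ (any chain in $[\wedge T,b]$ larger than $T$ is again a lower $\ll$-chain), then upgrades $\mathcal{T}$ to a lower $\ll$-chain on $[\wedge T,b]$; the down-extension piece on $[\mathfrak{p},\wedge T]$ is a lower $\ll$-chain because it refines the gap chain $C$, and one checks the two pieces glue into a lower $\ll$-chain from $\mathfrak{p}$ to $b$.

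The main obstacle I expect is the gluing and down-extension bookkeeping: verifying that the chain obtained below $\wedge T$ really is a lower $\ll$-chain (not merely a lower $\ll$-gap chain) and that $\wedge\mathcal{T}=\mathfrak{p}$ rather than some intermediate value. The key leverage is that $\ll$ is a dual $\mathbf{H}$-relation, which via Proposition \ref{P2.2n}(i) forces the lower $\ll$-set property to propagate through meets: for any $z$ in the chain and any lower point $w$, the witness $z\wedge w\ll w$ lives inside $G$ and inside the chain (since the chain is linearly ordered, $z\wedge w$ is comparable to $w$ and equals one of them). This is exactly the mechanism that prevents the infimum of the extended chain from being strictly above $\mathfrak{p}$: any such gap would again contradict $G\cap[\ll,\mathfrak{p}]=\{\mathfrak{p}\}$. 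Thus the uniqueness of $\mathfrak{p}$ from Theorem \ref{T2.3p}(i) does double duty, both pinning down $\wedge\mathcal{T}=\mathfrak{p}$ and guaranteeing the maximal down-extended lower $\ll$-chain reaches precisely $\mathfrak{p}$.
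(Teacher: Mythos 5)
Your first claim ($\mathfrak{p}\leq\wedge T$) is proved correctly, and in fact slightly more directly than in the paper: you apply Proposition \ref{P2.2n}(i) with $C=T$ and $z=\mathfrak{p}$, using the $\wedge$-completeness of $G$ to place the witness $c\wedge\mathfrak{p}$ inside $G\cap[\ll,\mathfrak{p}]$, whereas the paper first thins $T$ to a complete lower $\ll$-gap chain $S$ from $\wedge T$ to $b$ via Theorem \ref{T2.3p}(iii) and then quotes Theorem \ref{T2.3p}(ii); both rest on the same mechanism.

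The second half has a genuine gap. The statement requires $\mathcal{T}$ to \emph{down-extend} $T$, which by (\ref{2.7}) means $T=\mathcal{T}\cap[\wedge T,b]$ exactly, not merely $T\subseteq\mathcal{T}$. Your $\mathcal{T}$ -- a maximal chain in $G\cap[\mathfrak{p},b]$ containing $T\cup C$ -- will in general insert elements of $G$ strictly between elements of $T$; indeed you implicitly rely on this when you say Proposition \ref{P2.2n}(ii) ``upgrades $\mathcal{T}$ to a lower $\ll$-chain on $[\wedge T,b]$'', i.e.\ you treat $\mathcal{T}\cap[\wedge T,b]$ as a chain properly larger than $T$. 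Concretely: take $Q=G=[0,1]\subset\mathbb{R}$, $\ll$ $=$ $\leq$, $T=\{1/2,1\}$, so $\mathfrak{p}=0$; the only maximal chain is $[0,1]$ itself, and it meets $[1/2,1]$ in $[1/2,1]\neq T$, so it does not down-extend $T$. (Relatedly, ``maximal chain'' is a different and here unneeded notion from ``maximal down-extended''.) The repair is exactly the paper's assembly, for which you already have every ingredient: with $t=\wedge T$ and $C$ the complete lower $\ll$-gap chain down-extending $S$ with $\wedge C=\mathfrak{p}$, set $T'=C\cap[\mathfrak{p},t]$ and $\mathcal{T}=T'\cup T$. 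Then $\mathcal{T}\cap[t,b]=T$, so $\mathcal{T}$ genuinely down-extends $T$; it is $\wedge$-complete, and it is a lower $\ll$-chain because the gap chain $T'$ supplies $\ll$-witnesses at and below $t$ while $T$ supplies them above (no appeal to Proposition \ref{P2.2n}(ii) is then needed for the lower piece -- note that a chain merely \emph{containing} a lower $\ll$-gap chain is not automatically a lower $\ll$-chain, so your phrase ``because it refines the gap chain'' would itself need that proposition). Finally, $\wedge\mathcal{T}=\mathfrak{p}$ forces maximality among down-extensions: a proper down-extension $D\subseteq G$ would make $\mathfrak{p}$ a non-minimal element of the lower $\ll$-chain $D$, yielding $y\in G$ with $y\neq\mathfrak{p}$ and $y\ll\mathfrak{p}$, contradicting $G\cap[\ll,\mathfrak{p}]=\{\mathfrak{p}\}$ -- this is the one place where your closing remark about the uniqueness of $\mathfrak{p}$ is exactly right.
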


\begin{proof}
Set $t=\wedge T\in T\subseteq G.$ As $\ll$ is a dual $\mathbf{H}$-relation, it
follows from Theorem \ref{T2.3p} (iii) that there is a complete$,$ lower $\ll
$-gap chain $S$ in $G$ from $t$ to $b$. By Theorem \ref{T2.3p}(ii),
$\mathfrak{p}\leq t.$ If $\mathfrak{p}<t$\ then there is a complete$,$ lower
$\ll$-gap chain $C$ in $G$ from $\mathfrak{p}$ to $b$ that down-extends $S.$
Then $T^{\prime}=C\cap\lbrack\mathfrak{p},t]$ is a complete$,$ lower $\ll$-gap
chain from $\mathfrak{p}$ to $t.$ Hence $\mathcal{T}=T^{\prime}\cup T$ is a
$\wedge$-complete$,$ lower $\ll$-chain in $G$ from $\mathfrak{p}$ to $b$ that
down-extends $T.$ As $\wedge\mathcal{T}=\mathfrak{p},$ $\mathcal{T}$ can not
be down-extended. So $\mathcal{T}$ is maximal down-extended.\bigskip
\end{proof}

The results analogous to Theorem \ref{T2.3p} and Corollary \ref{C2.5p} hold by
duality for upper $\ll$-chains and upper $\ll$-gap chains if $\ll$ is an
$\mathbf{H}$-relation.

To refine the results of Theorem \ref{P2.1} for $\mathbf{H}$-relations, we
define for each $a\in Q,$ the maps $\curlyvee_{a}$,$\curlywedge_{a}$ on $Q$ by
setting%
\begin{equation}
\curlyvee_{a}(x)=a\vee x\text{ and }\curlywedge_{a}(x)=a\wedge x\text{ for
}x\in Q. \label{5.3}%
\end{equation}
For $G\subseteq Q$, let $\curlyvee_{a}\left(  G\right)  =\{\curlyvee
_{a}\left(  x\right)  $: $x\in G\}$ and $\curlywedge_{a}\left(  G\right)
=\{\curlywedge_{a}\left(  x\right)  $: $x\in G\}$. Then
\begin{equation}
\curlywedge_{a}(\wedge G)=\wedge\curlywedge_{a}\left(  G\right)  \text{ and
}\curlyvee_{a}(\vee G)=\vee\curlyvee_{a}\left(  G\right)  . \label{e.1}%
\end{equation}
For example, $\curlywedge_{a}(\wedge G)\leq\curlywedge_{a}(g)$ for $g\in G.$
So $\curlywedge_{a}(\wedge G)\leq\wedge\curlywedge_{a}(G).$ Conversely,
$\wedge\left(  \curlywedge_{a}(G)\right)  \leq g$ for $g\in G,$ so that
$\wedge\left(  \curlywedge_{a}(G)\right)  \leq\wedge G.$ As $\wedge\left(
\curlywedge_{a}(G)\right)  \leq a,$ we have $\wedge\left(  \curlywedge
_{a}(G)\right)  \leq\curlywedge_{a}(\wedge G).$ Thus $\curlywedge_{a}(\wedge
G)=\wedge\curlywedge_{a}\left(  G\right)  .$

We also have (see \cite[Theorem 1.4]{Sk}) that%
\begin{equation}
\text{if }G=\cup_{\alpha\in A}G_{\alpha}\text{ then}\wedge G=\wedge\{\wedge
G_{\alpha}\text{: }\alpha\in A\}\text{ and }\vee G=\vee\{\vee G_{\alpha
}\text{: }\alpha\in A\}. \label{2.5}%
\end{equation}

\begin{proposition}
\label{P2.4}Let $\ll$ be a dual $\mathbf{H}$-relation and let $G,$
$\{G_{\alpha}\}_{\alpha\in A}$ be $\wedge$-complete lower $\ll$\textbf{-}%
sets.\smallskip

\emph{(i) \ }For each $a\in Q,$ the set $\curlywedge_{a}(G)$ is a $\wedge
$-complete$,$ lower $\ll$\textbf{-}set$.\smallskip$

\emph{(ii) }Let $b=\vee G_{\alpha}\in G_{\alpha}$ for all $\alpha\in A.$ Then
the $\wedge$-completion $(\cup_{\alpha\in A}G_{\alpha})^{\wedge}$ of
$\cup_{\alpha\in A}G_{\alpha}$ $($see $(\ref{1.10}))$ is a lower $\ll
$\textbf{-}set.
\end{proposition}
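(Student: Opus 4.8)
The plan is to prove the two parts separately, using the characterization of lower $\ll$-sets via the dual $\mathbf{H}$-relation property from Lemma \ref{le1}(ii), namely that $a\ll b$ implies $a\wedge x\ll b\wedge x$ for every $x\in Q$.

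For part (i), first I would verify that $\curlywedge_{a}(G)$ is $\wedge$-complete. Given any nonempty subset $N\subseteq\curlywedge_{a}(G)$, each element has the form $a\wedge g$ for some $g\in G$; writing $N=\curlywedge_{a}(N')$ for a corresponding set $N'\subseteq G$, the identity (\ref{e.1}) gives $\wedge N=\curlywedge_{a}(\wedge N')=a\wedge(\wedge N')$, and since $G$ is $\wedge$-complete we have $\wedge N'\in G$, so $\wedge N\in\curlywedge_{a}(G)$. Next, to show $\curlywedge_{a}(G)$ is a lower $\ll$-set, let $w\in\curlywedge_{a}(G)\diagdown\{\wedge\curlywedge_{a}(G)\}$. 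Write $w=a\wedge x$ with $x\in G$; I would choose $x$ to be a genuine witness in the sense that $w$ is strictly above the bottom element $a\wedge(\wedge G)$, so $x$ cannot equal $\wedge G$. Since $G$ is a lower $\ll$-set, there is $y\in G$ with $y\neq x$ and $y\ll x$. By Lemma \ref{le1}(ii), $a\wedge y\ll a\wedge x=w$. The remaining obstacle here is to ensure $a\wedge y\neq w$: this is exactly where care is needed, since the meet with $a$ can collapse distinct elements. I expect to handle this by observing that if $a\wedge y=a\wedge x=w$ for \emph{every} available witness $y$, one can iterate or pass to $\wedge\{y\in G:y\ll x,\,y\neq x\}$ and use $\wedge$-completeness together with down-expandedness of the dual $\mathbf{H}$-relation to produce a strictly smaller element of $\curlywedge_{a}(G)$ still $\ll$-related to $w$; alternatively, one reduces to finding \emph{some} element of $\curlywedge_a(G)$ strictly below $w$ that is $\ll$-related to it, which is what the definition demands.

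For part (ii), the goal is to show the $\wedge$-completion $H:=(\cup_{\alpha}G_{\alpha})^{\wedge}$ is a lower $\ll$-set. A typical element of $H$ is $\wedge M$ where $M\subseteq\cup_{\alpha}G_{\alpha}$; using (\ref{2.5}) one can group $M$ by index and write $\wedge M=\wedge_{\alpha}(\wedge M_{\alpha})$ with $M_{\alpha}\subseteq G_{\alpha}$, and each $\wedge M_{\alpha}\in G_{\alpha}$ by $\wedge$-completeness. Given $z\in H\diagdown\{\wedge H\}$, I would pick some index $\beta$ and express $z$ using a meet of elements drawn from the various $G_{\alpha}$. The key move is to exploit that each $G_{\alpha}$ is a lower $\ll$-set together with the common top $b=\vee G_{\alpha}\in G_{\alpha}$: applying Theorem \ref{T2.3p}(iii), each $G_{\alpha}$ contains a $\wedge$-complete lower $\ll$-chain from $\wedge G_{\alpha}$ to $b$. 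Then I would use Proposition \ref{P2.2n}(i): for any $z\in[\mathbf{0},b]\diagdown[\mathbf{0},\wedge G_{\alpha}]$ there is $c\in G_{\alpha}$ with $z\neq c\wedge z$ and $c\wedge z\ll z$; since $c\wedge z$ again lies in the $\wedge$-completion $H$, this furnishes the required strictly smaller $\ll$-predecessor of $z$ inside $H$.

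The hard part will be the non-degeneracy conditions $y\neq x$ and $z\neq c\wedge z$ throughout: establishing the $\ll$-relation is mechanical via Lemma \ref{le1}(ii), but guaranteeing a \emph{proper} predecessor (so that the lower $\ll$-set definition is genuinely satisfied rather than vacuously) is where the argument could break down. My strategy is to route both parts through Proposition \ref{P2.2n}(i), which already packages exactly the statement ``there exists $c$ with $c\wedge z\neq z$ and $c\wedge z\ll z$'' and thereby supplies the strict inequality for free, provided I verify its hypotheses (the existence of a suitable $\wedge$-complete lower $\ll$-chain with top $b$). For part (ii) this is immediate from the shared top $b$; for part (i) I would reduce to the same situation by noting that $\curlywedge_a(G)$ inherits, via the order-preserving map $\curlywedge_a$, the chain structure of $G$ guaranteed by Theorem \ref{T2.3p}(iii), and then invoke Proposition \ref{P2.2n}(i) within $\curlywedge_a(G)$ directly.
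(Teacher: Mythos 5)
Your proposal correctly isolates the crux of part (i) --- the possible collapse $a\wedge y=a\wedge x$ --- but neither escape route you offer actually works. The appeal to ``down-expandedness of the dual $\mathbf{H}$-relation'' is unavailable: down-expandedness is precisely the extra property that distinguishes a dual $\mathbf{R}$-order from a dual $\mathbf{H}$-order (Definition \ref{D2.2}), it is not implied by the dual $\mathbf{H}$-condition (the relation here is not even assumed transitive), and the whole point of this section is to manufacture the down-expanded relation $\ll^{\triangleleft}$ from a $\ll$ that lacks this property; consequently $\wedge\{y\in G:\,y\ll x,\ y\neq x\}$ need not be $\ll$-related to $x$ and your iteration has no engine. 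The chain fallback also fails for (i): Theorem \ref{T2.3p} requires $\vee G\in G$, which part (i) does not assume (only part (ii) posits a common top $b=\vee G_{\alpha}\in G_{\alpha}$), so you cannot produce the $\wedge$-complete lower $\ll$-chain that Proposition \ref{P2.2n}(i) needs; and pushing a chain through $\curlywedge_{a}$ re-creates the very collapse problem you are trying to solve. The missing idea (the paper's) is a minimal-representative argument needing no chains and no expandedness: given $w=\curlywedge_{a}(g)$ with $w\neq\wedge\curlywedge_{a}(G)$, set $E_{g}=\{z\in G:\ w\leq\curlywedge_{a}(z)\}$ and $e_{g}=\wedge E_{g}\in G$ (using $\wedge$-completeness of $G$); by (\ref{e.1}), $\curlywedge_{a}(e_{g})=w$, and $\wedge G<e_{g}$ since $e_{g}=\wedge G$ would force $w=\wedge\curlywedge_{a}(G)$. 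Now take a predecessor of $e_{g}$ itself: there is $h\in G$ with $h\neq e_{g}$ and $h\ll e_{g}$, hence $h<e_{g}$; then $\curlywedge_{a}(h)\ll\curlywedge_{a}(e_{g})=w$ by Lemma \ref{le1}(ii), and $\curlywedge_{a}(h)\neq w$ because $w\leq\curlywedge_{a}(h)$ would put $h\in E_{g}$, contradicting $h<e_{g}=\wedge E_{g}$. Minimizing over representatives of $w$, rather than over witnesses $y$, is exactly what delivers the strictness you flagged as the obstacle.

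Your plan for part (ii), by contrast, is essentially sound and diverges legitimately from the paper: Theorem \ref{T2.3p} does apply to each $G_{\beta}$ (there $b=\vee G_{\beta}\in G_{\beta}$ is assumed), Proposition \ref{P2.2n}(i) then hands you $c\in G_{\beta}$ with $z\neq c\wedge z\ll z$ --- strictness included --- and $c\wedge z$ stays in $(\cup_{\alpha}G_{\alpha})^{\wedge}$ since $z$ is a meet of elements of $\cup_{\alpha}G_{\alpha}$. You should, however, make the choice of $\beta$ explicit, which your sketch leaves vague: if $z\leq\wedge G_{\alpha}$ for every $\alpha$ then $z\leq\wedge(\cup_{\alpha}G_{\alpha})$, forcing $z$ to be the bottom of the $\wedge$-completion, which is excluded; so some $\beta$ satisfies $z\nleq\wedge G_{\beta}$, while $z\leq b$ holds automatically, placing $z$ in the range where Proposition \ref{P2.2n}(i) applies. (The paper instead selects $\beta$ by splitting $z=\wedge M$ over the $G_{\alpha}$ via (\ref{2.5}) and then invokes part (i) with $a=z$; your route through Proposition \ref{P2.2n}(i) is a valid alternative, as that proposition precedes this one.) But since your overall architecture funnels part (i) through the same chain machinery that is inapplicable there, the proposal does not close without the minimal-representative step above.
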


\begin{proof}
(i) Let $\wedge\curlywedge_{a}(G)<\curlywedge_{a}(g)$ for some $g\in G.$ Set
$E_{g}=\left\{  z\in G\text{: }\curlywedge_{a}(g)\leq\curlywedge
_{a}(z)\right\}  .$ Then $g\in E_{g}$ and $e_{g}$:$=\wedge E_{g}\in G$, as $G$
is $\wedge$-complete$.$ Moreover, $e_{g}\in E_{g}$ and $\curlywedge
_{a}(g)=\curlywedge_{a}(e_{g}),$ since
\[
\curlywedge_{a}(e_{g})=\curlywedge_{a}(\wedge E_{g})\overset{(\ref{e.1}%
)}{=}\wedge(\curlywedge_{a}(E_{g}))=\curlywedge_{a}(g),\text{ as }g\in
E_{g}\text{ and }\curlywedge_{a}(g)\leq\curlywedge_{a}(z)\text{ for }z\in
E_{g}.
\]

We have $\curlywedge_{a}(\wedge G)\overset{(\ref{e.1})}{=}\wedge
\curlywedge_{a}(G)<\curlywedge_{a}(g)=\curlywedge_{a}(e_{g})$ by above. As
$\wedge G\leq e_{g},$ it follows that $\wedge G<e_{g}.$ As $G$ is a lower
$\ll$\textbf{-}set, there is $h\in G$ such that $h\ll e_{g}$ and $h<e_{g}$. As
$e_{g}$ is minimal in $E_{g}$, $\curlywedge_{a}(h)\neq\curlywedge_{a}(g).$ So
$\curlywedge_{a}(h)<\curlywedge_{a}(e_{g})=\curlywedge_{a}(g).$ As $\ll$ is a
dual $\mathbf{H}$-relation, $\curlywedge_{a}(h)=a\wedge h\ll a\wedge
e_{g}=\curlywedge_{a}(e_{g})=\curlywedge_{a}(g)$, i.e., $\curlywedge_{a}(h)$
is a $\ll$-predecessor of $\curlywedge_{a}(g).$ Thus $\curlywedge_{a}(G)$ is a
lower $\ll$\textbf{-}set.

For $N\subseteq\curlywedge_{a}(G)$, there is $M\subseteq G$ with
$N=\curlywedge_{a}(M)$. As $G$ is $\wedge$-complete, $\wedge M\in G.$ Hence
$\wedge N=\wedge\curlywedge_{a}(M)\overset{(\ref{e.1})}{=}\curlywedge
_{a}(\wedge M)\in\curlywedge_{a}(G)$. Thus $\curlywedge_{a}(G)$ is $\wedge$-complete.

(ii) Set $K=\cup_{\alpha\in A}G_{\alpha}.$ Let $x\in K^{\wedge}$ be such that
$\wedge K<x.$ Then $x=\wedge M$ for some $M\subseteq K.$ Let $F_{\alpha}=M\cap
G_{\alpha},$ $N_{\alpha}=G_{\alpha}\diagdown F_{\alpha}$ and $n_{\alpha
}=\wedge N_{\alpha}$ for all $\alpha\in A$. Then $K=(\cup_{\alpha}N_{\alpha
})\cup M$ and%
\[
\wedge K\overset{\left(  \ref{2.5}\right)  }{=}(\wedge\{n_{\alpha}\text{:
}\alpha\in A\})\wedge x=\wedge\{n_{\alpha}\wedge x\text{: }\alpha\in A\}.
\]
If $n_{\alpha}\wedge x=x$ for all $\alpha,$ then $\wedge K=x$ -- a
contradiction. Thus $n_{\beta}\wedge x<x$ for some $\beta.$ As $N_{\beta
}\subseteq G_{\beta},$ we have $\wedge\curlywedge_{x}(G_{\beta})=\curlywedge
_{x}(\wedge G_{\beta})\leq n_{\beta}\wedge x<x$. As $x\leq b\in G_{\beta},$ we
have $x=\curlywedge_{x}(b)\in\curlywedge_{x}(G_{\beta})$. By (i),
$\curlywedge_{x}(G_{\beta})$ is a $\wedge$-complete, lower $\ll$\textbf{-}set.
Hence there is $u\in G_{\beta}$ such that $\curlywedge_{x}(u)\ll
\curlywedge_{x}(b)$ and $\curlywedge_{x}(u)\neq\curlywedge_{x}(b),$ i.e.,
$x\neq x\wedge u\ll x.$ As $x\wedge u\in K^{\wedge}$, $K^{\wedge}$ is a lower
$\ll$\textbf{-}set.\bigskip
\end{proof}

We$\,$will use Proposition$\,$\ref{P2.4}$\,$to prove$\,$the main results$\,$of
this$\,$section.

\begin{theorem}
\label{inf}\emph{(i)\ }Let $\ll$ be a dual $\mathbf{H}$-relation$.$
Then\smallskip

$\qquad1)$ $\ll^{\triangleleft}$ $=$ $\ll^{\text{\emph{lo}}}$ is a dual
$\mathbf{R}$-order and $\overrightarrow{\ll}=\overrightarrow{\ll
^{\triangleleft}}=\left(  \overrightarrow{\ll}\right)  ^{\triangleright
}=\left(  \overrightarrow{\ll}\right)  ^{^{\text{\emph{up}}}}$ is an
$\mathbf{R}$-order$;\smallskip$

$\qquad2)$ for each $[a,b]\subseteq Q,$ there is a unique dual $\ll
^{\triangleleft}$-radical $\mathfrak{p}=\mathfrak{p}_{_{[a,b]}}$ such that $a$
$\overrightarrow{\ll}$ $\mathfrak{p}$ $\ll^{\triangleleft}b.$

\qquad\ \ \ Moreover$,$ $a\leq x$ $\ll^{\triangleleft}$ $b$ implies
$x\in\lbrack\mathfrak{p},b].\smallskip$

\emph{(ii) }Let $\ll$ be an $\mathbf{H}$-relation and $[a,b]\subseteq Q.$
Then\smallskip

$\qquad1)$ $\ll^{\triangleright}$ $=$ $\ll^{\text{\emph{up}}}$ \textit{is an
}$\mathbf{R}$\textit{-order}$,$\textit{ }and $\overleftarrow{\ll
}=\overleftarrow{\ll^{\triangleright}}=\left(  \overleftarrow{\ll}\right)
^{\triangleleft}=\left(  \overleftarrow{\ll}\right)  ^{\text{\emph{lo}}}$ is a
dual $\mathbf{R}$-order$;\smallskip$

$\qquad2)$ for each $[a,b]\subseteq Q,$ there is a unique $\ll^{\triangleright
}$-radical $\mathfrak{r}=\mathfrak{r}_{_{[a,b]}}$ such that $a$ $\ll
^{\triangleright}$ $\mathfrak{r}$ $\overleftarrow{\ll}$ $b.$

\qquad\ \ \ Moreover$,$ $a\ll^{\triangleright}x\leq b$ implies $x\in\lbrack
a,\mathfrak{r}].$
\end{theorem}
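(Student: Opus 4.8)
By the Duality Principle it suffices to prove (i); part (ii) then follows verbatim. Throughout, $\ll$ is a fixed dual $\mathbf{H}$-relation. The plan is to first identify $\ll^{\triangleleft}$ with $\ll^{\text{lo}}$. One inclusion, $\ll^{\text{lo}}\subseteq\ll^{\triangleleft}$, is already recorded in Theorem \ref{P2.1}(i). For the reverse, if $a\ll^{\triangleleft}b$ then by (\ref{2.18}) there is a complete (hence $\wedge$-complete) lower $\ll$-gap chain $C$ from $a$ to $b$; since any lower $\ll$-gap chain is a lower $\ll$-chain, Proposition \ref{P2.2n}(i) applied with $G=[a,b]$ shows $[a,b]$ is a lower $\ll$-set, i.e. $a\ll^{\text{lo}}b$. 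Thus $\ll^{\triangleleft}=\ll^{\text{lo}}$, which by Proposition \ref{p11}(i) and Theorem \ref{P2.1}(i) is a down-contiguous order with $\overrightarrow{\ll}=\overrightarrow{\ll^{\triangleleft}}$.

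Next I would prove the radical statement 2), since it feeds back into 1). For a fixed $[a,b]$ put $G=[a,b]$, which is $\wedge$-complete with $\vee G=b\in G$; Theorem \ref{T2.3p}(i) supplies a unique $\mathfrak{p}\in[a,b]$ with $[a,b]\cap[\ll,\mathfrak{p}]=\{\mathfrak{p}\}$, and Proposition \ref{P2.6} (with $S=\{b\}$) produces a complete lower $\ll$-gap chain from $\mathfrak{p}$ to $b$, whence $\mathfrak{p}\ll^{\triangleleft}b$. The condition $[a,b]\cap[\ll,\mathfrak{p}]=\{\mathfrak{p}\}$ gives $a\,\overrightarrow{\ll}\,\mathfrak{p}$ by (\ref{A1}), so $\mathfrak{p}$ is a dual $\ll^{\triangleleft}$-radical. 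The ``Moreover'' clause is immediate: if $a\le x\ll^{\triangleleft}b$, a complete lower $\ll$-gap chain from $x$ to $b$ witnesses, via Theorem \ref{T2.3p}(ii), that $\mathfrak{p}\le x$. Uniqueness follows from this: any dual $\ll^{\triangleleft}$-radical $\mathfrak{p}'$ has $\mathfrak{p}\le\mathfrak{p}'$ by the ``Moreover'' clause, while down-contiguity of $\ll^{\triangleleft}$ gives $\mathfrak{p}\ll^{\triangleleft}\mathfrak{p}'$, and then $a\,\overrightarrow{\ll^{\triangleleft}}\,\mathfrak{p}'$ forces $\mathfrak{p}'=\mathfrak{p}$.

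With 2) established, down-expandedness of $\ll^{\triangleleft}$ comes for free: a down-contiguous order each of whose intervals has a unique dual radical is a dual $\mathbf{T}$-order by Theorem \ref{eur1}(iv), hence down-expanded. To upgrade this to a dual $\mathbf{R}$-order I must still verify the dual $\mathbf{H}$-condition of Lemma \ref{le1}(ii), and this is where I expect the real work to lie. Given $a\ll^{\triangleleft}b$ and $c\le b$, the idea is to apply $\curlywedge_{c}$ to the lower $\ll$-set $G=[a,b]$: by Proposition \ref{P2.4}(i) the image $\curlywedge_{c}(G)$ is again a $\wedge$-complete lower $\ll$-set, and by (\ref{e.1}) its bottom and top are $a\wedge c$ and $c$. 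Theorem \ref{T2.3p}(iii) then yields a complete lower $\ll$-gap chain from $a\wedge c$ to $c$, i.e. $a\wedge c\ll^{\triangleleft}c$, which is exactly condition 2) of Lemma \ref{le1}(ii); thus $\ll^{\triangleleft}$ is a dual $\mathbf{H}$-relation. A down-expanded dual $\mathbf{H}$-order is, by Definition \ref{D2.2}(v), a dual $\mathbf{R}$-order, completing the first assertion of 1).

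For the second half of 1) I would show directly that $\overrightarrow{\ll}$ is an $\mathbf{R}$-order. Proposition \ref{Lat1}(ii) already gives that $\overrightarrow{\ll}$ is up-contiguous and that each $[\overrightarrow{\ll},b]$ is $\vee$-complete, i.e. up-expanded; it remains to check transitivity and the $\mathbf{H}$-condition, and both follow from a single device. If $u\ll b'$ with $b\le b'$, then Lemma \ref{le1}(ii) yields $u\wedge b\ll b$, and the hypothesis $a\,\overrightarrow{\ll}\,b$ (no proper element of $[a,b]$ is $\ll$-below $b$) forces $u\wedge b=b$, i.e. $b\le u$. Taking $b'=c$ (with $b\le c$) gives transitivity, and taking $b'=b\vee x$ gives $a\vee x\,\overrightarrow{\ll}\,b\vee x$, the $\mathbf{H}$-condition. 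Hence $\overrightarrow{\ll}$ is an up-expanded $\mathbf{H}$-order, i.e. an $\mathbf{R}$-order; being an up-expanded order it equals $(\overrightarrow{\ll})^{\triangleright}=(\overrightarrow{\ll})^{\text{up}}$ by Theorem \ref{P2.1}(iii), which together with $\overrightarrow{\ll}=\overrightarrow{\ll^{\triangleleft}}$ gives the full chain of equalities in 1).
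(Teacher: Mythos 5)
Your proposal is correct in outline and in most of its details, but there is one genuine error, in your final paragraph: you claim that Proposition \ref{Lat1}(ii) gives up-expandedness of $\overrightarrow{\ll}$. It does not. Lat1(ii) asserts $\vee$-completeness of the \emph{predecessor} sets $[\overrightarrow{\ll},b]$ (if $g\,\overrightarrow{\ll}\,b$ for all $g\in G$, then $\vee G\,\overrightarrow{\ll}\,b$), whereas up-expandedness in Definition \ref{D1.1} is $\vee$-completeness of the \emph{successor} sets $[a,\overrightarrow{\ll}]$. These are different properties, and the one you need fails for general reflexive $\ll$: in $Q=\{\mathbf{0},a,b,\mathbf{1}\}$ with $a\vee b=\mathbf{1}$, $a\wedge b=\mathbf{0}$ and $a\ll\mathbf{1}$ the only nontrivial pair, one has $\mathbf{0}\,\overrightarrow{\ll}\,a$ and $\mathbf{0}\,\overrightarrow{\ll}\,b$ but not $\mathbf{0}\,\overrightarrow{\ll}\,(a\vee b)$. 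Since Lat1(ii) holds for arbitrary relations, it cannot deliver a property that genuinely requires the dual $\mathbf{H}$-hypothesis. Fortunately your own ``single device'' closes the hole: if $a\,\overrightarrow{\ll}\,g$ for all $g\in G$ and $a\le x\ll\vee G$, then for each $g\in G$ Lemma \ref{le1}(ii) gives $x\wedge g\ll(\vee G)\wedge g=g$, and $a\le x\wedge g\le g$ together with $a\,\overrightarrow{\ll}\,g$ forces $x\wedge g=g$, i.e.\ $g\le x$; hence $\vee G\le x$, so $x=\vee G$ and $a\,\overrightarrow{\ll}\,\vee G$. With this inserted, $\overrightarrow{\ll}$ is an up-contiguous, up-expanded $\mathbf{H}$-order, hence a $\mathbf{T}$-order and an $\mathbf{R}$-order, and Theorem \ref{P2.1}(iii) then yields $\overrightarrow{\ll}=(\overrightarrow{\ll})^{\triangleright}=(\overrightarrow{\ll})^{\text{up}}$ as you say (note the $^{\text{up}}$-equality uses the $\mathbf{T}$-order clause of P2.1(iii), i.e.\ up-contiguity as well, not up-expandedness alone).

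Everything else checks out, and your organization is genuinely different from, and in places cleaner than, the paper's. You prove statement 2) first, straight from Theorem \ref{T2.3p} and Proposition \ref{P2.6} (your uniqueness argument via the ``Moreover'' clause plus down-contiguity is sound), and then harvest down-expandedness of $\ll^{\triangleleft}$ for free from Theorem \ref{eur1}; the paper instead proves down-expandedness first, by applying Proposition \ref{P2.4}(ii) to $[\ll^{\triangleleft},c]$ together with $(\ll^{\triangleleft})^{\triangleleft}=\ll^{\triangleleft}$, and extracts the radical afterwards from Theorem \ref{eur}. Likewise you verify the dual $\mathbf{H}$-condition of $\ll^{\triangleleft}$ directly, pushing the lower $\ll$-set $[a,b]$ through $\curlywedge_c$ (Proposition \ref{P2.4}(i)) and re-extracting a gap chain via Theorem \ref{T2.3p}(iii); the paper reaches this indirectly, from $\ll^{\text{lo}}=\overleftarrow{(\overrightarrow{\ll})}$ after showing $\overrightarrow{\ll}$ is an $\mathbf{H}$-order, and it gets up-expandedness of $\overrightarrow{\ll}$ from the fixed-point identity $\overrightarrow{\ll}=(\overrightarrow{\ll})^{\triangleright}$ rather than directly. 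One cosmetic slip in your dual-$\mathbf{H}$ step: (\ref{e.1}) justifies that the bottom of $\curlywedge_c([a,b])$ is $a\wedge c$, but the top is $c$ simply because $c\wedge b=c$ is attained at $b\in[a,b]$; (\ref{e.1}) says nothing about joins of meets.
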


\begin{proof}
(i) 1) Let $\ll$ be a dual $\mathbf{H}$-relation. Firstly, let us show that
$\ll^{\triangleleft}$ is down-expanded. Let $G=[\ll^{\triangleleft},c]$ for
some $c\in Q.$ For each $g\in G\diagdown\{c\},$ $G_{g}=\{g,c\}$ is a complete,
lower $\ll^{\triangleleft}$-set and $G=\cup_{g\in G\diagdown\{c\}}G_{g}.$
Hence $G^{\wedge}$ is a $\wedge$-complete, lower $\ll^{\triangleleft}$-set by
Proposition \ref{P2.4}. As $\wedge(G^{\wedge})=\wedge G,$ it follows from
Theorem \ref{T2.3p}(iii) that there is a complete\textit{ }lower
$\ll^{\triangleleft}$-gap chain $C$ in $G^{\wedge}$ such that $\wedge C=\wedge
G$ and $\vee C=c.$ Hence $\wedge G$ $\left(  \ll^{\triangleleft}\right)
^{\triangleleft}$ $c.$ As $\ll^{\triangleleft}$ $=$ $\left(  \ll
^{\triangleleft}\right)  ^{\triangleleft}$ by Theorem \ref{P2.1}, we have
$\wedge G$ $\ll^{\triangleleft}$ $c.$ Thus $[\ll^{\triangleleft},c]$ is
$\wedge$-complete whence $\ll^{\triangleleft}$ is down-expanded.

It follows from Theorem \ref{T2.3p}(iii) that $[a,b]$ is a lower $\ll$-set
($a$ $\ll^{\text{lo}}$ $b$ (see Definition \ref{D2})) if and only if $a$
$\ll^{\triangleleft}$ $b.$ Thus $\ll^{\triangleleft}$ $=$ $\ll^{\text{lo}}$ is down-expanded.

Let us now prove that $\overrightarrow{\ll}$ is an $\mathbf{H}$-order. Suppose
that $a\ \overrightarrow{\ll}\ b$ and $a\leq c$. Assume that $c\leq x\ll b\vee
c$ for some $x.$ As $\ll$ is a dual $\mathbf{H}$-relation, $a\leq x\wedge
b\ll(b\vee c)\wedge b=b.$ As $a\ \overrightarrow{\ll}\ b,$ we have $x\wedge
b=b$ by (\ref{A1}). So $b\leq x.$ Thus $x=b\vee c.$ From this we conclude that
$c\ \overrightarrow{\ll}\ b\vee c.$ It follows from Lemma \ref{le1} that
$\overrightarrow{\ll}$ is an $\mathbf{H}$-relation.

To show that $\overrightarrow{\ll}$ is an order, let $a$\ $\overrightarrow{\ll
}\ b$\ $\overrightarrow{\ll}$ $c$ and $a\leq x\ll c$. By Lemma \ref{le1},
$a\leq x\wedge b\ll c\wedge b=b.$ As $a$\ $\overrightarrow{\ll}\ b,$ we have
from (\ref{A1}) that $x\wedge b=b.$ Hence $b\leq x\ll c.$ As $b$%
\ $\overrightarrow{\ll}$ $c,$ we have from (\ref{A1}) that $x=c.$ Thus
$a$\ $\overrightarrow{\ll}\ c$ by (\ref{A1}). So $\overrightarrow{\ll}$ is
transitive. Thus $\overrightarrow{\ll}$ is an $\mathbf{H}$-order.

Similarly, if $\ll$ is an $\mathbf{H}$-relation, $\overleftarrow{\ll}$ is a
dual $\mathbf{H}$-order. Hence if $\ll$ is a dual $\mathbf{H}$-relation then
$\overleftarrow{\left(  \overrightarrow{\ll}\right)  }$ is a dual $\mathbf{H}%
$-order.\ By Proposition \ref{p11},\ $\ll^{\text{lo}}=\overleftarrow{\left(
\overrightarrow{\ll}\right)  }$ whence $\ll^{\text{lo}}$ is a dual
$\mathbf{H}$-order. Combining this with the fact that $\ll^{\triangleleft}$
$=$ $\ll^{\text{lo}}$ is down-expanded, we get that $\ll^{\triangleleft}$ $=$
$\ll^{\text{lo}}$ is a dual $\mathbf{R}$-order.

By duality, $\ll^{\triangleright}$\ $=$ $\ll^{\text{up}}$ is\textit{ }an
$\mathbf{R}$-order, if $\ll$ is an $\mathbf{H}$-relation. Hence, as
$\overrightarrow{\ll}$ is an $\mathbf{H}$-order by above, $\left(
\overrightarrow{\ll}\right)  ^{\triangleright}$ $=$ $\left(
\overrightarrow{\ll}\right)  ^{\text{up}}.$ Therefore, since $\ll^{\text{lo}%
}=$ $\overleftarrow{\left(  \overrightarrow{\ll}\right)  }$ and $\ll
^{\text{up}}=$ $\overrightarrow{\left(  \overleftarrow{\ll}\right)  }$ by
Proposition \ref{p11},%
\begin{equation}
\left(  \overrightarrow{\ll}\right)  ^{\triangleright}=\text{ }\left(
\overrightarrow{\ll}\right)  ^{\text{up}}=\overrightarrow{\left(
\overleftarrow{\left(  \overrightarrow{\ll}\right)  }\right)  }%
=\overrightarrow{\ll^{\text{lo}}}=\overrightarrow{\ll^{\triangleleft}}.
\label{4.2}%
\end{equation}
By (\ref{2,8}) and Proposition \ref{Lat1}, $\overrightarrow{\ll^{\triangleleft
}}$ $\subseteq$ $\overrightarrow{\ll}.$ By Proposition \ref{p11},
$\overrightarrow{\ll}$ $\subseteq$ $\overrightarrow{\ll^{\text{lo}}}$. As
$\ll^{\triangleleft}$ $=$ $\ll^{\text{lo}},$ we have $\overrightarrow{\ll
^{\triangleleft}}$ $\subseteq$ $\overrightarrow{\ll}$ $\subseteq$
$\overrightarrow{\ll^{\triangleleft}}$. So $\overrightarrow{\ll}$ $=$
$\overrightarrow{\ll^{\triangleleft}}$. By (\ref{4.2}), $\overrightarrow{\ll}$
$=$ $\left(  \overrightarrow{\ll}\right)  ^{\triangleright}.$ So, as
$\overrightarrow{\ll}$ is an $\mathbf{H}$-order, $\left(  \overrightarrow{\ll
}\right)  ^{\triangleright}$ is an $\mathbf{R}$-order by above. Thus
$\overrightarrow{\ll}$ is an $\mathbf{R}$-order. The proof of 1) is complete.

$2)$ As $\ll^{\triangleleft}$ is a dual $\mathbf{R}$-order, it is a dual
$\mathbf{T}$-order. By Theorem \ref{eur} and (\ref{2.11}), for each $\left[
a,b\right]  \subseteq Q$ there is a unique dual $\ll^{\triangleleft}$-radical
$\mathfrak{p}\in\left[  a,b\right]  $ such that $a$\ $\overrightarrow{\ll
^{\triangleleft}}$ $\mathfrak{p}$ $\ll^{\triangleleft}$ $b.$ As
$\overrightarrow{\ll^{\triangleleft}}$ $=$ $\overrightarrow{\ll}$ by 1), $a$
$\overrightarrow{\ll}$ $\mathfrak{p}$ $\ll^{\triangleleft}b.$

Let $a\leq x$ $\ll^{\triangleleft}$ $b.$ As $\ll^{\triangleleft}$ is a dual
$\mathbf{R}$-order, $a\leq x\wedge\mathfrak{p}$ $\ll^{\triangleleft}$
$b\wedge\mathfrak{p}=\mathfrak{p}$ by Lemma \ref{le1}(ii). As $a$
$\overrightarrow{\ll^{\triangleleft}}$ $\mathfrak{p}$ by above, we have from
(\ref{A1}) that $x\wedge\mathfrak{p}=\mathfrak{p}.$ Hence $\mathfrak{p}\leq
x.$

The proof of part (ii) is similar.\bigskip
\end{proof}

The following corollary strengthens Theorem \ref{eur1}

\begin{corollary}
\label{pr3}Let $\ll$ be a dual $\mathbf{H}$-relation. The following conditions
are equivalent.\smallskip

\emph{(i) \ \ }$\ll$ is a dual $\mathbf{R}$-order. \ \ \ \ \ \ \ \ \emph{(ii)
\ }$\ll$ $=$ $\ll^{\triangleleft}.$\ \smallskip

\emph{(iii)} $\ll$ is an order and $\wedge_{n\in\mathbb{N}}\left(
x_{n}\right)  \ll x_{1}$ for each descending $\ll$-series $\left(
x_{n}\right)  _{n\in\mathbb{N}}$\emph{.\smallskip}

\emph{(iv) }For each $[a,b]\subseteq Q,$ there is $c\in\lbrack a,b]$ such that
$a$ $\overrightarrow{\ll}$ $c\leq b.\smallskip$

\emph{(v) \ }Each $[a,b]\subseteq Q$ has a unique dual $\ll$-radical$.$
\end{corollary}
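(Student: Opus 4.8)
The plan is to prove the equivalence of the five conditions for a dual $\mathbf{H}$-relation $\ll$ by establishing a cycle of implications, leaning heavily on Theorem \ref{inf}(i) and Theorem \ref{P2.1}(ii). The key observation is that a dual $\mathbf{H}$-relation is automatically down-contiguous (this is part of the definition via Lemma \ref{le1}(ii), since condition 1) there includes down-contiguity), so by Theorem \ref{eur1}(iv) the condition of being a dual $\mathbf{T}$-order is equivalent to each $[a,b]$ having a unique dual $\ll$-radical. Since a dual $\mathbf{R}$-order is by Definition \ref{D2.2}(v) precisely a down-expanded dual $\mathbf{H}$-order, and a dual $\mathbf{T}$-order is down-contiguous and down-expanded, for a dual $\mathbf{H}$-relation the properties ``dual $\mathbf{R}$-order'' and ``dual $\mathbf{T}$-order'' should coincide.

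First I would prove (i) $\Leftrightarrow$ (ii). If $\ll$ is a dual $\mathbf{R}$-order, it is a down-expanded order, so by Theorem \ref{P2.1}(ii) we get $\ll$ $=$ $\ll^{\triangleleft}$. Conversely, if $\ll$ $=$ $\ll^{\triangleleft}$, then since $\ll$ is a dual $\mathbf{H}$-relation, Theorem \ref{inf}(i)(1) tells us that $\ll^{\triangleleft}$ is a dual $\mathbf{R}$-order, hence so is $\ll$. Next I would handle (i) $\Leftrightarrow$ (iii): the forward direction uses that a dual $\mathbf{R}$-order is down-expanded, and for a descending $\ll$-series $(x_n)$ the meet $\wedge_n x_n = x_\beta$ (the limit value) satisfies $x_\beta \ll x_1$ by down-expandedness applied to $[\ll, x_1]$, together with transitivity. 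For the reverse direction, condition (iii) combined with the $\mathbf{H}$-relation hypothesis should force down-expandedness: I would show $[\ll, c]$ is $\wedge$-complete by taking any subset, extracting a descending $\ll$-series converging to its meet (using Theorem \ref{T2.3p} to produce lower $\ll$-gap chains, which by Proposition \ref{p10} are descending $\ll$-series), and applying (iii).

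Then I would prove the equivalences (i) $\Leftrightarrow$ (iv) $\Leftrightarrow$ (v). For (i) $\Rightarrow$ (v), a dual $\mathbf{R}$-order is a dual $\mathbf{T}$-order, so Theorem \ref{eur1}(ii) gives the unique dual $\ll$-radical in each $[a,b]$. For (v) $\Rightarrow$ (i), Theorem \ref{eur1}(iv) (using that $\ll$ is down-contiguous as a dual $\mathbf{H}$-relation) gives that $\ll$ is a dual $\mathbf{T}$-order; being also a dual $\mathbf{H}$-order, it is down-expanded, hence a dual $\mathbf{R}$-order. Condition (iv) is essentially the assertion that $[a,b]$ contains an element $c$ with $a$ $\overrightarrow{\ll}$ $c$, i.e., the dual radical exists; I would connect (iv) to (v) by noting that existence of such $c$ for every interval, combined with the $\mathbf{H}$-relation structure guaranteeing uniqueness via Theorem \ref{inf}(i)(2) (which shows $a \leq x \ll^{\triangleleft} b$ implies $x \in [\mathfrak p, b]$ and hence the radical is forced), yields the unique dual $\ll$-radical, and conversely.

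The main obstacle I anticipate is the careful bookkeeping in (iii), specifically showing that condition (iii) upgrades a dual $\mathbf{H}$-relation to being down-expanded. The subtlety is that down-expandedness requires $\wedge N \in [\ll, b]$ for \emph{arbitrary} nonempty subsets $N$ of $[\ll, b]$, not merely for countable descending chains, whereas (iii) only directly controls descending $\ll$-series indexed by $\mathbb{N}$. Bridging this gap requires invoking the chain machinery: given arbitrary $G \subseteq [\ll, b]$, I would form $\mathfrak{p} = \wedge G$ and use Theorem \ref{T2.3p} to produce a complete lower $\ll$-gap chain realizing $\mathfrak{p}$, then argue that transfinite descending series reduce to the hypothesis of (iii) at successor steps while limit steps are handled by the meet-stability that (iii) provides; I would need to check that the transfinite induction in Theorem \ref{P2.1}(ii) goes through using only the $\mathbb{N}$-indexed hypothesis, which is the delicate point requiring that cofinally many steps collapse appropriately.
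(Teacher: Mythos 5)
Your (i)$\Leftrightarrow$(ii), (i)$\Rightarrow$(iii) and (i)$\Rightarrow$(v) match the paper, but the heart of the corollary is (iii)$\Rightarrow$(ii), and this is precisely the step you leave unexecuted. The paper proves it directly: given $a\ll^{\triangleleft}b$, it takes a descending $\ll$-series $\left(x_{\alpha}\right)_{1\leq\alpha\leq\gamma}$ from $b$ to $a$ and shows $x_{\alpha}\ll b$ by transfinite induction, with successor steps by transitivity and the limit step $\beta$ handled by writing $x_{\beta}=\wedge_{n\in\mathbb{N}}(x_{\alpha+n})$ where $\alpha$ is the preceding limit ordinal --- i.e., the limit step is explicitly reduced to the $\mathbb{N}$-indexed hypothesis (iii). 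Your alternative plan (show down-expandedness of $\ll$ directly by attacking $[\ll,c]$ with Theorem \ref{T2.3p} and Proposition \ref{p10}, then collapse transfinite series to $\omega$-sequences) is essentially the same reduction, but you never carry it out: you write that you ``would need to check that the transfinite induction goes through,'' and that check is where all the content sits. Note that the reduction only has purchase at limit ordinals approached by a sequence $x_{\alpha+n}$, $n\in\mathbb{N}$; at a limit of uncountable cofinality the hypothesis (iii), which speaks only of $\mathbb{N}$-indexed series, gives nothing directly, so ``cofinally many steps collapse appropriately'' is exactly the assertion that needs proof, not a bookkeeping detail. (Indeed the paper's own limit step is confined to limits of the form $\alpha+\omega$, which is all the more reason this point cannot be waved through.) As it stands your (iii)$\Rightarrow$(i) is a statement of intent, not an argument; also observe that the paper never re-proves down-expandedness of $\ll$ from scratch --- it gets (iii)$\Rightarrow$(ii) and then (ii)$\Rightarrow$(i) for free from Theorem \ref{inf}, since $\ll^{\triangleleft}$ is a dual $\mathbf{R}$-order.

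There are two further genuine errors in your treatment of (iv) and (v). First, your reading of (iv) as ``the dual radical exists'' is wrong: a dual $\ll$-radical must satisfy $a$ $\overrightarrow{\ll}$ $c\ll b$ (see (\ref{2.11})), whereas (iv) supplies only $a$ $\overrightarrow{\ll}$ $c\leq b$, with no assertion that $c\ll b$; and Theorem \ref{inf}(i)2) gives uniqueness of the dual $\ll^{\triangleleft}$-radical, not of the dual $\ll$-radical, so invoking it presupposes (ii), which makes your (iv)$\Leftrightarrow$(v) sketch circular. The paper instead proves (iv)$\Rightarrow$(ii): if $a\ll^{\triangleleft}b$ but $a\not\ll b$, take $c\neq a$ with $a$ $\overrightarrow{\ll}$ $c\leq b$ from (iv); since $\ll^{\triangleleft}$ $=$ $\ll^{\text{lo}}$ for a dual $\mathbf{H}$-relation (Theorem \ref{inf}), $[a,b]$ is a lower $\ll$-set, which produces $d\neq c$ in $[a,c]$ with $d\ll c$, contradicting $a$ $\overrightarrow{\ll}$ $c$. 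Second, your (v)$\Rightarrow$(i) via Theorem \ref{eur1}(iv) is illegitimate: that theorem requires $\ll$ to be a down-contiguous \emph{order}, but a dual $\mathbf{H}$-relation is not assumed transitive --- transitivity is part of what must be established in passing from (v) to (i). The paper obtains it by routing (v)$\Rightarrow$(iv)$\Rightarrow$(ii), since $\ll^{\triangleleft}$ is automatically an order by Theorem \ref{P2.1}.
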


\begin{proof}
(ii) $\Rightarrow$ (i) As $\ll$ is a dual $\mathbf{H}$-relation,
$\ll^{\triangleleft}$ is a dual $\mathbf{R}$-order by Theorem \ref{inf}. So
$\ll$ is a dual $\mathbf{R}$-order.

(i) $\Rightarrow$ (iii). As $x_{n+1}\ll x_{n}$ for all $n,$ and $\ll$ is
transitive, $x_{n}\ll x_{1}.$ As $\ll$ is down-expanded by Definition
\ref{D2.2}, $\wedge\left(  x_{n}\right)  _{n\in\mathbb{N}}\ll x_{1}.$

(iii) $\Rightarrow$ (ii). If $a\ll^{\triangleleft}b,$ there is a descending
$\ll$-series $\left(  x_{\alpha}\right)  _{1\leq\alpha\leq\gamma}$ with
$a=x_{\gamma}$ and $b=x_{1}$. If $x_{\alpha}\ll b$ for some $\alpha,$ then
$x_{\alpha+1}\ll x_{\alpha}\ll b$ implies $x_{\alpha+1}\ll b$ by transitivity
of $\ll$. Hence all $x_{\alpha+n}\ll b.$ If $\beta$ is a limit ordinal and
$\alpha$ is the previous limit ordinal then, $x_{\beta}=\wedge_{n\in
\mathbb{N}}(x_{\alpha+n})\ll x_{\alpha}\ll b$. So $x_{\beta}\ll b$. Thus, by
transfinite induction, $a\ll b$. So $\ll$ $=$ $\ll^{\triangleleft}.$

(i) $\Rightarrow$ (iv). As dual $\mathbf{R}$-orders are dual $\mathbf{T}%
$-orders, it follows from Theorem \ref{eur} (ii) that each $[a,b]\subseteq Q$
has a unique dual $\ll$-radical $\mathfrak{p}$: $a$ $\overrightarrow{\ll
}\,\mathfrak{p}$ $\ll$ $b$ (see (\ref{2.11})). If $\mathfrak{p}=a$ then $a\ll
b.$ If $\mathfrak{p}\neq a,$ set $c=\mathfrak{p}.$ As $\mathfrak{p}$ $\ll$ $b$
implies $\mathfrak{p}\leq b,$ (iv) holds.

(iv) $\Rightarrow$ (ii). Let $a$ $\ll^{\triangleleft}$ $b$ for $a<b.$ If $a$
$\not \ll $ $b$ then, by (iv), $a$ $\overrightarrow{\ll}$ $c\leq b$ for some
$c\neq a$. On the other hand, by Theorem \ref{inf}, $\ll^{\triangleleft}=$
$\ll^{\text{lo}}.$ Thus, by Definition \ref{D2}, $\left[  a,b\right]  $ is a
lower $\ll$-set, so that there is $d\in\lbrack a,b]$ such that $c\neq d\ll c$
which contradicts $a$ $\overrightarrow{\ll}$ $c.$ Thus $a\ll b$. So
$\ll^{\triangleleft}$ $\subseteq$ $\ll.$ Hence $\left(  \ref{2,8}\right)  $
implies $\ll$ $=$ $\ll^{\triangleleft}$.

(i) $\Rightarrow$ (v). As dual $\mathbf{R}$-orders are dual $\mathbf{T}%
$-orders, it follows from Theorem \ref{eur1} that each $[a,b]\subseteq Q$ has
a unique dual $\ll$-radical.

(v) $\Rightarrow$ (iv). If $[a,b]\subseteq Q$ has a dual $\ll$-radical
$\mathfrak{p}$ then $a$ $\overrightarrow{\ll}\,\mathfrak{p}$ $\ll$ $b$ by
(\ref{2.11}). So (iv) holds.$\bigskip$
\end{proof}

It should be noted that Amitsur \cite{Am} got some results equivalent to the
implication (ii) $\Leftrightarrow$ (iv).\smallskip

By duality, the following results are equivalent for an $\mathbf{H}$-relation
$\ll$:\medskip

(i) $\ll$ \textit{is an }$\mathbf{R}$\textit{-order}; (ii) $\ll$ $=$
$\ll^{\triangleright};$ (iii) Each $[a,b]\subseteq Q$ has a unique $\ll
$-radical$.$\medskip

If $\ll$ is an $\mathbf{H}$-relation in $Q,$ it follows from Theorem \ref{inf}
that $\ll^{\triangleright}$ is an $\mathbf{R}$-order and $\overleftarrow{\ll}$
is a dual $\mathbf{R}$-order in $Q.$ Hence each $[a,b]\subseteq Q$ has a
unique $\ll^{\triangleright}$\textit{-}radical\textit{ }and a unique dual
$\overleftarrow{\ll}$-radical$.$

Similarly, if $\ll$ is a dual $\mathbf{H}$-relation in $Q,$ then
$\ll^{\triangleleft}$ is a dual $\mathbf{R}$-order and $\overrightarrow{\ll}$
is an $\mathbf{R}$-order in $Q.$ So each $[a,b]\subseteq Q$ has a unique dual
$\ll^{\triangleleft}$\textit{-}radical\textit{ }and a unique
$\overrightarrow{\ll}$-radical$.$

\begin{proposition}
\label{T2.1}\emph{(i) }Let $\ll$ be an $\mathbf{H}$-relation$.$ Then\emph{,}
for each $[a,b]\subseteq Q,\smallskip$

$\qquad1)$ the $\ll^{\triangleright}$-radical\textit{ }and the dual
$\overleftarrow{\ll}$-radical in $[a,b]$ coincide$;\smallskip$

$\qquad2)$ if there exists a $\ll$-radical in $[a,b],$ it coincides with the
$\ll^{\triangleright}$-radical.\smallskip

\emph{(ii) }Let $\ll$ be a dual $\mathbf{H}$-relation$.$ Then\emph{,} for each
$[a,b]\subseteq Q,\smallskip$

\qquad$1)$ the dual $\ll^{\triangleleft}$-radical and the $\overrightarrow{\ll
}$-radical in $[a,b]$ coincide$;\smallskip$

\qquad$2)$ if there exists a dual $\ll$-radical in $[a,b],$ it coincides with
the dual $\ll^{\triangleleft}$-radical.
\end{proposition}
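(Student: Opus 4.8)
The plan is to prove part (ii) in full and obtain part (i) by the Duality Principle, since $\overrightarrow{\ll}$, $\ll^{\triangleleft}$ and the lower $\ll$-set machinery are exactly dual to $\overleftarrow{\ll}$, $\ll^{\triangleright}$ and the upper $\ll$-set machinery. The whole argument rests on Theorem \ref{inf}(i), which for a dual $\mathbf{H}$-relation $\ll$ supplies three facts I would invoke repeatedly: (a) $\ll^{\triangleleft}=\ll^{\text{lo}}$ is a dual $\mathbf{R}$-order, hence a dual $\mathbf{T}$-order, so by Theorem \ref{eur1} each $[a,b]$ has a unique dual $\ll^{\triangleleft}$-radical $\mathfrak{p}$; (b) $\overrightarrow{\ll}=\overrightarrow{\ll^{\triangleleft}}$ is an $\mathbf{R}$-order, so each $[a,b]$ has a unique $\overrightarrow{\ll}$-radical $\mathfrak{r}$; and (c) $\ll\subseteq\ll^{\triangleleft}$ by (\ref{2,8}).

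For part 2) I would start from a dual $\ll$-radical $\mathfrak{q}$ in $[a,b]$, so $a\,\overrightarrow{\ll}\,\mathfrak{q}\ll b$ by (\ref{2.11}). Since $\ll\subseteq\ll^{\triangleleft}$, the relation $\mathfrak{q}\ll b$ upgrades to $\mathfrak{q}\ll^{\triangleleft}b$, while $a\,\overrightarrow{\ll}\,\mathfrak{q}$ is literally $a\,\overrightarrow{\ll^{\triangleleft}}\,\mathfrak{q}$ by fact (b). Hence $\mathfrak{q}$ satisfies (\ref{2.11}) for $\ll^{\triangleleft}$, i.e. it is a dual $\ll^{\triangleleft}$-radical, and the uniqueness in (a) forces $\mathfrak{q}=\mathfrak{p}$. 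This step is essentially formal.

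The substance is part 1). I would take the $\overrightarrow{\ll}$-radical $\mathfrak{r}$, so $a\,\overrightarrow{\ll}\,\mathfrak{r}$ and $\mathfrak{r}\,\overleftarrow{(\overrightarrow{\ll})}\,b$, and show $\mathfrak{r}\ll^{\triangleleft}b$. By (\ref{A1}) the relation $\mathfrak{r}\,\overleftarrow{(\overrightarrow{\ll})}\,b$ says that no $x\in(\mathfrak{r},b]$ satisfies $\mathfrak{r}\,\overrightarrow{\ll}\,x$. Fixing such an $x$ and negating the defining condition of $\overrightarrow{\ll}$ in (\ref{A1}), I obtain some $w$ with $\mathfrak{r}\leq w<x$ and $w\ll x$, a proper $\ll$-predecessor of $x$ inside $[\mathfrak{r},b]$. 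As $x$ ranges over all of $(\mathfrak{r},b]=[\mathfrak{r},b]\diagdown\{\mathfrak{r}\}$, the interval $[\mathfrak{r},b]$ is a lower $\ll$-set, i.e. $\mathfrak{r}\ll^{\text{lo}}b=\ll^{\triangleleft}b$ by Definition \ref{D2} and fact (a). Combined with $a\,\overrightarrow{\ll^{\triangleleft}}\,\mathfrak{r}$ (fact (b)), this makes $\mathfrak{r}$ a dual $\ll^{\triangleleft}$-radical, and uniqueness again gives $\mathfrak{r}=\mathfrak{p}$.

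The main obstacle is precisely this last identification: correctly unfolding the double complement $\overleftarrow{(\overrightarrow{\ll})}$ through (\ref{A1}) to read off a genuine $\ll$-predecessor, and recognizing the resulting condition as the lower $\ll$-set property that characterizes $\ll^{\triangleleft}=\ll^{\text{lo}}$. Everything else is bookkeeping with the inclusion $\ll\subseteq\ll^{\triangleleft}$ and the identity $\overrightarrow{\ll}=\overrightarrow{\ll^{\triangleleft}}$, together with the uniqueness statements from Theorems \ref{eur} and \ref{eur1}. Part (i) is then immediate by the Duality Principle.
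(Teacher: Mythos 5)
Your proposal is correct and takes essentially the same route as the paper's proof: both rest on the identification of the double complement with the refined relation ($\overleftarrow{\left(\overrightarrow{\ll}\right)}=\,\ll^{\text{lo}}\,=\,\ll^{\triangleleft}$ in your dual formulation, $\overrightarrow{\left(\overleftarrow{\ll}\right)}=\,\ll^{\text{up}}\,=\,\ll^{\triangleright}$ in the paper's) via Proposition \ref{p11} and Theorem \ref{inf}, followed by the uniqueness of (dual) radicals, and on the inclusion $\ll\,\subseteq\,\ll^{\triangleleft}$ from (\ref{2,8}) for part 2). The only cosmetic differences are that you unfold the double complement by hand where the paper simply cites Proposition \ref{p11}, and that you prove (ii) directly and obtain (i) by duality while the paper does the reverse.
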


\begin{proof}
(i) 1) By Theorem \ref{inf}(ii), the $\ll^{\triangleright}$\textit{-}radical
$\mathfrak{r}$ satisfies $a$ $\ll^{\triangleright}$ $\mathfrak{r}$
$\overleftarrow{\ll}$ $b$. By (\ref{2.11}), the dual\textit{ }%
$\overleftarrow{\ll}$\textit{-}radical $\mathfrak{p}$ in\textit{ }$[a,b]$
satisfy $a$ $\overrightarrow{\overleftarrow{\ll}}$ $\mathfrak{p}$
$\overleftarrow{\ll}$ $b.$ By Proposition \ref{p11},
$\overrightarrow{\overleftarrow{\ll}}$ $=$ $\ll^{\text{up}}.$ By Theorem
\ref{inf}, $\ll^{\text{up}}$ $=$ $\ll^{\triangleright}$, as $\ll$ is an
$\mathbf{H}$-relation, So $\overrightarrow{\overleftarrow{\ll}}$ $=$
$\ll^{\triangleright}.$ Hence $a$ $\overrightarrow{\overleftarrow{\ll}}$
$\mathfrak{p}$ $\overleftarrow{\ll}$ $b$ turns into $a$ $\ll^{\triangleright}$
$\mathfrak{p}$ $\overleftarrow{\ll}$ $b.$ Comparing it to $a$ $\ll
^{\triangleright}$ $\mathfrak{r}$ $\overleftarrow{\ll}$ $b$ and taking into
account the uniqueness of the radicals, we get $\mathfrak{r}=\mathfrak{p}.$

2) If $\mathfrak{r}_{_{1}}$ is the $\ll$-radical in $[a,b]$ then $a$ $\ll$
$\mathfrak{r}_{_{1}}$ $\overleftarrow{\ll}$ $b.$ As $\ll$ $\mathbf{\subseteq}$
$\ll^{\triangleleft}$ by (\ref{2,8}), $a$ $\ll^{\triangleleft}$ $\mathfrak{r}%
_{_{1}}$ $\overleftarrow{\ll}$ $b.$ Since (see (i)) $a$ $\ll^{\triangleright}$
$\mathfrak{r}$ $\overleftarrow{\ll}$ $b,$ it follows from the uniqueness of
the radical that $\mathfrak{r}=\mathfrak{r}_{_{1}}.\bigskip$
\end{proof}

For a subset $F\subseteq$ Ref($Q),$ define the relations $\ll_{_{\wedge F}%
}:=\cap_{_{\ll\,\in F}}\ll$ and $\ll_{_{\vee F}}:=\cup_{_{\ll\,\in F}}\ll$ by%
\begin{equation}
a\ll_{_{\wedge F}}b\text{ if }a\ll b\text{ for all }\ll\text{ in }F;\text{
}a\ll_{_{\vee F}}b\text{ if }a\ll b\text{ for some }\ll\text{ in }F. \label{i}%
\end{equation}
They belong to Ref($Q).$ In particular, for $\prec$ and $\ll,$ $a$ $(\prec
\cap\ll)$ $b$ if and only if $a\prec b$ and $a\ll b$.

If $\ll$ is an $\mathbf{H}$-relation and $\prec$ is not an $\mathbf{H}%
$-relation, then $\prec$ $\cap$ $\ll$ is not necessarily an $\mathbf{H}$-relation.

\begin{lemma}
\label{L5.1}Let $\ll$ be an $\mathbf{H}$-relation\emph{, }let $\prec,$
$\sqsubset$ be relations in \emph{Ref(}$Q)$ and $\ll$ be stronger than
$\sqsubset$ \emph{(}see \emph{(\ref{0})). }If $\prec$ $\cap$ $\sqsubset$ is an
$\mathbf{H}$-relation then $\prec$ $\cap$ $\ll$ is also an $\mathbf{H}$-relation.

The same is true for dual $\mathbf{H}$-relations.
\end{lemma}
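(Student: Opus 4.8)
The plan is to verify the lemma directly from the third equivalent characterization of $\mathbf{H}$-relations in Lemma \ref{le1}(i), namely that a relation $\ll$ in \emph{Ref}$(Q)$ is an $\mathbf{H}$-relation precisely when $a\ll b$ implies $a\vee x\ll b\vee x$ for every $x\in Q$. First I would observe that $\prec\,\cap\,\ll$ lies in \emph{Ref}$(Q)$: it is reflexive since both $\prec$ and $\ll$ are, and it is stronger than $\leq$ since $\ll$ is (every relation in \emph{Ref}$(Q)$ is contained in $\leq$). Hence it suffices to check condition 3) of Lemma \ref{le1}(i) for $\prec\,\cap\,\ll$.

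The key step is to fix a pair $a\,(\prec\cap\ll)\,b$, so that $a\prec b$ and $a\ll b$, and a fixed $x\in Q$, and to produce the two components of $a\vee x\,(\prec\cap\ll)\,b\vee x$ from different hypotheses. For the $\ll$-component, since $\ll$ is an $\mathbf{H}$-relation and $a\ll b$, condition 3) applied to $\ll$ gives $a\vee x\ll b\vee x$ immediately. For the $\prec$-component, I would use the hypothesis that $\ll$ is stronger than $\sqsubset$: from $a\ll b$ and $\ll\,\subseteq\,\sqsubset$ (see (\ref{0})) we get $a\sqsubset b$, and combined with $a\prec b$ this yields $a\,(\prec\cap\sqsubset)\,b$. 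Since $\prec\,\cap\,\sqsubset$ is assumed to be an $\mathbf{H}$-relation, condition 3) applied to it gives $a\vee x\,(\prec\cap\sqsubset)\,b\vee x$, and in particular $a\vee x\prec b\vee x$. Combining the two components shows $a\vee x\,(\prec\cap\ll)\,b\vee x$, which is exactly condition 3) for $\prec\,\cap\,\ll$, so $\prec\,\cap\,\ll$ is an $\mathbf{H}$-relation.

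There is no genuinely hard step here; the only subtlety worth flagging is \emph{where} the strength hypothesis enters. One cannot hope to derive $a\sqsubset b$ from $a\prec b$, so it is essential that the $\ll$-coordinate of the pair---which is the stronger relation---supplies the passage $a\ll b\Rightarrow a\sqsubset b$ that lets us feed the pair into the known $\mathbf{H}$-relation $\prec\,\cap\,\sqsubset$. This is precisely why the conclusion is asserted for $\prec\,\cap\,\ll$ rather than for an arbitrary intersection. Finally, the dual statement for dual $\mathbf{H}$-relations follows without separate argument from the Duality Principle \cite[Theorem $1.3^{\prime}$]{Sk}, interchanging $\vee$ with $\wedge$ and condition 3) of Lemma \ref{le1}(i) with that of Lemma \ref{le1}(ii).
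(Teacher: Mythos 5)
Your proof is correct and follows essentially the same route as the paper's: both verify condition 3) of Lemma \ref{le1} for $\prec\,\cap\,\ll$ by obtaining the $\ll$-component directly and obtaining the $\prec$-component via $a\ll b\Rightarrow a\sqsubset b$ (using $\ll\,\subseteq\,\sqsubset$) and the assumed $\mathbf{H}$-relation $\prec\,\cap\,\sqsubset$. Your remark on where the strength hypothesis enters is a sound observation, and the appeal to duality for the dual case matches the paper's convention.
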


\begin{proof}
Let $a$ $(\prec\cap\ll)$ $b$ for $a,b\in Q$. Then $a\prec b$ and $a\ll b.$ As
$\ll$ is stronger than $\sqsubset,$ we have $a\sqsubset b.$ By (\ref{i}), $a$
$(\prec\cap\sqsubset)$ $b.$ Since $\prec\cap\sqsubset$ is an $\mathbf{H}%
$-relation, $a\vee c$ ($\prec\cap\sqsubset)$ $b\vee c$ for all $c\in Q,$ by
Definition \ref{D2.2}. Hence $a\vee c$ $\prec$ $b\vee c$ by (\ref{i}). As
$\ll$ is an $\mathbf{H}$-relation, we also have $a\vee c$ $\ll$ $b\vee c.$ So
$a\vee c$ ($\prec\cap\ll)$ $b\vee c$ by (\ref{i}). Thus $\prec\cap\ll$ is a
$\mathbf{H}$-relation (see Definition \ref{D2.2}).\bigskip
\end{proof}

For convenience, we summarize below some results about $\mathbf{H}$-relations
which will be used later.

\begin{theorem}
\label{T3.5}\emph{(i) }Let $\ll$ be an $\mathbf{H}$-relation and $a\in Q.$
Then $\ll^{\triangleright}$ is an $\mathbf{R}$-order and\smallskip

$1)$ $\mathfrak{r}(a)=\vee\lbrack\ll^{\triangleright},a]$ is the
$\ll^{\triangleright}$-radical in $[a,\mathbf{1}],$ i.e.$,$ $a\ll
^{\triangleright}\mathfrak{r}(a)$ $\overleftarrow{\ll}$ $\mathbf{1}%
;\smallskip$

$2)$ $\mathfrak{r}(b)=\mathfrak{r}(a)$ for all $b\in\lbrack a,\mathfrak{r}%
(a)]$ -- there is an ascending $\ll$-series from $b$ to $\mathfrak{r}%
(a);\smallskip$

$3)$ each $b\in\lbrack a,\mathbf{1}]\diagdown\lbrack\mathfrak{r}%
(a),\mathbf{1}]$ has a $\ll$-successor $s_{b}$\emph{: }$b\ll s_{b}\neq b;$
$\mathfrak{r}(a)$ has no $\ll$-successor.\smallskip

\emph{(ii) }Let $\ll$ be a dual $\mathbf{H}$-relation and $b\in Q.$ Then
$\ll^{\triangleleft}$ is a dual $\mathbf{R}$-order and\smallskip

$1)$\emph{ }$\mathfrak{p}(b)=\wedge\lbrack\ll^{\triangleleft},b]$ is the dual
$\ll^{\triangleleft}$-radical in $[\mathbf{0},b]\subseteq Q,$ i.e.$,$
$\mathbf{0}$ $\overrightarrow{\ll}$ $\mathfrak{p}(b)$ $\ll^{\triangleleft
}b;\smallskip$

$2)$\emph{ }$\mathfrak{p}(a)=\mathfrak{p}(b)$ for all $a\in\lbrack
\mathfrak{p}(b),b]$ -- there is a descending $\ll$-series from $a$ to
$\mathfrak{p}(b);\smallskip$

$3)$ each $a\in\lbrack\mathbf{0},b]\diagdown\lbrack\mathbf{0},\mathfrak{p}%
(b)]$ has a $\ll$-predecessor $p_{a}\ll a\neq p_{a};$ $\mathfrak{p}(b)$ has no
$\ll$-predecessor.
\end{theorem}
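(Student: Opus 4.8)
Theorem \ref{T3.5} is a summary result: its two parts are dual to each other (by the Duality Principle invoked throughout the paper), so I would prove part (ii) in full and then assert part (i) by duality. The key point is that almost every claim has already been established earlier in the excerpt; the work is to assemble those pieces and to supply the one genuinely new assertion, namely the statements in (ii)3) about $\ll$-predecessors. So the plan is: first quote the structural facts from Theorem \ref{inf} and Definition \ref{D2.2}, then read off the radical characterization, then derive the "no-jump" statement 2), and finally prove 3).

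\textbf{Steps for part (ii).} First, since $\ll$ is a dual $\mathbf{H}$-relation, Theorem \ref{inf}(i) gives immediately that $\ll^{\triangleleft}=\ll^{\text{lo}}$ is a dual $\mathbf{R}$-order; this is the opening assertion. For (ii)1), a dual $\mathbf{R}$-order is in particular a dual $\mathbf{T}$-order, so by (\ref{2.13}) the unique dual $\ll^{\triangleleft}$-radical in $[\mathbf{0},b]$ is $\mathfrak{p}(b)=\wedge[\ll^{\triangleleft},b]$ and satisfies $\mathbf{0}\ \overrightarrow{\ll^{\triangleleft}}\ \mathfrak{p}(b)\ \ll^{\triangleleft}b$; but Theorem \ref{inf}(i)1) records $\overrightarrow{\ll}=\overrightarrow{\ll^{\triangleleft}}$, which converts this into $\mathbf{0}\ \overrightarrow{\ll}\ \mathfrak{p}(b)\ \ll^{\triangleleft}b$, exactly as stated. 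For (ii)2), I would invoke Lemma \ref{L2.1}(i)/(\ref{2,5}) applied to $\ll^{\triangleleft}$: since $\mathfrak{p}(b)\leq a\leq b$ forces $\mathfrak{p}(a)=\mathfrak{p}(\mathfrak{p}(b))=\mathfrak{p}(b)$, and the relation $a\ \ll^{\triangleleft}\ b$ holding for all such $a$ means by (\ref{2.18}) that there is a complete lower $\ll$-gap chain from $\mathfrak{p}(b)$ to $b$, which by Proposition \ref{p10}(ii) is precisely a descending $\ll$-series.

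\textbf{Step 3), the main obstacle.} The assertion that every $a\in[\mathbf{0},b]\diagdown[\mathbf{0},\mathfrak{p}(b)]$ has a genuine $\ll$-predecessor $p_a\ll a$, $p_a\neq a$, is the one part not simply quotable. Here I would argue as follows: since $a\notin[\mathbf{0},\mathfrak{p}(b)]$ we have $\mathfrak{p}(b)\not\geq a$, hence $a\wedge\mathfrak{p}(b)<a$; because $\ll^{\triangleleft}=\ll^{\text{lo}}$ (Theorem \ref{inf}), the interval $[\mathfrak{p}(b),b]$ is a lower $\ll$-set in the sense of Definition \ref{D2}, and I want to transfer this to produce a predecessor of $a$ itself. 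The cleanest route is Proposition \ref{P2.2n}(i): taking a $\wedge$-complete lower $\ll$-chain $C$ from $\mathfrak{p}(b)$ to $b$ (which exists by Corollary \ref{C2.5p} or Theorem \ref{T2.3p}(iii)), that proposition guarantees for each $z\in[\mathbf{0},b]\diagdown[\mathbf{0},\mathfrak{p}(b)]$ an element $c\in C$ with $z\neq c\wedge z\ll z$; setting $p_a=c\wedge a$ gives the desired predecessor. Finally, $\mathfrak{p}(b)$ has no $\ll$-predecessor: if $p\ll\mathfrak{p}(b)$ with $p\neq\mathfrak{p}(b)$ then $p\in[\ll,\mathfrak{p}(b)]$ and $p<\mathfrak{p}(b)=\wedge[\ll^{\triangleleft},b]$ would contradict the minimality in (\ref{2.13}) (equivalently, it contradicts $\mathbf{0}\ \overrightarrow{\ll}\ \mathfrak{p}(b)$ via (\ref{A1}), since $\overrightarrow{\ll}$ forbids any $\ll$-relation strictly below $\mathfrak{p}(b)$ that reaches it). Part (i) then follows verbatim by the Duality Principle, interchanging $\wedge\leftrightarrow\vee$, lower $\leftrightarrow$ upper, predecessor $\leftrightarrow$ successor, and $\ll^{\triangleleft}\leftrightarrow\ll^{\triangleright}$.
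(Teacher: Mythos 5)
Your proposal is correct and matches the paper's (implicit) justification: Theorem \ref{T3.5} is stated there without proof, as a summary of Theorem \ref{inf}, (\ref{2.13})--(\ref{2.14}), Lemma \ref{L2.1}, Proposition \ref{p10} and Proposition \ref{P2.2n} plus duality, which are exactly the results you assemble, including the correct use of Proposition \ref{P2.2n}(i) for the predecessor claim in (ii)3) and of (\ref{A1}) for the minimality of $\mathfrak{p}(b)$. The only slip is cosmetic: in (ii)2) the descending $\ll$-series runs from $a$ to $\mathfrak{p}(b)$ --- obtained from $\mathfrak{p}(b)=\mathfrak{p}(a)\ll^{\triangleleft}a$ via (\ref{2,5}), (\ref{2.13}), (\ref{2.18}) and Proposition \ref{p10}(ii) --- not ``from $\mathfrak{p}(b)$ to $b$'' as you wrote, which is only the case $a=b$.
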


Let $Q$ be a complete lattice. For $a\in Q,$ set
\begin{equation}
\sigma(a)=\wedge\lbrack\ll,a]\text{ and }s(a)=\vee\lbrack a,\ll]. \label{1.5}%
\end{equation}
Then if $\ll$ is an $\mathbf{H}$-relation, we have
\begin{equation}
\lbrack a,\ll]\overset{(\ref{2,8})}{\subseteq}[a,\ll^{\triangleright}],\text{
so that }s(a)\leq\vee\lbrack a,\ll^{\triangleright}]\overset{(\ref{2.14}%
)}{=}\mathfrak{r}_{_{\ll^{\triangleright}}}(a). \label{2'}%
\end{equation}
If $\ll$ is a dual $\mathbf{H}$-relation, we have%
\begin{equation}
\lbrack\ll,a]\overset{(\ref{2,8})}{\subseteq}[\ll^{\triangleleft},a],\text{ so
that }\sigma(a)\geq\wedge\lbrack\ll^{\triangleleft},a]\overset{(\ref{2.13}%
)}{=}\mathfrak{p}_{_{\ll^{\triangleleft}}}(a). \label{2}%
\end{equation}

A bijection $\theta$: $Q\rightarrow Q^{\prime}$ is an isomorphism if $x\leq
y\Leftrightarrow\theta\left(  x\right)  \leq\theta\left(  y\right)  $ for all
$x,y\in$ $Q.$ Then%
\begin{equation}
\theta\left(  \vee G\right)  =\vee\theta\left(  G\right)  \text{ and }%
\theta\left(  \wedge G\right)  =\wedge\theta\left(  G\right)  \text{ for each
}G\subseteq Q. \label{5.1}%
\end{equation}
Indeed, $\theta\left(  x\right)  \leq\theta\left(  \vee G\right)  ,$ as
$x\leq\vee G$ for $x\in G.$ Thus $\vee\theta\left(  G\right)  \leq
\theta\left(  \vee G\right)  $. As $\theta^{-1}$ is also an isomorphism, $\vee
G=\vee\theta^{-1}\left(  \theta\left(  G\right)  \right)  \leq\theta
^{-1}\left(  \vee\theta\left(  G\right)  \right)  .$ So $\theta\left(  \vee
G\right)  \leq\vee\theta\left(  \vee G\right)  $. Hence $\theta\left(  \vee
G\right)  =\vee\theta\left(  G\right)  $. Similarly, $\theta\left(  \wedge
G\right)  =\wedge\theta\left(  G\right)  $.

Let $\theta$ be an automorphism of $Q$. It preserves a relation $\ll$ from
Ref($Q)$ if%
\begin{equation}
\theta(x)\ll\theta(y)\Leftrightarrow x\ll y\text{\emph{ }for all }x,y\in Q.
\label{5.0}%
\end{equation}
For $x<y$ in $Q,$ $\theta$ is an isomorphism from $[x,y]$ onto $[\theta
(x),\theta(y)].$ Clearly, $[x,y]$ is a lower (upper) $\ll$-set if and only if
$[\theta(x),\theta(y)]$ is a lower (upper) $\ll$-set$.$ So $x\ll^{\text{lo}}y$
if and only if $\theta(x)\ll^{\text{lo}}\theta(y),$ and $x\ll^{\text{up}}y$ if
and only if $\theta(x)\ll^{\text{up}}\theta(y).$ Thus $\theta$ preserves the
relations $\ll^{\text{lo}}$ and $\ll^{\text{up}}$.

\begin{proposition}
\label{T3.11}Let an automorphism $\theta$ of $Q$ preserve a relation $\ll$
from \emph{Ref(}$Q).$\smallskip

\emph{(i) \ }If $\ll$ is an $\mathbf{H}$-relation\emph{,} $\theta$ preserves
$\ll^{\triangleright};$ if $\ll$ is a dual $\mathbf{H}$-relation\emph{,}
$\theta$ preserves $\ll^{\triangleleft}.\smallskip$

\emph{(ii) }Let $\theta(a)=a$ for some $a\in Q.$ Then\emph{ }$\theta
(\sigma(a))=\sigma(a)$ and $\theta(s(a))=s(a).$ Moreover\emph{,\smallskip}

$\qquad1)$ if $\ll$ is a $\mathbf{T}$-order then $\theta(\mathfrak{r}%
(a))=\mathfrak{r}(a);$ if it is a dual $\mathbf{T}$-order then $\theta
(\mathfrak{p}(a))=\mathfrak{p}(a);\smallskip$

$\qquad2)$\emph{ }if $\ll$ is an $\mathbf{H}$-relation\emph{,} $\theta
(\mathfrak{r}_{_{\ll^{\triangleright}}}(a))=\mathfrak{r}_{_{\ll
^{\triangleright}}}(a);$ if it is a dual $\mathbf{H}$-relation\emph{,}
$\theta(\mathfrak{p}_{_{\ll^{\triangleleft}}}(a))=\mathfrak{p}_{_{\ll
^{\triangleleft}}}(a).$
\end{proposition}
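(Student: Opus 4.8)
The plan is to prove Proposition \ref{T3.11} in two parts, exploiting the fact that an automorphism preserving $\leq$ and $\ll$ automatically preserves all the derived relations and all the lattice operations. For part (i), I would use the observation recorded in the paragraph preceding the statement: since $\theta$ is a lattice automorphism, $\theta$ preserves the relations $\ll^{\text{lo}}$ and $\ll^{\text{up}}$, because $[x,y]$ is a lower (upper) $\ll$-set if and only if its image $[\theta(x),\theta(y)]$ is. By Theorem \ref{inf}, when $\ll$ is an $\mathbf{H}$-relation we have $\ll^{\triangleright} = \ll^{\text{up}}$, and when $\ll$ is a dual $\mathbf{H}$-relation we have $\ll^{\triangleleft} = \ll^{\text{lo}}$. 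Hence $\theta$ preserves $\ll^{\triangleright}$ in the first case and $\ll^{\triangleleft}$ in the second, which is exactly the assertion of (i).

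For part (ii), I would first establish the two basic identities $\theta(\sigma(a)) = \sigma(a)$ and $\theta(s(a)) = s(a)$. Recall from (\ref{1.5}) that $\sigma(a) = \wedge[\ll,a]$ and $s(a) = \vee[a,\ll]$. Since $\theta(a)=a$ and $\theta$ preserves $\ll$, I would argue that $\theta$ maps the set $[\ll,a] = \{x : x \ll a\}$ bijectively onto itself: if $x \ll a$ then $\theta(x) \ll \theta(a) = a$, and conversely using $\theta^{-1}$. Therefore $\theta([\ll,a]) = [\ll,a]$, and applying (\ref{5.1}) gives $\theta(\sigma(a)) = \theta(\wedge[\ll,a]) = \wedge\theta([\ll,a]) = \wedge[\ll,a] = \sigma(a)$. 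The same computation with $[a,\ll]$ and $\vee$ yields $\theta(s(a)) = s(a)$.

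The remaining claims 1) and 2) follow the same pattern, the only point to check being which characterization of the radical is available. For 1), when $\ll$ is a $\mathbf{T}$-order, (\ref{2.14}) gives $\mathfrak{r}(a) = \vee[a,\ll] = s(a)$, so $\theta(\mathfrak{r}(a)) = \theta(s(a)) = s(a) = \mathfrak{r}(a)$; dually, when $\ll$ is a dual $\mathbf{T}$-order, (\ref{2.13}) gives $\mathfrak{p}(a) = \wedge[\ll,a] = \sigma(a)$ and the conclusion is immediate. For 2), when $\ll$ is merely an $\mathbf{H}$-relation, I would invoke part (i): $\theta$ preserves $\ll^{\triangleright}$, which by Theorem \ref{inf} is an $\mathbf{R}$-order (hence a $\mathbf{T}$-order), and by Theorem \ref{T3.5}(i) its radical is $\mathfrak{r}_{\ll^{\triangleright}}(a) = \vee[\ll^{\triangleright},a]$. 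Since $\theta(a)=a$ and $\theta$ preserves $\ll^{\triangleright}$, the same set-and-supremum argument as above gives $\theta(\mathfrak{r}_{\ll^{\triangleright}}(a)) = \mathfrak{r}_{\ll^{\triangleright}}(a)$; the dual $\mathbf{H}$-relation case is symmetric.

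The main obstacle, such as it is, is not computational but organizational: one must be careful to route each statement through the correct derived relation. In particular the difference between 1) and 2) is exactly whether $\ll$ itself is a $\mathbf{T}$-order (so the radical is computed directly from $\ll$ via $s$ or $\sigma$) or only an $\mathbf{H}$-relation (so one must first pass to $\ll^{\triangleright}$ or $\ll^{\triangleleft}$ and use that $\theta$ preserves it, established in part (i)). Once that bookkeeping is settled, every step reduces to the single lemma that $\theta$ fixing $a$ carries the relevant order-ideal or order-filter to itself and commutes with $\wedge$ and $\vee$ by (\ref{5.1}).
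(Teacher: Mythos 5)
Your proposal is correct and takes essentially the same route as the paper: part (i) via the preservation of $\ll^{\text{lo}}$ and $\ll^{\text{up}}$ combined with the identifications $\ll^{\text{up}}=$ $\ll^{\triangleright}$, $\ll^{\text{lo}}=$ $\ll^{\triangleleft}$ from Theorem \ref{inf}; part (ii) via $\theta([\ll,a])=[\ll,a]$, $\theta([a,\ll])=[a,\ll]$ and (\ref{5.1}), with 1) reducing to $s(a)=\mathfrak{r}(a)$ and $\sigma(a)=\mathfrak{p}(a)$ by (\ref{2.14}) and (\ref{2.13}), and 2) obtained by applying (i) and 1) to $\ll^{\triangleright}$ and $\ll^{\triangleleft}$. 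One cosmetic point: where you write $\mathfrak{r}_{\ll^{\triangleright}}(a)=\vee[\ll^{\triangleright},a]$ it should be $\vee[a,\ll^{\triangleright}]$ by (\ref{2.14}) --- a slip inherited from the statement of Theorem \ref{T3.5}(i) itself, which does not affect the argument.
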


\begin{proof}
(i) follows, as $\theta$ preserves $\ll^{\text{lo}}$ and $\ll^{\text{up}}$,
and as $\ll^{\text{lo}}=$ $\ll^{\triangleleft},$ $\ll^{\text{up}}=$
$\ll^{\triangleright}$ by Theorem \ref{inf}.

(ii) As $\theta$ and $\theta^{-1}$ are automorphisms$,$ $\theta([\ll
,a])=[\ll,a]$ and $\theta([a,\ll])=[a,\ll].$ By (\ref{5.1}),%
\begin{equation}
\theta(\sigma(a))=\theta(\wedge\lbrack\ll,a])=\wedge\lbrack\ll,a]=\sigma
(a)\text{ and }\theta(s(a))=\theta(\vee\lbrack a,\ll])=\vee\lbrack
a,\ll]=s(a). \label{5.8}%
\end{equation}

By (\ref{2.14}), $s(a)=\mathfrak{r}(a)$ if $\ll$ is a $\mathbf{T}$-order. By
(\ref{2.13}), $\sigma(a)=\mathfrak{p}(a)$ if $\ll$ is a dual $\mathbf{T}%
$-order$.$ So 1) follows from (\ref{5.8}). Part 2) follows from (i) and 1).
\end{proof}

\section{Gap $<_{\mathfrak{g}}$ and continuity $<_{\mathfrak{c}}$ relations in
complete lattices.}

\begin{definition}
\label{D7.1}\emph{Let }$a\leq b$ \emph{in }$Q.$ \emph{We write }%
$a<_{\mathfrak{g}}b$ \emph{if either} $a=b,$ \emph{or} $[a,b]$ \emph{is a
gap}.\smallskip

\emph{We write }$a<_{\mathfrak{c}}b$ \emph{if either} $a=b,$ \emph{or there is
a complete \textbf{continuous} chain }$C$ \emph{from} $a$ \emph{to} $b:$
\emph{For all }$x<y$\emph{ in }$C,$\emph{ there is }$z\in C$\emph{ such that
}$x<z<y$\emph{, i.e., }$C$ \emph{has no gaps.}
\end{definition}

The relations $<_{\mathfrak{g}}$ and $<_{\mathfrak{c}}$ belong to Ref($Q)$ and
$<_{\mathfrak{c}}$ is an order.

\begin{proposition}
\label{P6.1}A complete lattice $Q$ has no gaps if and only if $<_{\mathfrak{c}%
}$ $=$ $\leq$\emph{.}
\end{proposition}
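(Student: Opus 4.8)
The plan is to exploit that $<_{\mathfrak{c}}$ lies in Ref($Q$), so that by definition $<_{\mathfrak{c}}\ \subseteq\ \leq$ automatically. Consequently the asserted equality $<_{\mathfrak{c}}=\ \leq$ is equivalent to the single reverse inclusion $\leq\ \subseteq\ <_{\mathfrak{c}}$, i.e. to the statement that every comparable pair $a\leq b$ is joined by a complete continuous chain. I would therefore prove the biconditional by establishing the two implications separately, in each case working with the reverse inclusion and using the characterization of ``no gaps'' as the density condition: for all $a<b$ in $Q$ there exists $z$ with $a<z<b$ (equivalently, no interval $[a,b]$ with $a<b$ equals $\{a,b\}$).

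For the forward direction, assume $Q$ has no gaps and fix $a\leq b$. The case $a=b$ is immediate from reflexivity, so take $a<b$. I would apply Lemma \ref{L2.1p}(i),(ii) to the complete set $G=[a,b]$ and the chain $\{a,b\}$: this yields a maximal chain $C$ in $[a,b]$ containing $\{a,b\}$, and since $[a,b]$ is complete, $C$ is complete as well (so in particular $a=\wedge C\in C$ and $b=\vee C\in C$, i.e. $C$ is a chain from $a$ to $b$). The key step is to show $C$ is continuous. If it were not, there would be $x<y$ in $C$ with $[x,y]\cap C=\{x,y\}$; the no-gaps hypothesis then supplies $w$ with $x<w<y$, and since $a\leq x<w<y\leq b$ we have $w\in[a,b]$. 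Every element of $C$ is $\leq x$ or $\geq y$, so $C\cup\{w\}$ is again a chain in $[a,b]$, strictly larger than $C$, contradicting maximality. Hence $C$ is a complete continuous chain from $a$ to $b$, giving $a<_{\mathfrak{c}}b$ and thus $\leq\ \subseteq\ <_{\mathfrak{c}}$.

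For the reverse direction, assume $<_{\mathfrak{c}}=\ \leq$ and suppose, for contradiction, that $Q$ has a gap, i.e. $a<b$ with $[a,b]=\{a,b\}$. Since $a\leq b$, the hypothesis gives $a<_{\mathfrak{c}}b$, so by Definition \ref{D7.1} there is a complete continuous chain $C$ from $a$ to $b$. As $\wedge C=a$ and $\vee C=b$, every element of $C$ lies in $[a,b]=\{a,b\}$, and completeness forces $a,b\in C$, whence $C=\{a,b\}$. Applying continuity to the pair $a<b$ in $C$ demands some $z\in C$ with $a<z<b$, which is impossible. This contradiction shows $Q$ has no gaps.

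The only genuinely delicate point is the maximality argument in the forward direction: one must verify both that the interpolating element $w$ stays inside $[a,b]$ and that adjoining it preserves the chain property, so that maximality is actually violated. Everything else is bookkeeping with the completeness statements of Lemma \ref{L2.1p} and the definition of a chain from $a$ to $b$. I expect no further obstacles, since the equivalence is essentially a direct unpacking of the definitions once the maximal-chain construction is in place.
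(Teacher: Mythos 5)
Your proposal is correct and takes essentially the same route as the paper: the forward direction is the identical maximal-chain construction via Lemma \ref{L2.1p} (you merely make explicit the verification, left tacit in the paper, that the interpolating element $w$ lies in $[a,b]$ and that $C\cup\{w\}$ is again a chain), and your reverse direction spells out in detail what the paper dismisses with ``Clearly.''
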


\begin{proof}
Clearly, if $<_{\mathfrak{c}}$ $=$ $\leq$ then $Q$ has no gaps. Conversely,
let $a<b.$ By Lemma \ref{L2.1p}, there is a maximal complete chain $C$ from
$a$ to $b$. If $C$ is not continuous, there is a gap $\left[  x,y\right]
_{_{C}}$ in $C$. As $Q$ has no gaps, there is $z\in Q,$ $z\notin C,$ with
$x<z<y.$ Hence the chain $C\cup\{z\}$ is larger than $C,$ a contradiction.
Thus $C$ is continuous. So $a<_{\mathfrak{c}}b.$ Hence $\leq$ $\subseteq$
$<_{\mathfrak{c}}.$ As $<_{\mathfrak{c}}$ $\subseteq$ $\leq,$ we have
$<_{\mathfrak{c}}$ $=$ $\leq.$
\end{proof}

\begin{example}
\label{E6.3}In general\emph{, }the order $<_{\mathfrak{c}}$ is neither
contiguous\emph{, }nor expanded.\smallskip

\emph{(i) Let }$Q=[0,1]\cup\{a\},$\emph{ where }$[0,1]\subset\mathbb{R}%
$\emph{,} $\mathbf{0}=0,$ $\mathbf{1}=1,$ $a\wedge t=\mathbf{0}$ \emph{and}
$a\vee t=\mathbf{1}$ \emph{for }$t\in(0,1).$\emph{ Then }$Q$\emph{ is a
complete lattice and }$\mathbf{0}<_{\mathfrak{c}}\mathbf{1}.$ \emph{As
}$a\not <  _{\mathfrak{c}}\mathbf{1,}$ \emph{we have} $[\mathbf{0,1}%
]\nsubseteq\lbrack<_{\mathfrak{c}},\mathbf{1}].$ \emph{So }$<_{\mathfrak{c}}%
$\emph{ is} not contiguous\emph{.\smallskip}

\emph{(ii) Let }$Q_{n},$ $2\leq n,$ \emph{be the interval in }$\mathbb{R}^{2}%
$\emph{ from }$\left(  \frac{1}{n},\frac{1}{n}\right)  $\emph{ to }$(1,0).$
\emph{Let }$L=\cup_{n=2}^{\infty}Q_{n}$ \emph{and} $(x,y)\leq(u,v)$ \emph{in
}$L$\emph{ if they lie in the same }$Q_{n}$\emph{ and} $x\leq u.$ \emph{Then}
$\mathbf{1}=(1,0).$ \emph{Set }$Q=(0,0)\cup L$\emph{ and }$\mathbf{0}%
=(0,0)<(x,y)$\emph{ for all }$(x,y)\in L.$\emph{ Then }$Q$\emph{ is a complete
lattice, }$[<_{\mathfrak{c}},\mathbf{1}]=L,$ $\mathbf{0}=\wedge\lbrack
<_{\mathfrak{c}},\mathbf{1}]$ \emph{and} $\mathbf{0}\not <  _{\mathfrak{c}%
}\mathbf{1.}$ \emph{So }$<_{\mathfrak{c}}$ is not expanded.
\ \ \ $\blacksquare$
\end{example}

We need now the following lemma.

\begin{lemma}
\label{L6.1}Let $T$ be a complete chain from $a$ to $b.$ Let each $t\in
T\diagdown\{b\}$ have an immediate successor $t_{s}$ in $T,$ i.e.\emph{,}
$[t,t_{s}]_{_{T}}=\{t,t_{s}\}$ is a gap\emph{, }and let $C_{t}$ be some
complete chain from $t$ to $t_{s}.$ Set $C_{b}=\{b\}.$ Then the chain
$C=\cup_{t\in T}C_{t}$ is complete$.$
\end{lemma}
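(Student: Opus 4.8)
The plan is to first verify that $C$ really is a chain, and then to prove completeness by establishing separately that $C$ is $\vee$-complete and that it is $\wedge$-complete. For the chain property, note that $T\subseteq C$, since each $t\neq b$ lies in $C_t$ and $b\in C_b$. Given $x\in C_t$ and $y\in C_{t'}$ with, say, $t\le t'$ in $T$, either $t=t'$, so that $x,y\in C_t$ are comparable, or $t<t'$, whence the immediate-successor hypothesis gives $t_s\le t'$ and therefore $x\le t_s\le t'\le y$. Thus $C$ is linearly ordered. Since every element of $C$ lies in some $[t,t_s]\subseteq[a,b]$, any nonempty $G\subseteq C$ has $\wedge G,\vee G\in[a,b]$, so these elements exist and it remains only to show they lie in $C$.

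To show $\vee$-completeness, fix a nonempty $G\subseteq C$ and set $s=\vee G$; the idea is to locate $s$ between two consecutive elements of $T$. Using completeness of $T$, put $u=\vee\{t\in T:t\le s\}\in T$, so $u\le s$. For each $x\in G$ we have $x\in C_{t_x}$ with $t_x\le x\le s$, hence $t_x\le u$ and (when $u<b$) $x\le (t_x)_s\le u_s$, giving $s\le u_s$. If $s=u$ then $s\in T\subseteq C$. Otherwise $s>u$, so $G_2=\{x\in G:x>u\}$ is nonempty, and for $x\in G_2$ the relations $t_x\le u<x\le (t_x)_s$ force $t_x=u$, i.e. $G_2\subseteq C_u$. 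The remaining elements of $G$ are $\le u\le\vee G_2$, so $s=\vee G_2$, which lies in $C_u$ by completeness of $C_u$. Thus $s\in C$ in every case (the degenerate case $u=b$ forces $s=b\in C$).

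The $\wedge$-completeness is dual in spirit but requires an extra step, which is the main obstacle: elements of $T$ possess immediate successors but not necessarily immediate predecessors, so I cannot symmetrically ``round down'' $m=\wedge G$. Instead I set $w=\vee\{t\in T:t\le m\}\in T$, so $w\le m$, and (assuming $w<b$, the case $w=b$ forcing $m=b$) I first extract a witness $x_0\in G$ with $x_0\le w_s$: if no such element existed, every $x\in G$ would satisfy $x>w_s$, making $w_s$ a lower bound of $G$, hence $w_s\le m$, contradicting the definition of $w$. The same case analysis as above shows that any $x\in G$ with $x\le w_s$ lies in $C_w$. Setting $G'=\{x\in G:x\le w_s\}\ni x_0$, one checks $\wedge G'=\wedge G=m$, since each element of $G\setminus G'$ exceeds $w_s\ge x_0\ge\wedge G'$. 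As $G'\subseteq C_w$ and $C_w$ is complete, $m=\wedge G'\in C_w\subseteq C$. Hence $C$ is both $\vee$- and $\wedge$-complete, i.e. complete.
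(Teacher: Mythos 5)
Your proof is correct and follows essentially the same route as the paper's: both arguments localize $\vee G$ and $\wedge G$ to a single block $C_t$ or to $T$ itself, using completeness of $T$ together with the gap structure $[t,t_s]_{_T}$, and both handle the up/down asymmetry (immediate successors but no immediate predecessors) by a small contradiction argument --- your witness $x_0\le w_s$ plays the role of the paper's claim that $z=\wedge Y$ must lie in $Y$. The only difference is bookkeeping: the paper partitions $G$ into slices $G_t=G\cap C_t$ and works with the slice extrema $c_t,g_t$, whereas you round the global supremum/infimum to the nearest element $u$ (resp. $w$) of $T$ and trap it in $[u,u_s]$; your compressed step ``the same case analysis shows $x\le w_s$ implies $x\in C_w$'' does need the extra observation that $t_x=w_s$ forces $x=w_s\in C_w$ and $t_x<w$ forces $x=w$, but this is routine and the argument stands.
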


\begin{proof}
Let $G\subseteq C.$ For $t\in T,$ set $G_{t}=G\cap C_{t}$ and $Y=\{t\in T$:
$G_{t}\neq\varnothing\}.$

Let $y=\vee Y$ and $c_{t}=\vee G_{t}$ for $t\in Y.$ As $T$ and all $C_{t}$ are
complete, $y\in T$ and $c_{t}\in C_{t}.$ As $G=\cup_{t\in Y}G_{t},$ we have
from (\ref{2.5}) that $\vee G=\vee\{c_{t}$: $t\in Y\}.$ If $y\in Y$ then $\vee
G=c_{y}\in C_{y}\subseteq C.$ If $y\notin Y$ then $t<t_{s}\leq y$ for all
$t\in Y,$ as $[t,t_{s}]_{_{T}}$ is a gap. So $y=\vee\{t_{s}$: $t\in Y\}.$ As
$t\leq c_{t}\leq t_{s},$ we have $y=\vee Y\leq\vee G=\vee\{c_{t}$: $t\in
Y\}\leq\vee\{t_{s}$: $t\in Y\}=y.$ Hence $\vee G=y\in T\subseteq C.$ Thus $C$
is $\vee$-complete.

Let $z=\wedge Y$ and $g_{t}=\wedge G_{t}$ for $t\in Y.$ As $T$ and all $C_{t}$
are complete, $z\in T$ and $g_{t}\in C_{t}.$ As $G=\cup_{t\in Y}G_{t},$ we
have from (\ref{2.5}) that $\wedge G=\wedge\{g_{t}$: $t\in Y\}.$ If $z\notin
Y$ then $z<t$ for all $t\in Y$, and $z=\wedge Y.$ As $[z,z_{s}]_{_{T}}$ is a
gap, we get a contradiction. Hence $z\in Y.$ Then $G_{z}\neq\varnothing$ and
$\wedge G=\wedge G_{z}=g_{z}\in C_{z}\subseteq C.$ Thus $C$ is $\wedge
$-complete. So $C$ is complete.\bigskip
\end{proof}

Lemma \ref{L6.1} also holds if each $t\in T\diagdown\{a\}$ has an immediate
predecessor $t_{p}$ in $T$: $[t_{p},t]_{_{T}}$ is a gap$.$

\begin{proposition}
\label{pr7}\emph{(i) }$<_{\mathfrak{c}}$ $=$ $<_{\mathfrak{c}}^{\triangleleft
}$ $=$ $<_{\mathfrak{c}}^{\triangleright}.$

\emph{(ii) }Each complete continuous chain from $a$ to $b$ in $Q$ is maximal
in $\left[  a,b\right]  .$
\end{proposition}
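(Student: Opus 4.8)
The plan is to prove part (ii) first, since it is a short self-contained maximality argument, and then to feed the gluing machinery of Lemma \ref{L6.1} into part (i).

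For part (ii), I would argue by contradiction. Suppose a complete continuous chain $C$ from $a$ to $b$ is not maximal in $[a,b]$; then there is $w\in[a,b]\setminus C$ comparable to every element of $C$. Set $x=\vee\{c\in C:c\leq w\}$ and $y=\wedge\{c\in C:c\geq w\}$; both lie in $C$ by completeness, and since $w\notin C$ we get $x<w<y$. Any $c\in C$ with $x<c<y$ is comparable to $w$, and $c\leq w$ forces $c\leq x$ while $c\geq w$ forces $c\geq y$ — so no such $c$ exists, i.e. $[x,y]_{C}=\{x,y\}$ is a gap. This contradicts the continuity of $C$, proving maximality.

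For part (i), the inclusions $<_{\mathfrak{c}}\subseteq <_{\mathfrak{c}}^{\triangleright}$ and $<_{\mathfrak{c}}\subseteq <_{\mathfrak{c}}^{\triangleleft}$ are immediate from (\ref{2,8}), so the substance is the reverse inclusion $<_{\mathfrak{c}}^{\triangleright}\subseteq <_{\mathfrak{c}}$. Given $a<_{\mathfrak{c}}^{\triangleright}b$ with $a<b$, there is a complete upper $<_{\mathfrak{c}}$-gap chain $T$ from $a$ to $b$. By the very definition of an upper gap chain (Definition \ref{D3}), every $t\in T\setminus\{b\}$ has an immediate successor $t_{s}$ in $T$ with $[t,t_{s}]_{T}$ a gap and $t<_{\mathfrak{c}}t_{s}$; hence there is a complete continuous chain $C_{t}$ from $t$ to $t_{s}$. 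Setting $C_{b}=\{b\}$, Lemma \ref{L6.1} shows that $C:=\cup_{t\in T}C_{t}$ is a complete chain from $a$ to $b$. It then remains only to verify that $C$ is continuous, after which $a<_{\mathfrak{c}}b$ follows directly.

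The main obstacle is exactly this continuity check. First I would observe that the lattice intervals $[t,t_{s}]$, $t\in T$, meet only at common $T$-points: since $t_{s}$ is an immediate successor, any $t'>t$ in $T$ satisfies $t'\geq t_{s}$, so distinct intervals overlap at most in a shared endpoint; consequently $C\cap[s,s_{s}]=C_{s}$ for each $s\in T\setminus\{b\}$. Now take $x<y$ in $C$ and put $s=\vee\{t\in T:t\leq x\}\in T$ (using completeness of $T$); since $x<b$ we have $s\neq b$, so $s_{s}$ exists, and $s\leq x<s_{s}$ (otherwise $s_{s}\leq x$ would contradict the choice of $s$). If $y\leq s_{s}$ then $x,y\in C\cap[s,s_{s}]=C_{s}$, and continuity of $C_{s}$ supplies $z\in C_{s}\subseteq C$ with $x<z<y$; if $y>s_{s}$ then $z=s_{s}\in C$ satisfies $x<z<y$. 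In either case $C$ has no gaps, so $C$ is continuous and $a<_{\mathfrak{c}}b$. Together with (\ref{2,8}) this gives $<_{\mathfrak{c}}=<_{\mathfrak{c}}^{\triangleright}$, and the symmetric argument using lower $<_{\mathfrak{c}}$-gap chains and the dual form of Lemma \ref{L6.1} (noted immediately after it) yields $<_{\mathfrak{c}}=<_{\mathfrak{c}}^{\triangleleft}$.
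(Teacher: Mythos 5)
Your proposal is correct and takes essentially the same approach as the paper: part (ii) is the same maximality argument via $c=\vee\{z\in C: z\leq w\}$ and $d=\wedge\{z\in C: w\leq z\}$ producing a gap that contradicts continuity, and part (i) glues the continuous chains $C_{t}$ over a complete upper $<_{\mathfrak{c}}$-gap chain $T$ using Lemma \ref{L6.1} and then verifies continuity of the union. The only difference is cosmetic: the paper checks continuity by contradiction (assuming a gap $[x,y]_{C}$ and locating $x$ in some $C^{t}$), whereas you verify it directly by exhibiting an intermediate element via the observation $C\cap[s,s_{s}]=C_{s}$ — both are the same argument.
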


\begin{proof}
(i) Let $a<_{\mathfrak{c}}^{\triangleright}b.$ Then there is a complete upper
$<_{\mathfrak{c}}$-gap chain $T$ from $a$ to $b$, i.e., each $t\in
T\diagdown\{b\}$ has an immediate $<_{\mathfrak{c}}$-successor $t_{s}$ in $T$,
$t<_{\mathfrak{c}}t_{s}.$ Then there is a complete continuous chain $C^{t}$
from $t$ to $t_{s}.$ By Lemma \ref{L6.1}, $C=\{b\}\cup(\cup_{_{t\in T}}C^{t})$
is a complete chain from $a$ to $b.$

If $C$ is not continuous, there are $x<y$ in $C$ such that $[x,y]_{_{C}}$ is a
gap. Let $t\in T$ be such that $x\in C^{t}\diagdown\{t_{s}\}.$ If $y\in C^{t}$
then $[x,y]_{_{C}}=[x,y]_{_{C^{t}}}$ is not a gap, as $C^{t}$ is continuous.
If $y\notin C^{t}$ then $y\neq t_{s}$ and $y\in C^{u},$ $t<u\in T.$ Hence
$x<t_{s}<y.$ So $[x,y]_{_{C}}$ is not a gap. Thus $C$ is continuous and
$a<_{\mathfrak{c}}b$. So $<_{\mathfrak{c}}^{\triangleright}\subseteq$
$<_{\mathfrak{c}}.$ As also $<_{\mathfrak{c}}\subseteq$ $<_{\mathfrak{c}%
}^{\triangleright}$ (see (\ref{2,8})), $<_{\mathfrak{c}}=$ $<_{\mathfrak{c}%
}^{\triangleright}.$ Similarly, $<_{\mathfrak{c}}^{\triangleleft}=$
$<_{\mathfrak{c}}.$

(ii) Let $C$ be a complete continuous chain from $a$ to $b.$ If $C$ is not
maximal, there is $u\in Q,$ $u\notin C,$ such that either $z<u$ or $u<z$ for
each $z\in C.$ Let $c=\vee\left\{  z\in C\text{: }z<u\right\}  $ and
$d=\wedge\left\{  z\in C\text{: }u<z\right\}  $. As $C$ is complete, $c,d\in
C$ and $c<u<d$. As $C$ is continuous, there is $x\in C$ such that $c<x<d.$
Hence either $x<u,$ or $u<x$, a contradiction. Thus $C$ is maximal.\bigskip
\end{proof}

For $a\in Q$, we defined in (\ref{5.3}) the maps $\curlyvee_{a}\left(
x\right)  =a\vee x$ and $\curlywedge_{a}\left(  x\right)  =a\wedge x$ for
$x\in Q.$ Then
\begin{equation}
\curlyvee_{a}\left(  x\right)  \leq\curlyvee_{a}\left(  y\right)  \text{ and
}\curlywedge_{a}\left(  x\right)  \leq\curlywedge_{a}\left(  y\right)  \text{
if }x\leq y. \label{6.7}%
\end{equation}
For each $G\subseteq Q$, its images $\curlyvee_{a}\left(  G\right)  $ and
$\curlywedge_{a}\left(  G\right)  $ satisfy (\ref{e.1}).

A complete lattice $\left(  Q,\leq\right)  $ is \textit{modular} if
\begin{equation}
\curlyvee_{a}\curlywedge_{b}=\curlywedge_{b}\curlyvee_{a}\text{ for all }a\leq
b\text{ in }Q.\label{6.8}%
\end{equation}
This is equivalent to the condition that $Q$ has no sublattice $P$
order-isomorphic to a pentagon:
\begin{equation}
P=\{a,b,c,d,e\},\text{ }a<b<c<d,\text{ }a<e<d,\text{ }b\vee e=d\text{ and
}c\wedge e=a.\label{6,0}%
\end{equation}
The lattice $Q$ in Example \ref{E6.3}(i) is not modular, as it contains
pentagons $\left\{  \mathbf{0},a,t,t^{\prime},\mathbf{1}\right\}  $ for
$\mathbf{0}<t<t^{\prime}<\mathbf{1}$.

\begin{proposition}
\label{P6.3}Let $C$ be a complete chain from $a$ to $b$ in $Q$ and $z\in
Q$.\smallskip

\emph{(i) }$\curlywedge_{z}\left(  C\right)  $ is a $\wedge$-complete chain
and $\curlyvee_{z}\left(  C\right)  $ is a $\vee$-complete chain$.\smallskip$

\emph{(ii) }Let $Q$ be a modular complete lattice.\smallskip

$\qquad1)$ If $\left[  x,y\right]  $ is a gap then $a)$ either $\curlywedge
_{z}\left(  x\right)  =\curlywedge_{z}\left(  y\right)  ,$ or $[\curlywedge
_{z}\left(  x\right)  ,\curlywedge_{z}\left(  y\right)  ]$ is a gap$;$

$\qquad\ \ \ \ b)$ either $\curlyvee_{z}\left(  x\right)  =\curlyvee
_{z}\left(  y\right)  ,$ or $[\curlyvee_{z}\left(  x\right)  ,\curlyvee
_{z}\left(  y\right)  ]$ is a gap$.$\smallskip

$\qquad2)$ If $C$ is continuous then the chains $\curlywedge_{z}\left(
C\right)  $ and $\curlyvee_{z}(C)$ are continuous$.$
\end{proposition}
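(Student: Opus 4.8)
The plan is to treat the three assertions in order, using only the monotonicity (\ref{6.7}), the meet/join-preservation identities (\ref{e.1}), and the modular law (\ref{6.8}). For part (i), first note that since $\curlywedge_{z}$ is monotone by (\ref{6.7}) and $C$ is a chain, the image $\curlywedge_{z}(C)$ is again a chain. To see it is $\wedge$-complete, I would copy the device already used in the proof of Proposition \ref{P2.4}(i): given $\varnothing\neq N\subseteq\curlywedge_{z}(C)$, write $N=\curlywedge_{z}(M)$ for some $M\subseteq C$; then by (\ref{e.1}) we get $\wedge N=\wedge\curlywedge_{z}(M)=\curlywedge_{z}(\wedge M)$, and $\wedge M\in C$ because $C$ is complete, so $\wedge N\in\curlywedge_{z}(C)$. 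The statement for $\curlyvee_{z}(C)$ being $\vee$-complete is dual, using the join half of (\ref{e.1}).

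For part (ii)1a I would argue by contradiction. Suppose $[x,y]$ is a gap, $\curlywedge_{z}(x)=z\wedge x<z\wedge y=\curlywedge_{z}(y)$, and there were $w$ with $z\wedge x<w<z\wedge y$. Since $w\leq z\wedge y\leq y$ and $x\leq y$, the element $w\vee x$ lies in $[x,y]$, so by the gap hypothesis either $w\vee x=x$ or $w\vee x=y$. If $w\vee x=x$ then $w\leq x$ and $w\leq z$, whence $w\leq z\wedge x$, contradicting $z\wedge x<w$. If $w\vee x=y$ then, applying the modular law (\ref{6.8}) with $w\leq z$, we get $z\wedge y=z\wedge(w\vee x)=w\vee(z\wedge x)=w$ because $z\wedge x\leq w$, contradicting $w<z\wedge y$. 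Thus no intermediate $w$ exists, so either $\curlywedge_{z}(x)=\curlywedge_{z}(y)$ or $[\curlywedge_{z}(x),\curlywedge_{z}(y)]$ is a gap. Part 1b follows by the Duality Principle (the dual computation uses (\ref{6.8}) in the form $z\vee(w\wedge y)=w\wedge(z\vee y)$ for $z\leq w$).

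For part (ii)2 I would show that $\curlywedge_{z}(C)$ has no gaps (the claim for $\curlyvee_{z}(C)$ being dual). Suppose for contradiction that $[u,v]$ is a gap in the chain $D:=\curlywedge_{z}(C)$. Since $D$ is a chain, every value $z\wedge c$ is comparable to $u$ and $v$ and not strictly between them, so $C=P\cup R$ with $P=\{c:z\wedge c\leq u\}$ a down-set and $R=\{c:z\wedge c\geq v\}$ an up-set. By (\ref{e.1}) the least element $m:=\wedge R$ satisfies $z\wedge m=\wedge_{c\in R}(z\wedge c)\geq v$, so $m\in R$ and $P=\{c\in C:c<m\}$, with $z\wedge c\leq u$ for all $c<m$. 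Completeness and continuity of $C$ force $m=\vee P$ (otherwise $[\vee P,m]$ would be a gap in $C$, contradicting Definition \ref{D7.1} via Proposition \ref{pr7}). Fixing $x_{1}\in C$ with $z\wedge x_{1}=u$ we have $x_{1}<m$, and for any $x_{1}<c<m$ in $C$ (which exists by continuity) monotonicity squeezes $z\wedge c=u$. The plan is then to reach a contradiction through the modular identity: one has $x_{1}\vee(z\wedge m)=m\wedge(x_{1}\vee z)$ by (\ref{6.8}), and once one knows $x_{1}\vee(z\wedge m)=m$, i.e. $m\leq x_{1}\vee z$, the modular law (\ref{6.8}) applied to $x_{1}\leq c$ gives $c=c\wedge(x_{1}\vee z)=x_{1}\vee(c\wedge z)=x_{1}\vee u=x_{1}$, contradicting $x_{1}<c$; hence no gap survives.

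The step I expect to be the main obstacle is precisely verifying the identity $x_{1}\vee(z\wedge m)=m$ at the limit element $m=\vee\{c\in C:c<m\}$: this is exactly where the constant value $u$ on the dense part $\{c<m\}$ must be prevented from jumping to $v$ at the supremum. Since the meet map $\curlywedge_{z}$ preserves meets (\ref{e.1}) but not arbitrary joins, closing this gap requires combining modularity (\ref{6.8}) with the density and completeness of the continuous chain $C$ — using (\ref{2.5}) to pass the join through $C$ — so that $z\wedge m$ coincides with $\vee_{c<m}(z\wedge c)=u$. For the $\curlyvee_{z}$ half the roles reverse: there the cut element $p:=\vee P'$ is pinned down cleanly by the join half of (\ref{e.1}), and the delicate point migrates to $\wedge R'$, handled dually. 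Everything else in part 2 is the routine cut-analysis above.
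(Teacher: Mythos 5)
Your parts (i) and (ii)1 are correct and essentially identical to the paper's own argument: the image is a chain by (\ref{6.7}), $\wedge$-completeness comes from pushing meets through $\curlywedge_{z}$ via (\ref{e.1}), and the gap case is settled by exactly the paper's move ($w\vee x=y$ forced by the gap, then $z\wedge y=z\wedge(w\vee x)=w\vee(z\wedge x)=w$ by modularity (\ref{6.8}), a contradiction). The problem is part (ii)2, where the step you flag as the ``main obstacle'' is a genuine, unclosed gap, and the repair you sketch cannot work.

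Concretely: you propose to close the argument by showing $z\wedge m=\vee_{c<m}(z\wedge c)=u$. That identity is an instance of the meet-infinite-distributivity (MIDC) of Definition \ref{D6.1} over the chain $\{c\in C:c<m\}$, and it is \emph{not} a consequence of modularity -- this is precisely the extra hypothesis the paper postpones to Lemma \ref{L6.2} and Theorem \ref{T6.2}, while Proposition \ref{P6.3} must be proved from modularity alone. Equation (\ref{2.5}), which you invoke, is only associativity of joins over a union of index sets and has no power to move a meet through a join; and in a complete modular (even distributive) lattice the needed distributivity can fail, e.g.\ in the subgroup lattice of $\mathbb{Z}$ one has $\wedge_{k}\left(q\mathbb{Z}\vee p^{k}\mathbb{Z}\right)=\mathbb{Z}\neq q\mathbb{Z}=q\mathbb{Z}\vee\wedge_{k}p^{k}\mathbb{Z}$ for distinct primes $p,q$ (the dual failure). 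Worse, your plan is internally inconsistent: you established $m\in R$, i.e.\ $z\wedge m\geq v>u$, so $z\wedge m=u$ is the outright negation of your gap hypothesis -- if you could prove it you would be done instantly, with no need of $x_{1}\vee(z\wedge m)=m$; and if $z\wedge m=u$ did hold, then $x_{1}\vee(z\wedge m)=x_{1}\vee u=x_{1}\neq m$, so the sub-goal you propose it for would be \emph{refuted}, not established. Your own modular computation points at the obstruction: for $x_{1}<c<m$ in $C$ one gets $c\wedge(x_{1}\vee z)=x_{1}\vee(c\wedge z)=x_{1}$, so $x_{1}\vee z$ meets the chain below $m$ only in $x_{1}$, and nothing supports $m\leq x_{1}\vee z$.

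What the paper does instead avoids any infinite distributivity by a two-sided cut argument with modularity applied on the side \emph{opposite} to where the limit point lands. Inside $[x,y]_{C}$ it sets $K=\{\xi:\curlywedge_{z}(\xi)=\curlywedge_{z}(x)\}$ and $L=\{\xi:\curlywedge_{z}(\xi)=\curlywedge_{z}(y)\}$, uses completeness and continuity of $C$ to force $u:=\vee K=\wedge L$, and then splits into cases. If $u\in K$, then for every $\xi\in L$ modularity gives $u\vee(z\wedge y)=u\vee(z\wedge\xi)=\xi\wedge(u\vee z)\leq\xi$, so $u\vee(z\wedge y)$ is a \emph{lower bound} of $L$, whence $u\vee(z\wedge y)\leq\wedge L=u$, so $z\wedge y\leq z\wedge u=z\wedge x$ -- a contradiction. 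Your configuration (the limit point $m=\vee P=\wedge R$ carrying the top value $z\wedge m\geq v$) is exactly the other case $u\in L$, which the paper settles by the order-dual of this computation (legitimate since modularity is self-dual and a reversed complete continuous chain is again one), not by passing a join through $\curlywedge_{z}$. By committing to the one-sided setup and attacking from below, you landed in the case where the pointwise modular estimates yield no bound, and the distributivity you would need to substitute for the dual argument is unavailable.
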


\begin{proof}
(i) By (\ref{6.7}), $\curlywedge_{z}\left(  C\right)  $ is a chain from
$z\wedge a$ to $z\wedge b.$ Let $\Gamma=\curlywedge_{z}(G)$ for some
$G\subseteq C.$ As $\wedge G\in C,$ we have $\wedge\curlywedge_{z}\left(
G\right)  =\curlywedge_{z}(\wedge G)\in\curlywedge_{z}\left(  C\right)  $ by
(\ref{e.1}). Thus $\curlywedge_{z}\left(  C\right)  $ is $\wedge$-complete.
Similarly, $\curlyvee_{z}\left(  C\right)  $ is $\vee$-complete.

(ii) 1) a) Let $\left[  x,y\right]  $ be a gap and $\curlywedge_{z}\left(
x\right)  \neq\curlywedge_{z}\left(  y\right)  .$ Assume that $\curlywedge
_{z}\left(  x\right)  <u<\curlywedge_{z}\left(  y\right)  $ for some $u\in Q.$
Then $u<z$ and $u<y$. If $u\leq x,$ then $u\leq\curlywedge_{z}\left(
x\right)  $, a contradiction. Thus $x<u\vee x\leq y$. As $[x,y]$ is a gap,
$y=u\vee x$. As $u<z,$ it follows from (\ref{6.8}) that%
\[
\curlywedge_{z}\left(  y\right)  =\curlywedge_{z}\curlyvee_{u}\left(
x\right)  =\curlyvee_{u}\curlywedge_{z}\left(  x\right)  =u\vee\curlywedge
_{z}\left(  x\right)  =u<\curlywedge_{z}\left(  y\right)  ,
\]
a contradiction. Thus $\left[  \curlywedge_{z}\left(  x\right)  ,\curlywedge
_{z}\left(  y\right)  \right]  $ is a gap. Similarly, we can prove $b).$

2) Let $C$ be continuous$.$ If $\curlywedge_{z}\left(  C\right)  $ is not
continuous, $[\curlywedge_{z}(x),\curlywedge_{z}(y)]_{_{\curlywedge_{z}\left(
C\right)  }}$ is a gap for some $x<y$ in $C.$ Then, for all $\xi\in\lbrack
x,y]_{_{C}},$ either $\curlywedge_{z}\left(  \xi\right)  =\curlywedge
_{z}\left(  x\right)  ,$ or $\curlywedge_{z}\left(  \xi\right)  =\curlywedge
_{z}\left(  y\right)  $. Set $K=\{\xi\in\lbrack x,y]_{_{C}}$: $\curlywedge
_{z}\left(  \xi\right)  =\curlywedge_{z}\left(  x\right)  \},$ $u=\vee K,$
$L=\{\xi\in\lbrack x,y]_{_{C}}$: $\curlywedge_{z}\left(  \xi\right)
=\curlywedge_{z}\left(  y\right)  \}$ and $v=\wedge L.$ As $C$ is complete,
$u,v\in\lbrack x,y]_{_{C}}.$ As $C$ is continuous, $u=v.$

Let $u\in K,$ so that $\curlywedge_{z}\left(  u\right)  =\curlywedge
_{z}\left(  x\right)  $ and $u<\xi$ for all $\xi\in L.$ As $\curlywedge
_{z}(\xi)=\curlywedge_{\xi}(z),$%
\[
\curlyvee_{u}\curlywedge_{z}(y)=\curlyvee_{u}\curlywedge_{z}(\xi
)=\curlyvee_{u}\curlywedge_{\xi}(z)\overset{(\ref{6.8})}{=}\curlywedge_{\xi
}\curlyvee_{u}(z)=\xi\wedge\curlyvee_{u}(z)\leq\xi.
\]
Thus $\curlyvee_{u}\curlywedge_{z}(y)$ is a lower bound of $L.$ Hence
$u\vee\curlywedge_{z}(y)=\curlyvee_{u}\curlywedge_{z}(y)\leq u$. So
$\curlywedge_{z}(y)\leq u.$ As $\curlywedge_{z}(y)\leq z,$ we have
$\curlywedge_{z}(y)\leq z\wedge u=\curlywedge_{z}(u)=\curlywedge_{z}\left(
x\right)  ,$ a contradiction. Similarly, we get a contradiction, if we assume
that $u\in L.$ Thus $\curlywedge_{z}\left(  C\right)  $ is continuous.
Similarly, $\curlyvee_{z}\left(  C\right)  $ is continuous.
\end{proof}

\begin{corollary}
\label{C6.1}If $Q$ is modular then $<_{\mathfrak{g}}$\emph{ }is an
$\mathbf{HH}$-relation$.$
\end{corollary}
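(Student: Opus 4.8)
The plan is to verify directly the two combinatorial conditions that, by Definition \ref{D2.2}, characterize $\mathbf{H}$- and dual $\mathbf{H}$-relations. By Lemma \ref{le1} it suffices to check that $a<_{\mathfrak{g}}b$ implies $a\vee x<_{\mathfrak{g}}b\vee x$ for every $x\in Q$ (condition 3) of part (i)) and that $a<_{\mathfrak{g}}b$ implies $a\wedge x<_{\mathfrak{g}}b\wedge x$ for every $x\in Q$ (condition 3) of part (ii)). Establishing both simultaneously yields that $<_{\mathfrak{g}}$ is an $\mathbf{HH}$-relation. First I would record that $<_{\mathfrak{g}}\in$ Ref$(Q)$: it is reflexive, since $a<_{\mathfrak{g}}a$ by definition, and $a<_{\mathfrak{g}}b$ forces $a\leq b$, so $<_{\mathfrak{g}}$ is stronger than $\leq$.

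The substance of the argument is already contained in Proposition \ref{P6.3}(ii)(1), which is where modularity is used. For the $\mathbf{H}$-condition, take $a<_{\mathfrak{g}}b$ and fix $x\in Q$. If $a=b$ the conclusion is immediate, since then $a\vee x=b\vee x$ and $<_{\mathfrak{g}}$ is reflexive. Otherwise $[a,b]$ is a gap, and I would apply Proposition \ref{P6.3}(ii)(1)(b) with $z=x$: recalling that $\curlyvee_{x}(a)=a\vee x$ and $\curlyvee_{x}(b)=b\vee x$, this gives that either $a\vee x=b\vee x$ or $[a\vee x,b\vee x]$ is a gap. In both cases $a\vee x<_{\mathfrak{g}}b\vee x$, as required.

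The dual $\mathbf{H}$-condition is entirely symmetric: for $a<_{\mathfrak{g}}b$ with $a\neq b$, so that $[a,b]$ is a gap, Proposition \ref{P6.3}(ii)(1)(a) applied with $z=x$ shows that either $a\wedge x=b\wedge x$ or $[a\wedge x,b\wedge x]$ is a gap, whence $a\wedge x<_{\mathfrak{g}}b\wedge x$. Combining the two conditions proves that $<_{\mathfrak{g}}$ is an $\mathbf{HH}$-relation.

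There is essentially no remaining obstacle, since all the real work -- ruling out the possibility that the meet or join of a gap leaves a strictly intermediate element, which is exactly where the pentagon-free (modular) hypothesis enters -- has already been discharged in Proposition \ref{P6.3}. The only point requiring minor care is the bookkeeping of the degenerate cases in which the meet or join collapses $a$ and $b$ to a single element; these are absorbed into the reflexive part of the definition of $<_{\mathfrak{g}}$, so they cause no difficulty.
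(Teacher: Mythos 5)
Your proposal is correct and matches the paper's own proof: both reduce the claim to Proposition \ref{P6.3}(ii)(1), which shows that under modularity a gap $[x,y]$ is sent by $\curlywedge_{z}$ and $\curlyvee_{z}$ either to a single point or to another gap, and then invoke Lemma \ref{le1}/Definition \ref{D2.2}. Your explicit handling of the reflexive case $a=b$ is a minor bookkeeping point the paper leaves implicit.
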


\begin{proof}
Let $Q$ by be modular. If $x$ $<_{\mathfrak{g}}$ $y,$ $x\neq y,$ then $[x,y]$
is a gap. By Proposition \ref{P6.3}(ii), for each $z\in Q,$ either $x\wedge
z=y\wedge z,$ or $[x\wedge z,y\wedge z]$ is a gap. So $x\wedge z$
$<_{\mathfrak{g}}$ $y\wedge z.$

Similarly, $x\vee z$ $<_{\mathfrak{g}}$ $y\vee z.$ By Definition \ref{D2.2},
$<_{\mathfrak{g}}$ is an $\mathbf{HH}$-relation$.\bigskip$
\end{proof}

For each $G\subseteq Q$ and any $z\in Q,$%
\begin{equation}
\vee\{z\wedge x:x\in G\}\leq z\wedge(\vee G)\text{ and }z\vee(\wedge
G)\leq\wedge\{z\vee x:x\in G\}. \label{6.0}%
\end{equation}
Consider the Join and Meet Infinite Distributive Identities (JID) and (MID)%
\begin{equation}
z\vee(\wedge G)=\wedge\{z\vee x\text{: }x\in G\}\text{ and }z\wedge(\vee
G)=\vee\{z\wedge x\text{: }x\in G\}\text{ for all }z\in Q\text{ and
}G\subseteq Q\text{.} \label{6.9}%
\end{equation}
For example, the lattice of \textit{all subsets} of a set $X$ with $\leq$ $=$
$\subseteq,$ $\wedge=\cap$ and $\vee=\cup$ has properties (\ref{6.9}), while
the lattice of \textit{all closed subsets} of a topological space $X$ does not
always have them.

If $Q$ is a chain then conditions (\ref{6.9}) hold. Note that a lattice $Q$
satisfies (\ref{6.9}) if and only if $Q$ has a complete embedding into a
complete boolean lattice (see \cite[Theorem 166]{G}).

\begin{definition}
\label{D6.1}\emph{A lattice }$Q$\emph{ has properties (JIDC) and (MIDC) if,
respectively, the first and second part of (\ref{6.9}) holds for each chain
}$G$ \emph{in} $Q.$
\end{definition}

The properties (JIDC) and (MIDC) are weaker than (JID) and (MID). In
particular, they do not imply that $Q$ is distributive.

\begin{lemma}
\label{L6.2}Let $Q$ have properties \emph{(JIDC)} and \emph{(MIDC)}$.$ If
$\ll$ is an $\mathbf{HH}$-relation then\smallskip

\emph{(i) \ }$\ll^{\triangleleft}$ is an $\mathbf{H}$-order and a dual
$\mathbf{R}$-order$;$ \ \emph{(ii) }$\ll^{\triangleright}$ is an $\mathbf{R}%
$-order and a dual $\mathbf{H}$-order\emph{.}
\end{lemma}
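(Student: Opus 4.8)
The plan is to prove Lemma~\ref{L6.2} by combining the machinery of Theorem~\ref{inf} with the distributivity-on-chains hypotheses (JIDC) and (MIDC). Since $\ll$ is an $\mathbf{HH}$-relation, it is in particular a dual $\mathbf{H}$-relation, so Theorem~\ref{inf}(i) already gives that $\ll^{\triangleleft}=\ll^{\text{lo}}$ is a dual $\mathbf{R}$-order; dually, Theorem~\ref{inf}(ii) gives that $\ll^{\triangleright}=\ll^{\text{up}}$ is an $\mathbf{R}$-order. Thus the ``dual $\mathbf{R}$-order'' half of (i) and the ``$\mathbf{R}$-order'' half of (ii) come for free, and the real content is the \emph{other} half of each assertion: that $\ll^{\triangleleft}$ is additionally an $\mathbf{H}$-\emph{order} and that $\ll^{\triangleright}$ is additionally a dual $\mathbf{H}$-\emph{order}. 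By the Duality Principle it suffices to prove (i), so I will focus on showing that $\ll^{\triangleleft}$ is an $\mathbf{H}$-order.

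To show $\ll^{\triangleleft}$ is an $\mathbf{H}$-relation I would verify condition~3) of Lemma~\ref{le1}(i): $a\ll^{\triangleleft}b$ implies $a\vee x\ll^{\triangleleft}b\vee x$ for every $x\in Q$. By the characterization~(\ref{2.18}), $a\ll^{\triangleleft}b$ means there is a complete lower $\ll$-gap chain $C$ from $a$ to $b$, equivalently (Proposition~\ref{p10}) a descending $\ll$-series. The natural candidate is the image chain $\curlyvee_{x}(C)=\{x\vee c:c\in C\}$, running from $x\vee a$ to $x\vee b$. Here is where the hypotheses enter: since $\ll$ is an $\mathbf{H}$-relation, each gap step $p\ll c$ in $C$ pushes up to $x\vee p\ll x\vee c$ by Lemma~\ref{le1}(i), so $\curlyvee_{x}(C)$ is a lower $\ll$-\emph{set} step by step. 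The property (JIDC) is what guarantees that taking suprema along the chain at limit ordinals commutes with $\curlyvee_{x}$, so that $\curlyvee_{x}(C)$ remains $\vee$-complete and its limit elements are correctly the joins of their predecessors; this is exactly the behaviour of $\curlyvee_{z}$ on chains recorded in Proposition~\ref{P6.3}(i) and~(\ref{e.1}). After passing to a maximal refinement inside $\curlyvee_{x}(C)$ and completing (via Lemma~\ref{L2.1p} and Proposition~\ref{P2.2n}, using that $\ll$ is a dual $\mathbf{H}$-relation so that maximal chains in lower $\ll$-sets are again lower $\ll$-chains), I would extract a complete lower $\ll$-gap chain from $x\vee a$ to $x\vee b$, yielding $a\vee x\ll^{\triangleleft}b\vee x$.

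Equivalently, and perhaps more cleanly, I would argue through the radical map. By Theorem~\ref{inf}, $\ll^{\triangleleft}$ is a dual $\mathbf{R}$-order with associated dual radical $\mathfrak{p}(b)=\wedge[\ll^{\triangleleft},b]=\wedge[\ll^{\text{lo}},b]$. By Corollary~\ref{C2.14}(i), a $\mathbf{TT}$-order is a dual $\mathbf{R}$-order precisely when its $\mathfrak{p}$ is monotone, and the $\mathbf{H}$-condition can be recast (via Lemma~\ref{L2.3}) as the identity $\mathfrak{p}(a\vee b)=\mathfrak{p}(a)\vee\mathfrak{p}(b)$. So one route is to establish this join-formula for $\mathfrak{p}_{\ll^{\triangleleft}}$ directly, using that the descending $\ll$-series computing $\mathfrak{p}(a\vee b)$ can be built from those computing $\mathfrak{p}(a)$ and $\mathfrak{p}(b)$ via the $\mathbf{H}$-property of $\ll$ and the interchange of joins with chain-infima granted by (JIDC). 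Either formulation reduces the lemma to the same analytic core.

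\textbf{The main obstacle} will be the transfinite/limit-ordinal step: propagating the gap-chain structure under $\curlyvee_{x}$ past limit ordinals requires that $x\vee(\wedge_{\lambda<\beta}c_{\lambda})=\wedge_{\lambda<\beta}(x\vee c_{\lambda})$ along the descending series, and this is exactly what (JIDC) supplies but what fails in a general lattice. Care is needed because $\curlyvee_{x}$ may collapse consecutive elements of $C$ (as in Proposition~\ref{P6.3}(ii), where a gap maps either to a gap or to an equality), so the image is only a lower $\ll$-set and must be re-refined to a genuine gap chain; the delicate point is to check that this re-refinement, together with (JIDC), still produces a \emph{complete} chain whose bottom element is $x\vee a$ rather than something strictly smaller. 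Once the commutation of $\curlyvee_{x}$ with chain-meets is secured by (JIDC), the remaining steps are routine applications of the completeness lemmas (Lemma~\ref{L2.1p}, Proposition~\ref{P2.2n}) already available.
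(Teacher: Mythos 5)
Your first route is essentially the paper's own proof: the dual $\mathbf{R}$-order half of (i) is quoted from Theorem \ref{inf}, and the $\mathbf{H}$-order half is proved by applying $\curlyvee_{z}$ to a complete lower $\ll$-gap chain $T$ from $x$ to $y$ witnessing $x\ll^{\triangleleft}y$ and verifying condition $3)$ of Lemma \ref{le1}(i). The one step you flag but leave open --- the collapse of gap steps under $\curlyvee_{z}$ --- is exactly where the paper's only nontrivial device sits, so let me record it: for $t\in T$ put $S_{t}=\{s\in T:\ z\vee s=z\vee t\}$; completeness of $T$ gives $u^{t}:=\wedge S_{t}\in T$, and (JIDC) gives $z\vee u^{t}=z\vee(\wedge S_{t})=\wedge\{z\vee s:\ s\in S_{t}\}=z\vee t$, so $u^{t}\in S_{t}$; if $u^{t}\neq x$, its immediate $\ll$-predecessor $u_{p}^{t}$ in $T$ satisfies $z\vee u_{p}^{t}\ll z\vee u^{t}=z\vee t$ (since $\ll$ is an $\mathbf{H}$-relation) and $z\vee u_{p}^{t}\neq z\vee t$ because $u_{p}^{t}\notin S_{t}$. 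Hence $\curlyvee_{z}(T)$ is \emph{already} a lower $\ll$-gap chain from $z\vee x$ to $z\vee y$, so your detour through Proposition \ref{P2.6}/Proposition \ref{P2.2n} (refining a lower $\ll$-set back into a gap chain) is unnecessary, though it would also work; and your worry about the bottom element is harmless, since (JIDC) applied along chains shows $\curlyvee_{z}(T)$ is $\wedge$-complete with $\wedge\curlyvee_{z}(T)=z\vee(\wedge T)=z\vee x$, whence completeness follows via Lemma \ref{L2.1p}(iii). One small slip: it is \emph{meets} at limit ordinals, not suprema, that (JIDC) must interchange with $\curlyvee_{z}$ --- your obstacle paragraph states this correctly, your second paragraph does not; $\vee$-completeness of the image is automatic from (\ref{e.1}).

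Your ``equivalently, and perhaps more cleanly'' second route, however, has a genuine gap. Lemma \ref{L2.3}(ii) gives only the implication from ($\mathbf{H}$-relation and dual $\mathbf{R}$-order) to $\mathfrak{p}(a\vee b)=\mathfrak{p}(a)\vee\mathfrak{p}(b)$; it is not a recasting of the $\mathbf{H}$-condition. From the join identity and $a\ll^{\triangleleft}b$ one only reaches $\mathfrak{p}(b\vee x)=\mathfrak{p}(a\vee x)\ll^{\triangleleft}a\vee x\leq b\vee x$, and passing from this to $a\vee x\ll^{\triangleleft}b\vee x$ is precisely an up-contiguity statement for $\ll^{\triangleleft}$, i.e. the assertion $\ll^{\triangleleft}$ $=$ $\ll^{\mathfrak{p}}$ of Theorems \ref{T3.6} and \ref{T2.2} --- but up-contiguity of $\ll^{\triangleleft}$ is itself a consequence of the $\mathbf{H}$-property you are trying to prove, so as sketched this route is circular. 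Likewise Corollary \ref{C2.14}, which you invoke, applies to $\mathbf{TT}$-orders, and at this stage $\ll^{\triangleleft}$ is not known to be one (cf. Example \ref{E3.1}, where $\ll^{\triangleleft}$ fails to be contiguous). Stick with the chain-image argument.
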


\begin{proof}
(i) By Proposition \ref{inf}, $\ll^{\triangleleft}$ is a dual $\mathbf{R}%
$-order. Let us prove that it is an $\mathbf{H}$-order.

Let $z\in Q.$ If $x\ll^{\triangleleft}y$ then there is a complete lower $\ll
$-gap chain $T$ from $y$ to $x$: each $t\in T\diagdown\{x\}$ has the immediate
$\ll$-predecessor $t_{p}$ in $T$: $[t_{p},t]_{T}$ is a gap. Fix $t\in
T\diagdown\{x\}$ and set $S_{t}=\{s\in T$: $z\vee s=z\vee t\}.$ As $T$ is
complete, $u^{t}:=\wedge S_{t}\in T$. By (\ref{6.9}),%
\begin{equation}
z\vee u^{t}=z\vee\left(  \wedge S_{t}\right)  =\wedge\{z\vee s\text{: }s\in
S_{t}\}=z\vee t,\text{ so that }u^{t}\in S_{t}. \label{6.10}%
\end{equation}
Let $u^{t}\neq x.$ Then $z\vee x\neq z\vee u^{t}=z\vee t,$ otherwise $u^{t}=x$
by (\ref{6.10}). The element $u^{t}$ has the immediate $\ll$-predecessor
$u_{p}^{t}$ in $T,$ i.e., $u_{p}^{t}\ll u^{t}.$ So $z\vee u_{p}^{t}\ll z\vee
u^{t}=z\vee t,$ as $\ll$ is an $\mathbf{H}$-relation. As $u_{p}^{t}\notin
S_{t},$ we have $z\vee u_{p}^{t}\neq z\vee t.$ So $z\vee u_{p}^{t}$ is the
immediate $\ll$-predecessor of $z\vee t=z\vee u^{t}$ in $\curlyvee_{z}\left(
T\right)  .$ Thus $\curlyvee_{z}\left(  T\right)  $ is a lower $\ll$-gap chain
from $z\vee y$ to $z\vee x.$

By Proposition \ref{P6.3}, $\curlyvee_{z}(T)$ is a $\vee$-complete$.$ Hence,
by Lemma \ref{L2.1p}(iii), it is complete. Thus $z\vee x\ll^{\triangleleft
}z\vee y,$ so that $\ll^{\triangleleft}$ is an $\mathbf{H}$-order. The proof
of part (ii) is similar.
\end{proof}

\begin{theorem}
\label{T6.2}If a modular lattice $Q$ has properties \emph{(JIDC)} and
\emph{(MIDC)} then $<_{\mathfrak{c}}$ is an $\mathbf{RR}$-order and $Q$ is a
union of disjoint sets each of which has no gaps.
\end{theorem}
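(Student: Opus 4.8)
The plan is to prove the two assertions in turn: first that $<_{\mathfrak{c}}$ is an $\mathbf{HH}$-relation, then to upgrade it to an $\mathbf{RR}$-order by means of the idempotence $<_{\mathfrak{c}}=\,<_{\mathfrak{c}}^{\triangleright}=\,<_{\mathfrak{c}}^{\triangleleft}$ from Proposition \ref{pr7}(i), and finally to read the decomposition of $Q$ off the structure theorem for $\mathbf{TT}$-orders. To verify the $\mathbf{H}$-condition I use the third equivalent condition in Lemma \ref{le1}(i): given $a<_{\mathfrak{c}}b$ witnessed by a complete continuous chain $C$ from $a$ to $b$, and any $x\in Q$, I transport $C$ to its image $\curlyvee_x(C)$. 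By Proposition \ref{P6.3}(i) this image is $\vee$-complete, by Proposition \ref{P6.3}(ii) (here modularity is used) it is continuous, and by (\ref{e.1}) its meet and join are $a\vee x$ and $b\vee x$. The one thing that is not automatic, and this is the crux of the first step, is $\wedge$-completeness of $\curlyvee_x(C)$: for a chain $M\subseteq C$, property (JIDC) (the first identity of (\ref{6.9}) restricted to chains) yields $\wedge\curlyvee_x(M)=x\vee(\wedge M)=\curlyvee_x(\wedge M)\in\curlyvee_x(C)$, since $\wedge M\in C$. Hence $\curlyvee_x(C)$ is a complete continuous chain from $a\vee x$ to $b\vee x$, so $a\vee x<_{\mathfrak{c}}b\vee x$, and $<_{\mathfrak{c}}$ is an $\mathbf{H}$-relation. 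The dual $\mathbf{H}$-condition follows symmetrically, transporting $C$ by $\curlywedge_x$ (which is $\wedge$-complete and continuous by Proposition \ref{P6.3}) and invoking (MIDC) to recover the missing $\vee$-completeness.

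With $<_{\mathfrak{c}}$ established as an $\mathbf{HH}$-relation, I combine it with Proposition \ref{pr7}(i), giving $<_{\mathfrak{c}}=\,<_{\mathfrak{c}}^{\triangleright}=\,<_{\mathfrak{c}}^{\triangleleft}$. Since $<_{\mathfrak{c}}$ is an $\mathbf{H}$-relation, Theorem \ref{inf}(ii) shows that $<_{\mathfrak{c}}^{\triangleright}$ is an $\mathbf{R}$-order, and as it equals $<_{\mathfrak{c}}$ the relation $<_{\mathfrak{c}}$ itself is an $\mathbf{R}$-order. Dually, since $<_{\mathfrak{c}}$ is a dual $\mathbf{H}$-relation and $<_{\mathfrak{c}}=\,<_{\mathfrak{c}}^{\triangleleft}$, Theorem \ref{inf}(i) makes $<_{\mathfrak{c}}$ a dual $\mathbf{R}$-order. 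By Definition \ref{D2.2}(vi), $<_{\mathfrak{c}}$ is therefore an $\mathbf{RR}$-order. (One could equally quote Corollary \ref{pr3} and its dual, which characterise $\mathbf{R}$-orders among $\mathbf{H}$-relations precisely by the equalities $\ll=\,\ll^{\triangleright}$ and $\ll=\,\ll^{\triangleleft}$.)

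For the second assertion I observe that an $\mathbf{RR}$-order is in particular a $\mathbf{TT}$-order, so Theorem \ref{T2.12} presents $Q$ as a disjoint union $\cup_{\lambda\in Q_{\mathfrak{r}}}[\mathfrak{p}(\lambda),\lambda]$, and Proposition \ref{C2.13}(iii) shows that $\leq$ and $<_{\mathfrak{c}}$ coincide on each block $[\mathfrak{p}(\lambda),\lambda]$. It then suffices to check that no block contains a gap: if $p<q$ lay in one block with $[p,q]=\{p,q\}$ a gap, then $p<_{\mathfrak{c}}q$ (because $\leq$ equals $<_{\mathfrak{c}}$ on the block) would demand a complete continuous chain from $p$ to $q$, which is impossible since the only chain in $[p,q]$ is $\{p,q\}$ and that chain is not continuous. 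Thus each disjoint block is gapless, which is exactly the stated decomposition of $Q$ into a union of disjoint sets without gaps. The anticipated main obstacle is entirely in the first paragraph: the maps $\curlyvee_x,\curlywedge_x$ preserve only one half of completeness each, and the hypotheses (JIDC) and (MIDC) are precisely what restores full completeness of the transported chains while keeping them inside the images.
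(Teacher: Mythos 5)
Your proof is correct and follows essentially the same route as the paper's: you transport the complete continuous chain by $\curlyvee_{x}$ and $\curlywedge_{x}$, using Proposition \ref{P6.3} (where modularity enters) for continuity and one half of completeness and (JIDC)/(MIDC) for the other half, then combine $<_{\mathfrak{c}}=\,<_{\mathfrak{c}}^{\triangleright}=\,<_{\mathfrak{c}}^{\triangleleft}$ from Proposition \ref{pr7} with Theorem \ref{inf} (the paper routes this through Lemma \ref{L6.2}, which rests on the same theorem) and finish with Theorem \ref{T2.12}. The only cosmetic deviations are that your direct two-point-chain argument for gaplessness is just the easy direction of Proposition \ref{P6.1}, which the paper cites instead, and that the meet of $\curlyvee_{x}(C)$ being $a\vee x$ follows already from $a=\wedge C\in C$ rather than from (\ref{e.1}).
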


\begin{proof}
Let $x<_{\mathfrak{c}}y$ and $C$ be a complete continuous chain from $x$ to
$y.$ Let $z\in Q.$ By Proposition \ref{P6.3}, $\curlywedge_{z}\left(
C\right)  $ is a $\wedge$-complete, continuous chain from $z\wedge x$ to
$z\wedge y$. Let $\Gamma=\curlywedge_{z}(G)\subseteq\curlywedge_{z}(C)$ for
some $G\subseteq C.$ Then $\vee G\in C,$ as $C$ is complete and, by
(\ref{6.9})$,$
\[
\vee\Gamma=\vee\curlywedge_{z}(G)=\vee\{z\wedge g\text{: }g\in G\}=z\wedge
(\vee G)\in\curlywedge_{z}\left(  C\right)  \text{.}%
\]
So $\curlywedge_{z}\left(  C\right)  $ is $\vee$-complete. Thus $\curlywedge
_{z}\left(  C\right)  $ is complete, so that $z\wedge x<_{\mathfrak{c}}z\wedge
y.$ Hence $\leq_{\mathfrak{c}}$ is a dual $\mathbf{H}$-order. Similarly,
$<_{\mathfrak{c}}$ is an $\mathbf{H}$-order. By Proposition \ref{pr7},
$<_{\mathfrak{c}}$ $=$ $<_{\mathfrak{c}}^{\triangleleft}$ $=$ $<_{\mathfrak{c}%
}^{\triangleright}.$ Thus, by Lemma \ref{L6.2}, $\leq_{\mathfrak{c}}$ is an
$\mathbf{RR}$-order.

By Theorem \ref{T2.12}, $Q=\cup_{\lambda\in Q_{\mathfrak{r}}}[\mathfrak{p}%
(\lambda),\lambda],$ all $[\mathfrak{p}(\lambda),\lambda],$ $\lambda\in
Q_{\mathfrak{r}},$ are mutually disjoint and $<_{\mathfrak{c}}$ $=$ $\leq$ in
each $[\mathfrak{p}(\lambda),\lambda].$ By Proposition \ref{P6.1},
$[\mathfrak{p}(\lambda),\lambda]$ has no gaps.\bigskip
\end{proof}

As complete chains are modular lattices and have properties (JID) and (MID),
we have

\begin{corollary}
\label{C6.2}Let $Q$ be a complete chain. Then $<_{\mathfrak{g}}$ is an
$\mathbf{HH}$-relation\emph{, }$<_{\mathfrak{g}}^{\triangleleft}$ is an
$\mathbf{H}$-order and a dual $\mathbf{R}$-order$,$ $<_{\mathfrak{g}%
}^{\triangleright}$ is an $\mathbf{R}$-order and a dual $\mathbf{H}%
$-order\emph{. }The relation $<_{\mathfrak{c}}$ is an $\mathbf{RR}$-order.
\end{corollary}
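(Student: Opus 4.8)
The plan is to deduce the corollary directly from the structural results already established for modular lattices satisfying the infinite distributive conditions, after checking that a complete chain meets all the required hypotheses.

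First I would verify that a complete chain $Q$ is modular. Since any two elements of $Q$ are comparable, $Q$ cannot contain a sublattice order-isomorphic to a pentagon: in the pentagon (\ref{6,0}) the elements $b$ and $e$ are incomparable. Indeed, if $b\leq e$ then $b\vee e=e<d$, contradicting $b\vee e=d$; and if $e\leq b$ then $e\leq b<c$, so $c\wedge e=e>a$, contradicting $c\wedge e=a$. Such a configuration is impossible inside a chain, so by the characterization of modularity via (\ref{6,0}), $Q$ is modular. (Alternatively, this is immediate from (\ref{6.8}) itself, since in a chain the maps $\curlyvee_{a}$ and $\curlywedge_{b}$ act by comparison and so commute for $a\leq b$.)

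Second, I would record that $Q$ satisfies (JID) and (MID). This is precisely the remark made after (\ref{6.9}) that a chain has both of these identities; in particular $Q$ satisfies the weaker chain versions (JIDC) and (MIDC) of Definition \ref{D6.1}.

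With these two facts in hand the corollary follows by invoking the earlier results. Since $Q$ is modular, Corollary \ref{C6.1} gives that $<_{\mathfrak{g}}$ is an $\mathbf{HH}$-relation. Applying Lemma \ref{L6.2} to the $\mathbf{HH}$-relation $\ll$ $=$ $<_{\mathfrak{g}}$ (using that $Q$ has (JIDC) and (MIDC)) then yields at once that $<_{\mathfrak{g}}^{\triangleleft}$ is an $\mathbf{H}$-order and a dual $\mathbf{R}$-order, and that $<_{\mathfrak{g}}^{\triangleright}$ is an $\mathbf{R}$-order and a dual $\mathbf{H}$-order. Finally, since $Q$ is a modular lattice with (JIDC) and (MIDC), Theorem \ref{T6.2} gives directly that $<_{\mathfrak{c}}$ is an $\mathbf{RR}$-order. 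I expect no genuine obstacle here: every assertion is an instance of a previously proved theorem once modularity and the distributive identities are in place, and the only point needing a word of justification is modularity, handled by the pentagon argument above.
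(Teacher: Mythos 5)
Your proposal is correct and follows essentially the same route as the paper, which derives the corollary immediately from the observation that complete chains are modular and satisfy (JID) and (MID), then invokes Corollary \ref{C6.1}, Lemma \ref{L6.2} and Theorem \ref{T6.2}. The only difference is that you spell out the (trivial) verification of modularity via the pentagon characterization, which the paper leaves implicit.
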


\section{Radical maps, enveloping  and inscribing sets}

In this section we study special subsets in complete lattices -- enveloping
and inscribing sets. In particular, we consider the lattices Ref($Q)$ of
relations and Map($Q)$ of all maps on a complete lattice $Q.$ We describe some
of their enveloping and inscribing sets and construct the corresponding
radical maps.

\subsection{Enveloping and inscribing sets in lattices, radical maps and
topology}

Recall that a subset $L$ is a \textit{sublattice} of ($Q,\leq)$ if $x\vee y,$
$x\wedge y\in L$ for all $x,y\in L.$ It is $\wedge$-\textit{complete}, if
$\wedge G\in L;$ and $\vee$-\textit{complete}, if $\vee G\in L$ for all
$G\subseteq L.$ It is \textit{complete }if it is $\wedge$- and $\vee$-complete.

On the other hand, $L$ can be a lattice in its own right with respect to the
restriction of $\leq$ to $L,$ not being a sublattice of $Q.$ For example, the
smallest element in $L$ majorizing $x,y\in L$ may differ from $x\vee y$.
Denote it $x\vee^{L}y$. Similarly, we use the notation $x\wedge^{L}y.$ A
lattice $L$ is $\vee^{L}$\textit{-complete} if each $G\subseteq L$ has the
smallest element $\vee^{L}G$ in $L$ majorizing all $x\in G.$ In general, $\vee
G\leq\vee^{L}G.$ Similarly, if $L$ is $\wedge^{L}$\textit{-complete} then
$\wedge^{L}G\leq\wedge G.$ If $L$ is $\vee^{L}$- and $\wedge^{L}$-complete, it
is a \textit{complete lattice.}

If $L$ is a \textit{sublattice} of $Q$ then $x\vee y=x\vee^{L}y$ and $x\wedge
y=x\wedge^{L}y$ for $x,y\in L.$ It is $\wedge$-\textit{complete }($\vee
$-\textit{complete}), if and only if $\wedge^{L}G=\wedge G$ ($\vee^{L}G=\vee
G)$ for all $G\subseteq L.$

For $a\in Q$ and $G\subseteq Q,$ write $G\leq a$ ($a\leq G),$ if $a$ majorizes
(minorizes) all $x\in G.$ Set
\begin{equation}
L^{^{G}}=\{a\in L\text{: }G\leq a\}\text{ and }L_{_{G}}=\{a\in L\text{: }a\leq
G\}. \label{3.12}%
\end{equation}

\begin{lemma}
\label{L3.5}\emph{(i) }Let $L$ be $\wedge^{^{L}}$-complete$.$ If $L$ has the
largest element\emph{,} it is also $\vee^{^{L}}$-complete\emph{, }i.e.\emph{,
}$L$ is a complete lattice\emph{;} and $\vee^{^{L}}G=\wedge^{^{L}}L^{^{G}}$ is
the smallest in $L^{^{G}}$ for all $G\subseteq L$.$\smallskip$

\emph{(ii) \ }Let $L$ be $\wedge$-complete and $\mathbf{1}\in L.$ For
$G\subseteq Q,$ $\wedge^{^{L}}L^{^{G}}=\wedge L^{^{G}}$ is the smallest
element in $L^{^{G}}.\smallskip$

\emph{(iii) }Let $L\subseteq Q$ be $\vee^{^{L}}$-complete. If $L$ has the
smallest element\emph{,} then $L$ is also $\wedge^{^{L}}$-complete\emph{,
}i.e.\emph{, }$L$ is a complete lattice\emph{,} and $\wedge^{^{L}}G=\vee
^{^{L}}L_{_{G}}$ is the largest in $L_{_{G}}$ for all $G\subseteq
L$.$\smallskip$

\emph{(iv) \ }Let $L$ be $\vee$-complete and $\mathbf{0}\in L.$ For
$G\subseteq Q,$ $\vee^{L}L_{_{G}}=\vee L_{_{G}}$ is the largest element in
$L_{_{G}}.$
\end{lemma}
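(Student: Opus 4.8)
The plan is to establish (i) and (ii) directly and then obtain (iii) and (iv) by the Duality Principle, under which reversing $\leq$ interchanges $\wedge^{^{L}}\leftrightarrow\vee^{^{L}}$, $L^{^{G}}\leftrightarrow L_{_{G}}$ and $\mathbf{1}\leftrightarrow\mathbf{0}$. Throughout I would keep a sharp distinction between the meet $\wedge$ computed in $Q$ and the intrinsic meet $\wedge^{^{L}}$ computed inside $L$; the whole argument hinges on showing that a certain meet of upper bounds, which is a priori only a lower bound of the set of upper bounds, is in fact itself an upper bound of $G$.

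For (i), given $G\subseteq L$, I would first note that the set $L^{^{G}}$ of upper bounds of $G$ in $L$ is nonempty, since it contains the largest element of $L$. As $L$ is $\wedge^{^{L}}$-complete, the element $m:=\wedge^{^{L}}L^{^{G}}$ exists in $L$. The key step is to verify $m\in L^{^{G}}$: for each $x\in G$ and each $a\in L^{^{G}}$ we have $x\leq a$, so $x$ is a lower bound of $L^{^{G}}$ lying in $L$; since $m$ is the greatest such lower bound, $x\leq m$, and as this holds for every $x\in G$ we get $G\leq m$. Thus $m$ is the smallest element of $L^{^{G}}$, that is, $m=\vee^{^{L}}G$. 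Since $G$ was arbitrary, $L$ is $\vee^{^{L}}$-complete, hence a complete lattice, and $\vee^{^{L}}G=\wedge^{^{L}}L^{^{G}}$.

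For (ii), the hypothesis is the stronger $\wedge$-completeness of $L$ as a subset of $Q$, which (as recorded before the lemma) forces $\wedge^{^{L}}E=\wedge E$ for every $E\subseteq L$. Now $L^{^{G}}\subseteq L$ and $L^{^{G}}$ is nonempty because $\mathbf{1}\in L\cap L^{^{G}}$, so $\wedge L^{^{G}}\in L$ and coincides with $\wedge^{^{L}}L^{^{G}}$. I would then run the same lower-bound argument, now inside $Q$: for $x\in G$ and $a\in L^{^{G}}$ one has $x\leq a$, hence $x\leq\wedge L^{^{G}}$, so that $\wedge L^{^{G}}$ majorizes $G$ and lies in $L^{^{G}}$; being below every member of $L^{^{G}}$, it is the smallest element of $L^{^{G}}$. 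Here $G$ is allowed to be an arbitrary subset of $Q$, which is exactly why the $Q$-meet, rather than the intrinsic $\wedge^{^{L}}$, is the natural object.

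Parts (iii) and (iv) are the order-duals of (i) and (ii) and follow at once from the Duality Principle. The only genuine obstacle is the step common to (i) and (ii), namely checking that the meet of all upper bounds is again an upper bound of $G$; once one observes that every $x\in G$ is a lower bound of $L^{^{G}}$, this is immediate from the defining extremal property of $\wedge^{^{L}}$ (respectively $\wedge$).
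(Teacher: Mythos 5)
Your proof is correct and follows essentially the same route as the paper: the key observation in both is that every element of $G$ is a lower bound of $L^{^{G}}$ lying in $L$, so the (intrinsic or ambient) meet of $L^{^{G}}$ majorizes $G$ and is therefore its smallest element, with (iii) and (iv) obtained by duality. The only difference is cosmetic: the paper delegates part (i) to Lemma 34 of Gr\"atzer's book and writes out only (ii), whereas you supply the standard argument for (i) explicitly --- and it is the same argument the paper itself uses for (ii).
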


\begin{proof}
Part (i) is proved in Lemma 34 \cite{G}. The proof of (iii) is similar.

(ii) The set $L^{G}$ contains $\mathbf{1}\mathfrak{.}$ As $L$ is $\wedge
$-complete, it follows that $h:=\wedge^{^{L}}L^{^{G}}=\wedge L^{^{G}}\in L$ is
the largest element in $Q$ minorizing $L^{^{G}}.$ As $g\leq x$ for all $g\in
G$ and $x\in L^{^{G}},\ $we have $g\leq h.$ Thus $h\in L^{^{G}}$ and is the
smallest element in $L^{^{G}}.$ The proof of part (iv) is similar.
\end{proof}

\begin{definition}
\label{D3.1}\emph{(cf.} Definition \emph{27 \cite{G}) }A set $L$ in $Q$ is
\textbf{enveloping}\textit{ }\emph{(Cld }in \emph{\cite{G}) }if\emph{,} for
each $x\in Q,$ the set $\{l\in L$\emph{:} $x\leq l\}$ has the smallest element
$\overline{x},$ so that $x\leq\overline{x}.$

It is \textbf{inscribing}\textit{ }if\emph{,} for each $x\in Q,$ the set
$\{l\in L$: $l\leq x\}$ has the largest element $\underline{x}.$
\end{definition}

The following theorem refines Corollary 29 \cite{G} and characterizes
enveloping and inscribing sets in terms of lattice operations.

\begin{theorem}
\label{C3.1}\emph{(i) }For $L\subseteq Q,$ the following conditions are
equivalent.\smallskip

$\qquad1)$ $\ L$ is enveloping$;$ \ $\ $

$\qquad2)$ $L$ is $\wedge$-complete and $\mathbf{1}\in L;$

$\qquad3)$ $\ L$ is a complete lattice$,$ $\wedge$-complete and $\mathbf{1}\in
L;$

$\qquad4)$ \ For each $G\subseteq Q,$ the set $L^{G}$ \emph{(}see
\emph{(\ref{3.12}))} has the smallest element $m_{_{G}}$ and
\begin{equation}
m_{_{G}}=\wedge^{L}L^{G}=\wedge L^{G}=\vee^{L}\{\overline{x}\text{\emph{: }%
}x\in G\};\text{ \ \ if }G\subseteq L\text{ then }m_{_{G}}=\vee^{L}G.
\label{3}%
\end{equation}

\emph{(ii) }For $N\subseteq Q,$ the following conditions are
equivalent.\smallskip

$\qquad1)$ $\ N$ is inscribing$;$

$\qquad2)$ $N$ is $\vee$-complete and $\mathbf{0}\in N;$

$\qquad3)$ $\ N$ is a complete lattice$,$ $\vee$-complete and $\mathbf{0}\in
N;$

$\qquad4)$ \ For each $G\subseteq Q,$ the set $N_{G}$ \emph{(}see
\emph{(\ref{3.12}))} has the largest element $n_{_{G}}$ and%
\[
n_{_{G}}=\vee^{N}N_{_{G}}=\vee N_{_{G}}=\wedge^{N}\{\underline{x}\text{\emph{:
}}x\in G\};\text{ \ if }G\subseteq N\text{ then }n_{_{G}}=\wedge^{N}G.
\]

\end{theorem}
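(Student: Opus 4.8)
\textbf{Proof proposal for Theorem \ref{C3.1}.}

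The plan is to prove part (i) directly and obtain part (ii) by duality, since the excerpt repeatedly invokes the Duality Principle. For part (i) I would establish the implications $1)\Rightarrow 2)\Rightarrow 3)\Rightarrow 4)\Rightarrow 1)$ in a cycle. First, assuming $L$ is enveloping, I would show $\mathbf{1}\in L$ by noting that the set $\{l\in L:\mathbf{1}\leq l\}$ must have a smallest element $\overline{\mathbf{1}}$, which forces $\overline{\mathbf{1}}=\mathbf{1}$. For $\wedge$-completeness, given $G\subseteq L$ I would consider $\overline{\wedge G}$: since $\wedge G\leq l$ for every $l\in L$ with $\wedge G\leq l$, and each element of $G$ is such an $l$, I expect to argue that the smallest upper member $\overline{\wedge G}$ coincides with $\wedge G$ itself, placing $\wedge G$ in $L$.

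For $2)\Rightarrow 3)$ I would apply Lemma \ref{L3.5}(i): a $\wedge^{L}$-complete lattice with a largest element (here $\mathbf{1}$) is automatically $\vee^{L}$-complete, hence a complete lattice, and $\vee^{L}G=\wedge^{L}L^{G}$. Note that for a $\wedge$-complete sublattice-like set I must be careful to distinguish $\wedge^{L}$ from $\wedge$; Lemma \ref{L3.5}(ii) supplies exactly the bridge, giving $\wedge^{L}L^{G}=\wedge L^{G}$ and identifying it as the smallest element of $L^{G}$ whenever $\mathbf{1}\in L$. This is where the main technical care is needed: the set $L$ need not be a sublattice of $Q$, so the internal operation $\vee^{L}$ may genuinely differ from $\vee$, and I must track which completeness hypothesis justifies each step. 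The identity $m_{G}=\vee^{L}\{\overline{x}:x\in G\}$ in $4)$ I would prove by showing both sides are the least element of $L^{G}$: any $a\in L$ with $G\leq a$ satisfies $\overline{x}\leq a$ for all $x\in G$ (by minimality of $\overline{x}$), so $a$ majorizes all $\overline{x}$; conversely $\vee^{L}\{\overline{x}\}$ lies in $L^{G}$ since $x\leq\overline{x}\leq\vee^{L}\{\overline{x}\}$.

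Finally, $4)\Rightarrow 1)$ is nearly immediate: specializing $G=\{x\}$ for a single $x\in Q$ gives that $L^{\{x\}}=\{l\in L:x\leq l\}$ has a smallest element $m_{\{x\}}=\overline{x}$, which is precisely the defining condition for $L$ to be enveloping. The special case $G\subseteq L\Rightarrow m_{G}=\vee^{L}G$ follows because then $\overline{x}=x$ for each $x\in G$, so the formula $m_{G}=\vee^{L}\{\overline{x}:x\in G\}$ collapses to $\vee^{L}G$. For part (ii), I would simply remark that inscribing sets, $\vee$-completeness, $\mathbf{0}\in N$, and the largest-element formulas are the order-duals of the enveloping notions, so the entire argument transfers verbatim under the Duality Principle \cite[Theorem $1.3^{\prime}$]{Sk}, using Lemma \ref{L3.5}(iii) and (iv) in place of (i) and (ii). The main obstacle throughout is bookkeeping the distinction between the ambient operations $\vee,\wedge$ and the internal operations $\vee^{L},\wedge^{L}$, which is resolved precisely by the four parts of Lemma \ref{L3.5}.
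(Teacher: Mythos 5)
Your proposal is correct and takes essentially the same route as the paper's proof: the same cycle $1)\Rightarrow 2)\Rightarrow 3)\Rightarrow 4)\Rightarrow 1)$, with $\wedge$-completeness in $1)\Rightarrow 2)$ obtained by squeezing $\wedge G\leq\overline{\wedge G}\leq g$ for all $g\in G$, Lemma \ref{L3.5}(i)--(ii) handling $2)\Rightarrow 3)$ and the identities $m_{G}=\wedge^{L}L^{G}=\wedge L^{G}$, the two-sided least-element argument for $m_{G}=\vee^{L}\{\overline{x}\colon x\in G\}$, and the specialization $G=\{x\}$ for $4)\Rightarrow 1)$. The only cosmetic difference is that the paper proves (ii) by "a similar argument" rather than formally invoking the Duality Principle, and it derives the case $G\subseteq L$ from Lemma \ref{L3.5}(i) where you note directly that $\overline{x}=x$; both are equally valid.
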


\begin{proof}
(i) 1) $\Rightarrow$ 2) As $L$ is enveloping, $\overline{\mathbf{1}%
}=\mathbf{1}\in L.$ For $G\subseteq L$, let $t=\wedge G.$ As $t\leq g$ for all
$g\in G,$ and $\overline{t}$ is the smallest element in $L$ majorizing $t,$
$t\leq\overline{t}\leq g.$ So $t\leq\overline{t}\leq\wedge G=t.$ Thus
$t=\overline{t}\in L$ and $L$ is $\wedge$-complete.

2) $\Rightarrow$ 3) If $L$ is $\wedge$-complete and $\mathbf{1}\in L,$ it is a
complete lattice by Lemma \ref{L3.5}(i).

3) $\Rightarrow$ 4) Let $G\subseteq Q.$ By Lemma \ref{L3.5}(ii), $\wedge
^{L}L^{G}$ exists, $\wedge^{L}L^{G}=\wedge L^{G}$ and it is the smallest
element in $L^{G}.$ So $m_{_{G}}=\wedge^{L}L^{G}=\wedge L^{G}.$ If $G\subseteq
L$ then $m_{_{G}}=\vee^{L}G$ by Lemma \ref{L3.5}(i).

For each $x\in G,$ the set $\{l\in L$\emph{:} $x\leq l\}$ majorizing $x$ has
the smallest element $\overline{x}$. As $L$ is a complete lattice, it has the
smallest element $s=\vee^{L}\{\overline{x}$: $x\in G\}$ majorizing all
$\overline{x}.$ Thus $x\leq\overline{x}\leq s$ for all $x\in G.$ So $s\in
L^{G}.$

If $y\in L$ majorizes all $x\in G,$ then $\overline{x}\leq y$ for all $x\in G$
(by definition of $\overline{x}),$ i.e., $y$ majorizes all $\overline{x}.$
Therefore $s\leq y.$ Hence\thinspace$s$ is the smallest element in $L$
majorizing all $x\in G$, i.e., $s$ is the smallest element in $L^{G}.$ Thus
$s=m_{_{G}}=\wedge^{L}L^{G}=\wedge L^{G}=\vee^{L}\{\overline{x}$\emph{: }$x\in
G\}.$

4) $\Rightarrow$ 1) For each $x\in Q,$ take $G=\{x\}.$

The proof of (ii) is similar.\bigskip
\end{proof}

Let $\{L_{\lambda}\}_{\lambda\in\Lambda}$ be subsets of $Q.$ Consider the set
$L_{\Lambda}=\mathcal{\{}x=\{x_{\lambda}\}_{\lambda\in\Lambda}$: $x_{\lambda
}\in L_{\lambda}\}$ and let%
\begin{equation}
\widehat{L_{\Lambda}}\text{ }=\left\{  \wedge x:=\wedge_{_{\lambda\in\Lambda}%
}x_{\lambda}\text{ for\ }x\in L_{\Lambda}\right\}  \text{ and }\overset{\vee
}{L_{\Lambda}}\text{ }=\left\{  \vee x:=\vee_{_{\lambda\in\Lambda}}x_{\lambda
}\text{ for\ }x\in L_{\Lambda}\right\}  . \label{3.31}%
\end{equation}

\begin{corollary}
\label{C3.2}Let all $\{L_{\lambda}\}_{\lambda\in\Lambda}$ be enveloping and
$\{N_{\lambda}\}_{\lambda\in\Lambda}$ inscribing sets in $Q.$ Then\smallskip

\emph{(i) \ }The sets $\cap_{\lambda\in\Lambda}L_{\lambda},$
$\widehat{L_{\Lambda}}$ and $\cup_{\lambda\in\Lambda}L_{\lambda}$ are
enveloping in $Q.\smallskip$

\emph{(ii) }The sets $\cup_{\lambda\in\Lambda}N_{\lambda},$ $\overset{\vee
}{N_{\Lambda}}$ and $\cap_{\lambda\in\Lambda}N_{\lambda}$ are inscribing in
$Q$.
\end{corollary}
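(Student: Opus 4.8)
The plan is to reduce everything to the characterisation in Theorem \ref{C3.1}: a subset of $Q$ is enveloping if and only if it is $\wedge$-complete and contains $\mathbf{1}$, and it is inscribing if and only if it is $\vee$-complete and contains $\mathbf{0}$. Since each $L_{\lambda}$ is enveloping, each is $\wedge$-complete with $\mathbf{1}\in L_{\lambda}$, and dually each $N_{\lambda}$ is $\vee$-complete with $\mathbf{0}\in N_{\lambda}$. Every one of the three sets in (i) visibly contains $\mathbf{1}$ (for $\widehat{L_{\Lambda}}$ take the constant choice $x_{\lambda}=\mathbf{1}$, so that $\wedge_{\lambda}x_{\lambda}=\mathbf{1}$), so the only thing to verify is $\wedge$-completeness; by the Duality Principle, part (ii) then follows from (i) upon exchanging $\wedge,\vee$, the bounds $\mathbf{1},\mathbf{0}$, and ``enveloping'' with ``inscribing''. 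Thus I would prove (i) and quote duality for (ii).

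For $\cap_{\lambda}L_{\lambda}$ the verification is immediate: if $G\subseteq\cap_{\lambda}L_{\lambda}$ then $G\subseteq L_{\lambda}$ for every $\lambda$, so $\wedge G\in L_{\lambda}$ by the $\wedge$-completeness of each $L_{\lambda}$, whence $\wedge G\in\cap_{\lambda}L_{\lambda}$. For $\widehat{L_{\Lambda}}$ the key tool is the regrouping identity (\ref{2.5}) for iterated meets. Given a subset $\{g^{i}\}_{i\in I}$ of $\widehat{L_{\Lambda}}$, write each $g^{i}=\wedge_{\lambda}x_{\lambda}^{i}$ with $x_{\lambda}^{i}\in L_{\lambda}$; applying (\ref{2.5}) to the double-indexed family $\{x_{\lambda}^{i}\}_{i\in I,\ \lambda\in\Lambda}$, first grouped over $i$ and then over $\lambda$, gives $\wedge_{i}g^{i}=\wedge_{\lambda}\left(\wedge_{i}x_{\lambda}^{i}\right)$. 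Each inner meet $y_{\lambda}:=\wedge_{i}x_{\lambda}^{i}$ lies in $L_{\lambda}$ by $\wedge$-completeness, so $\wedge_{i}g^{i}=\wedge_{\lambda}y_{\lambda}\in\widehat{L_{\Lambda}}$. Hence $\widehat{L_{\Lambda}}$ is $\wedge$-complete and therefore enveloping. (In passing, the same regrouping shows $\widehat{L_{\Lambda}}=(\cup_{\lambda}L_{\lambda})^{\wedge}$, i.e. $\widehat{L_{\Lambda}}$ is exactly the $\wedge$-completion of the union.)

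The delicate case --- and the step I expect to be the main obstacle --- is the union $\cup_{\lambda}L_{\lambda}$. Containing $\mathbf{1}$ is free, so again only $\wedge$-completeness is at issue, and the natural approach mirrors the $\widehat{L_{\Lambda}}$ computation: for $G\subseteq\cup_{\lambda}L_{\lambda}$ set $G_{\lambda}=G\cap L_{\lambda}$, so that $G=\cup_{\lambda}G_{\lambda}$ and, by (\ref{2.5}), $\wedge G=\wedge_{\lambda}\left(\wedge G_{\lambda}\right)$ with each $\wedge G_{\lambda}\in L_{\lambda}$. The crux is then to keep this meet of representatives, drawn from different members of the family, inside $\cup_{\lambda}L_{\lambda}$ rather than merely inside $\widehat{L_{\Lambda}}$; this is the only point of the argument that is not purely formal, and it is precisely where one must exploit interaction among the $L_{\lambda}$ (for instance an absorption or cofinality relation between them, reducing the coordinate meets to a single index). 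Once that containment is secured, Theorem \ref{C3.1} delivers that $\cup_{\lambda}L_{\lambda}$ is enveloping, and part (ii), including the inscribing analogues $\cup_{\lambda}N_{\lambda}$, $\overset{\vee}{N_{\Lambda}}$ and $\cap_{\lambda}N_{\lambda}$, follows verbatim by the Duality Principle.
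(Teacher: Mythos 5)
Your treatment of $\cap_{\lambda\in\Lambda}L_{\lambda}$ and of $\widehat{L_{\Lambda}}$ is correct and essentially the paper's own argument: the paper verifies the regrouping $\wedge_{g\in G}\bigl(\wedge_{\lambda}g_{\lambda}\bigr)=\wedge_{\lambda}\bigl(\wedge_{g\in G}g_{\lambda}\bigr)$ by a two-sided inequality instead of quoting (\ref{2.5}) for a double-indexed family, but the content is identical, and your appeal to duality for part (ii) matches the paper's ``Part (ii) can be proved similarly.''

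The step you flag as ``the crux'' in the union case is, however, not just delicate but unfixable: the clause about $\cup_{\lambda\in\Lambda}L_{\lambda}$ is false as stated. Take $Q=\{\mathbf{0},a,b,\mathbf{1}\}$ with $a\wedge b=\mathbf{0}$ and $a\vee b=\mathbf{1}$, and let $L_{1}=\{a,\mathbf{1}\}$, $L_{2}=\{b,\mathbf{1}\}$. Each $L_{i}$ is $\wedge$-complete and contains $\mathbf{1}$, hence enveloping by Theorem \ref{C3.1}; but $L_{1}\cup L_{2}=\{a,b,\mathbf{1}\}$ is not, since $a\wedge b=\mathbf{0}\notin L_{1}\cup L_{2}$ (equivalently, $\{l\in L_{1}\cup L_{2}:\mathbf{0}\leq l\}$ has no smallest element). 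The paper's own proof of this clause commits exactly the inference you declined to make: from $G=\cup_{\lambda}G_{\lambda}$ it derives $\wedge G=\wedge_{\lambda}\left(\wedge G_{\lambda}\right)\in\widehat{L_{\Lambda}}$ and then concludes that $\cup_{\lambda}L_{\lambda}$ is $\wedge$-complete, which does not follow; in the example, $G=\{a,b\}$ gives $\wedge G\in\widehat{L_{\Lambda}}$ but $\wedge G\notin\cup_{\lambda}L_{\lambda}$. What your computation does prove --- and what your parenthetical remark already records --- is the correct salvage: since $\mathbf{1}\in L_{\lambda}$ for every $\lambda$, padding a choice function with $\mathbf{1}$ shows $\cup_{\lambda}L_{\lambda}\subseteq\widehat{L_{\Lambda}}$, and indeed $\widehat{L_{\Lambda}}=(\cup_{\lambda}L_{\lambda})^{\wedge}$ is enveloping. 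This weaker statement also suffices for the paper's later use of the union clause: in Theorem \ref{T3.9} the set $F\cup\widehat{K_{_{G}}}$ equals $\widehat{K_{_{G}}}$, because each $\mathrm{Fix}(g)$ embeds into $\widehat{K_{_{G}}}$ by the same padding trick. Dually, the clause about $\cup_{\lambda}N_{\lambda}$ in part (ii) fails for $N_{1}=\{\mathbf{0},a\}$, $N_{2}=\{\mathbf{0},b\}$ in the same $Q$ (as $a\vee b=\mathbf{1}$ escapes the union), while the claims for $\overset{\vee}{N_{\Lambda}}=(\cup_{\lambda}N_{\lambda})^{\vee}$ and $\cap_{\lambda}N_{\lambda}$ stand. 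So: your proof is sound where it is complete, and the one gap you honestly identified is a genuine error in the statement itself, with the fix being to replace the union by its $\wedge$-completion (resp. $\vee$-completion).
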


\begin{proof}
(i) All $L_{\lambda}$ are $\wedge$-complete by Theorem \ref{C3.1}. So
$\cap_{\lambda\in\Lambda}L_{\lambda}$ is $\wedge$-complete. As $\mathbf{1}\in
L_{\lambda}$ for all $\lambda\in\Lambda$, $\mathbf{1}\in\cap_{\lambda
\in\Lambda}L_{\lambda}.$ By Theorem \ref{C3.1}, $\cap_{\lambda\in\Lambda
}L_{\lambda}$ is enveloping.

Let $g\in G\subseteq$ $\overset{\wedge}{L_{\Lambda}}.$ By $(\ref{3.31}),$
$g=\wedge_{\lambda\in\Lambda}g_{\lambda}$ for some $\{g_{\lambda}%
\}_{\lambda\in\Lambda}\in L_{\Lambda},$ and%
\[
\wedge G=\wedge_{_{g\in G}}g=\wedge_{_{g\in G}}(\wedge_{_{\lambda\in\Lambda}%
}g_{\lambda}).
\]
For $\lambda\in\Lambda,$ set $y_{\lambda}=\wedge_{_{g\in G}}g_{\lambda}.$ As
$g\leq g_{\lambda},$ we have $\wedge_{_{g\in G}}g\leq\wedge_{_{g\in G}%
}g_{\lambda}=y_{\lambda}.$ So $\wedge G\leq\wedge_{_{\lambda\in\Lambda}%
}y_{\lambda}.$ On the other hand, as $y_{\lambda}\leq g_{\lambda}$ for each
$g\in G,$ we have $\wedge_{_{\lambda\in\Lambda}}y_{\lambda}\leq\wedge
_{_{\lambda\in\Lambda}}g_{\lambda}=g.$ Thus
\[
\wedge G\leq\wedge_{_{\lambda\in\Lambda}}y_{\lambda}\leq\wedge_{_{g\in G}%
}g=\wedge G,\text{ so that }\wedge G=\wedge_{_{\lambda\in\Lambda}}y_{\lambda
}.
\]
As all $L_{\lambda}$ are $\wedge$-complete by Theorem \ref{C3.1}, all
$y_{\lambda}\in L_{\lambda}.$ So $\{y_{\lambda}\}_{\lambda\in\Lambda}\in
L_{\Lambda}$ and $\wedge_{_{\lambda\in\Lambda}}y_{\lambda}\in$
$\widehat{L_{\Lambda}}$. Hence $\wedge G\in\widehat{L_{\Lambda}}.$ Thus
$\widehat{L_{\Lambda}}$ is $\wedge$-complete and $\mathbf{1}\in
\widehat{L_{\Lambda}}$ as $\mathbf{1}\in L_{\lambda}$ for all $\lambda.$ By
Theorem \ref{C3.1}, $\widehat{L_{\Lambda}}$ is enveloping.

Now let $G\subseteq\cup_{\lambda\in\Lambda}L_{\lambda}$. Set $G_{\lambda
}=G\cap L_{\lambda}$ for $\lambda\in\Lambda$. Then $\wedge G_{\lambda}\in
L_{\lambda}$ for all $\lambda\in\Lambda$. Hence
\[
\{\wedge G_{\lambda}\}_{\lambda\in\Lambda}\in L_{\Lambda}\text{ and }\wedge
G=\wedge(\cup_{_{\lambda\in\Lambda}}G_{\lambda})\overset{(\ref{2.5})}{=}%
\wedge_{_{\lambda\in\Lambda}}\left(  \wedge G_{\lambda}\right)
\overset{(\ref{3.31})}{\in}\overset{\wedge}{L_{\Lambda}}.
\]
So $\cup_{_{\lambda\in\Lambda}}L_{\lambda}$ is $\wedge$-complete. As it
contains $\mathbf{1}$, it is enveloping by Theorem \ref{C3.1}.

Part (ii) can be proved similarly.\bigskip
\end{proof}

A map $g$: $Q\rightarrow Q$ is called%
\begin{align}
\text{a }T\text{\textit{-radical map} if }x  &  \leq g\left(  x\right)
=g\left(  g\left(  x\right)  \right)  \text{ and }g\left(  x\vee y\right)
=g\left(  x\right)  \vee g\left(  y\right)  ,\label{t3.1}\\
\text{a dual }T\text{\textit{-radical map }if }g(g(x))  &  =g\left(  x\right)
\leq x\text{ and }g\left(  x\wedge y\right)  =g\left(  x\right)  \wedge
g\left(  y\right)  \text{ for }x,y\in Q.\nonumber
\end{align}
$T$-radical maps are radical (see (\ref{f3.1})). Indeed, if $x\leq y$ then
$g\left(  x\right)  \leq g\left(  x\right)  \vee g\left(  y\right)  =g\left(
x\vee y\right)  =g\left(  y\right)  .$ Similarly, dual $T$-radical maps are
dual radical maps.

For a map $g$: $Q\rightarrow Q,$ we denote by Fix($g)$ the set of all
$g$\textit{-}fixed points:%
\[
\text{Fix}\left(  g\right)  =\left\{  x\in Q\text{: }g\left(  x\right)
=x\right\}  .
\]
For an enveloping set $L$ and an inscribing set $N,$ define the maps $f_{_{L}%
}$ and $g_{_{N}}$ on $Q$ by%
\begin{equation}
f_{_{L}}:x\in Q\mapsto\overline{x}\in L,\text{ and }g_{_{N}}:x\in
Q\mapsto\underline{x}\in N. \label{en1}%
\end{equation}

\begin{proposition}
\label{P3.1}\emph{(i) }$\theta\emph{:}$ $L\mapsto f_{L}$ is a bijection from
the class of all enveloping sets in $Q$\emph{ }onto\emph{ }the class of all
radical maps on $Q.$ Its inverse\emph{ }$\theta^{-1}$\emph{: }$g\mapsto$
$\emph{Fix}\left(  g\right)  .$ Moreover$,$ $L=$ \emph{Fix}$(f_{L})$ and
$g=f_{\text{\emph{Fix}}(g)}.\smallskip$

\emph{(ii) }$\theta$ is a bijection from the class of all enveloping
sublattices of $Q$\emph{ }onto\emph{ }the class of all $T$-radical maps on
$Q.\smallskip$

\emph{(iii) }$\phi\emph{:}$ $N\mapsto g_{_{N}}$ is a bijection from the class
of all inscribing sets in $Q$\emph{ }onto\emph{ }the class of all dual radical
maps on $Q.$ Its inverse $\phi^{-1}$\emph{: }$f\mapsto$ $\emph{Fix}\left(
f\right)  .$ Moreover$,$ $N=$ \emph{Fix}$(g_{_{N}})$ and $f=g_{_{\emph{Fix}%
(f)}}.\smallskip$

\emph{(iv) }$\phi$ is a bijection from the class of all inscribing sublattices
of $Q$\emph{ }onto\emph{ }the class of all dual $T$-radical maps on $Q.$
\end{proposition}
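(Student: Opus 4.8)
The plan is to recognize Proposition \ref{P3.1} as the classical correspondence between closure operators and closure systems, recast in the language of radical maps and enveloping sets, and to reduce everything to the characterization furnished by Theorem \ref{C3.1}(i): a set $L\subseteq Q$ is enveloping if and only if $L$ is $\wedge$-complete and $\mathbf{1}\in L$. I would prove (i) first, obtain (ii) as a refinement, and derive (iii) and (iv) by the Duality Principle \cite[Theorem $1.3^{\prime}$]{Sk}.

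For (i) I would argue in two directions. Given an enveloping $L$, I verify from the definition of $\overline{x}$ that $f_{L}$ is a radical map in the sense of (\ref{f3.1}): the inequality $x\leq\overline{x}$ is immediate; idempotence $\overline{\overline{x}}=\overline{x}$ holds because $\overline{x}\in L$ is already the least element of $L$ above itself; monotonicity follows since $x\leq y$ gives $x\leq y\leq\overline{y}\in L$, so $\overline{x}\leq\overline{y}$. Moreover $x\in L$ forces $\overline{x}=x$ and $\overline{x}=x$ forces $x\in L$, so $\mathrm{Fix}(f_{L})=L$, which already yields injectivity of $\theta$. Conversely, given a radical map $g$, set $L=\mathrm{Fix}(g)$ and check via Theorem \ref{C3.1} that $L$ is enveloping: $\mathbf{1}\in L$ because $\mathbf{1}\leq g(\mathbf{1})\leq\mathbf{1}$, and $L$ is $\wedge$-complete because for $G\subseteq L$ and $t=\wedge G$ one has $t\leq g(t)$ while $g(t)\leq g(h)=h$ for every $h\in G$ gives $g(t)\leq\wedge G=t$, hence $g(t)=t$. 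Finally I show $f_{L}=g$: for each $x$ the element $g(x)$ lies in $L=\mathrm{Fix}(g)$ and dominates $x$, so $\overline{x}\leq g(x)$, while $x\leq\overline{x}\in\mathrm{Fix}(g)$ gives $g(x)\leq g(\overline{x})=\overline{x}$. Thus $\theta$ is a bijection with inverse $g\mapsto\mathrm{Fix}(g)$, and the two ``moreover'' identities are precisely $\mathrm{Fix}(f_{L})=L$ and $f_{\mathrm{Fix}(g)}=g$.

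For (ii), since $T$-radical maps are radical (as noted after (\ref{t3.1})), it remains only to match up the extra closure condition. Because an enveloping $L$ is automatically $\wedge$-complete, $L$ is a sublattice precisely when it is closed under $\vee$, and I would prove this equivalent to $f_{L}$ being $T$-radical, i.e.\ $f_{L}(x\vee y)=f_{L}(x)\vee f_{L}(y)$. The inequality $\overline{x}\vee\overline{y}\leq\overline{x\vee y}$ always holds by monotonicity; if $L$ is $\vee$-closed then $\overline{x}\vee\overline{y}\in L$ dominates $x\vee y$, giving the reverse inequality and hence the join identity. Conversely, if $f_{L}$ is $T$-radical and $x,y\in L$, then $f_{L}(x\vee y)=f_{L}(x)\vee f_{L}(y)=x\vee y$, so $x\vee y\in\mathrm{Fix}(f_{L})=L$. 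Thus $\theta$ restricts to a bijection between enveloping sublattices and $T$-radical maps.

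Parts (iii) and (iv) I would deduce by duality, since $\phi$, $g_{N}$, inscribing sets, and the dual (resp.\ dual $T$-) radical maps of (\ref{t3.2}) are the exact order-reverses of $\theta$, $f_{L}$, enveloping sets, and the (resp.\ $T$-) radical maps, with Theorem \ref{C3.1}(ii) replacing Theorem \ref{C3.1}(i). I expect no serious obstacle; the only points demanding care are the $\wedge$-completeness of $\mathrm{Fix}(g)$ in (i), where monotonicity and idempotence of $g$ must be combined correctly, and the $\vee$-closure/$T$-radical equivalence in (ii), which hinges on the observation that $\wedge$-completeness already makes closure under $\wedge$ automatic.
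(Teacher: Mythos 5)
Your proposal is correct and takes essentially the same route as the paper: the paper simply cites Gr\"atzer's Lemma 28 for part (i), whose standard closure-operator argument (idempotence and monotonicity of $f_{L}$, the identity $\mathrm{Fix}(f_{L})=L$, and $\wedge$-completeness of $\mathrm{Fix}(g)$ via Theorem \ref{C3.1}) you reproduce in full and correctly. Your part (ii) matches the paper's proof up to a trivial variation (the paper first establishes $f(x\vee y)=f(f(x)\vee f(y))$ for any radical map and then uses $f(x)\vee f(y)\in\mathrm{Fix}(f)$, where you bound $\overline{x\vee y}\leq\overline{x}\vee\overline{y}$ directly), and both treatments dispose of (iii) and (iv) by duality.
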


\begin{proof}
Part (i) was proved in \cite[Lemma 28]{G}. The proof of (iii) is similar.

(ii) Let $L$ be an enveloping sublattice of $Q.$ Set $f=f_{L}.$ By (i), $f$ is
a radical map. Hence, for $x,y\in Q,$ $x\leq f(x)\leq f(x\vee y)$ and $y\leq
f(y)\leq f(x\vee y).$ So $x\vee y\leq f(x)\vee f(y)\leq f(x\vee y).$ Thus
$f(x\vee y)\leq f(f(x)\vee f(y))\leq f(f(x\vee y))=f(x\vee y),$ so that
$f(x\vee y)=f(f(x)\vee f(y)).$ As $L$ is a sublattice, $f(x)\vee f(y)\in L=$
Fix($f).$ Hence $f(x\vee y)=f(x)\vee f(y).$ Thus $f$ is a $T$-radical map.

Conversely, let $g$ be a $T$-radical map. By (i), Fix$(g)$ is an enveloping
set. By Theorem \ref{C3.1}, Fix$(g)$ is $\wedge$-complete. If $x,y\in$
Fix$(g)$ then $g(x\vee y)=g(x)\vee g(y)=x\vee y.$ So Fix$(g)$ is an enveloping
sublattice of $Q$ which completes the proof of (ii). Part (iv) can be proved
similarly.\bigskip
\end{proof}

Consider the following link between $T$-radical maps and topology. For a set
$X,$ the set P$\left(  X\right)  $ of all subsets of $X$ is a complete lattice
with order $\leq$ $=$ $\subseteq$, $\mathbf{0}=\varnothing$, $\mathbf{1}=X,$
$\wedge$ $=$ $\cap$ and $\vee$ $=$ $\cup$.

For a \textit{topological space }$\left(  X,\tau\right)  ,$ denote by
$\tau^{\text{op}}$ the $\vee$-complete sublattice\textbf{ }of all open subsets
in P($X)$ and by $\tau^{\text{cl}}$ the $\wedge$-complete sublattice\textbf{
}of all closed subsets in P($X).$

\begin{proposition}
\label{P2.18}\emph{(i) }Let $\left(  X,\tau\right)  $ be a topological space
and $Q=\mathrm{P}\left(  X\right)  .\smallskip$

$1)$ If $f:$ $Q\rightarrow Q$ maps each $G\in Q$ into its $\tau$-closure then
$f$ is a $T$-radical map$.\smallskip$

$2)$ If $g:$ $Q\rightarrow Q$ maps each $G\in Q$ into its $\tau$-interior then
$g$ is a dual $T$-radical map$.\smallskip$

\emph{(ii) }Let $X$ be a set and $Q=\mathrm{P}\left(  X\right)  $. Let
$f$\emph{: }$Q\rightarrow Q$ be a map.\smallskip

$1)$ If $f$ is a $T$-radical map and $f\left(  \varnothing\right)
=\varnothing,$ then $(X,\tau)$ with $\tau^{\mathrm{cl}}=\left\{  f\left(
G\right)  \text{\emph{: }}G\in Q\right\}  $ is a topological space.\smallskip

$2)$ If $f$ is a dual $T$-radical map and $f\left(  X\right)  =X,$ then
$(X,\tau)$ with $\tau^{\mathrm{op}}=\left\{  f\left(  G\right)  \text{\emph{:
}}G\in Q\right\}  $ is a topological space.
\end{proposition}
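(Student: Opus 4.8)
The plan is to treat part~(i) as a routine translation of the Kuratowski axioms into lattice language, and to obtain part~(ii) essentially for free from the structural results already established in Proposition~\ref{P3.1} and Theorem~\ref{C3.1}.

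For part~(i), recall that in $Q=\mathrm{P}(X)$ one has $\leq\ =\ \subseteq$, $\vee=\cup$, $\wedge=\cap$, $\mathbf{0}=\varnothing$ and $\mathbf{1}=X$. The topological closure operator $f$ is extensive ($G\subseteq\overline{G}$), idempotent ($\overline{\overline{G}}=\overline{G}$) and preserves finite unions ($\overline{G\cup H}=\overline{G}\cup\overline{H}$); these are exactly the conditions $x\leq f(x)=f(f(x))$ and $f(x\vee y)=f(x)\vee f(y)$ of (\ref{t3.1}), so $f$ is a $T$-radical map. Dually, the interior operator is contractive, idempotent and preserves finite intersections, which is precisely the defining list for a dual $T$-radical map. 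I would simply invoke these standard facts from point-set topology.

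For part~(ii.1) the crucial observation is that, since $f$ is idempotent, its image coincides with its fixed-point set: every $f(G)$ satisfies $f(f(G))=f(G)$, and every fixed point is its own image, so $\tau^{\mathrm{cl}}=\{f(G):G\in Q\}=\mathrm{Fix}(f)$. Because $f$ is a $T$-radical map, Proposition~\ref{P3.1}(ii) tells us that $\mathrm{Fix}(f)$ is an \emph{enveloping sublattice} of $Q$. By Theorem~\ref{C3.1}(i) an enveloping set is $\wedge$-complete and contains $\mathbf{1}=X$; being a sublattice, $\mathrm{Fix}(f)$ is also closed under $\vee=\cup$. Translating back into topology: $X\in\tau^{\mathrm{cl}}$, arbitrary intersections of members of $\tau^{\mathrm{cl}}$ again lie in $\tau^{\mathrm{cl}}$ (from $\wedge$-completeness), and finite unions of members of $\tau^{\mathrm{cl}}$ lie in $\tau^{\mathrm{cl}}$ (from the sublattice property). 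Finally $\varnothing=f(\varnothing)\in\tau^{\mathrm{cl}}$ by hypothesis. These are exactly the closed-set axioms, so $(X,\tau)$ is a topological space.

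Part~(ii.2) is the formal dual: with $f$ a dual $T$-radical map, $\tau^{\mathrm{op}}=\mathrm{Fix}(f)$ is an \emph{inscribing sublattice} by Proposition~\ref{P3.1}(iv), hence $\vee$-complete and containing $\mathbf{0}=\varnothing$ by Theorem~\ref{C3.1}(ii), and closed under $\wedge=\cap$; together with the hypothesis $X=f(X)\in\tau^{\mathrm{op}}$ this gives closure under arbitrary unions, finite intersections, and membership of both $\varnothing$ and $X$ — the open-set axioms. The only genuinely non-trivial point in the whole argument is closure under arbitrary intersections (resp. arbitrary unions), which is the subtle Kuratowski axiom; and this is precisely what is delivered at no extra cost by the $\wedge$-completeness (resp. $\vee$-completeness) of the enveloping (resp. inscribing) set $\mathrm{Fix}(f)$ proved in Theorem~\ref{C3.1}. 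Everything else is finite combinatorics already encoded in the sublattice property.
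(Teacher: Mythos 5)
Your proposal is correct and follows essentially the same route as the paper: part (i) by checking the conditions of (\ref{t3.1}) against the standard properties of closure/interior operators, and part (ii) by identifying $\tau^{\mathrm{cl}}=\mathrm{Fix}(f)$ via idempotence and then invoking Proposition \ref{P3.1} and Theorem \ref{C3.1} to get an enveloping ($\wedge$-complete, $X\in L$) sublattice, with finite unions from the $T$-radical identity, $\varnothing$ from the hypothesis, and duality for the remaining halves. The only cosmetic difference is that the paper verifies the finite-union axiom $f(G\cup K)=f(G)\cup f(K)$ directly from the sublattice structure of $\tau^{\mathrm{cl}}$ where you cite it as a standard Kuratowski fact.
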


\begin{proof}
(i) 1) For $G\in Q,$ its $\tau$-closure $f\left(  G\right)  =\cap\left\{
F\in\tau^{\text{cl}}\text{: }G\subseteq F\right\}  \in\tau^{\text{cl}}$. Hence
$G\subseteq f\left(  G\right)  =f\left(  f\left(  G\right)  \right)  $. Let
$K\in Q$. As $\tau^{\mathrm{cl}}$ is a sublattice of $Q,$ $G\cup K\subseteq
f\left(  G\right)  \cup f\left(  K\right)  \in\tau^{\mathrm{cl}}$. Hence
$f(G\cup K)\subseteq f\left(  G\right)  \cup f\left(  K\right)  \subseteq
f(G\cup K).$ Thus $f\left(  G\cup K\right)  =f\left(  G\right)  \cup f\left(
K\right)  $, so that $f$ is a $T$-radical map. Part 2) follows from duality.

$\left(  \mathrm{ii}\right)  $ 1) Let $L=\mathrm{Fix}\left(  f\right)  =\{G\in
Q$: $f(G)=G\}.$ Then $L=\left\{  f\left(  G\right)  \text{: }G\in Q\right\}
$. By Proposition \ref{P3.1}, $L$ is an enveloping sublattice. By Theorem
\ref{C3.1}, it is $\cap$-complete. By (\ref{t3.1}), $f\left(  G\right)  \cup
f\left(  K\right)  =f\left(  G\cup K\right)  \in L$ for $G,K\in Q.$ Hence $L$
is a $\cap$-complete sublattice of $Q$. As $X\subseteq f(X)\subseteq X$ by
(\ref{t3.1}), $X=f\left(  X\right)  \in L$. As $f\left(  \varnothing\right)
=\varnothing$, $\varnothing\in L$. Then $L=\tau^{\mathrm{cl}}$ for some
topology $\tau$ in $X$. Part 2) follows from duality.
\end{proof}

\subsection{Enveloping and inscribing sets in Ref($Q)$}

For a complete lattice $(Q,\leq),$ Ref$\left(  Q\right)  $ is also a complete
lattice with the order $\subseteq$ $($see $(\ref{0}))$. For a subset
$F\subseteq$ Ref($Q),$ the relations $\wedge F=$ $\ll_{_{\wedge F}}%
:=\cap_{_{\ll\,\in F}}\ll$ and $\vee F=$ $\ll_{_{\vee F}}:=\cup_{_{\ll\,\in
F}}\ll$ are defined in (\ref{i}). Consider the following subsets of
Ref$\left(  Q\right)  $:\smallskip

\begin{center}
$L_{\mathbf{H}}=\left\{  \text{all }\mathbf{H}\text{-relations in Ref}\left(
Q\right)  \right\}  ,$ $\ L_{d\mathbf{H}}=\left\{  \text{all dual }%
\mathbf{H}\text{-relations in Ref}\left(  Q\right)  \right\}  ,\smallskip$

$L_{\text{uc}}=\left\{  \text{all up-contiguous }\ll\text{ in Ref}\left(
Q\right)  \right\}  ,$ $\ L_{\text{dc}}=\left\{  \text{all down-contiguous
}\ll\text{ in Ref}\left(  Q\right)  \right\}  ,\smallskip$

$L_{\text{ue}}=\left\{  \text{all up-expanded }\ll\text{ in Ref}\left(
Q\right)  \right\}  ,$ $\ L_{\text{de}}=\left\{  \text{all down-expanded }%
\ll\text{ in Ref}\left(  Q\right)  \right\}  ,\smallskip$

$L_{\triangleright}=\left\{  \ll\text{ in Ref}\left(  Q\right)  \text{: }%
\ll\text{ }=\text{ }\ll^{\triangleright}\right\}  ,$ $\ L_{\triangleleft
}=\left\{  \ll\text{ in Ref}\left(  Q\right)  \text{: }\ll\text{ }=\text{ }%
\ll^{\triangleleft}\right\}  .$
\end{center}

\begin{theorem}
\label{P3.12}\emph{(i) }The sets $L_{_{\mathbf{H}}},$ $L_{_{d\mathbf{H}}},$
$L_{\text{\emph{uc}}},$ $L_{\text{\emph{dc}}}$ are inscribing and enveloping
sublattices of $\emph{Ref}\left(  Q\right)  $.$\smallskip$

\emph{(ii)\ }The sets $L_{\triangleright},$ $L_{\triangleleft},$
$L_{\text{\emph{ue}}},$ $L_{\text{\emph{de}}}$ and the set $L_{\text{\emph{o}%
}}$ of all orders are enveloping in $\emph{Ref}\left(  Q\right)  $.
\end{theorem}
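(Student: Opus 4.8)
The plan is to reduce everything to the characterizations of enveloping and inscribing sets in Theorem \ref{C3.1}. Recall that $\mathrm{Ref}(Q)$ is a complete lattice under $\subseteq$ in which $\wedge F=\cap_{\ll\in F}\ll$ and $\vee F=\cup_{\ll\in F}\ll$; its top element is $\mathbf{1}=\leq$ (the weakest relation in $\mathrm{Ref}(Q)$), and its bottom element $\mathbf{0}$ is the equality relation, which is contained in every reflexive relation. By Theorem \ref{C3.1}, a subset of $\mathrm{Ref}(Q)$ is enveloping if and only if it is $\wedge$-complete (closed under arbitrary intersections) and contains $\mathbf{1}=\leq$, and inscribing if and only if it is $\vee$-complete (closed under arbitrary unions) and contains $\mathbf{0}$. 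A set closed under arbitrary intersections and unions is in particular a sublattice of $\mathrm{Ref}(Q)$. So both parts follow once I check the relevant closure properties and membership of $\leq$ and of equality.

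For part (i), I would note that each of the four properties is a single-premise implication whose conclusion is witnessed inside one relation: for $L_{\text{uc}}$, $a\ll b$ forces $c\ll b$ for all $c\in[a,b]$; for $L_{\text{dc}}$, $a\ll b$ forces $a\ll c$ for all $c\in[a,b]$; and by condition $3)$ of Lemma \ref{le1}, membership in $L_{\mathbf H}$ (resp. $L_{d\mathbf H}$) is the implication $a\ll b\Rightarrow a\vee x\ll b\vee x$ (resp. $a\ll b\Rightarrow a\wedge x\ll b\wedge x$) for all $x$. Such conditions pass to an arbitrary intersection $\cap_\alpha\ll_\alpha$ (the hypothesis holds in every $\ll_\alpha$, hence so does the conclusion) and equally to an arbitrary union $\cup_\alpha\ll_\alpha$ (a single member $\ll_\beta$ witnessing the hypothesis already witnesses the conclusion). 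Thus all four sets are closed under arbitrary intersections and unions, so they are complete sublattices; each contains both $\leq$ and equality, which trivially satisfy the four conditions. By Theorem \ref{C3.1} they are inscribing and enveloping sublattices.

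For part (ii), the conditions are no longer single-premise, which is exactly why only enveloping is asserted. For $L_{\text{ue}}$, the condition ``$[a,\ll]$ is $\vee$-complete'' reads $(\forall g\in G:\ a\ll g)\Rightarrow a\ll\vee G$; since $[a,\cap_\alpha\ll_\alpha]=\cap_\alpha[a,\ll_\alpha]$ and each $[a,\ll_\alpha]$ is $\vee$-complete, so is the intersection, giving $\wedge$-completeness (it fails for unions, as the witnesses for different $g$ may differ). Dually for $L_{\text{de}}$. For $L_{\text{o}}$, transitivity is a two-premise condition, again preserved under intersection: if $a\ll b\ll c$ in $\cap_\alpha\ll_\alpha$ then $a\ll_\alpha c$ for every $\alpha$. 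Each of these three sets contains $\leq$, an up-expanded and down-expanded order, so by Theorem \ref{C3.1} they are enveloping.

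The one point requiring more than a closure observation is $L_{\triangleright}$ and $L_{\triangleleft}$, and this is where I expect the main work. I would argue that $\ll\mapsto\ll^{\triangleright}$ is a closure operator on $\mathrm{Ref}(Q)$: it is extensive by (\ref{2,8}), idempotent by Theorem \ref{P2.1}, and monotone because $\ll\subseteq\ll'$ turns every complete upper $\ll$-gap chain into a complete upper $\ll'$-gap chain (each immediate-successor relation $x\ll s_x$ gives $x\ll' s_x$, and the gap condition depends only on the chain), whence $\ll^{\triangleright}\subseteq(\ll')^{\triangleright}$. Then $L_{\triangleright}=\{\ll:\ll=\ll^{\triangleright}\}$ is the fixed-point set of a closure operator: for $\{\ll_\alpha\}\subseteq L_{\triangleright}$ and $\ll=\cap_\alpha\ll_\alpha$, monotonicity and $\ll_\alpha=\ll_\alpha^{\triangleright}$ give $\ll^{\triangleright}\subseteq\ll_\alpha^{\triangleright}=\ll_\alpha$ for each $\alpha$, so $\ll^{\triangleright}\subseteq\ll$, and (\ref{2,8}) gives the reverse inclusion, so $\ll\in L_{\triangleright}$. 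Hence $L_{\triangleright}$ is $\wedge$-complete and contains $\leq=\leq^{\triangleright}$ (the top is fixed by any closure operator; equivalently Theorem \ref{P2.1}(iii) applies to the up-expanded order $\leq$). By Theorem \ref{C3.1} it is enveloping, and $L_{\triangleleft}$ follows by duality. The only subtle input here is the idempotence from Theorem \ref{P2.1} together with the chain-preservation monotonicity of $\triangleright$; the remaining cases all reduce to the elementary single-premise versus multi-premise dichotomy for preservation under unions.
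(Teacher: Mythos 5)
Your proposal is correct and takes essentially the same route as the paper: an elementwise verification that each defining property passes to arbitrary intersections (and, for the single-premise properties in part (i), to arbitrary unions as well), combined with the characterization of enveloping and inscribing sets in Theorem \ref{C3.1} and the observation that $\leq$ and the equality relation lie in the relevant sets. Your closure-operator packaging of the $L_{\triangleright},L_{\triangleleft}$ case -- extensivity from (\ref{2,8}), idempotence from Theorem \ref{P2.1}, monotonicity via preservation of complete upper gap chains -- unwinds to exactly the paper's direct chain argument for the intersection relation, and in fact anticipates the paper's own Lemma \ref{L3.1}, which identifies $\ll\ \mapsto\ \ll^{\triangleright}$ as the radical map $f_{L_{\triangleright}}$ by the same observation.
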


\begin{proof}
(i) Let $F\subseteq L_{_{\mathbf{H}}}.$ If $a\ll_{_{\wedge F}}b$ (see
(\ref{i})) then $a\ll b$ for all $\ll$ in $F$. As all $\ll$ are $\mathbf{H}%
$-relations, $a\vee x\ll b\vee x$ for all $x\in Q$. Hence $a\vee
x\ll_{_{\wedge F}}b\vee x.$ So $\wedge F$ is an $\mathbf{H}$-relation. Let now
$a\ll_{_{\vee F}}b$ (see (\ref{i})). Then $a\ll b$ for some $\ll$ in $F$, and
$a\vee x\ll b\vee x$ for all $x\in Q$. Hence $a\vee x\ll_{_{\vee F}}b\vee x.$
So $\vee F$ is an $\mathbf{H}$-relation. Thus $L_{_{\mathbf{H}}}$ is complete.
As $\mathbf{0,1}\in L_{_{\mathbf{H}}}$, $L_{_{\mathbf{H}}}$ is an inscribing
and enveloping lattice by Theorem \ref{C3.1}. The proof for $L_{_{d\mathbf{H}%
}}$ is similar.

Let $F\subseteq L_{\text{uc}}$. If $a\ll_{_{\wedge F}}b$ then $a\ll b$ for all
$\ll$ in $F$. So%
\begin{equation}
\lbrack a,\ll_{_{\wedge F}}]=\cap_{_{\ll\,\in F}}[a,\ll]\text{ and }\left[
\ll_{_{\wedge F}},b\right]  =\cap_{_{\ll\,\in F}}\left[  \ll,b\right]  .
\label{3,1}%
\end{equation}
As all $\ll$ are up-contiguous, $\left[  a,b\right]  \subseteq\left[
\ll,b\right]  $ for all $\ll$ in $F.$ By (\ref{3,1}), $\left[  a,b\right]
\subseteq\left[  \ll_{_{\wedge F}},b\right]  $. So $\wedge F$ is
up-contiguous. Let now $a\ll_{_{\vee F}}b.$ Then $a\ll b$ for some $\ll$ in
$F$. Hence $\left[  a,b\right]  \subseteq\left[  \ll,b\right]  .$ Thus
$\left[  a,b\right]  \subseteq\left[  \ll_{_{\vee F}},b\right]  $. So $\vee F$
is up-contiguous. Thus $L_{\text{uc}}$ is complete. As $\mathbf{1}$ and
$\mathbf{0}$ are up-contiguous, $L_{\text{uc}}$ is an inscribing and
enveloping lattice by Theorem \ref{C3.1}. The proof for $L_{\text{dc}}$ is similar.

(ii) For $F\subseteq L_{\triangleright},$ let $a\ll_{_{\wedge F}%
}^{\triangleright}b.$ By (\ref{2.18}), there is a complete, upper
$\ll_{_{\wedge F}}$-gap chain $C$ from $a$ to $b$, i.e., each $x\in
C\diagdown\{b\}$ has an immediate $\ll_{_{\wedge F}}$-successor $s_{x}$:
$x\ll_{_{\wedge F}}s_{x}$ and $[x,s_{x}]\cap C=\{x,s_{x}\}.$ By (\ref{i}),
$x\ll s_{x}$ for all $\ll$ in $F.$ So $s_{x}$ is an immediate $\ll$-successor
of $x$ in $C.$ Thus $C$ is a complete, upper $\ll$-gap chain for all $\ll$ in
$F$, i.e., $a\,\ll^{\triangleright}$ $b.$ As $\ll\,=\,\ll^{\triangleright},$
$a\ll b$ for all $\ll$ in $F$. Hence $a\ll_{_{\wedge F}}b$. So $\ll_{_{\wedge
F}}^{\triangleright}$ is stronger than $\ll_{_{\wedge F}}$. By (\ref{2,8}),
$\ll_{_{\wedge F}}$ is stronger than $\ll_{_{\wedge F}}^{\triangleright}.$
Hence $\ll_{_{\wedge F}}^{\triangleright}=$ $\ll_{_{\wedge F}}.$ Thus
$L_{\triangleright}$ is $\wedge$-complete. As $\mathbf{1}\in L_{\triangleright
}$, $L_{\triangleright}$ is enveloping by Theorem \ref{C3.1}. The proof for
$L_{\triangleleft}$ is similar.

For $F\subseteq L_{\text{ue}}$ and $a\in Q,$ let $G\subseteq\lbrack
a,\ll_{_{\wedge F}}].$ By (\ref{3,1}), $G\subseteq\lbrack a,\ll]$ for all
$\ll$ in $F.$ As all $\ll$ are up-expanded, $\vee G\in\lbrack a,\ll]$ for all
$\ll$ in $F.$ Hence $\vee G\in\lbrack a,\ll_{_{\wedge F}}].$ So $\ll_{_{\wedge
F}}$ is up-expanded. Thus $L_{\text{ue}}$ is $\wedge$-complete. As
$\mathbf{1}\in L_{\text{ue}}$, $L_{\text{ue}}$ is enveloping by Theorem
\ref{C3.1}. The proof for $L_{\text{de}}$ is similar.

For $F\subseteq L_{\text{o}}$, let $a\ll_{_{\wedge F}}b\ll_{_{\wedge F}}c.$
Then $a\ll b\ll c$ for all $\ll$ in $F$, whence $a\ll c$. So $a\ll_{_{\wedge
F}}c.$ Thus $\ll_{_{\wedge F}}$ is an order. Thus $L_{\text{o}}$ is $\wedge
$-complete. As $\mathbf{1}\in L_{\text{o}}$, $L_{\text{o}}$ is enveloping by
Theorem \ref{C3.1}.
\end{proof}

\begin{corollary}
\label{C3.14}The sets of all $\mathbf{T}$-orders\emph{,} of all dual
$\mathbf{T}$-orders$,$ of all $\mathbf{R}$-orders\emph{,} of all dual
$\mathbf{R}$-orders are enveloping in \emph{Ref}$\left(  Q\right)  .$
\end{corollary}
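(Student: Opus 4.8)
The plan is to express each of the four classes as an intersection of sets already shown to be enveloping in Theorem \ref{P3.12}, and then to invoke the fact, recorded in Corollary \ref{C3.2}(i), that an intersection of enveloping sets in $\mathrm{Ref}(Q)$ is again enveloping. So the whole argument reduces to correctly unwinding the definitions.

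First I would identify each class with an explicit intersection. By Definition \ref{D3.2}(i) a relation is a $\mathbf{T}$-order exactly when it is transitive, up-contiguous and up-expanded, so the set of all $\mathbf{T}$-orders is $L_{\text{o}}\cap L_{\text{uc}}\cap L_{\text{ue}}$; dually, by Definition \ref{D3.2}(ii), the set of all dual $\mathbf{T}$-orders is $L_{\text{o}}\cap L_{\text{dc}}\cap L_{\text{de}}$. For the $\mathbf{R}$-orders I would use Definition \ref{D2.2}(iv): an $\mathbf{R}$-order is an up-expanded $\mathbf{H}$-order, and an $\mathbf{H}$-order is by definition a transitive $\mathbf{H}$-relation; hence the set of all $\mathbf{R}$-orders is $L_{\mathbf{H}}\cap L_{\text{o}}\cap L_{\text{ue}}$. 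Dually, by Definition \ref{D2.2}(v), the set of all dual $\mathbf{R}$-orders is $L_{d\mathbf{H}}\cap L_{\text{o}}\cap L_{\text{de}}$.

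Next I would note that every factor occurring in these four intersections is enveloping. Theorem \ref{P3.12}(ii) supplies this directly for $L_{\text{ue}}$, $L_{\text{de}}$ and the set $L_{\text{o}}$ of all orders, while Theorem \ref{P3.12}(i) gives that $L_{\mathbf{H}}$, $L_{d\mathbf{H}}$, $L_{\text{uc}}$ and $L_{\text{dc}}$ are enveloping (in fact both inscribing and enveloping). Applying Corollary \ref{C3.2}(i) to each of the four finite intersections then yields that the corresponding class is enveloping, which is the assertion of the corollary.

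I do not expect a genuine obstacle here beyond bookkeeping: the analytic content is entirely carried by Theorem \ref{P3.12} and by the stability of envelopingness under intersection. The one point demanding care is to make sure the defining conditions of each class are captured \emph{exactly} by the listed intersection; in particular one must remember that an "$\mathbf{H}$-order" is a \emph{transitive} $\mathbf{H}$-relation, so that transitivity has to be imposed separately through the factor $L_{\text{o}}$ rather than being already present in $L_{\mathbf{H}}$. Once this matching of definitions to factors is verified, the conclusion is immediate.
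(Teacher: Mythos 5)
Your proposal is correct and takes essentially the same approach as the paper: the paper's proof also writes the set of all $\mathbf{T}$-orders as the intersection $L_{\text{o}}\cap L_{\text{uc}}\cap L_{\text{ue}}$ of enveloping sets from Theorem \ref{P3.12} and applies Corollary \ref{C3.2}, handling the other three classes as ``similar'' -- precisely the decompositions you spell out, including imposing transitivity through the factor $L_{\text{o}}$.
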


\begin{proof}
By Definition \ref{D3.2}, the set of all $\mathbf{T}$-orders is the
intersection of $L_{\text{o}}$,$L_{\text{uc}},L_{\text{ue}}$ which are
enveloping. So it is enveloping by Corollary \ref{C3.2}. The rest of the proof
is similar.\bigskip
\end{proof}

By Proposition \ref{P3.1}, there is a bijection between enveloping sets $L$
and radical maps $f_{L}$. It is, however, often difficult to determine the
action of $f_{L}$. Below we do it for some sets in Ref($Q).$

\begin{lemma}
\label{L3.1}The radical maps $f_{L_{\triangleright}},$ $f_{L_{\triangleleft}}$
on \emph{Ref}$(Q)$ act by $f_{L_{\triangleright}}$\emph{: }$\ll$ $\mapsto$
$\ll^{\triangleright}$ and $f_{L_{\triangleleft}}$\emph{: }$\ll$ $\mapsto$
$\ll^{\triangleleft}.$
\end{lemma}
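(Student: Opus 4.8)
The plan is to unwind the meaning of the radical map attached to an enveloping set and then identify the resulting relation explicitly. By Theorem \ref{P3.12}(ii) the set $L_{\triangleright}$ is enveloping in $\mathrm{Ref}(Q)$, so by Proposition \ref{P3.1}(i) the radical map $f_{L_{\triangleright}}$ is well defined, and by Definition \ref{D3.1} together with (\ref{en1}) its value $f_{L_{\triangleright}}(\ll)=\overline{\ll}$ is the smallest element, in the order $\subseteq$ of $\mathrm{Ref}(Q)$, of the set $\{\prec\in L_{\triangleright}:\ll\subseteq\prec\}$. Thus the whole statement reduces to proving the single equality $\overline{\ll}=\ll^{\triangleright}$ (and its dual $\overline{\ll}=\ll^{\triangleleft}$ for $f_{L_{\triangleleft}}$).

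First I would show that $\ll^{\triangleright}$ actually belongs to the set whose smallest element is $\overline{\ll}$. Indeed $\ll\subseteq\ll^{\triangleright}$ by (\ref{2,8}), and $\ll^{\triangleright}=(\ll^{\triangleright})^{\triangleright}$ by (\ref{2.19}) of Theorem \ref{P2.1}, so $\ll^{\triangleright}\in L_{\triangleright}$. Since $\overline{\ll}$ is by definition the smallest element of $\{\prec\in L_{\triangleright}:\ll\subseteq\prec\}$, this already yields $\overline{\ll}\subseteq\ll^{\triangleright}$. For the reverse inclusion I would use that $\overline{\ll}$ itself lies in this set, so $\ll\subseteq\overline{\ll}$ and $\overline{\ll}\in L_{\triangleright}$, i.e. $(\overline{\ll})^{\triangleright}=\overline{\ll}$; combining these with the monotonicity of the operation $\cdot^{\triangleright}$ gives $\ll^{\triangleright}\subseteq(\overline{\ll})^{\triangleright}=\overline{\ll}$, and hence $\overline{\ll}=\ll^{\triangleright}$, which is the claimed formula $f_{L_{\triangleright}}(\ll)=\ll^{\triangleright}$.

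The one ingredient not read off directly from an earlier statement is the monotonicity of $\cdot^{\triangleright}$: if $\ll\subseteq\prec$ then $\ll^{\triangleright}\subseteq\prec^{\triangleright}$. This is where I would be careful, but it is essentially immediate from the definition (\ref{2.18}): a complete upper $\ll$-gap chain $C$ from $a$ to $b$ is automatically a complete upper $\prec$-gap chain, since every immediate $\ll$-successor relation $x\ll s_{x}$ in $C$ gives $x\prec s_{x}$, while the chain and the positions of its gaps $[x,s_x]\cap C=\{x,s_x\}$ are unchanged; hence $a\ll^{\triangleright}b$ implies $a\prec^{\triangleright}b$. So the gap-chain condition depends on $\ll$ only through the successor relations, and weakening $\ll$ to $\prec$ preserves each such chain, making the monotonicity routine rather than a genuine obstacle. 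Finally, the statement for $f_{L_{\triangleleft}}$ follows by the Duality Principle \cite[Theorem $1.3^{\prime}$]{Sk}, replacing upper gap chains by lower gap chains, $\ll^{\triangleright}$ by $\ll^{\triangleleft}$, and invoking the corresponding halves of (\ref{2,8}), (\ref{2.19}) and Theorem \ref{P3.12}(ii).
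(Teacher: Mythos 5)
Your proof is correct and matches the paper's in substance: both rest on exactly the same three facts --- $\ll\,\subseteq\,\ll^{\triangleright}$ from (\ref{2,8}), $(\ll^{\triangleright})^{\triangleright}=\,\ll^{\triangleright}$ from (\ref{2.19}), and monotonicity of $\prec\;\mapsto\;\prec^{\triangleright}$ via the observation that a complete upper $\ll$-gap chain remains a complete upper $\prec$-gap chain when $\ll\,\subseteq\,\prec$ --- and differ only in packaging, since the paper verifies that $g\colon\,\ll\,\mapsto\,\ll^{\triangleright}$ is a radical map with $\mathrm{Fix}(g)=L_{\triangleright}$ and concludes $g=f_{\mathrm{Fix}(g)}=f_{L_{\triangleright}}$ from Proposition \ref{P3.1}, whereas you identify $\ll^{\triangleright}$ directly as the least element of $L_{\triangleright}$ majorizing $\ll$. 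One minor caution: avoid writing $\overline{\ll}$ for $f_{L_{\triangleright}}(\ll)$, since the paper later reserves that symbol in (\ref{6.5}) for $f_{L_{\triangleright\triangleleft}}(\ll)$, which is a different relation.
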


\begin{proof}
Set $g$:\emph{ }$\ll$ $\mapsto$ $\ll^{\triangleright}.$ Then $g(g(\ll
))=(\ll^{\triangleright})^{\triangleright}\overset{(\ref{2.19})}{=}$
$\ll^{\triangleright}=g(\ll)\supseteq$ $\ll.$ It is easy to check that $\ll$
$\subseteq$ $\prec$ implies $g(\ll)=$ $\ll^{\triangleright}\subseteq$
$\prec^{\triangleright}=g(\prec),$ as each complete upper $\ll$-gap chain is
also a complete upper $\prec$-gap chain. Thus (\ref{f3.1}) holds. So $g$ is a
radical map from Ref($Q)$ onto $L_{\triangleright}.$ As
Fix$(g)=L_{\triangleright},$ we have from Proposition \ref{P3.1} that
$g=f_{\text{Fix}(g)}=f_{L_{\triangleright}}.$ The proof for
$f_{L_{\triangleleft}}$ is similar.\bigskip
\end{proof}

As $L_{\mathbf{H}}$ and $L_{d\mathbf{H}}$ are inscribing and enveloping
sublattices of Ref$\left(  Q\right)  $, the maps $f_{_{L_{\mathbf{H}}}}$ and
$f_{_{L_{d\mathbf{H}}}}$ are $T$-radical$,$ and $g_{_{L_{\mathbf{H}}}}$ and
$g_{_{L_{d\mathbf{H}}}}$ (see $(\ref{en1}))$ are dual $T$-radical. We describe
their action below.

\begin{proposition}
\emph{(i) }Set $\prec_{_{1}}=g_{_{L_{\mathbf{H}}}}(\ll).$ Then $a\prec_{_{1}%
}b$ if $a\vee x\ll b\vee x$ for all $x\in Q.$ \smallskip

\emph{(ii) }Set $\prec_{_{2}}=g_{_{L_{d\mathbf{H}}}}(\ll).$ Then $a\prec
_{_{2}}b$ if $a\wedge x\ll b\wedge x$ for all $x\in Q.\smallskip$

\emph{(iii) }Set $\prec_{_{3}}=f_{_{L_{\mathbf{H}}}}(\ll).$ Then $a\prec
_{_{3}}b$ if there are $x\ll y$ such that $x\leq a$ and $b=a\vee y.\smallskip$

\emph{(iv) }Set\emph{ }$\prec_{_{4}}=f_{_{L_{d\mathbf{H}}}}(\ll).$ Then
$a\prec_{_{4}}b$ if there are $x\ll y$ such that $a=b\wedge x\;$and $b\leq y.$
\end{proposition}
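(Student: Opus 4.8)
The plan is to handle all four parts by the same scheme: for each map I exhibit an explicit candidate relation $\prec$, check that $\prec$ lies in the relevant sublattice of Ref($Q$), and then verify the two extremality inequalities that identify $\prec$ with the value of the map. Recall that the order on Ref($Q$) is $\subseteq$ and that $L_{\mathbf H},L_{d\mathbf H}$ are both inscribing and enveloping by Theorem \ref{P3.12}(i). Hence $g_{L_{\mathbf H}}(\ll)$ is the largest $\mathbf H$-relation contained in $\ll$ (as a set of pairs), $g_{L_{d\mathbf H}}(\ll)$ the largest dual $\mathbf H$-relation contained in $\ll$, while $f_{L_{\mathbf H}}(\ll)$ and $f_{L_{d\mathbf H}}(\ll)$ are the smallest $\mathbf H$- and dual $\mathbf H$-relations containing $\ll$. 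The single recurring tool throughout is condition $3)$ of Lemma \ref{le1}(i) (and its dual): an $\mathbf H$-relation is stable under joining a fixed element. Since (ii) and (iv) arise from (i) and (iii) by the Duality Principle (exchanging $\vee\leftrightarrow\wedge$, $\leq\leftrightarrow\geq$, $\mathbf H\leftrightarrow$ dual $\mathbf H$), I describe only (i) and (iii).

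For (i) I would define $\prec$ by declaring $a\prec b$ precisely when $a\vee x\ll b\vee x$ for every $x\in Q$, and prove $\prec=\prec_1$. Taking $x=\mathbf 0$ yields $a\ll b$, so $\prec\subseteq\ll\subseteq\leq$, and reflexivity is immediate, giving $\prec\in$ Ref($Q$). To see $\prec$ is an $\mathbf H$-relation I apply Lemma \ref{le1}(i)$3)$: if $a\prec b$ then for every $y$ and every $x$ one has $(a\vee y)\vee x=a\vee(y\vee x)\ll b\vee(y\vee x)=(b\vee y)\vee x$, so $a\vee y\prec b\vee y$. For maximality, let $\prec'$ be any $\mathbf H$-relation with $\prec'\subseteq\ll$; if $a\prec' b$ then $a\vee x\prec' b\vee x$ for all $x$ by Lemma \ref{le1}(i)$3)$, hence $a\vee x\ll b\vee x$ for all $x$, i.e. $a\prec b$. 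Thus $\prec'\subseteq\prec$, so $\prec$ is the largest $\mathbf H$-relation inside $\ll$, whence $\prec=g_{L_{\mathbf H}}(\ll)=\prec_1$.

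For (iii) I would define $\prec$ by: $a\prec b$ iff there exist $x\ll y$ with $x\leq a$ and $b=a\vee y$, and prove $\prec=\prec_3$. Reflexivity follows with $x=y=a$, and $\prec\subseteq\leq$ holds since $b=a\vee y\geq a$; moreover $\ll\subseteq\prec$ by taking $x=a,\ y=b$ (using $a\ll b\Rightarrow a\vee b=b$). That $\prec$ is an $\mathbf H$-relation I verify via Lemma \ref{le1}(i)$3)$ directly: if $a\prec b$ is witnessed by $x\ll y$, the same pair witnesses $(a\vee z)\prec(b\vee z)$, because $x\leq a\leq a\vee z$ and $(a\vee z)\vee y=(a\vee y)\vee z=b\vee z$. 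For minimality, let $\prec'$ be an $\mathbf H$-relation with $\ll\subseteq\prec'$ and let $a\prec b$ be witnessed by $x\ll y$, $x\leq a$, $b=a\vee y$; then $x\prec' y$, so Lemma \ref{le1}(i)$3)$ gives $x\vee a\prec' y\vee a$, and since $x\vee a=a$ and $y\vee a=b$ we get $a\prec' b$. Hence $\prec\subseteq\prec'$, so $\prec$ is the smallest $\mathbf H$-relation containing $\ll$, i.e. $\prec=f_{L_{\mathbf H}}(\ll)=\prec_3$.

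The computations are short, and the only step demanding genuine care — the main obstacle — is the minimality argument in (iii) (and dually in (iv)): one must recognize that the ad hoc generating recipe ``$x\ll y,\ x\leq a,\ b=a\vee y$'' is in fact forced by the $\mathbf H$-property. This is exactly what the two identities $x\vee a=a$ and $y\vee a=b$ deliver, collapsing the join $x\vee a\prec' y\vee a$ onto $a\prec' b$. Once these identities are isolated, both extremality directions fall out immediately, and (ii) and (iv) are obtained verbatim by the stated duality.
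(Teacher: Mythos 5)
Your proof is correct and follows essentially the same route as the paper: the same candidate relations, the same use of Lemma \ref{le1}(i)$3)$ (and its dual) to verify the $\mathbf{H}$-property, the same extremality arguments (including the collapsing identities $x\vee a=a$, $y\vee a=b$ in part (iii)), and the same appeal to duality for (ii) and (iv).
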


\begin{proof}
(i) By Lemma \ref{le1}, $\prec_{_{1}}$ is an $\mathbf{H}$-relation. Set
$x=\mathbf{0.}$ We get that $\prec_{_{1}}$ stronger than $\ll$: $\prec_{_{1}}$
$\subseteq$ $\ll$. If an $\mathbf{H}$-relation $\prec$ is stronger than $\ll,$
$a\prec b$ implies $a\vee x\prec b\vee x$ for all $x\in Q,$ so that $a\vee
x\ll b\vee x.$ Thus $a\prec_{_{1}}b.$ So $\prec$ is stronger than $\prec
_{_{1}}$: $\prec$ $\subseteq$ $\prec_{_{1}}.$ Hence $\prec_{_{1}}$ is the
largest $\mathbf{H}$-relation minorizing $\ll.$ By Definition \ref{D3.1} and
(\ref{en1}), $g_{_{L_{\mathbf{H}}}}(\ll)=$ $\prec_{_{1}}.$ The proof of (ii)
is similar.

(iii) Let $a\prec_{_{3}}b$. Then there are
\begin{equation}
x,y\in Q\text{ such that }x\ll y,\text{ }x\leq a\text{ and }b=a\vee y.
\label{6.2}%
\end{equation}
For $z\in Q,$ $x\leq a\vee z$ and $b\vee z=(a\vee y)\vee z=(a\vee z)\vee y.$
Hence $(a\vee z)\prec_{_{3}}(b\vee z).$ Thus $\prec_{_{3}}$ is an $\mathbf{H}%
$-relation by Lemma \ref{le1}. It majorizes $\ll$ ($\ll$ $\subseteq$
$\prec_{_{3}}$), as $a\ll b$ implies $a\prec_{_{3}}b$ (set $x=a,$ $y=b).$

Let an $\mathbf{H}$-relation $\prec$ majorize $\ll$ ($\ll$ $\subseteq$
$\prec).$ If $a\prec_{_{3}}b$ then (\ref{6.2}) holds. As $x\ll y$ and $\ll$
$\subseteq$ $\prec,$ we have $x\prec y.$ As $\prec$ is an $\mathbf{H}%
$-relation, it follows from (\ref{6.2}) that $a=(a\vee x)\prec(a\vee y)=b.$
Thus $\prec_{_{3}}$ is stronger than $\prec$: $\prec_{_{3}}$ $\subseteq$
$\prec.$ Hence $\prec_{_{3}}$ is the smallest $\mathbf{H}$-relation majorizing
$\ll,$ i.e., $\prec_{_{3}}$ $=f_{L_{\mathbf{H}}}\left(  \ll\right)  $. The
proof of (iv) is similar.\bigskip
\end{proof}

For each $\ll$ in Ref$(Q),$ define the relations $\ll_{\text{uc}}$ and
$\ll_{\text{dc}}$ as follows\emph{:}%
\begin{align}
a  &  \ll_{\text{uc}}b\text{ if and only if there is }c\in Q\text{ satisfying
}c\ll b\text{ and }a\in\lbrack c,b];\label{6.3}\\
a  &  \ll_{\text{dc}}b\text{ if and only if there is }c\in Q\text{ such that
}a\ll c\text{ and }b\in\lbrack a,c].\nonumber
\end{align}

\begin{proposition}
\label{P3.2}For each $\ll$ in \emph{Ref}$(Q),$ $\ll_{\text{\emph{uc}}}$
$=f_{_{L_{\text{\emph{uc}}}}}(\ll)$ and $\ll_{\text{\emph{dc}}}$
$=f_{_{L_{\text{\emph{dc}}}}}(\ll).$
\end{proposition}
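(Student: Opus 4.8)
The plan is to identify the relation $\ll_{\text{uc}}$ of (\ref{6.3}) with the enveloping-set closure $\overline{\ll}$ of $\ll$ in the sublattice $L_{\text{uc}}$ of Ref$(Q)$. Recall from Theorem \ref{P3.12}(i) that $L_{\text{uc}}$ is enveloping, so by Definition \ref{D3.1} and (\ref{en1}) the value $f_{L_{\text{uc}}}(\ll)=\overline{\ll}$ is the $\subseteq$-smallest up-contiguous relation satisfying $\ll\subseteq\overline{\ll}$. Since $\subseteq$ is the ``stronger than'' order of (\ref{0}), $\subseteq$-smallest means strongest, i.e. the relation with the fewest related pairs. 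Thus it suffices to verify three things about $\ll_{\text{uc}}$: that it lies in $L_{\text{uc}}$, that $\ll\subseteq\ll_{\text{uc}}$, and that it is minimal among up-contiguous relations majorizing $\ll$.

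First I would check that $\ll_{\text{uc}}\in$ Ref$(Q)$ and is up-contiguous. If $a\ll_{\text{uc}}b$ then by (\ref{6.3}) there is $c$ with $c\ll b$ and $c\leq a\leq b$; in particular $a\leq b$, so $\ll_{\text{uc}}\subseteq\,\leq$, and taking $c=a$ gives reflexivity, whence $\ll_{\text{uc}}\in$ Ref$(Q)$. For up-contiguity (Definition \ref{D1.1}) I take any $z\in[a,b]$; then $c\leq a\leq z\leq b$ gives $z\in[c,b]$ while $c\ll b$, so $z\ll_{\text{uc}}b$ by (\ref{6.3}). Hence $[a,b]\subseteq[\ll_{\text{uc}},b]$, i.e. $\ll_{\text{uc}}$ is up-contiguous. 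The containment $\ll\subseteq\ll_{\text{uc}}$ is immediate: for $a\ll b$ take $c=a$ in (\ref{6.3}).

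The substantive step is minimality, and it is where up-contiguity of the competitor is used. Let $\prec$ be any up-contiguous relation with $\ll\subseteq\prec$, and suppose $a\ll_{\text{uc}}b$, so $c\ll b$ and $c\leq a\leq b$ for some $c$. Since $\ll\subseteq\prec$ we get $c\prec b$, and up-contiguity of $\prec$ forces $[c,b]\subseteq[\prec,b]$; as $a\in[c,b]$ this yields $a\prec b$. Therefore $\ll_{\text{uc}}\subseteq\prec$, so $\ll_{\text{uc}}$ is the strongest up-contiguous relation weaker than $\ll$, i.e. $\ll_{\text{uc}}=\overline{\ll}=f_{L_{\text{uc}}}(\ll)$. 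The identity $\ll_{\text{dc}}=f_{L_{\text{dc}}}(\ll)$ I would obtain by the Duality Principle rather than repeating the argument (interchanging $\leq$ with its reverse, up-contiguous with down-contiguous, and the roles of $c$ and $b$ in (\ref{6.3}) accordingly). No step is genuinely hard here; the only point needing care is keeping the orientation of $\subseteq$ straight, since ``smallest in $L_{\text{uc}}$'' means the relation with the fewest related pairs, which is exactly what the minimality computation produces.
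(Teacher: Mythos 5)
Your proof is correct and takes essentially the same route as the paper's: you verify that $\ll_{\text{uc}}$ is a reflexive, up-contiguous relation containing $\ll$, and then use up-contiguity of an arbitrary competitor $\prec$ with $\ll$ $\subseteq$ $\prec$ to pass from $c\prec b$ to $a\prec b$ via $a\in[c,b]$, establishing that $\ll_{\text{uc}}$ is the smallest element of $L_{\text{uc}}$ majorizing $\ll$, which is exactly $f_{_{L_{\text{uc}}}}(\ll)$ by Definition \ref{D3.1} and (\ref{en1}). Your explicit checks of reflexivity and $\ll_{\text{uc}}$ $\subseteq$ $\leq$, and your appeal to duality for $\ll_{\text{dc}}$ in place of the paper's ``proved similarly,'' are only cosmetic differences.
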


\begin{proof}
If $a\ll_{\text{uc}}b$ then $x\in\lbrack c,b]$ for $x\in\lbrack a,b]$. Then,
by (\ref{6.3}), $x\ll_{\text{uc}}b.$ Hence $[a,b]\subseteq\lbrack
\ll_{\text{uc}},b].$ So (see Definition \ref{D1.1}) $\ll_{\text{uc}}$ is up-contiguous.

If $a\ll b$ then $a\ll_{\text{uc}}b$ (set $c=a$ in (\ref{6.3})). So
$\ll_{\text{uc}}$ majorizes $\ll$ ($\ll$ $\subseteq$ $\ll_{\text{uc}}$ see
(\ref{0})).

Let an up-contiguous relation $\prec$ majorize $\ll$ ($\ll$ $\subseteq$
$\prec).$ If $a\ll_{\text{uc}}b$ then $c\ll b$ and $a\in\lbrack c,b]$ for some
$c\in Q.$ Hence $c\prec b.$ As $\prec$ is up-contiguous, $[c,b]\subseteq
\lbrack\prec,b],$ so that $a\prec b.$ Thus $\ll_{\text{uc}}$ is stronger than
$\prec.$ So $\ll_{\text{uc}}$ is the smallest element of $L_{\text{uc}}$
majorizing $\ll,$ i.e., $\ll_{\text{uc}}=f_{_{L_{\text{uc}}}}(\ll).$\ The
equality $f_{_{L_{\text{dc}}}}(\ll)=$ $\ll_{\text{dc}}$ is proved similarly.
\end{proof}

\subsection{Enveloping and inscribing sets of maps on lattices}

For a complete lattice $\left(  Q,\leq\right)  $, denote by Map$\left(
Q\right)  $ the set of all maps $Q\rightarrow Q$. For $f,g\in$ Map$\left(
Q\right)  ,$%
\begin{equation}
g\lesssim f\text{ if }g\left(  x\right)  \leq f\left(  x\right)  \text{ for
all }x\in Q;\text{ \ and set }\mathbf{0}_{\text{Map}}\text{: }x\mapsto
\mathbf{0}\text{ and }\mathbf{1}_{\text{Map}}\text{: }x\mapsto\mathbf{1}.
\label{3.k}%
\end{equation}
Then $\left(  \text{Map}\left(  Q\right)  ,\lesssim\right)  $ is a complete
lattice and, for each $G\subseteq$ Map$\left(  Q\right)  $,%
\begin{equation}
\left(  \vee G\right)  \left(  x\right)  =\vee_{_{f\in G}}f\left(  x\right)
\text{ and }\left(  \wedge G\right)  \left(  x\right)  =\wedge_{_{f\in G}%
}f\left(  x\right)  . \label{3.22}%
\end{equation}

Let Rad$_{_{Q}}$ and dRad$_{_{Q}}$ be the sets of all radical and dual radical
maps, respectively, on $Q$ (we write Rad and dRad$).$ They are partially
ordered with respect to the order $\lesssim$,%
\begin{align*}
\mathbf{0}_{\text{Rad}}  &  \text{: }x\mapsto x\text{ and }\mathbf{1}%
_{\text{Rad}}=\mathbf{1}_{\text{Map}}\text{: }x\mapsto\mathbf{1}\text{ for all
}x\in Q;\\
\mathbf{0}_{\text{dRad}}  &  =\mathbf{0}_{\text{Map}}\text{: }x\mapsto
\mathbf{0}\text{ and }\mathbf{1}_{\text{dRad}}=\mathbf{0}_{\text{Rad}}\text{:
}x\mapsto x\text{ for all }x\in Q.
\end{align*}

\begin{lemma}
\label{L3.14}\emph{(i) }Let $f$ and $g$ be radical maps in $Q.$ The following
conditions are equivalent$:\smallskip$

$1)$ $g\lesssim f;$ \ \ \ $2)$\emph{ }$f\left(  g\left(  x\right)  \right)
=f\left(  x\right)  $ for all $x\in Q;$ \ \ \ $3)$\emph{ }$\mathrm{Fix}\left(
f\right)  \subseteq\mathrm{Fix}\left(  g\right)  $.\smallskip

\emph{(ii) }For $G\subseteq$ \emph{Rad}$,$ the set $\cap_{g\in G}%
\emph{Fix}\left(  g\right)  $ is enveloping.
\end{lemma}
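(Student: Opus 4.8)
The plan is to prove the three-way equivalence in (i) by a cyclic chain $1)\Rightarrow 2)\Rightarrow 3)\Rightarrow 1)$, drawing only on the defining properties of a radical map in (\ref{f3.1}), namely monotonicity together with $x\leq g(x)=g(g(x))$, and then to deduce (ii) by combining Proposition \ref{P3.1} with Corollary \ref{C3.2}.

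For $1)\Rightarrow 2)$, assuming $g\lesssim f$ I would sandwich $f(g(x))$ between $f(x)$ and itself: from $x\leq g(x)$ and monotonicity of $f$ one gets $f(x)\leq f(g(x))$, while from $g(x)\leq f(x)$, monotonicity, and idempotence one gets $f(g(x))\leq f(f(x))=f(x)$; hence $f(g(x))=f(x)$. For $2)\Rightarrow 3)$, I would take $x\in\mathrm{Fix}(f)$ and use $g(x)\leq f(g(x))=f(x)=x$ together with $x\leq g(x)$ to conclude $g(x)=x$, so $x\in\mathrm{Fix}(g)$. For $3)\Rightarrow 1)$, I would observe that $f(x)$ is an $f$-fixed point by idempotence, hence a $g$-fixed point by hypothesis, so $g(f(x))=f(x)$; then monotonicity applied to $x\leq f(x)$ yields $g(x)\leq g(f(x))=f(x)$, i.e. $g\lesssim f$.

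For (ii) I would first recall that, by Proposition \ref{P3.1}(i), the assignment $g\mapsto\mathrm{Fix}(g)$ carries each radical map to an enveloping set, so every $\mathrm{Fix}(g)$ with $g\in G$ is enveloping. The set $\cap_{g\in G}\mathrm{Fix}(g)$ is therefore an intersection of enveloping sets, and Corollary \ref{C3.2}(i) states exactly that such an intersection is again enveloping, which finishes the argument. If one prefers a self-contained route, the same conclusion follows from Theorem \ref{C3.1}: each $\mathrm{Fix}(g)$ contains $\mathbf{1}$ (since $\mathbf{1}\leq g(\mathbf{1})\leq\mathbf{1}$) and is $\wedge$-complete (for $S\subseteq\mathrm{Fix}(g)$ one has $g(\wedge S)\leq g(s)=s$ for each $s\in S$, hence $g(\wedge S)\leq\wedge S\leq g(\wedge S)$), and both properties clearly pass to arbitrary intersections.

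I do not anticipate any genuine obstacle here; the proof is a short formal manipulation. The only place demanding attention is in (i), where one must be careful to invoke the correct radical-map axiom at each step — distinguishing the use of $x\leq g(x)$, of monotonicity, and of idempotence $g(g(x))=g(x)$ — so that no hidden circularity is introduced into the cyclic argument.
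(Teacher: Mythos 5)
Your proof is correct and follows essentially the same route as the paper: the cyclic chain $1)\Rightarrow 2)\Rightarrow 3)\Rightarrow 1)$ in (i) uses exactly the same sandwiching and fixed-point arguments, and (ii) invokes Proposition \ref{P3.1} together with Corollary \ref{C3.2} just as the paper does. Your optional self-contained verification of (ii) via Theorem \ref{C3.1} is also sound, but it is only a minor supplement rather than a different approach.
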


\begin{proof}
(i) $1)$\textrm{ }$\Rightarrow$ $2)$ Let $x\in Q$. By (\ref{f3.1}), $x\leq
g\left(  x\right)  \leq f\left(  x\right)  $, so that $f\left(  x\right)  \leq
f\left(  g\left(  x\right)  \right)  \leq f\left(  f\left(  x\right)  \right)
=f\left(  x\right)  $. Thus $f\left(  g\left(  x\right)  \right)  =f\left(
x\right)  .$

$2)$ $\Rightarrow$ $3)$ If $x\in$ Fix($f)$ then, by (\ref{f3.1}), $x\leq
g\left(  x\right)  \leq f\left(  g\left(  x\right)  \right)  =f\left(
x\right)  =x.$ So $g\left(  x\right)  =x$ and Fix$\left(  f\right)  \subseteq$
Fix$\left(  g\right)  $.

$3)$ $\Rightarrow$ $1)$ For all $x\in Q$, $f\left(  x\right)  \in
\,$Fix$\left(  f\right)  \subseteq$ Fix$\left(  g\right)  .$ So, by
(\ref{f3.1}), $g\left(  x\right)  \leq g\left(  f\left(  x\right)  \right)
=f\left(  x\right)  $.

(ii) By Proposition \ref{P3.1}, Fix($g)$ are enveloping sets for $g\in G.$ So
$\cap_{g\in G}$Fix$\left(  g\right)  $ is enveloping by Corollary \ref{C3.2}.
\end{proof}

\begin{theorem}
\label{T3.9}\emph{(i) Rad} is an enveloping set in \emph{Map}$(Q)$.\smallskip

\emph{(ii) }For $G\subseteq$ \emph{Rad}$,$ set $\mathfrak{h}=\vee
^{\text{\emph{Rad}}}G$ and $\mathfrak{g}=\wedge G.$ Then $\mathfrak{h,g}\in$
\emph{Rad}$,$ $\emph{Fix}\left(  \mathfrak{h}\right)  =\cap_{g\in G}%
\emph{Fix}\left(  g\right)  ,$%
\begin{equation}
\emph{Fix}\left(  \mathfrak{g}\right)  =\left(  \cup_{g\in G}\emph{Fix}\left(
g\right)  \right)  \cup\widehat{K_{_{G}}},\text{ where }K_{_{G}}%
=\{\{x_{g}\}_{g\in G}\emph{:\ }x_{g}\in\emph{Fix(}g)\}\text{ \emph{(}see
}\emph{(\ref{3.31}))}. \label{6.6}%
\end{equation}
The set$\ \emph{Fix}\left(  \mathfrak{g}\right)  $ is the smallest enveloping
set in $Q$ containing $\cup_{g\in G}$\emph{Fix}$\left(  g\right)  $.
\end{theorem}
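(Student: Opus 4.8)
The plan is to derive everything from three earlier results: the characterization of enveloping sets (Theorem \ref{C3.1}), the bijection $L\leftrightarrow f_{_{L}}$ between enveloping sets and radical maps (Proposition \ref{P3.1}), and the comparison criteria for radical maps (Lemma \ref{L3.14}); then only one genuine lattice computation is left. For (i), I would apply Theorem \ref{C3.1}(i) and reduce the assertion to showing that Rad is $\wedge$-complete in Map$(Q)$ and contains $\mathbf{1}_{\text{Map}}$. The latter is clear, since $x\mapsto\mathbf{1}$ satisfies (\ref{f3.1}). For $\wedge$-completeness I take $G\subseteq$ Rad, put $\mathfrak{g}=\wedge G$, so that $\mathfrak{g}(x)=\wedge_{g\in G}g(x)$ by (\ref{3.22}), and check (\ref{f3.1}). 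Monotonicity and $x\leq\mathfrak{g}(x)$ pass to the pointwise meet at once; the one point needing care is idempotence. Here I use $\mathfrak{g}(x)\leq g(x)$ and monotonicity of each $g$ to get $g(\mathfrak{g}(x))\leq g(g(x))=g(x)$, hence $\mathfrak{g}(\mathfrak{g}(x))=\wedge_g g(\mathfrak{g}(x))\leq\wedge_g g(x)=\mathfrak{g}(x)$, the reverse inequality being $y\leq\mathfrak{g}(y)$ at $y=\mathfrak{g}(x)$.

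For (ii), $\mathfrak{g}\in$ Rad is now known, and since Rad is $\wedge$-complete with largest element, Lemma \ref{L3.5}(i) makes it a complete lattice; thus $\mathfrak{h}=\vee^{\text{Rad}}G$ exists in Rad and is the smallest radical map majorizing every $g\in G$. To compute Fix$(\mathfrak{h})$ I would identify $\mathfrak{h}$ with $f_{_{L}}$, where $L=\cap_{g\in G}$Fix$(g)$ is enveloping by Lemma \ref{L3.14}(ii): each inclusion Fix$(f_{_{L}})=L\subseteq$Fix$(g)$ gives $g\lesssim f_{_{L}}$ by Lemma \ref{L3.14}(i), so $f_{_{L}}$ majorizes $G$; and any radical $f$ majorizing $G$ satisfies Fix$(f)\subseteq\cap_g$Fix$(g)=$Fix$(f_{_{L}})$, hence $f_{_{L}}\lesssim f$. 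So $f_{_{L}}$ is the least such map, $f_{_{L}}=\mathfrak{h}$, and Fix$(\mathfrak{h})=\cap_g$Fix$(g)$.

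The formula for Fix$(\mathfrak{g})$ is then an explicit two-sided inclusion. If $\mathfrak{g}(x)=x$, setting $x_g=g(x)$ gives $x_g\in$Fix$(g)$ by idempotence with $\wedge_g x_g=\mathfrak{g}(x)=x$, so $x\in\widehat{K_{_{G}}}$; conversely, for $x=\wedge_g x_g$ with $x_g\in$Fix$(g)$, monotonicity gives $g(x)\leq g(x_g)=x_g$, whence $\mathfrak{g}(x)=\wedge_g g(x)\leq\wedge_g x_g=x\leq\mathfrak{g}(x)$ and $x\in$Fix$(\mathfrak{g})$. Thus Fix$(\mathfrak{g})=\widehat{K_{_{G}}}$; padding the remaining coordinates with $\mathbf{1}\in$Fix$(g)$ shows $\cup_g$Fix$(g)\subseteq\widehat{K_{_{G}}}$, which gives (\ref{6.6}). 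Minimality is immediate: Fix$(\mathfrak{g})$ is enveloping by Proposition \ref{P3.1}, and any enveloping $M\supseteq\cup_g$Fix$(g)$ is $\wedge$-complete by Theorem \ref{C3.1}, so it contains each $\wedge_g x_g$ with $x_g\in$Fix$(g)\subseteq M$, i.e. Fix$(\mathfrak{g})\subseteq M$.

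I expect the main obstacle to be conceptual rather than computational: one must take $\mathfrak{h}$ as the join $\vee^{\text{Rad}}$ computed inside Rad, not the pointwise join, because the pointwise join of radical maps generally fails idempotence. This is precisely why Lemma \ref{L3.5}(i) is needed to guarantee that $\mathfrak{h}$ exists, and why the argument for Fix$(\mathfrak{h})$ proceeds through the associated enveloping set $L$ and the criterion of Lemma \ref{L3.14}(i) rather than by a direct pointwise evaluation.
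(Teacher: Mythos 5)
Your proof is correct. Part (i) and your identification of $\mathfrak{h}$ with $f_{_{L}}$ for $L=\cap_{g\in G}\mathrm{Fix}(g)$ follow essentially the paper's own route: the same pointwise-meet computation with the monotonicity trick for idempotence, and the same use of Lemma \ref{L3.14} and Proposition \ref{P3.1} for $\mathrm{Fix}(\mathfrak{h})$ (the paper sandwiches $g\lesssim\mathfrak{h}\lesssim f_{_{L}}$ and compares fixed-point sets rather than showing, as you do, that $f_{_{L}}$ is itself the least upper bound of $G$ in Rad -- an immaterial difference). Where you genuinely diverge is in (\ref{6.6}): the paper first proves abstractly, via the bijection of Proposition \ref{P3.1}, that $\mathrm{Fix}(\mathfrak{g})$ is the smallest enveloping set containing $F=\cup_{g\in G}\mathrm{Fix}(g)$, then invokes Corollary \ref{C3.2} to know that $R=F\cup\widehat{K_{_{G}}}$ is enveloping, squeezes $F\subseteq\mathrm{Fix}(\mathfrak{g})\subseteq R$, and finishes by checking $\mathfrak{g}(\wedge x)=\wedge x$ for $x\in K_{_{G}}$. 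You instead compute $\mathrm{Fix}(\mathfrak{g})$ directly: $x=\mathfrak{g}(x)$ forces $x=\wedge_{g}g(x)$ with $g(x)\in\mathrm{Fix}(g)$ by idempotence and (\ref{3.22}), and conversely any $\wedge_{g}x_{g}$ with $x_{g}\in\mathrm{Fix}(g)$ is fixed; this yields the sharper identity $\mathrm{Fix}(\mathfrak{g})=\widehat{K_{_{G}}}$, with the union in (\ref{6.6}) absorbed because padding coordinates with $\mathbf{1}\in\mathrm{Fix}(g)$ gives $F\subseteq\widehat{K_{_{G}}}$ -- a redundancy the paper does not record. Your minimality argument then needs only the $\wedge$-completeness of enveloping sets from Theorem \ref{C3.1}, bypassing both Corollary \ref{C3.2} and the comparison of $\mathfrak{g}$ with the radical map $f_{_{S}}$ of an enveloping superset. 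What your route buys is a shorter, self-contained verification that exhibits the fixed points of $\mathfrak{g}$ explicitly; what the paper's route buys is the intermediate characterization of $\mathrm{Fix}(\mathfrak{g})$ as the enveloping hull of $F$, established before (and independently of) the explicit formula. Your closing remark -- that $\mathfrak{h}$ must be the join computed inside Rad because the pointwise join of radical maps fails idempotence -- is exactly the right caution and matches the paper's use of $\vee^{\mathrm{Rad}}$.
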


\begin{proof}
(i) For $G\subseteq$ Rad$,$ set $\gamma=\wedge G\in$ Map($Q).$ As $x\leq f(x)$
for all $x\in Q,$ $f\in G$ by (\ref{f3.1}), we have $x\leq\gamma(x)\leq f(x)$
by (\ref{3.22})$.$ From this and (\ref{f3.1}) it follows that for all $x\in
Q,$
\[
\gamma(x)\leq\gamma(\gamma(x))\leq f(\gamma\left(  x\right)  )\leq
f(f(x))=f(x)\text{ for all }f\in G.
\]
Hence $\gamma(x)\leq\gamma(\gamma(x))\leq\wedge_{_{f\in G}}f\left(  x\right)
=\gamma(x).$ Thus $x\leq\gamma(x)=\gamma(\gamma(x))$ for all $x\in Q.$

Let $x\leq y$. Then $f(x)\leq f(y)$ for all $f\in G.$ Hence $\gamma\left(
x\right)  =\wedge_{_{f\in G}}f\left(  x\right)  \leq\wedge_{_{f\in G}}f\left(
y\right)  =\gamma\left(  y\right)  $. Thus $\gamma\in$ Rad. So Rad is $\wedge
$-complete. As $\mathbf{1}_{\text{Map}}\in$ Rad$,$ Rad is an enveloping set by
Theorem \ref{C3.1}.

(ii) As Rad\emph{ }is enveloping, it is complete\emph{ }and $\wedge$-complete
by Theorem \ref{C3.1}. Thus $\mathfrak{h,g}\in$ Rad$.$

By Lemma \ref{L3.14}, $L:=\cap_{g\in G}$Fix$\left(  g\right)  $ is enveloping.
By Proposition \ref{P3.1}, $L=$ Fix$(f_{L})$ for some radical map $f_{L}.$ As
Fix$(f_{L})\subseteq$ Fix$(g)$ for all $g\in G,$ $g\lesssim f_{L}$ by Lemma
\ref{L3.14}. As $\mathfrak{h}$ is the smallest radical map majorizing all
$g\in G,$ $g\lesssim\mathfrak{h\lesssim}$ $f_{L}.$ So, by Lemma \ref{L3.14},
$L=$ Fix$(f_{L})\subseteq$ Fix$\left(  \mathfrak{h}\right)  \subseteq$
$\cap_{_{g\in G}}$Fix$\left(  g\right)  =L.$ Thus Fix($\mathfrak{h}%
)=L=\cap_{_{g\in G}}$Fix$\left(  g\right)  .$

As $\mathfrak{g\in}$ Rad$,$ it is the largest radical map minorizing all $g\in
G.$ Hence, by Proposition \ref{P3.1} and Lemma \ref{L3.14}, Fix$\left(
\mathfrak{g}\right)  $ is an enveloping set in $Q$ and $F:=\cup_{_{g\in G}}%
$Fix$\left(  g\right)  \subseteq$ Fix$\left(  \mathfrak{g}\right)  .$ Let $S$
be an enveloping set such that $F\subseteq S$. Then $S=$ Fix$(f_{S})$ for a
radical map $f_{S},$ and $f_{S}\lesssim g$ for $g\in G.$ Hence $f_{S}%
\lesssim\mathfrak{g},$ so that Fix$\left(  \mathfrak{g}\right)  \subseteq S.$
Thus Fix$\left(  \mathfrak{g}\right)  $ is the smallest enveloping set
containing $F$.

Let $R=F\cup\widehat{K_{_{G}}}$ (see (\ref{3.31}) and (\ref{6.6})). By
Corollary \ref{C3.2}, $R$ is enveloping. Hence, as above, $F\subseteq$
Fix$\left(  \mathfrak{g}\right)  \subseteq R$. For $x=\{x_{g}\}\in K_{_{G}},$
$\wedge x=\wedge_{_{g\in G}}x_{g}\in\widehat{K_{_{G}}}$ and $\wedge x\leq
x_{g}$ for all $g\in G$. Hence $\mathfrak{g}\left(  \wedge x\right)
\leq\mathfrak{g}\left(  x_{g}\right)  =x_{g}$, as Fix($g)\subseteq$
Fix$\left(  \mathfrak{g}\right)  .$ So $\mathfrak{g}\left(  \wedge x\right)
\leq\wedge_{_{g\in G}}x_{g}=\wedge x$. As $\mathfrak{g}$ is radical
$\mathfrak{g}\left(  \wedge x\right)  =\wedge x$. Thus $\wedge x\in$
Fix$\left(  \mathfrak{g}\right)  ,$ so that $\widehat{K_{_{G}}}\subseteq$
Fix$\left(  \mathfrak{g}\right)  .$ Hence $R=F\cup\widehat{K_{_{G}}}\subseteq$
Fix$\left(  \mathfrak{g}\right)  \subseteq R.$ So $R=$ Fix$\left(
\mathfrak{g}\right)  $.\bigskip
\end{proof}

By duality, similarly to Theorem \ref{T3.9}, we get

\begin{theorem}
\label{T3.9'}\emph{(i) dRad} is an inscribing set in \emph{Map(}%
$Q)$.\smallskip

\emph{(ii) }For each $G\subseteq$ \emph{dRad}$,$ we have $\emph{Fix}\left(
\wedge^{\text{\emph{Rad}}}G\right)  =\cup_{g\in G}\emph{Fix}\left(  g\right)
$ and%
\[
\emph{Fix}\left(  \vee G\right)  =\left(  \cap_{g\in G}\emph{Fix}\left(
g\right)  \right)  \cap\overset{\vee}{K_{_{G}}},\text{ where }K_{_{G}%
}=\{\{x_{g}\}_{g\in G}\emph{:\ }x_{g}\in\emph{Fix(}g)\}
\]
$($see $(\ref{3.31})).$ The set $\emph{Fix}\left(  \vee G\right)  $ is the
largest inscribing set in $Q$ contained in $\cap_{g\in G}$\emph{Fix}$\left(
g\right)  .$
\end{theorem}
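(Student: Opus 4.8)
The plan is to obtain Theorem \ref{T3.9'} as the formal dual of Theorem \ref{T3.9}, via the Duality Principle \cite[Theorem $1.3^{\prime}$]{Sk} applied to the order-dual lattice $Q^{\mathrm{op}}=(Q,\geq)$, rather than reproving it from scratch. First I would set up the translation dictionary and check that it is exhaustive. Interchanging $\leq$ with $\geq$ swaps $\wedge$ with $\vee$ and $\mathbf{0}$ with $\mathbf{1}$; comparing (\ref{f3.1}) with (\ref{t3.2}) shows that $g$ is a dual radical map on $Q$ precisely when it is a radical map on $Q^{\mathrm{op}}$, so $\mathrm{dRad}_{Q}=\mathrm{Rad}_{Q^{\mathrm{op}}}$. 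By Definition \ref{D3.1} a subset is inscribing in $Q$ exactly when it is enveloping in $Q^{\mathrm{op}}$. On $\mathrm{Map}(Q)=\mathrm{Map}(Q^{\mathrm{op}})$ the order $\lesssim$ of (\ref{3.k}) reverses, so the Map-join $\vee G$ computed in $Q$ is the Map-meet computed in $Q^{\mathrm{op}}$, and the greatest lower bound $\wedge^{\mathrm{dRad}}G$ formed inside $\mathrm{dRad}_{Q}$ is the least upper bound formed inside $\mathrm{Rad}_{Q^{\mathrm{op}}}$. Finally, by (\ref{3.31}) the two operators $\widehat{\,\cdot\,}$ and $\overset{\vee}{\,\cdot\,}$ interchange, while the fixed-point sets $\mathrm{Fix}(g)$ and the family $K_{G}$ are order-free and are therefore carried across unchanged.

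With this dictionary, part (i) is immediate: ``$\mathrm{dRad}$ is inscribing in $\mathrm{Map}(Q)$'' reads verbatim as ``$\mathrm{Rad}$ is enveloping in $\mathrm{Map}(Q^{\mathrm{op}})$'', which is Theorem \ref{T3.9}(i). For part (ii) I would rewrite each clause of Theorem \ref{T3.9}(ii) in $Q^{\mathrm{op}}$ and read it back in $Q$: the fixed-point identity for the extremal element of $\mathrm{dRad}$ comes from $\mathrm{Fix}(\vee^{\mathrm{Rad}}G)=\cap_{g}\mathrm{Fix}(g)$, translating into the meet $\wedge^{\mathrm{dRad}}G$ taken inside $\mathrm{dRad}$; the formula for $\mathrm{Fix}(\vee G)$ comes from that for $\mathrm{Fix}(\wedge G)$ with $\widehat{K_{G}}$ replaced by $\overset{\vee}{K_{G}}$; and the extremality clause comes from ``$\mathrm{Fix}(\wedge G)$ is the smallest enveloping set containing $\cup_{g}\mathrm{Fix}(g)$'', whose dual is the corresponding statement for the smallest inscribing set, since enveloping-in-$Q^{\mathrm{op}}$ means inscribing-in-$Q$ and set inclusion is order-free. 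As a consistency check I would verify directly that $\mathrm{Fix}(\vee G)\supseteq\cup_{g}\mathrm{Fix}(g)$ (if $g_{0}(x)=x$ then $g_{0}(x)\leq(\vee G)(x)\leq x$ forces $(\vee G)(x)=x$) and that $\overset{\vee}{K_{G}}\subseteq\mathrm{Fix}(\vee G)$, mirroring the two inclusions established for $\mathrm{Fix}(\wedge G)$ in Theorem \ref{T3.9}.

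The only genuinely delicate point — and the one place where the argument can silently go wrong — is the transcription of the set-theoretic symbols. Under duality the lattice operations of $Q$ reverse, but the sets $\mathrm{Fix}(g)$ are intrinsic subsets of $Q$ and do not; hence the bare unions and intersections of the $\mathrm{Fix}(g)$, and the inclusions among enveloping and inscribing sets, must be kept as they are, while only the completions $\widehat{\,\cdot\,}$, $\overset{\vee}{\,\cdot\,}$ and the attributes ``smallest/largest'' and ``enveloping/inscribing'' flip. I would therefore pin the translation down against the direct computation above before invoking the principle, so that no spurious exchange of $\cup$ with $\cap$, or of ``smallest containing'' with ``largest contained in'', is introduced. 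If one prefers to bypass the Duality Principle entirely, the same result follows by mirroring the proof of Theorem \ref{T3.9} line by line, using Theorem \ref{C3.1}(ii), Corollary \ref{C3.2}(ii), Proposition \ref{P3.1}(iii) and the dual of Lemma \ref{L3.14} in place of their primal versions.
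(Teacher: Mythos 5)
Your route is exactly the paper's: Theorem \ref{T3.9'} carries no independent argument in the text, only the remark that it follows ``by duality, similarly to Theorem \ref{T3.9}'', so passing to $Q^{\mathrm{op}}$ with the dictionary you set up is precisely what the authors do, and your dictionary is the correct one (dual radical maps on $Q$ are radical maps on $Q^{\mathrm{op}}$; inscribing in $Q$ means enveloping in $Q^{\mathrm{op}}$; the order $\lesssim$ on $\mathrm{Map}(Q)$ reverses, so $\vee G$ and $\wedge^{\mathrm{dRad}}G$ correspond to $\wedge G$ and $\vee^{\mathrm{Rad}}G$; the sets $\mathrm{Fix}(g)$, their unions and intersections, and inclusions among subsets of $Q$ are order-free and pass through unchanged).

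Push your own caveat to its conclusion, however: applied faithfully, the dictionary yields $\mathrm{Fix}(\wedge^{\mathrm{dRad}}G)=\cap_{g\in G}\mathrm{Fix}(g)$ and $\mathrm{Fix}(\vee G)=\left(\cup_{g\in G}\mathrm{Fix}(g)\right)\cup\overset{\vee}{K_{G}}$, the \emph{smallest} inscribing set \emph{containing} $\cup_{g\in G}\mathrm{Fix}(g)$ --- which is what your second paragraph in fact derives, and which is \emph{not} the statement as printed. The printed version commits exactly the ``spurious exchange'' you warn against: it swaps $\cup$ with $\cap$ and ``smallest containing'' with ``largest contained in'' (besides writing $\wedge^{\mathrm{Rad}}$ where $\wedge^{\mathrm{dRad}}$ is meant), and as printed it is false. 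Indeed, in $Q=\mathrm{P}(\{1,2\})$ take $f(x)=x\wedge\{1\}$ and $g(x)=x\wedge\{2\}$, both dual radical maps; by (\ref{3.22}), $(f\vee g)(x)=x$, so $\mathrm{Fix}(f\vee g)=Q$, whereas $\mathrm{Fix}(f)\cap\mathrm{Fix}(g)=\{\mathbf{0}\}$, so the printed formula returns $\{\mathbf{0}\}$ and the printed extremality clause fails as well; likewise $\wedge^{\mathrm{dRad}}\{f,g\}=\mathbf{0}_{\mathrm{Map}}$ has fixed-point set $\{\mathbf{0}\}=\mathrm{Fix}(f)\cap\mathrm{Fix}(g)\neq\mathrm{Fix}(f)\cup\mathrm{Fix}(g)$. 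So your proposal is a correct proof, by the paper's own method, of the correctly dualized theorem; the wording in the text cannot be recovered by any argument, and the direct consistency checks you include ($\cup_{g\in G}\mathrm{Fix}(g)\subseteq\mathrm{Fix}(\vee G)$ and $\overset{\vee}{K_{G}}\subseteq\mathrm{Fix}(\vee G)$) are exactly what certifies the corrected formula and refutes the printed one.
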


The subset $\mathcal{T}$ of Rad of all $T$-radical maps on $Q$ is partially
ordered with respect to $\lesssim.$ It has the smallest element $\mathbf{0}%
_{\mathcal{T}}=\mathbf{0}_{\text{Rad}}$: $x\mapsto x,$ and the largest element
$\mathbf{1}_{\mathcal{T}}=\mathbf{1}_{\text{Map}}$: $x\mapsto\mathbf{1}$.

\begin{theorem}
\label{T3.10}\emph{(i) }$\mathcal{T}$ is a complete lattice$\emph{,}$
i.e.\emph{, }$\vee^{\mathcal{T}}G$ and $\wedge^{\mathcal{T}}G$ exist for each
$G\subseteq\mathcal{T}.$ Moreover$,\smallskip$

\qquad\emph{(a) }$\vee^{\mathcal{T}}G=\vee^{\emph{Rad}}G=\wedge^{\mathcal{T}%
}\mathcal{T}^{^{G}}$ \emph{(}see \emph{(\ref{3.12})) }is the smallest element
in $\mathcal{T}^{^{G}};\smallskip$

\qquad\emph{(b) }$\wedge^{\mathcal{T}}G=\vee^{\mathcal{T}}\mathcal{T}_{_{G}}$
\emph{(}see \emph{(\ref{3.12})) }is the largest element in $\mathcal{T}_{_{G}%
};\smallskip$

\qquad\emph{(c) }$\mathrm{Fix}\left(  \wedge^{\mathcal{T}}G\right)  $ is the
smallest enveloping sublattice of $Q$ containing $\cup_{g\in G}\mathrm{Fix}%
\left(  g\right)  $.\smallskip

\emph{(ii) $\mathcal{T}$} is an inscribing sublattice of \emph{Rad}$_{_{Q}}.$
\end{theorem}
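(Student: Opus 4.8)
The plan is to regard $\mathcal{T}$ as a subset of the complete lattice $\mathrm{Rad}=\mathrm{Rad}_{Q}$ (complete by Theorem \ref{T3.9}(i) together with Theorem \ref{C3.1}) and to deduce everything from a single crux: the $\mathrm{Rad}$-join of any family of $T$-radical maps is again $T$-radical. First I would record the two extreme maps. The identity $\mathbf{0}_{\mathcal{T}}=\mathbf{0}_{\mathrm{Rad}}$ has fixed set $Q$, and $\mathbf{1}_{\mathcal{T}}=\mathbf{1}_{\mathrm{Map}}$ has fixed set $\{\mathbf{1}\}$; both $Q$ and $\{\mathbf{1}\}$ are enveloping sublattices, so by the bijection of Proposition \ref{P3.1}(ii) these maps lie in $\mathcal{T}$, giving $\mathcal{T}$ a least and a greatest element.

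The crux is the join step. Fix $G\subseteq\mathcal{T}$ and set $\mathfrak{h}=\vee^{\mathrm{Rad}}G$. By Theorem \ref{T3.9}(ii), $\mathrm{Fix}(\mathfrak{h})=\cap_{g\in G}\mathrm{Fix}(g)$. Each $\mathrm{Fix}(g)$ is an enveloping sublattice of $Q$ by Proposition \ref{P3.1}(ii), and an intersection of enveloping sublattices is again one: it is $\wedge$-complete and contains $\mathbf{1}$ (the set statement of Corollary \ref{C3.2}(i)) and, being an intersection of sets each closed under the binary operations $\vee,\wedge$ of $Q$, it is itself so closed. Hence $\mathfrak{h}\in\mathcal{T}$ by Proposition \ref{P3.1}(ii). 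Since $\mathfrak{h}$ is the least upper bound of $G$ in $\mathrm{Rad}$ and it lies in $\mathcal{T}$, it is the least upper bound of $G$ in $\mathcal{T}$; thus $\vee^{\mathcal{T}}G=\vee^{\mathrm{Rad}}G$ and $\mathcal{T}$ is $\vee^{\mathcal{T}}$-complete.

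The lattice statements of (i) then follow from the completion machinery of Lemma \ref{L3.5}. As $\mathcal{T}$ is $\vee^{\mathcal{T}}$-complete with least element $\mathbf{0}_{\mathcal{T}}$, Lemma \ref{L3.5}(iii) shows $\mathcal{T}$ is $\wedge^{\mathcal{T}}$-complete, hence a complete lattice, and gives $\wedge^{\mathcal{T}}G=\vee^{\mathcal{T}}\mathcal{T}_{G}$ as the largest element of $\mathcal{T}_{G}$, which is (i)(b). Being now $\wedge^{\mathcal{T}}$-complete with greatest element $\mathbf{1}_{\mathcal{T}}$, Lemma \ref{L3.5}(i) gives $\vee^{\mathcal{T}}G=\wedge^{\mathcal{T}}\mathcal{T}^{G}$, the smallest element of $\mathcal{T}^{G}$; combined with the previous paragraph this is (i)(a). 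For (i)(c), note that $\wedge^{\mathcal{T}}G$ is the largest $T$-radical map below all $g\in G$; by Lemma \ref{L3.14}(i), $\wedge^{\mathcal{T}}G\lesssim g$ holds if and only if $\mathrm{Fix}(g)\subseteq\mathrm{Fix}(\wedge^{\mathcal{T}}G)$, so $\mathrm{Fix}(\wedge^{\mathcal{T}}G)$ is an enveloping sublattice containing $\cup_{g\in G}\mathrm{Fix}(g)$. Minimality follows because any enveloping sublattice $S\supseteq\cup_{g\in G}\mathrm{Fix}(g)$ equals $\mathrm{Fix}(f_{S})$ for a $T$-radical $f_{S}$ (Proposition \ref{P3.1}(ii)) with $f_{S}\lesssim g$ for every $g$ (Lemma \ref{L3.14}(i)), whence $f_{S}\lesssim\wedge^{\mathcal{T}}G$ and therefore $\mathrm{Fix}(\wedge^{\mathcal{T}}G)\subseteq S$.

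For (ii), the join step already gives that $\mathcal{T}$ is closed under $\vee^{\mathrm{Rad}}$, and $\mathbf{0}_{\mathrm{Rad}}\in\mathcal{T}$, so Theorem \ref{C3.1}(ii) yields at once that $\mathcal{T}$ is inscribing in $\mathrm{Rad}$, with its join inherited from $\mathrm{Rad}$. The step I expect to be the genuine obstacle, and the one worth emphasizing, is the asymmetry between joins and meets. The $\mathrm{Rad}$-meet $\wedge^{\mathrm{Rad}}G$ realizes, by Theorem \ref{T3.9}(ii), the smallest enveloping \emph{set} containing $\cup_{g\in G}\mathrm{Fix}(g)$, which need not be a sublattice, whereas the $\mathcal{T}$-meet $\wedge^{\mathcal{T}}G$ realizes the smallest enveloping \emph{sublattice}, as in (i)(c); the two coincide only when $\cup_{g\in G}\mathrm{Fix}(g)$ produces no new joins. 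Thus the correct reading of (ii) is the inscribing one delivered by Theorem \ref{C3.1}(ii) — $\vee^{\mathrm{Rad}}$-completeness plus $\mathbf{0}_{\mathrm{Rad}}\in\mathcal{T}$, so the join is inherited and the meet is the inscribing meet $\vee^{\mathcal{T}}\mathcal{T}_{G}$ — and one should not expect $\mathcal{T}$ to be closed under the pointwise meet of $\mathrm{Rad}$.
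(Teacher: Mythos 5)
Your argument coincides with the paper's own proof step for step: the crux join step via Theorem \ref{T3.9} and Proposition \ref{P3.1} (the fixed set of $\vee^{\mathrm{Rad}}G$ is the intersection of the enveloping sublattices $\mathrm{Fix}(g)$, hence an enveloping sublattice, so $\vee^{\mathrm{Rad}}G\in\mathcal{T}$), the completion machinery of Lemma \ref{L3.5} for (a) and (b), Lemma \ref{L3.14} together with Proposition \ref{P3.1} for (c), and Theorem \ref{C3.1} for the inscribing claim in (ii). Your closing caveat --- that $\mathcal{T}$ need not be closed under the pointwise meet of $\mathrm{Rad}$, so the meet in $\mathcal{T}$ is the inscribing meet $\vee^{\mathcal{T}}\mathcal{T}_{G}$ rather than the $\mathrm{Rad}$-meet --- is consistent with the paper, whose proof of (ii) likewise verifies only $\vee^{\mathrm{Rad}}$-completeness and $\mathbf{0}_{\mathcal{T}}\in\mathcal{T}$.
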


\begin{proof}
(i) Let $G\subseteq\mathcal{T}$. By Theorem \ref{T3.9}, there is
$\mathfrak{h}=\vee^{\text{Rad}}G$ in Rad$_{_{Q}}$ and $L:=$ Fix($\mathfrak{h}%
)=\cap_{_{g\in G}}$Fix$\left(  g\right)  $ is enveloping in $Q$. As each $g\in
G$ is $T$-radical, Fix($g)$ is a sublattice of $Q$. Hence $L$ is an enveloping
sublattice of $Q.$ By Proposition \ref{P3.1}, $\mathfrak{h}$ a $T$-radical
map. As $\mathfrak{h}$ is the smallest radical map majorizing all $g\in G$, it
is also the smallest $T$-radical map majorizing all $g\in G$, i.e.,
$\mathfrak{h}=\vee^{^{\mathcal{T}}}G$. Thus $\mathcal{T}$ is $\vee
^{^{\mathcal{T}}}$-complete and $\vee^{^{\mathcal{T}}}G=\vee^{\text{Rad}}G.$

As $\mathbf{0}_{_{\mathcal{T}}}$ is the smallest element in $\mathcal{T},$ it
follows from Lemma \ref{L3.5}(iii) that $\mathcal{T}$ is a complete lattice
and $\mathfrak{n}$: $=\wedge^{\mathcal{T}}G=\vee^{^{\mathcal{T}}}%
\mathcal{T}_{_{G}}$ is the largest element in $\mathcal{T}_{_{G}}$ (see
(\ref{3.12})). As $\mathcal{T}$ is a complete lattice, Lemma \ref{L3.5}(i)
implies that $\vee^{\mathcal{T}}G=\wedge^{\mathcal{T}}\mathcal{T}^{^{G}}$ is
the smallest in $\mathcal{T}^{^{G}}.$ This completes the proof of (a) and (b).

(c) As $\mathfrak{n}$ is $T$-radical, Fix$\left(  \mathfrak{n}\right)  $ is an
enveloping sublattice of $Q$ by Proposition \ref{P3.1}. As $\mathfrak{n}\leq
g$ for all $g\in G$, Fix$\left(  \mathfrak{n}\right)  $ contains $\cup_{g\in
G}$Fix$\left(  g\right)  $ by Lemma \ref{L3.14}. If $\cup_{_{g\in G}}%
$Fix$\left(  g\right)  \subseteq L$ for some enveloping sublattice $L,$ then
$L=$ Fix$(f_{L})$ for a $T$-radical map $f_{L}.$ By Lemma \ref{L3.14},
$f_{L}\leq g$ for $g\in G.$ Hence $f_{L}\leq\mathfrak{n},$ as $\mathfrak{n}$
is the largest $T$-radical map minorizing all $g\in G.$ By Lemma \ref{L3.14},
Fix$\left(  \mathfrak{n}\right)  \subseteq L.$

(ii) By Theorem \ref{T3.9}, Rad$_{_{Q}}$ is a complete lattice$.$ As
$\vee^{\mathcal{T}}G=\vee^{\text{Rad}}G$ for all $G\subseteq\mathcal{T},$
$\mathcal{T}$ is a $\vee^{\text{Rad}}$-complete sublattice of Rad$.$ As
$\mathbf{0}_{_{\mathcal{T}}}\in\mathcal{T},$ $\mathcal{T}$ is inscribing in
Rad by Theorem \ref{C3.1}.
\end{proof}

\subsection{Transfinite extensions of relations}

For $f,g\in$ Map($Q),$ their superposition $g\circ f$ is defined by $(g\circ
f)(x)=g(f(x)).$ If $g$ is a radical map then $f\lesssim g\circ f$. If $f,g$
are radical then $g\circ f$ is radical. Consider the following process of
transfinite superposition that has wide applications in the theory of radicals
of rings and algebras.

Let $G\subseteq$ Rad$.$ For an interval $\left[  1,\gamma\right]  $ of
ordinals, the set $\left(  h_{\alpha}\right)  _{1\leq\alpha\leq\gamma}$ in Rad
is an \textit{ascending} \textit{superposition }$G$-\textit{series }if%
\begin{align}
h_{1}  &  \in G,\text{ }h_{\alpha+1}=g^{\alpha}\circ h_{\alpha}\text{ for each
}\alpha\in\lbrack1,\gamma)\text{ and some }g^{\alpha}\in G,\nonumber\\
h_{\beta}  &  =\vee_{_{\alpha<\beta}}h_{\alpha}\text{ for each limit ordinal
}\beta\text{ (cf. (\ref{2.12})).}\label{3.33}\\
\text{If }g\circ h_{\gamma}  &  =h_{\gamma}\text{ for all }g\in G,\text{ then
the series is \textit{maximal.}} \label{8.1}%
\end{align}
Let $\mathbf{0}_{\text{Rad}}\notin G.$ As all $g\in G$ are radical maps,
$h_{\alpha+1}$ is larger than $h_{a}$ for all $\alpha.$ So the cardinality of
$\left(  h_{\alpha}\right)  _{1\leq\alpha\leq\gamma}$ is not larger than the
cardinality of Rad$.$ Thus each $G$-series extends to a maximal one.

\begin{proposition}
\label{P3.5}Let $G\subseteq$ \emph{Rad} and $L=\cap_{_{g\in G}}$%
\emph{Fix}$\left(  g\right)  .$ Then $L$ is an enveloping set\emph{.
}Moreover\emph{,}\smallskip

\emph{(i) \ \ }for each maximal ascending superposition $G$-series $\left(
h_{\alpha}\right)  _{\alpha\leq\gamma},$%
\[
\emph{\ }f_{_{L}}=h_{\gamma}=\vee^{\emph{Rad}}G\in\emph{Rad},\text{ }%
h_{\gamma}=g\circ h_{\gamma}\text{ for all }g\in G\text{ and }\emph{Fix}%
(h_{\gamma})=L;
\]

\emph{(ii)} \ if $f$ is a radical map and $g\circ f=f$ for all $g\in G,$ then
\emph{ }$h_{\gamma}\lesssim f;\smallskip$

\emph{(iii) }if $x\leq y\in L$ then $h_{\gamma}(x)\leq y.$
\end{proposition}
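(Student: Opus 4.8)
The plan is to identify $h_\gamma$ with $\vee^{\mathrm{Rad}}G$ and then read off all three assertions from the correspondence between enveloping sets and radical maps. That $L=\cap_{g\in G}\mathrm{Fix}(g)$ is enveloping is exactly Lemma \ref{L3.14}(ii). By Theorem \ref{T3.9} the supremum $\mathfrak{h}:=\vee^{\mathrm{Rad}}G$ exists in $\mathrm{Rad}$ and satisfies $\mathrm{Fix}(\mathfrak{h})=\cap_{g\in G}\mathrm{Fix}(g)=L$; since $\mathfrak{h}$ is radical, Proposition \ref{P3.1} gives $\mathfrak{h}=f_{\mathrm{Fix}(\mathfrak{h})}=f_{L}$. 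By definition of the series each $h_\alpha$, and in particular $h_\gamma$, lies in $\mathrm{Rad}$, so it will suffice to prove $h_\gamma=\mathfrak{h}$: then $f_{L}=h_\gamma=\vee^{\mathrm{Rad}}G$ and $\mathrm{Fix}(h_\gamma)=L$ follow at once, while $h_\gamma=g\circ h_\gamma$ for all $g\in G$ is just the maximality condition (\ref{8.1}).

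I would prove $h_\gamma=\mathfrak{h}$ by two inequalities, obtaining the harder direction $h_\gamma\lesssim\mathfrak{h}$ as a corollary of statement (ii), which I establish first. So let $f$ be radical with $g\circ f=f$ for all $g\in G$, and show $h_\alpha\lesssim f$ by transfinite induction on $\alpha$. For the base case, $h_1\in G$ gives $h_1\circ f=f$, which means $\mathrm{image}(f)=\mathrm{Fix}(f)\subseteq\mathrm{Fix}(h_1)$ (the image of a radical map coincides with its fixed-point set), hence $h_1\lesssim f$ by Lemma \ref{L3.14}(i). For a successor, if $h_\alpha\lesssim f$ then monotonicity of $g^\alpha$ yields $h_{\alpha+1}=g^\alpha\circ h_\alpha\lesssim g^\alpha\circ f=f$. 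At a limit ordinal $\beta$, every $h_\alpha\lesssim f$ for $\alpha<\beta$ forces $h_\beta=\vee_{\alpha<\beta}h_\alpha\lesssim f$, since $f$ is a radical upper bound of the $h_\alpha$. Thus $h_\gamma\lesssim f$, proving (ii).

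Now I apply (ii) to $f=\mathfrak{h}$. Since $\mathfrak{h}$ is radical with $\mathrm{Fix}(\mathfrak{h})=L\subseteq\mathrm{Fix}(g)$, and $\mathrm{image}(\mathfrak{h})=\mathrm{Fix}(\mathfrak{h})$, we get $g\circ\mathfrak{h}=\mathfrak{h}$ for every $g\in G$; so $\mathfrak{h}$ meets the hypothesis of (ii), giving $h_\gamma\lesssim\mathfrak{h}$. For the reverse inequality I invoke maximality (\ref{8.1}): $g\circ h_\gamma=h_\gamma$ says $\mathrm{image}(h_\gamma)=\mathrm{Fix}(h_\gamma)\subseteq\mathrm{Fix}(g)$ for all $g\in G$, whence $g\lesssim h_\gamma$ by Lemma \ref{L3.14}(i). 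Therefore $h_\gamma$ is a radical map majorizing every $g\in G$, and minimality of $\mathfrak{h}=\vee^{\mathrm{Rad}}G$ yields $\mathfrak{h}\lesssim h_\gamma$. Combining the two inequalities gives $h_\gamma=\mathfrak{h}$, and hence all of (i).

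Finally, for (iii): since $h_\gamma=f_{L}$ and, by (\ref{en1}) together with Definition \ref{D3.1}, $f_{L}(x)=\overline{x}$ is the smallest element of $L$ majorizing $x$, any $y\in L$ with $x\leq y$ satisfies $h_\gamma(x)=\overline{x}\leq y$. The step I expect to require the most care is the bookkeeping that converts the compositional conditions $g\circ f=f$ and $g\circ h_\gamma=h_\gamma$ into the fixed-point inclusions and the order $\lesssim$ via Lemma \ref{L3.14}(i); once the identity $\mathrm{image}(h_\gamma)=\mathrm{Fix}(h_\gamma)$ for radical maps is used consistently, the two inequalities close up without further difficulty.
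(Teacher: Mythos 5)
Your proof is correct, but it reverses the paper's logical order and closes the key identity $h_{\gamma}=\vee^{\mathrm{Rad}}G$ by a different mechanism. The paper proves (i) directly, without using (ii): maximality gives $h_{\gamma}(Q)\subseteq L$ and hence $\mathrm{Fix}(h_{\gamma})\subseteq L$; transfinite induction gives $L\subseteq\mathrm{Fix}(h_{\gamma})$ and, separately, re-derives the radical-map axioms for $h_{\gamma}$ ($x\leq h_{\gamma}(x)=h_{\gamma}(h_{\gamma}(x))$ and monotonicity, by induction) rather than taking them from the stipulation in (\ref{3.33}) that the series lies in Rad; it then concludes $h_{\gamma}=\mathfrak{h}=f_{L}$ from equality of fixed-point sets via Lemma \ref{L3.14} and the bijection of Proposition \ref{P3.1}. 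You instead establish (ii) first by the same transfinite induction (your base case via $\mathrm{Fix}(f)\subseteq\mathrm{Fix}(h_{1})$ and Lemma \ref{L3.14}(i) is sound, though the paper's is shorter: $g(x)\leq g(f(x))=f(x)$ by monotonicity), and then sandwich: $h_{\gamma}\lesssim\mathfrak{h}$ by applying (ii) to $f=\mathfrak{h}$, and $\mathfrak{h}\lesssim h_{\gamma}$ from $g\lesssim h_{\gamma}$ for all $g\in G$ (via $\mathrm{Fix}(h_{\gamma})\subseteq\mathrm{Fix}(g)$ and Lemma \ref{L3.14}(i)) together with minimality of $\vee^{\mathrm{Rad}}G$. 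Both routes rest on Theorem \ref{T3.9}; yours buys a cleaner derivation of $\mathrm{Fix}(h_{\gamma})=L$ (it falls out of $h_{\gamma}=\mathfrak{h}$ rather than being computed), at the cost of leaning on the definitional assumption $h_{\gamma}\in\mathrm{Rad}$ — legitimate, since (\ref{3.33}) places the series in Rad, but note that the paper's inductive verification is precisely what shows the limit-ordinal joins stay radical, so the definition is non-vacuous. Your identity $\mathrm{image}(f)=\mathrm{Fix}(f)$ for radical maps is used correctly throughout, and your (iii) via the characterization $f_{L}(x)=\overline{x}$ as the smallest element of $L$ above $x$ is a fine alternative to the paper's one-liner $h_{\gamma}(x)\leq h_{\gamma}(y)=y$.
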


\begin{proof}
(i) Set\textbf{ }$\mathfrak{h}=\vee^{\text{Rad}}G.$ By Theorem \ref{T3.9},
$\mathfrak{h}\in$ Rad and Fix$(\mathfrak{h})=L.$ As $\left(  h_{\alpha
}\right)  _{1\leq\alpha\leq\gamma}$ is maximal, $h_{\gamma}=g\circ h_{\gamma}$
for all $g\in G$. So $h_{\gamma}\left(  x\right)  =g(h_{\gamma}\left(
x\right)  )\in L$ for all $x\in Q$. Hence $h_{\gamma}(Q)\subseteq L$ and
Fix($h_{\gamma})\subseteq L.$

If $z\in L$ then $g\left(  z\right)  =z$ for all $g\in G$. As $h_{1}\in G,$
$h_{1}\left(  z\right)  =z$ and, by induction, $h_{2}\left(  z\right)
=z$,\ldots, $h_{\gamma}\left(  z\right)  =z$. Thus $L\subseteq$ Fix$(h_{\gamma
}).$ Hence $L=$ Fix$(h_{\gamma})$ and $h_{\gamma}(h_{\gamma}(x))=h_{\gamma
}(x)$ for all $x\in Q$.

As $x\leq g(x)$ for all $x\in Q$ and $g\in G$, we have $x\leq h_{1}\left(
x\right)  \leq h_{2}\left(  x\right)  ...$ By induction, $x\leq h_{\gamma
}(x)=h_{\gamma}(h_{\gamma}(x))$. If $x\leq y$ then, similarly by induction,
$h_{\gamma}\left(  x\right)  \leq h_{\gamma}\left(  y\right)  .$ So
$h_{\gamma}\in$ Rad$.$ As Fix($\mathfrak{h})=L=$ Fix($h_{\gamma}),$ we have
$\mathfrak{h}=h_{\gamma}=f_{_{L}}$ by Lemma \ref{L3.14} and Proposition
\ref{P3.1}$.$

(ii) Let $f$ be a radical map and $g\circ f=f$ for all $g\in G.$ As $x\leq
f(x)$ for all $x\in Q,$ $g(x)\leq g(f(x))=f(x)$ for all $g\in G.$ Thus
$h_{1}\lesssim f.$ Similarly, we prove by induction that $h_{\gamma}\lesssim
f.$

(iii)\emph{ }As $h_{\gamma}$ is a radical map, $h_{\gamma}(x)\leq h_{\gamma
}(y)$ by (\ref{f3.1}), and $h_{\gamma}(y)=y$ by (i).\bigskip
\end{proof}

By Proposition \ref{P3.5}, the radical map $f_{L}=\vee^{\text{Rad}}%
G=h_{\gamma}$ for $L=\cap_{g\in G}$Fix$\left(  g\right)  $, is obtained "step
by step" via maximal ascending superposition $G$-series. Note that, if for
some $\ll$ in Ref($Q),$%
\begin{equation}
g(\ll)=\text{ }\ll\text{ for all }g\in G,\text{ then }\ll\text{ belongs to
}L\text{ and }f_{L}(\ll)=\text{ }\ll\text{.} \label{3.1a}%
\end{equation}

By Theorem \ref{P3.12}, $L_{\text{uc}},$ $L_{\text{dc}},$ $L_{\text{ue}%
},L_{\text{de}},L_{\text{o}}$ are enveloping sets in Ref$\left(  Q\right)  $.
So, by Proposition \ref{P3.1}, $f_{_{L_{\text{uc}}}},$ $f_{_{L_{\text{dc}}}},$
$f_{_{L_{\text{ue}}}},$ $f_{_{L_{\text{de}}}},$ $f_{_{L_{\text{o}}}}$ are
radical maps. Set $G=\{f_{_{L_{\text{uc}}}},f_{_{L_{\text{dc}}}}%
,f_{_{L_{\text{ue}}}},f_{_{L_{\text{de}}}},f_{_{L_{\text{o}}}}\}.$ By
Corollary \ref{C3.2}, $L=L_{\text{uc}}\cap L_{\text{dc}}\cap L_{\text{ue}}\cap
L_{\text{de}}\cap L_{\text{o}}=\cap_{_{g\in G}}$Fix$\left(  g\right)  $ is
enveloping. As above, $f_{L}=\vee^{\text{Rad}}G=h_{\gamma}$ for any maximal
ascending superposition $G$-series $\left(  h_{\alpha}\right)  _{1\leq
\alpha\leq\gamma}$ (see (\ref{3.33})). Set%
\begin{equation}
\widetilde{\ll}=h_{\gamma}(\ll)=f_{L}(\ll)\text{ for }\ll\text{ from
Ref}\left(  Q\right)  . \label{6.4}%
\end{equation}

\begin{proposition}
\label{P3.7}For each $\ll$ from \emph{Ref}$(Q),$ $\widetilde{\ll}$ is the
smallest $\mathbf{TT}$-order majorizing $\ll.$
\end{proposition}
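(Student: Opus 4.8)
The plan is to identify the enveloping set $L=\cap_{g\in G}\text{Fix}(g)$ with the collection of \emph{all} $\mathbf{TT}$-orders, and then to read off the conclusion directly from the enveloping-set/radical-map correspondence of Proposition \ref{P3.1}. Once $L$ is known to be exactly the set of $\mathbf{TT}$-orders, the value $f_L(\ll)=\overline{\ll}$ is by Definition \ref{D3.1} the smallest member of $L$ majorizing $\ll$, and $\widetilde{\ll}=f_L(\ll)$ by (\ref{6.4}); this is precisely the assertion.

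First I would unwind Definition \ref{D3.2}(iii): a relation $\ll$ is a $\mathbf{TT}$-order if and only if it is transitive and both contiguous and expanded, that is, if and only if it is an order (lies in $L_{\text{o}}$), is up- and down-contiguous (lies in $L_{\text{uc}}\cap L_{\text{dc}}$) and is up- and down-expanded (lies in $L_{\text{ue}}\cap L_{\text{de}}$). Hence
\[
L=L_{\text{uc}}\cap L_{\text{dc}}\cap L_{\text{ue}}\cap L_{\text{de}}\cap L_{\text{o}}
\]
is exactly the set of all $\mathbf{TT}$-orders. By Theorem \ref{P3.12} each of these five sets is enveloping, so $L$ is enveloping by Corollary \ref{C3.2}; and, as recorded just before the statement, $f_L=\vee^{\text{Rad}}G=h_{\gamma}$ by Proposition \ref{P3.5}(i), whence $\widetilde{\ll}=h_{\gamma}(\ll)=f_L(\ll)$ for every $\ll\in\text{Ref}(Q)$.

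Next I would invoke the correspondence of Proposition \ref{P3.1}(i): since $L$ is enveloping, $L=\text{Fix}(f_L)$ and $f_L$ is its associated radical map, acting by $f_L(\ll)=\overline{\ll}\in L$, where by Definition \ref{D3.1} the relation $\overline{\ll}$ is the smallest element of $L$ with $\ll\subseteq\overline{\ll}$, i.e.\ the smallest member of $L$ majorizing $\ll$ in the order $\subseteq$ of $\text{Ref}(Q)$. Combining this with the previous paragraph, $\widetilde{\ll}=f_L(\ll)=\overline{\ll}$ is the smallest $\mathbf{TT}$-order majorizing $\ll$, which is the claim.

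The only point requiring any care is the first step, the identification $L=\{\text{all }\mathbf{TT}\text{-orders}\}$; but this is a direct translation of the defining conditions of a $\mathbf{TT}$-order (transitive, contiguous, expanded) into membership in the five sets $L_{\text{o}},L_{\text{uc}},L_{\text{dc}},L_{\text{ue}},L_{\text{de}}$, so there is no genuine obstacle. All the substantive work---that these five sets are enveloping, that a finite intersection of enveloping sets is enveloping, that the radical map of an enveloping set sends each relation to the smallest majorizing member of that set, and that this radical map is realized by a maximal ascending superposition $G$-series---has already been carried out in Theorem \ref{P3.12}, Corollary \ref{C3.2}, Proposition \ref{P3.1}, and Proposition \ref{P3.5}. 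The proposition is therefore an assembly of these results.
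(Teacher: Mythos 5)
Your proof is correct and takes essentially the same route as the paper: both arguments rest on Theorem \ref{P3.12}, Corollary \ref{C3.2} and Proposition \ref{P3.5} together with the identification of $L=L_{\text{uc}}\cap L_{\text{dc}}\cap L_{\text{ue}}\cap L_{\text{de}}\cap L_{\text{o}}$ with the set of all $\mathbf{TT}$-orders, the paper merely re-verifying membership of $\widetilde{\ll}$ in $L$ via $h_{\gamma}=g\circ h_{\gamma}$ and (\ref{3.1a}) and minimality via Proposition \ref{P3.5}(iii), where you instead cite the enveloping-set correspondence of Proposition \ref{P3.1} and Definition \ref{D3.1} directly. One small caution: do not write $f_{L}(\ll)=\overline{\ll}$, since the paper reserves $\overline{\ll}$ in (\ref{6.5}) for $f_{L_{\triangleright\triangleleft}}(\ll)$, which by Proposition \ref{L3.3}(ii) is in general strictly smaller than $\widetilde{\ll}$; in your second-to-last paragraph the overline must be read as the generic notation of Definition \ref{D3.1} relative to $L$, not as the relation of (\ref{6.5}).
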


\begin{proof}
By Proposition \ref{P3.5}, $h_{\gamma}\in$ Rad$_{_{\text{Ref(}Q)}}$ and
$h_{\gamma}=g\circ h_{\gamma}$ for all $g\in G.$ Hence $g(\widetilde{\ll
})=g(h_{\gamma}(\ll))=h_{\gamma}(\ll)=$ $\widetilde{\ll}$ for all $g\in G.$
So, by (\ref{3.1a}), $\widetilde{\ll}$ belongs to $L,$ so that it is an up-
and down-contiguous, and up- and down-expanded order, i.e., it is an
$\mathbf{TT}$-order.

If $\prec$ is a $\mathbf{TT}$-order then it is contiguous and expanded (see
Definition \ref{D3.2}). So $\prec$ belongs to $L=\cap_{g\in G}$Fix$\left(
g\right)  .$ Thus $g(\prec)=$ $\prec$ for all $g\in G.$ If $\ll$ $\subseteq$
$\prec$ (see (\ref{0})) then$\ \widetilde{\ll}=h_{\gamma}(\ll)\subseteq$
$\prec$ by Proposition \ref{P3.5}(iii).\bigskip
\end{proof}

By Theorem \ref{P3.12}, $L_{\triangleright},L_{\triangleleft}$ are enveloping
sets in Ref$\left(  Q\right)  $. Hence $G_{\triangleright\triangleleft
}=\{f_{_{L_{\triangleright}}},f_{_{L_{\triangleleft}}}\}$ consists of radical
maps, so that $L_{\triangleright\triangleleft}=L_{\triangleright}\cap
L_{\triangleleft}$ is enveloping. The radical map $f_{L}=\vee^{\text{Rad}%
}G_{\triangleright\triangleleft}$ is obtained by constructing a maximal
ascending superposition $G_{\triangleright\triangleleft}$-series $\left(
h_{\alpha}\right)  _{1\leq\alpha\leq\gamma}$ (see (\ref{3.33})) and
$f_{_{L_{\triangleright\triangleleft}}}=h_{\gamma}.$ For $\ll$\emph{ }from
Ref$(Q),$ set\emph{ }%
\begin{equation}
\overline{\ll}=h_{\gamma}(\ll)=f_{_{L_{\triangleright\triangleleft}}}(\ll).
\label{6.5}%
\end{equation}

\begin{example}
\label{E8.1}The relation $\overline{\ll}$ in $Q$ can be neither
contiguous\emph{, }nor expanded. \emph{Indeed,}\smallskip

$1)$ \emph{Let }$Q=\{\mathbf{0},a,\mathbf{1\},}$ $\mathbf{0}<a<\mathbf{1}$
\emph{and} $\mathbf{0}\ll\mathbf{1.}$ \emph{Then} $\overline{\ll}$ $=$ $\ll$
is not contiguous.\smallskip

$2)$ \emph{Let }$Q$ \emph{and }$\ll$ \emph{be as in} \emph{Example \ref{E3.2}.
Then }$\overline{\ll}$ $=$ $\ll$ is not expanded.
\end{example}

\begin{proposition}
\label{L3.3}\emph{(i)\ }$\overline{\ll}=$ $\overline{\ll}^{\triangleright}=$
$\overline{\ll}^{\triangleleft}$ and $\overline{\ll}$\textbf{ }is\emph{ }the
smallest of all relations $\prec$ satisfying $\ll$ $\subseteq$ $\prec$ $=$
$\prec^{\triangleright}=$ $\prec^{\triangleleft}.\smallskip$

\emph{(ii) } $\overline{\ll}\subseteq$ $\widetilde{\ll};$ if $\overline{\ll}$
is a $\mathbf{TT}$-order then $\overline{\ll}=$ $\widetilde{\ll}.\smallskip$

\emph{(iii) }If $\ll$ is an expanded order then $\overline{\ll}$ $=$
$\ll.\smallskip$

\emph{(iv) }Let $Q$ be a chain. If $\ll$ is up- or down-contiguous\emph{,}
$\overline{\ll}$ is up- or down-contiguous\emph{, }respectively.
\end{proposition}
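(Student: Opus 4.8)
The plan is to read off parts (i)--(iii) directly from the fact, established just before the statement, that $L_{\triangleright\triangleleft}=L_{\triangleright}\cap L_{\triangleleft}$ is enveloping in $\mathrm{Ref}(Q)$ and that $\overline{\ll}=f_{_{L_{\triangleright\triangleleft}}}(\ll)$, and then to prove (iv) by a transfinite induction along the defining superposition series. For (i), I would note that by Definition \ref{D3.1} and (\ref{en1}) the value $f_{_{L_{\triangleright\triangleleft}}}(\ll)=\overline{\ll}$ is exactly the smallest element of $L_{\triangleright\triangleleft}$ containing $\ll$. Since $L_{\triangleright\triangleleft}=L_{\triangleright}\cap L_{\triangleleft}$, membership $\prec\in L_{\triangleright\triangleleft}$ is by definition the condition $\prec=\prec^{\triangleright}=\prec^{\triangleleft}$. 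Hence $\overline{\ll}=\overline{\ll}^{\triangleright}=\overline{\ll}^{\triangleleft}$, and $\overline{\ll}$ is the smallest $\prec$ with $\ll\subseteq\prec=\prec^{\triangleright}=\prec^{\triangleleft}$, which is (i).

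For (ii) and (iii) I would exploit Theorem \ref{P2.1}. A $\mathbf{TT}$-order is both a $\mathbf{T}$- and a dual $\mathbf{T}$-order, hence up- and down-expanded, so Theorem \ref{P2.1}(ii),(iii) give $\widetilde{\ll}=\widetilde{\ll}^{\triangleright}=\widetilde{\ll}^{\triangleleft}$; thus $\widetilde{\ll}\in L_{\triangleright\triangleleft}$ and $\ll\subseteq\widetilde{\ll}$, so the minimality from (i) yields $\overline{\ll}\subseteq\widetilde{\ll}$. If moreover $\overline{\ll}$ is itself a $\mathbf{TT}$-order, then it is a $\mathbf{TT}$-order majorizing $\ll$, and Proposition \ref{P3.7} says $\widetilde{\ll}$ is the smallest such, giving $\widetilde{\ll}\subseteq\overline{\ll}$ and hence equality. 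For (iii), an expanded order is an up- and down-expanded order, so again by Theorem \ref{P2.1}(ii),(iii) we get $\ll=\ll^{\triangleright}=\ll^{\triangleleft}$, i.e.\ $\ll\in L_{\triangleright\triangleleft}$, whence $\overline{\ll}=\ll$.

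The real work is (iv), and here the chain hypothesis is essential. I would write $\ll_{\alpha}=h_{\alpha}(\ll)$ for the maximal ascending superposition $G_{\triangleright\triangleleft}$-series defining $\overline{\ll}=\ll_{\gamma}$ (see (\ref{3.33}) and (\ref{6.5})), with $G_{\triangleright\triangleleft}=\{f_{_{L_{\triangleright}}},f_{_{L_{\triangleleft}}}\}$. By Lemma \ref{L3.1} each successor step replaces the current relation by $\ll_{\alpha}^{\triangleright}$ or $\ll_{\alpha}^{\triangleleft}$, so that $\ll_{1}\in\{\ll^{\triangleright},\ll^{\triangleleft}\}$ and $\ll_{\alpha+1}\in\{\ll_{\alpha}^{\triangleright},\ll_{\alpha}^{\triangleleft}\}$, while at a limit ordinal $\ll_{\beta}=\cup_{\alpha<\beta}\ll_{\alpha}$ (the join in $\mathrm{Ref}(Q)$ being $\cup$ by (\ref{i}) and (\ref{3.22})). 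Assume $\ll$ is up-contiguous. Since $Q$ is a chain, Lemma \ref{L3.4} gives that $\ll^{\triangleright}$ and $\ll^{\triangleleft}$ are up-contiguous, so $\ll_{1}$ is up-contiguous, and the same lemma handles every successor step. At limit stages I would invoke Theorem \ref{P3.12}(i): $L_{\text{uc}}$ is a $\vee$-complete sublattice of $\mathrm{Ref}(Q)$, so the union $\cup_{\alpha<\beta}\ll_{\alpha}$ of up-contiguous relations is again up-contiguous. By transfinite induction every $\ll_{\alpha}$, and in particular $\overline{\ll}=\ll_{\gamma}$, is up-contiguous. The down-contiguous case is identical, using the down-part of Lemma \ref{L3.4} and the completeness of $L_{\text{dc}}$.

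The main obstacle is organizing this induction so that the limit step is legitimate: this is precisely where the chain hypothesis enters (through Lemma \ref{L3.4}, which fails for general modular lattices, cf.\ Example \ref{E3.1}) and where I rely on the completeness of $L_{\text{uc}}$ (and $L_{\text{dc}}$) in $\mathrm{Ref}(Q)$ rather than merely on its enveloping property. I would make sure to record explicitly that the increasing chain $(\ll_{\alpha})$ has join $\cup_{\alpha<\beta}\ll_{\alpha}$ at each limit, so that the contiguity property, which is preserved under arbitrary unions in $L_{\text{uc}}$, passes through limit ordinals without extra hypotheses on $Q$ beyond its being a chain.
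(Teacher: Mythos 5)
Your proposal is correct and follows essentially the same route as the paper: (i) via the enveloping set $L_{\triangleright\triangleleft}$ and the identification $\overline{\ll}=f_{_{L_{\triangleright\triangleleft}}}(\ll)$ (the paper packages this as "same as Proposition \ref{P3.7}" through Proposition \ref{P3.5}, but the content is identical), (ii) and (iii) exactly as in the paper via Theorem \ref{P2.1}, Proposition \ref{P3.7} and (\ref{3.1a}), and (iv) via Lemma \ref{L3.4} plus transfinite induction. Your only addition is making the limit-ordinal step of (iv) explicit --- that $\ll_{\beta}=\cup_{\alpha<\beta}\ll_{\alpha}$ by (\ref{i}) and (\ref{3.22}), and that up-contiguity survives unions by the $\vee$-completeness of $L_{\text{uc}}$ from Theorem \ref{P3.12}(i) --- a step the paper compresses to "by induction", so this is a legitimate and slightly more careful rendering of the same argument.
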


\begin{proof}
The proof of (i) is the same as in Proposition \ref{P3.7}.

(ii) As $\widetilde{\ll}$ is a $\mathbf{TT}$-order, $\ll$ $\subseteq$
$\widetilde{\ll}$ $=$ $\widetilde{\ll}^{\triangleright}=\widetilde{\ll
}^{\triangleleft}$ by Theorem \ref{P2.1}. Hence $\overline{\ll}$ $\subseteq$
$\widetilde{\ll}$ by (i).

By Proposition \ref{P3.7}, $\widetilde{\ll}$ is the smallest $\mathbf{TT}%
$-order majorizing $\ll.$ So, if $\overline{\ll}$ is a $\mathbf{TT}$-order
then $\widetilde{\ll}$ $\subseteq$ $\overline{\ll}.$ Thus $\overline{\ll}=$
$\widetilde{\ll}.$

(iii) If $\ll$ is an expanded order, it follows from Theorem \ref{P2.1} that
$\ll$ $=$ $\ll^{\triangleleft}=$ $\ll^{\triangleright}.$ Thus $g(\ll)=$ $\ll$
for all $g\in G_{\triangleright\triangleleft}.$ By (\ref{3.1a}),
$\overline{\ll}=$ $h_{\gamma}(\ll)=$ $\ll.$

(iv) If $Q$ is a chain then, by Lemma \ref{L3.4}, if $\ll$ is up-contiguous,
the relations $\ll^{\triangleleft}$ and $\ll^{\triangleright}$ are
up-contiguous. By induction, $\overline{\ll}$ is up-contiguous. The
down-contiguous case is similar.\bigskip
\end{proof}

By Proposition \ref{L3.3}, $\overline{\ll}$ $\subseteq$ $\widetilde{\ll}.$
Below we consider the case when they coincide.

\begin{proposition}
\label{P7.1}Let $Q$ have properties \emph{(JIDC)} and \emph{(MIDC)}
\emph{(}see Definition \emph{\ref{D6.1}). }If $\ll$ is an $\mathbf{HH}%
$-relation\emph{ }then $\overline{\ll}$ $=$ $\widetilde{\ll}$ is an\emph{
}$\mathbf{RR}$-order.
\end{proposition}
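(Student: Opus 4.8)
The plan is to reduce the whole statement to a single claim: that $\overline{\ll}$ is an $\mathbf{HH}$-relation. Granting this, the conclusion follows quickly. By Proposition \ref{L3.3}(i) we have $\overline{\ll} = \overline{\ll}^{\triangleright} = \overline{\ll}^{\triangleleft}$. Since $\overline{\ll}$ is then a dual $\mathbf{H}$-relation satisfying $\overline{\ll} = \overline{\ll}^{\triangleleft}$, Corollary \ref{pr3} shows it is a dual $\mathbf{R}$-order; and since it is an $\mathbf{H}$-relation satisfying $\overline{\ll} = \overline{\ll}^{\triangleright}$, the dual of Corollary \ref{pr3} shows it is an $\mathbf{R}$-order. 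Hence $\overline{\ll}$ is an $\mathbf{RR}$-order (Definition \ref{D2.2}(vi)), and in particular a $\mathbf{TT}$-order, so Proposition \ref{L3.3}(ii) gives $\overline{\ll} = \widetilde{\ll}$. This is exactly the asserted statement.

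To prove the remaining claim I would unwind the construction of $\overline{\ll}$. By (\ref{6.5}) I may fix a maximal ascending superposition $G_{\triangleright\triangleleft}$-series $(h_{\alpha})_{1 \leq \alpha \leq \gamma}$ with $G_{\triangleright\triangleleft} = \{f_{L_{\triangleright}}, f_{L_{\triangleleft}}\}$, so that $\overline{\ll} = h_{\gamma}(\ll)$; by Lemma \ref{L3.1}, applying $f_{L_{\triangleright}}$ sends a relation $\prec$ to $\prec^{\triangleright}$ and applying $f_{L_{\triangleleft}}$ sends $\prec$ to $\prec^{\triangleleft}$. I would then show by transfinite induction on $\alpha$ that each $h_{\alpha}(\ll)$ is an $\mathbf{HH}$-order. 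For the base step $h_{1}(\ll)$ equals $\ll^{\triangleright}$ or $\ll^{\triangleleft}$; since $\ll$ is an $\mathbf{HH}$-relation and $Q$ has properties (JIDC) and (MIDC), Lemma \ref{L6.2} gives that $\ll^{\triangleleft}$ is an $\mathbf{H}$-order and a dual $\mathbf{R}$-order (hence a dual $\mathbf{H}$-order), and that $\ll^{\triangleright}$ is an $\mathbf{R}$-order (hence an $\mathbf{H}$-order) and a dual $\mathbf{H}$-order; either way $h_{1}(\ll)$ is an $\mathbf{HH}$-order. The successor step repeats this verbatim: if $h_{\alpha}(\ll)$ is an $\mathbf{HH}$-order, then $h_{\alpha+1}(\ll) = (h_{\alpha}(\ll))^{\triangleright}$ or $(h_{\alpha}(\ll))^{\triangleleft}$, again an $\mathbf{HH}$-order by Lemma \ref{L6.2}.

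The only delicate point is the limit step, and it is where I would concentrate the care. For a limit ordinal $\beta$ the series satisfies $h_{\beta}(\ll) = \vee_{\alpha < \beta} h_{\alpha}(\ll) = \cup_{\alpha < \beta} h_{\alpha}(\ll)$ by (\ref{3.22}) and (\ref{i}). Because the $h_{\alpha}$ form an ascending series in $\mathrm{Rad}$, the relations $h_{\alpha}(\ll)$ increase with $\alpha$ under $\subseteq$; hence their union is an order, for if $a\, h_{\beta}(\ll)\, b$ and $b\, h_{\beta}(\ll)\, c$ then both pairs already lie in a single $h_{\alpha'}(\ll)$ (take the larger index), whose transitivity yields $a\, h_{\alpha'}(\ll)\, c$ and so $a\, h_{\beta}(\ll)\, c$. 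Moreover, by Theorem \ref{P3.12}(i) the sets $L_{\mathbf{H}}$ and $L_{d\mathbf{H}}$ are closed under $\vee$, so the union of the $\mathbf{H}$-relations (resp. dual $\mathbf{H}$-relations) $h_{\alpha}(\ll)$ is again an $\mathbf{H}$-relation (resp. dual $\mathbf{H}$-relation). Thus $h_{\beta}(\ll)$ is an $\mathbf{HH}$-order, completing the induction; taking $\alpha = \gamma$ shows that $\overline{\ll}$ is an $\mathbf{HH}$-order, which is the claim.

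The substance of the argument is carried entirely by Lemma \ref{L6.2} — the fact that, under (JIDC) and (MIDC), the two completion operations $\triangleright$ and $\triangleleft$ preserve the $\mathbf{HH}$ property — together with the observation that this property is inherited by the ascending unions produced at limit ordinals (Theorem \ref{P3.12}, plus the elementary stability of orders under increasing unions). Once the $\mathbf{HH}$ property of $\overline{\ll}$ is secured, the passage to an $\mathbf{RR}$-order and the identification with $\widetilde{\ll}$ are purely formal, via Corollary \ref{pr3} and Proposition \ref{L3.3}.
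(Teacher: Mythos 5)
Your proof is correct and follows essentially the same route as the paper: a transfinite induction along the superposition $G_{\triangleright\triangleleft}$-series, with Lemma \ref{L6.2} carrying the base and successor steps and the increasing-union argument handling limit ordinals, followed by Corollary \ref{pr3} and Proposition \ref{L3.3} to conclude that $\overline{\ll}=\widetilde{\ll}$ is an $\mathbf{RR}$-order. The only cosmetic differences are that you invoke Proposition \ref{L3.3}(i) where the paper appeals directly to the maximality condition (\ref{8.1}), and that you spell out the transitivity of the increasing union at limit stages, a detail the paper leaves implicit.
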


\begin{proof}
Let $Q$ have properties (JIDC) and (MIDC) and let $G_{\triangleright
\triangleleft}=\{f_{_{L_{\triangleright}}},f_{_{L_{\triangleleft}}}\}.$ By
Lemma \ref{L3.1}, the radical maps $f_{_{L_{\triangleright}}}%
,f_{_{L_{\triangleleft}}}$ on $Q$ act by $f_{_{L_{\triangleright}}}(\ll)=$
$\ll^{\triangleright}$ and $f_{_{L_{\triangleleft}}}(\ll)=$ $\ll
^{\triangleleft}.$ Let $\ll$ be an $\mathbf{HH}$-relation. By Lemma
\ref{L6.2}, $\ll^{\triangleleft}$ is an $\mathbf{H}$-order and a
dual\ $\mathbf{R}$-order, and $\ll^{\triangleright}$ is an $\mathbf{R}$-order
and a dual $\mathbf{H}$-order. So $f_{_{L_{\triangleleft}}}(\ll)=$
$\ll^{\triangleleft}$ and $f_{_{L_{\triangleright}}}(\ll)=$ $\ll
^{\triangleright}$ are $\mathbf{HH}$-orders.

Let $\left(  h_{\alpha}\right)  _{1\leq\alpha\leq\gamma}$ be a maximal
ascending superposition $G_{\triangleright\triangleleft}$-series Set
$\ll_{\alpha}=h_{\alpha}(\ll)$. If $\ll_{\alpha}$ is an $\mathbf{HH}$-order,
then $\ll_{\alpha+1}$ is either $f_{_{L_{\triangleright}}}(\ll_{\alpha})$, or
$f_{_{L_{\triangleleft}}}(\ll_{\alpha}),$ so that it is an $\mathbf{HH}$-order
by above.

Let $\beta$ be a limit ordinal. By (\ref{3.33}), $\ll_{\beta}=h_{\beta}%
(\ll)\overset{(\ref{3.33})}{=}\vee_{_{\alpha<\beta}}h_{\alpha}(\ll
)=\vee_{_{\alpha<\beta}}\ll_{\alpha}.$ If $a\ll_{\beta}b$ in $Q$, it follows
from (\ref{i}) that $a\ll_{\alpha}b$ for some $\alpha<\beta.$ As $\ll_{\alpha
}$ is an $\mathbf{HH}$-order, $a\wedge z\ll_{\alpha}b\wedge z$ and $a\vee
z\ll_{\alpha}b\vee z$ for all $z\in Q.$ Hence, by (\ref{i}), $a\wedge
z\ll_{\beta}b\wedge z$ and $a\vee z\ll_{\beta}b\vee z.$ Thus $\ll_{\beta}$ is
an $\mathbf{HH}$-order. So, by induction, $\overline{\ll}=h_{\gamma}(\ll)$ is
an $\mathbf{HH}$-order.

By (\ref{8.1}), $\overline{\ll}^{\triangleleft}=f_{_{L_{\triangleleft}}%
}(\overline{\ll})=f_{_{L_{\triangleleft}}}(h_{\gamma}(\ll))=h_{\gamma}%
(\ll)=\overline{\ll}$ and $\overline{\ll}^{\triangleright}%
=f_{_{L_{\triangleright}}}(\overline{\ll})=f_{_{L_{\triangleright}}}%
(h_{\gamma}(\ll))=h_{\gamma}(\ll)=\overline{\ll}.$ Then, by Corollary
\ref{pr3}, $\overline{\ll}$ is an $\mathbf{RR}$-order. So $\overline{\ll}$ is
a $\mathbf{TT}$-order. By Proposition \ref{L3.3}, $\overline{\ll}=$
$\widetilde{\ll}.$
\end{proof}

\begin{corollary}
\label{C8.1}Let a modular complete lattice $Q$ have properties \emph{(JIDC)}
and \emph{(MIDC) (}in particular\emph{, }$Q$ is a complete chain\emph{).} Let
$<_{\mathfrak{g}}$ and $<_{\mathfrak{c}}$ be the relations in $Q$ defined in
Definition \emph{\ref{D7.1}. }Then $\overline{<_{\mathfrak{g}}}%
=\widetilde{<_{\mathfrak{g}}}$ and $<_{\mathfrak{c}}=\overline{<_{\mathfrak{c}%
}}=\widetilde{<_{\mathfrak{c}}}$ are $\mathbf{RR}$-orders.
\end{corollary}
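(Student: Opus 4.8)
The plan is to reduce everything to results already established for modular lattices satisfying (JIDC) and (MIDC), so that essentially no fresh computation is required. First I would dispose of the parenthetical special case: a complete chain is modular and, as noted just after (\ref{6.9}), satisfies (JID) and (MID), hence a fortiori (JIDC) and (MIDC); thus the chain case is subsumed under the stated hypotheses and needs no separate treatment.

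For the gap relation the argument is immediate. Since $Q$ is modular, Corollary \ref{C6.1} shows that $<_{\mathfrak{g}}$ is an $\mathbf{HH}$-relation. Because $Q$ also has properties (JIDC) and (MIDC), Proposition \ref{P7.1} applies directly and yields both that $\overline{<_{\mathfrak{g}}}=\widetilde{<_{\mathfrak{g}}}$ and that this common relation is an $\mathbf{RR}$-order. This settles the first assertion.

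For the continuity relation, Theorem \ref{T6.2} already tells us that $<_{\mathfrak{c}}$ is an $\mathbf{RR}$-order; the only thing left to show is that neither the $\widetilde{(\cdot)}$ nor the $\overline{(\cdot)}$ construction enlarges it. For $\widetilde{<_{\mathfrak{c}}}$ I would recall, from the remark following Definition \ref{D2.2}, that every $\mathbf{R}$-order is a $\mathbf{T}$-order and every dual $\mathbf{R}$-order a dual $\mathbf{T}$-order, so an $\mathbf{RR}$-order is in particular a $\mathbf{TT}$-order. By Proposition \ref{P3.7}, $\widetilde{<_{\mathfrak{c}}}$ is the smallest $\mathbf{TT}$-order majorizing $<_{\mathfrak{c}}$; since $<_{\mathfrak{c}}$ is itself a $\mathbf{TT}$-order majorizing $<_{\mathfrak{c}}$, minimality gives $\widetilde{<_{\mathfrak{c}}}\subseteq{<_{\mathfrak{c}}}$, and together with the general inclusion ${<_{\mathfrak{c}}}\subseteq\widetilde{<_{\mathfrak{c}}}$ this forces $\widetilde{<_{\mathfrak{c}}}={<_{\mathfrak{c}}}$. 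For $\overline{<_{\mathfrak{c}}}$, Proposition \ref{pr7}(i) supplies $<_{\mathfrak{c}}={<_{\mathfrak{c}}^{\triangleleft}}={<_{\mathfrak{c}}^{\triangleright}}$, so $<_{\mathfrak{c}}$ is one of the relations $\prec$ satisfying ${<_{\mathfrak{c}}}\subseteq\prec={\prec^{\triangleright}}={\prec^{\triangleleft}}$; as $\overline{<_{\mathfrak{c}}}$ is by Proposition \ref{L3.3}(i) the smallest such relation and ${<_{\mathfrak{c}}}\subseteq\overline{<_{\mathfrak{c}}}$ always holds, the same minimality argument yields $\overline{<_{\mathfrak{c}}}={<_{\mathfrak{c}}}$.

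I expect no genuinely hard step here; the whole proof is an orchestration of the minimality characterizations of $\widetilde{\ll}$ (Proposition \ref{P3.7}) and $\overline{\ll}$ (Proposition \ref{L3.3}(i)) with Corollary \ref{C6.1}, Theorem \ref{T6.2}, Proposition \ref{pr7} and Proposition \ref{P7.1}. The only point demanding care is to invoke the correct "smallest-majorizing" statement in each case and to verify that $<_{\mathfrak{c}}$ already lies in the relevant fixed class — that it is a $\mathbf{TT}$-order and is fixed under both $\triangleright$ and $\triangleleft$ — so that the two constructions collapse to the identity on it.
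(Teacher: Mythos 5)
Your proposal is correct and follows essentially the same route as the paper: Corollary \ref{C6.1} plus Proposition \ref{P7.1} for $<_{\mathfrak{g}}$, and Theorem \ref{T6.2} plus the observation that both closure constructions already fix $<_{\mathfrak{c}}$. The only cosmetic difference is in citations: where the paper gets $<_{\mathfrak{c}}=\,<_{\mathfrak{c}}^{\triangleleft}=\,<_{\mathfrak{c}}^{\triangleright}$ from Corollary \ref{pr3} and then invokes (\ref{3.1a}) and Proposition \ref{P7.1}, you reach the same fixed-point conclusions via Proposition \ref{pr7}(i) together with the minimality characterizations of $\widetilde{\ll}$ and $\overline{\ll}$ in Propositions \ref{P3.7} and \ref{L3.3}(i) --- interchangeable ingredients for the same argument.
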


\begin{proof}
By Corollary \ref{C6.1}, $<_{\mathfrak{g}}$ is an $\mathbf{HH}$-relation, as
$Q$ is modular. Since $Q$ has properties (JIDC) and (MIDC), Proposition
\ref{P7.1} implies that $\overline{<_{\mathfrak{g}}}=$
$\widetilde{<_{\mathfrak{g}}}$ is an $\mathbf{RR}$-order.

By Theorem \ref{T6.2}, $\leq_{\mathfrak{c}}$ is an $\mathbf{RR}$-order, as $Q$
is modular and has properties (JIDC) and (MIDC)$.$ Hence $<_{\mathfrak{c}}$
$=$ $<_{\mathfrak{c}}^{\triangleleft}$ $=$ $<_{\mathfrak{c}}^{\triangleright}$
by Corollary \ref{pr3}. So $<_{\mathfrak{c}}=$ $\overline{<_{\mathfrak{c}}}$
by (\ref{3.1a}). It also follows from Proposition \ref{P7.1} that
$\overline{\ll_{\mathfrak{c}}}$ $=$ $\widetilde{\ll_{\mathfrak{c}}}.$
\end{proof}

\begin{problem}
\label{Pr8.1}\emph{Let a modular lattice do not have properties (JIDC) and
(MIDC)}$.$\emph{ Will }$\overline{<_{\mathfrak{g}}}=$\emph{ }%
$\widetilde{<_{\mathfrak{g}}}$\emph{ and }$<_{\mathfrak{c}}=$\emph{
}$\overline{<_{\mathfrak{c}}}=$\emph{ }$\widetilde{<_{\mathfrak{c}}}?$
\emph{Will they\ still be} $\mathbf{RR}$-\emph{orders?}
\end{problem}

Let $\ll$ belong to Ref$(Q).$ We say that a chain $C$ in $Q$ is
\textit{complete }$\ll$\textit{-gap dense} if,
\begin{equation}
\text{for all }z<w\text{ in }C\text{ there are }u\ll v\text{ in }[z,w]_{_{C}%
}\text{ such that }[u,v]_{_{C}}\text{\emph{ }is a gap in }C\emph{.}
\label{8.2}%
\end{equation}
Consider the following reflexive relation $\ll_{\text{g-d}}$ on $Q$: for
$a<b,$ we write%
\begin{equation}
a\ll_{\text{g-d}}b\text{ if there is a complete }\ll\text{-gap dense chain
from }a\text{ to }b. \label{3.x}%
\end{equation}

\begin{example}
\label{E6.1}The relation $\ll_{\text{\emph{g-d}}}$ in $Q$ can be neither
contiguous\emph{, }nor expanded. \emph{Indeed,}\smallskip

$1)$\emph{ Let} $Q=\{\mathbf{0},a,b,\mathbf{1\},}$ $\mathbf{0}<a<\mathbf{1,}$
$\mathbf{0}<b<\mathbf{1.}$ \emph{Let }$\ll$ \emph{be a reflexive order in }%
$Q$\emph{ such that }$\mathbf{0}\ll a\ll\mathbf{1.}$ \emph{Then }%
$\ll_{\emph{g-d}}=$ $\ll$ \emph{and} $\mathbf{0}\ll_{\emph{g-d}}\mathbf{1.}$
\emph{However} $[\mathbf{0,1}]\nsubseteq\lbrack\ll_{\emph{g-d}},\mathbf{1].}$
\emph{So }$\ll_{\emph{g-d}}$\emph{ }is not contiguous.\smallskip

$2)$\emph{ Let }$\ll$ \emph{be} \emph{the relation considered in Example
\ref{E3.2}. Then }$\ll_{\emph{g-d}}=$ $\ll$ is not expanded\emph{.
\ \ }$\blacksquare$\emph{\smallskip}
\end{example}

Note that $\ll_{\text{g-d}}$ is an order. Each complete upper (lower) $\ll
$-gap chain is $\ll$-gap dense.

The proof of the following lemma is evident.

\begin{lemma}
\label{L3.11}Let $T$ be a chain from $a$ to $b$ such that each $t\in
T\diagdown\{b\}$ has an immediate successor $t_{s}$ in $T,$ i.e.\emph{,}
$[t,t_{s}]_{_{T}}=\{t,t_{s}\}$ is a gap in $T.$ Let $\ll$ belong to
\emph{Ref(}$Q)$ and suppose that\emph{,} for each $t\in T\diagdown\{b\},$
there is a $\ll$-gap dense chain $C_{t}$ in $Q$ from $t$ to $t_{s}.$ Set
$C_{b}=\{b\}.$ Then\emph{\ }the chain $C=\cup_{t\in T}C_{t}$ is $\ll$-gap dense.
\end{lemma}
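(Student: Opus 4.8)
The plan is to verify the gap-density condition (\ref{8.2}) directly for the chain $C=\cup_{t\in T}C_{t}$, using the structure of $T$ as a chain whose every element (except $b$) has an immediate successor. First I would note that $C$ is indeed a chain: for $x,y\in C$ with $x\in C_{t}$ and $y\in C_{u}$, either $t=u$ (so $x,y$ are comparable in $C_{t}$), or $t\neq u$, say $t<u$ in $T$, in which case $x\leq t_{s}\leq u\leq y$ because $[t,t_{s}]_{_{T}}$ is a gap, giving comparability. So $C$ is linearly ordered.

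The main work is verifying (\ref{8.2}). I would fix $z<w$ in $C$ and locate $z$ in some $C_{t}$ with $t\in T\diagdown\{b\}$ (if $z=b$ there is nothing to check since $z<w$ is impossible). The key case-split is on whether $w$ lies in the same $C_{t}$ as $z$ or in a later block. If $w\in C_{t}$, then $[z,w]_{_{C}}=[z,w]_{_{C_{t}}}$ and I can apply the hypothesis that $C_{t}$ is $\ll$-gap dense directly to the pair $z<w$, producing $u\ll v$ in $[z,w]_{_{C_{t}}}$ with $[u,v]_{_{C_{t}}}$ a gap; since $[u,v]_{_{C}}=[u,v]_{_{C_{t}}}$ (no element of another block can sit strictly between them, by the comparability argument above), this is also a gap in $C$. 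If instead $w\notin C_{t}$, then $t_{s}\leq w$, so $z<t_{s}\leq w$ and the subchain $[z,t_{s}]_{_{C_{t}}}\subseteq[z,w]_{_{C}}$; applying $\ll$-gap density of $C_{t}$ to the pair $z<t_{s}$ (valid as long as $z\neq t_{s}$; if $z=t_{s}$ one shifts to the successor block, which exists and contributes its own gap-dense structure) yields $u\ll v$ inside $[z,t_{s}]_{_{C_{t}}}\subseteq[z,w]_{_{C}}$ with $[u,v]_{_{C}}$ a gap.

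The step I expect to require the most care is confirming that a gap in the block $C_{t}$ remains a gap in the whole chain $C$ — i.e., that no element of a neighboring block can be inserted strictly between $u$ and $v$. This follows from the immediate-successor structure of $T$: any element of $C$ lying strictly below $t_{s}$ must belong to $C_{t}$ itself (since the only elements below $t_{s}$ from other blocks $C_{r}$ are $\leq r_{s}\leq t$, hence $\leq u$), and any element from a block $C_{u'}$ with $u'>t$ is $\geq u'\geq t_{s}>v$. Thus $[u,v]_{_{C}}=[u,v]_{_{C_{t}}}$ remains a gap. Because the lemma's statement asserts the proof is evident, I would keep this verification brief, merely pointing out that the block decomposition $C=\cup_{t\in T}C_{t}$ together with the gap structure of $T$ localizes every interval $[z,w]_{_{C}}$ into at most the relevant initial block $C_{t}$, where the $\ll$-gap dense hypothesis applies verbatim. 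Hence $C$ is $\ll$-gap dense, completing the argument.
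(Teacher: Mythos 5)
Your proof is correct and is exactly the direct verification the paper has in mind: the paper states this lemma without proof (calling it evident), and your block decomposition together with the key observation that a gap in $C_{t}$ survives in $C$ --- since any element of an earlier block $C_{r}$ is $\leq r_{s}\leq t\leq u$ and any element of a later block is $\geq t_{s}\geq v$ --- is precisely that evident argument. One pedantic caveat: the paper's ``chain from $t$ to $t_{s}$'' only requires $\wedge C_{t}=t$ and $\vee C_{t}=t_{s}$, so $C_{t}$ need not contain its endpoints; hence where you apply gap-density to the pair $z<t_{s}$ (or ``shift'' $z=t_{s}$ into the successor block, which need not contain $t_{s}$) you should instead choose an actual element $w^{\prime}\in C_{t}$ with $z<w^{\prime}$ (one exists, else $\vee C_{t}\leq z<t_{s}$), respectively any pair of elements of $C_{t_{s}}$ below $w$ --- a one-line repair that leaves your argument unchanged.
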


\begin{corollary}
\label{C3.3}\emph{(i) }For each $\ll$ from \emph{Ref(}$Q),$ $\ll$ $\subseteq$
$\ll_{\text{\emph{g-d}}}=(\ll_{\text{\emph{g-d}}})^{\triangleright}%
=(\ll_{\text{\emph{g-d}}})^{\triangleleft}.$ So $\overline{\ll
_{\text{\emph{g-d}}}}=$ $\ll_{\text{\emph{g-d}}}$.\smallskip

\emph{(ii) }For each $\ll$ from \emph{Ref(}$Q),$ $\overline{\ll}$ $\subseteq$
$\ll_{\text{\emph{g-d}}}.$
\end{corollary}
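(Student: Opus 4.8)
The plan is to establish the two fixed-point identities in part (i) first, and then to obtain both the equality $\overline{\ll_{\text{g-d}}}=\ll_{\text{g-d}}$ and the containment in part (ii) as purely formal consequences of the minimality property of $\overline{\,\cdot\,}$ recorded in Proposition \ref{L3.3}(i). That proposition says $\overline{\ll}$ is the smallest relation $\prec$ with $\ll$ $\subseteq$ $\prec$ $=$ $\prec^{\triangleright}=$ $\prec^{\triangleleft}$, so once $\ll_{\text{g-d}}$ is shown to be a relation of this kind (once for the seed $\ll_{\text{g-d}}$ itself, once for the original $\ll$), the rest is immediate.

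First I would dispose of $\ll$ $\subseteq$ $\ll_{\text{g-d}}$: for $a\ll b$ with $a<b$, the two-point chain $\{a,b\}$ is complete and $\ll$-gap dense, since its only pair is the gap $[a,b]$ and $a\ll b$, so $a\ll_{\text{g-d}}b$ by (\ref{3.x}) (the reflexive case is trivial). The substantive step is the pair of identities $\ll_{\text{g-d}}=(\ll_{\text{g-d}})^{\triangleright}=(\ll_{\text{g-d}})^{\triangleleft}$, in each of which the inclusion ``$\subseteq$'' is (\ref{2,8}), so only the reverse inclusions require work. For $(\ll_{\text{g-d}})^{\triangleright}$ $\subseteq$ $\ll_{\text{g-d}}$ I would take $a\,(\ll_{\text{g-d}})^{\triangleright}\,b$ and a complete upper $\ll_{\text{g-d}}$-gap chain $T$ from $a$ to $b$: each $t\in T\diagdown\{b\}$ has an immediate $\ll_{\text{g-d}}$-successor $t_{s}$ in $T$ with $[t,t_{s}]_{_{T}}$ a gap and $t\ll_{\text{g-d}}t_{s}$. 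By (\ref{3.x}) this last relation furnishes a complete $\ll$-gap dense chain $C_{t}$ from $t$ to $t_{s}$; putting $C_{b}=\{b\}$, I would form $C=\cup_{t\in T}C_{t}$. Lemma \ref{L6.1} then shows $C$ is complete, and Lemma \ref{L3.11} shows $C$ is $\ll$-gap dense, so $C$ witnesses $a\ll_{\text{g-d}}b$. The identity for $\triangleleft$ follows by the dual argument (the predecessor versions of Lemmas \ref{L6.1} and \ref{L3.11}), or directly by the Duality Principle.

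Finally I would read off the remaining assertions. Since $\ll_{\text{g-d}}$ $\subseteq$ $\ll_{\text{g-d}}=(\ll_{\text{g-d}})^{\triangleright}=(\ll_{\text{g-d}})^{\triangleleft}$, the relation $\ll_{\text{g-d}}$ is itself an admissible $\prec$ in Proposition \ref{L3.3}(i) for the seed $\ll_{\text{g-d}}$, whence $\overline{\ll_{\text{g-d}}}$ $\subseteq$ $\ll_{\text{g-d}}$; combined with $\ll_{\text{g-d}}$ $\subseteq$ $\overline{\ll_{\text{g-d}}}$ this gives $\overline{\ll_{\text{g-d}}}=\ll_{\text{g-d}}$. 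For part (ii), part (i) shows $\ll$ $\subseteq$ $\ll_{\text{g-d}}=(\ll_{\text{g-d}})^{\triangleright}=(\ll_{\text{g-d}})^{\triangleleft}$, so $\ll_{\text{g-d}}$ is an admissible $\prec$ for the seed $\ll$, and the minimality in Proposition \ref{L3.3}(i) forces $\overline{\ll}$ $\subseteq$ $\ll_{\text{g-d}}$.

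The main obstacle is the gluing step in the second paragraph: one must verify simultaneously that the concatenated chain $C=\cup_{t}C_{t}$ is both complete and $\ll$-gap dense. Completeness is exactly what Lemma \ref{L6.1} provides (using that $T$ is complete, every non-top $t$ has an immediate $T$-successor $t_{s}$, and each $C_{t}$ is complete), while $\ll$-gap density is exactly Lemma \ref{L3.11}; the only point to check by hand is that $\wedge C=a$ and $\vee C=b$, which holds because $a=\wedge T\in T$ and $b=\vee T\in T$ by completeness of $T$. Once both lemmas are invoked, no further chain surgery is needed.
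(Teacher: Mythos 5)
Your proof is correct and follows essentially the same route as the paper's: the same gluing of complete $\ll$-gap dense chains $C_{t}$ over an upper $\ll_{\text{g-d}}$-gap chain $T$ via Lemma \ref{L3.11}, duality for the $\triangleleft$ case, and Proposition \ref{L3.3}(i) for part (ii). Your only deviations are cosmetic improvements: you explicitly invoke Lemma \ref{L6.1} for completeness of $C=\cup_{t\in T}C_{t}$ (which the paper leaves implicit in its citation of Lemma \ref{L3.11}), and you derive $\overline{\ll_{\text{g-d}}}=\ll_{\text{g-d}}$ from the minimality in Proposition \ref{L3.3}(i) rather than citing (\ref{3.1a}) directly -- the two are equivalent.
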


\begin{proof}
(i) Clearly, $\ll$ $\subseteq$ $\ll_{\text{g-d}}$ and $\ll_{\text{g-d}}$ is an
order.\emph{ }Set $\prec$ $=$ $\ll_{\text{g-d}}.$ If $a$ $\prec
^{\triangleright}$ $b$ then there is a complete upper $\prec$-gap chain $T$
from $a$ to $b,$ i.e., each $t\in T\diagdown\{b\}$ has the immediate $\prec
$-successor $t_{s}$ in $T.$ Then there is a complete $\ll$-gap dense chain
$C^{t}$ in $Q$ from $t$ to $t_{s}.$ Set $C^{b}=\{b\}.$ By Lemma \ref{L3.11},
$C=\cup_{_{t\in T}}C^{t}$ is a complete $\ll$-gap dense chain from $a$ to $b.$
Thus $a\prec b$, so that $\prec^{\triangleright}$ $\subseteq$ $\prec.$ As
$\prec$ $\subseteq$ $\prec^{\triangleright}$ by (\ref{2,8}), we have
$\prec^{\triangleright}$ $=$ $\prec.$ Similarly, $\prec^{\triangleleft}$ $=$
$\prec.$ By (\ref{3.1a}), $\overline{\prec}$ $=$ $\prec.$

(ii) By Proposition \ref{L3.3}(i), $\overline{\ll}$\textbf{ }is\emph{ }the
smallest of all relations $\prec$ satisfying the condition $\ll$ $\subseteq$
$\prec$ $=$ $\prec^{\triangleright}=$ $\prec^{\triangleleft}.$ As the relation
$\ll_{\text{g-d}}$ satisfies this condition by (i), $\overline{\ll}$
$\subseteq$ $\ll_{\text{g-d}}.\bigskip$
\end{proof}

In general, $\overline{\ll}\neq$ $\ll_{\text{g-d}}$ in $Q,$ even if $Q$ is a
chain. To show this, consider the gap relation $<_{\mathfrak{g}}$ (see
Definition \ref{D7.1}). First, it should be noted that, although the relations
$<_{\mathfrak{g}}$ and $<_{\mathfrak{c}}$ are not compatible (if
$a<_{\mathfrak{c}}b$ then $a\not <  _{\mathfrak{g}}b$ and vice versa), the
relations $\overline{<_{\mathfrak{g}}}$ and $<_{\mathfrak{c}}$ can be
compatible, as the lattice $Q$ in Example \ref{E6.3} shows: $\mathbf{0}%
<_{\mathfrak{c}}\mathbf{1}$ and $\mathbf{0}$ $\overline{<_{\mathfrak{g}}}$
$\mathbf{1},$ as $\mathbf{0}<_{\mathfrak{g}}a$ and $a<_{\mathfrak{g}%
}\mathbf{1}$\emph{.}

\begin{proposition}
\label{Pr3.1}If $Q$ has no continuous chains \emph{(}see Definition
\emph{\ref{D7.1}) }then $(<_{\mathfrak{g}})_{\text{\emph{g-d}}}$ $=$ $\leq.$
\end{proposition}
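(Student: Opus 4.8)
The plan is to establish the equality $(<_{\mathfrak{g}})_{\text{g-d}}=\,\leq$ by proving the two inclusions separately, the first of which is immediate. By its very definition (see (\ref{3.x})) the relation $(<_{\mathfrak{g}})_{\text{g-d}}$ is reflexive and satisfies $a\,(<_{\mathfrak{g}})_{\text{g-d}}\,b$ only when $a\leq b$; hence $(<_{\mathfrak{g}})_{\text{g-d}}\,\subseteq\,\leq$. Everything therefore reduces to the reverse inclusion $\leq\,\subseteq\,(<_{\mathfrak{g}})_{\text{g-d}}$: given $a\leq b$ in $Q$, with $a=b$ handled by reflexivity, I must exhibit in the case $a<b$ a complete $<_{\mathfrak{g}}$-gap dense chain from $a$ to $b$, i.e.\ a complete chain $C$ from $a$ to $b$ satisfying condition (\ref{8.2}) for $\ll\,=\,<_{\mathfrak{g}}$.

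First I would fix $a<b$ and, applying Lemma \ref{L2.1p}(i) with $G=Q$ to the chain $\{a,b\}$, choose a maximal chain $C$ in $[a,b]$ containing $a$ and $b$; since $Q$ is complete, Lemma \ref{L2.1p}(ii) shows $C$ is complete, and $\wedge C=a$, $\vee C=b$, so $C$ is a complete chain from $a$ to $b$. The structural observation that drives the proof is that maximality of $C$ converts gaps of $C$ into gaps of $Q$: if $u<v$ are consecutive in $C$, so that $[u,v]_{_{C}}=\{u,v\}$, and some $t\in Q$ satisfied $u<t<v$, then $C\cup\{t\}$ would be a strictly larger chain inside $[a,b]$, contradicting maximality. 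Hence $[u,v]$ is a gap in $Q$, that is, $u<_{\mathfrak{g}}v$ by Definition \ref{D7.1}. Thus every gap of the maximal chain $C$ is automatically a $<_{\mathfrak{g}}$-related pair.

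It then remains to verify the gap-density condition (\ref{8.2}). Given $z<w$ in $C$, I would pass to the sub-chain $[z,w]_{_{C}}=C\cap[z,w]$; using completeness of $C$ one checks it is itself a complete chain with $\wedge[z,w]_{_{C}}=z$ and $\vee[z,w]_{_{C}}=w$ (for $H\subseteq[z,w]_{_{C}}$ the elements $\wedge H,\vee H$ lie in $C$ and between $z$ and $w$). Here the hypothesis enters: were $[z,w]_{_{C}}$ without a gap, it would be a complete continuous chain from $z$ to $w$ with $z<w$, contradicting the assumption that $Q$ has no continuous chains. Therefore $[z,w]_{_{C}}$ contains a pair of consecutive elements $u<v$; by the observation above $u<_{\mathfrak{g}}v$, while $[u,v]_{_{C}}$ is a gap of $C$ lying in $[z,w]_{_{C}}$. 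This is precisely (\ref{8.2}), so $C$ is $<_{\mathfrak{g}}$-gap dense and $a\,(<_{\mathfrak{g}})_{\text{g-d}}\,b$, completing the reverse inclusion.

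The routine points, namely existence and completeness of $C$ and completeness of each sub-interval $[z,w]_{_{C}}$, are supplied by Lemma \ref{L2.1p} together with the definition of completeness. The step needing genuine care, and the crux of the argument, is the interplay between the two notions of gap: it is the maximality of $C$ that upgrades an internal $C$-gap to an honest gap of $Q$, and hence to a $<_{\mathfrak{g}}$-relation, and the hypothesis ``no continuous chains'' is used precisely to force such $C$-gaps to appear inside every sub-interval $[z,w]_{_{C}}$.
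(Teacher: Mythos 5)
Your proof is correct and takes essentially the same route as the paper: pick a maximal chain $C$ from $a$ to $b$ (complete by Lemma \ref{L2.1p}) and use the hypothesis that $Q$ has no continuous chains to force a gap inside every interval $[z,w]_{_{C}}$, giving condition (\ref{8.2}) for $<_{\mathfrak{g}}$. The only difference is that you make explicit a step the paper's proof leaves implicit, namely that maximality of $C$ upgrades any gap of the chain $C$ to a genuine gap of $Q$, so that the pair $u<v$ indeed satisfies $u<_{\mathfrak{g}}v$.
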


\begin{proof}
For $a<b$ in $Q,$ let $C$ be a maximal chain from $a$ to $b.$ By Lemma
\ref{L2.1p}, $C$ is complete. By (\ref{3.x}), $C$ is $<_{\mathfrak{g}}$-gap
dense if
\begin{equation}
\text{each interval }[x,y]_{_{C}}\subseteq C\text{ contains a gap }%
[u,v]_{_{Q}}. \label{3.b}%
\end{equation}
Hence, if $C$ is not $\ll_{\mathfrak{g}}$-gap dense, then there is an interval
$[x,y]_{_{C}}$ in $C$ without gaps$.$ So, by Definition \ref{D7.1},
$[x,y]_{_{C}}$ is a continuous chain$,$ a contradiction. Thus $C$ is
$<_{\mathfrak{g}}$-gap dense, so that $a$ $(<_{\mathfrak{g}})_{\text{g-d}}$
$b.$ Hence $\leq$ $\subseteq$ $(<_{\mathfrak{g}})_{\text{g-d}}.$ As we always
have $(<_{\mathfrak{g}})_{\text{g-d}}\subseteq$ $\leq,$ so $(<_{\mathfrak{g}%
})_{\text{g-d}}=$ $\leq.\bigskip$
\end{proof}

We shall now consider an example of a chain, for which $\overline
{<_{\mathfrak{g}}}\neq$ $(<_{\mathfrak{g}})_{\text{g-d}}.$

\begin{example}
\label{e7}\emph{Let }$Q=\{(t;n)$\emph{: }$t\in\left[  0,1\right]
\subset\mathbb{R},$ $n=0,1\}.$ \emph{Let }$\left(  t;n\right)  <\left(
s;m\right)  ,$ \emph{if either }$t<s,$\emph{ or }$t=s,$\emph{ }$n=0$
\emph{and} $m=1$\emph{. Then }$Q$\emph{ is }a complete chain\emph{ from
}$\mathbf{0}=(0;0)$\emph{ to }$\mathbf{1}=(1;1)$\emph{.}

\emph{As each interval }$[(t,0),(t,1)]$ \emph{in} $Q$ \emph{is a gap, all
intervals in }$Q$ \emph{have gaps. So, by (\ref{3.b}), }$Q$ \emph{is
}$<_{\mathfrak{g}}$\emph{-gap dense. By Proposition \ref{Pr3.1},
}$(<_{\mathfrak{g}})_{\text{\emph{g-d}}}=$ $\leq.$ \emph{Hence }$\mathbf{0}$
$(<_{\mathfrak{g}})_{\text{\emph{g-d}}}$ $\mathbf{1.}$

\emph{On the hand, it is easy to see that} $<_{\mathfrak{g}}\,=\,\overline
{<_{\mathfrak{g}}}.$ \emph{So }$\mathbf{0}$ $\not <  _{\mathfrak{g}}$
$\mathbf{1.}$ \emph{Thus} $\overline{<_{\mathfrak{g}}}\neq$ $(<_{\mathfrak{g}%
})_{\text{\emph{g-d}}}.$ \ \ \ $\blacksquare$
\end{example}

Although the relations $\overline{\ll}$ and $\ll_{\text{g-d}}$ do not
coincide, they are closely linked.

\begin{theorem}
\label{T3.8}If $a$ $\overline{\ll}$ $b$ then there is a complete $\ll$-gap
dense chain $C$ from $a$ to $b$ such that $x$ $\overline{\ll}$ $y$ for all
$x<y$ in $C.$
\end{theorem}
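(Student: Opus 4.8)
The plan is to prove the statement by transfinite induction along the maximal ascending superposition $G_{\triangleright\triangleleft}$-series $\left(h_{\alpha}\right)_{1\leq\alpha\leq\gamma}$ that defines $\overline{\ll}$ in (\ref{6.5}). Writing $\ll_{\alpha}=h_{\alpha}(\ll)$, recall from Lemma \ref{L3.1} and (\ref{3.33}) that $\ll_{1}$ is one of $\ll^{\triangleright},\ll^{\triangleleft}$, that $\ll_{\alpha+1}$ is one of $(\ll_{\alpha})^{\triangleright},(\ll_{\alpha})^{\triangleleft}$, that $\ll_{\beta}=\cup_{\alpha<\beta}\ll_{\alpha}$ at limit ordinals $\beta$, and that $\overline{\ll}=\ll_{\gamma}$. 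Since the series is increasing, $\ll_{\alpha}\subseteq\overline{\ll}$ for all $\alpha$, so it suffices to establish the strengthened claim $P_{\alpha}$: \emph{if $a\ll_{\alpha}b$ then there is a complete $\ll$-gap dense chain $C$ from $a$ to $b$ such that $x\ll_{\alpha}y$ for all $x<y$ in $C$.} Taking $\alpha=\gamma$ and using $\ll_{\gamma}=\overline{\ll}$ then yields the theorem, since $x\ll_{\gamma}y$ means $x\,\overline{\ll}\,y$.

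For the base case I would suppose $\ll_{1}=\ll^{\triangleright}$ and $a\ll^{\triangleright}b$, so by (\ref{2.18}) there is a complete upper $\ll$-gap chain $T$ from $a$ to $b$. For $z<w$ in $T$, the immediate $\ll$-successor $z_{s}$ of $z$ lies in $[z,w]_{_{T}}$ and $[z,z_{s}]_{_{T}}$ is a gap with $z\ll z_{s}$; hence $T$ is $\ll$-gap dense in the sense of (\ref{8.2}). Moreover, for $x<y$ in $T$ the restriction $T\cap[x,y]$ is again a complete upper $\ll$-gap chain, so $x\ll^{\triangleright}y$. Thus $P_{1}$ holds, and the case $\ll_{1}=\ll^{\triangleleft}$ follows by duality. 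The limit step is immediate: if $\beta$ is a limit ordinal and $a\ll_{\beta}b$, then by (\ref{i}) there is $\alpha<\beta$ with $a\ll_{\alpha}b$, and the chain supplied by $P_{\alpha}$ is $\ll$-gap dense and satisfies $x\ll_{\alpha}y$, hence $x\ll_{\beta}y$, for all $x<y$, giving $P_{\beta}$.

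The heart of the argument is the successor step; I would take $\ll_{\alpha+1}=(\ll_{\alpha})^{\triangleright}$ (the $\triangleleft$ case being dual). Given $a\ll_{\alpha+1}b$, fix a complete upper $\ll_{\alpha}$-gap chain $T$ from $a$ to $b$. Each $t\in T\diagdown\{b\}$ has an immediate $\ll_{\alpha}$-successor $t_{s}$ with $t\ll_{\alpha}t_{s}$; applying $P_{\alpha}$ to the pair $t\ll_{\alpha}t_{s}$ produces a complete $\ll$-gap dense chain $C_{t}$ from $t$ to $t_{s}$ with $x\ll_{\alpha}y$ for all $x<y$ in $C_{t}$. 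Put $C_{b}=\{b\}$ and $C=\cup_{t\in T}C_{t}$. By Lemma \ref{L6.1} the chain $C$ is complete, and by Lemma \ref{L3.11} it is $\ll$-gap dense. It then remains to check that $x\ll_{\alpha+1}y$ for all $x<y$ in $C$, i.e. that there is a complete upper $\ll_{\alpha}$-gap chain from $x$ to $y$. Setting $p=\vee\{s\in T:s\leq x\}$ and $r=\vee\{s\in T:s\leq y\}$ (both in $T$ by completeness), if $p=r$ then $x,y$ lie in a single cell $C_{p}$ and $x\ll_{\alpha}y$, hence $x\ll_{\alpha+1}y$, by the induction hypothesis; if $p<r$ one bridges $x$ to $y$ by $\{x\}\cup(T\cap[p_{s},r])\cup\{y\}$, which is a complete upper $\ll_{\alpha}$-gap chain because $x\ll_{\alpha}p_{s}$ (both in $C_{p}$), $T\cap[p_{s},r]$ is a complete upper $\ll_{\alpha}$-gap chain, and $r\ll_{\alpha}y$ (both in $C_{r}$). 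This witnesses $x\,(\ll_{\alpha})^{\triangleright}y=x\ll_{\alpha+1}y$, completing $P_{\alpha+1}$.

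The main obstacle is exactly this last verification in the successor step. Although $C$ is $\ll$-gap dense, it is \emph{not} an upper gap chain: as Example \ref{e7} shows, gap density permits elements with no immediate successor, so one cannot simply take $C\cap[x,y]$ as the required upper $\ll_{\alpha}$-gap chain. The delicate point is therefore the extraction of the bridging subchain, combining the cells $C_{p},C_{r}$ furnished by $P_{\alpha}$ at the two ends with the gap chain $T$ in the middle, together with the routine but slightly tedious treatment of the boundary cases in which $x$ or $y$ is an endpoint of its cell (so that it already belongs to $T$), where the corresponding bottom or top bridging piece is simply omitted.
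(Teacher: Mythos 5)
Your proposal is correct and takes essentially the same route as the paper: the identical transfinite induction along the $G_{\triangleright\triangleleft}$-series with the same strengthened hypothesis (your $P_{\alpha}$ is exactly (\ref{3.16})--(\ref{3.17})), the same base and limit steps, and the same gluing $C=\cup_{t\in T}C_{t}$ via Lemmas \ref{L6.1} and \ref{L3.11} in the successor step. The only (minor) divergence is the final verification there: where you construct an explicit witnessing upper $\ll_{\alpha}$-gap chain $\{x\}\cup(T\cap[p_{s},r])\cup\{y\}$ and must handle boundary cases by hand, the paper simply observes $x\ll_{\alpha+1}t_{s}$, $t_{s}\ll_{\alpha+1}t^{\prime}$, $t^{\prime}\ll_{\alpha+1}y$ and invokes transitivity of $\ll_{\alpha+1}$ (an order by Theorem \ref{P2.1}), which avoids that bookkeeping.
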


\begin{proof}
Let $G_{\triangleright\triangleleft}=\{f_{L_{\triangleright}}%
,f_{L_{\triangleleft}}\}$ and $\left(  h_{\alpha}\right)  _{1\leq\alpha
\leq\gamma}$ be a maximal ascending superposition $G_{\triangleright
\triangleleft}$-series, $h_{1}=f_{L_{\triangleleft}}$ and $h_{\gamma
}=f_{_{L_{\triangleright\triangleleft}}}.$ Set $\ll_{\alpha}=h_{\alpha}(\ll)$
and $\overline{\ll}=$ $\ll_{\gamma}.$ Consider the following induction.

Let $a\ll_{_{1}}b,$ i.e., $a\ll^{\triangleleft}b$ for some $a,b\in Q.$ Then
there is a complete lower $\ll$-gap chain $C^{1}$ from $a$ to $b$. As each
$x\in C^{1}\diagdown\{a\}$ has an immediate $\ll$-predecessor $x_{p}$:
$[x_{p},x]_{_{C}}$ is a gap in $C^{1}$ and $x_{p}\ll x$. So $C^{1}$ is $\ll
$-gap dense$.$ For all $x<y$ in $C^{1},$ $[x,y]_{_{C^{1}}}$ is a complete
lower $\ll$-gap chain from $x$ to $y$. Thus $x\ll^{\triangleleft}y.$

Assume that if $a\ll_{\alpha}b,$ for some $a,b\in Q$ and some $\alpha<\gamma,$
then%
\begin{align}
1)\text{ there is a complete }  &  \ll\text{-gap dense chain }C^{\alpha}\text{
from }a\text{ to }b\text{ and}\label{3.16}\\
2)\text{ }x  &  \ll_{\alpha}y\text{ for all }x<y\text{ in }C^{\alpha}.
\label{3.17}%
\end{align}

Let now $a\ll_{\alpha+1}b$, say, $a$ $(\ll_{\alpha})^{\triangleright}b$ for
some $a,b\in Q.$ Then there is a complete upper gap $\ll_{\alpha}$-chain $T$
from $a$ to $b,$ i.e., each $t\in T\diagdown\{b\}$ has an immediate
$\ll_{\alpha}$-successor $t_{s}\in T$ -- $[t,t_{s}]_{_{T}}=\{t,t_{s}\}$ is a
gap in $T$ and $t\ll_{\alpha}t_{s}$. By (\ref{3.16}), for each $t\in
T\diagdown\{b\},$ there is a complete $\ll$-gap dense chain $C_{t}$ from $t$
to $t_{s}$ satisfying (\ref{3.17})$.$ Set $C_{b}=\{b\}.$ By Lemma \ref{L3.11},
$C^{\alpha+1}=\cup_{_{t\in T}}C_{t}$ is a complete $\ll$-gap dense chain from
$a$ to $b$.

Let $x<y$ in $C^{\alpha+1}.$ Then $x\in C_{t},$ $y\in C_{t^{\prime}}$ for some
$t\leq t^{\prime}$ in $T.$ If $t=t^{\prime}$ then $x,y\in C_{t}$ and
$x\ll_{\alpha}y$ , by (\ref{3.17}). Thus $x\ll_{\alpha+1}y$ by (\ref{2,8}). If
$t\neq t^{\prime}$ then either $x<t_{s}<y,$ or $x=t_{s}<y,$ or $x<t_{s}=y.$
Let $x<t_{s}<y.$ As $C_{t},C_{t^{\prime}}$ satisfy (\ref{3.17}), $x\ll
_{\alpha}t_{s}$ and $t^{\prime}\ll_{\alpha}y.$ Hence $x\ll_{\alpha+1}t_{s}$
and $t^{\prime}\ll_{\alpha+1}y$ by (\ref{2,8}). As $T$ is a complete upper gap
$\ll_{\alpha}$-chain, $t_{s}\ll_{\alpha+1}t^{\prime}.$ As $\ll_{\alpha+1}$ is
an order, $x\ll_{\alpha+1}y.$ Similarly, $x\ll_{\alpha+1}y$ when $x=t_{s}<y,$
or $x<t_{s}=y.$ Thus (\ref{3.17}) holds for all $x<y$ in $C^{\alpha+1}$ and
$\ll_{\alpha+1}.$

Let $\beta$ be a limit ordinal and let, for each $\alpha<\beta,$ $a\ll
_{\alpha}b$ imply (\ref{3.16}) and (\ref{3.17}). Let now $a\ll_{\beta}b.$ As
$\ll_{\beta}=h_{\beta}(\ll)\overset{(\ref{3.33})}{=}\vee_{_{\alpha<\beta}%
}h_{\alpha}(\ll)=\vee_{_{\alpha<\beta}}\ll_{\alpha},$ it follows from
(\ref{i}) that $a\ll_{\alpha}b$ for some $\alpha<\beta.$ Hence there is a
complete $\ll$-gap dense chain $C^{\alpha}$ from $a$ to $b$ satisfying
(\ref{3.17}). Set $C^{\beta}=C^{\alpha}.$ Then $C^{\beta}$ satisfies
(\ref{3.16}). Let $x<y$ in $C^{\beta}.$ By (\ref{3.17}), $x\ll_{\alpha}y.$
Hence, by (\ref{i}), $x\ll_{\beta}y.$ Thus (\ref{3.16}) and (\ref{3.17}) hold
for $\beta.$ By induction, if $a\ll_{\gamma}b$ then there is a complete $\ll
$-gap dense chain $C^{\gamma}$ from $a$ to $b$ satisfying (\ref{3.17}) for
$\alpha=\gamma.$ As $\ll_{\gamma}=\overline{\ll},$ the proof is complete.
\end{proof}

\section{Relations in lattices of subspaces of Banach spaces}

Let $X$ be a Banach space. The set Ln$(X)$ of all linear subspaces of $X$ and
the set Cl($X)$ of all closed subspaces of $X$ are complete lattices with
order $\subseteq$,%
\begin{align}
\mathbf{0}  &  =\{0\},\text{ }\mathbf{1}=X,\text{ }\wedge G=\bigcap
\limits_{Y\in G}Y\text{ for a subset }G\neq\varnothing\text{ in Ln(}X)\text{
and Cl(}X),\nonumber\\
\vee G  &  =\sum_{Y\in G}Y\text{ for }G\subseteq\text{ Ln(}X)\text{ and }\vee
G=\overline{\sum_{Y\in G}Y}\text{ for }G\subseteq\text{ Cl(}X). \label{9.0}%
\end{align}
As before, for $L\subseteq M$ in $Q,$ we write%
\begin{equation}
L<_{\mathfrak{g}}M\text{ if either }L=M,\text{ or }[L,M]_{_{Q}}\text{ is a gap
in }Q. \label{9.5}%
\end{equation}
In this section we concentrate on the relation $<_{\mathfrak{g}}$ in the
sublattices of Ln($X)$ and Cl($X).$ We show that all sublattices of Ln($X)$
are modular, so that $<_{\mathfrak{g}}$ is an $\mathbf{HH}$-relation in all of
them$.$ The lattice Cl($X)$ is not modular. Although $<_{\mathfrak{g}}$ is an
$\mathbf{HH}$-relation in many of its sublattices (Corollary \ref{C9.3}),
there are sublattices of Cl($X),$ where it is neither an $\mathbf{H}$-, nor a
dual $\mathbf{H}$-relation (Corollary \ref{C9.1}).

Let $Q$ be a complete sublattice of Ln($X),$ or Cl($X).$ As in (\ref{1}) and
(\ref{1.5}), for $\ll$ from Ref($Q),$
\begin{align}
\lbrack &  \ll,L]_{_{Q}}=\{K\in Q\text{: }K\ll L\mathbf{\},}\text{ }%
[L,\ll]_{_{Q}}=\{K\in Q\text{: }L\ll K\mathbf{\}}\text{ for }L\in
Q;\nonumber\\
\sigma_{_{\ll}}(L)  &  =\wedge\lbrack\ll,L]_{_{Q}}=\cap\{K\text{: }K\in
\lbrack\ll,L]_{_{Q}}\}\text{ and }s^{\ll}(L)=\vee\lbrack L,\ll]_{_{Q}},\text{
so that}\label{3.3}\\
s^{\ll}(L)  &  =\sum{}_{K\in\lbrack L,\ll]_{_{Q}}}K\text{\ in Ln(}X),\text{
and }s^{\ll}(L)=\overline{\sum{}_{K\in\lbrack L,\ll]_{_{Q}}}K}\text{ in
Cl(}X). \label{3.2}%
\end{align}
It follows from Theorem \ref{T3.5} that if $\ll$ is an $\mathbf{H}$-relation
then $\ll^{\triangleright}$ is an $\mathbf{R}$-order in $Q$ and $\mathfrak{r}%
_{_{L}}=s^{\ll^{\triangleright}}(L)$ is the unique $\ll^{\triangleright}%
$-radical in $[L,X]_{_{Q}}$, i.e., $L\ll^{\triangleright}\mathfrak{r}_{_{L}}$
$\overleftarrow{\ll}$ $X.$

If $\ll$ is a dual $\mathbf{H}$-relation then $\ll^{\triangleleft}$ is a dual
$\mathbf{R}$-order in $Q$ and $\mathfrak{p}_{_{L}}=\sigma_{_{\ll
^{\triangleleft}}}(L)$ is the unique dual $\ll^{\triangleleft}$-radical in
$[\{0\},L]_{_{Q}}$, i.e., $\{0\}$ $\overrightarrow{\ll}$ $\mathfrak{p}_{_{L}%
}\ll^{\triangleleft}L.$ By (\ref{2'}) and (\ref{2}),%
\begin{equation}
s^{\ll}(L)\subseteq\mathfrak{r}_{_{L}}\text{ for an }\mathbf{H}\text{-relation
}\ll,\text{ and }\mathfrak{p}_{_{L}}\subseteq\sigma_{_{\ll}}(L)\text{ for a
dual }\mathbf{H}\text{-relation }\ll\text{.} \label{3.4}%
\end{equation}
Thus, for a complete sublattice $Q$ of Ln($X),$ or of Cl($X),$ Theorem
\ref{T3.5} yields

\begin{theorem}
\label{T4.6}\emph{(i) }Let $\ll$ be an $\mathbf{H}$-relation and $L\subseteq
K$ in $Q.\smallskip$

$1)$ If $K\subseteq\mathfrak{r}_{_{L}}$ then there is an ascending $\ll
$-series of spaces in $Q$ from $K$ to $\mathfrak{r}_{_{L}}.$\smallskip

$2)$ If $\mathfrak{r}_{_{L}}\nsubseteq K\neq X$ then $K$ has a $\ll$-successor
$S\in Q$\emph{: }$K\ll S;$ $\mathfrak{r}_{_{L}}$ has no $\ll$%
-successor$.\smallskip$

$3)$ If there is an ascending $\ll$-series of spaces in $Q$ from $L$ to $K$
then $K\subseteq\mathfrak{r}_{_{L}}.\smallskip$

\emph{(ii) }Let $\ll$ be a dual $\mathbf{H}$-relation and $K\subseteq L$ in
$Q.$

$1)$ If $\mathfrak{p}_{_{L}}\subseteq K$ then there is a descending $\ll
$-series of spaces in $Q$ from $K$ to $\mathfrak{p}_{_{L}}.\smallskip$

$2)$ If $\{0\}\neq K\nsubseteq\mathfrak{p}_{_{L}}$ then $K$ has a $\ll
$-predecessor $P\in Q$\emph{: }$P\ll K;$ $\mathfrak{p}_{_{L}}$ has no $\ll
$-predecessor$.\smallskip$

$3)$ If there is a descending $\ll$-series of spaces in $Q$ from $L$ to $K$
then $\mathfrak{p}_{_{L}}\subseteq K.$
\end{theorem}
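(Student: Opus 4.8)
The plan is to derive Theorem \ref{T4.6} as a direct translation of the lattice-theoretic statement Theorem \ref{T3.5} into the concrete setting where $Q$ is a complete sublattice of $\mathrm{Ln}(X)$ or $\mathrm{Cl}(X)$, ordered by inclusion. The essential dictionary is already set up in the excerpt: for an $\mathbf{H}$-relation $\ll$, Theorem \ref{inf}(ii) gives that $\ll^{\triangleright}$ is an $\mathbf{R}$-order, and Theorem \ref{T3.5}(i) supplies the structural facts about $\mathfrak{r}(L)=\vee[\ll^{\triangleright},L]$, which in our lattice is exactly $\mathfrak{r}_{_L}=s^{\ll^{\triangleright}}(L)$ by the definitions in (\ref{3.2}) and (\ref{3.3}). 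So first I would simply observe that $\mathfrak{r}_{_L}$ is the unique $\ll^{\triangleright}$-radical in $[L,X]_{_Q}$ and that, by Proposition \ref{p10}, an ascending $\ll$-series from $a$ to $b$ is the same thing as a complete upper $\ll$-gap chain.

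For part (i), I would handle the three claims in turn. Claim 1) asserts that if $L\subseteq K\subseteq\mathfrak{r}_{_L}$ then there is an ascending $\ll$-series in $Q$ from $K$ to $\mathfrak{r}_{_L}$. This is precisely Theorem \ref{T3.5}(i)2), which says $\mathfrak{r}(K)=\mathfrak{r}(L)$ for all $K\in[L,\mathfrak{r}(L)]$ and that there is an ascending $\ll$-series from $K$ to $\mathfrak{r}(L)$; I would just cite it. Claim 2) asserts that if $\mathfrak{r}_{_L}\nsubseteq K\neq X$ then $K$ has a $\ll$-successor and $\mathfrak{r}_{_L}$ has none; this is Theorem \ref{T3.5}(i)3) verbatim, where $b\in[L,X]\diagdown[\mathfrak{r}(L),X]$ is the condition $\mathfrak{r}_{_L}\nsubseteq K$. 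Claim 3) is the converse direction: if there is an ascending $\ll$-series from $L$ to $K$, then $K\subseteq\mathfrak{r}_{_L}$. Here I would argue that an ascending $\ll$-series from $L$ to $K$ is a complete upper $\ll$-gap chain, so $L\ll^{\triangleright}K$ by the definition (\ref{2.18}); since $\mathfrak{r}_{_L}=\vee[L,\ll^{\triangleright}]$ is the join of all such $K$, we get $K\subseteq\mathfrak{r}_{_L}$ immediately.

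Part (ii) is the dual statement for a dual $\mathbf{H}$-relation, and I would dispatch it by the Duality Principle \cite[Theorem $1.3^{\prime}$]{Sk} already invoked repeatedly in the paper: Theorem \ref{T3.5}(ii) is the dual of Theorem \ref{T3.5}(i), and the descending $\ll$-series / lower $\ll$-gap chain correspondence is Proposition \ref{p10}(ii). Concretely, $\mathfrak{p}_{_L}=\sigma_{_{\ll^{\triangleleft}}}(L)=\wedge[\ll^{\triangleleft},L]$ is the unique dual $\ll^{\triangleleft}$-radical in $[\{0\},L]_{_Q}$, and the three claims match Theorem \ref{T3.5}(ii)2), 3), and the definition (\ref{2.18}) of $\ll^{\triangleleft}$ respectively.

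The only genuine subtlety — and the step I expect to require a word of care rather than real work — is making sure the lattice operations in $Q$ are the ones the abstract theorems assume. Since $Q$ is stipulated to be a \emph{complete} sublattice of $\mathrm{Ln}(X)$ or $\mathrm{Cl}(X)$, the meets and joins computed inside $Q$ coincide with those of (\ref{9.0}) (intersection, and sum resp.\ closed sum), so that $\mathfrak{r}_{_L}$ and $\mathfrak{p}_{_L}$ in (\ref{3.2})–(\ref{3.3}) really are the abstract $\vee[\ll^{\triangleright},L]$ and $\wedge[\ll^{\triangleleft},L]$, and the completeness needed to form the gap chains and apply Theorem \ref{T3.5} is available. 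Once this identification is recorded, the proof is nothing more than quoting Theorem \ref{T3.5} together with Proposition \ref{p10} and the definitions in (\ref{2.18}); no separate argument is needed, so I would write the proof as essentially a one-paragraph citation.
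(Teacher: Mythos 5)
Your proposal is correct and takes essentially the same route as the paper, which proves Theorem \ref{T4.6} by the very same one-line reduction (``Theorem \ref{T3.5} yields''), using the identifications $\mathfrak{r}_{_{L}}=s^{\ll^{\triangleright}}(L)=\vee\lbrack L,\ll^{\triangleright}]_{_{Q}}$ and $\mathfrak{p}_{_{L}}=\sigma_{_{\ll^{\triangleleft}}}(L)=\wedge\lbrack\ll^{\triangleleft},L]_{_{Q}}$ from (\ref{3.3})--(\ref{3.2}) together with the ascending/descending-series versus gap-chain correspondence of Proposition \ref{p10} and the definition (\ref{2.18}). Your closing remark that $Q$ must be a \emph{complete} sublattice, so that its meets and joins agree with (\ref{9.0}) and the abstract radicals coincide with the concrete ones, is precisely the tacit hypothesis the paper relies on.
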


If $X$ is separable then the $\ll$-series in Theorem \ref{T4.6} are isomorphic
to$\ \mathbb{N}.$

\begin{proposition}
\label{P9.5}Let $X$ be a separable Banach space and $Q$ be a complete
sublattice of \emph{Cl(}$X)$.$\smallskip$

\emph{(i) }If\emph{ }$\ll$ is a dual $\mathbf{H}$-relation in $Q$ then there
are spaces%
\begin{equation}
...\ll Y_{n}\ll...\ll Y_{1}\ll X\text{ in }Q\text{ such that }\cap
_{n=1}^{\infty}Y_{n}=\sigma_{_{\ll}}(X). \label{3.0}%
\end{equation}

There are also spaces $...\ll^{\triangleleft}Z_{n}\ll^{\triangleleft}%
...\ll^{\triangleleft}Z_{1}\ll^{\triangleleft}X$ \ in $Q$ such that%
\begin{equation}
\cap_{n=1}^{\infty}Z_{n}=\mathfrak{p}_{_{X}}\text{ is the dual }%
\ll^{\triangleleft}\text{-radical in }X. \label{9.8}%
\end{equation}

\emph{(ii) }If\emph{ }$\ll$ is an $\mathbf{H}$-relation then there are spaces
$\{0\}\ll Y_{1}\ll...\ll Y_{n}\ll...$ in $Q$ and

$\qquad$spaces $\{0\}\ll^{\triangleright}Z_{1}\ll^{\triangleright}%
...\ll^{\triangleright}Z_{n}\ll^{\triangleright}...$ in $Q$ such that
\[
\overline{\sum{}_{n=1}^{\infty}Y_{n}}=s^{\ll}(\{0\})\text{ and }\overline
{\sum{}_{n=1}^{\infty}Z_{n}}=\mathfrak{r}_{_{\{0\}}}\text{ is the }%
\ll^{\triangleright}\text{-radical in }X.
\]

\end{proposition}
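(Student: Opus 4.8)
The plan is to reduce the statement to one fact about separable Banach spaces and then assemble the required $\omega$-indexed series by taking running intersections (resp. closed sums). The fact I would isolate is the following \emph{Countability Lemma}: if $X$ is separable and $\mathcal{F}$ is a nonempty family of closed subspaces of $X$, then there is a countable subfamily $\{K_{n}\}_{n\geq1}\subseteq\mathcal{F}$ with $\bigcap_{n}K_{n}=\bigcap_{K\in\mathcal{F}}K$; dually, any family of closed subspaces admits a countable subfamily with the same closed linear span. Granting this, the proposition is a routine application of the defining property of (dual) $\mathbf{H}$-relations (Lemma \ref{le1}), so I expect the Lemma itself to be the only real obstacle.

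For (i) I would first recall, from the paragraph preceding the proposition and from Theorem \ref{T3.5}(ii), that $\ll^{\triangleleft}$ is a dual $\mathbf{R}$-order with $\mathfrak{p}_{_{X}}=\sigma_{_{\ll^{\triangleleft}}}(X)=\wedge[\ll^{\triangleleft},X]_{_{Q}}$, while $\sigma_{_{\ll}}(X)=\wedge[\ll,X]_{_{Q}}$ by (\ref{3.3}); since $\ll$ is reflexive both families contain $X$ and are nonempty. Applying the Countability Lemma to $\mathcal{F}=[\ll,X]_{_{Q}}$ gives $\{K_{n}\}$ with $\bigcap_{n}K_{n}=\sigma_{_{\ll}}(X)$; set $Y_{n}=K_{1}\cap\dots\cap K_{n}\in Q$ (legitimate, as $Q$ is a complete sublattice). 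Then $Y_{1}=K_{1}\ll X$, and as $K_{n+1}\ll X$ and $\ll$ is a dual $\mathbf{H}$-relation, Lemma \ref{le1}(ii), condition $3)$, applied with $x=Y_{n}\subseteq X$, gives $Y_{n+1}=K_{n+1}\wedge Y_{n}\ll X\wedge Y_{n}=Y_{n}$; moreover $\bigcap_{n}Y_{n}=\bigcap_{n}K_{n}=\sigma_{_{\ll}}(X)$, which is (\ref{3.0}). Taking instead $\mathcal{F}=[\ll^{\triangleleft},X]_{_{Q}}$ and $Z_{n}=K_{1}\cap\dots\cap K_{n}$, the identical argument (now using that $\ll^{\triangleleft}$ is a dual $\mathbf{H}$-relation, by Theorem \ref{inf}) yields $Z_{n+1}\ll^{\triangleleft}Z_{n}$, $Z_{1}\ll^{\triangleleft}X$ and $\bigcap_{n}Z_{n}=\mathfrak{p}_{_{X}}$, which is (\ref{9.8}). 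Note that no transitivity is needed here, only the relational axiom of Lemma \ref{le1}.

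The hard part is the Countability Lemma, where separability must be turned into countability of an a priori transfinite process. I would prove it by transfinite recursion: put $J_{0}=X$; having $J_{\xi}$, stop if $J_{\xi}=\sigma:=\bigcap\mathcal{F}$, and otherwise choose $F_{\xi}\in\mathcal{F}$ with $J_{\xi}\nsubseteq F_{\xi}$ (possible, since $J_{\xi}\subseteq\bigcap\mathcal{F}=\sigma$ would force $J_{\xi}=\sigma$) and set $J_{\xi+1}=J_{\xi}\cap F_{\xi}\subsetneq J_{\xi}$; at limits put $J_{\lambda}=\bigcap_{\xi<\lambda}J_{\xi}$. An easy induction gives $J_{\xi}=\bigcap_{\zeta<\xi}F_{\zeta}$. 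The chain $(J_{\xi})$ decreases strictly at successors, so by Riesz's lemma I may pick unit vectors $u_{\xi}\in J_{\xi}$ with $\operatorname{dist}(u_{\xi},J_{\xi+1})\geq\tfrac12$; for $\xi<\xi'$ one has $u_{\xi'}\in J_{\xi'}\subseteq J_{\xi+1}$, whence $\|u_{\xi}-u_{\xi'}\|\geq\tfrac12$. Thus $\{u_{\xi}\}$ is a $\tfrac12$-separated set, necessarily countable in a separable space, so the recursion halts at a countable ordinal $\eta$ with $\bigcap_{\zeta<\eta}F_{\zeta}=J_{\eta}=\sigma$; re-enumerating $\{F_{\zeta}:\zeta<\eta\}$ proves the Lemma. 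The dual statement follows from the symmetric construction with a strictly increasing chain of closed subspaces and the same Riesz estimate.

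Part (ii) I would obtain by the Duality Principle \cite{Sk}, replacing $\ll^{\triangleleft}$, $\wedge=\cap$ and $\sigma_{_{\ll}}$ by $\ll^{\triangleright}$, $\vee=\overline{\sum}$ and $s^{\ll}$, and invoking the dual halves of Lemma \ref{le1} and of the Countability Lemma: the ascending series $\{0\}\ll Y_{1}\ll\dots$ and $\{0\}\ll^{\triangleright}Z_{1}\ll^{\triangleright}\dots$ arise as running closed sums of countable subfamilies of $[\{0\},\ll]_{_{Q}}$ and $[\{0\},\ll^{\triangleright}]_{_{Q}}$, whose closed sums are $s^{\ll}(\{0\})$ and $\mathfrak{r}_{_{\{0\}}}$ respectively by Theorem \ref{T3.5}(i) and (\ref{3.4}). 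In summary, the entire proposition hinges on the Countability Lemma; once separability is spent there, the $\mathbf{H}$-relation axiom handles all remaining bookkeeping.
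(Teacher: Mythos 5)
Your proposal is correct, and the assembly of the series is exactly the paper's: running meets $Y_{n}=K_{1}\cap\dots\cap K_{n}\in Q$ with the induction step $Y_{n+1}=K_{n+1}\wedge Y_{n}\ll X\wedge Y_{n}=Y_{n}$ from Lemma \ref{le1}(ii), then the same argument for $\ll^{\triangleleft}$ using Theorem \ref{inf}, and the dual assembly for part (ii). The genuine difference is the countability mechanism. The paper extracts the countable subfamily by passing to the dual: for each $L\in\lbrack\ll,X]_{_{Q}}$ it forms $W_{L}=\{f\in\mathbf{B}:L\subseteq\ker f\}$ in the unit ball $\mathbf{B}$ of $X^{\ast}$, uses $L=\cap_{f\in W_{L}}\ker f$ and the fact that $(\mathbf{B},\sigma(X^{\ast},X))$ is a separable metric space to choose a weak-$\ast$ dense sequence $\{f_{k}\}$ in $W=\cup_{L}W_{L}$, and then picks $L_{k}$ with $f_{k}\in W_{L_{k}}$, getting $\cap_{k}L_{k}=\sigma_{_{\ll}}(X)$. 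Your Countability Lemma instead stays inside $X$: a transfinite strictly decreasing chain of closed subspaces, Riesz-lemma unit vectors $u_{\xi}$ that are pairwise $\tfrac12$-separated, and the observation that a $\tfrac12$-separated set in a separable space is countable, so the recursion halts at a countable ordinal; this argument is sound (at a stage where no $F_{\xi}$ can be chosen one indeed has $J_{\xi}=\sigma$, and an uncountable halting ordinal would produce uncountably many separated vectors). What each buys: your lemma is more elementary (no dual space, no weak-$\ast$ topology or metrizability of $\mathbf{B}$) and is symmetric in meets and joins, so one mechanism serves both (i) and (ii); the paper's trick is shorter but tailored to intersections, and for part (ii) it merely says the proof is similar (where in fact the join case is even easier, since a dense sequence in $s^{\ll}(\{0\})$ already lies in the closed span of countably many members of $[\{0\},\ll]_{_{Q}}$). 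Your remark that only the $\mathbf{H}$-axiom, not transitivity, is used matches the paper. One bookkeeping point you left implicit: if your recursion halts after finitely many steps, pad the sequence, e.g.\ by $X\in\lbrack\ll,X]_{_{Q}}$ (available by reflexivity), so that the series is genuinely indexed by $\mathbb{N}$; reflexivity makes the repeated terms harmless.
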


\begin{proof}
(i) Set $\omega=\sigma_{_{\ll}}(X).$ Let $\mathbf{B}$ be the unit ball of
$X^{\ast}.$ For each $L\in\lbrack\ll,X]_{_{Q}}$, set $W_{L}=\{f\in
\mathbf{B}:L\subseteq\ker(f)\}.$ Set $W=\cup_{L\ll X}W_{L}.$ Then
$W\subseteq\mathbf{B}.$ As $X$ is separable, $\mathbf{B}$\textbf{ }is a
separable metric space in the $\sigma(X^{\ast},X)$-topology (see \cite[Section
4.1.7]{Sch}). Hence $W$ has a $\sigma(X^{\ast},X)$ dense sequence $\{f_{k}$:
$k\in\mathbb{N}\}$. As each $L=\cap_{f\in W_{L}}\ker f,$ we have $\cap_{k}%
\ker(f_{k})=\cap_{f\in W}\ker\left(  f\right)  =\cap_{L\ll X}L=\omega.$

For each $k\in\mathbb{N},$ choose $L_{k}\in\lbrack\ll,X]_{_{Q}}$ such that
$f_{k}\in W_{L_{k}}$. Then, by (\ref{3.2}), $\omega\subseteq\cap_{k=1}%
^{\infty}L_{k}\subseteq\cap_{k}\ker(f_{k})=\omega$. Hence $\cap_{k=1}^{\infty
}L_{k}=\omega$.

Set $Y_{n}=L_{1}\cap...\cap L_{n}\in Q.$ Then $Y_{k+1}\subseteq Y_{k}$ for all
$k,$ and $\cap_{n=1}^{\infty}Y_{n}=\cap_{k=1}^{\infty}L_{k}=\omega.$ Suppose,
by induction, that $Y_{k}\ll Y_{k-1}\ll...\ll Y_{1}\ll X$ for some $k.$ As all
$L_{n}\in\lbrack\ll,X]_{_{Q}},$ we have $L_{k+1}\ll X.$ Since $\ll$ is a dual
$\mathbf{H}$-order, it follows from Lemma$\ $\ref{le1}(ii) that%
\[
Y_{k+1}=Y_{k}\cap L_{k+1}=Y_{k}\wedge L_{k+1}\ll Y_{k}\wedge X=Y_{k}\cap
X=Y_{k}.
\]
Hence (\ref{3.0}) holds. As $\mathfrak{p}_{_{X}}=\sigma_{_{\ll^{\triangleleft
}}}(X)$ and $\ll^{\triangleleft}$ is a dual $\mathbf{R}$-order in $Q,$
(\ref{9.8}) follows from (\ref{3.0}).

Part (ii) can be proved similarly.
\end{proof}

\subsection{Relations $<_{\mathfrak{g}}$ and $\ll_{_{n}}$in sublattices of
Ln($X).$}

\begin{proposition}
\label{P9.1}\emph{Ln(}$X)$ is modular and $<_{\mathfrak{g}}$ is an
$\mathbf{HH}$-relation in \emph{Ln(}$X)$.
\end{proposition}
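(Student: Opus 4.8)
The plan is to reduce the statement about $<_{\mathfrak{g}}$ to the statement about modularity, and then verify modularity directly. Indeed, Corollary \ref{C6.1} asserts that $<_{\mathfrak{g}}$ is an $\mathbf{HH}$-relation in every modular complete lattice; since $\mathrm{Ln}(X)$ is a complete lattice by (\ref{9.0}), it suffices to prove that $\mathrm{Ln}(X)$ is modular in the sense of (\ref{6.8}), after which an appeal to Corollary \ref{C6.1} finishes the proof.

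By (\ref{9.0}) the lattice operations in $\mathrm{Ln}(X)$ are $L\wedge M = L\cap M$ and $L\vee M = L+M$, the algebraic sum, which is again a linear subspace. Unwinding (\ref{6.8}) with $\curlyvee_{A}(x)=A\vee x$ and $\curlywedge_{B}(x)=B\wedge x$, modularity amounts to the identity
\[
A+(B\cap C)=B\cap(A+C)\qquad\text{whenever }A\subseteq B,\ C\in\mathrm{Ln}(X).
\]
First I would note the inclusion ``$\subseteq$'', valid in any lattice: $A+(B\cap C)$ is contained in $B$ (as $A\subseteq B$ and $B\cap C\subseteq B$) and in $A+C$ (as $A\subseteq A+C$ and $B\cap C\subseteq C\subseteq A+C$), hence in their intersection.

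The reverse inclusion is the place where the vector-space structure enters, through a one-line element chase. Given $v\in B\cap(A+C)$, write $v=a+c$ with $a\in A$ and $c\in C$; then $c=v-a$ lies in $B$, because $v\in B$ and $a\in A\subseteq B$, so $c\in B\cap C$ and $v=a+c\in A+(B\cap C)$. Combining the two inclusions gives the modular identity, so $\mathrm{Ln}(X)$ is modular, and Corollary \ref{C6.1} then yields that $<_{\mathfrak{g}}$ is an $\mathbf{HH}$-relation in $\mathrm{Ln}(X)$. I do not anticipate a genuine obstacle: the only subtle point is that the step $c=v-a\in B$ uses that subspaces are closed under subtraction and that $L+M$ is an honest subspace --- precisely the feature that fails for $\mathrm{Cl}(X)$, where sums need not be closed, which is why the parallel claim for $\mathrm{Cl}(X)$ must be handled separately.
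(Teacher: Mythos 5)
Your proof is correct and follows essentially the same route as the paper: both verify the modular law $A+(B\cap C)=B\cap(A+C)$ for $A\subseteq B$ by the identical element chase (writing $v=a+c$ and observing $c=v-a\in B$), and both then invoke Corollary \ref{C6.1} to conclude that $<_{\mathfrak{g}}$ is an $\mathbf{HH}$-relation. Your closing remark about why the argument breaks down in $\mathrm{Cl}(X)$ is accurate and matches the paper's later discussion.
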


\begin{proof}
Let $I,J,K\in$ Ln($X)$ and $I\subseteq J.$ Then $\curlyvee_{_{I}}%
\curlywedge_{_{J}}(K)=I+(J\cap K)$ and $\curlywedge_{_{J}}\curlyvee_{_{I}%
}(K)=J\cap(I+K)$. Clearly, $I+(J\cap K)\subseteq J\cap(I+K).$ Conversely, if
$x\in J\cap(I+K)$ then $x=i+k\in J,$ where $i\in I,$ $k\in K.$ Then $i\in J,$
as $I\subseteq J.$ Hence $k\in J\cap K.$ So $x\in I+(J\cap K).$ Thus
$J\cap(I+K)\subseteq I+(J\cap K).$ Hence $I+(J\cap K)=J\cap(I+K).$ So
$\curlyvee_{_{I}}\curlywedge_{_{J}}(K)=\curlywedge_{_{J}}\curlyvee_{_{I}}(K).$
Thus Ln($X)$ is modular by (\ref{6.8}).

It follows from Corollary \ref{C6.1} that $<_{\mathfrak{g}}$ is an
$\mathbf{HH}$-relation in Ln($X)$.\bigskip
\end{proof}

To introduce more $\mathbf{HH}$-relations$,$ we will use the following result
proved in Theorem 2.2 \cite{Di}.

\begin{lemma}
\label{Clos}Let $L\subset M$ in \emph{Ln(}$X)$ and $n:=\dim(M/L)<\infty.$ Let
$K\in$ \emph{Ln(}$X).\smallskip$

\emph{(i) \ \ }If $K\subseteq M$ then $\dim\left(  K/(L\cap K)\right)
=\dim\left(  \left(  L+K\right)  /L\right)  \leq n.\smallskip$

\emph{(ii) \ }If $L\subseteq K$ then $\dim\left(  M/(M\cap K)\right)
=\dim\left(  \left(  M+K\right)  /K\right)  \leq n.\smallskip$

\emph{(iii) }If $K,L,M$ are closed subspaces\emph{,} then $L+K$ in \emph{(i)
}and $M+K$ in \emph{(ii) }are closed.
\end{lemma}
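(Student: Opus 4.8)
The plan is to obtain the two dimension equalities of (i) and (ii) from the second isomorphism theorem for vector spaces, to read off the bounds $\leq n$ from elementary inclusions, and to reduce the closedness claim (iii) to the standard fact that the sum of a closed subspace and a finite-dimensional subspace of a Banach space is closed. Throughout, the quotients $M/L$, $K/(L\cap K)$, etc., are ordinary linear quotient spaces, so parts (i) and (ii) are purely algebraic and no topology enters until (iii).

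First I would prove (i). The composition of the inclusion $K\hookrightarrow L+K$ with the quotient map $L+K\to(L+K)/L$ is a linear surjection $K\to(L+K)/L$ whose kernel is exactly $L\cap K$; hence it induces an isomorphism $K/(L\cap K)\cong(L+K)/L$, which gives the first equality. Since $L\subseteq M$ and, by hypothesis, $K\subseteq M$, we have $L+K\subseteq M$, so $(L+K)/L$ is a subspace of $M/L$ and $\dim((L+K)/L)\leq\dim(M/L)=n$. Part (ii) is symmetric: the same construction with $M$ in the role of $K$ yields $M/(M\cap K)\cong(M+K)/K$, and now the hypothesis $L\subseteq K$ together with $L\subseteq M$ gives $L\subseteq M\cap K$, so the natural map $M/L\to M/(M\cap K)$ is a well-defined surjection and $\dim(M/(M\cap K))\leq\dim(M/L)=n$.

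For (iii) I would exploit that all of these quotients are finite-dimensional. In (i), choosing $k_{1},\dots,k_{m}\in K$ (with $m\leq n$) whose cosets form a basis of $(L+K)/L$ and setting $F=\mathrm{span}\{k_{1},\dots,k_{m}\}$, one has $L+K=L+F$ with $\dim F<\infty$; similarly in (ii) one writes $M+K=K+F'$ with $F'\subseteq M$ finite-dimensional. Thus both closedness claims follow from the single fact that, if $Y$ is a closed subspace and $F$ a finite-dimensional subspace, then $Y+F$ is closed. The cleanest way to see this, and the only genuinely analytic step, is to use that $Y$ closed makes the quotient map $q\colon X\to X/Y$ continuous; then $q(F)$ is a finite-dimensional, hence closed, subspace of the normed space $X/Y$, and $Y+F=q^{-1}(q(F))$ is closed as the preimage of a closed set under a continuous map. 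This closed-plus-finite-dimensional fact is the main (indeed the only non-formal) obstacle; alternatively it can be proved by induction on $\dim F$, reducing to the one-dimensional step where $Y+\mathrm{span}\{v\}$ with $v\notin Y$ is closed because the image of $v$ spans a closed line in the normed quotient $X/Y$.
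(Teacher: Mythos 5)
Your proof is correct and complete; note that the paper itself gives no proof of this lemma, simply quoting it as Theorem 2.2 of Dixon \cite{Di}, so there is no in-paper argument to compare against. Your route is the standard one for Dixon's result: parts (i) and (ii) are exactly the second isomorphism theorem together with the obvious inclusions ($L+K\subseteq M$ in (i), and $L\subseteq M\cap K$ giving a surjection $M/L\to M/(M\cap K)$ in (ii)), and your reduction of (iii) to the fact that a closed subspace plus a finite-dimensional subspace is closed is the right analytic step, correctly executed by choosing coset representatives inside $K$ (resp.\ $M$) so that $L+K=L+F$ (resp.\ $M+K=K+F'$) with $F$ (resp.\ $F'$) finite-dimensional, and then writing $Y+F=q^{-1}(q(F))$ with $q\colon X\to X/Y$ the continuous quotient map and $q(F)$ closed because it is finite-dimensional in the normed space $X/Y$.
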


Consider the following relation in Ln($X).$ For $n\in\mathbb{N\cup\infty}$ and
$L\subseteq M$ in Ln$(X),$ we write%
\begin{equation}
L\ll_{n}M\text{ if }\dim(M/L)<n\text{.} \label{9.2}%
\end{equation}

\begin{corollary}
\label{C9.6}All $\ll_{n}$ are $\mathbf{HH}$-relations in each sublattice of
\emph{Ln(}$X)$ and $\ll_{\infty}$ is an $\mathbf{HH}$-order.
\end{corollary}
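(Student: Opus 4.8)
The plan is to verify the two defining conditions of an $\mathbf{HH}$-relation from Lemma \ref{le1}, using the dimension counting supplied by Lemma \ref{Clos}, and then to handle transitivity of $\ll_{\infty}$ separately. Since $\ll_{n}$ is patently reflexive and stronger than $\subseteq$ (if $L\ll_{n}M$ then $L\subseteq M$ and $\dim(M/L)<n$), it belongs to Ref$(Q)$ for every sublattice $Q$ of Ln$(X)$. To show it is an $\mathbf{H}$-relation I would use condition 3) of Lemma \ref{le1}(i): I must check that $L\ll_{n}M$ implies $L\vee K\ll_{n}M\vee K$ for every $K$ in the sublattice, i.e.\ $L+K\ll_{n}M+K$. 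For this I would write $M+K=(M)+ (L+K)$ and apply Lemma \ref{Clos}(i) with the roles of the spaces arranged so that $\dim\bigl((M+K)/(L+K)\bigr)\le\dim(M/L)<n$; concretely, $\dim\bigl((L+K)+M)/(L+K)\bigr)=\dim\bigl(M/(M\cap(L+K))\bigr)\le\dim(M/L)$ because $L\subseteq M\cap(L+K)$. Thus $L+K\subseteq M+K$ with quotient dimension $<n$, which is exactly $L\vee K\ll_{n}M\vee K$.

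Dually, to show $\ll_{n}$ is a dual $\mathbf{H}$-relation I would use condition 3) of Lemma \ref{le1}(ii): $L\ll_{n}M$ must imply $L\wedge K\ll_{n}M\wedge K$, i.e.\ $L\cap K\ll_{n}M\cap K$. Here Lemma \ref{Clos}(i) again applies directly: taking $K':=M\cap K\subseteq M$, we have $\dim\bigl(K'/(L\cap K')\bigr)\le\dim(M/L)<n$, and since $L\cap K'=L\cap M\cap K=L\cap K$ (as $L\subseteq M$), this reads $\dim\bigl((M\cap K)/(L\cap K)\bigr)<n$. Hence $L\cap K\subseteq M\cap K$ with quotient dimension strictly below $n$, giving $L\wedge K\ll_{n}M\wedge K$. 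Combining the two paragraphs, each $\ll_{n}$ satisfies both conditions of Lemma \ref{le1}, so by Definition \ref{D2.2}(iii) it is an $\mathbf{HH}$-relation in every sublattice of Ln$(X)$. I would remark that the key reason this works in \emph{any} sublattice, rather than just in Ln$(X)$ itself, is that the operations $\vee$ and $\wedge$ in a complete sublattice of Ln$(X)$ agree with $+$ and $\cap$, and the dimension bounds of Lemma \ref{Clos} are purely linear-algebraic, hence insensitive to which sublattice we work in.

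It remains to treat the transitivity assertion for $\ll_{\infty}$, which upgrades it from an $\mathbf{HH}$-relation to an $\mathbf{HH}$-\emph{order}. Here I would argue that if $L\ll_{\infty}M$ and $M\ll_{\infty}N$ then $\dim(M/L)<\infty$ and $\dim(N/M)<\infty$, so by additivity of dimension in the tower $L\subseteq M\subseteq N$ we get $\dim(N/L)=\dim(N/M)+\dim(M/L)<\infty$, whence $L\ll_{\infty}N$. Thus $\ll_{\infty}$ is transitive, and being already an $\mathbf{HH}$-relation it is an $\mathbf{HH}$-order. By contrast, for finite $n$ the relation $\ll_{n}$ is genuinely not transitive (two steps each of codimension $<n$ can accumulate to codimension $\ge n$), which is why the order statement is claimed only for $n=\infty$.

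The computations are all short, so there is no single hard step; the one place requiring care is matching the indices in Lemma \ref{Clos}(i) so that the quotient whose dimension we bound is exactly the one appearing in the defining inequality of $\ll_{n}$ after applying $\curlyvee_{K}$ or $\curlywedge_{K}$. In particular one must use the \emph{strict} inequality $\dim(M/L)<n$ together with the \emph{non-strict} bound $\dim\bigl((M+K)/(L+K)\bigr)\le\dim(M/L)$ (and its $\cap$-analogue) from Lemma \ref{Clos}, so that strictness is preserved and the relation $\ll_{n}$ is genuinely reproduced rather than merely $\ll_{n+1}$. Once that bookkeeping is done the verification of Lemma \ref{le1}'s conditions is immediate.
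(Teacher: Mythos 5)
Your proposal is correct and takes essentially the paper's approach: both arguments verify the conditions of Lemma \ref{le1} via the dimension bounds of Lemma \ref{Clos}, the paper checking clause $2)$ directly ($L\subseteq K$ gives $K\ll_{n}M\vee K$ by Lemma \ref{Clos}(ii), and $K\subseteq M$ gives $L\wedge K\ll_{n}K$ by Lemma \ref{Clos}(i)), while you check the equivalent clause $3)$, which merely forces you to apply Lemma \ref{Clos} with $K$ replaced by $L+K$, respectively $M\cap K$. One bookkeeping slip: in your $\mathbf{H}$-relation half the equality $\dim\bigl((M+K)/(L+K)\bigr)=\dim\bigl(M/(M\cap(L+K))\bigr)$ and its bound come from Lemma \ref{Clos}(ii) applied with $L\subseteq L+K$, not from part (i) as you cite; the dual half correctly uses part (i), and the transitivity of $\ll_{\infty}$ by additivity of dimension matches the paper's remark.
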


\begin{proof}
Let $L\ll_{n}M,$ i.e., dim($M/L)<n.$ Let $L\subseteq K.$ By (\ref{9.0}) and
Lemma \ref{Clos}(ii), $M\vee K=M+K$ and $\dim\left(  \left(  M+K\right)
/K\right)  \leq n.$ Hence $K\ll_{n}M\vee K.$ So, by Lemma \ref{le1}(i),
$\ll_{n}$ is an $\mathbf{H}$-relation.

Let $K\subseteq M.$ By (\ref{9.0}) and Lemma \ref{Clos}(i), $L\wedge K=L\cap
K$ and $\dim\left(  K/(L\cap K)\right)  \leq n.$ Hence $L\wedge K\ll_{n}K.$
So, by Lemma \ref{le1}(ii), $\ll_{n}$ is a dual $\mathbf{H}$-relation.
Clearly, $\ll_{\infty}$ is an $\mathbf{HH}$-order.
\end{proof}

\subsection{Relations $\ll_{_{n}},$ $<_{\mathfrak{g}},$ $\sqsubset
_{\mathfrak{g}}$ and $\prec_{\mathfrak{g}}$ in sublattices of Cl($X).$}

Note that the restrictions of $\mathbf{H}$- and dual $\mathbf{H}$-relations to
sublattices $Q$ of Cl($X)$ also have the same properties. However, an
$\mathbf{H}$-, or a dual $\mathbf{H}$-relation in $Q$ is not necessarily a
restriction of a relation in Cl($X)$ with the same property.

Making use of Lemma \ref{Clos}(iii), and repeating the proof of Corollary
\ref{C9.6}, we get

\begin{corollary}
\label{T9.2}$\{\ll_{n}\}_{n=1}^{\infty}$ are $\mathbf{HH}$-relations in all
sublattices of \emph{Cl(}$X)$ and $\ll_{\infty}$ is an $\mathbf{HH}$-order.
\end{corollary}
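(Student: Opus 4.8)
The plan is to mimic exactly the proof of Corollary \ref{C9.6}, but to invoke part (iii) of Lemma \ref{Clos} at the two points where closedness of the relevant sums must be checked. The statement to prove is that for a Banach space $X$, every $\ll_{n}$ with $1\leq n<\infty$ is an $\mathbf{HH}$-relation in each sublattice $Q$ of Cl$(X)$, and $\ll_{\infty}$ is an $\mathbf{HH}$-order. By Definition \ref{D2.2}, an $\mathbf{HH}$-relation is one that is simultaneously an $\mathbf{H}$-relation and a dual $\mathbf{H}$-relation, so by Lemma \ref{le1} it suffices to verify the two implications: (a) $L\ll_{n}M$ and $L\subseteq K$ imply $K\ll_{n}M\vee K$ (the $\mathbf{H}$-condition, Lemma \ref{le1}(i), clause 2); and (b) $L\ll_{n}M$ and $K\subseteq M$ imply $L\wedge K\ll_{n}K$ (the dual $\mathbf{H}$-condition, Lemma \ref{le1}(ii), clause 2).

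First I would fix $L\ll_{n}M$, that is $L\subseteq M$ in $Q$ with $\dim(M/L)<n<\infty$. For the $\mathbf{H}$-condition, take $K\in Q$ with $L\subseteq K$. The crucial new ingredient is that in Cl$(X)$ the join is $M\vee K=\overline{M+K}$ rather than $M+K$ (see (\ref{9.0})); however, Lemma \ref{Clos}(iii) guarantees that, since $L,M,K$ are closed and $\dim(M/L)<\infty$, the algebraic sum $M+K$ is already closed, so $M\vee K=M+K$. Then Lemma \ref{Clos}(ii) gives $\dim\left((M+K)/K\right)=\dim\left(M/(M\cap K)\right)\leq n$, hence $\dim\left((M\vee K)/K\right)<n$, i.e. $K\ll_{n}M\vee K$. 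By Lemma \ref{le1}(i), $\ll_{n}$ is an $\mathbf{H}$-relation in $Q$. Dually, for the dual $\mathbf{H}$-condition, take $K\in Q$ with $K\subseteq M$; here $L\wedge K=L\cap K$ is automatically closed as an intersection, and Lemma \ref{Clos}(i) yields $\dim\left(K/(L\cap K)\right)=\dim\left((L+K)/L\right)\leq n<n'$ for $n'=n$, so $L\wedge K\ll_{n}K$. By Lemma \ref{le1}(ii), $\ll_{n}$ is a dual $\mathbf{H}$-relation, and combining the two, $\ll_{n}$ is an $\mathbf{HH}$-relation.

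For the final clause about $\ll_{\infty}$, the same two computations apply, and transitivity of $\ll_{\infty}$ is immediate: if $\dim(M/L)<\infty$ and $\dim(N/M)<\infty$ then $\dim(N/L)\leq\dim(N/M)+\dim(M/L)<\infty$, so $\ll_{\infty}$ is an order; being an $\mathbf{HH}$-relation that is also an order makes it an $\mathbf{HH}$-order. I expect the entire argument to be essentially mechanical once Lemma \ref{Clos}(iii) is in hand; the only point requiring genuine attention is the identification $M\vee K=M+K$, since the lattice Cl$(X)$ is precisely the setting where joins can fail to be algebraic sums when codimensions are infinite. This is exactly why the statement is restricted to finite $n$ in the first two clauses, and why for $\ll_{\infty}$ one must still check closedness of $M+K$ from the finiteness of $\dim(M/L)$ locally rather than from any global hypothesis. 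Thus the main (and essentially only) obstacle is the closedness of finite-codimensional sums, and that obstacle is removed by Lemma \ref{Clos}(iii); everything else reproduces the proof of Corollary \ref{C9.6} verbatim.
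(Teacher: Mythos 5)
Your proposal is correct and follows essentially the same route as the paper, whose entire proof reads ``Making use of Lemma \ref{Clos}(iii), and repeating the proof of Corollary \ref{C9.6}'': the one genuinely new point, namely that $M\vee K=\overline{M+K}=M+K$ in Cl$(X)$ because $\dim(M/L)<\infty$ forces $M+K$ to be closed, is exactly the point you isolate. One cosmetic remark: Lemma \ref{Clos} uses $n$ for $\dim(M/L)$ itself, so its conclusion $\dim((M+K)/K)\leq\dim(M/L)<n$ (with $n$ now the relation's parameter) is what gives the strict inequality needed for $K\ll_{n}M\vee K$ — your ``$\leq n$, hence $<n$'' glosses over this, but the paper's own wording in Corollary \ref{C9.6} has the same harmless slippage.
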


The relation $<_{\mathfrak{g}}$ is an $\mathbf{HH}$-relation in many
sublattices of Cl($X)$. Corollary \ref{C6.2} yields

\begin{lemma}
\label{L9.3}If $Q$ is a nest \emph{(}a complete linearly ordered set\emph{)}
in \emph{Cl(}$X)$ then $<_{\mathfrak{g}}$ is an $\mathbf{HH}$-relation.
\end{lemma}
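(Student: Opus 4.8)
The plan is to apply Corollary \ref{C6.2} after verifying that a nest $Q$ in Cl($X)$ satisfies its hypotheses. Corollary \ref{C6.2} asserts that in a \emph{complete chain}, the gap relation $<_{\mathfrak{g}}$ is an $\mathbf{HH}$-relation. So the entire lemma reduces to the observation that a nest, being a complete linearly ordered subset of Cl($X)$, is precisely a complete chain in its own right. The one subtlety is that "complete" in the definition of a nest must be understood so that $Q$ is a complete \emph{lattice} under the induced order $\subseteq$; since $Q$ is linearly ordered, for any $G\subseteq Q$ the meet $\wedge^{Q}G$ and join $\vee^{Q}G$ taken inside $Q$ exist and make $Q$ a complete chain in the abstract sense of Section 2.

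First I would note that a nest $Q$ is by definition a complete linearly ordered subset of Cl($X)$; hence $(Q,\subseteq)$ is a complete chain, i.e., a complete lattice in which any two elements are comparable. The gap relation $<_{\mathfrak{g}}$ referred to in the lemma is the one from (\ref{9.5}), namely $L<_{\mathfrak{g}}M$ if $L=M$ or $[L,M]_{_{Q}}$ is a gap in $Q$; this is exactly the relation $<_{\mathfrak{g}}$ of Definition \ref{D7.1} computed inside the lattice $Q$. The crucial point is that $\mathbf{H}$- and dual $\mathbf{H}$-properties are properties of a relation in a given lattice, and we are working throughout inside the complete chain $Q$, where the lattice operations are $\wedge=\wedge^{Q}$ and $\vee=\vee^{Q}$.

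Then I would simply invoke Corollary \ref{C6.2}: since $Q$ is a complete chain, $<_{\mathfrak{g}}$ is an $\mathbf{HH}$-relation in $Q$. That completes the argument.

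The only place where care is needed, and what I expect to be the main (though minor) obstacle, is the bookkeeping about which lattice operations are used. In a nest the join $\vee^{Q}$ and meet $\wedge^{Q}$ computed inside $Q$ may differ from the ambient operations $\vee$ and $\wedge$ in Cl($X)$ from (\ref{9.0}); but for a complete chain $Q$ the $\mathbf{HH}$-property of $<_{\mathfrak{g}}$ is established relative to the operations of $Q$ itself, and that is exactly the content asserted by the lemma. Since $Q$ is totally ordered, $\vee^{Q}G$ and $\wedge^{Q}G$ are automatically the relevant sup and inf inside $Q$, so Corollary \ref{C6.2} applies verbatim and no genuine verification of modularity or of (JID)/(MID) is required beyond the trivial fact that every chain is modular and satisfies these identities.
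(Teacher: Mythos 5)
Your proof is correct and follows exactly the paper's route: the paper derives Lemma \ref{L9.3} by observing that a nest is a complete chain and invoking Corollary \ref{C6.2} (complete chains being modular with (JID)/(MID), so that Corollary \ref{C6.1} applies). Your additional remark that the $\mathbf{HH}$-property is taken with respect to the operations $\vee^{Q},\wedge^{Q}$ of the nest itself is a sound clarification, consistent with how the paper reads the statement.
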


However, there are sublattices of Cl($X),$ where $<_{\mathfrak{g}}$ is neither
an $\mathbf{H}$-, nor a dual $\mathbf{H}$-relation (see Corollary \ref{C9.1}).
The main obstacle is the fact that the sum of subspaces is not necessarily closed.

To avoid this, introduce relations $\sqsubset$ and $\prec$ in sublattices $Q$
of Cl($X).$ For $L\subseteq M$ in $Q,$ we write%
\begin{align}
L  &  \sqsubset M\text{ if }M+K\text{ is closed for each }K\in Q\text{ such
that }L\subseteq K,\nonumber\\
L  &  \prec M\text{ if }L+K\text{ is closed for each }K\in Q\text{ such that
}K\subseteq M. \label{9.4}%
\end{align}
We will show that the intersection of $<_{\mathfrak{g}}$ with $\sqsubset$ is
an $\mathbf{H}$-relation and with $\prec$ is a dual $\mathbf{H}$-relation.

\begin{proposition}
\label{P9.2}$\sqsubset$ is an $\mathbf{H}$-relation and $\prec$ is a dual
$\mathbf{H}$-relation in any sublattice $Q$ of \emph{Cl}$(X).$
\end{proposition}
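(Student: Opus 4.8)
The plan is to verify the algebraic criteria of Lemma~\ref{le1} rather than to argue directly from contiguity and expandedness. First I would note that both $\sqsubset$ and $\prec$ belong to Ref$(Q)$: each is contained in $\subseteq$ by its definition in (\ref{9.4}), and each is reflexive, since $L\sqsubset L$ only asks that $L+K=K$ be closed for $K\supseteq L$ and $L\prec L$ only asks that $L+K=L$ be closed for $K\subseteq L$, both of which hold trivially.

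For $\sqsubset$ I would check condition~2) of Lemma~\ref{le1}(i): if $L\sqsubset M$ and $L\subseteq N$ with $N\in Q$, then $N\sqsubset M\vee N$. The key observation, which is exactly what the definition of $\sqsubset$ is designed to provide, is that taking $K=N$ in (\ref{9.4}) already shows $M+N$ is closed, so that $M\vee N=\overline{M+N}=M+N\in Q$ by (\ref{9.0}). Then for any $K\in Q$ with $N\subseteq K$ one has $(M\vee N)+K=(M+N)+K=M+K$, because $N\subseteq K$ gives $N+K=K$; and since $L\subseteq N\subseteq K$ the hypothesis $L\sqsubset M$ shows that $M+K$ is closed. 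Hence $N\sqsubset M\vee N$, and $\sqsubset$ is an $\mathbf{H}$-relation by Definition~\ref{D2.2}(i).

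For $\prec$ I would check condition~2) of Lemma~\ref{le1}(ii): if $L\prec M$ and $K\subseteq M$ with $K\in Q$, then $L\wedge K=L\cap K\prec K$. Fix $K'\in Q$ with $K'\subseteq K$; then $K'\subseteq M$, so $L\prec M$ gives that $L+K'$ is closed. Here the substantive step is to invoke the modularity of Ln$(X)$ established in Proposition~\ref{P9.1}: since $K'\subseteq K$, the modular identity yields $(L\cap K)+K'=(L+K')\cap K$. As $L+K'$ and $K$ are each closed, their intersection is closed, so $(L\cap K)+K'$ is closed for every admissible $K'$; that is, $L\cap K\prec K$. Thus $\prec$ is a dual $\mathbf{H}$-relation by Definition~\ref{D2.2}(ii).

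The genuine content here is small and largely bookkeeping: the definitions of $\sqsubset$ and $\prec$ are tailored so that precisely the sums one needs turn out to be closed, which is what allows the closure in the join of Cl$(X)$ to be discarded. The one point where more than the definitions enters is the $\prec$ case, where the modular law of Ln$(X)$ is needed to rewrite $(L\cap K)+K'$ as an intersection of two closed subspaces; I expect this use of Proposition~\ref{P9.1} to be the main, though still routine, ingredient of the argument.
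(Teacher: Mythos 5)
Your proof is correct and takes essentially the same route as the paper's: both verify condition $2)$ of Lemma \ref{le1} directly, reducing $(M\vee N)+K$ to $M+K$ via $N\subseteq K$ in the $\sqsubset$ case, and using the modular identity $(L\cap K)+K'=(L+K')\cap K$ of Ln$(X)$ (valid since $K'\subseteq K$) to write the relevant sum as an intersection of closed subspaces in the $\prec$ case. Your explicit observations --- that the witness $K=N$ in (\ref{9.4}) already forces $M+N$ to be closed, so $M\vee N=M+N$, and that the modular law is supplied by Proposition \ref{P9.1} --- merely spell out steps the paper leaves implicit.
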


\begin{proof}
Let $L\sqsubset M$ in $Q.$ For $L\subseteq K\in Q,$ let us show that
$K\sqsubset M\vee K$. Indeed, for each $R\in Q,$ $K\subseteq R,$ we have
$L\subseteq R,$ so that $(M\vee K)+R=M+K+R=M+R$ is closed. Thus $K\sqsubset
M\vee K.$ Hence $\sqsubset$ is an $\mathbf{H}$-relation by Lemma \ref{le1}(i).

Let $L\prec M$ in $Q.$ For $K\subseteq M$ in $Q,$ let us show that $L\cap
K\prec K$. Indeed, for each $R\in Q,$ $R\subseteq K,$ we have $R\subseteq M,$
so that $L\cap K+R=(L+R)\cap K$ is closed, as $L+R$ is closed. Thus $L\cap
K\prec K$. Hence $\prec$ is a dual $\mathbf{H}$-relation by Lemma
\ref{le1}(ii).\bigskip
\end{proof}

In many important sublattices $Q$ of Cl($X)$ the relations $\sqsubset,$
$\prec,$ $\subseteq$ coincide. For example, they coincide if $Q$ is a nest (a
linearly ordered set of subspaces)$,$ or a commutative subspace lattice, if
$X$ is a Hilbert space (Theorem \ref{T9.1}).

Consider another example. Let $B(X)$ the algebra of all bounded operators on
$X.$ A projection $p$ in $B(X)$ is an $L$-\textit{projection} if $\left\Vert
x\right\Vert =\left\Vert px\right\Vert +\left\Vert x-px\right\Vert $ for $x\in
X.$ The subspace $pX$ of $X$ is called an $L$-\textit{summand}. A subspace $J$
of $X$ is an $M$-\textit{ideal} if%
\begin{equation}
J^{\bot}=\{f\in X^{\ast}:f(x)=0\text{ for all }x\in J\}\text{ is an
}L\text{-summand in }X^{\ast}. \label{9,1}%
\end{equation}
Some spaces (for example, $L^{q}$-spaces, $1<q<\infty)$ have no non-trivial
$M$-ideals. In Banach algebras $M$-ideals are closed subalgebras but not
necessarily ideals \cite[Theorem V.2.3]{Har}. In C*-algebras the sets of
$M$-ideals and all closed two-sided ideals coincide.

If $I,$ $J$ are $M$-ideals of $X$ then $I+J$ is an $M$-ideal \cite[Proposition
1.11]{Har}. This yields

\begin{corollary}
\label{C3.5}If a sublattice $Q$ of \emph{Cl}$(X)$ consists of $M$-deals then
$\sqsubset$ $=$ $\prec$ $=$ $\subseteq$ in $Q.$
\end{corollary}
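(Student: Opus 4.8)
The plan is to prove the three relations coincide by establishing the two directions of inclusion (in the sense of (\ref{0})) separately. One direction is automatic: by Definition (\ref{9.4}) both $\sqsubset$ and $\prec$ belong to Ref$(Q)$ and are defined only for pairs with $L\subseteq M$, so $L\sqsubset M$ already forces $L\subseteq M$, and likewise $L\prec M$ forces $L\subseteq M$. Thus $\sqsubset$ and $\prec$ are each stronger than the lattice order $\subseteq$ on $Q$, and it remains only to show the reverse, namely that $\subseteq$ is stronger than both.

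For the reverse direction I would invoke the cited closure property. Fix $L\subseteq M$ in $Q$. To verify $L\sqsubset M$, take an arbitrary $K\in Q$ with $L\subseteq K$; since $Q$ consists of $M$-ideals, both $M$ and $K$ are $M$-ideals, so by \cite[Proposition 1.11]{Har} their sum $M+K$ is again an $M$-ideal and is in particular a closed subspace. Hence the defining condition of $\sqsubset$ in (\ref{9.4}) holds for every admissible $K$, giving $L\sqsubset M$. The verification of $L\prec M$ is entirely symmetric: for any $K\in Q$ with $K\subseteq M$, the subspace $L+K$ is a sum of two $M$-ideals, hence closed, so the defining condition of $\prec$ holds and $L\prec M$. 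Combining the two directions yields $\sqsubset\,=\,\prec\,=\,\subseteq$ on $Q$.

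There is essentially no genuine obstacle here once the result on sums of $M$-ideals is in hand; the content is entirely carried by \cite[Proposition 1.11]{Har}. The only point deserving care is notational — the symbol $\subseteq$ serves both as the order on the lattice $Q\subseteq$ Cl$(X)$ and, in Ref$(Q)$, as the ``stronger than'' relation of (\ref{0}) — so I would phrase the two inclusions explicitly in terms of ``$L\sqsubset M$ implies $L\subseteq M$'' and ``$L\subseteq M$ implies $L\sqsubset M$'' to avoid ambiguity. It is also worth remarking that the side conditions ``$L\subseteq K$'' and ``$K\subseteq M$'' appearing in (\ref{9.4}) play no role in the argument: under the hypothesis the sum of \emph{any} two members of $Q$ is closed, so the closedness requirements in the definitions of $\sqsubset$ and $\prec$ are satisfied unconditionally.
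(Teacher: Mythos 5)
Your proof is correct and follows essentially the same route as the paper: the paper derives the corollary directly from the fact that the sum of two $M$-ideals is an $M$-ideal (hence closed) by \cite[Proposition 1.11]{Har}, which makes the closedness conditions in (\ref{9.4}) hold unconditionally, exactly as you argue. Your explicit remark that $\sqsubset$ and $\prec$ are by definition stronger than $\subseteq$, so only the converse implications need checking, is a harmless and accurate elaboration of what the paper leaves implicit.
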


For a subset $S\subseteq$ Cl($X),$ Alg $S$ is the algebra of all operators in
$B(X)$ that leave all $L\in S$ invariant. For a subalgebra $\mathcal{A}$ of
$B(X),$ Lat $\mathcal{A}$\ is the lattice of all $\mathcal{A}$-invariant
subspaces of $X.$ A sublattice $Q$ of Cl($X)$ is \textit{reflexive }if $Q=$
Lat(Alg $Q)).$ Similarly, a subalgebra $\mathcal{A}$ of $B(X)$ is
\textit{reflexive} if $\mathcal{A}=$ Alg(Lat $\mathcal{A})).$ Alg $Q$ is a
reflexive subalgebra of $B(X)$ and Lat $\mathcal{A}$ is a strongly closed,
complete reflexive sublattice of Cl($X).$

For $L\subseteq M$ in Lat $\mathcal{A},$ let $p$: $M\rightarrow M/L$ be the
quotient map. Set $A_{p}p(x)=p(Ax)$ for $A\in\mathcal{A}$ and $x\in M.$ Then
$\mathcal{A}_{p}=\{A_{p}$: $A\in\mathcal{A}\}$ is a subalgebra of the algebra
$B(M/L)$ of all operators acting on $M/L.$ To proceed further we need the
following lemma.

\begin{lemma}
\label{Closed}Let $L\subset M$ in \emph{Lat} $\mathcal{A},$ let $p$\emph{:
}$M\rightarrow M/L$ and $K\in$ \emph{Lat} $\mathcal{A}.$\smallskip

\emph{(i) }Suppose that $K\subseteq M$ and $L+K$ is closed. Set $q$\emph{:
}$K\rightarrow K/(L\cap K).$ For $x\in K,$ let $Sq(x)=p(x).$ Then $p(K)$ is
closed in $p(M),$ $S$ is an invertible contraction from $q(K)$ onto $p(K)$ and%
\[
SA_{q}q(x)=A_{p}Sq(x)\text{ for all }A\in\mathcal{A}\text{ and }x\in K.
\]

\emph{(ii)} Suppose that $L\subseteq K$ and $N:=M+K$ be closed. Set $q$\emph{:
}$N\rightarrow N/K.$ For $x\in M,$ let $Tp(x)=q(x).$ Then $T$ is a contraction
from $p(M)$ onto $q(N),$ $\ker T=p(M\cap K)$ and%
\[
TA_{p}p(x)=A_{q}Tp(x)\text{ for all }A\in\mathcal{A}\text{ and }x\in M.
\]

\end{lemma}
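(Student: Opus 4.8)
The plan is to check directly that the prescribed formulas define bounded maps $S$ and $T$ with the claimed images, kernels, and intertwining properties; the only analytically substantive point will be the invertibility of $S$ in part (i), which is exactly where the hypothesis that $L+K$ is closed enters, through the bounded inverse theorem. Since part (ii) claims no invertibility, it will be pure bookkeeping once the induced algebra $\mathcal{A}_q$ is seen to be defined.

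For part (i), I would first note that $L\cap K$ is closed, so $q\colon K\rightarrow K/(L\cap K)$ is a genuine quotient map of Banach spaces, and that $S$ is well defined and injective: if $q(x)=q(y)$ with $x,y\in K$ then $x-y\in L\cap K\subseteq L$, so $p(x)=p(y)$, and $Sq(x)=p(x)=0$ forces $x\in L\cap K$. The contraction estimate follows from
\[
\|Sq(x)\|=\inf_{l\in L}\|x-l\|\le\inf_{l\in L\cap K}\|x-l\|=\|q(x)\|,
\]
and by construction the image is $p(K)=(L+K)/L$. The decisive step is the identification $p^{-1}(p(K))=L+K$ inside $M$: since $p$ is a continuous open surjection, a subspace of $M/L$ is closed if and only if its $p$-preimage is closed, so $p(K)$ is closed in $M/L$ precisely because $L+K$ is closed by hypothesis. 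Thus $S$ is a continuous bijection of the Banach space $K/(L\cap K)$ onto the Banach space $p(K)$, and the bounded inverse theorem makes $S$ invertible. The intertwining relation is then a one-line computation using $K\in$ Lat $\mathcal{A}$, hence $Ax\in K$, so that $SA_qq(x)=Sq(Ax)=p(Ax)=A_pp(x)=A_pSq(x)$.

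For part (ii) I would argue dually, with no open-mapping argument needed. After observing that $N=M+K$ lies in Lat $\mathcal{A}$ (closed by hypothesis, invariant as a sum of invariant subspaces), so that $A_q$ is defined on $N/K$, I would verify that $T$ is well defined (if $p(x)=p(y)$ then $x-y\in L\subseteq K$, so $q(x)=q(y)$), that it maps onto $q(N)$ because $N=M+K$ gives $q(N)=q(M)=T(p(M))$, and that it is a contraction from
\[
\|Tp(x)\|=\inf_{k\in K}\|x-k\|\le\inf_{l\in L}\|x-l\|=\|p(x)\|,
\]
valid since $L\subseteq K$. The kernel is read off directly, $Tp(x)=0$ iff $x\in K$, i.e. $x\in M\cap K$, giving $\ker T=p(M\cap K)$, and the intertwining $TA_pp(x)=Tp(Ax)=q(Ax)=A_qq(x)=A_qTp(x)$ uses only that $M$ is $\mathcal{A}$-invariant.

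The main obstacle, and really the only place requiring more than routine verification, is establishing in part (i) that $p(K)$ is closed and that $S^{-1}$ is bounded. Both rest on the closedness of $L+K$, combined with the second isomorphism theorem $K/(L\cap K)\cong(L+K)/L$ and the bounded inverse theorem; I would take care to justify explicitly the quotient-map fact that $V\subseteq M/L$ is closed iff $p^{-1}(V)$ is closed, applied to $V=p(K)$ with $p^{-1}(p(K))=L+K$.
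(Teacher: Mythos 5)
Your proposal is correct and follows essentially the same route as the paper: well-definedness and the norm estimates via the two infima, surjectivity of $S$ and $T$ from $p(K)=p(L+K)$ and $q(N)=q(M)$, the kernel computations, and the one-line intertwining identities using invariance of $K$, $M$ and $N$. The only difference is that you make explicit two steps the paper leaves implicit -- the quotient-map criterion $p^{-1}(p(K))=L+K$ for the closedness of $p(K)$, and the bounded inverse theorem upgrading the continuous bijection $S$ to an invertible contraction -- which is a welcome sharpening rather than a deviation.
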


\begin{proof}
(i) As $L+K$ is closed, $p(L+K)$ is closed. So $p(K)=p(L+K)$ is closed in
$p(M).$

If $q(x)=q(y)$ then $x-y\in L\cap K.$ So $p(x)=p(y).$ Thus $S$ is a well
defined linear operator from $q(K)$ onto $p(K)$. If $Sq(x)=0$ for $x\in K,$
then $p(x)=0.$ So $x\in L.$ Thus $x\in L\cap K$ and $q(x)=0.$ Hence $\ker
S=\{0\}.$ Moreover, $\left\Vert S\right\Vert \leq1,$ as%
\[
\left\Vert p(x)\right\Vert _{M/L}=\underset{t\in L}{\inf}\left\Vert
x+t\right\Vert \leq\underset{t\in L\cap K}{\inf}\left\Vert x+t\right\Vert
=\left\Vert q(x)\right\Vert _{K/(L\cap K)}\text{ for all }x\in K.
\]
Hence $S$ is an invertible contraction. For $A\in\mathcal{A}$ and $x\in K,$
$A_{q}q(x)=q(Ax),$ so that%
\[
SA_{q}q(x)=Sq(Ax)=p(Ax)=A_{p}p(x)=A_{p}Sq(x).
\]

(ii) As $N$ is closed, $q(M)=q(N)$ is a Banach space. If $p(x)=p(y)$ then
$x-y\in L.$ So $q(x)=q(y),$ as $L\subseteq K.$ Thus $T$ is a well defined
linear operator from $p(M)$ onto $q(N)$. If $Tp(x)=q(x)=0$ then $x\in M\cap
K.$ So $\ker T=p(M\cap K).$ Moreover, $\left\Vert T\right\Vert \leq1,$ since%
\[
\left\Vert q(x)\right\Vert _{N/K}=\underset{t\in K}{\inf}\left\Vert
x+t\right\Vert \leq\underset{t\in L}{\inf}\left\Vert x+t\right\Vert
=\left\Vert p(x)\right\Vert _{M/L}\text{ for all }x\in M.
\]
We also have $TA_{p}p(x)=Tp(Ax)=q(Ax)=A_{q}q(x)=A_{q}Tp(x)$ for $A\in
\mathcal{A}$ and $x\in M.\bigskip$
\end{proof}

Recall that a subalgebra $\mathcal{A}$ of $B(X)$ is \textit{irreducible} on
$L\in$ Lat $\mathcal{A},$ if $L$ has no non-trivial invariant subspaces. It is
\textit{algebraically irreducible}$,$ if $L$ has no non-trivial invariant
linear manifolds.

For a sublattice $Q$ of Cl($X),$ consider the following relations from
Ref($Q$) stronger than $<_{\mathfrak{g}}$:%
\begin{equation}
\sqsubset_{\mathfrak{g}}=\text{ }<_{\mathfrak{g}}\cap\text{ }\sqsubset\text{
and }\prec_{\mathfrak{g}}=\text{ }<_{\mathfrak{g}}\cap\text{ }\prec,
\label{9.6}%
\end{equation}
that is, $L\sqsubset_{\mathfrak{g}}M$ if $L<_{\mathfrak{g}}M$ and $L\sqsubset
M;$\ and $L\prec_{\mathfrak{g}}M$ if $L<_{\mathfrak{g}}M$ and $L\prec M$ for
$L,M\in Q.$

However, for some pairs $(L,M),$ $L<_{\mathfrak{g}}M$ implies $L\sqsubset
_{\mathfrak{g}}M,$ for some it implies $L\prec_{\mathfrak{g}}M.$ For example,
if $L<_{\mathfrak{g}}X$ then $L\sqsubset_{\mathfrak{g}}X;$ if
$\{0\}<_{\mathfrak{g}}M$ then $\{0\}\prec_{\mathfrak{g}}M.$

\begin{theorem}
\label{T4.5}$\sqsubset_{\mathfrak{g}}$ is an $\mathbf{H}$-relation and
$\prec_{\mathfrak{g}}$ is a dual $\mathbf{H}$-relation in each reflexive sublattice.
\end{theorem}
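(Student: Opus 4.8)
The plan is to verify the two defining conditions of Lemma \ref{le1} directly, splitting each into two tasks: preserving the $\sqsubset$- (resp. $\prec$-) component, which will be immediate from Proposition \ref{P9.2}, and preserving the gap component $<_{\mathfrak{g}}$, which carries the real content. Throughout I will use that a reflexive sublattice $Q$ is a complete sublattice of Cl($X$), so that for $L,K\in Q$ the lattice operations are $L\vee K=\overline{L+K}$ and $L\wedge K=L\cap K$, and that $R\cap M$ and $L\vee R$ lie in $Q$ whenever $M,R,L\in Q$. For $\sqsubset_{\mathfrak{g}}$ I will use Lemma \ref{le1}(i), condition 2): it suffices to show that $L\sqsubset_{\mathfrak{g}}M$ and $L\subseteq K$ in $Q$ imply $K\sqsubset_{\mathfrak{g}}(M\vee K)$. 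Writing $L\sqsubset_{\mathfrak{g}}M$ as $L<_{\mathfrak{g}}M$ together with $L\sqsubset M$ (see (\ref{9.6})), Proposition \ref{P9.2} and Lemma \ref{le1}(i) at once give $K\sqsubset(M\vee K)$; in particular $M+K$ is closed, so $M\vee K=M+K\in Q$, and only the gap claim $K<_{\mathfrak{g}}(M+K)$ will remain.

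The gap claim is where I expect the main obstacle, since Cl($X$) is not modular and gaps are in general destroyed by the map $K\mapsto M\vee K$. The idea is that closedness of $M+K$ restores modular behaviour locally. Concretely, I would argue by contradiction: suppose $R\in Q$ with $K\subsetneq R\subsetneq M+K$. Then $R\cap M\in Q$ and $L\subseteq R\cap M\subseteq M$, so $R\cap M\in[L,M]_{_{Q}}$; since $[L,M]_{_{Q}}$ is a gap, $R\cap M=L$ or $R\cap M=M$. If $R\cap M=M$ then $M\subseteq R$, whence $M+K\subseteq R$, contradicting $R\subsetneq M+K$. If $R\cap M=L$, take $r\in R\subseteq M+K$ and write $r=m+k$ with $m\in M$ and $k\in K\subseteq R$; then $m=r-k\in R\cap M=L\subseteq K$, so $r\in K$ and $R\subseteq K$, contradicting $K\subsetneq R$. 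Hence no such $R$ exists, $[K,M+K]_{_{Q}}$ is a gap (or $K=M+K$), and $K<_{\mathfrak{g}}(M\vee K)$. Combined with $K\sqsubset(M\vee K)$ this gives $K\sqsubset_{\mathfrak{g}}(M\vee K)$, so $\sqsubset_{\mathfrak{g}}$ is an $\mathbf{H}$-relation by Lemma \ref{le1}(i) and Definition \ref{D2.2}.

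For $\prec_{\mathfrak{g}}$ I will run the dual argument via Lemma \ref{le1}(ii), condition 2): from $L\prec_{\mathfrak{g}}M$ and $K\subseteq M$ I must derive $(L\cap K)\prec_{\mathfrak{g}}K$. Proposition \ref{P9.2} yields $(L\cap K)\prec K$ and, from $L\prec M$, closedness of $L+R$ for every $R\in Q$ with $R\subseteq M$. For the gap part $(L\cap K)<_{\mathfrak{g}}K$ I would again assume $R\in Q$ with $L\cap K\subsetneq R\subsetneq K$; then $L\vee R=L+R$ is closed and lies in $[L,M]_{_{Q}}$ (since $L\subseteq L+R\subseteq L+K\subseteq M$), so $L+R\in\{L,M\}$. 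The case $L+R=L$ forces $R\subseteq L\cap K$, a contradiction; the case $L+R=M$ gives, through the modular identity $(L+R)\cap K=R+(L\cap K)$ (valid for $R\subseteq K$ in the modular lattice Ln($X$), Proposition \ref{P9.1}) together with $L\cap K\subseteq R$, that $K=M\cap K=(L+R)\cap K=R$, again a contradiction. Hence $[L\cap K,K]_{_{Q}}$ is a gap and $(L\cap K)<_{\mathfrak{g}}K$, so $\prec_{\mathfrak{g}}$ is a dual $\mathbf{H}$-relation.

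The single delicate ingredient, and the point to be careful about, is the transfer of the modular identities of Ln($X$) (Proposition \ref{P9.1}) to the closed subspaces of $Q$: this is legitimate exactly because the relevant sums $M+K$ and $L+R$ are closed by virtue of $\sqsubset$, resp. $\prec$, so the intermediate subspaces produced by the modular computation again lie in Cl($X$) and in $Q$. This is precisely the mechanism that rescues the argument; without the closedness built into $\sqsubset$ and $\prec$, the gap would fail to be preserved, which is why $<_{\mathfrak{g}}$ by itself need not be an $\mathbf{H}$- or dual $\mathbf{H}$-relation (Corollary \ref{C9.1}) whereas its refinements $\sqsubset_{\mathfrak{g}}$ and $\prec_{\mathfrak{g}}$ do have these properties.
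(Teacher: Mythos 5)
Your proof is correct, but it takes a genuinely different route from the paper's. The paper exploits reflexivity essentially: writing $Q=$ Lat $\mathcal{A}$ with $\mathcal{A}=$ Alg $Q$, it identifies the gap condition $L<_{\mathfrak{g}}M$ with irreducibility of the compressed algebra $\mathcal{A}_{p}$ on $M/L$, and then transports irreducibility to $N/K$ along the canonical intertwining contraction $T$ of Lemma \ref{Closed}(ii), which is injective precisely because $M\cap K=L$ (the other alternative $M\cap K=M$ being handled trivially, just as in your degenerate cases). That identification of gaps with irreducible quotient actions is exactly where reflexivity is used, since in a non-reflexive sublattice a gap $[L,M]_{_{Q}}$ says nothing about invariant subspaces outside $Q$. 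Your argument replaces all of this operator theory by elementary subspace computations: in the $\mathbf{H}$ case, the decomposition $r=m+k$ of elements of an intermediate $R$ (legitimate only because $\sqsubset$ forces $M\vee K=M+K$ to be the algebraic sum), and in the dual case one application of the modular law of Ln$(X)$ (Proposition \ref{P9.1}), valid in $Q$ because $\prec$ makes $L+R$ closed, so that $L\vee R=L+R$ and $M\cap K=(L+R)\cap K=R+(L\cap K)$. Your diagnosis of the delicate point is exactly right, and a noteworthy consequence is that your proof never invokes reflexivity at all: it only uses that $Q$ is a complete sublattice of Cl$(X)$ (so $\wedge=\cap$, $\vee=\overline{+}$, and $R\cap M,\,L\vee R\in Q$), hence it proves the theorem for arbitrary complete sublattices of Cl$(X)$, a strictly stronger statement than Theorem \ref{T4.5}. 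What the paper's method buys instead is the operator-algebraic picture -- gaps as irreducible quotient representations -- which the authors reuse immediately afterwards (e.g. Corollary \ref{C9.2}(ii), where algebraic irreducibility of $\mathcal{A}_{p}$ is the hypothesis), and which has no counterpart in your purely lattice-theoretic approach. The only cosmetic point to tidy is the degenerate case $L=M$ of $L<_{\mathfrak{g}}M$, where $[L,M]_{_{Q}}$ is not a gap; there your contradiction arguments are vacuous but the conclusions $K=M\vee K$ and $L\cap K=K$ hold trivially, as you parenthetically note.
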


\begin{proof}
If a sublattice $Q$ of Cl($X)$ is reflexive then $Q=$ Lat $\mathcal{A},$ where
$\mathcal{A}=$ Alg $Q.$

Let $L\sqsubset_{\mathfrak{g}}M$ in $Q$ and $L\subseteq K\in Q.$ As
$L\sqsubset M,$ we have that $N:=K+M=K\vee M$ is invariant and closed, i.e.,
$N\in Q$. If we show that $K\sqsubset_{\mathfrak{g}}N$ then, by Lemma
\ref{le1}(i), $\sqsubset_{\mathfrak{g}}$ is an $\mathbf{H}$-relation.

By Proposition \ref{P9.2}, $\sqsubset$ is an $\mathbf{H}$-relation. Hence it
follows from Lemma \ref{le1}(i) that $K\sqsubset N.$ As $\sqsubset
_{\mathfrak{g}}=$ $<_{\mathfrak{g}}\cap$ $\sqsubset,$ we only need to prove
$K<_{\mathfrak{g}}N$. As $L\subseteq M\cap K\subseteq M$ and $L<_{\mathfrak{g}%
}M,$ either $L=M\cap K$ or $M\cap K=M.$ In the second case $M\subseteq K,$ so
that $K=N.$ Thus $K<_{\mathfrak{g}}N$ by (\ref{9.5}).

Let now $L=M\cap K$ and $p$: $M\rightarrow M/L,$ $q$: $N\rightarrow N/K$ be
the quotient maps. The operator $T$ in Lemma \ref{Closed}(ii) is a contraction
from $p(M)$ onto $q(N)$ and $\ker T=p(M\cap K)=p(L)=\{0\}$. Thus $T$ is
invertible. Moreover, $TA_{p}p(x)=A_{q}Tp(x)$ for all $A\in\mathcal{A}$ and
$x\in M.$

As $L<_{\mathfrak{g}}M,$ the algebra $\mathcal{A}_{p}$ on\thinspace$M/L$ is
irreducible. Hence, as $T$ is invertible, the algebra $\mathcal{A}_{q}$ on
$N/K$ is irreducible, i.e., $K<_{\mathfrak{g}}N.$ Thus $K\sqsubset
_{\mathfrak{g}}N,$ so that $\sqsubset_{\mathfrak{g}}$ is an $\mathbf{H}$-relation.

Similarly, one can prove that $\prec_{\mathfrak{g}}$ is a dual $\mathbf{H}%
$-relation in $Q.$
\end{proof}

\begin{corollary}
\label{C9.2}Let $Q=$ \emph{Lat }$\mathcal{A}$ be a reflexive sublattice of
\emph{Cl(}$X).\smallskip$

\emph{(i) \ }$<_{\mathfrak{g}}\cap$ $\ll_{n}$ is an $\mathbf{HH}$-relation in
$Q$ for each $1\leq n\leq\infty.\smallskip$

\emph{(ii) }Suppose that whenever $L<_{\mathfrak{g}}M$ in $Q$ then
$\mathcal{A}_{p}$ is algebraically irreducible in $M/L,$ where $p$\emph{:
}$M\rightarrow M/L.$ Then $<_{\mathfrak{g}}=$ $\prec_{\mathfrak{g}}$ is a dual
$\mathbf{H}$-relation.
\end{corollary}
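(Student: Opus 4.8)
The plan is to derive both parts from the properties of $\sqsubset_{\mathfrak{g}}$ and $\prec_{\mathfrak{g}}$ already obtained in Theorem \ref{T4.5}, using as the one new ingredient the fact that a finite-codimensional step forces the relevant sums to be closed. First I would record that each $\ll_n$ is stronger than both $\sqsubset$ and $\prec$ of (\ref{9.4}). Indeed, if $L\ll_n M$ then $L\subseteq M$ and $\dim(M/L)<n\leq\infty$, so $\dim(M/L)<\infty$; Lemma \ref{Clos}(iii) then gives that $M+K$ is closed for every $K\in Q$ with $L\subseteq K$ (hence $L\sqsubset M$) and that $L+K$ is closed for every $K\in Q$ with $K\subseteq M$ (hence $L\prec M$). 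Thus $\ll_n$ $\subseteq$ $\sqsubset$ and $\ll_n$ $\subseteq$ $\prec$ in the sense of (\ref{0}).

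For part (i), since $\ll_n$ is an $\mathbf{H}$-relation by Corollary \ref{T9.2} and $\sqsubset_{\mathfrak{g}}=$ $<_{\mathfrak{g}}\cap\sqsubset$ is an $\mathbf{H}$-relation by Theorem \ref{T4.5}, I would apply Lemma \ref{L5.1} with the $\mathbf{H}$-relation taken to be $\ll_n$, the relation it is stronger than taken to be $\sqsubset$, and the third relation taken to be $<_{\mathfrak{g}}$; this yields that $<_{\mathfrak{g}}\cap\ll_n$ is an $\mathbf{H}$-relation. Dually, $\ll_n$ is a dual $\mathbf{H}$-relation, $\ll_n$ is stronger than $\prec$, and $\prec_{\mathfrak{g}}=$ $<_{\mathfrak{g}}\cap\prec$ is a dual $\mathbf{H}$-relation by Theorem \ref{T4.5}, so the dual form of Lemma \ref{L5.1} gives that $<_{\mathfrak{g}}\cap\ll_n$ is a dual $\mathbf{H}$-relation. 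Combining the two, $<_{\mathfrak{g}}\cap\ll_n$ is an $\mathbf{HH}$-relation for every $1\leq n\leq\infty$.

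For part (ii), I observe that $\prec_{\mathfrak{g}}=$ $<_{\mathfrak{g}}\cap\prec$ is already a dual $\mathbf{H}$-relation by Theorem \ref{T4.5}, so it suffices to show the hypothesis forces $L<_{\mathfrak{g}}M$ to imply $L\prec M$; then $<_{\mathfrak{g}}=$ $<_{\mathfrak{g}}\cap\prec=$ $\prec_{\mathfrak{g}}$, and the conclusion follows. Fix $L<_{\mathfrak{g}}M$ with $L\neq M$ and let $p\colon M\to M/L$. Since $Q=$ Lat $\mathcal{A}$ is reflexive, the members of $[L,M]_{_Q}$ correspond to the closed $\mathcal{A}_p$-invariant subspaces of $M/L$, so the gap condition means $\mathcal{A}_p$ is irreducible on $M/L$; by hypothesis it is in fact algebraically irreducible. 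Now take any $K\in Q$ with $K\subseteq M$. As $K$ is $\mathcal{A}$-invariant, $p(K)$ is an $\mathcal{A}_p$-invariant linear manifold of $M/L$, hence equals $\{0\}$ or $M/L$ by algebraic irreducibility; in the first case $K\subseteq L$ and $L+K=L$, in the second $L+K=M$, so $L+K$ is closed either way, giving $L\prec M$ by (\ref{9.4}).

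The routine inputs (the closedness statements of Lemma \ref{Clos} and the definitions of $\sqsubset,\prec$) are already in place, so the only genuinely delicate point is the passage in part (ii) from topological to algebraic irreducibility: the hypothesis is precisely what converts the a priori merely linear (possibly non-closed) invariant manifold $p(K)$ into one of the two trivial subspaces, which is what makes $L+K$ closed. The two supporting steps to carry out with care there are the identification of $[L,M]_{_Q}$ with $\mathrm{Lat}\,\mathcal{A}_p$ via reflexivity and the verification that $p(K)$ is $\mathcal{A}_p$-invariant; everything else is a direct application of Lemma \ref{L5.1} and Theorem \ref{T4.5}.
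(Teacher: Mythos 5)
Your proof is correct and follows essentially the same route as the paper: part (i) via Lemma \ref{Clos} (each $\ll_n$ is stronger than $\sqsubset$ and $\prec$), Theorem \ref{T4.5} and Lemma \ref{L5.1}; part (ii) by using algebraic irreducibility of $\mathcal{A}_p$ to force the $\mathcal{A}$-invariant manifold $L+K$ (equivalently $p(K)$) to be $L$ or $M$, hence closed, so $<_{\mathfrak{g}}$ coincides with $\prec_{\mathfrak{g}}$. Your side remark that reflexivity yields topological irreducibility is harmless but redundant, since the algebraic irreducibility is simply the stated hypothesis of (ii).
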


\begin{proof}
(i) By Lemma \ref{Clos}, the $\mathbf{HH}$-relation $\ll_{n}$ is stronger than
$\sqsubset$ and than $\prec.$ By Theorem \ref{T4.5}, $<_{\mathfrak{g}}\cap$
$\sqsubset$ $=$ $\sqsubset_{\mathfrak{g}}$ is an $\mathbf{H}$-relation and
$<_{\mathfrak{g}}\cap$ $\prec$ $=$ $\prec_{\mathfrak{g}}$ is a dual
$\mathbf{H}$-relation. Hence, by Lemma \ref{L5.1}, $<_{\mathfrak{g}}\cap$
$\ll_{n}$ is an $\mathbf{HH}$-relation.

(ii) If $K\subset M$ in $Q$ then $L+K$ is an $\mathcal{A}$-invariant manifold
and $L\subseteq L+K\subseteq M.$ As $\mathcal{A}_{p}$ is algebraically
irreducible in $M/L,$ either $L+K=L,$ or $L+K=M.$ Hence $L+K$ is closed. Thus
$L\prec M$. So $L\prec_{\mathfrak{g}}M.$\bigskip
\end{proof}

Algebraically irreducible representations of Banach algebras were studied in
the papers of Poguntke \cite{P}, Jeu and Tomiyama \cite{JT}, Radjavi \cite{R},
Barnes \cite{Ba} and of many others.

\subsection{Superinvariant subspaces in Lat $\mathcal{A}.$}

A subspace $W\subseteq B(X)$ is a \textit{Lie subalgebra }of $B(X)$ if the
\textit{commutator }$AB-BA\in W$ for all $A,B\in W.$ Let $\mathcal{A}$ be a
closed subalgebra of $B(X)$ and $Q=$ Lat $\mathcal{A}.$ Then%
\begin{equation}
\text{Nor }\mathcal{A}=\{S\in B(X)\text{: }SA-AS\in\mathcal{A}\text{ for all
}A\in\mathcal{A\}} \label{3.1}%
\end{equation}
is a closed Lie subalgebra of $B(X)$ and $\mathcal{A}^{\prime}+\mathcal{A}%
\subseteq$ Nor $\mathcal{A},$ where $\mathcal{A}^{\prime}$ is the commutant of
$\mathcal{A}$:%
\[
\mathcal{A}^{\prime}=\{B\in B(X):AB-BA=0\text{ for all }A\in\mathcal{A}\}.
\]
A space in $Q$ is \textit{superinvariant }(\cite{K1}) if it is invariant for
all operators in Nor $\mathcal{A}.$

For $S\in B(X),$ we define the map $\theta_{_{S}}$ on Cl($X)$ by the formula
$\theta_{_{S}}(L)=e^{S}L.$ If $S\in$ Nor $\mathcal{A}$ then (see \cite[Lemmas
2.1 and 2.3]{K2}) $\theta_{_{S}}$ is an isomorphism of $Q$ and%
\begin{equation}
L\in Q\text{ is superinvariant if and only if }\theta_{_{S}}(L)=L\text{ for
all }S\in\text{ Nor }\mathcal{A}. \label{3.7}%
\end{equation}
Recall (see (\ref{5.0}) that $\theta_{_{S}}$ preserves a relation $\ll$ in $Q$
if $\theta_{_{S}}(L)\ll\theta_{_{S}}(K)\Leftrightarrow L\ll K$ for $L,K\in Q.$

\begin{proposition}
\label{P9.4}Let all isomorphisms $\theta_{_{S}},$ $S\in$ \emph{Nor
}$\mathcal{A},$ preserve a relation $\ll$ in $Q$. If $L\in Q$ is
superinvariant then\smallskip

\emph{(i) \ \ }The subspaces $s^{\ll}(L)$ and $\sigma_{_{\ll}}(L)$ in
\emph{(\ref{3.3}) }and \emph{(\ref{3.2}) }are superinvariant$.\smallskip$

\qquad In particular\emph{,} $s^{\ll}(\{0\})$ and $\sigma_{_{\ll}}(X)$ are
superinvariant.\smallskip

\emph{(ii)} \ If $\ll$ is an $\mathbf{H}$-relation then the $\ll
^{\triangleright}$-radical in $[L,X]_{_{Q}}$ is superinvariant.\smallskip

\qquad In particular\emph{, }the $\ll^{\triangleright}$-radical in $Q$ is
superinvariant.\smallskip

\emph{(iii) }If $\ll$ is a dual $\mathbf{H}$-relation then the dual
$\ll^{\triangleleft}$-radical in $[\{0\},L]_{_{Q}}$ is
superinvariant.\smallskip

\qquad In particular\emph{, }the dual $\ll^{\triangleleft}$-radical in $Q$ is superinvariant.
\end{proposition}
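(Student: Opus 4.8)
The plan is to reduce the whole proposition to the single abstract fact recorded in Proposition \ref{T3.11}, namely that an automorphism of $Q$ which preserves $\ll$ and fixes a point $a$ must also fix the extremal elements $\sigma(a)$, $s(a)$ attached to $a$, as well as the $\ll^{\triangleright}$- and dual $\ll^{\triangleleft}$-radicals built from $a$. The bridge between superinvariance and such fixed points is the criterion (\ref{3.7}): a space $M\in Q$ is superinvariant precisely when $\theta_{_{S}}(M)=M$ for every $S\in$ Nor $\mathcal{A}$. Since each $\theta_{_{S}}$, $S\in$ Nor $\mathcal{A}$, is an isomorphism of $Q$ and, by hypothesis, preserves $\ll$, every such $\theta_{_{S}}$ is exactly an automorphism of the kind treated in Proposition \ref{T3.11}. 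First I would use (\ref{3.7}) to rewrite the assumption ``$L$ is superinvariant'' as the family of equalities $\theta_{_{S}}(L)=L$, one for each $S\in$ Nor $\mathcal{A}$, and then apply Proposition \ref{T3.11} with $\theta=\theta_{_{S}}$ and $a=L$, separately for each $S$.

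For part (i), I would fix $S\in$ Nor $\mathcal{A}$; since $\theta_{_{S}}(L)=L$, the first assertion of Proposition \ref{T3.11}(ii) gives $\theta_{_{S}}(\sigma(L))=\sigma(L)$ and $\theta_{_{S}}(s(L))=s(L)$. Matching the notation of (\ref{1.5}) with that of (\ref{3.3}), so that $s^{\ll}(L)=s(L)$ and $\sigma_{_{\ll}}(L)=\sigma(L)$, these equalities hold for every $S$, and (\ref{3.7}) then yields that $s^{\ll}(L)$ and $\sigma_{_{\ll}}(L)$ are superinvariant. The ``in particular'' clause follows by taking $L=\{0\}=\mathbf{0}$ and $L=X=\mathbf{1}$, both of which are trivially superinvariant, being invariant for all of $B(X)$.

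Parts (ii) and (iii) run along identical lines, invoking instead part 2) of Proposition \ref{T3.11}(ii). When $\ll$ is an $\mathbf{H}$-relation, $\ll^{\triangleright}$ is an $\mathbf{R}$-order by Theorem \ref{inf}, and the $\ll^{\triangleright}$-radical in $[L,X]_{_{Q}}$ equals $\mathfrak{r}_{_{\ll^{\triangleright}}}(L)=s^{\ll^{\triangleright}}(L)$; applying Proposition \ref{T3.11}(ii)2) to each $\theta_{_{S}}$ gives $\theta_{_{S}}(\mathfrak{r}_{_{\ll^{\triangleright}}}(L))=\mathfrak{r}_{_{\ll^{\triangleright}}}(L)$, so (\ref{3.7}) makes the radical superinvariant. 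Dually, when $\ll$ is a dual $\mathbf{H}$-relation the dual $\ll^{\triangleleft}$-radical in $[\{0\},L]_{_{Q}}$ is $\mathfrak{p}_{_{\ll^{\triangleleft}}}(L)$ and is fixed by every $\theta_{_{S}}$ for the same reason. The ``in particular'' radicals in $Q$ correspond to $L=\{0\}$ in (ii) and $L=X$ in (iii), giving the intervals $[\{0\},X]_{_{Q}}=Q$.

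There is no serious obstacle here once the translation via (\ref{3.7}) is in place: the entire content lies in carrying the hypotheses of Proposition \ref{T3.11} over to each $\theta_{_{S}}$ and in identifying the objects $s^{\ll}(L),\sigma_{_{\ll}}(L)$ of (\ref{3.3}) with the $s(a),\sigma(a)$ of (\ref{1.5}), and the radicals $\mathfrak{r}_{_{\ll^{\triangleright}}}$, $\mathfrak{p}_{_{\ll^{\triangleleft}}}$ with the $\ll^{\triangleright}$- and dual $\ll^{\triangleleft}$-radicals of the respective intervals. The one point deserving care is that Proposition \ref{T3.11} must be applied separately for each $S\in$ Nor $\mathcal{A}$, producing a family of fixed-point equalities indexed by $S$, before (\ref{3.7}) assembles them into the single statement of superinvariance.
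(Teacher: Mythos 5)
Your proposal is correct and follows essentially the same route as the paper: the paper's proof likewise combines the criterion (\ref{3.7}) with Proposition \ref{T3.11} applied to each $\theta_{_{S}}$, $S\in$ Nor $\mathcal{A}$, treating parts (ii) and (iii) as the analogous application of Proposition \ref{T3.11}(ii)2). Your version merely spells out the notational identifications ($s^{\ll}(L)=s(L)$, $\sigma_{_{\ll}}(L)=\sigma(L)$) and the trivial superinvariance of $\{0\}$ and $X$ for the ``in particular'' clauses, which the paper leaves implicit.
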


\begin{proof}
(i) As $\theta_{_{S}}$ preserve $\ll$ and as $\theta_{_{S}}(L)=L$ for all
$S\in$ Nor $\mathcal{A}$ by (\ref{3.7}), Proposition \ref{T3.11} gives
$\theta_{_{S}}(s^{\ll}(L))=s^{\ll}(L)$ and $\theta_{_{S}}(\sigma_{_{\ll}%
}(L))=\sigma_{_{\ll}}(L).$ Then, by (\ref{3.7}), $s^{\ll}(L)$ and
$\sigma_{_{\ll}}(L)$ are superinvariant. Parts (ii) and (iii) have similar
proofs and follow from Proposition \ref{T3.11}.\bigskip
\end{proof}

Combining Theorem \ref{T4.6}, Propositions \ref{P9.5} and \ref{P9.4} and
(\ref{3.4}), we obtain

\begin{corollary}
\label{C9.4}Let $\ll$ be a relation in $Q=$ \emph{Lat }$\mathcal{A}$ and let
all $\theta_{_{S}},$ $S\in$ \emph{Nor }$\mathcal{A},$ preserve $\ll
.\smallskip$

\emph{(i) \ }\ Suppose that $\mathcal{A}$ has no non-trivial superinvariant
subspaces.\smallskip

\qquad If $[\ll,X]_{_{Q}}\neq\{X\}$ then $\sigma_{_{\ll}}(X)=\{0\}.$ If
$[\{0\},\ll]_{_{Q}}\neq\{0\}$ then $s^{\ll}(\{0\})=X.\smallskip$

\emph{(ii) }Let $\ll$ be a dual $\mathbf{H}$-relation. If $\sigma_{_{\ll}%
}(X)=\{0\}$ then the dual $\ll^{\triangleleft}$-radical $\mathfrak{p}=\{0\}$
and\smallskip

$\qquad1)$ for each $L\neq\{0\}$ in $Q,$ there is $K\subset L$ in $Q$ such
that $K\ll L.$\smallskip

$\qquad2)$ If\emph{ }$X$ is separable\emph{,} there are $...\ll Y_{n}\ll...\ll
Y_{1}\ll X$ in $Q$ such that $\cap_{n=1}^{\infty}Y_{n}=\{0\}.\smallskip$

\emph{(iii) }Let $\ll$ be an $\mathbf{H}$-relation. If $s^{\ll}(\{0\})=X$ then
the $\ll^{\triangleright}$-radical $\mathfrak{r}=X$ and\smallskip

$\qquad1)$ for each $L\neq X$ in $Q,$ there is $M\supset L$ in $Q$ such that
$L\ll M.$

$\qquad2)$ If $X$ is separable\emph{,} there are $\{0\}\ll Y_{1}\ll...\ll
Y_{n}\ll...$ in $Q$ such that $\overline{\sum{}_{n=1}^{\infty}Y_{n}%
}=X.\smallskip$
\end{corollary}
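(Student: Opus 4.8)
The plan is to read off each assertion from the earlier structural results, the only genuine input being the superinvariance supplied by Proposition~\ref{P9.4} together with the hypothesis that $\mathcal{A}$ has no non-trivial superinvariant subspaces.

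For part (i), I would first note that $X$ and $\{0\}$ are trivially superinvariant, so Proposition~\ref{P9.4}(i) makes both $\sigma_{_{\ll}}(X)=\wedge[\ll,X]_{_Q}$ and $s^{\ll}(\{0\})=\vee[\{0\},\ll]_{_Q}$ superinvariant. Since $\mathcal{A}$ has no non-trivial superinvariant subspaces, each of these two subspaces must equal $\{0\}$ or $X$. The hypothesis $[\ll,X]_{_Q}\neq\{X\}$ produces some $K\ll X$ with $K\neq X$; as $\ll$ is stronger than $\subseteq$, we get $\sigma_{_{\ll}}(X)\subseteq K\subsetneq X$, forcing $\sigma_{_{\ll}}(X)=\{0\}$. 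Dually, $[\{0\},\ll]_{_Q}\neq\{0\}$ yields $K\neq\{0\}$ with $\{0\}\ll K$, so $s^{\ll}(\{0\})\supseteq K\neq\{0\}$ and hence $s^{\ll}(\{0\})=X$.

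For part (ii), under $\sigma_{_{\ll}}(X)=\{0\}$ the inclusion $\mathfrak{p}\subseteq\sigma_{_{\ll}}(X)$ from (\ref{3.4}) immediately gives the dual $\ll^{\triangleleft}$-radical $\mathfrak{p}=\mathfrak{p}_{_X}=\{0\}$. Assertion 1) then follows by applying Theorem~\ref{T4.6}(ii)2) with ambient interval $[\{0\},X]_{_Q}$: for any $L\neq\{0\}$ we have $\{0\}\neq L\nsubseteq\mathfrak{p}_{_X}=\{0\}$, so $L$ has a $\ll$-predecessor $P\in Q$ with $P\ll L$ and $P\neq L$, and taking $K=P$ gives $K\subset L$ with $K\ll L$. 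Assertion 2) is exactly Proposition~\ref{P9.5}(i), whose descending chain satisfies $\cap_{n}Y_{n}=\sigma_{_{\ll}}(X)=\{0\}$ once $X$ is separable. Part (iii) is the order-dual of (ii): (\ref{3.4}) gives $X=s^{\ll}(\{0\})\subseteq\mathfrak{r}_{_{\{0\}}}=\mathfrak{r}$, hence $\mathfrak{r}=X$; Theorem~\ref{T4.6}(i)2) on $[\{0\},X]_{_Q}$ gives, for $L\neq X$, a $\ll$-successor $M\supset L$ with $L\ll M$ (since $\mathfrak{r}_{_{\{0\}}}=X\nsubseteq L$); and Proposition~\ref{P9.5}(ii) supplies the ascending chain with $\overline{\sum_{n}Y_{n}}=s^{\ll}(\{0\})=X$.

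I do not expect a serious obstacle, since all the machinery is already in place and the work is entirely in matching hypotheses. The one point requiring care is the bookkeeping in applying Theorem~\ref{T4.6}: its statement fixes an ambient interval and measures the tested subspace against the radical $\mathfrak{p}_{_L}$ (resp. $\mathfrak{r}_{_L}$) of \emph{that} interval, so I must take the ambient interval to be all of $[\{0\},X]_{_Q}=Q$ and use $\mathfrak{p}_{_X}=\{0\}$ (resp. $\mathfrak{r}_{_{\{0\}}}=X$) rather than a radical computed in some smaller interval, and then verify the strict (non-)inclusions needed to trigger the existence of a proper $\ll$-predecessor or $\ll$-successor. I would also note explicitly that the preservation hypothesis on the $\theta_{_S}$ is used only in part (i), whereas (ii) and (iii) rest solely on the standing assumptions $\sigma_{_{\ll}}(X)=\{0\}$ and $s^{\ll}(\{0\})=X$ respectively.
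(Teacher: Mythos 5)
Your proposal is correct and follows essentially the same route as the paper: Corollary \ref{C9.4} is stated there with no separate proof precisely because it is obtained by "combining Theorem \ref{T4.6}, Propositions \ref{P9.5} and \ref{P9.4} and (\ref{3.4})," which is exactly the assembly you carry out (superinvariance of $\sigma_{_{\ll}}(X)$ and $s^{\ll}(\{0\})$ via Proposition \ref{P9.4} for part (i), then (\ref{3.4}) to identify the radicals, Theorem \ref{T4.6} for the predecessor/successor claims on the ambient interval $[\{0\},X]_{_{Q}}$, and Proposition \ref{P9.5} for the separable chains). Your added bookkeeping remarks, including that the $\theta_{_{S}}$-preservation hypothesis enters only through part (i), are accurate.
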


\begin{corollary}
\label{C9.5}Isomorphisms $\theta_{_{S}},$ $S\in$ \emph{Nor }$\mathcal{A},$
preserve the relations $<_{\mathfrak{g}},$ $\sqsubset,$ $\prec,$ $\ll_{n}$ for
$n\in\mathbb{N}\cup\infty.$ Thus all results of Propositions \emph{\ref{P9.5}}
and \emph{\ref{P9.4}} and Corollary \emph{\ref{C9.4}} hold for these relations.
\end{corollary}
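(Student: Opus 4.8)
The plan is to reduce everything to two facts about the map $\theta_{_{S}}\colon L\mapsto e^{S}L$ for $S\in$ Nor $\mathcal{A}$. First, by \cite[Lemmas 2.1 and 2.3]{K2} (quoted just above (\ref{3.7})), $\theta_{_{S}}$ is an order-automorphism of $Q=$ Lat $\mathcal{A}$; in particular, by (\ref{5.1}) it commutes with $\wedge$ and $\vee$ and restricts to an order-isomorphism of each interval $[L,M]_{_{Q}}$ onto $[\theta_{_{S}}(L),\theta_{_{S}}(M)]_{_{Q}}$. Second, $e^{S}$ is a bounded invertible operator on $X$ with inverse $e^{-S}$, hence a linear homeomorphism of $X$; consequently $e^{S}$ sends closed subspaces to closed subspaces, preserves closedness of sums (since it is linear, $e^{S}(M+K)=e^{S}M+e^{S}K=\theta_{_{S}}(M)+\theta_{_{S}}(K)$, and a homeomorphism carries closed sets to closed sets and back), and preserves dimensions of quotients. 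I would verify preservation (in the sense of (\ref{5.0})) relation by relation, noting that $\theta_{_{S}}^{-1}=\theta_{_{-S}}$ has the same properties, so it suffices to argue one implication in each equivalence.

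For $<_{\mathfrak{g}}$ this is immediate from the order-isomorphism property: $L=M$ iff $\theta_{_{S}}(L)=\theta_{_{S}}(M)$, and $[L,M]_{_{Q}}$ is a gap (i.e.\ equals $\{L,M\}$) iff its isomorphic image $[\theta_{_{S}}(L),\theta_{_{S}}(M)]_{_{Q}}$ is a gap; hence $L<_{\mathfrak{g}}M\Leftrightarrow\theta_{_{S}}(L)<_{\mathfrak{g}}\theta_{_{S}}(M)$ by Definition \ref{D7.1}. For $\ll_{n}$, observe that $L\subseteq M$ iff $e^{S}L\subseteq e^{S}M$, and that $e^{S}$ restricts to a linear bijection $M\to e^{S}M$ carrying $L$ onto $e^{S}L$, so it induces a linear isomorphism $M/L\cong e^{S}M/e^{S}L$. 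Therefore $\dim(M/L)=\dim(\theta_{_{S}}(M)/\theta_{_{S}}(L))$, and by (\ref{9.2}) the relation $\ll_{n}$ is preserved for every $n\in\mathbb{N}\cup\infty$.

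The only step requiring slight care is $\sqsubset$ and $\prec$, because their definitions (\ref{9.4}) quantify over auxiliary subspaces $K\in Q$. Suppose $L\sqsubset M$. For any $K'\in Q$ with $\theta_{_{S}}(L)\subseteq K'$, write $K=\theta_{_{S}}^{-1}(K')=e^{-S}K'\in Q$; since $\theta_{_{S}}$ is an order-automorphism, $L\subseteq K$, so $M+K$ is closed, and then $\theta_{_{S}}(M)+K'=e^{S}(M+K)$ is closed because $e^{S}$ is a homeomorphism. As $K'$ was arbitrary with $\theta_{_{S}}(L)\subseteq K'$, this gives $\theta_{_{S}}(L)\sqsubset\theta_{_{S}}(M)$; applying the same argument to $\theta_{_{S}}^{-1}$ yields the converse, so $\sqsubset$ is preserved. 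The argument for $\prec$ is identical, quantifying instead over $K'\subseteq\theta_{_{S}}(M)$ and using $e^{S}(L+K)=\theta_{_{S}}(L)+K'$. Since Proposition \ref{P9.5}, Proposition \ref{P9.4} and Corollary \ref{C9.4} are all stated under the sole hypothesis that every $\theta_{_{S}}$, $S\in$ Nor $\mathcal{A}$, preserves the relation in question, the final assertion of the corollary then follows at once for each of $<_{\mathfrak{g}}$, $\sqsubset$, $\prec$ and $\ll_{n}$. I expect no genuine obstacle beyond bookkeeping the auxiliary quantifiers in $\sqsubset$ and $\prec$ and recording that $e^{S}$ preserves closedness of sums.
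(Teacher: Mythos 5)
Your proposal is correct and takes essentially the same route as the paper's own proof: preservation is verified relation by relation using that $e^{S}$ is an invertible linear homeomorphism with $\theta_{_{S}}^{-1}=\theta_{_{-S}}$ (so gaps, quotient dimensions, and closedness of sums are carried back and forth), and for $\sqsubset$ and $\prec$ the auxiliary subspace $K$ is pulled back through $\theta_{_{-S}}$ exactly as in the paper. The only cosmetic slip is your remark that Proposition \ref{P9.5} is stated under a preservation hypothesis -- its actual hypothesis is that $\ll$ be a (dual) $\mathbf{H}$-relation, which holds for these relations by Corollary \ref{T9.2}, Proposition \ref{P9.2} and Theorem \ref{T4.5}, so the final assertion stands.
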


\begin{proof}
For $S\in$ Nor $\mathcal{A}$ and each $R\in Q,$ set $R_{_{S}}=e^{S}R.$ Let
$L<_{\mathfrak{g}}M$ in $Q.$ If $L_{_{S}}\subsetneqq K\subsetneqq M_{_{S}}$
for some $K\in Q,$ then $L\subsetneqq K_{_{-S}}\subsetneqq M$ and $K_{_{-S}%
}\in Q,$ a contradiction. Thus $L_{_{S}}<_{\mathfrak{g}}M_{_{S}}.$

Let $L\sqsubset M.$ Let $L_{_{S}}\subset K\in Q.$ Then $L\subset K_{_{-S}},$
so that $M+K_{_{-S}}$ is closed. Hence $(M+K_{_{-S}})_{_{S}}=M_{_{S}}+K$ is
closed. So $L_{_{S}}\sqsubset M_{_{S}}.$ Conversely, if $L_{_{S}}\sqsubset
M_{_{S}}$ then, by above, $L=(L_{_{S}})_{_{-S}}\sqsubset(M_{_{S}})_{_{-S}}=M.$
So $\theta_{_{S}}$ preserves the relation $\sqsubset.$ Similarly,
$\theta_{_{S}}$ preserves the relation $\prec.$

If $L\subset M$ then $\dim(M/L)=\dim(M_{_{S}}/L_{_{S}})$. So $\theta_{_{S}}$
preserve all relations $\ll_{n},$ $n\in\mathbb{N}\cup\infty.\bigskip$
\end{proof}

Let us consider some cases where Corollary \ref{C9.4} can be applied.

\begin{proposition}
\label{P9.6}Let an operator $T\in B(X)$ have eigenvectors $\{e_{\lambda
}\}_{\lambda\in\Lambda}$.\smallskip

\emph{(i)\ }If $X=\overline{\emph{span}(e_{\lambda})_{\lambda\in\Lambda}}$
then the $\ll_{_{1}}^{\triangleright}$-radical $\mathfrak{r}=s^{\ll_{_{1}}%
}(\{0\})=X$ in \emph{Lat }$T$ and all results of Corollary \emph{\ref{C9.4}%
(iii)} hold for the $\mathbf{HH}$-relation $\ll_{_{1}}$ \emph{(}see
\emph{(\ref{9.2})).}\medskip

\emph{(ii) }Set $X_{\mu}=$ $\overline{\emph{span}(e_{\lambda})_{\mu\neq
\lambda\in\Lambda}}$ for $\mu\in\Lambda.$ If $\cap_{\mu\in\Lambda}X_{\mu
}=\{0\}$ then the dual $\ll_{_{1}}^{\triangleleft}$-radical $\mathfrak{p}%
=\sigma_{\ll_{_{1}}}(X)=\{0\}$ in \emph{Lat }$T$ and all results of Corollary
\emph{\ref{C9.4}(ii)} hold for the $\mathbf{HH}$-relation $\ll_{_{1}}$\emph{.}
\end{proposition}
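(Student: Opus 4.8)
The plan is to reduce both parts to Corollary \ref{C9.4}, whose standing hypotheses are almost all already in force, so that the only genuine work is to evaluate the two quantities $s^{\ll_{_1}}(\{0\})$ and $\sigma_{\ll_{_1}}(X)$ from the eigenvector data. First I would put $Q=$ Lat $T$ and $\mathcal{A}=$ Alg $Q$, so that $Q=$ Lat $\mathcal{A}$ is a reflexive sublattice of Cl$(X)$ with $T\in\mathcal{A}$. By Corollary \ref{T9.2} the relation $\ll_{_1}$ is an $\mathbf{HH}$-relation in $Q$, and by Corollary \ref{C9.5} every isomorphism $\theta_{_S}$, $S\in$ Nor $\mathcal{A}$, preserves $\ll_{_1}$. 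Hence the hypotheses of Corollary \ref{C9.4} on $\ll_{_1}$ are satisfied, and it remains only to prove $s^{\ll_{_1}}(\{0\})=X$ in (i) and $\sigma_{\ll_{_1}}(X)=\{0\}$ in (ii); all the remaining assertions of the two parts are then delivered verbatim by Corollary \ref{C9.4}(iii) and (ii), respectively.

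For part (i), each $e_\lambda$ is an eigenvector, so the one-dimensional subspace $E_\lambda:=\mathrm{span}\{e_\lambda\}$ satisfies $TE_\lambda\subseteq E_\lambda$ and therefore lies in $Q$, with $\{0\}\ll_{_1}E_\lambda$. Thus $E_\lambda\in[\{0\},\ll_{_1}]_{_Q}$ for every $\lambda\in\Lambda$, and by (\ref{3.2})
\[
s^{\ll_{_1}}(\{0\})=\vee[\{0\},\ll_{_1}]_{_Q}=\overline{\sum{}_{K\in[\{0\},\ll_{_1}]_{_Q}}K}\supseteq\overline{\sum{}_{\lambda\in\Lambda}E_\lambda}=\overline{\mathrm{span}(e_\lambda)_{\lambda\in\Lambda}}=X,
\]
so $s^{\ll_{_1}}(\{0\})=X$. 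Since $\ll_{_1}$ is an $\mathbf{H}$-relation, Corollary \ref{C9.4}(iii) then gives $\mathfrak{r}=s^{\ll_{_1}}(\{0\})=X$, the successor statement, and, for separable $X$, the ascending $\ll_{_1}$-series.

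For part (ii), I would first verify that each $X_\mu$ is a closed $T$-invariant subspace: $T$ carries $\mathrm{span}(e_\lambda)_{\lambda\neq\mu}$ into itself and is bounded, so $TX_\mu\subseteq X_\mu$ and $X_\mu\in Q$. Next, $X_\mu+E_\mu$ is closed (a closed subspace plus a one-dimensional one) and equals $\overline{\mathrm{span}(e_\lambda)_{\lambda\in\Lambda}}=X$, so $X_\mu$ has codimension at most one in $X$, i.e. $X_\mu\ll_{_1}X$ and $X_\mu\in[\ll_{_1},X]_{_Q}$. Hence, using (\ref{3.3}) and the fact that $\wedge$ is intersection in Cl$(X)$,
\[
\sigma_{\ll_{_1}}(X)=\wedge[\ll_{_1},X]_{_Q}\subseteq\wedge_{\mu\in\Lambda}X_\mu=\bigcap_{\mu\in\Lambda}X_\mu=\{0\},
\]
so $\sigma_{\ll_{_1}}(X)=\{0\}$. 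As $\ll_{_1}$ is a dual $\mathbf{H}$-relation, Corollary \ref{C9.4}(ii) then yields $\mathfrak{p}=\{0\}$ together with the predecessor statement and the descending $\ll_{_1}$-series in the separable case.

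The invariance and closedness checks and the join/meet evaluations are routine. The one delicate point, and the step I would treat most carefully, is the codimension-one claim $X_\mu\ll_{_1}X$ in part (ii): it depends on $X_\mu+E_\mu$ exhausting $X$, that is, on the eigenvectors spanning $X$, and on the standard fact that the sum of a closed subspace and a finite-dimensional subspace is closed. Without this, the $X_\mu$ need not be $\ll_{_1}$-related to $X$, the containment $\sigma_{\ll_{_1}}(X)\subseteq\bigcap_\mu X_\mu$ would fail, and the conclusion $\sigma_{\ll_{_1}}(X)=\{0\}$ could not be drawn from $\bigcap_\mu X_\mu=\{0\}$ alone.
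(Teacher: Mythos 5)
Your proof is correct and follows essentially the same route as the paper: for (i) the paper likewise observes $\{0\}\ll_{_{1}}\mathbb{C}e_{\lambda}$, concludes $s^{\ll_{_{1}}}(\{0\})=\vee_{\lambda}\mathbb{C}e_{\lambda}=X$ via (\ref{3.2}), and then invokes Corollaries \ref{C9.5} and \ref{C9.4}(iii). For (ii) the paper only says the proof is similar, and your spelled-out version --- $X_{\mu}\in$ Lat $T$, the sum $X_{\mu}+\mathbb{C}e_{\mu}$ closed and equal to $X$ so that $X_{\mu}\ll_{_{1}}X$, hence $\sigma_{\ll_{_{1}}}(X)\subseteq\cap_{\mu}X_{\mu}=\{0\}$, then Corollary \ref{C9.4}(ii) --- is exactly the intended argument, including your correct observation that the step $X_{\mu}\ll_{_{1}}X$ tacitly relies on the spanning hypothesis of part (i).
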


\begin{proof}
(i) As $\{0\}\ll_{_{1}}\mathbb{C}e_{\lambda},$ we have $\mathbb{C}e_{\lambda
}\in\lbrack\{0\},\ll_{_{1}}]$ (see (\ref{3.3})) for all $\lambda\in\Lambda.$
By (\ref{3.2}), $X=\vee\mathbb{C}e_{\lambda}=s^{\ll_{_{1}}}(\{0\}).$ By
Corollary \ref{C9.5}, $\mathfrak{r}=X$ and Corollary \ref{C9.4}(iii) holds for
$\ll_{_{1}}$.

Part (ii) has a similar proof.\bigskip
\end{proof}

To illustrate Proposition \ref{P9.6}, consider the following example.

\begin{example}
\emph{Let }$X$\emph{ be a Hilbert space with basis }$\{e_{n}\}_{n=1}^{\infty}%
$\emph{ and }$T$\emph{ be the adjoint to the unilateral shift: }%
$Te_{n}=e_{n-1}.$\emph{ Let }$\Lambda=\{\lambda\in\mathbb{C}$\emph{:
}$\left\vert \lambda\right\vert <1\}.$\emph{ For each }$\lambda\in\Lambda
,$\emph{ }$e_{\lambda}=(1,\lambda,\lambda^{2},...)\in X$\emph{ is an
eigenvector of }$T$\emph{: }$Te_{\lambda}=\lambda e_{\lambda},$ \emph{and
}$X=\overline{\text{\emph{span}}(e_{\lambda})_{\lambda\in\Lambda}}.$\emph{
Thus all results of Corollary \ref{C9.4}(iii) hold. }$\blacksquare$
\end{example}

Let the commutant $S^{\prime}$ of a set $S\subset B(X)$ contain projections
$\{P_{\lambda}\}_{\lambda\in\Lambda},$ $\dim P_{\lambda}<\infty.$ Set $Q=$ Lat
$S.$ All subspaces $X_{\lambda}=P_{\lambda}X$ belong to $\left[
\{0\},\ll_{_{\infty}}\right]  _{_{Q}}$ and all $(\mathbf{1}-P_{\lambda})X$ to
$\left[  \ll_{_{\infty}},X\right]  _{_{Q}}.$ As in Proposition \ref{P9.6}, we obtain

\begin{proposition}
\label{P4.6}\emph{(i) }If $\overline{\sum{}_{\lambda\in\Lambda}X_{\lambda}}=X$
then the $\ll_{_{\infty}}^{\triangleright}$-radical $\mathfrak{r}%
=s^{\ll_{_{\infty}}}(\{0\})=X$ in $Q$ and all results of Corollary
\emph{\ref{C9.4}(iii)} hold for $\ll_{_{\infty}}$.\smallskip

\emph{(ii) }If $\cap_{\lambda\in\Lambda}(\mathbf{1}-P_{\lambda})X=\{0\}$ then
the dual $\ll_{_{\infty}}^{\triangleleft}$-radical $\mathfrak{p}=\sigma
_{\ll_{_{\infty}}}(X)=\{0\}$ in $Q$ and all results of Corollary
\emph{\ref{C9.4}(ii)} hold for $\ll_{_{\infty}}$.
\end{proposition}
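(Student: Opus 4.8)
The plan is to mirror the short argument used for Proposition \ref{P9.6}, reducing everything to the apparatus of Corollary \ref{C9.4}. First I would replace $S$ by the closed subalgebra $\mathcal{A}$ of $B(X)$ generated by $S$: a closed subspace is $S$-invariant precisely when it is invariant for $\mathcal{A}$, so $Q=$ Lat $S=$ Lat $\mathcal{A}$, and since any operator commuting with $S$ commutes with $\mathcal{A}$ by continuity, $S^{\prime}=\mathcal{A}^{\prime}$, whence the projections $P_{\lambda}$ lie in $\mathcal{A}^{\prime}$. Because $P_{\lambda}$ and $\mathbf{1}-P_{\lambda}$ commute with every $A\in\mathcal{A}$, the subspaces $X_{\lambda}=P_{\lambda}X$ and $(\mathbf{1}-P_{\lambda})X$ are $\mathcal{A}$-invariant, hence lie in $Q$. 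As $\dim X_{\lambda}=\dim P_{\lambda}<\infty$, we get $\{0\}\ll_{_{\infty}}X_{\lambda}$, so $X_{\lambda}\in\lbrack\{0\},\ll_{_{\infty}}]_{_{Q}}$; and since $P_{\lambda}$ is bounded, $X=P_{\lambda}X\oplus(\mathbf{1}-P_{\lambda})X$ gives $X/(\mathbf{1}-P_{\lambda})X\cong P_{\lambda}X=X_{\lambda}$, which is finite-dimensional, so $(\mathbf{1}-P_{\lambda})X\ll_{_{\infty}}X$ and $(\mathbf{1}-P_{\lambda})X\in\lbrack\ll_{_{\infty}},X]_{_{Q}}$. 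This verifies the two membership claims preceding the statement.

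For part (i) I would compute, using (\ref{3.3}) and (\ref{9.0}),
\[
\overline{\sum\nolimits_{\lambda\in\Lambda}X_{\lambda}}\subseteq\vee\lbrack\{0\},\ll_{_{\infty}}]_{_{Q}}=s^{\ll_{_{\infty}}}(\{0\})\subseteq X,
\]
the first inclusion holding because each $X_{\lambda}\in\lbrack\{0\},\ll_{_{\infty}}]_{_{Q}}$. Under the hypothesis $\overline{\sum_{\lambda}X_{\lambda}}=X$ this forces $s^{\ll_{_{\infty}}}(\{0\})=X$. By Corollary \ref{T9.2}, $\ll_{_{\infty}}$ is an $\mathbf{HH}$-order, in particular an $\mathbf{H}$-relation, and by Corollary \ref{C9.5} every $\theta_{_{S}}$, $S\in$ Nor $\mathcal{A}$, preserves $\ll_{_{\infty}}$. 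Thus the hypotheses of Corollary \ref{C9.4}(iii) are met, and it yields $\mathfrak{r}=s^{\ll_{_{\infty}}^{\triangleright}}(\{0\})=X$ (the equality with $s^{\ll_{_{\infty}}}(\{0\})$ coming from (\ref{3.4})) together with all the conclusions listed there.

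For part (ii) I would argue dually. Using (\ref{3.3}),
\[
\{0\}\subseteq\sigma_{\ll_{_{\infty}}}(X)=\wedge\lbrack\ll_{_{\infty}},X]_{_{Q}}\subseteq\cap_{\lambda\in\Lambda}(\mathbf{1}-P_{\lambda})X,
\]
the last inclusion holding because each $(\mathbf{1}-P_{\lambda})X$ lies in $\lbrack\ll_{_{\infty}},X]_{_{Q}}$, so the meet over the full set is contained in the meet over the subfamily $\{(\mathbf{1}-P_{\lambda})X\}$. If $\cap_{\lambda}(\mathbf{1}-P_{\lambda})X=\{0\}$ this gives $\sigma_{\ll_{_{\infty}}}(X)=\{0\}$, and Corollary \ref{C9.4}(ii) then delivers $\mathfrak{p}=\{0\}$ and the remaining conclusions.

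The argument is essentially a bookkeeping reduction to results already proved, so I do not expect a serious obstacle; the only points needing care are (a) the passage from $S$ to the generated algebra $\mathcal{A}$, which is what makes the Lat $\mathcal{A}$-apparatus available — specifically the preservation statement of Corollary \ref{C9.5} and the superinvariance content folded into Corollary \ref{C9.4} — and (b) correctly orienting the two inclusions, so that the join $s^{\ll_{_{\infty}}}(\{0\})$ is forced \emph{up} to $X$ in part (i) while the meet $\sigma_{\ll_{_{\infty}}}(X)$ is forced \emph{down} to $\{0\}$ in part (ii), matching the respective hypotheses.
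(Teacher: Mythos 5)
Your proposal is correct and follows essentially the same route as the paper, which proves this proposition by the one-line reduction ``as in Proposition \ref{P9.6}'': verify that the $X_{\lambda}$ lie in $[\{0\},\ll_{_{\infty}}]_{_{Q}}$ and the $(\mathbf{1}-P_{\lambda})X$ in $[\ll_{_{\infty}},X]_{_{Q}}$, force $s^{\ll_{_{\infty}}}(\{0\})=X$ (resp.\ $\sigma_{\ll_{_{\infty}}}(X)=\{0\}$) from the hypothesis, and invoke Corollaries \ref{C9.5} and \ref{C9.4} together with (\ref{3.4}). Your only addition is making explicit the passage from the set $S$ to the generated closed algebra $\mathcal{A}$ (so that $Q=$ Lat $\mathcal{A}$ and the Nor $\mathcal{A}$ machinery applies), a point the paper leaves implicit; this is a sound and welcome clarification, not a different method.
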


For example, suppose that the commutant $S^{\prime}$ contains a compact
operator $T.$ Then Sp$(T)=\{\lambda_{n}\}_{n=0}^{m}$ for $m\leq\infty,$ where
$\lambda_{0}=0.$ The algebra $S^{\prime}$ contains projections $P_{n},$ $1\leq
n\leq m,$ such that $\dim P_{n}<\infty.$ The subspaces $X_{n}=P_{n}X$ and
$V_{n}=(1-P_{n})X$ are $T$-invariant$,$
\begin{equation}
X=X_{n}\dotplus V_{n},\text{ \ Sp}(T|_{X_{n}})=\{\lambda_{n}\}\text{ and
Sp}(T|_{V_{n}})=\text{Sp}(T)\diagdown\{\lambda_{n}\}\text{ for }1\leq n\leq m.
\label{5.2}%
\end{equation}
Moreover, $X_{n}\subseteq V_{k}$ for $n\neq k.$ So, for each $1\leq k\leq m,$
$X=Y_{k}\dotplus\sum_{n=1}^{k}\dotplus X_{n},$ where $Y_{k}=\cap_{n=1}%
^{k}V_{k},$ and all subspaces in the decompositions are $S$-invariant.
Thus\medskip

$1)$ if $\overline{\sum{}_{n\geq1}X_{n}}=X$ then all results of Corollary
\ref{C9.4}(iii) hold for the relation $\ll_{_{\infty}}$;\smallskip

$2)$ if $\cap_{n\geq1}V_{n}=\{0\}$ then all results of Corollary
\ref{C9.4}(ii) hold for the relation $\ll_{_{\infty}}$.

\begin{proposition}
The conditions $\overline{\sum{}_{n}X_{n}}=X$ and $\cap_{n}V_{n}=\{0\}$ above
are not equivalent.
\end{proposition}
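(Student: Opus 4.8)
The plan is to disprove equivalence by exhibiting a single compact operator for which the first condition $\overline{\sum_n X_n}=X$ holds while the second condition $\cap_n V_n=\{0\}$ fails. Since we are free to choose the set $S$, I would take $S=\{T\}$, so that $T\in S'=\{T\}'$ automatically and $Q=$ Lat $T$; the required spectral projections $P_n$ will then lie in $\{T\}'$ because they commute with $T$, and $X_n,V_n$ will be $T$-invariant. Working on the Hilbert space $H=\ell^{2}(\{0,1,2,\dots\})$ with orthonormal basis $\{e_k\}_{k\ge 0}$, I would build $T$ from a biorthogonal system whose two halves have \emph{opposite} completeness properties.

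Concretely, set $f_n=e_n+e_0$ and $g_n=e_n$ for $n\ge 1$, and let $P_n$ be the rank-one idempotent $P_nx=\langle x,g_n\rangle f_n$. First I would record biorthogonality, $\langle f_n,g_m\rangle=\delta_{nm}$ for $n,m\ge 1$, from which $P_nP_m=\delta_{nm}P_n$ follows by a one-line computation, so the $P_n$ are mutually disjoint idempotents. Choosing summable distinct nonzero weights, e.g. $\lambda_n=2^{-n}$, I then define
\begin{equation*}
T=\sum_{n\ge 1}\lambda_n P_n .
\end{equation*}
Because $\|P_n\|=\|f_n\|\,\|g_n\|=\sqrt{2}$ is bounded and $\sum_n\lambda_n\|P_n\|<\infty$, the series converges in operator norm, so $T$ is a norm limit of finite-rank operators and hence compact. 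A direct computation shows $TP_n=P_nT=\lambda_nP_n$, that $f_n$ is an eigenvector for $\lambda_n$, that $Te_0=0$, and that no $\mu\notin\{0\}\cup\{\lambda_n\}$ is an eigenvalue; thus the nonzero spectrum of $T$ is exactly $\{\lambda_n\}$ with Riesz projections $P_n$, matching the setup with $\lambda_0=0$ and $m=\infty$.

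It then remains to read off the two conditions. Here $X_n=P_nH=\mathbb{C}(e_n+e_0)$, and the key point is that $\{f_n\}_{n\ge 1}=\{e_n+e_0\}_{n\ge 1}$ is \emph{complete}: if $y\perp f_n$ for all $n$ then each coordinate satisfies $y_n=-y_0$ for every $n\ge 1$, which forces $y_0=0$ by square-summability and hence $y=0$; therefore $\overline{\sum_n X_n}=H$ and the first condition holds. On the other hand $V_n=\ker P_n=\{x:\langle x,e_n\rangle=0\}$, so $\cap_{n\ge 1}V_n=\{x:x_n=0\text{ for all }n\ge 1\}=\mathbb{C}e_0\neq\{0\}$, because the family $\{g_n\}_{n\ge 1}=\{e_n\}_{n\ge 1}$ is \emph{incomplete} (it misses $e_0$); thus the second condition fails. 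Consequently the two conditions are independent, which is the assertion.

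The substantive obstacle is the simultaneous control of three requirements on the biorthogonal system: one family complete, the other incomplete, and the projection norms $\|P_n\|$ uniformly bounded so that $T$ is compact with the expected spectral data. The tension is that enlarging the free components of $f_n$ inside $(\overline{\mathrm{span}}\{g_n\})^{\perp}=\mathbb{C}e_0$ to gain completeness of $\{f_n\}$ tends to blow up $\|f_n\|$ and hence $\|P_n\|$; the resolution is the choice $\beta_n\equiv 1$ (i.e. $f_n=e_n+e_0$), a coefficient sequence that is bounded but \emph{not} square-summable, which is exactly what makes $\{f_n\}$ complete while keeping $\|P_n\|$ constant. Conceptually this is the Hilbert-space instance of the classical fact that completeness of the root vectors of a compact operator is not inherited by its adjoint, since the eigenvectors of $T^{*}$ here are precisely the incomplete family $\{e_n\}_{n\ge 1}$.
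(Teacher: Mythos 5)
Your construction is correct, but it takes a genuinely different route from the paper. The paper's proof cites Hamburger's compact operator (\cite{Ham}, see also \cite{N}): there the eigenspaces $X_n=\ker(T-\lambda_n\mathbf{1})$ span $X$, yet there is a nonzero invariant subspace $E$ with $T|_{E}\neq 0$ and $\mathrm{Sp}(T|_{E})=\{0\}$, and a spectral-radius estimate $\Vert T^{k}e\Vert^{1/k}\rightarrow 0$ forces $E\subseteq\cap_{n}V_{n}$; the converse non-implication is then obtained by a similar, somewhat longer argument applied to $A=T^{\ast}$. You instead build an explicit diagonalizable compact operator from the biorthogonal system $f_{n}=e_{n}+e_{0}$, $g_{n}=e_{n}$, and verify everything by direct computation; your checks (biorthogonality, $P_nP_m=\delta_{nm}P_n$, $\Vert P_{n}\Vert=\sqrt{2}$, norm convergence hence compactness of $T$, absence of extraneous eigenvalues, completeness of $\{f_{n}\}$ versus incompleteness of $\{g_{n}\}$) are all accurate, and the identification of $P_{n}$ as the Riesz projection at $\lambda_{n}$ is legitimate because $T$ commutes with $P_{n}$ and $\lambda_{n}\notin\mathrm{Sp}(T|_{\ker P_{n}})$, so (\ref{5.2}) holds. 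Your approach buys self-containedness and transparency: no appeal to Hamburger's delicate construction, and the offending subspace $\cap_{n}V_{n}=\mathbb{C}e_{0}$ is completely explicit. The paper's approach buys a stronger pathology (its hidden subspace $E$ carries a nonzero quasinilpotent restriction, and its $X_{n}$ are full eigenspaces) and it explicitly establishes \emph{both} non-implications. For the proposition as literally stated, one counterexample suffices, so your proof is complete; but your closing claim that the two conditions are ``independent'' requires the second direction as well, and in your example it is only sketched conceptually. It is a two-line addendum worth writing out for parity with the paper: for $T^{\ast}$ the Riesz projections are $P_{n}^{\ast}x=\langle x,f_{n}\rangle e_{n}$, so $\cap_{n}\ker P_{n}^{\ast}=\{f_{n}:n\geq 1\}^{\perp}=\{0\}$ while $\overline{\sum_{n}P_{n}^{\ast}H}=\overline{\mathrm{span}\{e_{n}:n\geq 1\}}=e_{0}^{\perp}\neq H$.
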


\begin{proof}
To show that $\overline{\sum{}_{n\geq1}X_{n}}=X$ $\not \Longrightarrow
\cap_{n\geq1}V_{n}=\{0\},$ consider the compact operator $T$ constructed in
\cite{Ham} (see also \cite[p. 262]{N}) on a Hilbert space $X.$ It has the
following properties:
\[
\text{Sp}(T)=\{\lambda_{n}\}_{n=0}^{\infty},\text{ }\overline{\sum{}_{n\geq
1}X_{n}}=X,\text{ where }X_{n}=\text{Ker}(T-\lambda_{n}\mathbf{1}),
\]
and there exists a $T$-invariant subspace $E\neq\{0\}$ satisfying $T|_{E}%
\neq0$ and Sp$(T|_{E})=0.$ Let $P_{n}$ be the projections on $X_{n}.$ Then
(\ref{5.2}) hold.

As the spectral radius $r(T|_{E})=0,$ we have, for each $e\in E,$%
\begin{equation}
\left\Vert T^{k}e\right\Vert ^{1/k}\leq\left\Vert (T|_{E})^{k}\right\Vert
^{1/k}\longrightarrow r(T|_{E})=0,\text{ as }k\longrightarrow\infty.
\label{5.5}%
\end{equation}
Let $e\in E$ and $n\geq1.$ By (\ref{5.2}), $e=x+y,$ where $x\in X_{n}$ and
$y\in V_{n}.$ As $T$ and $P_{n}$ commute,
\[
\left\Vert T^{k}x\right\Vert ^{1/k}=\left\Vert T^{k}P_{n}e\right\Vert
^{1/k}=\left\Vert P_{n}T^{k}e\right\Vert ^{1/k}\leq\left\Vert P_{n}\right\Vert
^{1/k}\left\Vert T^{k}e\right\Vert ^{1/k}\overset{(\ref{5.5})}{\longrightarrow
}0,
\]
as $k\rightarrow\infty.$ As $\dim X_{n}<\infty$ and Sp$(T|_{X_{n}}%
)=\lambda_{n}\neq0,$ it is easy to prove that $x=0.$ Thus $e\in V_{n}.$ Hence
$\{0\}\neq E\subseteq\cap_{n\geq1}V_{n}.$ Thus $\overline{\sum{}_{n}X_{n}}=X$
does not imply $\cap_{n}V_{n}=\{0\}.$

To prove conversely that $\cap_{n}V_{n}=\{0\}$ $\not \Longrightarrow $
$\overline{\sum{}_{n}X_{n}}=X,$ consider $A=T^{\ast}.$ Then Sp($A)=\{\lambda
_{n}^{\ast}\}_{n=0}^{\infty}.$ As in (\ref{5.2}), let $Y_{n}$ and $U_{n}$ be
$A$-invariant subspaces satisfying%
\[
X=Y_{n}\dotplus U_{n},\text{ Sp}(A|_{Y_{n}})=\{\lambda_{n}^{\ast}\},\text{
}\dim Y_{n}<\infty\text{ and Sp}(A|_{U_{n}})=\text{Sp}(A)\diagdown
\{\lambda_{n}^{\ast}\},
\]
for each $n.$ Set $U=\cap_{n\geq1}U_{n}.$ Since Sp($A|_{U})=\{0\},$ we have as
in (\ref{5.5}) that $\left\Vert A^{k}u\right\Vert ^{1/k}\longrightarrow0,$ as
$k\rightarrow\infty,$ for each $u\in U.$ Let $x\in X_{n}$ and $Tx=\lambda
_{n}x$ for $n\geq1.$ Then for all $u\in U,$%
\begin{equation}
\left\vert \lambda_{n}\right\vert \left\vert (x,u)\right\vert ^{1/k}%
=\left\vert (T^{k}x,u)\right\vert ^{1/k}=\left\vert (x,A^{k}u)\right\vert
^{1/k}\leq\left\Vert x\right\Vert ^{1/k}\left\Vert A^{k}u\right\Vert
^{1/k}\longrightarrow0, \label{e3.1}%
\end{equation}
as $k\rightarrow\infty.$ Hence $x\bot U.$ If $Tx=\lambda_{n}x+z$ and
$Tz=\lambda_{n}z,$ then $z\bot U$ and, by (\ref{e3.1}), $x\bot U.$ As $\dim
X_{n}<\infty,$ continuing this$,$ we get that all $X_{n}\bot U.$ As
$\overline{\sum{}_{n\geq1}X_{n}}=X,$ we have $U=\{0\}.$

On the other hand, let $y\in Y_{n}$ be such that $Ay=\lambda_{n}^{\ast}y.$
Then, for each $e\in E,$%
\begin{equation}
\left\vert \lambda_{n}^{\ast}\right\vert \left\vert (y,e)\right\vert
^{1/k}=\left\vert (A^{k}y,e)\right\vert ^{1/k}=\left\vert (y,T^{k}%
e)\right\vert ^{1/k}\leq\left\Vert y\right\Vert ^{1/k}\left\Vert
T^{k}e\right\Vert ^{1/k}\overset{(\ref{5.5})}{\longrightarrow}0, \label{3.2e}%
\end{equation}
as $k\rightarrow\infty.$ Hence $y\bot E.$ If $Ay=\lambda_{n}^{\ast}y+z$ and
$Az=\lambda_{n}^{\ast}z,$ then $z\bot U$ and, by (\ref{3.2e}), $y\bot U.$ As
$\dim Y_{n}<\infty,$ continuing this, we get that all $Y_{n}\bot E,$ so that
$\overline{\sum{}_{n\geq1}Y_{n}}\neq X.$ Thus $\cap_{n\geq1}U_{n}=\{0\}$ does
not imply $\overline{\sum{}_{n\geq1}Y_{n}}=X.$
\end{proof}

\subsection{Relation $<_{\mathfrak{g}}$ in sublattices of Cl($H),$ where $H$
is a Hilbert space}

In this section $X=H$ is a Hilbert space. First we consider reflexive
sublattices of Cl($H),$ $\dim$ $H=\infty,$ where $<_{\mathfrak{g}}$ is neither
$\mathbf{H}$-, nor a dual $\mathbf{H}$-relation.

For $x,y\in H,$ define the rank one operator $x\otimes y$ on $H$ by%
\begin{equation}
(x\otimes y)\xi=(\xi,x)y\text{ for }\xi\in H\mathfrak{.} \label{d}%
\end{equation}
Let $F$ be a closed symmetric operator on $H$ with domain $D(F)$ and $F^{\ast
}$ be its adjoint. Let $F^{\ast}\neq F.$ Then $D(F)\subsetneqq D(F^{\ast}).$
For $u,v\in H$, $y\in D(F)$ and $x\in D(F^{\ast}),$%
\begin{equation}
(x\otimes y)(u\otimes v)=(v,x)(u\otimes y)\text{ and }F(x\otimes y)=x\otimes
Fy,\text{ }(x\otimes y)F=F^{\ast}x\otimes y. \label{f}%
\end{equation}
Set $X=H\oplus H$ and let $\Omega=\{\{0\},H\oplus\{0\},\{0\}\oplus
H\mathfrak{,}X\}.$ Consider%
\[
\mathcal{A}=\text{span}\left\{  A(x,y)=%
\begin{pmatrix}
x\otimes Fy & 0\\
0 & Fx\otimes y
\end{pmatrix}
\text{ for }x,y\in D(F)\right\}  .
\]
By (\ref{f}), $\mathcal{A}$ is a subalgebra of $B(X)$. For each closed
operator $S$ on $H$ with domain $D(S),$%
\[
M_{_{S}}=\left\{  \widehat{\xi}=S\xi\oplus\xi:\xi\in D(S)\right\}  \text{ is a
closed subspace of }X.
\]

\begin{lemma}
\label{L9.1}Let $\overline{FD(F)}=H.$ Then \emph{Lat} $\mathcal{A}=\Omega
\cup(\cup_{t\in\mathbb{C}\backslash\{0\}}\mathcal{K}_{t})$\emph{ }and\emph{,}
for each $t,$%
\[
\mathcal{K}_{t}=\{M_{_{tT}}\text{\emph{:} }F\subseteq T\subseteq F^{\ast
},\text{ }T\text{ is closed}\}=\{L\text{\emph{:} }L\text{ is a space},\text{
}M_{_{tF}}\subseteq L\subseteq M_{_{tF^{\ast}}}\}.
\]

\end{lemma}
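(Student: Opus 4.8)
The plan is to compute the action of a generator $A(x,y)$ explicitly: for $(a,b)\in X=H\oplus H$ one reads off from \eqref{d} that $A(x,y)(a,b)=((a,x)Fy,(b,Fx)y)$. Both directions rest on this formula. The members of $\Omega$ are visibly invariant, and for $F\subseteq T\subseteq F^{\ast}$ with $T$ closed I would verify $M_{tT}\in$ Lat$\,\mathcal{A}$ by writing $A(x,y)(tT\xi,\xi)=((tT\xi,x)Fy,(\xi,Fx)y)$ and noting that, since $\xi\in D(T)\subseteq D(F^{\ast})$ and $x\in D(F)$, the adjoint identity $(T\xi,x)=(F^{\ast}\xi,x)=(\xi,Fx)$ makes this vector equal $tF\eta\oplus\eta$ with $\eta=(\xi,Fx)y\in D(F)$, so it lies in $M_{tF}\subseteq M_{tT}$. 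The equality of the two descriptions of $\mathcal{K}_{t}$ is then a routine graph argument: $M_{tF}\subseteq M_{tT}\subseteq M_{tF^{\ast}}$ is immediate from $F\subseteq T\subseteq F^{\ast}$, while conversely any closed $L$ with $M_{tF}\subseteq L\subseteq M_{tF^{\ast}}$ is the flipped graph of $F^{\ast}$ restricted to $D_{L}=\{\xi:tF^{\ast}\xi\oplus\xi\in L\}$, a closed operator $T$ between $F$ and $F^{\ast}$ because the homeomorphism $(\xi,\zeta)\mapsto(t\zeta,\xi)$ carries the graph of $tT$ onto $L$.

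The substantial direction is Lat$\,\mathcal{A}\subseteq\Omega\cup\bigcup_{t\neq0}\mathcal{K}_{t}$, where the hypothesis $\overline{FD(F)}=H$ (and the density of $D(F)$, valid since $F$ is symmetric) is used repeatedly. Fix $L\in$ Lat$\,\mathcal{A}$ with $L\notin\Omega$. First I would eliminate pure vectors: from $A(x,y)(a,0)=((a,x)Fy,0)$ and density of $D(F)$, any $(a,0)\in L$ with $a\neq0$ forces $FD(F)\oplus\{0\}\subseteq L$, hence $H\oplus\{0\}\subseteq L$ by $\overline{FD(F)}=H$; symmetrically $A(x,y)(0,b)=(0,(b,Fx)y)$ shows any $(0,b)\in L$ with $b\neq0$ forces $\{0\}\oplus H\subseteq L$. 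Using the same two displayed actions one checks that each of $H\oplus\{0\}\subseteq L$ and $\{0\}\oplus H\subseteq L$ propagates to $L=X\in\Omega$; therefore for $L\notin\Omega$ we have $L\cap(H\oplus\{0\})=L\cap(\{0\}\oplus H)=\{0\}$, so every nonzero $(a,b)\in L$ has $a\neq0$ and $b\neq0$.

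The heart of the argument extracts a graph. For nonzero $(a,b)\in L$ choose, using $\overline{FD(F)}=H$, an $x$ with $c:=(b,Fx)\neq0$; then for every $y\in D(F)$ one has $A(x,y)(a,b)=((a,x)Fy,cy)=(tF\eta,\eta)$, where $\eta=cy$ ranges over all of $D(F)$ and $t=(a,x)/c$, whence $M_{tF}\subseteq L$, with $t\neq0$ (else $\{0\}\oplus D(F)\subseteq L$). Next I would show $t$ is independent of $x$: two distinct ratios $t_{1}\neq t_{2}$ would place both $M_{t_{1}F}$ and $M_{t_{2}F}$ in $L$, whose difference contains $(t_{1}-t_{2})F\eta\oplus0$ and so forces $H\oplus\{0\}\subseteq L$, excluded. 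Constancy of the ratio, i.e. $(a,x)=t(b,Fx)$ for \emph{all} $x$ (the vanishing case $(b,Fx)=0$ handled by a short conjugate-linear functional argument), is precisely the adjoint relation $b\in D(F^{\ast})$, $a=tF^{\ast}b$, that is $(a,b)\in M_{tF^{\ast}}$. Finally the same ``two ratios force $H\oplus\{0\}$'' device shows one value $t=t_{L}$ serves all nonzero elements of $L$, giving $M_{tF}\subseteq L\subseteq M_{tF^{\ast}}$ and hence $L\in\mathcal{K}_{t}$ by the first paragraph.

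I expect the main obstacle to be this extraction-of-the-graph step: organising the two distinct roles of $\overline{FD(F)}=H$ (surjectivity onto first components via $Fy$, and separation of second components so that $b\neq0\Rightarrow(b,Fx)\neq0$ for some $x$) so that the ratio $t=(a,x)/(b,Fx)$ is pinned to a single nonzero constant across all of $L$, and then converting ``constant ratio'' into the operator statement $(a,b)\in M_{tF^{\ast}}$ through the defining property of $F^{\ast}$. The exhaustive bookkeeping that excludes every configuration except $\{0\}$, the three proper members of $\Omega$, $X$, and the graphs $M_{tT}$ is where care is needed; once the ratio is shown constant, the passage to $\mathcal{K}_{t}$ is purely formal.
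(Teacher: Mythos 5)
Your proof is correct, and its engine is the same as the paper's: the explicit generator action $A(x,y)(a\oplus b)=(a,x)Fy\oplus(b,Fx)y$, the two uses of $\overline{FD(F)}=H$ (together with density of $D(F)$), the ratio $t=(a,x)/(b,Fx)$, and the identity $(T\xi,x)=(F^{\ast}\xi,x)=(\xi,Fx)$ employed both for the invariance of $M_{tT}$ and for converting the constant ratio into $b\in D(F^{\ast})$, $a=tF^{\ast}b$. Where you diverge is organizational: the paper first uses irreducibility of the restrictions of $\mathcal{A}$ to $H\oplus\{0\}$ and $\{0\}\oplus H$ to conclude in one line that $L=M_{S}$ for a closed operator $S\neq 0$, and then runs the ratio argument at the operator level ($D(F)\subseteq D(S)$, $S|_{D(F)}=tF$, $S\subseteq tF^{\ast}$ from the single equation $(S\eta,x)Fy=(\eta,Fx)Sy$); you instead eliminate pure vectors element by element and obtain the sandwich $M_{tF}\subseteq L\subseteq M_{tF^{\ast}}$ directly, recovering the graph structure only afterwards via the second description of $\mathcal{K}_{t}$. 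Your route is slightly longer — it needs the two separate ``distinct ratios force $H\oplus\{0\}\subseteq L$'' steps, once per element and once across elements — but it is more self-contained, effectively supplying the justification the paper compresses into ``hence $L=M_{S}$.'' One cosmetic slip: your homeomorphism $(\xi,\zeta)\mapsto(t\zeta,\xi)$ carries the graph of $T$ (not of $tT$) onto $L$; this does not affect the closedness argument.
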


\begin{proof}
Let $L\in$ Lat $\mathcal{A}$ and $L\notin\Omega.$ As $FD(F)$ is dense in
$H\mathfrak{,}$ the subspaces $H\oplus\{0\}$ and $\{0\}\oplus H$ have no
non-trivial $\mathcal{A}$-invariant subspaces$.$ Hence $L=M_{_{S}}$ for some
closed operator $S\neq0$ on $H\mathfrak{.}$ As $A(x,y)\widehat{\xi}\subseteq
M_{S}$ for all $x,y\in D(F)$ and $\xi\in D(S)$,%
\[
\left(  Fx\otimes y\right)  \xi\overset{(\ref{d})}{=}\sum_{i=1}^{n}%
(\xi,Fx)y\in D(S)\text{ and }\left(  x\otimes Fy\right)  S=S\left(  Fx\otimes
y\right)  \overset{(\ref{f})}{=}Fx\otimes Sy.
\]
As $FD(F)$ is dense in $H\mathfrak{,}$ we have $D(F)\subseteq D(S).$ As
$\left(  x\otimes Fy\right)  S=Fx\otimes Sy,$ we have%
\begin{equation}
\left(  x\otimes Fy\right)  S\eta\overset{(\ref{d})}{=}(S\eta,x)Fy=(Fx\otimes
Sy)\eta\overset{(\ref{d})}{=}(\eta,Fx)Sy\text{ for }\eta\in D(S). \label{3.1b}%
\end{equation}

Fix $\eta$ and choose $x$ such that $(\eta,Fx)\neq0.$ Then $Sy=tFy$ for $y\in
D(F)$ with $t=(S\eta,x)/(\eta,Fx)\in\mathbb{C}.$ As $0\neq S$ is closed and
$D(F)\subseteq D(S),$ we have $t\neq0.$ Thus $tF\subseteq S.$

Choose now $y\in D(F)$ in (\ref{3.1b}) such that $Fy\neq0.$ Then
$(S\eta,x)=t(\eta,Fx)$ for all $x\in D(F)$ and $\eta\in D(S).$ Hence $\eta\in
D(F^{\ast})$ and $S\eta=tF^{\ast}\eta.$ Thus $S\subseteq tF^{\ast}$ and
$tF\subseteq S\subseteq tF^{\ast}.$ Set $T=S/t.$ Then $M_{_{S}}=M_{_{tT}}$ and
$F\subseteq T\subseteq F^{\ast}.$

Conversely, let $F\subseteq T\subseteq F^{\ast}.$ Then, for $A(x,y)\in
\mathcal{A},$ $\xi\in D(T)$ and $t\in\mathbb{C}\backslash\{0\},$ we have%
\[
A(x,y)(tT\xi\oplus\xi)\overset{(\ref{d})}{=}t(T\xi,x)Fy\oplus(\xi
,Fx)y=(T\xi,x)(tTy\oplus y)\in M_{_{tT}}.
\]
Thus $M_{_{tT}}\in$ Lat $\mathcal{A}$ for all $t\in\mathbb{C}\backslash\{0\}.$

If $L$ is a subspace of $X$ and $M_{_{tF}}\subseteq L\subseteq M_{_{tF^{\ast}%
}},$ then there is a linear operator $T$ such that $D(F)\subseteq
D(T)\subseteq D(F^{\ast})$ and $L=M_{_{tT}}.$ As $L$ is closed, $T$ is closed.
Thus $L\in\mathcal{K}_{t}.$
\end{proof}

\begin{corollary}
\label{C9.1}The sublattice \emph{Lat }$\mathcal{A}$ of \emph{Cl(}$H)$ in Lemma
\emph{\ref{L9.1} }is not modular and the relation $<_{\mathfrak{g}}$ in
\emph{Lat }$\mathcal{A}$ is neither an $\mathbf{H}$-$,$ nor a dual
$\mathbf{H}$-relation$.$
\end{corollary}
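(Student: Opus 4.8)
Corollary \ref{C9.1} asserts two things about the lattice $Q=$ Lat $\mathcal{A}$ from Lemma \ref{L9.1}: that it is not modular, and that $<_{\mathfrak{g}}$ is neither an $\mathbf{H}$-, nor a dual $\mathbf{H}$-relation in it. Let me think about how to prove each.

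The lattice structure is clear from Lemma \ref{L9.1}: $Q = \Omega \cup (\cup_t \mathcal{K}_t)$, where $\Omega = \{\{0\}, H\oplus\{0\}, \{0\}\oplus H, X\}$ and each $\mathcal{K}_t$ is the interval $[M_{tF}, M_{tF^*}]$. The key structural facts I expect to need are: the spaces $M_{tF}$ are minimal nonzero elements of a certain kind and $M_{tF^*}$ are maximal proper ones, the intervals $\mathcal{K}_t$ for distinct $t$ meet only at $\{0\}$ and $X$, and crucially that each $\mathcal{K}_t$ is itself a gap-rich interval with $M_{tF}<_{\mathfrak{g}}L$ or $L<_{\mathfrak{g}}M_{tF^*}$ failing to propagate.

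So first I would prove non-modularity by exhibiting a pentagon. The plan is to use two distinct values $t\neq s$ and the "short" side through the other branch. Concretely, I expect to take $a=\{0\}$, $d=M_{tF^*}$ (or perhaps $d=X$), with a three-element chain $a<M_{tF}<M_{tF^*}<d$ along one branch $\mathcal{K}_t$ meeting a single middle element $e$ from a different branch $\mathcal{K}_s$; the meet $M_{tF^*}\wedge e$ should collapse to $\{0\}=a$ (since the branches only share $\{0\}$) while the join $M_{tF}\vee e$ should reach the top, reproducing the pentagon $P$ of (\ref{6,0}). I would verify $b\vee e = d$ and $c\wedge e = a$ using the fact that distinct branches meet only in $\Omega$. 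The main subtlety here is choosing the five elements so the pentagon relations hold exactly; I would lean on Lemma \ref{L9.1}'s explicit description of which subspaces lie in which $\mathcal{K}_t$.

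For the relation $<_{\mathfrak{g}}$, the plan is to show directly that the $\mathbf{H}$-condition from Lemma \ref{le1}(i) fails, i.e. find $L<_{\mathfrak{g}}M$ and $K$ with $L\subseteq K$ but $K\not<_{\mathfrak{g}}M\vee K$ (or that a gap maps to a non-gap). The natural witness is a gap $[M_{tF}, M_{tF^*}]$ restricted appropriately, or the gap $\{0\}<_{\mathfrak{g}} M_{tF}$ if $M_{tF}$ is an atom. Since $F\subsetneq F^*$ means $D(F)\subsetneq D(F^*)$, the interval $\mathcal{K}_t=[M_{tF},M_{tF^*}]$ corresponds to closed operators $T$ with $F\subseteq T\subseteq F^*$, which is order-isomorphic to a lattice of symmetric extensions that need not be a gap. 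I would pick $L<_{\mathfrak{g}}M$ inside one branch and join with an element $K$ of another branch to produce $M\vee K$ whose quotient is too large (not a gap), violating the $\mathbf{H}$-condition; dually for the dual $\mathbf{H}$-condition using meets. The hard part will be the analysis of $\dim(M\vee K / K)$ or verifying that $[K, M\vee K]$ is not a gap — this requires understanding how joining across branches behaves, which is exactly where the failure of modularity (equivalently, the failure of sums to interact nicely with the gap relation) manifests. I would argue that joining a one-dimensional-type extension in $\mathcal{K}_t$ with a whole branch element from $\mathcal{K}_s$ overshoots a single gap, so the gap relation is not preserved. Concretely I expect to exhibit $L<_{\mathfrak{g}}M$ in $\mathcal{K}_t$ and $K\in\mathcal{K}_s$ with $L\subseteq K$, yet $M\vee K = X$ while $K\neq X$ and $[K,X]$ is not a gap, contradicting up-contiguity of an $\mathbf{H}$-relation; the dual statement follows symmetrically by taking meets.

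I anticipate the genuinely delicate step is translating "$[\,\cdot\,,\cdot\,]$ is a gap" into operator-theoretic language via Lemma \ref{L9.1}, since gaps in $\mathcal{K}_t$ correspond to minimal symmetric extensions of $F$, and I must confirm that these do not survive the join/meet operations across distinct branches. Once the explicit elements are in hand, the verification that the $\mathbf{H}$-condition (Lemma \ref{le1}) and its dual both fail is a finite check against Definition \ref{D2.2}.
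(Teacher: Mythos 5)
Your non-modularity step is essentially fine, though you can do it more cheaply than with two branches: $\{\{0\},M_{F},M_{F^{\ast}},X,H\oplus\{0\}\}$ is already a pentagon as in (\ref{6,0}), and this is exactly what the paper uses (your cross-branch pentagon also works, but verifying $M_{tF^{\ast}}\cap M_{sF^{\ast}}=\{0\}$ requires noting that $\overline{FD(F)}=H$ forces $\ker F^{\ast}=\{0\}$, or invoking the classification of Lemma \ref{L9.1}). The genuine gap is in your witness for the failure of the $\mathbf{H}$-condition. The configuration you finally commit to --- $L<_{\mathfrak{g}}M$ in $\mathcal{K}_{t}$ and $K\in\mathcal{K}_{s}$ with $L\subseteq K$, $s\neq t$ --- does not exist: an inclusion $M_{tT}\subseteq M_{sT^{\prime}}$ of graphs forces $tF\xi=sF\xi$ for all $\xi\in D(F)$, i.e. $F=0$, contradicting $\overline{FD(F)}=H$; so there are no inclusions between nonzero elements of distinct branches. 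Nor can you rescue the plan by dropping the containment and using Lemma \ref{le1}(iii): the join of any nonzero element of $\mathcal{K}_{t}$ with any nonzero element of $\mathcal{K}_{s}$ (or with $H\oplus\{0\}$ or $\{0\}\oplus H$) is all of $X$, since differences of graph vectors already span a dense subset of $H\oplus\{0\}$. Hence for a gap $L<_{\mathfrak{g}}M$ strictly inside a branch, both $L\vee K$ and $M\vee K$ collapse to $X$, the required relation $X<_{\mathfrak{g}}X$ holds trivially, and no contradiction arises. Gaps interior to a branch can never witness the failure; the witness must be anchored at $\{0\}$ or $X$.

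The paper's witnesses are of exactly this anchored kind, and make the whole argument a two-line check: by Lemma \ref{L9.1}, $[\{0\},H\oplus\{0\}]$ and $[H\oplus\{0\},X]$ are gaps of Lat $\mathcal{A}$ (no graph $M_{tT}$ is contained in, or contains, $H\oplus\{0\}$). If $<_{\mathfrak{g}}$ were an $\mathbf{H}$-relation, joining the first gap with $M_{F}$ would give $M_{F}<_{\mathfrak{g}}(H\oplus\{0\})\vee M_{F}=X$, false because $M_{F}\subsetneqq M_{F^{\ast}}\subsetneqq X$; if it were a dual $\mathbf{H}$-relation, meeting the second gap with $M_{F^{\ast}}$ would give $\{0\}=(H\oplus\{0\})\cap M_{F^{\ast}}<_{\mathfrak{g}}M_{F^{\ast}}$, false because $\{0\}\subsetneqq M_{F}\subsetneqq M_{F^{\ast}}$. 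Your hedge ``the gap $\{0\}<_{\mathfrak{g}}M_{tF}$ if $M_{tF}$ is an atom'' is the one salvageable branch-based route ($M_{tF}$ is indeed an atom by the classification, and joining with $M_{sF}$, $s\neq t$, yields $M_{sF}$ versus $X$, which is not a gap), but you neither verified atomicity nor carried this variant out, and the ``delicate'' analysis you anticipate --- gaps in $\mathcal{K}_{t}$ as minimal symmetric extensions surviving joins --- is both unnecessary and, as shown above, a dead end: nothing beyond the strict inclusions $M_{F}\subsetneqq M_{F^{\ast}}$, the density of $D(F)$, and Lemma \ref{L9.1} is needed.
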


\begin{proof}
As $\{0\},H\oplus\{0\},X,M_{_{F^{\ast}}},M_{_{F}}$ form a pentagon in Lat
$\mathcal{A}$ (see (\ref{6,0})), Lat $\mathcal{A}$ is not modular.

By Lemma \ref{L9.1}, $\{0\}<_{\mathfrak{g}}H\oplus\{0\}<_{\mathfrak{g}}X.$ If
$<_{\mathfrak{g}}$ is an $\mathbf{H}$-relation then, by Lemma \ref{le1}(iii),
$M_{_{F}}=\{0\}\vee M_{_{F}}<_{\mathfrak{g}}(H\oplus\{0\})\vee M_{_{F}%
}=\overline{(H\oplus\{0\})+M_{_{F}}}=X.$ However, $M_{_{F}}\not <
_{\mathfrak{g}}X,$ since $M_{_{F}}\subsetneqq M_{_{F^{\ast}}}\subsetneqq X.$
Thus $<_{\mathfrak{g}}$ is not an $\mathbf{H}$-relation.

If $<_{\mathfrak{g}}$ is a dual $\mathbf{H}$-relation then $\{0\}=(H\oplus
\{0\})\cap M_{_{F^{\ast}}}<_{\mathfrak{g}}X\cap M_{_{F^{\ast}}}=M_{_{F^{\ast}%
}}$ by Lemma \ref{le1}(ii). However, $\{0\}\not <  _{\mathfrak{g}}%
M_{_{F^{\ast}}},$ since $\{0\}\subsetneqq M_{_{F}}\subsetneqq M_{_{F^{\ast}}%
}.$ Thus $<_{\mathfrak{g}}$ is not a dual $\mathbf{H}$-relation.\bigskip
\end{proof}

As sublattices of a modular lattice are modular, it follows from Corollary
\ref{C9.1} that Cl($H)$ is not modular if $\dim H=\infty$. On the other hand,
Cl($H)=$ Ln($H)$ if $\dim H<\infty$, so that Cl($H)$ is modular by Proposition
\ref{P9.1}. This yields (cf. Proposition 1.5.5 \cite{K}).

\begin{corollary}
\emph{Cl(}$H)$ is a modular lattice if and only if $\dim(H)<\infty$.
\end{corollary}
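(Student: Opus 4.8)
The plan is to prove the two implications separately, reducing each to results already in hand. I would first settle the easy direction: if $\dim H<\infty$ then Cl$(H)$ is modular. When $H$ is finite dimensional every linear subspace is itself finite dimensional, hence closed, so Cl$(H)=$ Ln$(H)$ as sets. The lattice operations also coincide: meets are intersections in both lattices, and for any family $G$ of subspaces of the finite dimensional space $H$ the algebraic sum $\sum_{Y\in G}Y$ is finite dimensional, hence closed, so it equals $\overline{\sum_{Y\in G}Y}$; thus $\vee$ agrees in Ln$(H)$ and Cl$(H)$. By Proposition \ref{P9.1}, Ln$(H)$ is modular, and therefore so is Cl$(H)$.

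For the converse I would argue contrapositively: if $\dim H=\infty$ then Cl$(H)$ is not modular. The idea is to exhibit inside Cl$(H)$ a non-modular sublattice and then invoke the fact, recorded just before the statement, that every sublattice of a modular lattice is modular. The non-modular sublattice is furnished by Lemma \ref{L9.1} and Corollary \ref{C9.1}, once their hypotheses are realized in $H$. The only analytic input required is a closed symmetric operator $F$ on $H$ with $F\neq F^{\ast}$ and $\overline{FD(F)}=H$. For $H=L^{2}[0,\infty)$ the operator $F=i\,d/dx$ on the domain $\{f:\ f,f'\in L^{2}[0,\infty),\ f(0)=0\}$ of absolutely continuous functions is closed and symmetric, has deficiency indices $(0,1)$ so that $F\neq F^{\ast}$, and satisfies $\ker F^{\ast}=\{0\}$ (the only constant lying in $L^{2}[0,\infty)$ is $0$), whence its range is dense. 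For a general infinite dimensional $H$ one writes $H$ as an orthogonal direct sum of copies of $L^{2}[0,\infty)$ and takes the corresponding direct sum of these operators, which is again closed, symmetric, non-self-adjoint and of dense range.

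Given such an $F$, Lemma \ref{L9.1} and Corollary \ref{C9.1} produce a non-modular sublattice Lat $\mathcal{A}$ of Cl$(H\oplus H)$. Since $\dim H=\infty$, the Hilbert space $H\oplus H$ is isomorphic to $H$; this isomorphism carries Cl$(H\oplus H)$ onto Cl$(H)$ as lattices (it preserves $\subseteq$, and hence $\wedge$ and $\vee$ by \eqref{5.1}) and transports Lat $\mathcal{A}$ to a non-modular sublattice of Cl$(H)$. As a sublattice of a modular lattice is modular, Cl$(H)$ cannot be modular. Combining the two directions yields the stated equivalence.

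The main obstacle is not the lattice-theoretic bookkeeping, which is routine once the pieces are assembled, but the verification that the hypotheses of Lemma \ref{L9.1} can be met in every infinite dimensional $H$: namely the existence of a closed symmetric, non-self-adjoint operator with dense range, together with the harmless identification $H\oplus H\cong H$. I would take particular care that the chosen $F$ simultaneously satisfies $F\neq F^{\ast}$ — so that $M_{F}\subsetneq M_{F^{\ast}}$ and the pentagon of Corollary \ref{C9.1} is genuinely non-degenerate — and $\overline{FD(F)}=H$, the hypothesis of Lemma \ref{L9.1}. The half-line example is convenient precisely because these two requirements, which concern $F^{\ast}$ near $\pm i$ and at $0$ respectively and thus pull in different directions, are both transparent there.
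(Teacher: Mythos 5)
Your proof is correct and follows essentially the same route as the paper: the finite-dimensional direction via Cl$(H)=$ Ln$(H)$ and Proposition \ref{P9.1}, and the infinite-dimensional direction via the non-modular sublattice \emph{Lat} $\mathcal{A}$ of Corollary \ref{C9.1} together with the fact that sublattices of modular lattices are modular. Your explicit half-line operator $F=i\,d/dx$ on $L^{2}[0,\infty)$, its transfer to arbitrary infinite-dimensional $H$, and the identification $H\oplus H\cong H$ simply supply verifications that the paper leaves implicit.
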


We will now consider some sublattices of Cl($H),$ where $<_{\mathfrak{g}}$ is
an $\mathbf{HH}$-relation. Denote by $P(H)$ the set of all orthogonal
projections in $B(H).$ For $L\in$ Cl($H),$ $p_{_{L}}$ denotes the projection
on\thinspace$L.$ So Cl($H)$ can be identified with $P(H)$ and each sublattice
of Cl($H)$ with a sublattice of $P(H).$

A \textit{complete sublattice} $Q$ of Cl($H)$ is a \textit{commutative
subspace lattice} (CSL) if all projections $p_{_{L}},$ $L\in Q,$ commute. For
$p\in P(H),$ let $p^{\bot}=\mathbf{1}-p.$ If $Q$ is CSL and $p,q\in Q$ then%
\begin{equation}
p\wedge q=pq,\text{ }p\vee q=p+q-pq=p+p^{\bot}q. \label{9.11}%
\end{equation}

\begin{theorem}
\label{T9.1}Let $Q$ be a \emph{CSL} in \emph{Cl(}$H).$ Then\smallskip

\emph{(i) \ \ }$Q$ has properties \emph{(JID)} and \emph{(MID)} \emph{(}see
\emph{(\ref{6.9}))}$,$ so that it is modular.\smallskip

\emph{(ii) \ }$L+K\in Q$ for all $L,K\in Q$ and $<_{\mathfrak{g}}$ is an
$\mathbf{HH}$-relation in $Q.\smallskip$

\emph{(iii)} $\sqsubset$ $=$ $\prec$ $=$ $\subseteq$ and $\sqsubset
_{\mathfrak{g}}$ $=$ $\prec_{\mathfrak{g}}$ $=$ $<_{\mathfrak{g}}$ in
$Q.\smallskip$

\emph{(iv)\ }$<_{\mathfrak{g}}^{\triangleleft}$ is an $\mathbf{H}$-order and a
dual $\mathbf{R}$-order$;$ $<_{\mathfrak{g}}^{\triangleright}$ is an
$\mathbf{R}$-order and a dual $\mathbf{H}$-order\emph{.\smallskip}

\emph{(v) \ }$\overline{<_{\mathfrak{g}}}=$ $\widetilde{<_{\mathfrak{g}}}$
\emph{(}see \emph{(\ref{6.4}) }and \emph{(\ref{6.5})) }is an $\mathbf{RR}$-order.

\emph{(vi) }$Q$ is a union of disjoint intervals that have no gaps.
\end{theorem}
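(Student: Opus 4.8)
The plan is to derive parts (iv)--(vi) as formal consequences of (i)--(iii) together with the abstract theory of Sections 5 and 6, so that the genuine analytic content sits in (i) and (ii). I would begin with (i). Since $Q$ is a CSL, the orthogonal projections $\{p_{L}:L\in Q\}$ pairwise commute and hence generate a \emph{commutative} von Neumann algebra $\mathcal{M}$. Its projection lattice $\mathcal{P}(\mathcal{M})$ is a complete Boolean algebra, and for any family of projections the lattice join is the projection onto the closed span of the ranges while the meet is the projection onto the intersection of the ranges. Thus, under the identification of $L\in$ Cl$(H)$ with $p_{L}$, the operations $\wedge G=\cap_{L\in G}L$ and $\vee G=\overline{\sum_{L\in G}L}$ of (\ref{9.0}) are exactly the Boolean meet and join restricted to $Q$, and since $Q$ is a \emph{complete} sublattice they remain inside $Q$. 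It then remains to note that every complete Boolean algebra satisfies the one-variable laws (\ref{6.9}): for $z\wedge(\vee G)=\vee\{z\wedge x:x\in G\}$ one checks that the Boolean difference $w=z\wedge(\vee G)\wedge(\vee\{z\wedge x:x\in G\})^{\perp}$ satisfies $w\wedge x=0$ for all $x\in G$ and $w\leq\vee G$, forcing $w=0$; (JID) follows dually. Transporting these identities through the identification gives (JID) and (MID) for $Q$, and since (\ref{6.9}) forces distributivity and hence modularity, $Q$ is modular.

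For (ii), formula (\ref{9.11}) gives $p_{L}\vee p_{K}=p_{L}+p_{L}^{\perp}p_{K}$, an orthogonal projection (the two summands are orthogonal projections by commutativity) whose range is $L\oplus p_{L}^{\perp}p_{K}H$. Writing each $k\in K$ as $p_{L}k+p_{L}^{\perp}k$ with $p_{L}^{\perp}k=p_{L}^{\perp}p_{K}k$ shows that this range equals $L+K$; being the range of a projection it is closed, so $L+K=(p_{L}\vee p_{K})H\in Q$. As $Q$ is modular by (i), Corollary \ref{C6.1} shows $<_{\mathfrak{g}}$ is an $\mathbf{HH}$-relation in $Q$. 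Part (iii) is then immediate: by (ii) every finite sum $M+K$ of members of $Q$ is closed, so the defining conditions in (\ref{9.4}) hold automatically and $\sqsubset$ $=$ $\prec$ $=$ $\subseteq$; intersecting with $<_{\mathfrak{g}}$ as in (\ref{9.6}) and using that $<_{\mathfrak{g}}$ is stronger than $\subseteq$ yields $\sqsubset_{\mathfrak{g}}$ $=$ $\prec_{\mathfrak{g}}$ $=$ $<_{\mathfrak{g}}$.

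The remaining parts are applications of the abstract theory. Since (JID) and (MID) from (i) imply the chain versions (JIDC) and (MIDC) of Definition \ref{D6.1}, and $<_{\mathfrak{g}}$ is an $\mathbf{HH}$-relation by (ii), Lemma \ref{L6.2} gives (iv): $<_{\mathfrak{g}}^{\triangleleft}$ is an $\mathbf{H}$-order and a dual $\mathbf{R}$-order, and $<_{\mathfrak{g}}^{\triangleright}$ is an $\mathbf{R}$-order and a dual $\mathbf{H}$-order. Proposition \ref{P7.1}, applicable for the same reasons, yields (v), that $\overline{<_{\mathfrak{g}}}=\widetilde{<_{\mathfrak{g}}}$ is an $\mathbf{RR}$-order. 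Finally, (vi) follows from Theorem \ref{T6.2}: as $Q$ is modular and has (JIDC) and (MIDC), $<_{\mathfrak{c}}$ is an $\mathbf{RR}$-order and, by the decomposition of Theorem \ref{T2.12} together with Proposition \ref{P6.1}, $Q$ is a union of mutually disjoint intervals $[\mathfrak{p}(\lambda),\lambda]$ on each of which $<_{\mathfrak{c}}=\leq$, so each has no gaps.

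The main obstacle is (i): one must verify that the passage from the CSL to the Boolean algebra $\mathcal{P}(\mathcal{M})$ genuinely identifies the Cl$(H)$-operations $\cap$ and $\overline{\sum}$ with the Boolean meet and join on \emph{all} of $Q$ (not merely for finite families), and that completeness of $Q$ as a sublattice is what lets the abstract laws (\ref{6.9}) descend to $Q$. Everything downstream is bookkeeping once (i) and (ii) are in hand.
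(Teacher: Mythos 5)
Your proposal is correct, and in parts (ii)--(vi) it follows the paper's own route almost verbatim: the same computation $L+K=L\oplus p_{L}^{\bot}p_{K}H$ for closedness, Corollary \ref{C6.1} for the $\mathbf{HH}$-property, (\ref{9.4}) and (\ref{9.6}) for (iii), Lemma \ref{L6.2} for (iv), Proposition \ref{P7.1} for (v) (the paper cites its packaged consequence, Corollary \ref{C8.1}, which rests on the same proposition), and Theorem \ref{T6.2} for (vi). The genuine divergence is in (i). The paper proves (JID) and (MID) by a direct projection computation: setting $q=\vee G$ and $p=\vee\{zx\text{: }x\in G\}$, it decomposes each $x\in G$ as $zx\oplus z^{\bot}x$ and squeezes $q\leq p\oplus z^{\bot}q\leq zq\oplus z^{\bot}q=q$ to conclude $zq=p$, with a dual computation for (JID). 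You instead pass to the abelian von Neumann algebra $\mathcal{M}=\{p_{L}\text{: }L\in Q\}^{\prime\prime}$, note that its projection lattice is a complete Boolean algebra whose arbitrary meets and joins coincide with the operations (\ref{9.0}) of Cl$(H)$, so that $Q$ sits inside it as a complete sublattice, and quote the classical one-variable infinite distributive laws for complete Boolean algebras. Your Boolean-difference argument is sound and non-circular: from $w\wedge x=0$ for all $x\in G$ one gets $\vee G\leq w^{\bot}$ by complementation alone, whence $w\leq w^{\bot}$ forces $w=0$. What your route buys is conceptual clarity about where (JID)/(MID) come from -- it is essentially the observation the paper attributes to Davidson \cite[p. 357]{Da} before reproving it ``directly for completeness'' -- at the cost of the extra verifications you rightly flag: that a commuting self-adjoint family generates an abelian von Neumann algebra, that suprema and infima of arbitrary families of its projections stay in $\mathcal{M}$ and agree with those of $B(H)$, and that completeness of $Q$ as a sublattice lets the Boolean identities descend. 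The paper's computation buys self-containedness, using nothing beyond (\ref{9.11}) and elementary projection algebra.
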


\begin{proof}
(i) It was noted in \cite[p. 357]{Da} that $Q$ has properties (JID) and (MID).
For completeness, we prove this here directly. Let $G\subseteq Q$ and $z\in
Q.$ Set $q=\vee G$ and $p=\vee\{zx$: $x\in G\}.$

As $Q$ is a complete sublattice of Cl($H),$ $q\in Q.$ So%
\begin{equation}
p=\vee\{zx\text{: }x\in G\}\overset{(\ref{9.11})}{=}\vee\{z\wedge x\text{:
}x\in G\}\overset{(\ref{6.0})}{\leq}z\wedge(\vee G)=z\wedge
q\overset{(\ref{9.11})}{=}zq. \label{8.8}%
\end{equation}
We have $x=zx\oplus z^{\bot}x\leq p\oplus z^{\bot}q$ for all $x\in G.$ Hence
$q=\vee\{x$: $x\in G\}\leq p\oplus z^{\bot}q\overset{(\ref{8.8})}{\leq
}zq\oplus z^{\bot}q=q.$ So $zq=p,$ i.e., $z\wedge(\vee G)=\vee\{z\wedge x$:
$x\in G\}.$ Thus $Q$ has property (MID) (see (\ref{6.9})).

If $G=\{x,y\}$ then $z\wedge(x\vee y)=(z\wedge x)\vee(z\wedge y).$ Thus $Q$ is
distributive. So it is modular.

Set now $r=\wedge G$ and $s=\wedge\{z^{\bot}x$: $x\in G\}.$ As $Q$ is
complete$,$ $r\in Q.$ So%
\begin{align}
z\oplus z^{\bot}r\overset{(\ref{9.11})}{=}z\vee r  &  =z\vee(\wedge
G)\overset{(\ref{6.0})}{\leq}\wedge\{z\vee x\text{: }x\in
G\}\overset{(\ref{9.11})}{=}\wedge\{z\oplus z^{\bot}x\text{: }x\in
G\}\nonumber\\
&  =z\oplus\wedge\{z^{\bot}x\text{: }x\in G\}=z\oplus s. \label{9.12}%
\end{align}
So $z^{\bot}r\leq s.$ For $x\in G,$ we have $x=zx\oplus z^{\bot}x\geq zr\oplus
s.$ Therefore $r=\wedge\{x$: $x\in G\}\geq zr\oplus s.$ So $z^{\bot}r\geq s.$
Hence $z^{\bot}r=s,$ so that $z\oplus z^{\bot}r=z\oplus s.$ Thus
\[
z\vee(\wedge G)\overset{(\ref{9.11})}{=}z\oplus z^{\bot}r=z\oplus
s\overset{(\ref{9.12})}{=}\wedge\{z\vee x:x\in G\}.
\]
Therefore (see (\ref{6.9})) $Q$ has properties (JID).

(ii) Let $a=p_{_{L}},b=p_{_{K}}$ for $L,K\in Q.$ Then $L+K=L\oplus a^{\bot
}K=L\oplus a^{\bot}bH.$ As $Q$ is CSL, $a$ and $b$ commute. So $a^{\bot}b$ is
a projection$.$ Thus $a^{\bot}bH$ is a closed subspace of $K^{\bot}.$ So $L+K$
is closed.

As $Q$ is modular, it follows from Corollary \ref{C6.1} that $<_{\mathfrak{g}%
}$ is an $\mathbf{HH}$-relation in $Q.$

(iii) As $L+K\in Q$ for all $L,K\in Q,$ it follows from (\ref{9.4}) that
$\sqsubset$ $=$ $\prec$ $=$ $\subseteq.$ Hence, since $<_{\mathfrak{g}}=$
$<_{\mathfrak{g}}\cap$ $\subseteq,$ $\sqsubset_{\mathfrak{g}}=$
$<_{\mathfrak{g}}\cap$ $\sqsubset$ and $\prec_{\mathfrak{g}}=$
$<_{\mathfrak{g}}\cap$ $\prec,$ we have $<_{\mathfrak{g}}=$ $\sqsubset
_{\mathfrak{g}}$ $=$ $\prec_{\mathfrak{g}}.$

As $Q$ is modular, has properties (JID) and (MID) (see (\ref{6.9})) and
$<_{\mathfrak{g}}$ is an $\mathbf{HH}$-relation in $Q,$ (iv) and (v) follow
from Lemma \ref{L6.2} and Corollary \ref{C8.1}. Part (vi) follows from Theorem
\ref{T6.2}.\bigskip
\end{proof}

The lattice of all finite projections in a W*-algebra is modular (Theorem
V.1.37 \cite{T}). So Corollary \ref{C6.1} yields

\begin{corollary}
\label{C9.3}$<_{\mathfrak{g}}$ is an $\mathbf{HH}$-relation in the lattice of
all finite projections in any W$^{\ast}$-algebra.
\end{corollary}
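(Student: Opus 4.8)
The plan is to reduce the statement entirely to Corollary \ref{C6.1}, which asserts that $<_{\mathfrak{g}}$ is automatically an $\mathbf{HH}$-relation in any modular lattice. Thus the whole substance of the corollary is the assertion that the lattice $Q$ of all finite projections in a W$^{\ast}$-algebra $A$ is modular, when it is ordered by $p\leq q$ iff $pq=p$ (equivalently $pH\subseteq qH$) and equipped with the usual projection lattice operations $\wedge$ and $\vee$.

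First I would confirm that $Q$ is genuinely a lattice under these operations, i.e. that it is closed under $\wedge$ and $\vee$, so that the conditions of Definition \ref{D2.2} (via Lemma \ref{le1}) and the relation $<_{\mathfrak{g}}$ itself are meaningful on $Q$. Closure under $\wedge$ is immediate since $p\wedge q\leq p$ is finite. Closure under $\vee$ is where some care is needed: using the parallelogram law $p\vee q-q\sim p-p\wedge q\leq p$ one sees that the projection $p\vee q-q$ is equivalent to a subprojection of the finite projection $p$, hence finite, so $p\vee q=q+(p\vee q-q)$ is a sum of two orthogonal finite projections and therefore finite. This keeps all the elements $a\vee x$ and $a\wedge x$ of the $\mathbf{H}$- and dual $\mathbf{H}$-conditions inside $Q$.

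Next I would invoke the modularity of $Q$, which is the one nontrivial input and is classical: the finite projections of a W$^{\ast}$-algebra carry a (center-valued, or scalar in the factor case) dimension function $d$ obeying $d(p\vee q)+d(p\wedge q)=d(p)+d(q)$, and the existence of such a strictly monotone modular valuation forces the lattice to be modular. This is exactly Theorem V.1.37 of \cite{T}, which I would cite rather than reprove. With $Q$ modular, Corollary \ref{C6.1} applies verbatim and gives that $<_{\mathfrak{g}}$ is an $\mathbf{HH}$-relation in $Q$, completing the argument. The only place deserving attention is the closure-under-join step of the second paragraph, since finiteness of joins is precisely what can fail in a general projection lattice; once modularity is secured there is no further obstacle.

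\begin{proof}
The lattice of all finite projections in a W$^{\ast}$-algebra is modular (Theorem V.1.37 \cite{T}). So Corollary \ref{C6.1} shows that $<_{\mathfrak{g}}$ is an $\mathbf{HH}$-relation in it.
\end{proof}
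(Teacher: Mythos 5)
Your proposal is correct and matches the paper's own argument exactly: cite Theorem V.1.37 of \cite{T} for the modularity of the lattice of finite projections and then apply Corollary \ref{C6.1}. Your additional verification via the parallelogram law that the join of two finite projections is finite (so the lattice operations stay inside $Q$) is a sound supplementary check that the paper leaves implicit within the cited theorem.
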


The lattice Cl($H)\approx P(H)$ has a large variety of $\mathbf{H}$- and dual
$\mathbf{H}$-relations which is difficult to describe. However, it is possible
to describe all $\mathbf{HH}$-relations in $P(H)$. To do this, recall that
projections $p,q\in P(H)$ are equivalent ($p\sim q)$ if%
\begin{equation}
p=vv^{\ast}\text{ and }q=v^{\ast}v\text{ for some partial isometry }v.
\label{9.3}%
\end{equation}
To introduce a new class of $\mathbf{HH}$-relations in $P(H)$, we need the
following result.

\begin{proposition}
\label{P9.3}\emph{\cite{K3} }Let $p\leq q$ in $P(H)$. Then$,$ for each
projection $r\in P(H),$%
\begin{align}
(q-p)^{\bot}  &  \leq(q\vee r-p\vee r)^{\bot};\label{5,1}\\
(q-p)^{\bot}  &  \sim\mathfrak{z}\text{\emph{ }for some }\mathfrak{z}\in
P(H),\text{ and }\mathfrak{z}\leq(q\wedge r-p\wedge r)^{\bot}. \label{5,2}%
\end{align}

\end{proposition}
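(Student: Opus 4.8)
The statement to prove is Proposition \ref{P9.3}, which consists of two inequalities about orthogonal projections $p \le q$ in $P(H)$. Let me first understand what needs to be shown. Given $p \le q$ (so $q - p$ is a projection onto $L := (q-p)H$), and given an arbitrary projection $r$, we must establish $(5,1)$ and $(5,2)$. Let me think about the geometry. The projection $q - p$ projects onto $qH \ominus pH$, the part of $q$ orthogonal to $p$. The inequality $(5,1)$ says this space is contained in $(q \vee r - p \vee r)H$'s orthocomplement — equivalently, taking complements, $(q \vee r - p \vee r) \le (q - p)$ as projections. The statement $(5,2)$ is subtler: it asserts that $(q-p)^\bot$ is equivalent to some sub-projection $\mathfrak{z}$ of $(q \wedge r - p \wedge r)^\bot$.

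The plan is as follows. First I would unravel the claims into statements about the ranges of the projections involved, using that for projections $a \le b$ the difference $b - a$ is the projection onto $bH \ominus aH$. The key algebraic facts I expect to use are the lattice identities for $P(H)$ and the standard comparison theory of projections via partial isometries. For $(5,1)$, the natural route is to show directly that $q \vee r - p \vee r \le q - p$, i.e.\ that $(q\vee r)H \ominus (p \vee r)H$ sits inside $qH \ominus pH$; since $p \vee r \ge r$ and $q \vee r \ge q$, one examines how adjoining $r$ to both $p$ and $q$ can only shrink the ``new part'' contributed when passing from $p$ to $q$. This should follow from a careful manipulation of the join operation and the inclusion $p \le q$.

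The harder half is $(5,2)$, and this is where I expect the main obstacle. The appearance of equivalence ($\sim$) rather than a plain inequality signals that one must construct an explicit partial isometry $v$ with $vv^* = (q-p)^\bot$ and $v^*v = \mathfrak{z} \le (q \wedge r - p \wedge r)^\bot$. The natural candidate comes from the parallelogram law (Kaplansky's formula) for projections: $p \vee r - r \sim p - p \wedge r$ and similarly for $q$, implemented by the partial isometries arising from the polar decomposition of appropriate products. The plan is to combine these equivalences for the pairs $(p,r)$ and $(q,r)$, tracking the ranges through the subtraction $q - p$, and then to verify that the resulting isometry carries $(q-p)^\bot$ into the complement of $q \wedge r - p \wedge r$. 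Since the proposition is cited from reference \cite{K3}, I would expect the cleanest argument to invoke or reproduce the comparison-theory machinery there; the essential difficulty is bookkeeping the four projections $p, q, r$ and their meets and joins simultaneously while keeping the partial isometry well-defined on the correct subspace.

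**Summary of strategy.** In order: (1) translate $(5,1)$ into the projection inequality $q \vee r - p \vee r \le q - p$ and prove it by range inclusion using $p \le q$ and the formula $p \vee r = p + p^\bot r$-type expressions valid when things commute, or more generally by a direct subspace argument; (2) for $(5,2)$, invoke the parallelogram/Kaplansky equivalence to produce partial isometries linking the joins and meets, assemble them to realize $(q-p)^\bot$ as equivalent to a projection $\mathfrak{z}$, and (3) confirm the dominance $\mathfrak{z} \le (q \wedge r - p \wedge r)^\bot$ by checking the target range of the constructed isometry. The delicate point throughout is that $P(H)$ is \emph{not} commutative, so the clean CSL identities $(\ref{9.11})$ are unavailable; one must use equivalence and the comparison theory of von Neumann algebras rather than direct computation, and getting the partial isometry's initial and final spaces exactly right is the crux.
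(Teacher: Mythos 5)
The first thing to note is that the paper contains no proof of Proposition \ref{P9.3} to compare against: it is quoted from \cite{K3} (listed as ``to appear''), and only its consequences are used, in the proof of Theorem \ref{T9.3}. So your proposal can only be measured against the statement itself, and there the verdict is split: your strategy for (\ref{5,2}) is the right one, but your plan for (\ref{5,1}) targets a false inequality. Your reduction is correct --- $(q-p)^{\bot}\leq(q\vee r-p\vee r)^{\bot}$ is equivalent to $q\vee r-p\vee r\leq q-p$ --- but that inequality fails in general: take $H=\mathbb{C}^{2}$, $p=0$, $q$ the projection onto $\mathbb{C}e_{1}$, and $r$ the projection onto $\mathbb{C}(\cos\theta\,e_{1}+\sin\theta\,e_{2})$ with $0<\theta<\pi/2$. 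Then $q\vee r-p\vee r=\mathbf{1}-r$ while $q-p=q$, and $\mathbf{1}-r\nleq q$; equivalently, $(q-p)^{\bot}$ is the projection onto $\mathbb{C}e_{2}$, which is not $\leq r=(q\vee r-p\vee r)^{\bot}$. Hence no ``direct subspace argument'' can exist, and the identity $p\vee r=p+p^{\bot}r$ you invoke requires commutativity, which --- as you yourself observe in your closing paragraph --- is unavailable in $P(H)$. With the standard order on projections, (\ref{5,1}) as printed is an overstatement; what is true, and is all that the proof of Theorem \ref{T9.3} actually uses (it only needs $\dim(q\vee r-p\vee r)^{\bot}\geq\dim(q-p)^{\bot}$), is the subequivalence analogue of (\ref{5,2}): $(q-p)^{\bot}\sim\mathfrak{z}^{\prime}\leq(q\vee r-p\vee r)^{\bot}$ for some projection $\mathfrak{z}^{\prime}$.

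For (\ref{5,2}) your instinct (Kaplansky's parallelogram law $e\vee f-f\sim e-e\wedge f$) is exactly right, but your text stops at the level of strategy, and the assembly you defer is precisely what carries the proof; here it is, and it also yields the corrected form of (\ref{5,1}). Since $p\leq q$, one has the orthogonal decompositions $(q-p)^{\bot}=q^{\bot}\oplus(p-p\wedge r)\oplus(p\wedge r)$ and $(q\wedge r-p\wedge r)^{\bot}=(q\wedge r)^{\bot}+(p\wedge r)=q^{\bot}\oplus(q-q\wedge r)\oplus(p\wedge r)$. Map the two outer summands identically and compare the middle ones via Kaplansky: $p-p\wedge r\sim p\vee r-r\leq q\vee r-r\sim q-q\wedge r$, so $p-p\wedge r\precsim q-q\wedge r$; orthogonal additivity of subequivalence then gives $(q-p)^{\bot}\sim\mathfrak{z}\leq(q\wedge r-p\wedge r)^{\bot}$, which is (\ref{5,2}). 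The same pattern with joins --- $(q-p)^{\bot}=(q\vee r)^{\bot}\oplus(q\vee r-q)\oplus p$ and $(q\vee r-p\vee r)^{\bot}=(q\vee r)^{\bot}\oplus(p\vee r-p)\oplus p$, together with $q\vee r-q\sim r-r\wedge q\leq r-r\wedge p\sim p\vee r-p$ --- gives the join statement in subequivalence form. Until the partial isometries are pinned down by such explicit decompositions (your step (3)), the proposal remains a plan rather than a proof; and for (\ref{5,1}) specifically, the plan as written aims at an operator inequality that a two-dimensional example already refutes.
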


Consider now the reflexive relations $\ll_{_{n}}^{^{\bot}}$in $P(H),$ for
$0\leq n\leq\infty,$ defined by the condition%
\begin{equation}
p\ll_{_{n}}^{^{\bot}}q\text{ if }p\leq q\text{ in }P(H)\text{ and }%
\dim(q-p)^{\bot}\geq n,\text{ so that }\ll_{_{0}}^{^{\bot}}=\text{ }\leq.
\label{5,7}%
\end{equation}

\begin{theorem}
\label{T9.3}The relations $\ll_{_{n}}^{^{\bot}},$ $0\leq n\leq\infty,$ are
$\mathbf{HH}$-relations in $P(H).$
\end{theorem}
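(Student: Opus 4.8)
The plan is to verify the two halves of the $\mathbf{HH}$-condition separately, using the characterizations of Lemma \ref{le1} rather than the definitions directly. For the $\mathbf{H}$-relation part I would check condition 3) of Lemma \ref{le1}(i): that $p \ll_{_{n}}^{^{\bot}} q$ implies $p \vee r \ll_{_{n}}^{^{\bot}} q \vee r$ for every $r \in P(H)$; for the dual $\mathbf{H}$-relation part I would check condition 3) of Lemma \ref{le1}(ii): that $p \ll_{_{n}}^{^{\bot}} q$ implies $p \wedge r \ll_{_{n}}^{^{\bot}} q \wedge r$. In both cases the monotonicity part ($p \leq q \Rightarrow p \vee r \leq q \vee r$ and $p \wedge r \leq q \wedge r$) is immediate from the lattice structure of $P(H)$, so the whole content sits in the dimension inequality $\dim(\cdot)^{\bot} \geq n$, which is exactly what Proposition \ref{P9.3} is built to supply.

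For the $\mathbf{H}$-relation, I would take $p \ll_{_{n}}^{^{\bot}} q$, i.e. $p \leq q$ and $\dim(q-p)^{\bot} \geq n$, and invoke inequality (\ref{5,1}), which gives $(q-p)^{\bot} \leq (q \vee r - p \vee r)^{\bot}$; hence $\dim(q \vee r - p \vee r)^{\bot} \geq \dim(q-p)^{\bot} \geq n$, so $p \vee r \ll_{_{n}}^{^{\bot}} q \vee r$, and Lemma \ref{le1}(i) finishes this direction. For the dual $\mathbf{H}$-relation the argument is the same in spirit but must route through equivalence, and this is the one delicate point. From (\ref{5,2}) one obtains only a projection $\mathfrak{z}$ with $(q-p)^{\bot} \sim \mathfrak{z}$ and $\mathfrak{z} \leq (q \wedge r - p \wedge r)^{\bot}$, not a direct order comparison. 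The key observation is that equivalent projections have equal rank: if $(q-p)^{\bot} = vv^{\ast}$ and $\mathfrak{z} = v^{\ast}v$ for a partial isometry $v$, then $v$ restricts to an isometry of $\mathfrak{z}H$ onto $(q-p)^{\bot}H$, so $\dim \mathfrak{z} = \dim(q-p)^{\bot} \geq n$. Combining with $\mathfrak{z} \leq (q \wedge r - p \wedge r)^{\bot}$ yields $\dim(q \wedge r - p \wedge r)^{\bot} \geq \dim \mathfrak{z} \geq n$, so $p \wedge r \ll_{_{n}}^{^{\bot}} q \wedge r$, and Lemma \ref{le1}(ii) gives the dual $\mathbf{H}$-property.

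I expect the only genuine obstacle to be this rank-preservation step in the dual case; once one recognizes that (\ref{5,2}) deliberately states its conclusion up to equivalence precisely because the naive order comparison $(q-p)^{\bot} \leq (q \wedge r - p \wedge r)^{\bot}$ can fail, the remainder is bookkeeping. The boundary cases are then automatic: $\ll_{_{0}}^{^{\bot}} = \leq$ is trivially an $\mathbf{HH}$-relation, and the argument above is uniform in $n$, covering $n = \infty$ as well (where $\dim(q-p)^{\bot} \geq \infty$ means $(q-p)^{\bot}$ is infinite-dimensional, a property preserved under equivalence by (\ref{5,2}) and only enlarged under (\ref{5,1})). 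Thus all $\ll_{_{n}}^{^{\bot}}$ are simultaneously $\mathbf{H}$- and dual $\mathbf{H}$-relations, hence $\mathbf{HH}$-relations in $P(H)$.
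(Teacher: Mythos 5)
Your proposal is correct and follows essentially the same route as the paper's own proof: both reduce the claim via Lemma \ref{le1} to the join comparison from (\ref{5,1}) and the meet comparison from (\ref{5,2}), and both handle the delicate dual case by observing that the equivalence $(q-p)^{\bot}\sim\mathfrak{z}$ in (\ref{9.3}) preserves dimension, so $\dim(q\wedge r-p\wedge r)^{\bot}\geq\dim\mathfrak{z}=\dim(q-p)^{\bot}\geq n$. Your explicit treatment of the boundary cases $n=0$ and $n=\infty$ is a minor addition the paper leaves implicit, but the argument is the same.
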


\begin{proof}
Let $p,q,r\in Q$ and $p\ll_{_{n}}^{^{\bot}}q$ for some $n.$ Then $p\leq q$
and\ $\dim(q-p)^{\bot}\geq n.$

We have that $p\vee r,q\vee r$ belong to $Q$ and $p\vee r\leq q\vee r.$ By
(\ref{5,1}), $(q-p)^{\bot}\leq(q\vee r-p\vee r)^{\bot}.$ So $\dim(q\vee
r-p\vee r)^{\bot}\geq n.$ Hence $p\vee r\ll_{_{n}}^{^{\bot}}q\vee r$ by
(\ref{5,7}).

We have $p\wedge r,q\wedge r\in Q$ and $p\wedge r\leq q\wedge r.$ By
(\ref{5,2}), $(q-p)^{\bot}\sim\mathfrak{z}$\emph{ }and $\mathfrak{z}%
\leq(q\wedge r-p\wedge r)^{\bot}$ for some projection $\mathfrak{z}$ in
$P(H).$ It follows from (\ref{9.3}) that $\dim(q-p)^{\bot}=\dim\mathfrak{z.}$
Hence $\dim(q\wedge r-p\wedge r)^{\bot}\geq n.$ So $p\wedge r\ll_{_{n}%
}^{^{\bot}}q\wedge r$ by (\ref{5,7}). Thus, by Definition \ref{D2.2}
$\ll_{_{n}}^{^{\bot}}$ is an $\mathbf{HH}$-relation.\bigskip
\end{proof}

By Corollary \ref{T9.2}, all $\ll_{n},$ $1\leq n\leq\infty,$ are $\mathbf{HH}%
$-relations in Cl($H)\approx P(H)$. The following result obtained
\cite{K3}\emph{ }in describes all $\mathbf{HH}$-relation in $P(H)$.

\begin{theorem}
\label{T6.3}For a separable $H,$ each $\mathbf{HH}$-relation in $P(H)$ is
either $\ll_{_{n}},$ or $\ll_{_{n}}^{^{\bot}}$ for $n\in\mathbb{N}\cup\infty.$
\end{theorem}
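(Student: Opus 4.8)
The plan is to reduce an arbitrary $\mathbf{HH}$-relation $\ll$ on $P(H)$ to a numerical invariant and then classify the admissible invariants. Since $\ll$ is reflexive and stronger than $\leq$, it suffices to decide, for each pair $p\leq q$ in $P(H)$, whether $p\ll q$. To such a pair I attach the two relative dimensions $d(p,q)=\dim(q-p)$ and $c(p,q)=\dim(q-p)^{\bot}$, which satisfy $d(p,q)+c(p,q)=\dim H$. The target relations are governed by exactly these two numbers: $p\ll_{n}q$ means $d(p,q)<n$ (a ``small gap'' condition, see (\ref{9.2})), while $p\ll_{n}^{\bot}q$ means $c(p,q)\geq n$ (a ``large codimension'' condition, see (\ref{5,7})). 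When $\dim H<\infty$ the identity $c=\dim H-d$ forces the two families to coincide, so the whole content lies in the case $\dim H=\infty$, where $d$ finite forces $c=\infty$, but $d=\infty$ leaves $c$ free in $\{0,1,\dots,\infty\}$.

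The engine of the argument is the monotone behaviour of $(d,c)$ under the lattice operations that $\ll$ must respect. By Definition \ref{D2.2} and Lemma \ref{le1}, $p\ll q$ forces $p\vee r\ll q\vee r$ and $p\wedge r\ll q\wedge r$ for every $r\in P(H)$. By Lemma \ref{Clos} the relative dimension cannot increase under either operation, $d(p\vee r,q\vee r)\leq d(p,q)$ and $d(p\wedge r,q\wedge r)\leq d(p,q)$, while by Proposition \ref{P9.3} the codimension cannot decrease, $c(p\vee r,q\vee r)\geq c(p,q)$ and $c(p\wedge r,q\wedge r)\geq c(p,q)$. First I would record the precise reachability: choosing $r$ inside, orthogonal to, or straddling the gap subspace $G=(q-p)H$, one realizes from a given $\ll$-related pair new $\ll$-related pairs with any invariants $(d',c')$ obtainable from $(d,c)$ by lowering $d$ and raising $c$ (absorbing a $k$-dimensional subspace of $G$ into the lower projection lowers $d$ by $k$ and raises $c$ by $k$; when $G$ is infinite-dimensional this keeps $d=\infty$ while $c$ runs upward). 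Thus the set $S$ of invariant values carried by $\ll$-related pairs is closed under these moves.

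Next I would show that $\ll$-relatedness depends \emph{only} on $(d,c)$, not on $\dim p$ and $\dim q$ separately. The key remark is that joining with a projection orthogonal to $q$, or meeting with one of the form $G\oplus K$ with $K\subseteq pH$, leaves the gap subspace $G$ unchanged, hence fixes both $d$ and $c$, while moving $\dim p$ to any smaller value (and, using the orthogonal room available when $\dim H=\infty$, to larger values). Running these down to $\dim p=0$ carries $(p,q)$ to the canonical pair $(0,\mathrm{proj}_{G})$ with the same invariants. The genuinely hard point is to connect two canonical pairs $(0,\mathrm{proj}_{G})$ and $(0,\mathrm{proj}_{G'})$ with $\dim G=\dim G'$ and $\dim G^{\bot}=\dim G'^{\bot}$ but $G\neq G'$: purely monotone moves do not suffice, and one must interpose a join followed by a meet with projections chosen so that the intermediate space contains $G'$ and the final meet cuts back to exactly $G'$. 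Producing such projections is the main obstacle; it is precisely where separability and infinite-dimensionality are spent (there must be enough orthogonal room to place $G'$ generically relative to $G$), and it is the technical heart carried out in \cite{K3}. Granting it, $\ll$-relatedness becomes a well-defined function of $(d,c)$ alone.

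Finally I would classify the admissible profiles. The reachability moves organize the invariant values into a single chain, running $(\infty,0)\to(\infty,1)\to\cdots\to(\infty,\infty)\to\cdots\to(d,\infty)\to\cdots\to(1,\infty)\to(0,\infty)$, where $v\to w$ means $w$ is reachable from $v$; indeed every value with $d=\infty$ reaches every value with $d$ finite, and within each ray the moves are strictly monotone. Since $S$ is closed under $\to$, it is a terminal ray (up-set) of this chain. The up-sets lying inside the finite-$d$ ray are exactly $\{d<n\}$, yielding $\ll=\ll_{n}$; the up-sets reaching into the $d=\infty$ ray are exactly $\{d<\infty\}\cup\{c\geq n\}=\{c\geq n\}$, yielding $\ll=\ll_{n}^{\bot}$ (with $\ll_{0}^{\bot}=\leq$ and $\ll_{\infty}^{\bot}$ the ray $\{c=\infty\}$). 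Since, conversely, Corollary \ref{T9.2} and Theorem \ref{T9.3} confirm that each $\ll_{n}$ and each $\ll_{n}^{\bot}$ is an $\mathbf{HH}$-relation, this exhausts the possibilities and completes the classification.
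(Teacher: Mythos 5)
First, a point about the comparison you were asked to survive: the paper does not prove Theorem \ref{T6.3} at all. It is quoted from the companion work \cite{K3} (``Relations and trails in lattices of projections in W*-algebras, to appear''), so there is no internal proof to match your argument against. Judged on its own merits, your proposal is a sensible framing — attaching to each pair $p\leq q$ the invariants $d(p,q)=\dim(q-p)$ and $c(p,q)=\dim(q-p)^{\bot}$, observing via Lemma \ref{Clos} and Proposition \ref{P9.3} that joins and meets can only decrease $d$ and increase $c$, and then classifying the up-sets of the resulting reachability chain, which indeed yields exactly the families $\ll_{n}$ and $\ll_{n}^{\bot}$, with Corollary \ref{T9.2} and Theorem \ref{T9.3} supplying the converse. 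But it is not a proof: at the decisive moment you write ``Granting it'' and defer to \cite{K3} the claim that $\ll$-relatedness is a function of $(d,c)$ alone, i.e., that any two pairs with equal invariants are mutually reachable by joins and meets. That claim \emph{is} the theorem, essentially; everything before and after it is bookkeeping.

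To see that this step is genuinely hard and that your proposed ``join followed by a meet'' does not obviously work, note that your own monotonicity observations pin the invariants along any connecting path: since $d$ never increases and $c$ never decreases under either operation, \emph{every} intermediate pair in a trail from $(0,\mathrm{proj}_{G})$ to $(0,\mathrm{proj}_{G'})$ must carry exactly the same $(d,c)$, so there is no slack to place $G'$ ``generically'' and cut back. Concretely, take $c=1$: let $G^{\bot}=\mathbb{C}u$ and $G'^{\bot}=\mathbb{C}v$ with $u,v$ linearly independent. The naive meet of $(0,\mathrm{proj}_{G})$ with $\mathrm{proj}_{G'}$ produces the pair $(0,\mathrm{proj}_{G\cap G'})$ whose complement invariant is $2$, overshooting the permitted value $1$; by equation (\ref{5,2}) the complement can also be rotated by a meet without changing its dimension, but arranging a finite sequence of such moves that carries $\mathbb{C}u$ to $\mathbb{C}v$ while never letting $c$ exceed $1$ (and this uniformly over all hypothetical $\mathbf{HH}$-relations $\ll$, not just the model ones) is precisely the ``trails'' construction that \cite{K3} is devoted to. So your submission correctly isolates the crux, and correctly handles the reduction and the combinatorics of up-sets around it, but the heart of the theorem — the orbit-transitivity of lattice moves on pairs with fixed $(d,c)$ — is assumed rather than proved, and the difficulty it conceals is exactly where separability and the structure of $P(H)$ must be spent.
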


\section{$\mathbf{H}$-relations in the lattices of ideals of Banach algebras}

\subsection{$\mathbf{H}$-relations in the lattices of ideals of Banach
algebras}

Let $A$ be a Banach algebra. Denote by Id$_{A}$ the set of all closed
two-sided ideals in $A$ (we call them \textit{ideals}). Let $LR\mathcal{(}A)$
be the Banach algebra of operators on $A$ generated by all operators of left
and right multiplication by elements from $A.$ Then Id$_{A}=$ Lat
$LR\mathcal{(}A)$ is a sublattice of Cl($A).$

The following Proposition was obtained in \cite[Example 4.6]{Ru1} and
rediscovered in \cite[Lemma 4.9(iv)]{Ki} (see also \cite[Lemma 8.1]{W} and
\cite[Proposition 2.4]{Di}).

\begin{proposition}
\label{P3.3}Let $L$ and $R$ be closed left and right ideals of $A$. If $L$ has
a bounded right approximate identity $($a. i.$),$ or $R$ has a bounded left a.
i.$,$ then $L+R$ is closed.
\end{proposition}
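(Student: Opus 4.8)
The plan is to avoid any factorization machinery and instead reduce the statement to a single uniform distance estimate obtained directly from the approximate identity. Treat first the case where $L$ has a bounded right approximate identity $(e_\lambda)$, i.e.\ $e_\lambda\in L$, $\sup_\lambda\|e_\lambda\|=:M<\infty$, and $xe_\lambda\to x$ for all $x\in L$. Let $\pi\colon A\to A/R$ be the quotient map; since $R$ is closed, $A/R$ is a Banach space, and because $R\subseteq L+R$ we have $L+R=\pi^{-1}(\pi(L))$, so it suffices to show that $\pi(L)=(L+R)/R$ is closed in $A/R$. This in turn is the assertion that the bounded operator $\pi|_L\colon L\to A/R$ has closed range; since its kernel is $L\cap R$, the induced injective map $L/(L\cap R)\to A/R$ has closed range as soon as it is bounded below, i.e.\ as soon as there is $c>0$ with $\mathrm{dist}(x,R)\ge c\,\mathrm{dist}(x,L\cap R)$ for every $x\in L$.

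Next I would prove that estimate with $c=1/M$. Fix $x\in L$ and $r\in R$. The crucial point is that $re_\lambda\in L\cap R$: indeed $e_\lambda\in L$ and $L$ is a left ideal give $re_\lambda\in A\,e_\lambda\subseteq AL\subseteq L$, while $R$ is a right ideal gives $re_\lambda\in RA\subseteq R$. Writing $x-re_\lambda=(x-xe_\lambda)+(x-r)e_\lambda$ yields the bound
\[
\|x-re_\lambda\|\le\|x-xe_\lambda\|+M\,\|x-r\|.
\]
Letting $\lambda$ run and using $\|x-xe_\lambda\|\to0$ gives $\mathrm{dist}(x,L\cap R)\le\liminf_\lambda\|x-re_\lambda\|\le M\,\|x-r\|$, and taking the infimum over $r\in R$ produces $\mathrm{dist}(x,L\cap R)\le M\,\mathrm{dist}(x,R)$, which is exactly the required lower bound.

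Finally, a bounded-below injective map between Banach spaces has closed range, so $\pi(L)$ is closed and hence $L+R$ is closed, completing this case. The case in which $R$ carries a bounded left approximate identity $(f_\mu)\subseteq R$ I would handle symmetrically, quotienting instead by $L$ via $\pi'\colon A\to A/L$ (well defined as a left $A$-module quotient since $L$ is a left ideal) and using that $f_\mu\ell\in L\cap R$ for $\ell\in L$ — here $f_\mu\in R$ with $R$ a right ideal gives $f_\mu\ell\in R$, and $L$ a left ideal gives $f_\mu\ell\in L$; the decomposition $y-f_\mu\ell=(y-f_\mu y)+f_\mu(y-\ell)$ then yields $\mathrm{dist}(y,L\cap R)\le M\,\mathrm{dist}(y,L)$ for $y\in R$, and the same closed-range conclusion follows. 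I expect the only genuinely delicate step to be this membership $re_\lambda\in L\cap R$ (resp.\ $f_\mu\ell\in L\cap R$): it is precisely the spot where the two one-sided ideal hypotheses and the one-sidedness of the approximate identity must be matched up correctly, and where one must note that only the one-sided module structure of $A/R$ (resp.\ $A/L$) is available and, fortunately, all that is needed. Everything else is routine Banach-space bookkeeping.
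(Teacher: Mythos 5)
Your proof is correct: the key membership $re_\lambda\in L\cap R$ (from $e_\lambda\in L$ with $L$ a left ideal, and $r\in R$ with $R$ a right ideal), the identity $x-re_\lambda=(x-xe_\lambda)+(x-r)e_\lambda$, and the resulting uniform estimate $\mathrm{dist}(x,L\cap R)\le M\,\mathrm{dist}(x,R)$ for $x\in L$ are all valid, and since $L/(L\cap R)$ is complete this makes the induced injection into $A/R$ bounded below, so $\pi(L)$ and hence $L+R=\pi^{-1}(\pi(L))$ are closed, with the second case handled correctly by symmetry. Note that the paper does not actually prove this proposition but cites it (Rudin, Kirchberg, Wassermann, Dixon); your argument is essentially the standard one from those references, which likewise rests on exactly this distance estimate, in the form of Rudin's criterion that a sum $Y+Z$ of closed subspaces is closed once $\mathrm{dist}(y,Y\cap Z)\le C\,\mathrm{dist}(y,Z)$ holds uniformly on $Y$.
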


\begin{corollary}
\label{C3.4}Let $a$ be a Banach algebra. If each $I\in$ \emph{Id}$_{A}$ has a
bounded left or right a. i.\emph{,} then $\prec$ $=$ $\sqsubset$ $=$
$\subseteq$ and $<_{\mathfrak{g}}$ $=$ $\prec_{\mathfrak{g}}$ $=$
$\sqsubset_{\mathfrak{g}}$ are $\mathbf{HH}$-relations in \emph{Id}$_{A}$.
\end{corollary}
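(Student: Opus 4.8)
The plan is to reduce everything to a single observation: under the hypothesis, the sum of any two ideals in $\text{Id}_A$ is closed. Once this is established, both asserted chains of equalities follow formally from the definitions (\ref{9.4}) and (\ref{9.6}), and the $\mathbf{HH}$-property is handed to us by Theorem \ref{T4.5}.

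First I would prove that for all $I,J\in\text{Id}_A$ the sum $I+J$ is closed. Since $I$ is a closed two-sided ideal, it is simultaneously a closed left ideal and a closed right ideal, and by hypothesis it has a bounded left or a bounded right approximate identity. If $I$ has a bounded right approximate identity, then applying Proposition \ref{P3.3} with $I$ in the role of the left ideal and $J$ in the role of the right ideal shows $I+J$ is closed. If instead $I$ has a bounded left approximate identity, then applying Proposition \ref{P3.3} with $J$ as the left ideal and $I$ as the right ideal shows $J+I=I+J$ is closed. In either case $I+J$ is closed. This is the key step.

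Given this, the equalities $\sqsubset=\prec=\subseteq$ are immediate from (\ref{9.4}). Both $\sqsubset$ and $\prec$ are by definition stronger than $\subseteq$, and conversely, if $L\subseteq M$ in $\text{Id}_A$, then for every $K\in\text{Id}_A$ both $M+K$ and $L+K$ are closed by the previous step, so $L\sqsubset M$ and $L\prec M$. The statement for the gap relations then follows from (\ref{9.6}): since $\sqsubset=\prec=\subseteq$ and since $<_{\mathfrak{g}}$ is already contained in $\subseteq$, we get $\sqsubset_{\mathfrak{g}}=$ $<_{\mathfrak{g}}\cap$ $\sqsubset=$ $<_{\mathfrak{g}}\cap$ $\subseteq=$ $<_{\mathfrak{g}}$, and likewise $\prec_{\mathfrak{g}}=$ $<_{\mathfrak{g}}$.

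Finally, to show $<_{\mathfrak{g}}$ is an $\mathbf{HH}$-relation, I would use that $\text{Id}_A=\text{Lat}\,LR(A)$ is a reflexive sublattice of $\text{Cl}(A)$. Theorem \ref{T4.5} then gives that $\sqsubset_{\mathfrak{g}}$ is an $\mathbf{H}$-relation and $\prec_{\mathfrak{g}}$ is a dual $\mathbf{H}$-relation in this reflexive sublattice. Combining this with the identifications just made, $<_{\mathfrak{g}}=$ $\sqsubset_{\mathfrak{g}}$ is an $\mathbf{H}$-relation and $<_{\mathfrak{g}}=$ $\prec_{\mathfrak{g}}$ is a dual $\mathbf{H}$-relation, whence $<_{\mathfrak{g}}$ is an $\mathbf{HH}$-relation. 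The only point demanding care is the orientation of the ideals in the first step: because the hypothesis permits either a left or a right approximate identity, one must split into two cases and exploit the commutativity $I+J=J+I$ of the sum in order to apply Proposition \ref{P3.3} in the appropriate one-sided form in each case.
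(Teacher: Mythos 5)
Your proposal is correct and follows essentially the same route as the paper: Proposition \ref{P3.3} (with the same left/right case split, which the paper leaves implicit) gives closedness of sums and hence $\prec$ $=$ $\sqsubset$ $=$ $\subseteq$ via (\ref{9.4}), the gap-relation identities follow from (\ref{9.6}), and the $\mathbf{HH}$-property comes from Theorem \ref{T4.5} applied to the reflexive lattice \emph{Id}$_{A}=$ \emph{Lat} $LR(A)$. Your write-up merely makes explicit the details the paper's one-line proof compresses.
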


\begin{proof}
It follows from (\ref{9.4}) and Proposition \ref{P3.3} that $\prec$ $=$
$\sqsubset$ $=$ $\subseteq$. From this and from (\ref{9.5}) and (\ref{9.6}) we
have $<_{\mathfrak{g}}=$ $\prec_{\mathfrak{g}}=$ $\sqsubset_{\mathfrak{g}}$ in
Id$_{A}.$ By Theorem \ref{T4.5}, it is an $\mathbf{HH}$-relation.\bigskip
\end{proof}

Let $\ll$ be a relation in Id$_{A}$. As in (\ref{3.3}), for $L\in$ Id$_{A},$
let
\begin{equation}
\sigma_{_{\ll}}(L)=\cap\{J\in\text{ Id}_{A}\text{: }J\ll L\}\text{ and
}s^{^{_{\ll}}}(L)=\overline{\sum\{J\in\text{ Id}_{A}\text{: }L\ll J\}}.
\label{9.7}%
\end{equation}
For an $\mathbf{H}$-relations $\ll,$ $\ll^{\triangleright}\ $is an
$\mathbf{R}$-order in Id$_{A}$ and the ideal
\[
\mathfrak{r}_{_{L}}\overset{(\ref{2.14})}{=}\overline{\text{span\{}J\in\text{
Id}_{A}\text{: }L\ll^{\triangleright}\text{ }J\}}\text{ is the unique }%
\ll^{\triangleright}\text{-radical in }[L,A]\subseteq\text{ Id}_{A}.
\]
If $\ll$ is a dual $\mathbf{H}$-relations then $\ll^{\triangleleft}\ $is a
dual $\mathbf{R}$-order in Id$_{A}$ and the ideal%
\[
\mathfrak{p}_{_{L}}\overset{(\ref{2.13})}{=}\cap\{J\in\text{ Id}_{A}\text{:
}J\ll^{\triangleleft}L\}\text{ is the unique dual }\ll^{\triangleleft
}\text{-radical in }[\{0\},L]\subseteq\text{Id}_{A}.
\]

\begin{corollary}
\label{C4.2n}The results of Theorem \emph{\ref{T4.6}} and Proposition
\emph{\ref{P9.5} }hold for $X=A,$ where $Q=$ \emph{Id}$_{A}$ is a sublattice
of \emph{Cl(}$A)$ and the word $"$spaces$"$ replaced by $"$ideals$"$ of $A.$
\end{corollary}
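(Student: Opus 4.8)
The plan is to recognize that this corollary is a pure specialization of the two ambient results, so that the only substantive task is to verify that $Q=$ Id$_{A}$ satisfies the hypotheses under which Theorem \ref{T4.6} and Proposition \ref{P9.5} were established. Both of those were proved for an \emph{arbitrary complete} sublattice $Q$ of Cl($X$) (with $X$ separable in the case of Proposition \ref{P9.5}). Hence the first and essentially only step is to record that Id$_{A}$ is such a sublattice of Cl($A$).

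First I would invoke the identification Id$_{A}=$ Lat $LR(A)$ noted just before the statement, where $LR(A)$ is the operator algebra generated by the left and right multiplications on $A$. Since for any subalgebra $\mathcal{A}$ of $B(X)$ the lattice Lat $\mathcal{A}$ is a strongly closed, complete, reflexive sublattice of Cl($X$) (recalled in Section 8.2), taking $\mathcal{A}=LR(A)$ and $X=A$ shows at once that Id$_{A}$ is a complete sublattice of Cl($A$). Concretely, its operations are inherited from Cl($A$): by (\ref{9.0}) one has $\wedge G=\cap_{J\in G}J$ and $\vee G=\overline{\sum_{J\in G}J}$, and a closed two-sided ideal is preserved under both, since intersections of ideals are ideals and the closure of a sum of ideals is an ideal. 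Thus the lattice operations of Id$_{A}$ coincide with the ambient ones of Cl($A$).

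Having this, I would apply Theorem \ref{T4.6} and Proposition \ref{P9.5} verbatim with $X=A$ and $Q=$ Id$_{A}$ and then read off the output. Every ``space'' appearing in those statements is by construction an element of $Q$; as each element of Id$_{A}$ is a closed two-sided ideal, every such space is automatically an ideal of $A$, which is exactly the re-wording the corollary demands. No reinterpretation of $\sigma_{_{\ll}}(L)$, $s^{^{_{\ll}}}(L)$, $\mathfrak{r}_{_{L}}$ or $\mathfrak{p}_{_{L}}$ (see (\ref{9.7})) is needed, because these are defined through the same $\wedge$ and $\vee$ of Cl($A$) restricted to Id$_{A}$. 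The only point requiring attention is the separability bookkeeping for the part coming from Proposition \ref{P9.5}, where the hypothesis ``$X$ separable'' is what yields the countable $\ll$-, $\ll^{\triangleright}$- and $\ll^{\triangleleft}$-series; transporting it one must carry along the assumption that $A$ is separable. There is no genuine obstacle: the whole content is the recognition, via Id$_{A}=$ Lat $LR(A)$, that Id$_{A}$ is a complete sublattice of Cl($A$), after which the abstract machinery developed for complete sublattices applies unchanged.
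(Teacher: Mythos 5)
Your proposal is correct and matches the paper's (implicit) argument exactly: the corollary rests entirely on the identification $\mathrm{Id}_{A}=\mathrm{Lat}\,LR(A)$, stated just before it, which makes $\mathrm{Id}_{A}$ a complete sublattice of $\mathrm{Cl}(A)$ so that Theorem \ref{T4.6} and Proposition \ref{P9.5} apply verbatim with every element of $Q$ automatically a closed two-sided ideal. Your explicit verification that $\wedge$ and $\vee$ from (\ref{9.0}) preserve ideals, and your note that separability of $A$ must be carried along for the Proposition \ref{P9.5} part, are exactly the right bookkeeping (compare Corollary \ref{C.0}, which indeed assumes $A$ separable).
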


By Corollary \ref{T9.2}, all $\ll_{n}$ are $\mathbf{HH}$-relations in Id$_{A}$
and $\ll_{\infty}$ is an $\mathbf{HH}$-order. Proposition \ref{P9.5} yields

\begin{corollary}
\label{C.0}Let $A$ be a separable Banach algebra.\smallskip

\emph{(i) }Let $\sigma_{_{\ll_{_{\infty}}}}(A)=\{0\}.$ Then there is a chain
$...I_{n}\subset...\subset I_{0}=A$ of ideals of finite codimension in $A$
such that $\cap_{n}I_{n}=\{0\}.$ Each $\{0\}\neq J\in$ \emph{Id}$_{A}$
contains $I\in$ \emph{Id}$_{A}$ such that $0<\dim(J/I)<\infty.\smallskip$

\emph{(ii) }Let $s^{^{_{\ll_{_{\infty}}}}}(\{0\})=A.$ Then there is a chain
$\{0\}=I_{0}\subset..\subset I_{n}..$ of ideals such that $\dim I_{n}<\infty$
and $\overline{\sum{}_{n=1}^{\infty}I_{n}}=A.$ Each $A\neq J\in\emph{\ Id}%
_{A}$ is contained in $I\in\emph{\ Id}_{A}$ such that $0<\dim(I/J)<\infty.$
\end{corollary}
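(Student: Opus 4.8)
The plan is to read both statements off the general subspace-lattice machinery of the section, applied to $X=A$, $Q=\mathrm{Id}_A$ and $\ll=\ll_\infty$, using throughout the single translation that $K\ll_\infty L$ with $K\neq L$ means precisely $0<\dim(L/K)<\infty$. The standing hypotheses are in place: by Corollary \ref{T9.2} the relation $\ll_\infty$ is an $\mathbf{HH}$-order in $\mathrm{Id}_A$ (so it is both an $\mathbf{H}$- and a dual $\mathbf{H}$-relation), and $\mathrm{Id}_A=\mathrm{Lat}\,LR(A)$ is a complete sublattice of $\mathrm{Cl}(A)$, whence Proposition \ref{P9.5} and Theorem \ref{T3.5} are available; $A$ is separable, as required in Proposition \ref{P9.5}. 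The only computational ingredient is additivity of codimension along inclusions: if $L\subseteq M\subseteq N$ with $\dim(M/L)<\infty$ and $\dim(N/M)<\infty$ then $\dim(N/L)=\dim(N/M)+\dim(M/L)<\infty$. This is exactly what converts an $\ll_\infty$-series into a chain whose members have finite codimension (resp. finite dimension).

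For (i), since $\ll_\infty$ is a dual $\mathbf{H}$-relation, Proposition \ref{P9.5}(i) produces ideals $\ldots\ll_\infty Y_n\ll_\infty\ldots\ll_\infty Y_1\ll_\infty A$ with $\cap_n Y_n=\sigma_{\ll_\infty}(A)=\{0\}$. Here $\dim(A/Y_1)<\infty$ and $\dim(Y_{n-1}/Y_n)<\infty$, so by additivity $\dim(A/Y_n)<\infty$ for every $n$; setting $I_0=A$ and $I_n=Y_n$ gives the required descending chain of finite-codimension ideals with $\cap_n I_n=\{0\}$. For the second assertion I would first record, from (\ref{3.4}), that the dual $\ll_\infty^{\triangleleft}$-radical satisfies $\mathfrak{p}_A\subseteq\sigma_{\ll_\infty}(A)=\{0\}$, hence $\mathfrak{p}_A=\{0\}$. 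Theorem \ref{T3.5}(ii)3) with $b=A$ then says that every $J\in\mathrm{Id}_A\setminus\{\{0\}\}=[\{0\},A]\setminus[\{0\},\mathfrak{p}_A]$ has a $\ll_\infty$-predecessor $I\subsetneqq J$, i.e. an ideal $I$ with $0<\dim(J/I)<\infty$.

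Part (ii) is the dual argument. As $\ll_\infty$ is an $\mathbf{H}$-relation, Proposition \ref{P9.5}(ii) gives ideals $\{0\}\ll_\infty Y_1\ll_\infty\ldots\ll_\infty Y_n\ll_\infty\ldots$ with $\overline{\sum_n Y_n}=s^{\ll_\infty}(\{0\})=A$; since $\dim Y_1<\infty$ and $\dim(Y_n/Y_{n-1})<\infty$, additivity yields $\dim Y_n<\infty$ for all $n$, and with $I_0=\{0\}$, $I_n=Y_n$ we obtain the ascending chain of finite-dimensional ideals with $\overline{\sum_n I_n}=A$. For the last clause, (\ref{3.4}) gives $A=s^{\ll_\infty}(\{0\})\subseteq\mathfrak{r}_{\{0\}}\subseteq A$, so the $\ll_\infty^{\triangleright}$-radical $\mathfrak{r}_{\{0\}}=A$, and Theorem \ref{T3.5}(i)3) with $a=\{0\}$ shows that every $J\in\mathrm{Id}_A\setminus\{A\}=[\{0\},A]\setminus[\mathfrak{r}_{\{0\}},A]$ has a $\ll_\infty$-successor $I\supsetneqq J$, i.e. an ideal $I$ with $0<\dim(I/J)<\infty$.

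There is no serious obstacle: everything reduces to invoking Proposition \ref{P9.5} and Theorem \ref{T3.5} (equivalently Corollary \ref{C9.4} with $\mathcal{A}=LR(A)$, whose $\theta_S$-preservation hypothesis holds by Corollary \ref{C9.5}) and translating $\ll_\infty$ back into codimension. The one point needing care is the identification of the radicals $\mathfrak{p}_A$ and $\mathfrak{r}_{\{0\}}$ with $\{0\}$ and $A$, which is precisely where the hypotheses $\sigma_{\ll_\infty}(A)=\{0\}$ and $s^{\ll_\infty}(\{0\})=A$ enter through the inclusions (\ref{3.4}); without them the ``each ideal'' clauses would fail. Strictness of the inclusions in the two chains causes no difficulty, since $\cap_n I_n=\{0\}\neq A$ and $\overline{\sum_n I_n}=A\neq\{0\}$ force the chains to be non-constant, and any repeated terms may simply be discarded.
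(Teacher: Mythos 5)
Your proof is correct and follows essentially the same route as the paper, which derives the corollary directly from Proposition \ref{P9.5} (via Corollary \ref{C4.2n}) together with Theorem \ref{T4.6} -- the subspace-lattice form of the Theorem \ref{T3.5} you invoke. The details you make explicit (additivity of codimension along the $\ll_{\infty}$-series, and the identifications $\mathfrak{p}_{A}=\{0\}$ and $\mathfrak{r}_{\{0\}}=A$ via (\ref{3.4})) are exactly what the paper leaves implicit, so there is nothing to correct.
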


Since ideals of a Banach algebra $A$ are precisely invariant subspaces of the
algebra $LR(A),$ it is natural to call superinvariant subspaces of $A$ by
\textit{superideals. }Thus $I\in$ Id$_{A}$ is a superideal of $A$, if (see
$(\ref{3.1}))$ it is invariant for all operators $S$ from the Lie subalgebra
Nor $LR\mathcal{(}A)$ of the algebra $B(A)$:%
\begin{equation}
\text{Nor }LR\mathcal{(}A)=\{S\in B(A)\text{: }ST-TS\in LR\mathcal{(}A)\text{
for all }T\in LR\mathcal{(}A)\}. \label{4.31}%
\end{equation}

A bounded operator $\delta\in B(A)$ is a derivation on $A$ if $\delta
(ab)=\delta(a)b+a\delta(b)$ for $a,b\in A.$

\begin{theorem}
\label{T9.6}\emph{(i) Nor} $LR\mathcal{(}A)$ contains all \textit{derivations}
of $A.\smallskip$

\emph{(ii)\ \ }If $I\in$ \emph{Id}$_{A}$ then each operator $S\in$ \emph{Nor}
$LR\mathcal{(}A)$ maps $\overline{I^{2}}$ in $I$\emph{:} $Sx\in I$ for
$x\in\overline{I^{2}}.\smallskip$

\emph{(iii) }If $\overline{I^{2}}=I$ $($in particular$,$ if $I$ has a left
$($right$)$ a. i.$)$ then $I$ is a superideal.
\end{theorem}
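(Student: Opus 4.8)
The plan is to handle the three parts in order, with part (ii) carrying the essential idea while parts (i) and (iii) reduce to a commutator computation and a short density argument, respectively.

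For (i), I would first recall that $LR(A)$ is the closure of the algebra $\mathcal{B}_{0}$ generated by the multiplication operators $L_{a}\colon x\mapsto ax$ and $R_{a}\colon x\mapsto xa$, $a\in A$. The key observation is that, for a bounded derivation $\delta\in B(A)$, the map $D\colon T\mapsto\delta T-T\delta$ is itself a bounded derivation of the algebra $B(A)$, and on generators a direct computation gives $D(L_{a})=\delta L_{a}-L_{a}\delta=L_{\delta(a)}$ and $D(R_{a})=R_{\delta(a)}$, both of which lie in $\mathcal{B}_{0}$. Since $D$ obeys the Leibniz rule, it maps every product of generators, hence all of $\mathcal{B}_{0}$, into $\mathcal{B}_{0}\subseteq LR(A)$; and since $D$ is continuous and $LR(A)$ is closed, $D(LR(A))=D(\overline{\mathcal{B}_{0}})\subseteq LR(A)$. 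By (\ref{4.31}) this says exactly that $\delta\in$ Nor $LR(A)$.

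For (ii), fix $I\in$ Id$_{A}$ and $S\in$ Nor $LR(A)$. The crucial step combines two facts. First, for $x\in I$ the operator $L_{x}$ belongs to $LR(A)$, so by the definition of Nor $LR(A)$ the operator $T:=SL_{x}-L_{x}S$ again lies in $LR(A)$. Second, since $I\in$ Id$_{A}=$ Lat $LR(A)$, the subspace $I$ is invariant under every operator in $LR(A)$, so $T(y)\in I$ whenever $y\in I$. Evaluating the commutator relation at $y\in I$ gives the identity
\[
S(xy)=x\,S(y)+T(y),
\]
in which $x\,S(y)\in I$ because $x\in I$ and $I$ is a two-sided ideal, while $T(y)\in I$ as just noted. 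Hence $S(xy)\in I$ for all $x,y\in I$, so by linearity $S(I^{2})\subseteq I$, and by boundedness of $S$ together with closedness of $I$ we conclude $S(\overline{I^{2}})\subseteq\overline{S(I^{2})}\subseteq I$, which is the claim.

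Part (iii) is then immediate: if $\overline{I^{2}}=I$, then (ii) yields $S(I)\subseteq I$ for every $S\in$ Nor $LR(A)$, which is precisely the statement that $I$ is invariant for all of Nor $LR(A)$, i.e.\ a superideal. For the parenthetical remark, if $(e_{\lambda})$ is a (bounded) left approximate identity for $I$, then for each $a\in I$ one has $e_{\lambda}a\in I^{2}$ and $e_{\lambda}a\to a$, so $a\in\overline{I^{2}}$; thus $I\subseteq\overline{I^{2}}\subseteq I$ and the hypothesis of (iii) is satisfied, the right a.i.\ case being symmetric. The only genuinely delicate point in the whole argument is the identity $S(xy)=x\,S(y)+T(y)$ in (ii): the difficulty is conceptual rather than computational, namely recognizing that the normalizer condition on $S$ and the $LR(A)$-invariance of $I$ must be used in tandem, each forcing one of the two summands into $I$.
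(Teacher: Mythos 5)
Your proof is correct and follows essentially the same route as the paper: part (i) rests on the identical computation $[\delta,L_{a}]=L_{\delta(a)}$, $[\delta,R_{a}]=R_{\delta(a)}$ extended by the Leibniz rule, and part (ii) is the paper's argument with $L_{x}$ in place of $R_{b}$ --- you write $S(xy)=xS(y)+[S,L_{x}]y$ and use the normalizer condition together with the $LR(A)$-invariance of $I$ exactly as the paper does with $S(ab)=(Sa)b+[S,R_{b}]a$. Your explicit treatment of the closure of the generated algebra in (i) and of the approximate-identity remark in (iii) just fills in details the paper leaves implicit, and both are correct.
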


\begin{proof}
(i) Let $L_{a},$ $R_{a}$ be the operators of left and right multiplication by
$a\in A$ on $A$. Then%
\[
\lbrack\delta,L_{a}]x=\delta L_{a}x-L_{a}\delta(x)=\delta(ax)-a\delta
(x)=\delta(a)x=L_{\delta(a)}x\text{ for all }x\in A.
\]
Hence $[\delta,L_{a}]=L_{\delta(a)}\in LR\mathcal{(}A).$ Similarly,
$[\delta,R_{a}]=\delta R_{a}-R_{a}\delta=R_{\delta(a)}\in LR\mathcal{(}A).$
Since the algebra $LR\mathcal{(}A)$ is generated by the sums of products of
the operators $L_{a}$ and $R_{b}$ and since $[\delta,TS]=[\delta
,T]S+T[\delta,S]$ for all $T,S\in B(A),$ we have $\delta\in$ Nor
$LR\mathcal{(}A).$

(ii) Let $S\in$ Nor $LR\mathcal{(}A)$ and $a,b\in I.$ Then
\[
S(ab)=SR_{b}a=R_{b}Sa+[S,R_{b}]a=(Sa)b+[S,R_{b}]a.
\]
As $Sa\in A$ and $b\in I,$ we have $(Sa)b\in I.$ As $[S,R_{b}]\in
LR\mathcal{(}A)$ by $(\ref{4.31}),$ we have $[S,R_{b}]a\in I.$ Hence $S(ab)\in
I.$ Since $S$ is linear and bounded, it maps $\overline{I^{2}}$ into $I.$ Part
(iii) follows from (ii). \bigskip
\end{proof}

To see an example of a Banach algebra that has no non-trivial superideal,
consider a Banach space $A$ with trivial multiplication: $ab=0$ for $a,b\in
A.$ Then $A$ is a Banach algebra, $LR(A)=\{0\}$ and Nor $LR\mathcal{(}%
A)=B(A).$ As $B(A)\ $has no non-trivial invariant subspaces, $A$ has no
superideals. On the other hand, all closed subspaces of $A$ are ideals of $A.$

Denote by $\Sigma_{A}$ the set of all closed subalgebras of $A$ of finite codimension.

\begin{proposition}
\label{C-Laf}For each Banach algebra $A,$ $\sigma_{_{\ll_{_{\infty}}}}%
(A)=\cap_{_{S\in\Sigma_{A}}}S.$
\end{proposition}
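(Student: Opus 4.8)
I need to show $\sigma_{_{\ll_{_{\infty}}}}(A)=\cap_{_{S\in\Sigma_{A}}}S$, where by (9.7) the left-hand side is $\cap\{J\in\text{Id}_A : J\ll_\infty A\}$, i.e., the intersection of all closed ideals of finite codimension in $A$ (since $J\ll_\infty A$ means $\dim(A/J)<\infty$ by (9.2)), while the right-hand side is the intersection of all closed subalgebras of finite codimension. Every finite-codimensional ideal is a finite-codimensional subalgebra, so the inclusion $\cap_{S\in\Sigma_A}S \subseteq \sigma_{_{\ll_{_{\infty}}}}(A)$ is immediate. The content is the reverse inclusion: every finite-codimensional closed subalgebra $S$ must contain the intersection of all finite-codimensional ideals.

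**The plan.** Let me write $\mathfrak{i} = \sigma_{_{\ll_{_{\infty}}}}(A) = \cap\{I\in\text{Id}_A : \dim(A/I)<\infty\}$. I would fix an arbitrary $S\in\Sigma_A$ and aim to produce a closed ideal $I$ of finite codimension with $I\subseteq S$; then $\mathfrak{i}\subseteq I\subseteq S$, and intersecting over all $S$ gives $\mathfrak{i}\subseteq\cap_{S\in\Sigma_A}S$. The natural candidate for such an ideal is the \emph{core} of $S$, namely the largest two-sided ideal of $A$ contained in $S$. The key point to establish is that this core has finite codimension whenever $S$ does. First I would pass to finite-dimensional data: since $\dim(A/S)=n<\infty$, consider the left regular action of $A$ on the finite-dimensional quotient space $A/S$ (or on $A/S$ viewed appropriately), and use finite-dimensionality to extract a finite-codimensional ideal sitting inside $S$.

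**The core construction.** Concretely, I would argue as follows. Set $V=A/S$, a finite-dimensional vector space, and let $\pi:A\to V$ be the quotient map. For each $a\in A$, left multiplication $L_a$ need not descend to $V$ since $S$ is only a subalgebra, but I can consider the two-sided annihilator structure directly: define $I = \{a\in A : Aa A \subseteq S,\ aA\subseteq S,\ Aa\subseteq S,\ a\in S\}$, or more cleanly take $I$ to be the largest ideal contained in $S$, which exists as the sum (closure of the sum) of all ideals contained in $S$. The technical heart is showing $\dim(A/I)<\infty$. For this I would represent $A$ on the finite-dimensional space $B(A/S)$-type data: the map $a\mapsto(x\mapsto \pi(ax))$ and $a\mapsto(x\mapsto\pi(xa))$ from $A$ into $\mathrm{Hom}(A,A/S)$ restricted suitably gives a finite-dimensional target once combined with $\pi$ itself, and the common kernel of these finitely many finite-rank-valued maps is a closed ideal of finite codimension contained in $S$. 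I would verify that this common kernel is indeed a two-sided ideal (using that $S$ is a subalgebra so products land correctly) and that it is closed (as a finite intersection of kernels of bounded maps into finite-dimensional spaces) and of finite codimension (since it is the kernel of a bounded linear map into a finite-dimensional space).

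**The main obstacle.** The delicate step is the finite-codimensionality of the core together with its being genuinely two-sided and closed; getting the bookkeeping of the multiplication maps right so that the common kernel $I$ satisfies $AI\subseteq I$ and $IA\subseteq I$ (not merely $I\subseteq S$) is where care is needed, and one must ensure the target space into which $A$ maps is finite-dimensional so that $\dim(A/I)<\infty$. I expect the cleanest route is to let $A$ act on $V=A/S$ by the \emph{two-sided} multiplier maps and take $I$ to be the kernel of the induced homomorphism of $A$ into the finite-dimensional algebra $\mathrm{End}(V)\oplus\mathrm{End}(V)$ of left/right actions, after checking the action is well-defined modulo the core; the finite-dimensionality of $\mathrm{End}(V)$ then forces finite codimension of $I$. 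Once $I\in\text{Id}_A$ with $\dim(A/I)<\infty$ and $I\subseteq S$ are in hand, the conclusion $\sigma_{_{\ll_{_{\infty}}}}(A)=\cap_{S\in\Sigma_A}S$ follows by the two inclusions above.
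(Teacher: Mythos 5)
Your two-inclusion frame is exactly the paper's: the containment $\cap_{S\in\Sigma_{A}}S\subseteq\sigma_{\ll_{\infty}}(A)$ is immediate because every closed ideal of finite codimension belongs to $\Sigma_{A}$, and everything reduces to showing that each $S\in\Sigma_{A}$ contains a closed ideal of finite codimension. The paper settles this key step by citing Laffey \cite{L}; you instead attempt to prove it, and your attempt has a genuine gap at precisely this point. The device you ultimately propose --- a homomorphism of $A$ into the finite-dimensional algebra $\mathrm{End}(V)\oplus\mathrm{End}(V)$ with $V=A/S$, whose kernel would be the desired ideal --- does not exist: since $S$ is only a subalgebra, $aS\nsubseteq S$ in general, so left and right multiplication do not descend to $V$ (as you yourself note at the outset), and your caveat ``after checking the action is well-defined modulo the core'' is circular, because finite codimension of the core is exactly what is to be proved. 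Your fallback, that the maps $a\mapsto(x\mapsto\pi(ax))$ ``restricted suitably'' have finite-dimensional targets, is never made precise: unrestricted, they land in $\mathrm{Hom}(A,A/S)$, which is infinite-dimensional, so the kernel $\{a\in A:aA\subseteq S\}$ is not visibly of finite codimension, and the claim that your common kernel is a finite intersection of kernels of bounded maps into finite-dimensional spaces is unsubstantiated.

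The missing idea is a two-step, one-sided reduction that makes the targets finite-dimensional. Put $R=\{a\in S:aA\subseteq S\}$. For $a\in S$ one has $aS\subseteq S$ automatically, so $aA\subseteq S$ if and only if $aU\subseteq S$, where $U$ is a finite-dimensional complement of $S$ in $A$; hence $R$ is the kernel of the bounded map $S\to\mathrm{Hom}(U,A/S)$, $a\mapsto(u\mapsto au+S)$, whose target is finite-dimensional, so $R$ is closed of finite codimension, and $R$ is a right ideal of $A$ since $(ax)y=a(xy)\in S$ for $a\in R$ and $x,y\in A$. Repeating on the left inside $R$: because $RA\subseteq R$, the condition $Aa\subseteq R$ for $a\in R$ is testable on a finite-dimensional complement $W$ of $R$, so $I=\{a\in R:Aa\subseteq R\}$ is the kernel of a bounded map $R\to\mathrm{Hom}(W,A/R)$ with finite-dimensional target; one checks directly that $I$ is a two-sided ideal of $A$ (for $a\in I$, $x\in A$: $A(xa)=(Ax)a\subseteq Aa\subseteq R$ and $A(ax)=(Aa)x\subseteq Rx\subseteq R$). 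Thus $I$ is a closed ideal of finite codimension with $I\subseteq R\subseteq S$, which is all the intersection defining $\sigma_{\ll_{\infty}}(A)$ requires. With this lemma supplied (or with the citation of \cite{L}, as in the paper, plus the remark that one may pass to the closure of the algebraic ideal inside the closed set $S$), your argument closes; without it, the crucial step fails.
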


\begin{proof}
As $\{J\in$ Id$_{A}$: $J\ll_{_{\infty}}A\}\subseteq\Sigma_{A},$ we have
$\cap_{_{S\in\Sigma_{A}}}S\subseteq\sigma_{_{\ll_{_{\infty}}}}(A)$ by
(\ref{9.7}). In \cite{L} it was proved that each $S\in\Sigma_{A}$ contains an
ideal of finite codimension. Thus $\sigma_{_{\ll_{_{\infty}}}}(A)\subseteq
\cap_{_{S\in\Sigma_{A}}}S.$ So $\sigma_{_{\ll_{_{\infty}}}}(A)=\cap
_{_{S\in\Sigma_{A}}}S.$
\end{proof}

\subsection{$\mathbf{H}$-relations in the lattices of all ideals of
C*-algebras.}

Let $A$ be a C*-algebra. Since ideals of $A$ have bounded a.i., $I+J\in$
Id$_{A}$ for all $I,J\in$ Id$_{A},$ by Proposition \ref{P3.3} (see \cite{D}).
Thus Id$_{A}$ is a sublattice (not a complete sublattice) of Ln($A).$ So
Id$_{A}$ is a modular lattice and $<_{\mathfrak{g}}$ is an $\mathbf{HH}%
$-relation in Id$_{A}$ by Proposition \ref{P9.1}. It also follows from
Corollary \ref{C3.4} that the relations $\prec,$ $\sqsubset$ and $\subseteq$
coincide in Id$_{A}$ and $<_{\mathfrak{g}}$ $=$ $\prec_{\mathfrak{g}}$ $=$
$\sqsubset_{\mathfrak{g}}.$

The complete lattice Id$_{A}$ has many $\mathbf{H}$-relations (some of them
stronger than $<_{\mathfrak{g}})$ that depend on the nature of the quotient
ideals $J/I.$ A large variety of such relations was investigated in
\cite{KST4}. In this section we briefly consider some of them.

Let $\mathfrak{A}$ be the set of all C*-algebras. We say that a subclass $P$
of $\mathfrak{A}$ is a \textit{property}$\mathfrak{,}$ if%
\begin{equation}
\{0\}\in P\text{ and }A\in P\text{ implies }B\in P\text{ for all }B\approx
A\text{.} \label{1.7}%
\end{equation}
If $A\in P,$ we say that $A$ has property $P,$ or $A$ is a $P$%
-\textit{algebra}. For example, the classes $CCR$ of all CCR-algebras and
$GCR$ of all GCR-algebras are properties.

A standard way to define a relation in Id$_{A}$ is to consider a property $P$
and to write%
\begin{equation}
I\ll_{_{P}}J\text{ if }I\subseteq J\text{ in Id}_{A}\text{ and }J/I\text{ is a
}P\text{-algebra},\text{ i.e., }J/I\in P. \label{4.1}%
\end{equation}
A property $P$ on $\mathfrak{A}$ is \textbf{lower stable} if $A\in P$ implies
that all $I\in$ Id$_{A}$ belong to $P;$\emph{ }$P$ is \textbf{upper stable }if
$A\in P$ implies that the quotients $A/I\in P$ for all $I\in$ Id$_{A}.$

\begin{theorem}
\label{T9.4}\emph{(i) }A property\emph{\ }$P$ is upper stable if and only if
$\ll_{_{P}}$ is an $\mathbf{H}$-relation$.$

\qquad Then $\ll_{_{P}}^{\triangleright}$ is an $\mathbf{R}$-order and
$\emph{Id}_{A}$ has the $\ll_{_{P}}^{\triangleright}$-radical $\mathfrak{r}%
_{_{P}}\emph{:}$ $\{0\}\ll_{_{P}}^{\triangleright}\mathfrak{r}_{_{P}}$
$\overleftarrow{\ll_{_{P}}^{\triangleright}}$ $A.\smallskip$

\emph{(ii)\ \ }A property $P$ is lower stable if and only if\emph{ }$\ll
_{_{P}}$ is a dual $\mathbf{H}$-relation\emph{. }

\qquad Then $\ll_{_{P}}^{\triangleleft}$ is a dual $\mathbf{R}$-order and
$\emph{Id}_{A}$ has the dual $\ll_{_{P}}^{\triangleleft}$-radical
$\mathfrak{p}_{_{P}}\emph{:}$ $\{0\}$ $\overrightarrow{\ll_{_{P}%
}^{\triangleleft}}$ $\mathfrak{p}_{_{P}}\ll_{_{P}}^{\triangleleft}A.$
\end{theorem}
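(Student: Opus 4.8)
The plan is to establish the characterization in Theorem \ref{T9.4} by reducing the stability properties of $P$ directly to the defining conditions of $\mathbf{H}$-relations (Lemma \ref{le1}(i)) and dual $\mathbf{H}$-relations (Lemma \ref{le1}(ii)). I treat the two parts symmetrically; by the Duality Principle it suffices to give full details for one, say part (i), and invoke duality for part (ii). The key observation throughout is that for $I\subseteq J$ in Id$_A$, quotients of the form $J/I$ relate to joins and meets in Id$_A$ via the standard C*-algebraic isomorphism theorems. Since ideals of a C*-algebra have bounded approximate identities, Proposition \ref{P3.3} guarantees $I+J\in$ Id$_A$ for all $I,J$, so Id$_A$ is a genuine sublattice of Ln($A$) with $I\vee J=I+J$ and $I\wedge J=I\cap J$; this closedness is what makes the quotient arguments go through cleanly.

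First I would prove the forward direction of (i): assuming $P$ is upper stable, I verify condition 3) of Lemma \ref{le1}(i), namely that $I\ll_{_P}J$ implies $I\vee K\ll_{_P}J\vee K$ for every $K\in$ Id$_A$. Given $I\ll_{_P}J$ we have $I\subseteq J$ and $J/I\in P$. Then $I+K\subseteq J+K$, and the relevant quotient $(J+K)/(I+K)$ is, by the second isomorphism theorem for C*-algebras, a quotient of $J/(J\cap(I+K))$, hence ultimately a quotient of $J/I$. Upper stability of $P$ then forces $(J+K)/(I+K)\in P$, giving $I+K\ll_{_P}J+K$. For the converse, I would assume $\ll_{_P}$ is an $\mathbf{H}$-relation and show $P$ is upper stable: take $A\in P$ and $I\in$ Id$_A$; I want $A/I\in P$. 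Here I use that $\{0\}\ll_{_P}A$ (since $A/\{0\}=A\in P$), and apply the $\mathbf{H}$-relation property with the element $I$ to pass from $\{0\}\vee I$ to $A\vee I$, extracting $I\ll_{_P}A$, i.e. $A/I\in P$. Once $P$ is shown equivalent to $\ll_{_P}$ being an $\mathbf{H}$-relation, the existence and uniqueness of the $\ll_{_P}^{\triangleright}$-radical $\mathfrak{r}_{_P}$ follows immediately from Theorem \ref{inf}(ii) (or Theorem \ref{T3.5}(i)): an $\mathbf{H}$-relation $\ll$ always yields an $\mathbf{R}$-order $\ll^{\triangleright}$ with a unique radical in each interval, and I specialize to $[\{0\},A]$.

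The main obstacle I anticipate is the careful bookkeeping with the isomorphism theorems in the C*-setting, specifically confirming that $(J+K)/(I+K)$ really is a $P$-algebra when $J/I$ is. The natural surjection $J\to (J+K)/(I+K)$ has kernel $J\cap(I+K)\supseteq I$, so $(J+K)/(I+K)$ is a quotient of $J/I$; but verifying that this map is well-defined, surjective, and a $*$-homomorphism onto a genuine C*-quotient (so that upper stability applies) requires invoking the closedness of $I+K$ and $J+K$ as ideals, which is exactly where Proposition \ref{P3.3} is essential. The other delicate point is the direction of the quotient: upper stability concerns quotients $A/I$, so I must ensure the isomorphism-theorem quotient is of the correct ``upper'' type rather than an ideal (lower) type; keeping the roles of $\subseteq$ and the numerator/denominator straight is the genuinely error-prone part, and the rest is a direct appeal to the machinery already developed in Sections 4 and 5.
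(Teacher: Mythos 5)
Your proposal is correct and follows essentially the same route as the paper: the paper verifies condition 2) of Lemma \ref{le1}(i) (from $I\ll_{_P}J$ and $I\subseteq K$ deduce $K\ll_{_P}J\vee K$, using $(J+K)/K\approx J/(J\cap K)$), while you verify the equivalent condition 3) via $(J+K)/(I+K)\approx J/(J\cap(I+K))$, a quotient of $J/I$; your converse (from $\{0\}\ll_{_P}A$ extract $I\ll_{_P}A$) and the appeal to Theorem \ref{inf} for the radical are identical to the paper's. One small caveat: part (ii) is not a formal lattice dual of (i) — lower stability concerns ideals rather than quotients, so the ``dual'' argument must be written out with the mirrored isomorphism $(I+K)/I\approx K/(I\cap K)$, which is what the paper means by ``similarly'' rather than a literal invocation of the Duality Principle.
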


\begin{proof}
(i) Let $I\ll_{_{P}}J$ in Id$_{A},$ $I\neq J,$ and $I\subseteq K\in$ Id$_{A}.$
Then $(K\cap J)/I$ is an ideal of $J/I.$ As $J/I\in P$ and $P$ is upper
stable, the quotient $(J/I)/((K\cap J)/I)\in P.$ As $(J/I)/((K\cap
J)/I)\approx J/(K\cap J),$ we have $J/(K\cap J)\in P.$ By Corollary 1.8.4
\cite{D}, $J/(K\cap J)\approx(J+K)/K,$ so that $(J+K)/K\in P.$ Thus
$K\ll_{_{P}}(J+K)=J\vee K.$ So, by Lemma \ref{le1}, $\ll_{_{P}}$ is an
$\mathbf{H}$-relation.

Conversely, if $\ll_{_{P}}$ is an $\mathbf{H}$-relation and $A\in P$ then
$\{0\}\ll_{_{P}}A$ by (\ref{4.1}). Hence, for $I\in$ Id$_{A},$ $I\ll_{_{P}%
}(A+I)=A$ by Lemma \ref{le1}. Thus $A/I\in P$ by (\ref{4.1}). So $P$ is upper stable.

The rest of (i) follows from Theorem \ref{inf}. Part (ii) can be proved
similarly.\bigskip
\end{proof}

Now we can treat Id$_{A}$ as a lattice with two relations: $\subseteq$ and
$\ll_{_{P}}.$ With this approach we can construct a large variety of
$\mathbf{H}$- and dual $\mathbf{H}$-relations and, using methods of the
lattice theory, obtain many results in the theory of C*-algebras. Thus
Theorems \ref{T4.6} and \ref{T9.4} yield

\begin{theorem}
\label{T9.5}\emph{I. }Let $P$ be an upper stable property\emph{, }so that
$\ll_{_{P}}$ is an $\mathbf{H}$-relation$.$ Let $A\in\mathfrak{A}.\smallskip$

\emph{(i) \ \ }The radical $\mathfrak{r}_{_{P}}$ is the \textbf{largest} ideal
in $A$ such that there is an ascending $\ll_{_{P}}$-series of ideals from
$\{0\}$ to $\mathfrak{r}_{_{P}}$.\smallskip

\emph{(ii) \ }If $\mathfrak{r}_{_{P}}\nsubseteq I\neq A,$ then there is $J\in$
Id$_{A}$ such that $J/I$ is a $P$-algebra.

\emph{(iii)}$\ \mathfrak{r}_{_{P}}$ is the \textbf{smallest} out of all ideals
$J$ such that $A/J$ has no $P$-ideals.\smallskip

\emph{II. }Let $P$ be a lower stable property\emph{, }so that $\ll_{_{P}}$ is
a dual $\mathbf{H}$-relation$.$ Let $A\in\mathfrak{A}.\smallskip$

\emph{(i) \ \ }The dual radical $\mathfrak{p}_{_{P}}$ is the \textbf{smallest}
ideal in $A$ such that there is an descending $\ll_{_{P}}$-series of ideals
from $A$ to $\mathfrak{p}_{_{P}}$.\smallskip

\emph{(ii) \ }If $\{0\}\neq I\nsubseteq\mathfrak{p}_{_{P}}$ then there is
$J\in$ Id$_{A}$ such that $I/J$ is a $P$-algebra.\smallskip

\emph{(iii)}$\ \mathfrak{p}_{_{P}}$ is the \textbf{largest} ideal such that
all its quotients are not $P$-algebras.
\end{theorem}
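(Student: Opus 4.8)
The statement to prove is Theorem \ref{T9.5}, which characterizes the $\ll_{_{P}}^{\triangleright}$-radical $\mathfrak{r}_{_{P}}$ (for an upper stable property $P$) and the dual $\ll_{_{P}}^{\triangleleft}$-radical $\mathfrak{p}_{_{P}}$ (for a lower stable property $P$) in Id$_{A}$. By the Duality Principle, it suffices to prove Part I and deduce Part II by duality. Let me sketch Part I.

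The plan is to invoke Theorem \ref{T9.4}(i), which already tells us that since $P$ is upper stable, $\ll_{_{P}}$ is an $\mathbf{H}$-relation, so $\ll_{_{P}}^{\triangleright}$ is an $\mathbf{R}$-order and $\mathfrak{r}_{_{P}}$ is the unique $\ll_{_{P}}^{\triangleright}$-radical in $[\{0\},A]=$ Id$_{A}$. Then the three assertions are essentially translations of Theorem \ref{T4.6}(i) into the language of ideals and $P$-algebras. For assertion (i), I would note that by the definition of $\ll_{_{P}}^{\triangleright}$ (see (\ref{2.18})), the relation $\{0\}\ll_{_{P}}^{\triangleright}\mathfrak{r}_{_{P}}$ means there is a complete upper $\ll_{_{P}}$-gap chain from $\{0\}$ to $\mathfrak{r}_{_{P}}$, i.e.\ an ascending $\ll_{_{P}}$-series of ideals from $\{0\}$ to $\mathfrak{r}_{_{P}}$ (by Proposition \ref{p10}(i)). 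Conversely, by Theorem \ref{T4.6}(i)(3), if there is an ascending $\ll_{_{P}}$-series of ideals from $\{0\}$ to some $K$, then $K\subseteq\mathfrak{r}_{_{\{0\}}}=\mathfrak{r}_{_{P}}$; so $\mathfrak{r}_{_{P}}$ is the largest such ideal.

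For assertion (ii), I would apply Theorem \ref{T4.6}(i)(2) with $L=\{0\}$: if $\mathfrak{r}_{_{P}}\nsubseteq I\neq A$, then $I$ has a $\ll_{_{P}}$-successor $J\in$ Id$_{A}$, meaning $I\ll_{_{P}}J$ with $I\neq J$. By the definition (\ref{4.1}) of $\ll_{_{P}}$, this says precisely that $J/I$ is a $P$-algebra. Assertion (iii) is the characterization of $\mathfrak{r}_{_{P}}$ as the smallest ideal $J$ with $A/J$ having no $P$-ideals. Here I would argue as follows: since $\mathfrak{r}_{_{P}}$ $\overleftarrow{\ll_{_{P}}^{\triangleright}}$ $A$ and more relevantly using $\overleftarrow{\ll_{_{P}}}$, the condition $\mathfrak{r}_{_{P}}$ $\overleftarrow{\ll_{_{P}}}$ $A$ (which holds since $\mathfrak{r}_{_{P}}$ is the $\ll_{_{P}}^{\triangleright}$-radical and $\overleftarrow{\ll_{_{P}}}=\overleftarrow{\ll_{_{P}}^{\triangleright}}$ by (\ref{2.19})) unwinds via (\ref{A1}) to say that no ideal strictly above $\mathfrak{r}_{_{P}}$ is $\ll_{_{P}}$-related to it; translating through the quotient correspondence between ideals of $A/\mathfrak{r}_{_{P}}$ and ideals of $A$ containing $\mathfrak{r}_{_{P}}$, and using (\ref{4.1}), this means $A/\mathfrak{r}_{_{P}}$ has no nonzero $P$-ideals. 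Minimality of $\mathfrak{r}_{_{P}}$ then follows from assertion (i) together with upper stability.

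The main obstacle I anticipate is assertion (iii): carefully setting up the bijection between Id$_{A/\mathfrak{r}_{_{P}}}$ and $\{I\in$ Id$_{A}:\mathfrak{r}_{_{P}}\subseteq I\}$ and verifying that "$A/\mathfrak{r}_{_{P}}$ has a nonzero $P$-ideal" corresponds exactly to "$\mathfrak{r}_{_{P}}\ll_{_{P}}J$ for some $J\supsetneq\mathfrak{r}_{_{P}}$", using the isomorphism $(J/\mathfrak{r}_{_{P}})\cong$ the relevant quotient and the standard C*-algebra isomorphism theorems (as already invoked in the proof of Theorem \ref{T9.4}). One must also confirm the equivalence of the two descriptions of minimality—that $\mathfrak{r}_{_{P}}$ being the largest ideal reachable by an ascending $\ll_{_{P}}$-series coincides with its being the smallest $J$ with $A/J$ free of $P$-ideals—which rests on the uniqueness of the radical and the defining complement relation $\overleftarrow{\ll_{_{P}}}$. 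Everything else is a routine dictionary translation from the abstract lattice results (Theorems \ref{T4.6}, \ref{T9.4}) to the ideal setting, and Part II follows formally by the Duality Principle.
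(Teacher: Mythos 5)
Your proposal is correct and follows essentially the same route as the paper: all three assertions of Part I are read off from Theorem \ref{T4.6}(i) (applied with $Q=$ Id$_{A}$ and $L=\{0\}$, so $\mathfrak{r}_{_{\{0\}}}=\mathfrak{r}_{_{P}}$), combined with Theorem \ref{T9.4} and the definition (\ref{4.1}) of $\ll_{_{P}}$, with Part II handled by duality just as in the paper. The one small deviation is your minimality argument in (iii) via ``assertion (i) together with upper stability,'' which would require pushing an ascending $\ll_{_{P}}$-series through the join with $J$ (including the limit-ordinal steps); the paper gets minimality immediately from Theorem \ref{T4.6}(i)2) --- if $A/J$ has no $P$-ideals but $\mathfrak{r}_{_{P}}\nsubseteq J$, then $J$ has a $\ll_{_{P}}$-successor, i.e.\ $A/J$ has a $P$-ideal, a contradiction --- which is precisely your assertion (ii), so your own setup already contains the cleaner step.
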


\begin{proof}
I. Parts (i) and (ii) follow from Theorem \ref{T4.6} (ii).

(iii). By Theorem \ref{T4.6} (ii), $\mathfrak{r}_{_{P}}$ has no $\ll_{_{P}}%
$-successor. So $A/\mathfrak{r}_{_{P}}$ has no $P$-ideals.

Let $A/J$ have no $P$-ideals for $J\in$ Id$_{A}.$ If $\mathfrak{r}_{_{P}%
}\nsubseteq J$ then, by Theorem \ref{T4.6} (ii) 2), $A/J$ has $P$-ideals. So
$\mathfrak{r}_{_{P}}\subseteq J$ which proves (iii). The proof of part II is
similar.\bigskip
\end{proof}

Each automorphism $\phi$ of a C*-algebra $A$ generates a lattice automorphism
$\widetilde{\phi}$ of Id$_{A}$. For a property $P\mathfrak{,}$ let $I\ll
_{_{P}}J$ in Id$_{A},$ i.e., $J/I\in P.$ Then $\widetilde{\phi}(I)\subseteq
\widetilde{\phi}(J)$ and the map $\widehat{\phi}$: $J/I\rightarrow
\widetilde{\phi}(J)/\widetilde{\phi}(I)$ defined by $\widehat{\phi}%
(x+I)=\phi(x)+\widetilde{\phi}(I)$ for $x\in J,$ is an isomorphism, i.e.,
$J/I\approx\widetilde{\phi}(J)/\widetilde{\phi}(I),$ so that $\widetilde{\phi
}(J)/\widetilde{\phi}(I)\in P,$ i.e., $\widetilde{\phi}(I)\ll_{_{P}%
}\widetilde{\phi}(J).$ As $\phi^{-1}$ is also an automorphism$,$
$\widetilde{\phi}(I)\ll_{_{P}}\widetilde{\phi}(J)$ implies $I\ll_{_{P}}J.$
Thus $\widetilde{\phi}$ preserves $\ll_{_{P}}.$ So Proposition \ref{T3.11} yields

\begin{corollary}
\label{C9.8}Let $\ll_{_{P}}$ be an $\mathbf{H}$-relation with $\ll_{_{P}%
}^{\triangleright}$-radical $\mathfrak{r}_{_{P}}$ and $\ll_{_{S}}$ be a dual
$\mathbf{H}$-relation with dual $\ll_{_{S}}^{\triangleleft}$-radical
$\mathfrak{p}_{_{S}}.$ Then $\widetilde{\phi}(\mathfrak{r}_{_{P}%
})=\mathfrak{r}_{_{P}}$ and $\widetilde{\phi}(\mathfrak{p}_{_{S}%
})=\mathfrak{p}_{_{S}}$ for all automorphisms $\phi$ of $A.$
\end{corollary}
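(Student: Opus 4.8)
The statement to prove is Corollary \ref{C9.8}: for an upper stable property $P$ and a lower stable property $S$, every automorphism $\phi$ of the C*-algebra $A$ satisfies $\widetilde{\phi}(\mathfrak{r}_{_{P}})=\mathfrak{r}_{_{P}}$ and $\widetilde{\phi}(\mathfrak{p}_{_{S}})=\mathfrak{p}_{_{S}}$, where $\widetilde{\phi}$ is the induced lattice automorphism of $\mathrm{Id}_A$.

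The plan is to reduce everything to Proposition \ref{T3.11}(ii) part 2), which is tailor-made for this conclusion: if an automorphism $\theta$ of a complete lattice $Q$ preserves an $\mathbf{H}$-relation $\ll$ (in the sense of (\ref{5.0})) and fixes the base point $a$, then it fixes the $\ll^{\triangleright}$-radical $\mathfrak{r}_{_{\ll^{\triangleright}}}(a)$; dually for a dual $\mathbf{H}$-relation and its $\ll^{\triangleleft}$-radical. So the whole argument splits into three verifications. First I would observe that $\widetilde{\phi}$ is indeed an automorphism of the complete lattice $Q=\mathrm{Id}_A$: since $\phi$ is an algebra automorphism it carries closed ideals to closed ideals bijectively and respects inclusion, so it is a lattice isomorphism of $\mathrm{Id}_A$ onto itself. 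Second, I would check that $\widetilde{\phi}$ fixes the relevant base point. The radicals in question are taken over the full interval: $\mathfrak{r}_{_{P}}$ is the $\ll_{_{P}}^{\triangleright}$-radical in $[\{0\},A]$ and $\mathfrak{p}_{_{S}}$ is the dual $\ll_{_{S}}^{\triangleleft}$-radical in $[\{0\},A]$. For the radical case the base point is $a=\{0\}=\mathbf{0}$, and for the dual radical case the base point is $a=A=\mathbf{1}$; an automorphism necessarily fixes $\mathbf{0}$ and $\mathbf{1}$ of the lattice, so $\widetilde{\phi}(a)=a$ holds automatically in each case.

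The third and substantive step is to verify the hypothesis of Proposition \ref{T3.11}, namely that $\widetilde{\phi}$ \emph{preserves} $\ll_{_{P}}$ and $\ll_{_{S}}$ in the sense $\widetilde{\phi}(I)\ll_{_{P}}\widetilde{\phi}(J)\Leftrightarrow I\ll_{_{P}}J$. This is exactly the computation laid out in the paragraph immediately preceding the corollary, so I would invoke it: if $I\ll_{_{P}}J$, then $I\subseteq J$ and the quotient $J/I\in P$; the map $\widehat{\phi}\colon x+I\mapsto \phi(x)+\widetilde{\phi}(I)$ is a well-defined C*-isomorphism $J/I\to\widetilde{\phi}(J)/\widetilde{\phi}(I)$, whence $\widetilde{\phi}(J)/\widetilde{\phi}(I)\approx J/I\in P$ by the defining property (\ref{1.7}), giving $\widetilde{\phi}(I)\ll_{_{P}}\widetilde{\phi}(J)$. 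Applying the same argument to the automorphism $\phi^{-1}$ yields the reverse implication, so $\widetilde{\phi}$ preserves $\ll_{_{P}}$; the identical computation works verbatim for $\ll_{_{S}}$.

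With these three points in hand the conclusion is immediate. Since $\widetilde{\phi}$ preserves the $\mathbf{H}$-relation $\ll_{_{P}}$ and fixes $\mathbf{0}$, Proposition \ref{T3.11}(ii)2) gives $\widetilde{\phi}(\mathfrak{r}_{_{\ll_{_{P}}^{\triangleright}}}(\{0\}))=\mathfrak{r}_{_{\ll_{_{P}}^{\triangleright}}}(\{0\})$, i.e. $\widetilde{\phi}(\mathfrak{r}_{_{P}})=\mathfrak{r}_{_{P}}$; dually, since $\widetilde{\phi}$ preserves the dual $\mathbf{H}$-relation $\ll_{_{S}}$ and fixes $\mathbf{1}$, the dual half of Proposition \ref{T3.11}(ii)2) gives $\widetilde{\phi}(\mathfrak{p}_{_{S}})=\mathfrak{p}_{_{S}}$. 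The only place requiring genuine attention is the well-definedness and isomorphism property of $\widehat{\phi}$, but even this is routine C*-algebra bookkeeping rather than a real obstacle; the conceptual work has already been absorbed into the lattice-theoretic machinery of Proposition \ref{T3.11}, so the corollary is essentially a direct application of it.
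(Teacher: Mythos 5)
Your proof is correct and follows the paper's own route: the paper likewise establishes in the paragraph preceding the corollary that $\widetilde{\phi}$ is a lattice automorphism of $\mathrm{Id}_A$ preserving $\ll_{_{P}}$ (via the quotient isomorphism $\widehat{\phi}\colon J/I\to\widetilde{\phi}(J)/\widetilde{\phi}(I)$ and property (\ref{1.7}), with $\phi^{-1}$ giving the converse implication), and then invokes Proposition \ref{T3.11} exactly as you do. Your explicit remark that the base points $\mathbf{0}$ and $\mathbf{1}$ are automatically fixed by any lattice automorphism is a detail the paper leaves implicit, but the argument is the same.
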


By Propositions 4.2.4 and 4.3.5 \cite{D}, the properties $CCR$ and $GCR$ of
all CCR- and GCR-algebras are lower and upper stable. For the property $CCR$
this gives a well-known result that the radical $\mathfrak{r}_{_{CCR}}$ is the
largest GCR-ideal in $A$ and $A/\mathfrak{r}_{_{CCR}}$ has no CCR-ideals.
Moreover, if $\mathfrak{r}_{_{CCR}}\nsubseteq I\neq A$ then there is $J\in$
Id$_{A}$ such that $J/I$ is a CCR-algebra. However, the property of all
NGCR-algebras is lower, but not upper stable (see 4.7.4 b) and c) \cite{D}).

Consider now some more lower and upper stable properties. A unital
$A\in\mathfrak{A}$ has real rank zero\textbf{ }\cite{BP}\textbf{ }if its
invertible selfadjoint elements are dense in the set of all selfadjoint
elements of $A.$ A non-unital algebra is real rank zero if its unitization is
real rank zero. Denote by $RZ$ the class of all real rank zero algebras. Then
$RZ$ is a property.

Let $A\in RZ$. By Corollary 2.8 \cite{BP}, each hereditary C*-subalgebra of
$A$ also belongs to $RZ.$ As each ideal $I$ of $A$ is hereditary (Theorem
I.5.3 \cite{Da1}), $I\in RZ$. Let $p$: $A\rightarrow A/I.$ As $\left\Vert
p(x)\right\Vert _{A/I}\leq\left\Vert x\right\Vert _{A},$ it follows that $A/I$
also belongs to $RZ$. This yields

\begin{corollary}
\label{C9.11}The property RZ is lower and upper stable.
\end{corollary}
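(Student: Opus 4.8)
The plan is to establish the two stability properties of $RZ$ separately, since $RZ$ being a property (i.e. satisfying (\ref{1.7})) is immediate from the stated definition: the zero algebra is real rank zero, and real rank zero is an isomorphism invariant. To show $RZ$ is lower stable, I must verify that every ideal $I$ of a real rank zero algebra $A$ again belongs to $RZ$. The key fact I would invoke is Corollary 2.8 of \cite{BP}, which asserts that a hereditary C*-subalgebra of a real rank zero algebra is again real rank zero. Thus the essential step is to recall that every closed two-sided ideal $I$ of a C*-algebra $A$ is a hereditary C*-subalgebra; this is Theorem I.5.3 of \cite{Da1}. Combining these two cited results gives $I \in RZ$ directly, completing the lower stability argument.

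For upper stability I must show that for $A \in RZ$ and any $I \in$ Id$_A$, the quotient $A/I$ belongs to $RZ$. Here I would let $p\colon A \rightarrow A/I$ be the quotient map and exploit that it is a surjective *-homomorphism which is norm-decreasing, $\left\Vert p(x)\right\Vert_{A/I} \leq \left\Vert x\right\Vert_A$. The point is that selfadjoint elements of $A/I$ lift to selfadjoint elements of $A$ (one may replace any lift $x$ by $(x+x^{*})/2$, since $p$ commutes with the adjoint), and real rank zero is a condition about density of invertible selfadjoint elements that is preserved under surjective *-homomorphisms. A clean way to record this is: given a selfadjoint $a \in A/I$ and $\varepsilon > 0$, choose a selfadjoint lift $x \in A$, approximate $x$ within $\varepsilon$ by an invertible selfadjoint $y \in A$ using $A \in RZ$, and then observe that $p(y)$ is selfadjoint and approximates $a$ within $\varepsilon$, while its invertibility in the quotient follows from invertibility in $A$. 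The non-unital case reduces to the unital one via unitizations, as already built into the definition of $RZ$.

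The statement of Corollary \ref{C9.11} in the excerpt gives the proof essentially as a two-sentence citation chain for lower stability together with the quotient estimate for upper stability, so I would keep the write-up correspondingly short and lean on the cited literature rather than reprove real rank zero permanence theorems from scratch. The main obstacle, such as it is, lies in the upper stability half: I need to be careful that invertibility of $p(y)$ genuinely follows, which requires the lift argument to respect both selfadjointness and the behavior of the spectrum under the quotient, and to handle the reduction to the unital setting correctly. The lower stability half is purely a matter of correctly identifying closed ideals as hereditary subalgebras and quoting \cite{BP} and \cite{Da1}; no computation is involved there.
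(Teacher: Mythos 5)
Your proposal follows the paper's proof exactly: lower stability via the same citation chain (every closed ideal is hereditary by Theorem I.5.3 of \cite{Da1}, and hereditary C*-subalgebras of real rank zero algebras are real rank zero by Corollary 2.8 of \cite{BP}), and upper stability via the norm-decreasing quotient map $p\colon A\rightarrow A/I$, which is precisely the paper's one-line argument. Your write-up merely makes explicit the details the paper leaves implicit (selfadjoint lifting, invertibility of $p(y)$, unitization), and these are correctly handled.
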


Recall that $A\in\mathfrak{A}$ is \textit{approximately finite-dimensional
}(AF algebra) if it is the closure of an increasing union of
finite-dimensional *-subalgebras. It is \textit{nuclear }if, for each
C*-algebra $B$, $A\odot B$ has only one C*-norm. Finite-dimensional and all
commutative C*-algebras, $C(\mathcal{H}),$ all AF and all C*-algebras of type
I are nuclear. Then (see Section III.4 \cite{Da1} and Corollary XV.3.4
\cite{T})

\begin{theorem}
\label{NE}\emph{(i) }A C$^{\ast}$-algebra $A$ is nuclear if and only if each
ideal $I$ and $A/I$ are nuclear$.\smallskip$

\emph{(ii) }A C$^{\ast}$-algebra $A$ is an AF algebra if and only if each
ideal $I$ and $A/I$ are AF algebras$.$
\end{theorem}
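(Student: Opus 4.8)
Theorem \ref{NE} asserts two closure characterizations: a C*-algebra is nuclear (resp. AF) if and only if every closed ideal $I$ and every quotient $A/I$ is nuclear (resp. AF). Both statements have the same structure, so my plan is to treat them in parallel, establishing the two directions separately.

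The forward direction (nuclearity/AF passes to ideals and quotients) I would take essentially as the deep input, citing the references already invoked in the excerpt (Section III.4 of \cite{Da1} and Corollary XV.3.4 of \cite{T}). For nuclearity this is the statement that nuclearity is preserved under passage to subquotients of the relevant type; for AF algebras it follows from the fact that a closed ideal of an AF algebra is AF and a quotient of an AF algebra is AF. These are standard structural facts and I would not reprove them. The substantive content I would extract is the \emph{converse}: that if both $I$ and $A/I$ carry the property, then so does $A$.

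For the converse in the nuclear case, the plan is to use the extension stability of nuclearity: given a short exact sequence $0 \to I \to A \to A/I \to 0$ of C*-algebras with $I$ and $A/I$ nuclear, one concludes $A$ is nuclear. First I would recall that nuclearity is equivalent to the completely positive approximation property, and that this property is inherited by extensions — a result due essentially to Choi–Effros and to the Choi–Effros–Kirchberg circle of ideas, again available in \cite{Da1}. So the argument reduces to: assemble the c.p.\ approximations on $I$ and on $A/I$ and lift/glue them to approximations on $A$. For the AF converse, the corresponding fact is that an extension of an AF algebra by an AF algebra, when $A$ is separable and its ideal/quotient are AF, is again AF; this is the Elliott/Brown type extension result for AF algebras, and I would cite it from the same source rather than reconstruct the approximate-finite-dimensional subalgebras by hand.

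The main obstacle, and the step deserving the most care, will be the extension direction — showing that nuclearity (resp.\ AF structure) of $I$ and $A/I$ forces it on $A$. The ideal-and-quotient-to-algebra implication is genuinely the non-formal part: the other implication is pure heredity, whereas this one requires a gluing or lifting construction that is special to C*-algebras (and, for the AF statement, uses separability and the classification of AF algebras via local approximation). Since the excerpt explicitly attributes both characterizations to \cite{Da1} and \cite{T}, the honest and efficient approach is to state Theorem \ref{NE} as a direct consequence of those cited results, indicating the two directions and the extension-stability principle that powers the converse, rather than duplicating the operator-algebraic machinery. The role this theorem plays downstream — namely that $NU$ and $AF$ are \emph{both} lower and upper stable properties, hence that $\ll_{NU}$ and $\ll_{AF}$ are $\mathbf{HH}$-relations (indeed $\mathbf{R}$-orders) — is exactly the dual symmetry of ``ideal'' and ``quotient'' that these two biconditionals encode.
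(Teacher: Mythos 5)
Your proposal matches the paper exactly: the paper offers no proof of Theorem \ref{NE} at all, simply citing Section III.4 of \cite{Da1} (ideals and quotients of AF algebras, plus Brown's extension theorem) and Corollary XV.3.4 of \cite{T} (nuclearity is closed under ideals, quotients, and extensions), which is precisely the citation-based reduction you settle on. Your added sketch of the two directions --- heredity one way, extension stability (c.p.\ approximation for nuclearity, the Elliott/Brown result for AF) the other --- is accurate and consistent with the cited sources.
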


Denote by $AF$ and $NU$ the classes of all AF and of all nuclear C*-algebras.
By Theorem \ref{NE}, they are lower and upper stable properties. Hence Theorem
\ref{T9.4} yields

\begin{corollary}
\label{C9.7}For each $A\in\mathfrak{A},$ $\ll_{_{CCR}},$ $\ll_{_{GCR}},$
$\ll_{_{RZ}},$ $\ll_{_{AF}},$ $\ll_{_{NU}}$ are $\mathbf{HH}$-relations in
\emph{Id}$_{A}.$
\end{corollary}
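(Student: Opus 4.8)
The plan is to recognize that Corollary~\ref{C9.7} is an immediate consequence of Theorem~\ref{T9.4} once we know that each of the five classes is a \emph{property} in the sense of (\ref{1.7}) and is both upper and lower stable. Indeed, Theorem~\ref{T9.4} states that $\ll_{_{P}}$ is an $\mathbf{H}$-relation precisely when $P$ is upper stable, and a dual $\mathbf{H}$-relation precisely when $P$ is lower stable; so if $P$ is \emph{both} upper and lower stable, then $\ll_{_{P}}$ is simultaneously an $\mathbf{H}$- and a dual $\mathbf{H}$-relation, i.e.\ an $\mathbf{HH}$-relation. Thus the entire task reduces to verifying, class by class, that $CCR,GCR,RZ,AF,NU$ are properties that are upper and lower stable.

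First I would dispatch the five verifications by citing the stability facts already assembled in the preceding text, rather than reproving them. For $CCR$ and $GCR$: the paragraph after Corollary~\ref{C9.8} records, via Propositions 4.2.4 and 4.3.5 of \cite{D}, that both $CCR$ and $GCR$ are lower and upper stable. For $RZ$: Corollary~\ref{C9.11} states exactly that $RZ$ is lower and upper stable. For $AF$ and $NU$: Theorem~\ref{NE} gives the two-sided characterizations ($A$ is nuclear iff every ideal $I$ and quotient $A/I$ is nuclear; likewise for AF), from which both upper and lower stability are immediate, and this is precisely the remark made just before the corollary's statement. Each class also trivially satisfies (\ref{1.7}): $\{0\}$ is CCR, GCR, real rank zero, AF and nuclear, and all five classes are closed under $\ast$-isomorphism. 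Hence each is a property in our sense.

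Having established that all five classes are upper-and-lower-stable properties, I would then invoke Theorem~\ref{T9.4}(i) and (ii) together: upper stability gives that $\ll_{_{P}}$ is an $\mathbf{H}$-relation and lower stability gives that it is a dual $\mathbf{H}$-relation, so by Definition~\ref{D2.2}(iii) $\ll_{_{P}}$ is an $\mathbf{HH}$-relation in $\mathrm{Id}_{A}$. Running this for $P\in\{CCR,GCR,RZ,AF,NU\}$ yields the five assertions simultaneously, for every $A\in\mathfrak{A}$.

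The proof is essentially bookkeeping, so there is no genuine obstacle; the only point requiring a modicum of care is making sure the stability statements are quoted in the direction the corollary needs. The subtlety worth flagging is that the classes are \emph{not} on equal footing beyond this corollary: as the introduction notes, $CCR$ and $RZ$ give $\mathbf{HH}$-relations that fail to be transitive, whereas $AF$ and $NU$ are in fact $\mathbf{R}$-orders (Corollary~\ref{C8.4}); but transitivity plays no role here, since the $\mathbf{HH}$-property asks only for the lattice-compatibility conditions of Lemma~\ref{le1}, which are exactly what upper and lower stability deliver through Theorem~\ref{T9.4}. I would therefore keep the proof to a single sentence invoking Theorem~\ref{T9.4} after the stability citations, as the author does.
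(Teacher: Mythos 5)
Your proposal is correct and takes essentially the same route as the paper: the corollary there is stated as an immediate consequence of Theorem \ref{T9.4}, after the surrounding text records that $CCR$ and $GCR$ are lower and upper stable (Propositions 4.2.4 and 4.3.5 of \cite{D}), that $RZ$ is lower and upper stable (Corollary \ref{C9.11}), and that $AF$ and $NU$ are lower and upper stable by Theorem \ref{NE}. Your additional remark that transitivity is irrelevant here is accurate and matches the paper's later discussion.
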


We will now discuss transitivity of the relation $\ll_{_{P}}$ for a property
$P.$

\begin{lemma}
\label{L9.2}The relation $\ll_{_{P}}$ is transitive in all lattices
\emph{Id}$_{A},$ $A\in\mathfrak{A},$ if and only if\emph{, }for all
$B\in\mathfrak{A}$ and all $S\in$ \emph{Id}$_{B},$ the condition $S,B/S\in P$
implies $B\in P.$
\end{lemma}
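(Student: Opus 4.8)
The plan is to prove both implications of the equivalence directly, the whole argument resting on the third isomorphism theorem for C*-algebras together with the closure of $P$ under isomorphism (see (\ref{1.7})).

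First I would prove the ``if'' direction (transitivity from the quotient condition). Assume that $S,B/S\in P$ implies $B\in P$ for every $B\in\mathfrak{A}$ and every $S\in$ Id$_{B}$. Fix $A\in\mathfrak{A}$ and suppose $I\ll_{_{P}}J$ and $J\ll_{_{P}}K$ in Id$_{A}$; by (\ref{4.1}) this means $I\subseteq J\subseteq K$ with $J/I\in P$ and $K/J\in P$. The key observation is that, since $J$ and $K$ are closed two-sided ideals of $A$ with $J\subseteq K$, $J$ is a closed two-sided ideal of the C*-algebra $K$ as well (for $k\in K\subseteq A$ and $j\in J$ one has $kj,jk\in J$). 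Hence I may set $B=K/I\in\mathfrak{A}$ and $S=J/I$, which is a closed ideal of $B$. Then $S=J/I\in P$, and the third isomorphism theorem gives the $*$-isomorphism $B/S=(K/I)/(J/I)\approx K/J$; since $K/J\in P$ and $P$ is closed under $\approx$, we get $B/S\in P$. The assumed quotient condition now yields $B=K/I\in P$, i.e.\ $I\ll_{_{P}}K$ by (\ref{4.1}). Thus $\ll_{_{P}}$ is transitive in Id$_{A}$, and $A$ was arbitrary.

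Next I would prove the ``only if'' direction. Assume $\ll_{_{P}}$ is transitive in all lattices Id$_{A}$, and let $B\in\mathfrak{A}$, $S\in$ Id$_{B}$ with $S\in P$ and $B/S\in P$. Working inside Id$_{B}$, consider the chain $\{0\}\subseteq S\subseteq B$. Because $S/\{0\}=S\in P$ we have $\{0\}\ll_{_{P}}S$, and because $B/S\in P$ we have $S\ll_{_{P}}B$, both by (\ref{4.1}). Transitivity of $\ll_{_{P}}$ in Id$_{B}$ then gives $\{0\}\ll_{_{P}}B$, which by (\ref{4.1}) says exactly $B/\{0\}=B\in P$, the desired conclusion.

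The argument is short and the only point demanding care is the one flagged above: verifying that $J$ really is a closed two-sided ideal of $K$ (so that $K/J$ is defined and the isomorphism $(K/I)/(J/I)\approx K/J$ is legitimate in the C*-category), and that the canonical quotient map is a genuine $*$-isomorphism rather than a merely algebraic one. Once this is in place, closure of $P$ under $\approx$ does all the remaining work, so no estimates or approximate-identity considerations are needed. I would cite the same source used for the isomorphism theorems elsewhere in this section, namely the relevant Corollary in \cite{D} already invoked in the proof of Theorem \ref{T9.4}.
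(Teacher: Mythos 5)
Your proof is correct and follows essentially the same route as the paper's: the forward direction via the chain $\{0\}\ll_{_{P}}S\ll_{_{P}}B$ in \emph{Id}$_{B}$, and the converse via the third isomorphism theorem $(K/I)/(J/I)\approx K/J$ together with closure of $P$ under $\approx$ (the paper takes $B=J/K$ and $S=I/K$, a relabeling of your $B=K/I$, $S=J/I$). Your added verification that $J$ is a closed ideal of $K$ is sound but routine.
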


\begin{proof}
Let $B\in\mathfrak{A}$ and $S\in$ Id$_{B}.$ Let $S,B/S\in P.$ Then
$\{0\}\ll_{_{P}}S\ll_{_{P}}B.$ If $\ll_{_{P}}$ is transitive in all lattices
Id$_{A},$ $A\in\mathfrak{A},$ it is transitive in Id$_{B}.$ So $\{0\}\ll
_{_{P}}B.$ Thus $B\in P.$

Conversely, let $A\in\mathfrak{A}$ and $K\ll_{_{P}}I\ll_{_{P}}J$ in Id$_{A}.$
Then $I/K,J/I\in P$ and $J/I\approx(J/K)/(I/K).$ Let,\emph{ }for all
$B\in\mathfrak{A}$ and $S\in$ Id$_{B},$ $S,B/S\in P$ implies $B\in P.$ Set
$B=J/K$ and $S=I/K.$ As $S,B/S\approx J/I\in P,$ we have $J/K=B\in P.$ So
$K\ll_{_{P}}J.$ Thus $\ll_{_{P}}$ is transitive in all Id$_{A},$
$A\in\mathfrak{A}.\bigskip$
\end{proof}

The relation $\ll_{_{CCR}}$ is not transitive. Indeed, let $B=C(H)+\mathbb{C}%
\mathbf{1}_{H}.$ Then $C(H)$ and $B/C(H)\approx\mathbb{C}\mathbf{1}$ are
CCR-algebras, but $B$ is not a CCR-algebra. So, by Lemma \ref{L9.2},
$\ll_{_{CCR}}$ is not transitive.

Similarly, $\ll_{_{RZ}}$ is not transitive, since $A\in RZ$ if and only if all
$I\in$ Id$_{A}$ and $A/I$ are $RZ$-algebras and \textit{all projections in}
$A/I$ \textit{lift to projections in} $A$ (Theorem 3.14 \cite{BP}). On the
other hand, $\ll_{_{AF}}$ and $\ll_{_{NU}}$ are transitive by Theorem
\ref{NE}. It is also easy to show that $\ll_{_{GCR}}$ is transitive.

If an $\mathbf{HH}$-relation $\ll_{_{P}}$ is not transitive then, generally,
the $\ll_{_{P}}^{\triangleright}$-radical $\mathfrak{r}_{_{P}}$ is not a
$P$-algebra. For example, if $B=C(H)+\mathbb{C}\mathbf{1}_{H},$ we have
$\{0\}\ll_{_{CCR}}C(H)\ll_{_{CCR}}B$ and $\mathfrak{r}_{_{CCR}}=B$ is a GCR-
and not a CCR-algebra. Even if $\ll_{_{P}}$ is transitive, $\mathfrak{r}%
_{_{P}}$ may still be not a $P$-algebra. It is a $P$-algebra if and only if
the inductive limits of ascending series of $P$-ideals of $A$ are $P$-algebras
(\cite{KST4}). For example, AF and nuclear algebras have this property (see
\cite[p. 17]{W}).

\begin{corollary}
\label{C8.4}For $A\in\mathfrak{A}$\emph{, }let $\mathfrak{r}_{_{AF}}$ be the
$\ll_{_{AF}}^{\triangleright}$-radical and $\mathfrak{r}_{_{NU}}$ be the
$\ll_{_{NU}}^{\triangleright}$-radical.\smallskip

\emph{(i) (\cite{KST4})} The relations $\ll_{_{AF}},$ $\ll_{_{NU}}$ are
$\mathbf{R}$-orders in \emph{Id}$_{A}$\emph{:} $\ll_{_{AF}}=$ $\ll_{_{AF}%
}^{\triangleright}$ and $\ll_{_{NU}}=$ $\ll_{_{NU}}^{\triangleright
}.\smallskip$

\emph{(ii) (\cite{ST}, \cite{KST4})} $\mathfrak{r}_{_{AF}}$ is the largest
AF-algebra and $\mathfrak{r}_{_{NU}}$ is the largest nuclear algebra in $A.$
\end{corollary}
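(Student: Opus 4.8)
The plan is to deduce both parts from the lattice-theoretic machinery of Sections 2--5 together with a single genuinely C*-algebraic input: the stability of the classes $AF$ and $NU$ under countable inductive limits. Fix $P\in\{AF,NU\}$ throughout and write $\ll$ for $\ll_{_P}$.

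First I would assemble what is already available. By Corollary \ref{C9.7}, $\ll$ is an $\mathbf{HH}$-relation in $\mathrm{Id}_A$, hence in particular an $\mathbf{H}$-relation; and, as recorded in the discussion preceding the statement (Lemma \ref{L9.2} together with the implication $S,B/S\in P\Rightarrow B\in P$ supplied by Theorem \ref{NE}), $\ll$ is transitive, so it is an $\mathbf{H}$-order. To promote an $\mathbf{H}$-order to an $\mathbf{R}$-order I would invoke the dual, via the Duality Principle, of Corollary \ref{pr3}(iii): for an $\mathbf{H}$-order $\ll$, being an $\mathbf{R}$-order (equivalently $\ll=\ll^{\triangleright}$) is equivalent to the condition $x_1\ll\vee_n x_n$ for every ascending $\ll$-series $(x_n)_{n\in\mathbb{N}}$. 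Thus part (i) reduces to verifying this one countable up-expandedness condition.

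The core step is to check that condition for ideals. Given an ascending $\ll$-series $I_1\subseteq I_2\subseteq\cdots$ in $\mathrm{Id}_A$ with $I_{n+1}/I_n\in P$, I would set $J=\vee_n I_n=\overline{\bigcup_n I_n}\in\mathrm{Id}_A$ and use transitivity of $\ll$ to get $I_1\ll I_n$, i.e.\ $I_n/I_1\in P$, for all $n$. The ideals $I_n/I_1$ then form an increasing chain of $P$-ideals of $J/I_1$ whose union is dense, so $J/I_1$ is the countable inductive limit of the $P$-algebras $I_n/I_1$. Closure of $P$ under such limits (\cite[p.~17]{W}) yields $J/I_1\in P$, that is $I_1\ll J=\vee_n I_n$. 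This establishes that $\ll$ is an $\mathbf{R}$-order, and hence $\ll=\ll^{\triangleright}$, for both $P=AF$ and $P=NU$, proving (i). For part (ii) I would then read off the radical: since $\ll$ is an up-expanded order, Theorem \ref{eur}(i) and (\ref{2.14}) give that the unique $\ll^{\triangleright}$-radical in $\mathrm{Id}_A=[\{0\},A]$ is $\mathfrak{r}_{_P}=\vee[\{0\},\ll]$, with $\mathfrak{r}_{_P}\in[\{0\},\ll]$. As $[\{0\},\ll]=\{I\in\mathrm{Id}_A:I/\{0\}=I\in P\}$ is exactly the set of $P$-ideals of $A$, the element $\mathfrak{r}_{_P}$ is a $P$-ideal majorizing every $P$-ideal, i.e.\ the largest $P$-ideal of $A$; this is the largest AF-algebra for $P=AF$ and the largest nuclear algebra for $P=NU$ (cf.\ \cite{ST}, \cite{KST4}).

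The main obstacle will be the one non-formal ingredient, namely the reduction target of part (i): everything else is a direct application of the general theory, but the passage $I_1\ll J$ rests on the fact that AF-ness and nuclearity pass to countable inductive limits of ideals. I would therefore isolate and cite that fact (\cite{W}) and keep the rest strictly lattice-theoretic. A minor technical point to verify carefully is that $\bigcup_n(I_n/I_1)$ is genuinely dense in $J/I_1$, which follows from the continuity and surjectivity of the quotient map $J\to J/I_1$ combined with $J=\overline{\bigcup_n I_n}$; and that $\ll$ being transitive is used legitimately to pass from the hypothesis $I_{n+1}/I_n\in P$ to $I_n/I_1\in P$.
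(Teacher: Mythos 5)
Your skeleton matches what the paper intends (the paper itself gives no proof of Corollary \ref{C8.4}, deferring to \cite{ST} and \cite{KST4}; the surrounding text supplies transitivity via Lemma \ref{L9.2} and Theorem \ref{NE}, and the criterion that $\mathfrak{r}_{_{P}}\in P$ iff inductive limits of \emph{ascending series} of $P$-ideals are $P$-algebras), but your central reduction has a genuine gap. You promote the $\mathbf{H}$-order $\ll_{_{P}}$ to an $\mathbf{R}$-order via the dual of Corollary \ref{pr3}(iii) and thereby reduce everything to \emph{countable} ascending series, verifying only closure of $AF$ and $NU$ under countable inductive limits. That countable criterion is insufficient: ascending series in (\ref{2.12}) are ordinal-indexed, and the countable condition controls limit stages of countable cofinality only (indeed the paper's own proof of (iii)$\Rightarrow$(ii) in Corollary \ref{pr3} treats only limit ordinals of the form $\alpha+\omega$, via ``the previous limit ordinal''). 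The criterion genuinely fails: in the complete chain $Q=[0,\omega_1]$ let $a\ll b$ iff $a\le b<\omega_1$, together with $\omega_1\ll\omega_1$. This is an $\mathbf{H}$-order, and every countable ascending $\ll$-series satisfies $x_1\ll\vee_n x_n$ (a countable supremum of countable ordinals is countable), yet the series $(\alpha)_{\alpha\le\omega_1}$ gives $0\ll^{\triangleright}\omega_1$ while $0\not\ll\omega_1$, so $\ll\ne\ll^{\triangleright}$. The failure even occurs inside a lattice $\mathrm{Id}_A$: for $A=c_0(\Omega)$ with $|\Omega|=\aleph_1$ and $AF$ taken literally as in the paper (closure of an increasing union of finite-dimensional subalgebras, which forces separability), every countable ascending $\ll_{_{AF}}$-series satisfies your condition, but the ideals $I_\alpha=c_0(\alpha)$, $\alpha\le\omega_1$, form an ascending $\ll_{_{AF}}$-series with one-dimensional quotients witnessing $\{0\}\ll_{_{AF}}^{\triangleright}A$ with $A$ not $AF$ in that sense. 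So your argument as written cannot deliver $\ll_{_{P}}=\ll_{_{P}}^{\triangleright}$.

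What is actually required --- and what the citations are for --- is closure of $P$ under inductive limits of \emph{arbitrary} ordinal-indexed ascending chains of $P$-ideals: one proves $\ll_{_{P}}^{\triangleright}\subseteq\ll_{_{P}}$ by transfinite induction along a complete upper $\ll_{_{P}}$-gap chain (an ascending $\ll_{_{P}}$-series by Proposition \ref{p10}), using at \emph{every} limit stage $\beta$, of arbitrary cofinality, that $I_\beta/I_1=\overline{\bigcup_{\alpha<\beta}I_\alpha/I_1}\in P$. For $P=NU$ this holds for arbitrary directed dense unions (any C*-norm on $(I_\alpha/I_1)\odot B$ is squeezed between min and max, which coincide there, so min equals max on a dense subalgebra of the limit); for $P=AF$ one needs the local characterization of AF-ness (or a separability hypothesis), which is exactly the content delegated to \cite[p. 17]{W} and \cite{KST4}. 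Your deduction of part (ii) from part (i) is correct as it stands: once $\ll_{_{P}}$ is an up-expanded order, Theorem \ref{eur}(i) and (\ref{2.14}) give $\mathfrak{r}_{_{P}}=\vee[\{0\},\ll_{_{P}}]\in[\{0\},\ll_{_{P}}]$, and since $[\{0\},\ll_{_{P}}]$ is precisely the set of $P$-ideals, $\mathfrak{r}_{_{P}}$ is the largest one.
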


\subsection{Relation $\ll_{_{\infty}}$ in the lattices of Lie ideals of Banach
Lie algebras.}

A complex Lie algebra $A$ with Lie multiplication $[\cdot,\cdot]$ is a Banach
Lie algebra, if it is a Banach space in some norm $\left\Vert \cdot\right\Vert
$ and $\left\Vert [a,b]\right\Vert \leq C\left\Vert a\right\Vert \left\Vert
b\right\Vert $ for some $C>0$ and for all $a,b\in A.$

A subspace $I$ of $A$ is a \textit{Lie subalgebra }if $[a,b]\in I$ for all
$a,b\in I$; it is a \textit{Lie ideal} if $[a,b]\in I$ for all $a\in I,$ $b\in
A.$ Banach algebras are Banach Lie algebras with Lie multiplication
$[a,b]=ab-ba$.

The set Li$_{A}$ of all closed Lie ideals of $A$ is a complete lattice of
invariant subspaces of the subalgebra of $B(A\mathcal{)}$ generated by all
operators ad($a)$: $x\rightarrow\lbrack a,x]$ on $A.$ As in (\ref{9.2}), the
$\mathbf{HH}$-relation $\ll_{_{\infty}}$ in Li$_{A}$ is defined by:
$I\ll_{_{\infty}}J$ if $\dim(J/I)<\infty.$ Then Corollaries \ref{C4.2n} and
\ref{C.0} hold in this setting with two-sided ideals replaced by Lie ideals.

A linear bounded operator $\delta$ on $A$ is a \textit{Lie derivation} if%
\begin{equation}
\delta([a,b])=[\delta(a),b]+[a,\delta(b)]\text{ for }a,b\in A. \label{1.1}%
\end{equation}
The set $\mathfrak{D}\left(  A\right)  $ of all Lie derivations on $A$ is a
closed Lie subalgebra of the algebra $B\left(  A\right)  $ of all bounded
operators on $A.$ Each $a\in A$ defines a Lie derivation ad$\left(  a\right)
$ on $A.$

An ideal $I\in$ Li$_{A}$ is called \textit{characteristic} if $\delta
(I)\subseteq I$ for all $\delta\in\mathfrak{D}\left(  A\right)  .$ By
(\ref{1.1}), the centre of $A$ is a characteristic Lie ideal. If $A$ is
commutative then $\{0\}$ and $A$ are the only characteristic Lie ideals of
$A$, as $\mathfrak{D}\left(  A\right)  =B(A\mathcal{)}$ and only $\{0\}$ and
$A$ are invariant for $B(A).$

If $\delta\in\mathfrak{D}\left(  A\right)  $ then $e^{t\delta}=\sum
_{n=0}^{\infty}t^{n}\delta^{n}/n!$ for $t\in\mathbb{R},$ is a one-parameter
group of bounded Lie automorphism of $A$: $e^{t\delta}([a,b])=[e^{t\delta
}(a),e^{t\delta}(b)]$ for $a,b\in A.$ Hence
\begin{equation}
I\in\text{ Li}_{_{A}}\text{ }\Rightarrow e^{t\delta}(I)\in\text{ Li}_{_{A}%
}\text{ for all }t\in\mathbb{R}\text{ and }\delta\in\mathfrak{D}\left(
A\right)  . \label{10.1}%
\end{equation}
Moreover, since $\delta(a)=\underset{t\rightarrow0}{\lim}(e^{t\delta
}(a)-a)/t\in I$ for all $a\in I,$ it follows that%
\begin{equation}
I\text{ is a characteristic Lie ideal if and only if }e^{t\delta}(I)=I\text{
for all }t\in\mathbb{R}\text{ and }\delta\in\mathfrak{D}\left(  A\right)  .
\label{10.2}%
\end{equation}

Denote by Li$_{_{A}}^{\text{ch}}$ the subset of Li$_{_{A}}$ of all closed
characteristic Lie ideals of $A.$ We have that%
\begin{equation}
\text{if }J\in\text{ Li}_{_{A}}^{\text{ch}}\text{ and }I\text{ is a
characteristic Lie ideal of }J,\text{ then }I\in\text{ Li}_{_{A}}^{\text{ch}}.
\label{4.41}%
\end{equation}
Indeed, $\delta|_{J}\in\mathfrak{D}\left(  J\right)  $ for all $\delta
\in\mathfrak{D}\left(  A\right)  $, as $J\in$ Li$_{_{A}}^{\text{ch}}.$ Then
$\delta(I)=\delta|_{J}(I)\subseteq I,$ as $I\in$ Li$_{_{J}}^{\text{ch}}$.
Thus, as ad($a)\in\mathfrak{D}\left(  A\right)  ,$ for each $a\in A$, $[a,I]=$
ad($a)(I)\subseteq I.$ Hence $I\in$ Li$_{_{A}}^{\text{ch}}.$

The existence of Lie and characteristic Lie ideals of finite codimension in
Banach Lie algebras was studied in \cite{KST2}. We will use below the
following result obtained there.

\begin{theorem}
\label{KST1}If a Banach non-commutative Lie algebra has a closed proper Lie
subalgebra of finite codimension$,$ then it has a proper closed characteristic
Lie ideal of finite codimension.
\end{theorem}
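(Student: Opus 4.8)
The plan is to produce the required ideal as a \emph{characteristic core}. Let $\mathfrak{U}$ be the closed associative subalgebra of $B(A)$ generated by the identity together with all Lie derivations $\delta\in\mathfrak{D}(A)$ (recall that $\mathrm{ad}(a)\in\mathfrak{D}(A)$ for every $a\in A$), and set
\[
I=\{x\in A:\ ux\in S\ \text{ for all }u\in\mathfrak{U}\}.
\]
Then $I=\cap_{u\in\mathfrak{U}}u^{-1}(S)$ is closed, it is $\mathfrak{U}$-invariant (since $\mathfrak{U}u\subseteq\mathfrak{U}$), hence invariant under every $e^{t\delta}=\sum_{n}t^{n}\delta^{n}/n!\in\mathfrak{U}$, so by (\ref{10.2}) it is a closed characteristic Lie ideal, and it is visibly the largest one contained in $S$. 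Because $\mathfrak{U}$ contains the identity, $I\subseteq S$, and since $S$ is \emph{proper}, $I$ is automatically a \emph{proper} characteristic Lie ideal. Thus properness of the output is free, and the whole content of the theorem is the single assertion that $\dim(A/I)<\infty$.

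First I would reduce the finite-codimension claim in two stages. Stage one produces a proper closed Lie \emph{ideal} $J$ of finite codimension with $J\subseteq S$: this is the Lie-algebraic counterpart of the result of \cite{L} quoted after Proposition \ref{C-Laf}, that every finite-codimensional closed subalgebra of a Banach \emph{associative} algebra contains a finite-codimensional ideal. One builds $J$ as the $\mathrm{ad}(A)$-core, namely the descending intersection $S=T_{0}\supseteq T_{1}\supseteq\cdots$ with $T_{j+1}=\{x\in T_{j}:[A,x]\subseteq T_{j}\}$ and $J=\cap_{j}T_{j}$, and bounds $\dim(T_{j}/T_{j+1})$ by exploiting the adjoint action of $A$ on the finite-dimensional quotient $A/S$, so that the codimensions stabilise. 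Stage two then replaces $J$ by its characteristic core $\{x\in A:\mathfrak{U}x\subseteq J\}$ (which equals the $I$ above when $J$ is maximal), and one must show this has finite codimension.

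The hard part will be Stage two, that is, controlling the codimension of the characteristic core against the full, generally infinite-dimensional, derivation algebra $\mathfrak{D}(A)$; the naive estimate $\dim(J/J^{\mathrm{ch}})\le\dim\mathrm{Hom}(\mathfrak{D}(A),A/J)$ is useless. The argument must instead use the derivation identity (\ref{1.1}): a Lie derivation is rigidly tied to the multiplication, and I would use this rigidity to show that the action of $\mathfrak{D}(A)$ on the finite-dimensional space $A/J$ factors through a finite-dimensional quotient of $\mathfrak{D}(A)$, after which the characteristic core stabilises in finitely many steps exactly as in Stage one. This is precisely the point where \emph{non-commutativity} is indispensable: if $A$ were abelian then $[\cdot,\cdot]\equiv0$, every bounded operator would be a Lie derivation ($\mathfrak{D}(A)=B(A)$), the only characteristic Lie ideals would be $\{0\}$ and $A$, and for $\dim A=\infty$ the characteristic core of a proper $S$ would collapse to $\{0\}$, of infinite codimension, so the theorem would fail. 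The non-vanishing of the bracket is exactly what keeps $\mathfrak{D}(A)$ small enough that $J^{\mathrm{ch}}$ stays finite-codimensional, while the inclusion $J^{\mathrm{ch}}\subseteq S$ keeps it proper.

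Once finite codimension is established, the conclusion fits the framework of Section 2: the ideal $I$ shows that, for a non-commutative $A$ admitting a proper finite-codimensional closed subalgebra, the lattice $\mathrm{Li}_{A}^{\mathrm{ch}}$ of characteristic Lie ideals contains genuine finite-codimensional $\ll_{\infty}$-steps, which is the hypothesis feeding the applications built on $\cap_{S\in\Sigma_{A}}S$.
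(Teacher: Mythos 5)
You should first note what the paper itself does with this statement: it gives no proof at all. Theorem \ref{KST1} is quoted from \cite{KST2}, where it is the principal theorem of a full-length article, so the benchmark here is whether your outline could plausibly replace that proof. It cannot, and the decisive gap is your Stage one. The construction of $J$ as the $\mathrm{ad}(A)$-core, $T_{j+1}=\{x\in T_{j}:[A,x]\subseteq T_{j}\}$, together with the claim that the codimensions stabilise because $A$ acts on the finite-dimensional quotient $A/S$, is a purely algebraic argument: it never uses the norm, the completeness of $A$, or the continuity of the bracket. But the purely algebraic statement is false. Take the one-sided Witt algebra $A=\mathbb{C}[t]\,d/dt$ with basis $e_{n}=t^{n+1}d/dt$, $n\geq-1$, $[e_{m},e_{n}]=(n-m)e_{m+n}$, and $S=\mathrm{span}\{e_{n}:n\geq0\}$, a Lie subalgebra of codimension $1$. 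Since $[e_{-1},e_{n}]=(n+1)e_{n-1}$, one computes inductively $T_{j}=\mathrm{span}\{e_{n}:n\geq j\}$, so every step $T_{j+1}\subset T_{j}$ drops the codimension by exactly $1$, the chain never stabilises, and $\cap_{j}T_{j}=\{0\}$ --- consistent with the fact that this algebra is simple and hence has no proper Lie ideal of finite codimension despite having a proper subalgebra of codimension $1$. Any argument of the shape you propose would therefore prove a false statement; the Banach structure must enter essentially, and making it enter is precisely the content of \cite{KST2}. For the same reason the analogy with Laffey's theorem \cite{L} is misleading: the associative core argument closes up because of associativity, which has no Lie counterpart.

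Stage two is the second gap: it is not an argument but a declared intention. The pivotal claim --- that the derivation identity (\ref{1.1}) forces the action of $\mathfrak{D}(A)$ on the finite-dimensional quotient $A/J$ to factor through a finite-dimensional quotient of $\mathfrak{D}(A)$, so that the characteristic core stabilises in finitely many steps --- is exactly the kind of stabilisation assertion that just failed in Stage one, and you supply no mechanism for it; a priori the characteristic core of a finite-codimensional Lie ideal can collapse, and your own commutative example shows that collapse to $\{0\}$ is a real phenomenon, so ruling it out in the non-commutative case is genuine work, not bookkeeping. What is correct in your write-up is the easy part, and it is done cleanly: $I=\cap_{u\in\mathfrak{U}}u^{-1}(S)$ is closed, it is a Lie ideal because $\mathfrak{U}\,\mathrm{ad}(a)\subseteq\mathfrak{U}$, it satisfies $\delta(I)\subseteq I$ for every $\delta\in\mathfrak{D}(A)$ (so it is characteristic directly, without even invoking (\ref{10.2})), it is the largest characteristic Lie ideal inside $S$, and properness is free since $1\in\mathfrak{U}$ gives $I\subseteq S\subsetneqq A$. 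But, as you say yourself, the entire theorem is the assertion $\dim(A/I)<\infty$, and neither stage of your outline establishes it.
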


As the intersection and the closed sum of any family of characteristic Lie
ideals is a characteristic Lie ideal, Li$_{_{A}}^{\text{ch}}$ is a complete
sublattice of Li$_{_{A}}$. Denote by $\mathfrak{p}_{_{\infty}}^{\text{ch}}(A)$
the dual $\ll_{_{\infty}}^{\triangleleft}$-radical in Li$_{_{A}}^{\text{ch}}$
and by $\Sigma_{_{A}}$ the set of all closed Lie subalgebras of finite
codimension in $A$.

We now prove the following version of Proposition \ref{C-Laf} for Banach Lie algebras.

\begin{theorem}
\label{comdiff}Let \emph{Li}$_{_{A}}^{\text{\emph{ch}}}$ have no commutative
infinite-dimensional Lie ideals and $\cap_{_{L\in\Sigma_{A}}}L=\{0\}.$
Then\emph{,}\ for each $\{0\}\neq K\in$ \emph{Li}$_{_{A}}^{\text{\emph{ch}}},$
there is $I\in$ \emph{Li}$_{_{A}}^{\text{\emph{ch}}}$ such that $K\neq
I\ll_{_{\infty}}I,$ and $\mathfrak{p}_{_{\infty}}^{\text{\emph{ch}}%
}(A)=\{0\}.$
\end{theorem}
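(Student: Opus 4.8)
The plan is to deduce the statement from the machinery already in place, chiefly Theorem \ref{inf}, Theorem \ref{T4.6}(ii), and the finite-codimension result Theorem \ref{KST1}. First I would verify that $\ll_{_{\infty}}$ is a dual $\mathbf{H}$-relation in the complete sublattice $\text{Li}_{_{A}}^{\text{ch}}$ of $\text{Li}_{_{A}}$: this follows from Corollary \ref{T9.2} (the relations $\ll_n$, hence $\ll_{\infty}$, are $\mathbf{HH}$-relations in every sublattice) together with the fact noted before Theorem \ref{comdiff} that $\text{Li}_{_{A}}^{\text{ch}}$ is a complete sublattice of $\text{Li}_{_{A}}$, and that restrictions of dual $\mathbf{H}$-relations to sublattices retain the property. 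Consequently $\ll_{_{\infty}}^{\triangleleft}$ is a dual $\mathbf{R}$-order on $\text{Li}_{_{A}}^{\text{ch}}$, and $\mathfrak{p}_{_{\infty}}^{\text{ch}}(A)=\sigma_{_{\ll_{_{\infty}}^{\triangleleft}}}(A)$ is the unique dual $\ll_{_{\infty}}^{\triangleleft}$-radical in $[\{0\},A]$, by Theorem \ref{inf}(i).

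The core of the argument is to show that each nonzero $K\in\text{Li}_{_{A}}^{\text{ch}}$ has a proper characteristic Lie ideal $I$ of finite codimension, i.e. $K\neq I\ll_{_{\infty}}K$ with $I\in\text{Li}_{_{A}}^{\text{ch}}$ (I would read the statement's $\ll_{_{\infty}}I$ as $\ll_{_{\infty}}K$). Here I would use the hypothesis $\cap_{_{L\in\Sigma_A}}L=\{0\}$: since $K\neq\{0\}$, there is some $L\in\Sigma_A$ with $K\not\subseteq L$, so $L\cap K$ is a proper closed Lie subalgebra of $K$ of finite codimension in $K$ (finite codimension passing from $A$ to $K$ by Lemma \ref{Clos}(i) applied in $\text{Ln}(A)$). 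If $K$ is non-commutative, Theorem \ref{KST1} applied to the Banach Lie algebra $K$ produces a proper closed characteristic Lie ideal $I$ of $K$ of finite codimension; by the heredity property (\ref{4.41}), $I\in\text{Li}_{_{A}}^{\text{ch}}$, so $I\neq K$, $\dim(K/I)<\infty$, and hence $I\ll_{_{\infty}}K$. The assumption that $\text{Li}_{_{A}}^{\text{ch}}$ has no commutative infinite-dimensional Lie ideals is exactly what rules out the degenerate case: if $K$ were commutative it would have to be finite-dimensional, and then I would take $I=\{0\}$, which lies in $\text{Li}_{_{A}}^{\text{ch}}$ and satisfies $\{0\}\ll_{_{\infty}}K$.

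Having established that every nonzero $K\in\text{Li}_{_{A}}^{\text{ch}}$ admits a proper $\ll_{_{\infty}}$-predecessor inside $\text{Li}_{_{A}}^{\text{ch}}$, I would invoke Theorem \ref{T4.6}(ii)2) (in the Lie-ideal version, valid by the remark preceding Theorem \ref{comdiff} that Corollaries \ref{C4.2n} and \ref{C.0} hold here): the dual radical $\mathfrak{p}_{_{\infty}}^{\text{ch}}(A)$ is the unique element with no proper $\ll_{_{\infty}}$-predecessor. Since every nonzero $K$ has such a predecessor, the only candidate is $K=\{0\}$, whence $\mathfrak{p}_{_{\infty}}^{\text{ch}}(A)=\{0\}$. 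Equivalently, starting from $A$ one builds a descending $\ll_{_{\infty}}$-series of characteristic Lie ideals of finite-codimension steps terminating at the radical, and the predecessor property forces the terminal ideal to be trivial.

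The main obstacle is the case analysis at a single ideal $K$: ensuring that Theorem \ref{KST1} is applicable requires $K$ to be \emph{non-commutative} and to possess a \emph{proper closed Lie subalgebra of finite codimension}, and both conditions must be extracted from the two global hypotheses. The finite-codimension subalgebra comes from $\cap_{L\in\Sigma_A}L=\{0\}$ via $L\cap K$, but I must check that $L\cap K$ has finite codimension \emph{in $K$} and is a \emph{proper} Lie subalgebra of $K$ — the former by Lemma \ref{Clos}(i), the latter because $K\not\subseteq L$ forces $L\cap K\neq K$. The non-commutativity is handled by the no-infinite-dimensional-commutative-ideal hypothesis together with the observation that a finite-dimensional $K$ makes $I=\{0\}$ an immediate predecessor; the delicate point is confirming that these two reductions jointly cover all nonzero $K$ without gap, which is where I would be most careful.
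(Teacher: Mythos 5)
Your proof is correct and follows essentially the same route as the paper's: the same use of $\cap_{_{L\in\Sigma_{A}}}L=\{0\}$ with Lemma \ref{Clos}(i) to produce a proper closed Lie subalgebra of finite codimension in $K$, Theorem \ref{KST1} together with (\ref{4.41}) to obtain a characteristic $\ll_{_{\infty}}$-predecessor, and the no-predecessor property of the dual radical from Theorem \ref{T4.6}(ii) to force $\mathfrak{p}_{_{\infty}}^{\text{ch}}(A)=\{0\}$. Your reading of the statement's typo (it should be $K\neq I\ll_{_{\infty}}K$) matches the paper's proof, and your case split (commutative versus non-commutative, instead of the paper's finite- versus infinite-dimensional) is just an equivalent reorganization of the identical argument.
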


\begin{proof}
Set $\mathfrak{p}=\mathfrak{p}_{_{\infty}}^{\text{ch}}(A).$ Let $\{0\}\neq
K\in$ Li$_{_{A}}^{\text{ch}}.$ If $\dim K<\infty$ then $\{0\}\ll_{_{\infty}}K$
and $\{0\}\in$ Li$_{_{A}}^{\text{ch}}.$

Let $\dim K=\infty.$ As $\cap_{_{L\in\Sigma_{_{A}}}}L=\{0\},$ there is
$L\in\Sigma_{_{A}}$ that does not contain $K.$ Then $K\cap L$ is a proper
closed Lie subalgebra of $A.$ Replacing $M$ by $A$ in Lemma \ref{Clos}(i), we
have that $K\cap L$ has finite codimension in $K$. By our assumption, $K$ is
non-commutative. So, by Theorem \ref{KST1}, there is $I\in$ Li$_{_{K}%
}^{\text{ch}}$ of finite codimension, i.e., $\dim(K/I)<\infty.$ By
(\ref{4.41}), $I\in$ Li$_{_{A}}^{\text{ch}},$ so that $I\ll_{_{\infty}}K$ in
Li$_{_{A}}^{\text{ch}}.$

If $\mathfrak{p}\neq\{0\}$ then, by above, $I\ll_{_{\infty}}\mathfrak{p}$ for
some $\mathfrak{p}\neq I\in$ Li$_{_{A}}^{\text{ch}}$ which contradicts Theorem
\ref{T4.6}(i).$\bigskip$
\end{proof}

The condition that Li$_{_{A}}^{\text{ch}}$ has no commutative
infinite-dimensional Lie ideals in Theorem \ref{comdiff} is essential. To show
this, consider the following construction of Banach Lie algebras \cite[Sec
1.8]{Bo}.

Let $X$ be a Banach space and\emph{ }$\mathcal{L}$ a closed Lie subalgebra of
$B(X).$ The direct sum $A=\mathfrak{L}\oplus^{\text{id}}X$ (\textit{semidirect
product of }$\mathfrak{L}$ and $X$) endowed with Lie multiplication and norm%
\[
\left[  (a;x),(b;y)\right]  =\left(  [a,b];ay-bx\right)  \text{ and
}\left\Vert (a;x)\right\Vert =\max\left\{  \left\Vert a\right\Vert ,\left\Vert
x\right\Vert \right\}  \text{ for }a,b\in\mathfrak{L},\text{ }x,y\in X,
\]
is a Banach Lie algebra. Clearly, $J=\{0\}\oplus^{\text{id}}X$ is a closed
commutative Lie ideal of $A$.

\begin{example}
\label{E3}Let $\dim X=\infty,$ $\dim\mathfrak{L}<\infty$ and let
$\mathfrak{L}$ have no invariant subspaces in $X.$ Then $\cap_{_{L\in
\Sigma_{_{A}}}}L=\{0\}.$ However$,$\emph{ }$\mathfrak{p}_{_{\infty}%
}^{\text{\emph{ch}}}(A)=J\neq\{0\},$ as \emph{Li}$_{_{A}}^{\text{\emph{ch}}}$
has a commutative Lie ideal $J$.
\end{example}

\begin{proof}
If $Y$ is a closed subspace of codimension $1$ in $X$ then $\{0\}\oplus
^{\text{id}}Y\in\Sigma_{A}$, as $\dim\mathfrak{L}<\infty.$ Since the
intersection of all such Lie subalgebras is\emph{ }$\{0\},$ we have
$\cap_{_{L\in\Sigma_{A}}}L=\{0\}.$

As $\mathfrak{L}$ is irreducible in $X$, each $K\in$ Li$_{A}$ has form
$K=I\oplus^{\text{id}}X,$ where $I\in$ Li$_{\mathfrak{L}};$ and $J$ is the
unique minimal non-zero closed Lie ideal of $A$. By (\ref{10.1}), $e^{t\delta
}(J)$ is a minimal Lie ideal for each $t\in\mathbb{R}$ and $\delta
\in\mathfrak{D}\left(  A\right)  ,$ so that $e^{t\delta}(J)=J.$ By
(\ref{10.2}), $J\in$ Li$_{_{A}}^{\text{ch}}.$ Thus the condition in Theorem
\ref{comdiff} fails and the theorem does not hold since $\{0\}\neq
\mathfrak{p}_{_{\infty}}^{\text{ch}}(A)=J\ll_{_{\infty}}A$.\bigskip
\end{proof}

Let $\mathfrak{p}_{_{\infty}}(\mathfrak{L})$ be the dual $\ll_{_{\infty}%
}^{\triangleleft}$-radicals in Li$_{_{\mathfrak{L}}}$ and $\mathfrak{p}%
_{_{\infty}}(X)$ be the dual $\ll_{_{\infty}}^{\triangleleft}$-radicals in Lat
$\mathfrak{L}$ -- the lattice of all $\mathfrak{L}$-invariant subspaces in
$X$. Let $\mathfrak{p}_{_{\infty}}(A)$ be the dual $\ll_{_{\infty}%
}^{\triangleleft}$-radical in Li$_{_{A}}$.

\begin{proposition}
\label{P8.2n}Let $A=\mathfrak{L}\oplus^{\text{\emph{id}}}X.$ If $\mathfrak{p}%
_{_{\infty}}(\mathfrak{L})=\{0\}$ then $\mathfrak{p}_{_{\infty}}%
(A)=\{0\}\oplus^{\text{\emph{id}}}\mathfrak{p}_{_{\infty}}(X).$
\end{proposition}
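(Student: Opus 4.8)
The plan is to localize the whole computation to the commutative Lie ideal $J=\{0\}\oplus^{\text{id}}X$ and then transport the radical of the top element $A$ down to $J$. Throughout write $\ll_{_{\infty}}$ for the finite-codimension relation; in each of the complete lattices Li$_{_{A}}$, Li$_{_{\mathfrak{L}}}$ and Lat $\mathfrak{L}$ it is an $\mathbf{HH}$-order (Corollary \ref{T9.2} and the Lie-ideal version of it), so by Theorem \ref{inf} the induced order $\ll_{_{\infty}}^{\triangleleft}$ is a dual $\mathbf{R}$-order whose dual radical map is $\mathfrak{p}_{_{\infty}}=\wedge[\ll_{_{\infty}}^{\triangleleft},\cdot]$. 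Thus it suffices to prove two facts: (a) $\mathfrak{p}_{_{\infty}}(J)=\{0\}\oplus^{\text{id}}\mathfrak{p}_{_{\infty}}(X)$, where the radical of $J$ is computed inside Li$_{_{A}}$; and (b) $J\ll_{_{\infty}}^{\triangleleft}A$. Granting these, (\ref{2,2}) applied to the dual $\mathbf{T}$-order $\ll_{_{\infty}}^{\triangleleft}$ gives $\mathfrak{p}_{_{\infty}}(A)=\mathfrak{p}_{_{\infty}}(J)$, which is the assertion.

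For (a) I would first describe the Lie ideals of $A$ lying below $J$. For $(0;v)\in J$ and $(b;y)\in A$ one has $[(b;y),(0;v)]=(0;bv)$, so a closed subspace $\{0\}\oplus^{\text{id}}V\subseteq J$ is a Lie ideal of $A$ exactly when $V$ is $\mathfrak{L}$-invariant; conversely every $I\in$ Li$_{_{A}}$ with $I\subseteq J$ has the form $\{0\}\oplus^{\text{id}}V$ with $V=\{x:(0;x)\in I\}$ closed and $\mathfrak{L}$-invariant. Hence $V\mapsto\{0\}\oplus^{\text{id}}V$ is a lattice isomorphism of Lat $\mathfrak{L}$ onto the interval $[\{0\},J]$ of Li$_{_{A}}$, and it preserves dimensions of quotients, so it carries $\ll_{_{\infty}}$ to $\ll_{_{\infty}}$ and therefore (via Definition \ref{D3} and Proposition \ref{p10}) complete lower $\ll_{_{\infty}}$-gap chains to complete lower $\ll_{_{\infty}}$-gap chains. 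Since for $I_{1}\subseteq I_{2}\subseteq J$ any chain realizing $I_{1}\ll_{_{\infty}}^{\triangleleft}I_{2}$ lies in the order-ideal $[\{0\},J]$, the relation $\ll_{_{\infty}}^{\triangleleft}$ and its radical are determined intrinsically inside that interval, and the isomorphism maps $\mathfrak{p}_{_{\infty}}(X)=\wedge[\ll_{_{\infty}}^{\triangleleft},X]$ to $\mathfrak{p}_{_{\infty}}(J)=\wedge[\ll_{_{\infty}}^{\triangleleft},J]$, giving (a).

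For (b) I would use the hypothesis $\mathfrak{p}_{_{\infty}}(\mathfrak{L})=\{0\}$. By Theorem \ref{T3.5}(ii) there is a descending $\ll_{_{\infty}}$-series $(\mathfrak{L}_{\lambda})_{1\le\lambda\le\gamma}$ in Li$_{_{\mathfrak{L}}}$ with $\mathfrak{L}_{1}=\mathfrak{L}$ and $\mathfrak{L}_{\gamma}=\{0\}$. Setting $I_{\lambda}=\mathfrak{L}_{\lambda}\oplus^{\text{id}}X$, each $I_{\lambda}$ is a closed Lie ideal of $A$ (for $(a;x)\in I_{\lambda}$, $(b;y)\in A$ the bracket $([b,a];bx-ay)$ lies in $I_{\lambda}$ since $\mathfrak{L}_{\lambda}\in$ Li$_{_{\mathfrak{L}}}$), with $\dim(I_{\lambda}/I_{\lambda+1})=\dim(\mathfrak{L}_{\lambda}/\mathfrak{L}_{\lambda+1})<\infty$ and $\cap_{\lambda<\beta}I_{\lambda}=(\cap_{\lambda<\beta}\mathfrak{L}_{\lambda})\oplus^{\text{id}}X$ at limit ordinals. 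Thus $(I_{\lambda})$ is a descending $\ll_{_{\infty}}$-series in Li$_{_{A}}$ from $A$ to $I_{\gamma}=\{0\}\oplus^{\text{id}}X=J$, i.e. a complete lower $\ll_{_{\infty}}$-gap chain from $J$ to $A$ (Proposition \ref{p10}), whence $J\ll_{_{\infty}}^{\triangleleft}A$ by (\ref{2.18}); if $\mathfrak{L}=\{0\}$ then $J=A$ and this is trivial.

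The routine verifications — that $\{0\}\oplus^{\text{id}}V$ and $\mathfrak{L}_{\lambda}\oplus^{\text{id}}X$ are closed Lie ideals and that the relevant intersections split as stated — are straightforward from the topological direct-sum structure of $A=\mathfrak{L}\oplus^{\text{id}}X$. The main obstacle is conceptual rather than computational: one must be certain that the dual $\ll_{_{\infty}}^{\triangleleft}$-radical, a priori defined through gap chains running across the whole lattice Li$_{_{A}}$, may legitimately be computed inside the order-ideal $[\{0\},J]$, so that the isomorphism with Lat $\mathfrak{L}$ really does transport it; and that the descent from $A$ to $J$ costs nothing, which is precisely what (\ref{2,2}) delivers once (b) holds. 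Care is also needed to see that the hypothesis enters only through (b): without $\mathfrak{p}_{_{\infty}}(\mathfrak{L})=\{0\}$ the same argument yields $\mathfrak{p}_{_{\infty}}(A)=\mathfrak{p}_{_{\infty}}(\mathfrak{p}_{_{\infty}}(\mathfrak{L})\oplus^{\text{id}}X)$, which need not lie in $J$, as Example \ref{E3} shows.
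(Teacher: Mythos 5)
Your proof is correct, and it is worth recording how it differs from the paper's. Both arguments rest on the same two ingredients: the lifted descending series $(\mathfrak{L}_{\lambda}\oplus^{\text{id}}X)_{1\leq\lambda\leq\gamma}$ obtained from $\mathfrak{p}_{_{\infty}}(\mathfrak{L})=\{0\}$, and the correspondence $V\mapsto\{0\}\oplus^{\text{id}}V$ between Lat $\mathfrak{L}$ and the Lie ideals of $A$ contained in $\{0\}\oplus^{\text{id}}X$. But the paper finishes differently: it concatenates that series with the lift of a descending $\ll_{_{\infty}}$-series in Lat $\mathfrak{L}$ from $X$ to $\mathfrak{p}_{_{\infty}}(X)$, producing one descending series from $A$ all the way down to $\{0\}\oplus^{\text{id}}\mathfrak{p}_{_{\infty}}(X)$, and then identifies this endpoint as the dual radical by the predecessor criterion of Corollary \ref{C4.2n} (Theorem \ref{T4.6}(ii)), using that $\mathfrak{p}_{_{\infty}}(X)$ has no invariant subspace of finite codimension. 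You instead stop the chain at $J=\{0\}\oplus^{\text{id}}X$, apply the dual $\mathbf{T}$-order identity (\ref{2,2}) to $\ll_{_{\infty}}^{\triangleleft}$ -- legitimate, since $\ll_{_{\infty}}^{\triangleleft}$ is a dual $\mathbf{R}$-order whose own left construction satisfies $(\ll_{_{\infty}}^{\triangleleft})^{\triangleleft}=\ll_{_{\infty}}^{\triangleleft}$ by Theorem \ref{P2.1}, so its radical map in (\ref{2.13}) is exactly $\mathfrak{p}_{_{\infty}}$ -- to conclude $\mathfrak{p}_{_{\infty}}(A)=\mathfrak{p}_{_{\infty}}(J)$, and then transport the radical through the isomorphism $[\{0\},J]\cong$ Lat $\mathfrak{L}$. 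Your localization step is the decisive extra care: since any chain witnessing $K\ll_{_{\infty}}^{\triangleleft}J$ satisfies $\wedge C=K$ and $\vee C=J$, its elements automatically lie in $[K,J]$, and $[\{0\},J]$ is a complete sublattice of Li$_{_{A}}$ (meets are intersections, joins are closed sums staying inside the closed subspace $J$), so $\ll_{_{\infty}}^{\triangleleft}$ and $\wedge[\ll_{_{\infty}}^{\triangleleft},J]$ are computed intrinsically in the interval and are carried across the dimension-preserving isomorphism. This makes explicit two points the paper leaves implicit (that the series $C_{_{X}}$ of invariant subspaces lifts to Lie ideals of $A$, and that the no-predecessor condition transfers across the correspondence). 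What your route buys is a reusable principle -- dual radicals live in order ideals and can be moved across interval isomorphisms -- together with a clean isolation of where the hypothesis enters, as your closing remark and Example \ref{E3} show; what the paper's route buys is brevity, avoiding both (\ref{2,2}) and the intrinsicness discussion by checking the predecessor condition once, directly at the bottom of a single concatenated chain.
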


\begin{proof}
As $\mathfrak{p}_{_{\infty}}(\mathfrak{L})=\{0\}$, there is a descending
$\ll_{_{\infty}}$\textit{-}series of Lie ideals $\left(  I_{\lambda}\right)
_{1\leq\lambda\leq\gamma}$ of $\mathfrak{L}$ from $\mathfrak{L}$ to $\{0\}.$
Then $C_{_{\mathfrak{L}}}=\{I_{\lambda}\oplus^{\text{id}}X\}_{1\leq\lambda
\leq\gamma}$ is a descending $\ll_{_{\infty}}$\textit{-}series of Lie ideals
of $A$ from $A$ to $\{0\}\oplus^{\text{id}}X.$

Let $C_{_{X}}=\{Y_{\omega}\}_{1\leq\omega\leq\alpha}$ be a descending
$\ll_{_{\infty}}$\textit{-}series of subspaces in Lat $\mathfrak{L}$ from $X$
to $\mathfrak{p}_{_{\infty}}(X).$ As $\mathfrak{p}_{_{\infty}}(X)$ has no
invariant subspaces of finite codimension, the Lie ideal $J=\{0\}\oplus
^{\text{id}}\mathfrak{p}_{_{\infty}}(X)$ contains no Lie ideals of $A$ of
finite codimension and $C=C_{_{\mathfrak{L}}}\cup C_{_{X}}$ is a descending
$\ll_{_{\infty}}$\textit{-}series of Lie ideals from $A$ to $J.$ So, by
Corollary \ref{C4.2n}, $\mathfrak{p}_{_{\infty}}(A)=J.$
\end{proof}

E. Kissin: STORM, London Metropolitan University, 166-220 Holloway Road,
London N7 8DB, Great Britain; e-mail: e.kissin@londonmet.ac.uk\medskip

V. S. Shulman: Department of Mathematics, Vologda State University, Vologda,
Russia; e-mail: shulman.victor80@gmail.com\medskip

Yu. V. Turovskii: Konakovo, Russia, e-mail: yuri.turovskii@gmail.com


\begin{thebibliography}{99999}                                                                                            %


\bibitem[A-I]{Am}S. A. Amitsur, A general theory of radicals. I. Radicals in
complete lattices, Amer. J. Math. \textbf{74} (1952), 774-786.\vspace{-0.3cm}

\bibitem[A-II]{Am2}S. A. Amitsur, A general theory of radicals. II. Radicals
in rings and bicategories, Amer. J. Math. \textbf{76} (1954), 100-125.\vspace
{-0.3cm}

\bibitem[A-III]{Am3}S.\thinspace A.\thinspace Amitsur,\thinspace A\thinspace
general\thinspace theory\thinspace of\thinspace radicals.\thinspace
III.\thinspace Applications,\thinspace Amer.\thinspace J.\thinspace
Math.\thinspace\textbf{76}\thinspace(1954),\thinspace126-136.\vspace{-0.3cm}

\bibitem[Ba]{Ba}B. A. Barnes, When is a representation of a Banach *-algebra
Naimark-related to a *-representation?, Pacific J. Mathematics,
72\textbf{(1977)}, No 1, 5-25.\vspace{-0.3cm}

\bibitem[Bo]{Bo}N. Bourbaki, \textquotedblleft Groupes et algebres de
Lie\textquotedblright, Hermann, Paris VI, 1971.\vspace{-0.3cm}

\bibitem[BP]{BP}L.G. Brown and G.K. Pedersen, C*-Algebras of Real Rank Zero,
J. Funct. Anal., \textbf{99} (1991), 131-149.\vspace{-0.3cm}

\bibitem[Da]{Da}K.R. Davidson, "Nest algebras", Pitman Research Notes in
Mathematics Series, Longman, 1988.\vspace{-0.3cm}

\bibitem[Da1]{Da1}K.R. Davidson, "C*-algebras by Example", AMS, Providence,
Rhode Island, 1991.\vspace{-0.3cm}

\bibitem[DS]{DS}N. Dunford and J.T. Schwartz, "Linear operators", Interscience
Publishers, New York, London, 1958.\vspace{-0.3cm}

\bibitem[D]{D}J.\thinspace Dixmier,\thinspace"Les\thinspace
C*-algebres\thinspace et\thinspace leurs\thinspace representations",\thinspace
Paris,\thinspace Gauthier-Villars\thinspace Editeur,\thinspace1969.\vspace
{-0.3cm}

\bibitem[Di1]{Di1}P.G. Dixon\textit{,} Topologically irreducible
representations and radicals in Banach algebras, Proc. London Math. Soc. (3),
\textbf{74 }(1997), 174--200.\vspace{-0.3cm}

\bibitem[Di]{Di}P. G. Dixon, Non-closed sums of closed ideals in Banach
algebras, Proc. AMS, \textbf{128}, 12 (2000), 3647-3654.\vspace{-0.3cm}

\bibitem[G]{G}G. Gr\"{a}tzer, \textquotedblleft Lattice Theory:
Foundation\textquotedblright, Birkh\"{a}user, 2011.\vspace{-0.3cm}

\bibitem[Gr]{Gr}M. Gray, "A radical approach to algebra", Addison-Wesley,
London, Ontario, 1970.\vspace{-0.3cm}

\bibitem[Ha]{Ham}H. L. Hamburger, Uber die Zerlegung des Hilbertschen Raumes
durch vollstetige lineare Transformationen, Math. Nachr., 1951, \textbf{4},
56-69.\vspace{-0.3cm}

\bibitem[Har]{Har}P. Harmand, D. Werner, W. Werner,."M-ideals in Banach spaces
and Banach algebras", Springer-Verlag, 1992.\vspace{-0.3cm}

\bibitem[H]{H}I.\thinspace N.\thinspace Herstein,\thinspace\textquotedblleft
Noncommutative\thinspace rings\textquotedblright,\thinspace The\thinspace
Carus\thinspace Math.\thinspace Monog.,\thinspace15,\thinspace Math.\thinspace
Ass.\thinspace of\thinspace Am.,\thinspace1971.\vspace{-0.3cm}

\bibitem[JT]{JT}M. de Jeu and J. Tomiyama, Algebraically irreducible
representations and structure space of the Banach algebra associated with a
topological dynamical system, arXiv:1407.8329v1[math.OA] 31 Jul 2014.\vspace
{-0.3cm}

\bibitem[K]{K}G. Kalambach, "Orthomodular Lattices", Academic Press, London,
New York, 1983

\bibitem[Ka]{Ka}I. Kaplansky, The structure of certain operator algebras,
Trans. AMS, 70 (1951), 219-255.\vspace{-0.3cm}

\bibitem[Ke]{Ke}J. L. Kelley, "General topology", Van Nostrand Co., Inc.,
Princeton, N.J., 1955.\vspace{-0.3cm}

\bibitem[Ki]{Ki}E. Kirchberg, On restricted perturbations in inverse images
and a description of normalizer algebras in C*-algebras, J. Funct. Anal., 129
(1995), 1--34.\vspace{-0.3cm}

\bibitem[K1]{K1}E.\thinspace Kissin,\thinspace Invariant\thinspace
subspaces\thinspace for\thinspace derivations,\thinspace Proc.\thinspace
of\thinspace Amer.\thinspace Math.\thinspace Soc.,\thinspace\textbf{102}%
(1988),\thinspace95-101.\vspace{-0.3cm}

\bibitem[K2]{K2}E. Kissin, On lattices of invariant subspaces of operator
algebras, J. Oper. Theory, \textbf{18}(1987), 373-392.\vspace{-0.3cm}

\bibitem[K3]{K3}E. Kissin, Relations and trails in lattices of projections in
W*-algebras, to appear.\vspace{-0.3cm}

\bibitem[KST2]{KST2}E. Kissin, V.S. Shulman and Yu. V. Turovskii, Banach Lie
algebras with Lie subalgebras of finite codimension have Lie ideals, J. London
Math. Soc., \textbf{80 }(2009), 603-626.\vspace{-0.3cm}

\bibitem[KST3]{KST3}E. Kissin, V.S. Shulman and Yu. V. Turovskii, Topological
radicals and Frattini theory of Banach Lie algebras, Integr. Equ. Oper.
Theory, \textbf{74} (2012), 51-121.\vspace{-0.3cm}

\bibitem[KST4]{KST4}E. Kissin, V.S. Shulman and Yu. V. Turovskii, Relations
and radicals in lattices of ideals of C*-algebras, to appear.\vspace{-0.3cm}

\bibitem[KST5]{KST5}E. Kissin, V.S. Shulman and Yu. V. Turovskii, On theory of
topological radicals, Contemporary Mathematics, Fundamental directions,
\textbf{64}, No 3 (2018), 490-546.\vspace{-0.3cm}

\bibitem[L]{L}T. Laffey, On the structure of algebraic algebras, Pacific J.
Math., \textbf{62}(1976), 461-471.\vspace{-0.3cm}

\bibitem[N]{N}N. K. Nikol'skii, Invariant subspaces in the theory of operators
and the theory of functions, Itogi Nauki i Techniki, Matematicheskii Analiz,
v. \textbf{12}, 1974, 199-412. [In Russian]\vspace{-0.3cm}

\bibitem[P]{P}D. Poguntke, Algebraically irreducible representations of
$L^{1}$-algebras of exponential Lie groups, Duke Math. J., 50(\textbf{1983}),
No 4, 1077-1106.\vspace{-0.3cm}

\bibitem[R]{R}H. Radjavi, Nonselfadjoint representations of C*-algebras, Proc.
of AMS, 47(\textbf{1975)}, 133-136.\vspace{-0.3cm}

\bibitem[Ro]{Ro}S. Roman, \textquotedblleft Lattices and ordered
sets\textquotedblright, Springer, 2008.\vspace{-0.3cm}

\bibitem[Ru1]{Ru1}W. Rudin, Spaces of type $H^{\infty}+C$ , Ann. Inst.
Fourier, Grenoble, 25 (1975), 99--125. (MR 57:13692; errata MR 57:7134)\vspace
{-0.3cm}

\bibitem[Sa]{Sa}S. Sakai, \textquotedblleft C*-algebras and
W*-algebras\textquotedblright, Springer-Verlag, New-York Heidelberg,
1998.\vspace{-0.3cm}

\bibitem[Sch]{Sch}H. H. Schaefer, \textquotedblleft Topological Vector
Spaces\textquotedblright, Springer, New-York Heidelberg Berlin, 1971.\vspace
{-0.3cm}

\bibitem[ST]{ST}V.S. Shulman and Yu. V. Turovskii, Topological radicals, V.
From algebra to spectral theory, Operator Theory Advances and Applications
\textbf{233}, Algebraic Methods in Functional analysis (2014),
171--280.\vspace{-0.3cm}

\bibitem[Sk]{Sk}L.\thinspace A.\thinspace Skornjakov,\thinspace
\textquotedblleft Elements\thinspace of\thinspace\ Lattice\thinspace
Theory\textquotedblright,\thinspace Hindustan\thinspace Publ.\thinspace
Corp.,\thinspace Delhi,\thinspace1977.\vspace{-0.3cm}

\bibitem[T]{T}M. Takesaki, "Theory of operator algebra", Springer, Berlin
Heidelberg New York, 2001.\vspace{-0.3cm}

\bibitem[W]{W}S. Wassermann, Exact C*-algebras and related topics, Res. Inst.
Mathem., Global Analysis Res. Center, Seoul National Univ., Lecture Note
Series, 19, 1994.\medskip
\end{thebibliography}
\end{document}